\newtheoremstyle{exampstyle}
  {1pt} 
  {1pt} 
  {} 
  {} 
  {\bfseries} 
  {.} 
  {.5em} 
  {} 
\newcommand{\myhocolim}[1]{\mathbin{\operatorname*{hocolim}_{#1}^{}}}
\newcommand{\mywedge}[1]{\mathbin{\operatorname*{\wedge}_{#1}^{}}}
\newcommand{\myotimes}[1]{\mathbin{\operatorname*{\otimes}_{#1}^{}}}
\DeclareMathOperator{\Alg}{Alg}
\DeclareMathOperator{\Ob}{Ob}
\DeclareMathOperator{\sgn}{sgn} 
\DeclareMathOperator{\Mor}{Mor}
\DeclareMathOperator{\Wh}{Wh}
\DeclareMathOperator{\Barr}{Bar}
\DeclareMathOperator{\Ho}{Ho}
\DeclareMathOperator{\id}{id}
\newcommand{\mywedgetwo}[2]{\mathbin{\operatorname*{\wedge}_{#1}^{#2}}}
\newcommand{\mytimestwo}[2]{\mathbin{\operatorname*{\times}_{#1}^{#2}}}
\newcommand*{\ovA}[1]{%
  \m@th\overline{#1\raisebox{2.5mm}{}}%
}
\newcommand*{\ovB}[1]{%
  \m@th\overline{#1\raisebox{1.5mm}{}}%
}
\let\euscr\mathscr \let\mathscr\relax 
\DeclareMathOperator{\coeq}{coeq}
\DeclareMathOperator{\gr}{gr} 
\DeclareMathOperator*{\Orb}{Orb} 
\DeclareMathOperator{\Mod}{Mod}
\DeclareMathOperator{\Ind}{Ind}
\DeclareMathOperator{\End}{End}
\DeclareMathOperator{\SSeq}{SSeq}
\DeclareMathOperator{\Po}{Po}
\DeclareMathOperator{\Sing}{Sing}
\DeclareMathOperator{\chara}{char}
\DeclareMathOperator{\bfCAlg}{\mathbf{CAlg}}
\DeclareMathOperator{\bfCMon}{\mathbf{CMon}}
\DeclareMathOperator{\hSCR}{hSCR}
\DeclareMathOperator{\hsMod}{hsMod}
\DeclareMathOperator{\Comm}{Comm}
\DeclareMathOperator{\sSet}{sSet}
\DeclareMathOperator{\sk}{sk}
\newcommand{\NN}{\mathbb{N}}
\newcommand{\ZZ}{\mathbb{Z}}
\newcommand{\QQ}{\mathbb{Q}}
\newcommand{\RR}{\mathbb{R}}
\newcommand{\FF}{\mathbb{F}}
\newcommand{\DD}{\mathbb{D}}
\DeclareMathOperator{\Fun}{Fun}
\DeclareMathOperator{\AQ}{AQ}
\DeclareMathOperator{\sCpl}{sCpl}
\DeclareMathOperator{\symsset}{SymsSet}
\DeclareMathOperator{\Fin}{Fin}
\DeclareMathOperator{\Set}{Set}
\DeclareMathOperator{\im}{im}
\DeclareMathOperator{\Stab}{Stab}
\DeclareMathOperator{\coker}{coker}
\DeclareMathOperator{\cofib}{cofib}
\DeclareMathOperator{\St}{St}
\DeclareMathOperator{\BT}{BT}
\numberwithin{equation}{section}
\DeclareMathOperator{\CW}{CW}  
\newtheorem*{question}{Question}
\theoremstyle{exampstyle}\theoremstyle{plain}  
\theoremstyle{exampstyle}
\theoremstyle{exampstyle}
\theoremstyle{exampstyle}
\theoremstyle{exampstyle}
\theoremstyle{exampstyle}\newtheorem{warning}[equation]{Warning}
\theoremstyle{exampstyle}\newtheorem{theorem}[equation]{Theorem}
\theoremstyle{exampstyle}\newtheorem{lemma}[equation]{Lemma}
\theoremstyle{exampstyle}\newtheorem{proposition}[equation]{Proposition}
\theoremstyle{exampstyle}\newtheorem*{nonquestion}{Question}{}
\theoremstyle{exampstyle}\newtheorem{corollary}[equation]{Corollary}
\theoremstyle{exampstyle}\newtheorem*{claim}{Claim}
\theoremstyle{exampstyle}
\theoremstyle{exampstyle}\theoremstyle{definition}
\theoremstyle{exampstyle}
\theoremstyle{exampstyle}\newtheorem{definition}[equation]{Definition}
\newtheorem{notation}[equation]{Notation}
\theoremstyle{exampstyle}\newtheorem{remark}[equation]{Remark}
\theoremstyle{exampstyle}\newtheorem*{nonremark}{Remark}
\theoremstyle{exampstyle}\newtheorem*{nonexample}{Example}
\newtheorem*{remarks*}{Remarks}
\theoremstyle{exampstyle}\newtheorem{examples}[equation]{Examples}
\theoremstyle{exampstyle}\newtheorem{example}[equation]{Example}
\theoremstyle{exampstyle}\newtheorem*{VariableNoNum}{{\VariableText}}
\theoremstyle{exampstyle}\newtheorem{Variable}[equation]{{\VariableText}}
\theoremstyle{exampstyle}\theoremstyle{definition}
\theoremstyle{exampstyle}\newtheorem*{VariableNoNumBold}{{\VariableText}}
\theoremstyle{exampstyle}\newtheorem{VariableBold}[equation]{{\VariableText}}
\theoremstyle{exampstyle}\theoremstyle{definition}
 \theoremstyle{exampstyle}
\newlength{\asidelength}
\def\Changed/{\ifvmode\else\vadjust{%
\vbox to 0pt{\vskip -\baselineskip%
\hbox to 0pt{\hss\vrule height 0pt depth 1.2\baselineskip\hskip 1em}\vss}}\fi}
\def\CHanged{\ifvmode\else\vadjust{%
\vbox to 0pt{\vskip -\baselineskip%
\hbox to 0pt{\hss\vrule height 0pt depth 1.2\baselineskip\hskip 1em}\vss}}\fi}
\def\Math#1{\def\MathString{#1}\futurelet\MathDelim\MathChoose}
\def\MathChoose{\ifmmode\let\MathDo\MathString%
              \else\let\MathDo\MathSkip\fi%
              \MathDo}
\def\MathSkip{\ifx\MathDelim/\def\MathDo{$\MathString$\EatOne}%
              \else\def\MathDo{$\MathString$}\fi%
              \MathDo}
\def\Text#1{\def\TextString{#1}\futurelet\TextDelim\TextSkip}
\def\TextSkip{\ifx\TextDelim/\def\TextDo{\TextString\EatOne}%
              \else\let\TextDo\TextString\fi%
              \TextDo}
\def\EatOne#1{}
    \newtheoremstyle{TheoremNum}
        {}{}              
        {\itshape}                      
        {}                              
        {\bfseries}                     
        {.}                             
        { }                             
        {\thmname{#1}\thmnote{ \bfseries #3}}
    \theoremstyle{TheoremNum}
    \newtheorem{theoremn}{Theorem}
     \newtheoremstyle{DefinitionNum}
        {}{}              
        {\itshape}                      
        {}                              
        {\bfseries}                     
        {.}                             
        { }                             
        {\thmname{#1}\thmnote{ \bfseries #3}}
    \theoremstyle{DefinitionNum}
    \newtheorem{definitionn}{Definition}
    \newtheoremstyle{LemmaNum}
        {}{}              
        {\itshape}                      
        {}                              
        {\bfseries}                     
        {.}                             
        { }                             
        {\thmname{#1}\thmnote{ \bfseries #3}}
            \newtheorem{lemman}{Lemma}
    \newtheorem{corollaryn}{Corollary}
  \newtheoremstyle{CorollaryNum}
        {}{}              
        {\itshape}                      
        {}                              
        {\bfseries}                     
        {.}                             
        { }                             
        {\thmname{#1}\thmnote{ \bfseries #3}}
  \newtheoremstyle{PropositionNum}
        {}{}              
        {\itshape}                      
        {}                              
        {\bfseries}                     
        {.}                             
        { }                             
        {\thmname{#1}\thmnote{ \bfseries #3}}
\def\SkipToEndScan#1\EndScan{}
\def\Scan#1#2#3{\ifx#1#2#3\expandafter\SkipToEndScan\fi\Scan#1}
\def\Upper#1{%
\Scan#1aAbBcCdDeEfFgGhHiIjJkKlLmMnNoOpPqQrRsStTuUvVwWxXyYzZ#1#1\EndScan}
\def\Phrase#1 #2/#3/#4=#5 #6/#7/#8.{%
\expandafter\edef\csname#2#3\endcsname{\noexpand\Text{#6#7}}
\expandafter\edef\csname\Upper#2#3\endcsname{\noexpand\Text{\Upper#6#7}}
\expandafter\edef\csname#1#2#3\endcsname{\noexpand\Text{#5 #6#7}}
\expandafter\edef\csname\Upper#1#2#3\endcsname{\noexpand\Text{\Upper#5 #6#7}}
\expandafter\edef\csname#2#4\endcsname{\noexpand\Text{#6#8}}
\expandafter\edef\csname\Upper#2#4\endcsname{\noexpand\Text{\Upper#6#8}}
}
\newcommand{\orb}[1]{{#1\kern-2pt}{\scriptscriptstyle\rm{-orb}}}
\newcommand{\Smash}{\wedge}
\newcommand{\GL}{\operatorname{GL}} 
\newcommand{\HH}{\operatorname{H}}
\newcommand{\EE}{\operatorname{E}}
\newcommand{\PP}{\operatorname{P}}
\newcommand*{\relrelbarsep}{.386ex}
\newcommand*{\relrelbar}{%
  \mathrel{%
    \mathpalette\@relrelbar\relrelbarsep
  }%
}
\newcommand*{\@relrelbar}[2]{%
  \raise#2\hbox to 0pt{$\m@th#1\relbar$\hss}%
  \lower#2\hbox{$\m@th#1\relbar$}%
}
\providecommand*{\rightrightarrowsfill@}{%
  \arrowfill@\relrelbar\relrelbar\rightrightarrows
}
\providecommand*{\leftleftarrowsfill@}{%
  \arrowfill@\leftleftarrows\relrelbar\relrelbar
}
\providecommand*{\xrightrightarrows}[2][]{%
  \ext@arrow 0359\rightrightarrowsfill@{#1}{#2}%
}
\providecommand*{\xleftleftarrows}[2][]{%
  \ext@arrow 3095\leftleftarrowsfill@{#1}{#2}%
}
\newcommand{\Map}{\operatorname{Map}}
\renewcommand{\star}{\operatorname{St}}
\newcommand{\link}{\operatorname{Lk}}
\newcommand{\rk}{\operatorname{rk}}
\newcommand{\thhh}{\tilde{\operatorname{h}}}
\def\HomotopyOrbit#1on#2/{\ensuremath{#2_{\widetilde{\HH}h#1}}}
\def\RedHomotopyOrbit#1on#2/{\ensuremath{#2_{\thhh#1}}}
\DeclareMathOperator{\Lie}{\Lcal ie}
\DeclareMathOperator{\Lierep}{Lie}
\DeclareMathOperator{\Aff}{Aff}
\DeclareMathOperator{\br}{Br}
\DeclareMathOperator{\Free}{Free}
\DeclareMathOperator{\Sp}{Sp}
\DeclareMathOperator{\hSp}{hSp}
\DeclareMathOperator{\Tor}{Tor}
\def\doCal#1{%
\ifx#1\doAllCalEnd\def\doAllCal{\relax}\else%
 \expandafter\edef\csname#1cal\endcsname{{\noexpand\mathcal #1}}\fi}
\def\doAllCal#1{\doCal#1\doAllCal}
\def\doBar#1{%
\ifx#1\doAllBarEnd\def\doAllBar{\relax}\else%
 \expandafter\edef\csname#1bar\endcsname{{\noexpand\overline{#1}}}\fi}
\def\doAllBar#1{\doBar#1\doAllBar}
\def\doWiggle#1{%
\ifx#1\doAllWiggleEnd\def\doAllWiggle{\relax}\else%
 \expandafter\edef\csname#1wiggle\endcsname{{\noexpand\tilde{#1}}}\fi}
\def\doAllWiggle#1{\doWiggle#1\doAllWiggle}
\newcommand{\hobased}{{{\text{h}}}}
\newcommand{\n}{ {\mathbf{n}} }
\newcommand{\m}{{\mathbf{m}}}
\newcommand{\pgroupsntof}[1]{\Scal_{p}\kern-1pt\left(#1\right)}
\newcommand{\pgroupsof}[1]{{\overline{\Scal}_{p}}\kern-1pt\left(#1\right)}
\newcommand{\reals}{{\mathbb{R}}}
\newcommand{\integers}{{\mathbb{Z}}}
\newcommand{\field}{{\mathbb{F}}}
\date{}
\begin{document} 
\title{ The Action of Young Subgroups on the Partition Complex}

\author{Gregory Z. Arone}
\address{Kerchof Hall, U. of Virginia, P.O. Box 400137,
         Charlottesville VA 22904 USA}
\curraddr{Department of Mathematics, Stockholm University, SE - 106 91 Stockholm, Sweden}         
\email{gregory.arone@math.su.se} 
\author{D. Lukas B. Brantner}
\address{Department of Mathematics, Harvard University, 1 Oxford Street,
         Cambridge MA 02138 USA}
\curraddr{Merton College, Oxford University, Merton Street, Oxford OX1 4JD, United Kingdom}         
\email{brantner@maths.ox.ac.uk}  



\begin{abstract}   
We study  the restrictions,  the strict fixed points, and the strict quotients of the partition \mbox{complex $|\Pi_n|$,}  which is  the $\Sigma_n$-space attached to the poset of proper nontrivial partitions of the set $\{1,\ldots,n\}$.\\
We express the space  of fixed points $|\Pi_n|^G$ in terms of subgroup posets for general   $G\subset \Sigma_n$ and prove a formula for the restriction of $|\Pi_n|$ to Young subgroups $\Sigma_{n_1}\times \dots\times \Sigma_{n_k}$. 
Both results follow by applying a  general method, proven with discrete Morse theory,  for producing equivariant branching rules on lattices with group actions. \\
We uncover surprising links between strict Young quotients of $|\Pi_n|$,  commutative monoid spaces, and the cotangent fibre in derived algebraic geometry. These connections allow us to construct a cofibre sequence relating various strict quotients $|\Pi_n|^\diamond \mywedge{\Sigma_n} (S^\ell)^{\wedge n}$ and give a   combinatorial proof of a splitting in derived algebraic geometry. \\
Combining all our results, we decompose strict Young quotients of $|\Pi_n|$ in terms of ``atoms" $|\Pi_d|^\diamond\mywedge{\Sigma_d} (S^\ell)^{\wedge d}$ for $\ell$ odd and compute their homology. We thereby also generalise  Goerss' computation of the algebraic Andr\'{e}-Quillen homology of trivial square-zero extensions from $\FF_2$ to $\FF_p$ for $p$ an odd prime.
\end{abstract}
\maketitle  
\
\\
\\
\\
\\
\\
\\
\tableofcontents

\newpage \vspace{-4pt}
\section*{Exposition} \vspace{-5pt}
Let $\Pi_{n}$ denote the poset of proper nontrivial partitions of  $\{1,\ldots,n\}$ and write $|\Pi_n|$ for its geometric realisation, the \textit{partition complex}. There is a well-known equivalence $ |\Pi_n|\simeq \bigvee_{(n-1)!} S^{n-3}.$
However, this equivalence does  \textit{not} preserve the natural action of the symmetric group $\Sigma_n$.  

The {equivariant} topology of  the space $|\Pi_n|$ arises in  numerous contexts of interest:  its \mbox{homology} group $\widetilde{\HH}_{n-3}(|\Pi_n|,\ZZ)$ is closely related to the Lie representation, its desuspended Spanier-Whitehead dual parametrises spectral Lie algebras and Goodwillie's Taylor expansion of spaces, and, as we shall prove, the homology of its strict quotients computes the cotangent fibre of certain simplicial commutative rings. Moreover,  $|\Pi_n|$ is equivalent to the complex of non-connected graphs on $n$ vertices, which appears in Vassiliev's  work on knot theory. \label{pageone}
\vspace{2pt}

In \hspace{-2.5pt} this \hspace{-2.5pt} article, we  \hspace{-2.5pt}  describe \textit{general \hspace{-2.5pt} fixed \hspace{-2.5pt} points},\hspace{-1pt}  \textit{Young \hspace{-2.5pt} restrictions}, \hspace{-2.5pt} \mbox{and\hspace{-1.0pt}  \textit{strict \hspace{-4.5pt} Young \hspace{-4pt} quotients} \hspace{-3.5pt} of \hspace{-3.5pt} $|\Pi_n|$.} Our results for  {fixed points} and {restrictions} take the form of \textit{branching rules}, i.e. equivariant equivalences to  wedge sums of simpler spaces. We produce these rules by \mbox{applying a new general algorithm} which takes a $G$-lattice $\mathcal{P}$ and a list of functions $(F_1,\ldots,F_k)$ as input and gives a rule for how to equivariantly collapse a subspace of $|\mathcal{P}-\{\hat{0},\hat{1}\}|$ as output. In the simplest instance, we recover a common generalisation of work of  Bj\"{o}rner-Walker \cite{bjorner1983homotopy}, Welker \cite{welker1990homotopie}, and Kozlov \cite{kozlov1998order}.
\begin{nonexample}
If we feed our algorithm  $\Pi_4$ with its $\Sigma_{2}\times {\Sigma_2}$-action and a suitably chosen pair of functions, it collapses the contractible subcomplex  {drawn with thin lines on the left and gives rise to the bouquet of circles on the right:}\  \vspace{-3pt}
 \begin{figure}[H] 
  \centering
    \includegraphics[width=0.82\textwidth]{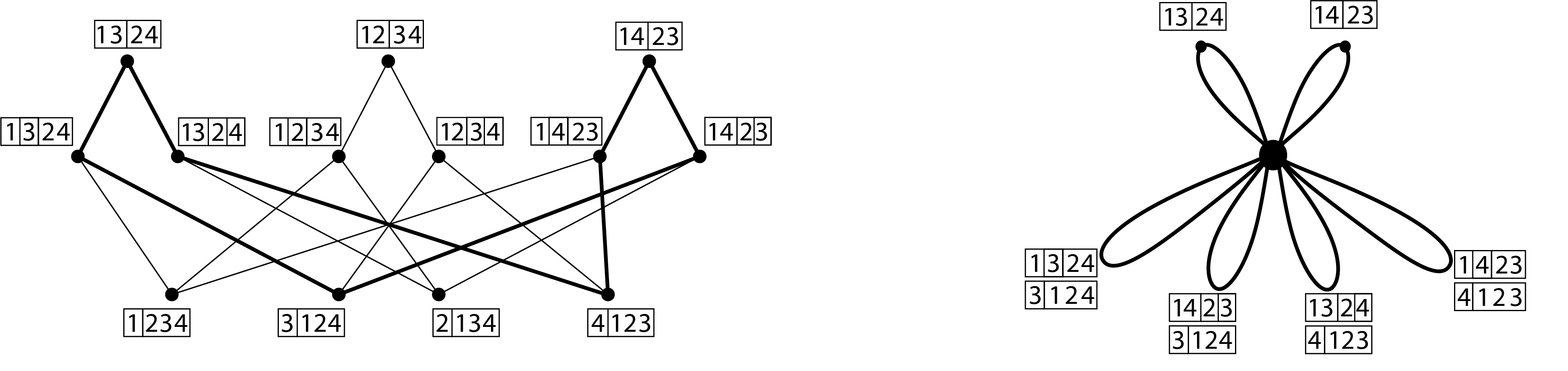}
\end{figure} 
 \end{nonexample}\vspace{-13pt}
\mbox{In general, our branching rule for restrictions of $\Pi_n$ to Young subgroups $\Sigma_{n_1}\times \ldots \times \Sigma_{n_k}$ 
reads\vspace{-1pt}}
$$
|\Pi_n| \xrightarrow{ \ \ \ \simeq\ \ \ } \bigvee_{d|\gcd(n_1, \ldots, n_k)}  \bigvee_{B(\frac{n_1}{d}, \ldots, \frac{n_k}{d})} \Ind^{\Sigma_{n_1}\times \cdots \times {\Sigma_{n_k}}}_{\Sigma_d}\left(\Sigma^{-1} (S^{\frac{n}{d}-1})^{\wedge d} \wedge |\Pi_d|^\diamond\right) .\vspace{-1pt}
$$
Here $B(m_1, \ldots, m_k)$ denotes the set of words in letters $c_1,\ldots,c_k$ which are lexicographically minimal among their own cyclic rotations and involve the letter $c_i$  exactly $m_i$ times. \vspace{3pt} Its size is given by a famous formula due to Witt \cite{witt1937treue} as $\displaystyle \   \frac{1}{m}\sum_{d|\gcd(m_1,\ldots,m_k)} \mu(d){\frac{m}{d} \choose \frac{m_1}{d},\ldots,\frac{m_k}{d}}\   $  \mbox{for $m= m_1+\ldots + m_k$.} \vspace{1pt}

The branching rule allows us to split strict Young quotients of $|\Pi_n|$, and we  decompose them even further by constructing a new cofibre sequence of \textit{strict} orbit spaces for each $d $ and each $n $ even: \vspace{0pt}
$$ \Sigma^2 |\Pi_{\frac{d}{2}}|^\diamond \mywedge{\Sigma_{\frac{d}{2}}} (S^{2n+1})^{\wedge \frac{d}{2}} 
\rightarrow  \Sigma^2 |\Pi_d|^\diamond \mywedge{\Sigma_d} (S^{n})^{\wedge d} \rightarrow  \Sigma |\Pi_d|^\diamond \mywedge{\Sigma_d} (S^{n+1})^{\wedge d}.\vspace{-1pt}$$
\mbox{This sequence is a ``strict'' analogue of the Takayasu cofibration sequence  (cf.\ \cite{takayasu1999stable}, \cite{kuhn2001new}).}

Based on these topological results, we compute the homology of strict Young quotients of $|\Pi_n|$  with coefficients in $\QQ$ and $\FF_p$ for all primes $p$.  From our computations, we can read off the algebraic Andr\'{e}-Quillen homology of trivial square-zero extensions over these fields, thereby generalising work of Goerss over $\FF_2$ and establishing a key tool in the emerging field of deformation  theory in characteristic $p$, cf.\ \cite{brantner2019deformation} \cite{brantner2020purely}.

\section{Statement of Results}
We shall begin by describing our general combinatorial branching algorithm and  then proceed to explain its \mbox{consequences} for fixed points and restrictions of  partition complexes and Bruhat-Tits buildings. Finally, we will discuss several  computations and conceptual connections concerning    strict quotients of partition complexes.

\subsection*{Complementary Collapse} Fix a finite group $G$ acting in an order-preserving manner on a finite lattice $\mathcal{P}$, i.e.\ a finite poset containing all binary meets $x\wedge y$ and joins $x \vee y$. Let  $\mathcal{F}_\mathcal{P}$ denote the collection of nondegenerate chains in $\mathcal{P}$ and write $ \ovA{\mathcal{P}} = \mathcal{P}-\{\hat{0},\hat{1}\}$ for the poset obtained by removing the minimum $\hat{0}$ and the maximum $\hat{1}$ from $\mathcal{P}$. We introduce the following notion:
\begin{definitionn}[\ref{orthogonalityfunction}] 
A function
$F: \mathcal{F}_\mathcal{P} \rightarrow \mathcal{P}$ is called an \textit{orthogonality function} if 
\begin{enumerate}[leftmargin=26pt]
\item $F$ is $G$-equivariant and  increasing  (i.e. $y\leq F(\sigma)$ for  every $\sigma \in \mathcal{F}_\mathcal{P}$ and every  $y\in \sigma$.)
\item For any $\sigma = [y_0 < \dots < y_m] \in  \mathcal{F}_\mathcal{P}$ and $z>y_{m}$, the following subposet   is discrete:
$$\{\ \  y_m  < t< z \  \ \ |\ \  \ t \wedge F(\sigma) =y_m\ \  , \ \ (t\vee F(\sigma)) \wedge z = z \ \   \}. $$
\end{enumerate}
\end{definitionn}
Lists $\mathbf{F} = (F_1,\dots,F_n)$ of orthogonality functions are examples of ``\textit{orthogonality fans}'' (cf.\  Definition \ref{fans}), and there is a notion for when a chain \mbox{$\sigma = [ y_0 < \dots < y_r ] $} is orthogonal to a fan $\mathbf{F}$, written $\sigma \perp \mathbf{F}$ (cf.\ Definition \ref{globalinv}).  
Using  discrete Morse theory  (cf.\  \cite{forman1998morse},  \cite{freij2009equivariant}), we prove:

\begin{theoremn}[\ref{CollapseFansB} (Complementary Collapse)]
Let $\mathbf{F} = (F_1,\dots,F_n)$ be an orthogonality \mbox{fan on $\mathcal{P}$} with $F_1([\hat{0}]) \neq \hat{0},\hat{1}$.
\mbox{There is a $G$-equivariant simple homotopy equivalence}
$$| \ovA{\mathcal{P}}|    \ \xrightarrow{\ \  \simeq \ \  }
 \bigvee_{[ y_0 < \dots < y_r ] \perp \mathbf{F}}  | \ovA{\mathcal{P}}_{(\hat{0},y_0)}|^\diamond \wedge \Sigma| \ovA{\mathcal{P}}_{(y_0,y_1)}|^\diamond \wedge \dots \wedge \Sigma| \ovA{\mathcal{P}}_{(y_{r-1},y_r)}|^\diamond \wedge | \ovA{\mathcal{P}}_{(y_r,\hat{1})}|^\diamond. $$
\end{theoremn}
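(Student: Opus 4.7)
The plan is to prove this theorem via $G$-equivariant discrete Morse theory in the sense of Forman and Freij. I will proceed by induction on the length $n$ of the orthogonality fan, constructing at each stage an acyclic $G$-equivariant matching on the face poset of the order complex $|\overline{\mathcal{P}}|$ whose critical cells are indexed up to $G$-action by chains orthogonal to the portion of the fan used so far, together with unconstrained chains filling the open intervals between consecutive orthogonal elements.

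For the base case of a single orthogonality function $F = F_1$, condition (1) ensures $F(\sigma) \geq \max(\sigma)$ for every chain $\sigma$. I will define the matching by a local insertion/deletion rule near the top of the chain: scanning from $\sigma$ upward, either extend $\sigma$ by inserting $F$ of a suitable subchain when this element does not already lie in $\sigma$, or delete the top element when doing so is consistent with the same rule. The subposet appearing in condition (2) records exactly the elements $t$ that could obstruct a well-defined matching step between $y_m$ and some larger $z$; its discreteness guarantees the local move can be made unambiguously and that applications of the rule are compatible across face relations.

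In the resulting Morse complex, the critical chains will be precisely the $F$-closed (i.e., orthogonal) ones, and the refinements of a given critical chain $[y_0 < \dots < y_r]$ by arbitrary additional elements correspond exactly to joining the order complexes of the open intervals $(y_i, y_{i+1})$. Applying the identity $|X| * |Y| \simeq \Sigma(|X| \wedge |Y|)$ on each interior interval, while the two end intervals $(\hat{0}, y_0)$ and $(y_r, \hat{1})$ contribute the unsuspended ``$\diamond$-caps'' $|\overline{\mathcal{P}}_{(\hat{0},y_0)}|^\diamond$ and $|\overline{\mathcal{P}}_{(y_r,\hat{1})}|^\diamond$, recovers the stated wedge summand $|\overline{\mathcal{P}}_{(\hat{0},y_0)}|^\diamond \wedge \Sigma|\overline{\mathcal{P}}_{(y_0,y_1)}|^\diamond \wedge \dots \wedge \Sigma|\overline{\mathcal{P}}_{(y_{r-1},y_r)}|^\diamond \wedge |\overline{\mathcal{P}}_{(y_r,\hat{1})}|^\diamond$. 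For the inductive step, I apply the same construction inside each interior interval order complex using $F_2, \dots, F_n$; the equivariance of the matching is automatic from the $G$-equivariance of each $F_i$, and the hypothesis $F_1([\hat{0}]) \neq \hat{0}, \hat{1}$ ensures the first matching step actually produces an orthogonal chain element lying in $\overline{\mathcal{P}}$.

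The main technical obstacle is acyclicity of the matching. I plan to argue by contradiction: a minimal cycle in the matching graph would produce two distinct chains refining a common chain, both consistent with the $F$-induced insertion/deletion rule, and tracing the cycle would force two distinct comparable elements $t < t'$ inside the subposet of condition (2), contradicting its discreteness. Once acyclicity is established, Freij's equivariant version of Forman's theorem yields a $G$-equivariant simple homotopy equivalence from $|\overline{\mathcal{P}}|$ onto the Morse complex of critical cells, and the combinatorial identification of critical cells as orthogonal chains together with the join-to-smash computation above delivers the full wedge decomposition.
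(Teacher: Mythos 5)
Your overall strategy---use $G$-equivariant discrete Morse theory to collapse away the "non-orthogonal" part of $|\ovA{\mathcal{P}}|$ and identify the surviving cells with orthogonal chains and their refinements, organized into star-shaped pieces---is the right skeleton, and it is essentially the structure the paper follows (Theorem~\ref{CollapseFansA} plus Corollary~\ref{takingout}). But your plan has gaps precisely at the points where the argument earns its keep.

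\textbf{The matching rule.} The matching does not work by "inserting $F$ of a suitable subchain." In the paper's proof of Theorem~\ref{CollapseFansA}, one first assigns to each chain $\sigma$ its orthogonality tree $\mathbf{T}_\mathbf{F}(\sigma)$, locates the leftmost leaf $\mathbf{w}$ at which $\sigma$ is not invisible, with label $(Z\in[y_\alpha,y_\omega])$, defines indices $i(\sigma),j(\sigma)$ from $\sigma$ and $Z$, and then forms the \emph{lattice-derived} element $(y_i\vee Z)\wedge y_{j+1}$. The matching inserts this element, or deletes $y_j$, depending on whether $(y_i\vee Z)\wedge y_{j+1}$ already equals $y_j$. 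The element being inserted is a meet-and-join of chain elements with the reference $Z$; it is not a value of $F$. If you match by inserting $F(\tau)$, the deletion step will generally not return you to $\sigma$, so the rule is not even an involution. The lattice hypothesis on $\mathcal{P}$ is essential here, and your proposal never uses it.

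\textbf{Acyclicity.} Your proposed contradiction ("two comparable elements inside the subposet of condition~(2)") does not follow from the existence of a cycle, and in fact condition~(2) plays a different role. Its discreteness (via axiom~(2) of Definition~\ref{fans}) is what guarantees that each invisible chain contains a \emph{unique} orthogonal subchain (Lemma~\ref{containsorthogonal}), which is needed to split the quotient as a wedge; it does not certify acyclicity. The actual acyclicity argument tracks the structure triple $(i(\sigma), j(\sigma), \mathbf{T}_\mathbf{F}(\sigma))$ around a hypothetical cycle, using Lemma~\ref{sub} (the tree is monotone under taking subchains) and Lemma~\ref{sametree} (it is stable under the specific insertions/deletions the matching performs), and shows the triple cannot revisit its starting value. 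Without these stability lemmas you cannot control how the matching propagates.

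\textbf{Morse complex vs.\ quotient.} You have conflated two endgames. If you form the Morse complex of critical cells, you must determine its attaching maps before you can conclude it is the claimed wedge---this is not free in general. The paper avoids this: Theorem~\ref{CollapseFansA} only asserts that the subcomplex $N(\ovA{\mathcal{P}})^{-\mathbf{F}^\perp}$ of chains containing no orthogonal chain is $G$-equivariantly collapsible. By Proposition~\ref{con} the quotient map is then a simple $G$-equivalence, and the quotient $|\ovA{\mathcal{P}}|/|N(\ovA{\mathcal{P}})^{-\mathbf{F}^\perp}|$ is identified with the wedge of star quotients by a purely combinatorial argument (Corollary~\ref{takingout}), using that no two orthogonal chains have a common refinement. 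That last identification lies outside discrete Morse theory and cannot be subsumed into the matching.

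\textbf{The induction.} Applying the construction "inside each interior interval" is not as local as you suggest: the relevant piece of the matching on a chain in the interval $(y_{t-1},y_t)$ depends on the full chain $\sigma$ through its orthogonality tree, so you must know that the tree is unchanged by the moves the matching makes inside the interval. This is exactly what Lemma~\ref{sametree} provides, and without it the "local" construction is not well-defined on the face poset of the whole complex.
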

Here $ \ovA{\mathcal{P}}_{(a,b)}$ denotes the subposet of elements $z$ with $a < z<b$, and  the unreduced suspension of a space $X$ is denoted by $X^\diamond$. If $X$ is pointed, we write $\Sigma X$ for its reduced suspension. \mbox{An \textit{equivariant simple homotopy equivalence} is an equivariant equivalence} which can be obtained by iterated elementary expansions and collapses (cf.\ Definition \ref{elemexp}). 
\vspace{3pt}

 Applying our  theorem to the case where $\mathbf{F}$ consists of a  \textit{single} function $F_1$ with $F_1(\hat{0}) = x$ and $F_1(y) = \hat{1}$ for $y>\hat{0}$, we recover Bj\"{o}rner-Walker's complementation formula (cf.\ \cite{bjorner1983homotopy}) together with its generalisations by Kozlov \cite{kozlov1998order} and Welker \cite{welker1990homotopie}.\vspace{4pt}

Complementary collapse constitutes a powerful tool in  poset topology, as we will demonstrate now. \vspace{-18pt}

\subsection*{Fixed Points} We write $\mathcal{P}_n$ for the lattice of partitions of $\mathbf{n} = \{1,\ldots,n\}$, ordered 
so that $\sigma \leq \tau$ if $\sigma$ refines $\tau$. Set $\Pi_n = \mathcal{P}_n - \{\hat{0},\hat{1}\}$.  Given a subgroup $G\subset \Sigma_n$, it is natural to ask:
\begin{nonquestion}
What is the $W_{\Sigma_n}(G)=N_{\Sigma_n}(G)/G$-equivariant simple homotopy type of  $|\Pi_n|^G$?
\end{nonquestion}

If $G$ acts {transitively} on $\n = \{1,\dots,n\}$, then $\n$ can be identified, as a $G$-set, with the coset space $G/H$ for $H$ the stabiliser of the element $1$, say. For such transitive actions, it is not difficult to show that 
the poset of $G$-invariant partitions of $G/H$ is isomorphic to the poset of subgroups of $G$ that contain $H$. This is Lemma~\ref{lemma: transitive} (it had appeared in the paper of White-Williamson~\cite[Lemma 3]{white1976combinatorial}, and they attributed the result to Klass~\cite{klass1973enumeration}).\\
For general $G\subset \Sigma_n$, this question is more difficult. Group actions on posets fall into two categories: either all orbits are equivariantly isomorphic, in which case the action is called \emph{isotypical}, or they are not, in which case the action is called \emph{non-isotypical}. 

Complementary collapse reduces the above question to  the well-understood transitive case:\vspace{-3pt}
\begin{theoremn}[\ref{fixedpoints}]
If $G$ acts isotypically,    relabel $\n$ so that $G$ is a transitive subgroup of a \mbox{diagonally} embedded  $\Sigma_d\xrightarrow{\Delta} \Sigma_d^{\frac{n}{d}}\subset \Sigma_n$ for $d\ |\ n$.
There is a $W_{\Sigma_n}(G)=N_{\Sigma_n}(G)/G$-equivariant simple   \mbox{equivalence} \vspace{-3pt}
$$|\Pi_n|^G  \ \ \ \xrightarrow{\ \  \simeq \ \  }\ \ \Ind^{W_{\Sigma_n}(G)}_{W_{\Sigma_d}(G)\times  \Sigma_{\frac{n}{d}}} \ \  (|\Pi_d|^G )^\diamond \wedge |\Pi_{\frac{n}{d}}|^\diamond.$$
\end{theoremn}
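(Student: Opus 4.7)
The plan is to apply Complementary Collapse (Theorem \ref{CollapseFansB}) to the lattice $\mathcal{P} := \mathcal{P}_n^G$ of $G$-invariant partitions of $\mathbf{n}$. This lattice carries the action of $W_{\Sigma_n}(G)$ and satisfies $|\Pi_n|^G = |\overline{\mathcal{P}}|$. Its distinguished $W_{\Sigma_n}(G)$-fixed element is the orbit partition $\pi_{\mathrm{orb}}$; I call a $G$-invariant partition $d_\alpha$ a \emph{matching partition} if each of its blocks meets every $G$-orbit in exactly one element, equivalently $d_\alpha \wedge \pi_{\mathrm{orb}} = \hat{0}$ and $d_\alpha \vee \pi_{\mathrm{orb}} = \hat{1}$. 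By isotypicality such partitions exist and form a single $W_{\Sigma_n}(G)$-orbit, whose stabiliser identifies with $W_{\Sigma_d}(G) \times \Sigma_{n/d}$: the $\Sigma_{n/d}$-factor permutes the $G$-orbits, while $W_{\Sigma_d}(G)$ acts diagonally within each orbit, and these exhaust the stabiliser via the decomposition $N_{\Sigma_n}(G) = \Sigma_{n/d} \ltimes ((C_{\Sigma_d}(G))^{n/d} \cdot N_{\Sigma_d}(G)^{\mathrm{diag}})$.

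Next I would design an orthogonality fan $\mathbf{F}$ on $\mathcal{P}_n^G$ so that the only orthogonal chains are the singletons $[d_\alpha]$ with $d_\alpha$ a matching partition. A natural candidate is the single function $F(\sigma) = \max(\sigma) \vee \pi_{\mathrm{orb}}$, which is $W_{\Sigma_n}(G)$-equivariant, increasing, and satisfies $F([\hat{0}]) = \pi_{\mathrm{orb}} \ne \hat{0},\hat{1}$. Verifying the discreteness axiom of Definition \ref{orthogonalityfunction} amounts to checking, for each chain $\sigma$ with $y_m := \max(\sigma)$ and each $z > y_m$, that the subposet of $t \in (y_m, z)$ with $t \wedge (y_m \vee \pi_{\mathrm{orb}}) = y_m$ and $(t \vee \pi_{\mathrm{orb}}) \wedge z = z$ forms an antichain; this reduces, by case analysis on how $y_m$ lies relative to $\pi_{\mathrm{orb}}$, to lattice-theoretic facts about how meets with $\pi_{\mathrm{orb}}$ factor through the orbit decomposition. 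The induced orthogonality constraint eliminates every chain containing an element $y$ with $y \wedge \pi_{\mathrm{orb}} \ne \hat{0}$, leaving only the singleton chains at matchings.

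Theorem \ref{CollapseFansB} then provides a $W_{\Sigma_n}(G)$-equivariant simple homotopy equivalence
\[
|\Pi_n|^G \;\xrightarrow{\;\simeq\;}\; \bigvee_{d_\alpha \text{ matching}} |\overline{\mathcal{P}}_{(\hat{0}, d_\alpha)}|^\diamond \wedge |\overline{\mathcal{P}}_{(d_\alpha, \hat{1})}|^\diamond.
\]
For a matching partition $d_\alpha$ with block $B$, the stabiliser $H'' = \Stab_G(B)$ has order $|G|/d$ and contains every point-stabiliser $\Stab_G(x)$ for $x \in B$ (which has the same order); so $H'' = H' = \Stab_G(1)$ and $H''$ fixes $B$ pointwise. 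Consequently the lower interval $(\hat{0}, d_\alpha)$ is the unconstrained partition poset $\Pi_{n/d}$ of the $(n/d)$-element block, while the upper interval $(d_\alpha, \hat{1})$ identifies via the $G$-equivariant bijection $\{\text{blocks of } d_\alpha\} \cong G/H' = \mathbf{d}$ with $\Pi_d^G$.

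Summing the wedge over the $W_{\Sigma_n}(G)$-orbit of matchings yields equivariantly the induction from the stabiliser $W_{\Sigma_d}(G) \times \Sigma_{n/d}$, producing $\mathrm{Ind}^{W_{\Sigma_n}(G)}_{W_{\Sigma_d}(G) \times \Sigma_{n/d}}\bigl(|\Pi_{n/d}|^\diamond \wedge (|\Pi_d|^G)^\diamond\bigr)$, which after commuting the two smash factors is the claimed formula. The main obstacle is verifying the discreteness axiom for the proposed $F$ and pinning down the orthogonal chains exactly (and, should edge cases fail, enriching the fan with auxiliary functions to rule out non-matching complements of $\pi_{\mathrm{orb}}$); a secondary subtlety is the group-theoretic identification of $\Stab(d_\alpha)$ using the normalizer decomposition recalled above.
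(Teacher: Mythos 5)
Your proposal is correct and follows essentially the same route as the paper. The paper likewise works with the $W_{\Sigma_n}(G)$-lattice $\mathcal{P}_n^G$, takes the orbit partition as the distinguished element, applies Complementary Collapse against that single point (Theorem~\ref{Maesterkey}, which is the instance of Theorem~\ref{CollapseFansB} for the fan with $F([\hat 0])=\pi_{\mathrm{orb}}$ and $F(\sigma)=\hat 1$ otherwise; your choice $F(\sigma)=\max(\sigma)\vee\pi_{\mathrm{orb}}$ yields the same orthogonal chains, namely singletons at complements of $\pi_{\mathrm{orb}}$), and identifies the intervals $(\hat 0,z)\cong\Pi_{n/d}$, $(z,\hat 1)\cong\Pi_d^G$ and the stabiliser $W_{\Sigma_d}(G)\times\Sigma_{n/d}$ exactly as you do.
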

Here we used the following notation: given a subgroup $H\subset G$ and a pointed $H$-space $X$, the induced $G$-space $\Ind^G_H(X) = G_+\mywedge{H} X$ is   \mbox{the wedge of $|G/H|$ copies of $X$ with  its natural $G$-action.}
\vspace{-13pt}

In the remaining case, we have:\vspace{-4pt}
\begin{lemman}[\ref{easy}]
If $G$ acts non-isotypically, then $|\Pi_n|^G$ is $W_{\Sigma_n}(G)$-equivariantly collapsible.\vspace{-4pt}\end{lemman}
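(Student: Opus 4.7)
The plan is to construct a $W_{\Sigma_n}(G)$-equivariant retraction of the poset $\Pi_n^G$ of proper nontrivial $G$-invariant partitions of $\mathbf{n}$ onto a subposet with a $W_{\Sigma_n}(G)$-fixed maximum, which then collapses equivariantly onto its apex. I would begin by identifying the cone point: let $A_1, \ldots, A_r \subseteq \mathbf{n}$ be the unions of the $G$-orbits of a common isomorphism type. Non-isotypicality forces $r \geq 2$, so the \emph{orbit-type partition} $\pi_{\mathrm{type}} := \{A_1, \ldots, A_r\}$ lies in $\Pi_n^G$. Any $\sigma \in N_{\Sigma_n}(G)$ satisfies $G_{\sigma x} = \sigma G_x \sigma^{-1}$ and therefore preserves orbit types; hence $\sigma$ sends each $A_j$ to itself, so $\pi_{\mathrm{type}}$ is $W_{\Sigma_n}(G)$-fixed.

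Next I would introduce the $W_{\Sigma_n}(G)$-equivariant, order-preserving, idempotent map
$$
\phi \colon \Pi_n^G \to \Pi_n^G, \qquad \phi(\pi) = \pi \wedge \pi_{\mathrm{type}}.
$$
By construction $\phi(\pi) \leq \pi_{\mathrm{type}} < \hat{1}$, so the content is to check $\phi(\pi) \neq \hat{0}$ (the discrete partition). Suppose otherwise: every block $B$ of $\pi$ meets every $A_j$ in at most one element. For $x \in B$ one has $G_x \subseteq G_B$ trivially, and because $G_B$ preserves both $B$ and the $G$-invariant piece $A_{j(x)}$ containing $x$, the $G_B$-orbit of $x$ lies in $B \cap A_{j(x)}$ and therefore equals $\{x\}$, giving $G_B \subseteq G_x$. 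Thus $G_x = G_B$ would be independent of $x \in B$; but elements with equal stabilizers have equal orbit types, placing all of $B$ inside a single $A_j$, which together with $|B\cap A_j|\leq 1$ forces $|B|=1$---i.e.\ $\pi=\hat{0}$, contradicting $\pi \in \Pi_n^G$.

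To conclude, I would invoke standard equivariant discrete Morse theory (cf.\ \cite{freij2009equivariant}): an order-preserving idempotent with $\phi \leq \mathrm{id}$ yields a $W_{\Sigma_n}(G)$-equivariant collapse of $|\Pi_n^G|$ onto the realization of $\mathrm{Im}(\phi) = \{\pi \in \Pi_n^G : \pi \leq \pi_{\mathrm{type}}\}$. This image has the $W_{\Sigma_n}(G)$-fixed maximum $\pi_{\mathrm{type}}$, so its realization is the equivariant cone on $\{\pi < \pi_{\mathrm{type}}\}$ and collapses equivariantly onto the apex. The main obstacle will be the stabilizer argument showing $\phi(\pi)\neq \hat{0}$, as this is the unique place where non-isotypicality is essentially used; everything else is formal lattice theory and equivariant simple-homotopy machinery.
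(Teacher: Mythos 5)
Your proof is correct, and it takes a genuinely different route from the paper's. The paper anchors at the \emph{orbit} partition $x$ (whose blocks are the individual $G$-orbits) and proves the biconditional that $G$ is isotypical if and only if $x$ has a complement in the lattice $\mathcal{P}_n^G$; the non-isotypical case then follows because $x^\perp=\emptyset$ together with complementary collapse (Theorems~\ref{CC1}--\ref{CC2}) yields collapsibility. You instead anchor at the coarser orbit-type partition $\pi_{\mathrm{type}}$ and prove the stronger, one-sided statement that $\pi\wedge\pi_{\mathrm{type}}>\hat{0}$ for every $\pi\in\Pi_n^G$ --- so in particular $\pi_{\mathrm{type}}^\perp=\emptyset$ --- and you package this as the closure operator $\phi(\pi)=\pi\wedge\pi_{\mathrm{type}}$. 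Note that the choice of $\pi_{\mathrm{type}}$ over $x$ is essential to your argument, not cosmetic: your stabiliser step produces ``same stabiliser, hence same \emph{orbit type}'', which confines a block to a single $A_j$ but would not confine it to a single orbit. Both arguments isolate a $W_{\Sigma_n}(G)$-fixed poorly complemented element, but yours never needs the ``join equals $\hat{1}$'' half of the perpendicularity condition, and the stabiliser argument is self-contained; this is very likely the ``straightforward and direct proof'' the paper alludes to in the remark following the lemma. If you prefer to quote the paper's own machinery rather than the general closure-operator collapse, your computation $\pi_{\mathrm{type}}^\perp=\emptyset$ feeds directly into Theorem~\ref{CC1} with reference element $\pi_{\mathrm{type}}$.

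Two small repairs. First, the assertion that $\sigma\in N_{\Sigma_n}(G)$ ``preserves orbit types; hence $\sigma$ sends each $A_j$ to itself'' overclaims: one has $G_{\sigma a}=\sigma G_a\sigma^{-1}$, which is conjugate to $G_a$ in $N_{\Sigma_n}(G)$ but not in general in $G$, so $\sigma$ may permute the $A_j$. For instance, let $G\cong(\ZZ/4)^2\subset\Sigma_8$ act on $G/H_1\sqcup G/H_2$ with $H_1,H_2$ the two coordinate copies of $\ZZ/4$; since $G$ is abelian these two orbits are non-isomorphic, yet the coordinate-swap automorphism of $G$ lifts to an element of $N_{\Sigma_8}(G)$ interchanging them. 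What is true, and all you need, is that $\sigma$ preserves the \emph{relation} of having the same orbit type ($G$-conjugate stabilisers remain $G$-conjugate after applying $\sigma(\cdot)\sigma^{-1}$), so $\sigma$ permutes the blocks $\{A_j\}$ and therefore fixes $\pi_{\mathrm{type}}$ as a partition. Second, your closing sentence misattributes where non-isotypicality enters: the stabiliser argument for $\phi(\pi)>\hat{0}$ does not use it and is valid for any $G\subset\Sigma_n$. Non-isotypicality is used exactly to guarantee $\pi_{\mathrm{type}}\neq\hat{1}$, i.e., that $\pi_{\mathrm{type}}$ actually lies in $\Pi_n^G$ and $\phi$ is not the identity.
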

\begin{nonremark}
This lemma also has a very straightforward and direct proof. It has been  observed independently by Markus Hausmann. \vspace{2pt}
\end{nonremark}

In ~\cite{arone2016bredon}, the authors proved that the only $p$-groups $P$ for which the fixed point space $|\Pi_n|^P$ is not contractible are elementary abelian groups acting freely on $\n$.
It was also observed that if $n=p^k$ and $P\cong \FF_p^k$ acts freely and transitively on $\n$, then $\Pi_n^P$ is isomorphic to the poset $\BT(\FF_p^k)$ of proper non-trivial subgroups of $P$, which is closely related 
to the Tits building for $\GL_k(\field_p)$.  
Combining these observations with our Theorem \ref{fixedpoints} and Lemma \ref{easy}, we can complete the calculation and describe all fixed point spaces of the partition complex with respect to  $p$-groups:

\begin{corollaryn}[~\ref{corollary: el abelian}]
Let $P\subset \Sigma_n$ be a $p$-group.
\begin{enumerate}[leftmargin=26pt]
\item If $P\cong \FF_p^k$ is elementary abelian acting freely on $\n$ for  $n=mp^k$ \mbox{(here $p$ may divide $m$),}  {we have}  $\Aff_{\FF_p^k} := N_{\Sigma_{p^k}} ({\FF_p^k}) \cong  \FF_p^k \rtimes \GL_k(\FF_p)$ and  \mbox{$\Aff_{\FF_p^k \wr \Sigma_m}:= N_{\Sigma_n} (\FF_p^k ) \cong (\FF_p^k)^{ m} \rtimes (\GL_k(\FF_p) \times \Sigma_m)$.}\vspace{3pt} \\
There is an $\Aff_{\FF_p^k \wr \Sigma_m}$-equivariant simple  homotopy equivalence\vspace{-3pt}
$$|\Pi_n|^P   \ \ \ \xrightarrow{\ \  \simeq \ \  }\ \ \ 
 \Ind^{\Aff_{\FF_p^k \wr \Sigma_m}}_{\Aff_{\FF_p^k} \times \Sigma_m}  ( |\BT(\FF_p^k)|^\diamond \wedge |\Pi_m|^\diamond).$$ 
As before, $\BT(\FF_p^k)$ is  the poset of proper nontrivial subspaces of  $\FF_p^k$. 
Nonequivariantly, this implies that $|\Pi_n|^P$ is a bouquet of $(m-1)! \cdot p^{k(m-1)+{k\choose 2}}$ spheres of dimension ${m+k-3}.$
\item If the action of $P$ is not of this form, then $|\Pi_n|^P$ is $W_{\Sigma_n}(P)$-equivariantly contractible.
\end{enumerate}
\end{corollaryn}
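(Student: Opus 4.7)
The plan is to combine Theorem \ref{fixedpoints}, Lemma \ref{easy}, and the two facts recalled from \cite{arone2016bredon}: first, when $n = p^k$ and $P = \FF_p^k$ acts freely and transitively on $\n$, Lemma \ref{lemma: transitive} identifies $\Pi_n^P$ with $\BT(\FF_p^k)$ as a $\GL_k(\FF_p)$-poset; second, the only $p$-groups $P \subset \Sigma_n$ with non-contractible $|\Pi_n|^P$ are the elementary abelian ones acting freely on $\n$.

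For part (1), a free action of $P = \FF_p^k$ on $\n$ with $n = mp^k$ is automatically isotypical, since every orbit is a copy of the regular $P$-set. After relabelling, $P$ embeds in $\Sigma_n$ as a transitive subgroup of the diagonal $\Sigma_{p^k} \hookrightarrow \Sigma_{p^k}^m \subset \Sigma_n$, so Theorem \ref{fixedpoints} applied with $d = p^k$ and $n/d = m$ gives a $W_{\Sigma_n}(P)$-equivariant simple homotopy equivalence
\[
|\Pi_n|^P \;\xrightarrow{\;\simeq\;}\; \Ind_{W_{\Sigma_{p^k}}(P) \times \Sigma_m}^{W_{\Sigma_n}(P)} \bigl( (|\Pi_{p^k}|^P)^\diamond \wedge |\Pi_m|^\diamond \bigr).
\]
Substituting $|\Pi_{p^k}|^P \cong |\BT(\FF_p^k)|$ and noting that $P$ acts trivially on both smash factors, I can lift the induction along the quotient $N_{\Sigma_n}(P) \twoheadrightarrow W_{\Sigma_n}(P)$, recovering the induction from $\Aff_{\FF_p^k} \times \Sigma_m$ to $\Aff_{\FF_p^k \wr \Sigma_m}$ stated in the corollary. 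For the nonequivariant count, multiplying the index $[\Aff_{\FF_p^k \wr \Sigma_m} : \Aff_{\FF_p^k} \times \Sigma_m] = p^{k(m-1)}$ by $p^{\binom{k}{2}}$ (the number of top spheres in $|\BT(\FF_p^k)|^\diamond$, each of dimension $k-1$) and by $(m-1)!$ (the number of spheres in $|\Pi_m|^\diamond$, each of dimension $m-2$) yields the claimed bouquet of $(m-1)! \cdot p^{k(m-1) + \binom{k}{2}}$ spheres of dimension $(k-1) + (m-2) = m + k - 3$.

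For part (2), I case-split. If $P$'s action on $\n$ is non-isotypical, Lemma \ref{easy} delivers $W_{\Sigma_n}(P)$-equivariant collapsibility directly. Otherwise the action is isotypical, and I apply Theorem \ref{fixedpoints} to reduce the problem to the $W_{\Sigma_d}(P)$-equivariant homotopy type of $|\Pi_d|^P$ for a transitive $P$-action with stabilizer $Q$; by Lemma \ref{lemma: transitive}, this is the nerve of the subgroup lattice of $P$ strictly between $Q$ and $P$. Since the hypotheses of part (2) rule out the only non-contractible case recorded in \cite{arone2016bredon}, this nerve is equivariantly contractible — for example via the Frattini-retract $H \mapsto H\Phi(P/Q)$ when $P/Q$ is not elementary abelian — and the equivariant simple homotopy equivalence of Theorem \ref{fixedpoints} propagates the contractibility to $|\Pi_n|^P$.

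The principal obstacle will be verifying that the $W$-equivariant induction appearing in Theorem \ref{fixedpoints} genuinely matches the $\Aff$-equivariant induction stated in the corollary — a matter of lifting induction along a normal subgroup acting trivially on the fixed-point smash factor — and that the nonequivariant contractibility recalled from \cite{arone2016bredon} can be upgraded to the $W$-equivariant contractibility required in part (2).
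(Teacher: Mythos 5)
Your proof of part (1) follows the same route as the paper: apply Theorem \ref{fixedpoints} with $d=p^k$, identify $\Pi_{p^k}^{P}$ with $\BT(\FF_p^k)$ via Lemma \ref{lemma: transitive}, and transfer the induction from the Weyl group to the normaliser. (The paper uses the normaliser-equivariant form of Theorem \ref{fixedpoints} mentioned in the remark following it, which avoids the explicit lifting step, and it additionally verifies the identifications $N_{\Sigma_{p^k}}(\FF_p^k)=\FF_p^k\rtimes\GL_k(\FF_p)$ and $N_{\Sigma_n}(\FF_p^k)=(\FF_p^k)^m\rtimes(\GL_k(\FF_p)\times\Sigma_m)$ which you assert without proof.) Your nonequivariant sphere count is correct.

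For part (2), the paper simply cites Proposition 6.2 of \cite{arone2016bredon}, which you also fall back on. Your supplementary attempt at a self-contained argument, though, has a gap. After reducing via Theorem \ref{fixedpoints} and Lemma \ref{lemma: transitive}, the isotypical-but-not-free case leaves you with the interval $(Q,P)$ in the subgroup lattice with $Q\neq 1$. Since the $P$-action on $\n$ is faithful, $Q$ has trivial normal core; combined with $Q\neq 1$, this means $Q$ is \emph{not} normal in $P$, so $P/Q$ is not a group and the ``Frattini retract $H\mapsto H\Phi(P/Q)$'' you propose is not well-defined. (For the free, non-elementary-abelian case, where $Q=1$, the retract $H\mapsto H\Phi(P)$ does give a valid $W$-equivariant contraction.) So a genuinely self-contained proof of (2) would need a different argument for the non-free isotypical case; as it stands, your part (2) still rests on the same citation as the paper.
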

Our techniques can also be used to examine fixed points under iterated wreath products:\vspace{-4pt}
\begin{lemman}[\ref{lemma: isotypical wreath}]
Suppose that $d \ |\ n$ and that $d$ factors as a product \mbox{$d=d_1\cdots d_l$} of integers \mbox{$d_1,\ldots, d_l>1$} with $l>1$. Consider the iterated wreath product $\Sigma_{d_1}\wr\cdots\wr\Sigma_{d_l}$ as a subgroup of $\Sigma_n$ via the diagonal embedding.
Then the space $|\Pi_n|^{\Sigma_{d_1}\wr\cdots\wr\Sigma_{d_l}}$ is collapsible.
\end{lemman}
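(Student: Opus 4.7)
The plan is to apply Theorem~\ref{fixedpoints} in order to reduce the question to the fixed-point space $|\Pi_d|^G$, where $G = \Sigma_{d_1}\wr\cdots\wr\Sigma_{d_l}$ acts naturally on $\mathbf{d}$, and then to show this latter space is collapsible.

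First, $G$ acts transitively on $\mathbf{d}$, as one sees by identifying $\mathbf{d}$ with the leaves of a rooted tree of depth $l$ in which every node at depth $i-1$ has $d_i$ children. Under the diagonal embedding $\Sigma_d\hookrightarrow \Sigma_d^{n/d}\subset\Sigma_n$, the $G$-set $\mathbf{n}$ is a disjoint union of $n/d$ copies of the transitive $G$-set $\mathbf{d}$, and the action is therefore isotypical. Theorem~\ref{fixedpoints} then provides a $W_{\Sigma_n}(G)$-equivariant simple homotopy equivalence
\[
|\Pi_n|^G \xrightarrow{\ \simeq\ } \Ind^{W_{\Sigma_n}(G)}_{W_{\Sigma_d}(G)\times\Sigma_{n/d}} (|\Pi_d|^G)^\diamond \wedge |\Pi_{n/d}|^\diamond.
\]
If $|\Pi_d|^G$ is $W_{\Sigma_d}(G)$-equivariantly collapsible, then so is its unreduced suspension, hence also its smash with $|\Pi_{n/d}|^\diamond$; inducing then produces a $W_{\Sigma_n}(G)$-equivariant wedge of collapsible summands, which is again collapsible. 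The lemma thus reduces to collapsibility of $|\Pi_d|^G$.

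Second, Lemma~\ref{lemma: transitive} identifies the lattice $\mathcal{P}_d^G$ of $G$-invariant partitions of $\mathbf{d} = G/H$ (with $H$ the stabiliser of a leaf) with the lattice of subgroups $K$ satisfying $H\subseteq K\subseteq G$. The central claim I would establish is that $\mathcal{P}_d^G$ is a totally ordered chain with $l+1$ elements, namely the partitions $P_k$ ($0\le k\le l$) in which two leaves lie in the same block iff they share a common ancestor at depth $k$, with $P_0=\hat 0$ and $P_l=\hat 1$. The inclusions $P_0<P_1<\cdots<P_l$ are immediate; the converse I would prove by induction on $l$. In the inductive step write $G = \Sigma_{d_1}\wr G_{l-1}$ and use normality of $\Sigma_{d_1}^{d_2\cdots d_l}$ in $G$ to argue that any $G$-invariant block containing the chosen leaf is either contained in a single depth-$1$ fibre (in which case it is a singleton or the whole fibre) or is a union of entire depth-$1$ fibres indexed by a $G_{l-1}$-invariant subset of the depth-$(l-1)$ tree, at which point the inductive hypothesis applies.

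Third, since $\Pi_d^G$ is a chain of $l-1\ge 1$ elements, $|\Pi_d|^G$ is an $(l-2)$-simplex (a point when $l=2$) and thus collapsible. The Weyl group $W_{\Sigma_d}(G)$ acts on this chain by order-preserving bijections, which must be the identity, so the action on $|\Pi_d|^G$ is trivial and the collapse is automatically $W_{\Sigma_d}(G)$-equivariant. Combined with the first paragraph, this proves the lemma. The main obstacle is the inductive classification of $G$-invariant partitions as tree-level partitions in paragraph two; once it is in place, the rest is a clean assembly of the reductions already established.
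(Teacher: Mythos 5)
Your proof is correct, but it takes a genuinely different route from the paper. The paper's argument is group-theoretic and iterative: after the same reduction to $d=n$ via Theorem~\ref{fixedpoints}, it observes that the base group $H=(\Sigma_{d_1}\wr\cdots\wr\Sigma_{d_{l-1}})^{d_l}$ is normal in $G$, so $|\Pi_d|^G=(|\Pi_d|^H)^{G/H}$, and then appeals to Lemma~\ref{easy}: since $H$ acts \emph{non-isotypically} on $\mathbf d$ (the $d_l$ orbits are permuted, not fixed, by distinct direct factors of $H$), the space $|\Pi_d|^H$ is Weyl-equivariantly collapsible, and hence so is its fixed-point subspace $|\Pi_d|^G$. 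This sidesteps any classification of $G$-invariant partitions. Your argument instead goes through Lemma~\ref{lemma: transitive}, identifying $\mathcal{P}_d^G$ with the interval $[H,G]$ in the subgroup lattice and proving directly (by induction on $l$, peeling off the base group $\Sigma_{d_1}^{d_2\cdots d_l}$) that this interval is the totally ordered chain of ``tree-level'' subgroups, so that $|\Pi_d|^G$ is a simplex $\Delta^{l-2}$ with trivial Weyl action. Both proofs are valid; the paper's is shorter and recycles the non-isotypical lemma already in hand, whereas yours requires the extra inductive classification but in exchange yields a sharper conclusion — it actually computes the $G$-invariant partition poset rather than merely establishing collapsibility, which could be useful elsewhere. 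One small point worth being explicit about in both arguments: the reduction via Theorem~\ref{fixedpoints} produces a \emph{simple equivalence}, and one must note that this equivalence is realized by collapsing a subcomplex (via Theorem~\ref{CC2}), so that collapsibility of the target does transfer back to $|\Pi_n|^G$; a bare simple equivalence to a point would not a priori give collapsibility.
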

\subsection*{Restrictions} We proceed to our results concerning the restrictions of $|\Pi_n|$ to Young subgroups. 
Fix positive integers $n_1,\ldots,n_k$ with $n=n_1+\ldots + n_k$. As before, we write $B(n_1,\ldots, n_k)$ for the set of  words in letters $c_1,\ldots,c_k$ which are lexicographically minimal among their   rotations  and involve the letter $c_i$ precisely $n_i$ times.
\mbox{Words with this minimality property are called \textit{Lyndon words}.}

One can choose orthogonality functions $(F_1,F_2)$ on $\Pi_n$ and apply complementary collapse to obtain:
\begin{theoremn}[\ref{main}]
There is a $\Sigma_{n_1}\times \cdots \times \Sigma_{n_k}$-equivariant simple homotopy equivalence
\begin{equation}\label{equation: main}
|\Pi_n| \ \ \ \xrightarrow{\ \ \simeq \ \  }   \ \ \ \bigvee_{\substack{d|\gcd(n_1, \ldots, n_k)\\ w\in B(\frac{n_1}{d}, \ldots, \frac{n_k}{d})}} \ \Ind^{\Sigma_{n_1}\times \cdots \times {\Sigma_{n_k}}}_{\Sigma_d}\left(\Sigma^{-1} (S^{ \frac{n}{d}-1})^{\wedge d} \wedge |\Pi_d|^\diamond\right).
\end{equation}
\end{theoremn}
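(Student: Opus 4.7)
The plan is to apply the Complementary Collapse Theorem~\ref{CollapseFansB} to the partition lattice $\mathcal{P}_n$ equipped with a carefully chosen $Y$-equivariant orthogonality fan $\mathbf{F} = (F_1, F_2)$, where $Y := \Sigma_{n_1} \times \cdots \times \Sigma_{n_k}$. The combinatorial seed is the canonical Young partition $\pi_0 = \{B_1, \ldots, B_k\}$ (with $B_i = \{n_1 + \cdots + n_{i-1} + 1, \ldots, n_1 + \cdots + n_i\}$), which is the unique nontrivial $Y$-fixed partition. First I would define $F_1$ and $F_2$ using $\pi_0$ as a reference point; for instance, setting $F_1(\sigma) = \max(\sigma) \vee \pi_0$ takes care of chains whose top element does not already dominate $\pi_0$, while $F_2$ takes over on chains where $F_1$ has already saturated. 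The equivariance and monotonicity of both functions are built in by construction; the content that needs verification is the discreteness condition in Definition~\ref{orthogonalityfunction}(2) and the fan compatibility of Definition~\ref{fans}, via a combinatorial analysis of how arbitrary partitions interact with $\pi_0$ under meet and join.

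The combinatorial heart of the argument is the classification of chains $[y_0 < \cdots < y_r] \perp \mathbf{F}$ up to $Y$-orbit. The claim is that these orbits biject with pairs $(d, w)$ where $d$ divides $\gcd(n_1, \ldots, n_k)$ and $w \in B(n_1/d, \ldots, n_k/d)$ is a Lyndon word. The heuristic is that orthogonality forces the top $y_r$ to be a balanced partition into $d$ blocks of size $n/d$, each meeting $B_i$ in exactly $n_i/d$ elements, while the remaining chain $y_0 < \cdots < y_{r-1}$ encodes a periodic refinement pattern repeated identically across the $d$ blocks of $y_r$ together with a cyclic identification among those blocks. The Lyndon lex-minimality condition then picks out a canonical $Y$-orbit representative among the rotations of this periodic pattern, and the stabilizer of the representative inside $Y$ is a diagonal copy of $\Sigma_d$ permuting the $d$ blocks.

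For each such orthogonal chain I would identify the corresponding smash-product summand from Theorem~\ref{CollapseFansB}. The interval $(y_r, \hat 1)$ is isomorphic to $\mathcal{P}_d$ and contributes $|\Pi_d|^\diamond$. The lower intervals $(\hat 0, y_0), (y_0, y_1), \ldots, (y_{r-1}, y_r)$ each split as a product of intervals in $\mathcal{P}_{n/d}$, one factor per block of $y_r$; their realizations combine via the standard identification $|\overline{L_1 \times \cdots \times L_d}| \simeq |\bar L_1|^\diamond \wedge \cdots \wedge |\bar L_d|^\diamond$. After bookkeeping the suspensions prescribed by Theorem~\ref{CollapseFansB}, the lower part assembles into the smash sphere $\Sigma^{-1}(S^{n/d-1})^{\wedge d}$, with each $S^{n/d-1}$ coming from the pattern inside one block of $y_r$. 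Summing over $Y$-orbits and invoking the stabilizer computation then produces exactly $\Ind^{Y}_{\Sigma_d}\bigl(\Sigma^{-1}(S^{n/d-1})^{\wedge d} \wedge |\Pi_d|^\diamond\bigr)$ per pair $(d, w)$, giving the stated wedge decomposition.

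The hard part will be the coordinated design of $F_1$ and $F_2$: they must be honest orthogonality functions while simultaneously having their orthogonal chains biject with Lyndon words. The translation between the combinatorial orthogonality condition and the Lyndon lex-minimality property is the crux of the argument; the guiding intuition is the Poincar\'e--Birkhoff--Witt decomposition of the free Lie algebra, which predicts exactly this combinatorial structure at the level of the Lie representation $\widetilde{H}_*(|\Pi_n|)$.
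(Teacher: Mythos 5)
Your high-level plan matches the paper: apply Complementary Collapse (Theorem~\ref{CollapseFansB}) to $\mathcal{P}_n$ against a length-two $Y$-equivariant orthogonality fan, classify the orthogonal chains up to $Y$-orbit by Lyndon words, and read off the wedge summands from the intervals. That is the correct route. But the concrete orthogonality fan you propose does not work, and this is precisely where the difficulty of the theorem lives.

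Your $F_1(\sigma) = \max(\sigma)\vee \pi_0$, which gives $F_1([\hat 0]) = \pi_0$, already fails the required discreteness condition for an orthogonality fan (Definition~\ref{fans}(2)). In the partition lattice, complements of a fixed element are generically \emph{not} discrete. Take the smallest relevant example, $n=4$ with $n_1=n_2=2$, so $\pi_0 = \{12\,|\,34\}$. Both $y = \{13\,|\,2\,|\,4\}$ and $y' = \{13\,|\,24\}$ lie in $\pi_0^\perp$ (each meets $\pi_0$ in $\hat 0$ and joins to $\hat 1$), and $y < y'$, so $\pi_0^\perp$ contains a nontrivial chain. Hence $(F_1)$ is not even an orthogonality fan, and Theorem~\ref{CollapseFansB} cannot be applied. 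This failure is not a corner case that can be patched by a cleverer $F_2$: the whole mechanism of complementary collapse requires the first complement to be discrete, and $\pi_0^\perp$ simply isn't.

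What the paper actually does (Section~\ref{previossection}) is subtler and is where the real work is. Given a chain $\sigma = [y_0 < \cdots < y_m]$, it recursively attaches a word $w_K \in F\langle c_1,\dots,c_k\rangle$ to each class $K$ of the top partition $y_m$, using the entire history of the chain (for $m=0$, $w_K = c_1^{|K\cap C_1|}\cdots c_k^{|K\cap C_k|}$; for $m>0$, multiply the words of the $y_{m-1}$-subclasses in ascending lexicographic order). Then $F_1(\sigma)$ merges all $y_m$-classes \emph{except} those carrying the lexicographically minimal word, and $F_2(\sigma)$ merges the minimal-word classes. Because only all-but-one of the ``types'' of blocks are merged, the complement becomes discrete (this is the content of Theorem~\ref{isafan}: the candidate complements inject into a set of functions $A'\to B'$). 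The Lyndon words then arise not by fiat but from the reduction function $(-)'$ on weak Lyndon words in $F\langle c_\upalpha\rangle_{\upalpha\in\mathbf{k}^\ast}$, whose iterated application produces exactly the binary orthogonal chains (Definition~\ref{chainfromlyndon}, Lemma~\ref{thewords}). The binarity of these chains is what makes each interval $(y_{j-1},y_j)$ a Boolean lattice, hence a sphere.

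Two further gaps in your write-up are downstream of the missing $F_1$: you assert that orthogonality forces $y_r$ to be a balanced $d$-block partition and the lower chain to be a periodic refinement pattern, but with an incorrect fan this cannot be derived; and you do not account for why the chains are binary, which is needed to identify the interval realizations with spheres via Proposition~\ref{veryeasy}. Your choice of $\pi_0$ as the anchor is closer in spirit to the paper's Symmetry Breaking (Theorem~\ref{inductivestep}), where a single $Y$-fixed partition does appear as $F_1'([\hat 0])$, but even there the partition merges only a \emph{proper} subfamily of the orbits and the resulting branching rule is asymmetric; the paper explicitly remarks that using it inductively for Theorem~\ref{main} is possible but obscures the collapse maps, which are needed for the later applications to Lie algebras and ungrafting.
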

The group \mbox{$\Sigma_{n_1}\times \cdots\times \Sigma_{n_k}\subset \Sigma_n$} is the subgroup of permutations in $\Sigma_n$ that preserve the partition $x=\left\{\{1,\ldots,n_1\},\{n_1+1,\ldots,n_1+n_2\},\ldots   \right\} $.

The asserted equivalence is pointed, where the basepoint of  $|\Pi_n|$ is given by the above partition $x$.

If $d>0$ divides all of $n_1, \ldots, n_k$, then $\Sigma_d^{ \frac{n}{d}}$
is a subgroup of $\Sigma_{n_1}\times\cdots \times\Sigma_{n_k}$.  We obtain a diagonal inclusion
$
\Sigma_d \hookrightarrow \Sigma_{n_1}\times \cdots\times \Sigma_{n_k}
$ and will identify $\Sigma_d$ with its image under this map. 

By $\Sigma^{-1} (S^{\frac{n}{d}-1})^{\wedge d}$, we mean a desuspension of the sphere $ (S^{\frac{n}{d}-1})^{\wedge d}$ with its natural  $\Sigma_d$-action. Such a desuspension exists since there is a $\Sigma_d$-equivariant homeomorphism between $S^d$ and the suspension of the reduced standard representation sphere of $\Sigma_d$.
\begin{remark}\label{smallspheres}
Our definition of $\Pi_d$ only works well for $d\ge 2$. For $d=2$, $|\Pi_2|$ is the empty set, which we identify with the $(-1)$-dimensional sphere, since its unreduced suspension $|\Pi_2|^\diamond$ is homeomorphic to $S^0$. We decree that $|\Pi_1|$ is the $(-2)$-dimensional sphere. In practice, this means that the space $|\Pi_1|$ is undefined, but for any suspension $X^\diamond$, we define $X^\diamond \wedge |\Pi_1|^\diamond := X$. \end{remark}
 \vspace{3pt}
\begin{nonexample}
If $\gcd(n_1, \ldots, n_k)=1$, Theorem~\ref{main} says that $|\Pi_n|$ is $\Sigma_{n_1}\times\cdots\times\Sigma_{n_k}$-equivariantly equivalent to a wedge sum 
$
\bigvee_{w \in B(n_1, \ldots, n_k)} ({\Sigma_{n_1}\times \cdots\times\Sigma_{n_k}})_+ \wedge S^{n-3}
$
of copies of $S^{n-3}$,  freely permuted by $\Sigma_{n_1}\times\cdots\times\Sigma_{n_k}$. In the special case  of $\Sigma_{n-1}\times \Sigma_1\subset \Sigma_n$, we note that $|B(n-1, 1)|=1$, and hence conclude that there is a $\Sigma_{n-1}$-equivariant homotopy equivalence 
$|\Pi_n|\simeq_{}{\Sigma_{n-1}}_+ \wedge S^{n-3}.
$
This equivalence is   well-known. For example, see~\cite{donau2012nice} for a closely related statement.

In the case of the subgroup $\Sigma_{n-2}\times \Sigma_2$ for $n$ even, one easily calculates that $|B(n-2, 2)|= \frac{n-2}{2}$ and $|B(\frac{n-2}{2}, 1)|=1$. We conclude that there is a $\Sigma_{n-2}\times \Sigma_2$-equivariant equivalence
\begin{equation}\label{equation: second case}
|\Pi_n|\simeq  \ \ \bigvee_{\frac{n-2}{2}} (\Sigma_{n-2}\times {\Sigma_2})_+ \wedge S^{n-3} \ \ \ \  \vee \ \ \ \  (\Sigma_{n-2}\times {\Sigma_2})_+\mywedge{\Sigma_2} S^{n-3}.
\end{equation}
In the right summand $S^{n-3}$ has an action of $\Sigma_2$: it is homeomorphic to  
$S^{\frac{n}{2}-2}\wedge (\hat{S}^1)^{\Smash{\frac{n}{2}-1}}$, where $S^{\frac{n}{2}-2}$ is a sphere with a trivial action of $\Sigma_2$ while $\hat{S}^1$ is a sphere on which $\Sigma_2$ acts by reflection about a line.  Formula~\eqref{equation: second case} was first obtained by Ragnar Freij, see~\cite{freij2009equivariant}, Theorems 5.3 and 5.5. 

For $n=4$, we obtain the $\Sigma_2\times\Sigma_2$-equivariant equivalence depicted in the illustration on p.\pageref{pageone}:
$$
|\Pi_4| \ \ \xrightarrow{\simeq} \ \ (\Sigma_{2}\times {\Sigma_2})_+ \wedge S^{1}\ \  \vee\ \  (\Sigma_{2}\times {\Sigma_2})_+\mywedge{\Sigma_2} \hat S^{1}.
$$
Here $|\Pi_4|$ is a one-dimensional complex. The map is defined by $\Sigma_2\times\Sigma_2$-equivariantly collapsing the subcomplex drawn in thin lines. The reader is invited to check that this subcomplex is $\Sigma_2\times\Sigma_2$-invariant, and that the quotient space of $|\Pi_4|$ by this subcomplex is indeed homeomorphic to $(\Sigma_{2}\times {\Sigma_2})_+ \wedge S^{1} \vee (\Sigma_{2}\times {\Sigma_2})_+\wedge_{\Sigma_2} \hat S^{1}$.
\end{nonexample}
Our Theorem~\ref{main} is a strengthening of the main result of~\cite{arone1998homology}.  In [op. cit], the authors applied Goodwillie calculus to the Hilton-Milnor theorem, which is a decomposition result for the topological free group generated by a wedge sum of connected spaces. It was possible to prove a weak version of Theorem~\ref{main} which holds only after one (a) makes the group action pointed-free, (b) applies the suspension spectrum functor. Moreover, the map defining this equivalence was not constructed explicitly in [op. cit.]. 

In this work, we first give an explicit point-set level description of the map in ~\eqref{equation: main} as a collapse map and then prove that it is an equivariant simple homotopy  equivalence of spaces. Such an explicit description is desirable for applications.

For example, partition complexes show up in the study of \textit{spectral Lie algebras}, i.e. algebras over the {spectral Lie operad} $\mathbf{Lie}$ defined by Salvatore  \cite{salvatore1998configuration} and Ching \cite{ching2005bar}.  The $n^{th}$ term of $\mathbf{Lie}$ is given by $\Map_{\Sp}(S^1,(S^1)^{\wedge n})\wedge \DD(\Sigma |\Pi_n|^\diamond) $, where $\DD $ denotes Spanier-Whitehead duality. Its structure maps  are constructed  by  ``grafting''  weighted trees. The homotopy  and homology groups of spectral Lie algebras form graded Lie algebras.

In Section \ref{LAPC1} and \ref{LAPC2}, we link the simplicial collapse maps  in Theorem \ref{main} to the structure maps of the spectral Lie operad. This allows us to give a concrete description of  free spectral Lie algebras on many generators. 
Indeed,  suppose that we are given spectra $X_1,\ldots,X_k$. Let  $w$ be a Lie word in letters $c_1,\ldots,c_k$  involving the letter $c_i$ precisely $|w|_i$ times. We can then define a natural  map $F_w:   X_1^{\wedge |w|_1}  \wedge\ldots \wedge X_k^{\wedge |w|_k} \xrightarrow{{F}_w} \Free_{\mathbf{Lie}}(X_1 \vee \ldots \vee X_k)$ to the free spectral Lie algebra on $X_1\vee \ldots \vee X_k$.
The induced map $\pi_\ast(X_1)^{\otimes |w|_1 } \otimes \ldots \otimes \pi_\ast(X_k)^{\otimes |w|_k }  \xrightarrow{ } \pi_\ast(\Free_{\mathbf{Lie}}(X_1 \vee \ldots \vee X_k))$ has the effect of sending an element $(x_1^1 \otimes \ldots \otimes x_1^{|w|_1} \otimes  x_2^1 \otimes  \ldots \otimes  x_2^{|w|_2}\otimes  \ldots )$ to the element  obtained by first replacing, for all $i$, the occurrences of $c_i$ in $w$ by $x_i^1,\ldots,x_i^{|w|_i}$ from left to right,  and then evaluating this Lie product in the Lie algebra $\pi_\ast(\Free_{\mathbf{Lie}}(X_1 \vee \ldots \vee X_k))$.  Theorem \ref{main} implies:\vspace{-3pt}
\begin{corollaryn}[\ref{BBBB}] \label{effectonhomotopy} 
Summing up the induced   maps $$\ovA{F}_w :  \Free_{\mathbf{Lie}}(X_1^{\wedge |w|_1}  \wedge \ldots \wedge X_k^{\wedge |w|_k}) \xrightarrow{{\ \ \ \ \ \ }} \Free_{\mathbf{Lie}}(X_1 \vee \ldots \vee X_k)$$ over all  words $w$ in the Lyndon basis $B_k$ for the free Lie algebra on $k$  letters, \mbox{we obtain an equivalence}
$$\bigvee_{w\in B_k}\ovA{F}_w : \ \ \bigvee_{w\in B_k}\Free_{\mathbf{Lie}}(  X_1^{\wedge|w|_1}  \wedge \ldots \wedge X_k^{\wedge |w|_k}) \xrightarrow{ \ \ \ \simeq \ \ \ } \Free_{\mathbf{Lie}}(X_1 \vee \ldots \vee  X_k).$$
\end{corollaryn}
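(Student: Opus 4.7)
The plan is to establish the equivalence piece-by-piece in the multidegrees of $X_1,\ldots,X_k$, reducing the claim to Theorem~\ref{main}. First, expand both sides using $\Free_{\mathbf{Lie}}(Y)=\bigvee_{n\geq 1}\mathbf{Lie}(n)\mywedge{\Sigma_n}Y^{\wedge n}$. The standard decomposition of a smash power of a wedge along ordered compositions yields
$$\Free_{\mathbf{Lie}}(X_1 \vee \cdots \vee X_k) \ \simeq\ \bigvee_{(m_1,\ldots,m_k)}\mathbf{Lie}(n)\mywedge{\Sigma_{m_1}\times\cdots\times\Sigma_{m_k}} \left(X_1^{\wedge m_1} \wedge \cdots \wedge X_k^{\wedge m_k}\right)$$
with $n=\sum m_i$, while the source splits in multidegree $(m_1,\ldots,m_k)$ as a wedge indexed by pairs $(w,d)$ with $w \in B_k$ Lyndon and $d\cdot|w|_i=m_i$, each summand being $\mathbf{Lie}(d)\mywedge{\Sigma_d}(X_1^{\wedge m_1}\wedge\cdots\wedge X_k^{\wedge m_k})$, where $\Sigma_d$ acts diagonally through the embedding $\Sigma_d\hookrightarrow \Sigma_{m_1}\times\cdots\times\Sigma_{m_k}$ appearing in Theorem~\ref{main}.

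Next, apply Spanier--Whitehead duality to Theorem~\ref{main} and smash with the operadic sign sphere $\Map_{\Sp}(S^1,(S^1)^{\wedge n})$. This converts the splitting of $|\Pi_n|$ into an equivariant splitting of the spectral Lie operad:
$$\mathbf{Lie}(n)\bigl|_{\Sigma_{m_1}\times\cdots\times\Sigma_{m_k}} \ \simeq\ \bigvee_{d\mid\gcd(m_i)}\ \bigvee_{w\in B(m_1/d,\ldots,m_k/d)}\Ind^{\Sigma_{m_1}\times\cdots\times\Sigma_{m_k}}_{\Sigma_d}\mathbf{Lie}(d).$$
The key bookkeeping is that the sphere factor $\Sigma^{-1}(S^{n/d-1})^{\wedge d}$ from Theorem~\ref{main} combines with the sign sphere in $\mathbf{Lie}(n)$ to reproduce precisely the sign sphere of $\mathbf{Lie}(d)$ on each summand. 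Smashing both sides over the Young subgroup with $X_1^{\wedge m_1}\wedge\cdots\wedge X_k^{\wedge m_k}$ converts induction into restriction along the diagonal, yielding term-by-term the multidegree $(m_1,\ldots,m_k)$ component obtained on the source above. This establishes the desired equivalence of spectra abstractly.

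The remaining task is to show that this abstract equivalence is realized by $\bigvee_{w\in B_k}\bar F_w$. By the free-forgetful adjunction, $\bar F_w$ is determined by its restriction $F_w$ to the generating summand $Z_w=X_1^{\wedge|w|_1}\wedge\cdots\wedge X_k^{\wedge|w|_k}$, and this restriction selects the canonical element of $\mathbf{Lie}(|w|)$ attached to the Lyndon word $w$ via iterated tree grafting. The content of Sections~\ref{LAPC1} and~\ref{LAPC2} is that the explicit point-set collapse map underlying Theorem~\ref{main} implements this operadic grafting on each summand, so the wedge summand indexed by $(d,w)$ in our decomposition of $\mathbf{Lie}(n)$ is precisely the image of the corresponding operadic composition built from $w$.

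The main obstacle is this final identification. An abstract equivalence follows formally from Theorem~\ref{main} and summand counts match by Witt's formula, but the delicate part is the on-the-nose compatibility between the discrete Morse matching producing the branching rule and the operadic grafting maps producing $F_w$. Once this compatibility is in hand--and the two dedicated sections are written precisely to supply it--the equivalence of spectra produced summand-by-summand automatically coincides with $\bigvee_{w}\bar F_w$, completing the proof.
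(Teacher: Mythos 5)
Your overall strategy matches the paper's: reduce to a fixed multidegree, expand $\Free_{\mathbf{Lie}}$ via the operadic formula, convert Theorem~\ref{main} into a splitting of the (restricted) spectral Lie operad term by Spanier--Whitehead duality, and then identify the resulting abstract equivalence with $\bigvee_w \ovA{F}_w$ using the compatibility between the collapse maps and tree ungrafting established in Sections~\ref{LAPC1} and~\ref{LAPC2}. You have correctly located the conceptual crux, which in the paper is packaged precisely as Corollary~\ref{branchingforungrafting}: the branching equivalence $M(1,-)$ is explicitly realised, on each wedge summand, by $(\id \wedge \bigwedge a_{T_{i,w}})\circ u_{x_w}$, i.e.\ by the operadic ungrafting maps used in the definition of $F_w$.

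There is, however, a genuine gap in the step ``Smashing both sides over the Young subgroup with $X_1^{\wedge m_1}\wedge\cdots\wedge X_k^{\wedge m_k}$ converts induction into restriction along the diagonal.'' This operation is a \emph{strict} coinvariants construction, and applying strict orbits to a map that is merely an underlying or homotopy-equivariant equivalence does not a priori yield an equivalence. This is exactly why the paper works with the $\Sigma$-cofibrant replacement $\underline{\mathbf{Lie}}$ rather than with $\mathbf{Lie}$ directly, and why the proof of Corollary~\ref{BBBB} invokes Lemma~$9.20$ of \cite{arone2011operads} to verify that the relevant objects, namely $\Ind_{\Sigma_d}^{\Sigma_{n_1}\times\cdots\times\Sigma_{n_k}}(\underline{\mathbf{Lie}}_d\wedge S_c^{\wedge d})\wedge X_1^{\wedge n_1}\wedge\cdots\wedge X_k^{\wedge n_k}$ and $\underline{\mathbf{Lie}}_n\wedge X_1^{\wedge n_1}\wedge\cdots\wedge X_k^{\wedge n_k}$, are cofibrant as equivariant spectra, so that strict orbits compute homotopy orbits. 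Your sketch never mentions this; without it, the passage from the $\Sigma_{m_1}\times\cdots\times\Sigma_{m_k}$-equivariant splitting to the strict orbit splitting is unjustified. A related but smaller omission is the dualization step: applying $\DD(-)$ to the equivalence in Corollary~\ref{branchingforungrafting} requires the conditions of Proposition~\ref{sw1} (which address cofibrancy on the source and suspension-spectrum form on the target), and this too is silently assumed in your phrase ``apply Spanier--Whitehead duality to Theorem~\ref{main}.'' These are not cosmetic details: they are the precise reason the paper sets up the free functor on $\underline{\mathbf{Lie}}$ and defines $\upgamma_{\tilde w}$ as a point-set representative out of a cofibrant sphere $S_c$. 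To close the gap you should explicitly (i) replace $\mathbf{Lie}$ by $\underline{\mathbf{Lie}}$, (ii) cite the cofibrancy input so that strict smash and orbit constructions are homotopically correct, and (iii) invoke Proposition~\ref{sw1} when dualizing the equivariant equivalence of spaces.
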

\begin{remark} 
An equivalence of  this form can also be deduced by applying Goodwillie calculus to the Hilton-Milnor theorem, as established by the first-named author and \mbox{Marja Kankaanrinta  \cite{arone1998homology}.} \mbox{However,}
the maps  arising in this approach are mysterious and it is not a priori clear how they interact with the Lie bracket. Moreover, this 
approach does not give any genuine \mbox{equivariant information.}  These two significant obstructions to     applications are not present  in our explicit approach, which makes   the interaction with the Lie operad and  genuine equivariant phenomena   entirely \mbox{transparent.}
\end{remark}

We can also give an asymmetric decomposition for Young restrictions of $|\Pi_n|$. For this, we  fix a Young subgroup $\Sigma_{A} \times \Sigma_{B_1} \times \dots \times \Sigma_{B_k}\subset \Sigma_n$ and set $B=\cup_{i=1}^k B_i$. \mbox{Complementary collapse implies:}
\begin{theoremn}[\ref{inductivestep} (Symmetry Breaking)] \ \\
There is a simple $\Sigma_{A} \times\Sigma_{B_1} \times  \dots \times \Sigma_{B_k}$-equivariant homotopy equivalence spaces 
$$ |\Pi_n|   \ \longrightarrow \   \bigvee_{ \substack{A = A_1 \coprod \dots \coprod A_r,  \ A_i \neq \emptyset \\ f_i : A_i \hookrightarrow B \\ \mbox{ s.t. } \im(f_{i+1}) \subset \im(f_i)}} \Sigma^{-1}  S^{|A_1|} \wedge\dots \wedge S^{|A_r|}  \wedge |\Pi_B|^\diamond .$$ 
 \end{theoremn}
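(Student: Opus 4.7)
The plan is to apply Complementary Collapse (Theorem~\ref{CollapseFansB}) to the partition lattice $\mathcal{P}_n$, equipped with the action of the Young subgroup $H = \Sigma_A \times \Sigma_{B_1}\times\ldots\times\Sigma_{B_k}$, using an asymmetric orthogonality fan $\mathbf{F}$ that isolates elements of $A$ while leaving the action on $B = B_1\sqcup\dots\sqcup B_k$ undisturbed. The length of $\mathbf{F}$ will be at most $|A|$, so that the orthogonal chains $[y_0 < \ldots < y_{r-1}]$ whose contributions appear in Theorem~\ref{CollapseFansB} are indexed by exactly the combinatorial data in the statement.

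Concretely, for a chain $\sigma = [y_0 < \ldots < y_m]$, call an element $a \in A$ \emph{absorbed} at stage $y_m$ if its $y_m$-block also contains some element of $B$. I would design $F_j(\sigma)$ to be the coarsening of $y_m$ obtained by simultaneously merging the blocks of certain previously unabsorbed elements of $A$ into blocks of $y_m$ that already contain $B$-elements. The choice of which $A$-elements to merge and into which blocks will be prescribed in an $H$-equivariant manner by exploiting that the $\Sigma_A$-symmetry acts on $A$ uniformly (so one may define $F_j$ by a simultaneous prescription on an entire $\Sigma_A$-orbit rather than picking a single element). The monotonicity $y \le F_j(\sigma)$ is automatic from the construction, and the discreteness condition (2) of Definition~\ref{orthogonalityfunction} should reduce to a standard semi-modularity argument in the partition lattice, using that the merging operation is ``transversal'' to further refinements among the as-yet-unmerged blocks.

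Next I would identify the $\mathbf{F}$-orthogonal chains $\sigma = [y_0 < \ldots < y_{r-1}]$. By the design of $\mathbf{F}$, each step $y_{i-1} \to y_i$ will correspond to picking an $A_i \subset A$ (the new $A$-elements absorbed at this stage) together with an injection $f_i: A_i \hookrightarrow B$ recording into which $B$-indexed blocks they are absorbed. The nesting $\im(f_{i+1}) \subset \im(f_i)$ should emerge as the precise orthogonality constraint: once a cluster of $B$-elements is ``activated'' at stage $i$ by some $A$-insertion, later stages can only refine within that same set of activated $B$-blocks, not open up new ones. This yields the asserted $H$-equivariant bijection between $\mathbf{F}$-orthogonal chains and tuples $(A_1,\ldots,A_r,f_1,\ldots,f_r)$.

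Finally, for each orthogonal chain, I would compute the intervals $\ovA{\mathcal{P}}_{(y_{i-1},y_i)}$ and $\ovA{\mathcal{P}}_{(y_{r-1},\hat 1)}$. The ``inner'' intervals factor as products of smaller partition lattices, most of which are collapsible cones and drop out; the surviving factor is equivalent to a single sphere $S^{|A_i|-1}$ (coming from the coning off of a free action), so that $\Sigma|\ovA{\mathcal{P}}_{(y_{i-1},y_i)}|^\diamond \simeq S^{|A_i|}$ equivariantly. The ``terminal'' interval $(y_{r-1},\hat 1)$ consists of partitions coarsening $y_{r-1}$ that are not $\hat 1$; these are freely parametrised by proper nontrivial partitions of $B$, yielding $|\ovA{\mathcal{P}}_{(y_{r-1},\hat 1)}|^\diamond \simeq |\Pi_B|^\diamond$. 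The initial factor $|\ovA{\mathcal{P}}_{(\hat 0,y_0)}|^\diamond$ will absorb the overall $\Sigma^{-1}$ desuspension appearing in the statement.

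The main obstacle I anticipate is the precise verification of the discreteness condition in Definition~\ref{orthogonalityfunction} for this asymmetric fan, and the accompanying bookkeeping that ensures the inner intervals reduce to \emph{single} spheres $S^{|A_i|}$ rather than wedges of smaller spheres. Both issues are controlled by the fact that once an $A$-element has been absorbed into a $B$-block, its further subdivision within that block is rigidified by the orthogonality constraints, eliminating the free factors that would otherwise multiply the number of contributing spheres.
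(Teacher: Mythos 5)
Your high-level strategy---Complementary Collapse on $\mathcal{P}_n$ with a fan that isolates $A$ from $B$---matches the paper's, but the fan you propose cannot be constructed. The value $F_1([\hat 0])$ must be a single partition fixed by $\Sigma_A \times \Sigma_{B_1}\times\cdots\times\Sigma_{B_k}$, and the only such partitions are joins of the indiscrete partitions on $A, B_1, \ldots, B_k$. Your $F_1$ is meant to ``merge certain unabsorbed elements of $A$ into blocks containing $B$-elements,'' but no equivariant choice of a $B$-partner for any $a\in A$ exists; promoting the prescription to ``an entire $\Sigma_A$-orbit'' only forces all of $A$ to be treated simultaneously and still fails to produce a well-defined $B$-target. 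So the first function of your proposed fan is undefinable and the argument cannot get started.

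The paper sidesteps this by separating the two roles. It sets $F_1'([\hat 0]) = x$, the invariant partition whose blocks are the singletons $\{a\}$ for $a\in A$ together with the single block $B$; no choice is made inside $F_1'$, and the complements $y\perp x$ are exactly the partitions $y_f$ encoding a function $f\colon A\to B$. The filtration $A = A_1\sqcup\cdots\sqcup A_r$ and the injections $f_i$ then emerge from the orthogonality recursion below $y$, governed by a \emph{single} second function $F_2' = F_2(S,g)$ (the lexicographic-merging orthogonality function from the Young-restriction proof, for the orbit ordering $A < B_1 < \cdots < B_k$). This also corrects your premise that the fan needs length at most $|A|$: by Definitions~\ref{fans} and~\ref{globalinv}, a two-function fan already produces orthogonal chains of arbitrary length, since one orthogonality function controls the entire recursion below $y$. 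Your interval bookkeeping is essentially correct (each inner interval is the Boolean lattice $\ovA{\Bcal}_{A_i}$ with $|\ovA{\Bcal}_{A_i}|\cong S^{|A_i|-2}$, and the terminal interval is $\Pi_B$), but the key missing ingredient is a fan that is actually equivariant.
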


Partition complexes can be thought of as Bruhat-Tits buildings over ``the field with one element".  In this heuristic picture, Young subgroups correspond to parabolic subgroups. Complementary collapse then has a neat analogous consequence for parabolic restrictions of Bruhat-Tits buildings.\vspace{3pt}

Let $V$ be a finite-dimensional vector space over a finite field $k$.  Fix a flag $\mathbf{A} = [A_0<\dots < A_r] $ of proper nonzero subspaces of $V$, and write 
$P_{\mathbf{A}}$ for the associated parabolic subgroup.
Choose a complementary flag $\mathbf{B}=[B_0<\dots < B_r] $ to $\mathbf{A}$ with corresponding parabolic subgroup $P_\mathbf{B}$. Write $L_{\mathbf{A}\mathbf{B}} = P_\mathbf{A}\cap P_\mathbf{B}$ for the   intersecting Levi. Complementary collapse shows:\vspace{-2pt}
\begin{lemman}[\ref{BTb}] There is a $P_\mathbf{A}$-equivariant simple equivalence\vspace{-3pt}
$$ |\BT(V)| \simeq \Ind^{P_\mathbf{A}}_{L_{\mathbf{A}\mathbf{B}}} (\Sigma^{r} \bigwedge_{i = 0}^{r+1} |\BT(\gr^i(\mathbf{B}))|^\diamond ).\vspace{-3pt}$$
Here $\gr^i(\mathbf{B})=B_i/ {B_{i-1}}$ for $i=1,\ldots ,r$ and we set  $\gr^0(\mathbf{B}) = B_0$  and $\gr^{r+1}(\mathbf{B}) = V/ {B_r}$.
\end{lemman}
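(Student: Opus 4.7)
The plan is to apply the Complementary Collapse theorem (Theorem~\ref{CollapseFansB}) to the lattice $\mathcal{P}$ of all subspaces of $V$, equipped with its $P_\mathbf{A}$-action, using an orthogonality fan tailored to the reference flag $\mathbf{A}$. Since $|\BT(V)| = |\ovA{\mathcal{P}}|$, if we can design a fan whose set of orthogonal chains forms a single $P_\mathbf{A}$-orbit, with $\mathbf{B}$ as a chosen representative, then the right-hand wedge in Theorem~\ref{CollapseFansB} will collapse to a single induced summand and the Lemma will follow.

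To build the fan, I would use $r+1$ orthogonality functions $\mathbf{F} = (F_1, \ldots, F_{r+1})$, where $F_i$ is built from the subspace $A_{i-1}$. For a chain $\sigma = [U_0 < \cdots < U_m]$, I set $F_i(\sigma) = U_m + A_{i-1}$ whenever the elements of $\sigma$ are transverse to the flag $\mathbf{A}$ up through step $i-1$ (so that $\sigma$ looks like a prefix of a complement of $\mathbf{A}$), and $F_i(\sigma) = \hat 1$ otherwise. Each $F_i$ is increasing by construction and $P_\mathbf{A}$-equivariant because $P_\mathbf{A}$ stabilises every $A_{i-1}$. With these predicates in place, the chains orthogonal to $\mathbf{F}$ should be exactly those of the form $[B'_0 < B'_1 < \cdots < B'_r]$ where $B'_i$ is a complement of $A_{i-1}$ inside $A_i$ in the appropriate inductive sense, i.e.\ precisely the complementary flags to $\mathbf{A}$.

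The key verification is condition (2) of Definition~\ref{orthogonalityfunction}: for each $\sigma$ and each $z > U_m$, the poset of $t$ satisfying $t \wedge F_i(\sigma) = U_m$ and $(t \vee F_i(\sigma)) \wedge z = z$ must be an antichain. Using the modular law for subspaces, these meet/join conditions pin $t$ down to the choice of a complement of a fixed subspace inside a fixed quotient of $z$, and any two such complements are incomparable, yielding the required discreteness. Once the axiom is checked, Theorem~\ref{CollapseFansB} produces a $P_\mathbf{A}$-equivariant simple equivalence between $|\BT(V)|$ and the wedge, indexed by orthogonal chains, of suspended smashes of interval realisations.

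To finish, I would identify the orthogonal chains as a single $P_\mathbf{A}$-orbit: by Levi decomposition, any two complementary flags to $\mathbf{A}$ are conjugate by the unipotent radical of $P_\mathbf{A}$, and the stabiliser of $\mathbf{B}$ is precisely $P_\mathbf{A} \cap P_\mathbf{B} = L_{\mathbf{A}\mathbf{B}}$. Each open interval $\ovA{\mathcal{P}}_{(B_{i-1}, B_i)}$ is isomorphic as a poset to the proper nonzero subspaces of $\gr^i(\mathbf{B}) = B_i/B_{i-1}$ via $U \mapsto U/B_{i-1}$, and similarly for the two outer intervals bounded by $\hat 0$ and $\hat 1 = V$. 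Pulling out the $r$ reduced suspensions produced by Complementary Collapse then yields exactly $\Ind^{P_\mathbf{A}}_{L_{\mathbf{A}\mathbf{B}}}\bigl(\Sigma^r \bigwedge_{i=0}^{r+1} |\BT(\gr^i(\mathbf{B}))|^\diamond\bigr)$. The main obstacle will be choosing the transversality predicates in the definition of the $F_i$'s precisely enough that orthogonal chains match complementary flags on the nose, and then dispatching the discreteness axiom cleanly; this is the linear-algebraic counterpart of the combinatorial verification underlying Theorem~\ref{main}.
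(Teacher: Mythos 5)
Your overall strategy is the right one and matches the paper's: realise $|\BT(V)|$ as $|\ovA{\mathcal P}|$ for $\mathcal P$ the subspace lattice, build a $P_\mathbf{A}$-equivariant orthogonality fan from the flag $\mathbf{A}$, and observe that the orthogonal chains are exactly the complementary flags, which form a single $P_\mathbf{A}$-orbit with stabiliser $L_{\mathbf{AB}}$. The discreteness check via the modular law is also essentially what one does.

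The fan design, however, diverges from the paper's and is where the proposal has a gap. You propose a fan of $r+1$ functions $(F_1,\ldots,F_{r+1})$, with $F_i$ built from $A_{i-1}$ and gated by a transversality predicate, apparently intending the $F_i$'s to be consumed one at a time as the chain climbs through $\mathbf{A}$. That is not how the recursion in Definition~\ref{globalinv} consumes a fan: after choosing $y\perp F_1([\hat 0])$, the \emph{upper} interval $[y,\hat 1]$ retains $F_1$ (restricted to $F_1^{\geq y}$), and only the \emph{lower} interval $[\hat 0, y]$ drops $F_1$. Multiple functions in a fan are there to run separate collapses on lower subintervals (as with $(F_1,F_2)$ for Young restrictions), not to sequence reference subspaces up the flag. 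The paper accomplishes the upward iteration with a \emph{single} function that varies with chain length, $F([B_0<\cdots<B_i])=A_{r-i}\vee B_i$, so that $F([\hat 0])=A_r$ and, after picking $C_0\perp A_r$, the restricted $F^{\geq C_0}$ automatically presents $A_{r-1}\vee C_0$ on the next pass. With your $r+1$-function design, you would have to arrange that $F_1^{\geq y}$ becomes vacuous (returns $\hat 1$) above $y$ in a way that is consistent with equivariance and the increasing property; the transversality predicate is left unspecified precisely at this point, which is where the argument would break or need real work. If you replace the list by the single function $F(\sigma)=A_{r-|\sigma|+1}\vee (\text{top of }\sigma)$, verify the discreteness condition by modularity as you sketched, and keep the rest of your argument (orthogonal chains $=$ complementary flags; single orbit; stabiliser $L_{\mathbf{AB}}$; interval identifications $\ovA{\mathcal P}_{(B_{i-1},B_i)}\cong \BT(B_i/B_{i-1})$), the proof closes cleanly and agrees with the paper's.
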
 
This   gives  a new ``topological'' proof of a  classical   result in modular representation theory:\vspace{-2pt}
\begin{corollaryn}[\ref{steinbergproj}] 
Let $k = \FF_q$ be a finite field and assume that $R$ is any  ring in which the number $\prod_{k=1}^n  ({q^k-1}) $ is invertible. Write $\St = \tilde{\HH}_{n-2}(\BT(\FF_q^n),\ZZ)$ for the integral Steinberg module.
Then $\St \otimes R$ is a projective $R[\GL_n(\FF_q)]$-module. 
\end{corollaryn}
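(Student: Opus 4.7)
The plan is to apply Lemma~\ref{BTb} to a complete chamber flag and thereby identify $\St \otimes R$ as a free module over the unipotent radical of a Borel subgroup; a standard transfer argument, whose required invertibility is exactly the hypothesis we are handed, then promotes this to projectivity over $R[\GL_n(\FF_q)]$.

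Concretely, I would take $V = \FF_q^n$ with $\mathbf{A} = [A_0 < \cdots < A_{n-2}]$ a standard complete flag (so $\dim A_i = i+1$) and $\mathbf{B}$ a complementary complete flag. Then $P_{\mathbf{A}} = B$ is a Borel subgroup, $L_{\mathbf{A}\mathbf{B}} = T$ is a maximal torus, and each graded piece $\gr^i(\mathbf{B})$ is one-dimensional, so $|\BT(\gr^i(\mathbf{B}))|$ is empty and $|\BT(\gr^i(\mathbf{B}))|^\diamond = S^0$. Lemma~\ref{BTb} with $r = n-2$ then yields a $B$-equivariant simple equivalence
$$|\BT(V)| \ \simeq \ \Ind^B_T \left( \Sigma^{n-2} \bigwedge_{i=0}^{n-1} S^0 \right) \ = \ \Ind^B_T S^{n-2},$$
on which $T$ acts through some character $\chi$ on the suspension coordinate. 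Passing to reduced homology in degree $n-2$ with $R$-coefficients gives the $R[B]$-module isomorphism
$$\St \otimes R \ \cong \ \Ind^B_T(R_\chi) \ = \ R[B] \otimes_{R[T]} R_\chi.$$

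Writing $B = U \rtimes T$ with $U$ the unipotent radical, the underlying $U$-set $B/T$ is $U$ with left-translation action, so $(\St \otimes R)|_U \cong R[U]$ is a free (in particular projective) $R[U]$-module. The index $[\GL_n(\FF_q) : U]$ equals $\prod_{k=1}^n(q^k - 1)$, exactly the quantity assumed invertible in $R$. A standard transfer argument now finishes the proof: the counit $\Ind^G_U \Res^G_U (\St \otimes R) \to \St \otimes R$ is split by the averaging operator $m \mapsto [G:U]^{-1} \sum_{gU \in G/U} g \otimes g^{-1}m$, exhibiting $\St \otimes R$ as a direct summand of the free $R[G]$-module $R[G] \otimes_{R[U]} R[U] = R[G]$, and hence as a projective $R[\GL_n(\FF_q)]$-module. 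The only mild technicality is that the character $\chi$ is left implicit in the topological equivalence; however the argument is insensitive to $\chi$, since only the underlying $U$-set structure $B/T \cong U$ enters the transfer step.
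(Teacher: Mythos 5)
Your argument is correct and follows essentially the same route as the paper: take a complete flag so that $P_{\mathbf{A}}$ is a Borel subgroup, apply Lemma~\ref{BTb} and the fact that the unipotent radical $U$ meets the Levi $T$ trivially (equivalently, a Mackey/double-coset computation) to see that the restriction of $\St\otimes R$ to $R[U]$ is free, and then invoke the standard transfer criterion using that $[\GL_n(\FF_q):U]=\prod_{k=1}^n(q^k-1)$ is invertible in $R$. The only cosmetic difference is that you unpack the transfer splitting explicitly where the paper cites it as a known projectivity criterion, and you note (correctly) that the undetermined $T$-character on the suspension coordinate is irrelevant since only the $U$-set structure of $B/T$ enters.
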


\subsection*{Strict Orbits.} 
Strict quotients of $|\Pi_n|$ by subgroups of $\Sigma_n$ have received some attention over the years. First of all, it is known that the quotient space of $|\Pi_n|$ by the full symmetric group is contractible for $n \ge 3$. This is due to Kozlov, \cite[Corollary 4.3]{kozlov2000collapsibility}. We give a new proof of this result in Corollary \ref{Kozlov} in the body of the paper. Results about quotient spaces by certain subgroups of $\Sigma_n$ were obtained, for example, by Patricia Hersh~\cite{hersh2003lexicographic}  (for groups $\Sigma_2 \wr \Sigma_m$) and Ralf Donau~\cite{donau2012nice} (for subgroups of $\Sigma_1 \times \Sigma_{n-1}$). 

In this paper, we will address the following question:\vspace{-2pt}
\begin{question} Given a Young subgroup $\Sigma_{n_1} \times \ldots \times \Sigma_{n_k} \subset \Sigma_n$, what is the homology and homotopy type of the strict Young quotient $|\Pi_n|/_{\Sigma_{n_1}\times \dots \times \Sigma_{n_k}} ?$ \end{question}
We start by observing the following immediate consequence of Theorem~\ref{main}:
\begin{corollaryn}[\ref{proposition: orbits again}]
 Suppose $n=n_1+\cdots+n_k$.  There is an equivalence $$
|\Pi_n|/_{\Sigma_{n_1}\times\cdots\times\Sigma_{n_k}} \xrightarrow{\ \ \simeq \ \ } \bigvee_{\substack{d|\gcd(n_1, \ldots, n_k) \\ B(\frac{n_1}{d}, \ldots, \frac{n_k}{d})}}  \left(\Sigma^{-1}(S^{\frac{n}{d}-1})^{\wedge d}\mywedge{\Sigma_d} |\Pi_d|^\diamond\right).
$$
More generally, given pointed spaces $X_1,\ldots,X_k$, there is an equivalence
$$ |\Pi_n|^\diamond \mywedge{\Sigma_n}(X_1\vee \ldots \vee X_k)^{\wedge n} \ \ \  \xrightarrow{\ \ \simeq \ \ } \ \ \  \bigvee_{\substack{n=n_1+\ldots+n_k \\ d|\gcd(n_1, \ldots, n_k) \\ w \in B(\frac{n_1}{d}, \ldots, \frac{n_k}{d})}} \left( (S^{\frac{n}{d}-1}\wedge X_1^{\frac{n_1}{d}} \wedge \ldots \wedge X_k^{\frac{n_k}{d}})^{\wedge d}\mywedge{\Sigma_d} |\Pi_d|^\diamond\right). $$
\end{corollaryn}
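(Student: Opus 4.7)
The plan is to deduce both equivalences as formal consequences of Theorem~\ref{main} via two routine manipulations: passing to strict orbits, and decomposing a power of a wedge into induced summands. The only substantive input is Theorem~\ref{main}; the remaining steps are bookkeeping with the Frobenius-type identity $Z\wedge_G(G_+\wedge_H Y)\simeq Z|_H\wedge_H Y$ together with the pointed equivariant equivalence $X^\diamond\simeq\Sigma X$ for well-pointed $X$ having a $G$-fixed basepoint.

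For part~(1), I set $H=\Sigma_{n_1}\times\cdots\times\Sigma_{n_k}$, apply Theorem~\ref{main}, and pass to strict $H$-orbits on both sides. The tautological identity $(H_+\wedge_K Y)/H\cong Y/K$, valid for any subgroup $K\subset H$ and any pointed $K$-space $Y$, converts each induced summand $\Ind^H_{\Sigma_d}\bigl(\Sigma^{-1}(S^{\frac{n}{d}-1})^{\wedge d}\wedge|\Pi_d|^\diamond\bigr)$ into $\Sigma^{-1}(S^{\frac{n}{d}-1})^{\wedge d}\mywedge{\Sigma_d}|\Pi_d|^\diamond$, directly yielding the first formula.

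For part~(2), I first record the standard $\Sigma_n$-equivariant pointed identification
$$
(X_1\vee\cdots\vee X_k)^{\wedge n}\ \cong\ \bigvee_{n_1+\cdots+n_k=n}\Ind^{\Sigma_n}_{\Sigma_{n_1}\times\cdots\times\Sigma_{n_k}}\bigl(X_1^{\wedge n_1}\wedge\cdots\wedge X_k^{\wedge n_k}\bigr),
$$
obtained by distributing smash over wedge and counting the $\Sigma_n$-orbits on indexing functions $\mathbf{n}\to\{1,\ldots,k\}$. Smashing with $|\Pi_n|^\diamond$ over $\Sigma_n$ and applying the Frobenius identity reduces, for each composition $(n_1,\ldots,n_k)$, to analysing $|\Pi_n|^\diamond\mywedge{H}(X_1^{\wedge n_1}\wedge\cdots\wedge X_k^{\wedge n_k})$ with $H$ as above. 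Taking reduced suspension of Theorem~\ref{main} (which absorbs the $\Sigma^{-1}$) gives a pointed $H$-equivariant equivalence between $|\Pi_n|^\diamond\simeq\Sigma|\Pi_n|$ and a wedge of induced $H$-spaces $\Ind^H_{\Sigma_d}\bigl((S^{n/d-1})^{\wedge d}\wedge|\Pi_d|^\diamond\bigr)$ indexed by pairs $(d,w)$ with $d\mid\gcd(n_1,\ldots,n_k)$ and $w\in B(n_1/d,\ldots,n_k/d)$. Smashing each such summand with $X_1^{\wedge n_1}\wedge\cdots\wedge X_k^{\wedge n_k}$ over $H$ and applying the Frobenius identity once more produces $\bigl((S^{n/d-1})^{\wedge d}\wedge|\Pi_d|^\diamond\bigr)\mywedge{\Sigma_d}(X_1^{\wedge n_1}\wedge\cdots\wedge X_k^{\wedge n_k})|_{\Sigma_d}$. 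The restriction along the diagonal $\Sigma_d\hookrightarrow H$ is canonically $(X_1^{\wedge n_1/d}\wedge\cdots\wedge X_k^{\wedge n_k/d})^{\wedge d}$ with $\Sigma_d$ permuting the $d$ blocks, and combining this with the sphere factor gives the term advertised in the statement.

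The only genuine subtlety lies in coherently tracking the unreduced suspension $(-)^\diamond$, the reduced suspension $\Sigma$, and the concrete desuspension $\Sigma^{-1}(S^{n/d-1})^{\wedge d}$ appearing in Theorem~\ref{main}; once these conventions are pinned down via the $\Sigma_n$-equivariant equivalence $|\Pi_n|^\diamond\simeq\Sigma|\Pi_n|$ and the canonical identification $(X_1^{\wedge n_1}\wedge\cdots\wedge X_k^{\wedge n_k})|_{\Sigma_d}\cong(X_1^{\wedge n_1/d}\wedge\cdots\wedge X_k^{\wedge n_k/d})^{\wedge d}$, every remaining step is a purely formal diagram chase.
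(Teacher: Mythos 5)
Your proof is correct and follows essentially the same route as the paper: both parts are deduced directly from Theorem~\ref{main} by passing to strict orbits, with the standard decomposition of $(X_1\vee\cdots\vee X_k)^{\wedge n}$ into induced summands and the Frobenius/shearing isomorphism handling the bookkeeping. The paper phrases the general case via the graded functor $C_{\Lie}$ and a "double suspension" step, but the underlying argument is the one you give.
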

We are therefore reduced to answering the above questions for  spaces of the form\vspace{3pt} 
\mbox{$
\Sigma^{-1}(S^{\ell})^{\wedge d}\mywedge{\Sigma_d} |\Pi_d|^\diamond
$.}
Fix a prime $p$. To compute the $\FF_p$-homology of the  spaces in question, we consider the graded Mackey functor with $\mu_\ast(T) =\displaystyle  \widetilde{\HH}_*(S^{\ell n} \mywedge{\Sigma_n} T_+ ,\FF_p)$. There is  a spectral sequence of \vspace{-5pt} signature   $  \widetilde{\HH}_s^{\br}( |\Pi_n|^{\diamond}; \mu_t) \displaystyle \Rightarrow   \widetilde{\HH}_{s+t}(S^{\ell n} \mywedge{\Sigma_n} |\Pi_n|^\diamond  ,   \FF_p)$ from   Bredon homology to the   homology of \mbox{strict quotients.}\vspace{3pt}

If $\ell$ is  \textit{odd}, then this Mackey functor $\mu_\ast$ has desirable properties and indeed satisfies the conditions of the main   Theorem $1.1.$ of \cite{arone2016bredon}. Using this, we compute in our final Section \ref{theproof}:
\begin{theoremn}[\ref{theorem: thehomology}] Let $\ell \geq 1$ be an integer, assumed to be odd whenever the prime $p$ is odd.\\
If $n$ is not a power of $p$, then $\displaystyle \widetilde{\HH}_*(  |\Pi_n|^\diamond \mywedge{\Sigma_{n}}S^{\ell n} ,\FF_p)$ is trivial.\\ If $n=p^a$, then $\displaystyle \widetilde{\HH}_* (  |\Pi_{p^a}|^\diamond \mywedge{\Sigma_{p^a}} S^{\ell p^a},\FF_p )$ has a basis consisting of sequences $(i_1, \ldots, i_a)$, where $i_1, \ldots, i_a$ are positive integers satisfying:
\begin{enumerate}[leftmargin=26pt]
\item Each $i_j$ is congruent to $0$ or $1$ modulo $2(p-1)$. \label{congruence}
\item For all $1\le j<a$, we have $1<i_j< p i_{j+1}$.
\item We have $1<i_a\le (p-1)\ell$ (notice that if $p>2$, then~\eqref{congruence} means that the inequality is  strict).
\end{enumerate}
The homological degree of $(i_1, \ldots, i_a)$ is $i_1+\cdots+i_a+\ell+a-1$.
\end{theoremn}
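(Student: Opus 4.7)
The plan is to feed the graded Mackey functor $\mu_\ast$ into the Bredon spectral sequence stated immediately above the theorem, and then apply \cite[Theorem~1.1]{arone2016bredon} to evaluate its $E_2$-page. That result says, under a cohomological Mackey-functor hypothesis, that $\widetilde{\HH}_\ast^{\br}(|\Pi_n|^\diamond;\mu_\ast)$ is controlled by contributions of elementary abelian $p$-subgroups of $\Sigma_n$ acting freely on $\n$. My first step is to verify the required hypothesis for $\mu_t(T)=\widetilde{\HH}_\ast(S^{\ell n}\mywedge{\Sigma_n}T_+,\FF_p)$ when either $p=2$ or $\ell$ is odd, using that in both cases $S^{\ell n}$ is the one-point compactification of a real representation that restricts to an odd-dimensional representation of every $p$-subgroup, which pins down the behaviour of the transfer maps in $\mu_\ast$.

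Once we are reduced to the elementary abelian stratum, Corollary~\ref{corollary: el abelian} identifies the $P$-fixed subspace for $P\cong \FF_p^k$ acting freely on $n=mp^k$ points as induced from $|\BT(\FF_p^k)|^\diamond\wedge |\Pi_m|^\diamond$, with residual Weyl action of $\GL_k(\FF_p)\times \Sigma_m$. When $m>1$, I would show that the $\Sigma_m$-factor on the $|\Pi_m|^\diamond$ side forces the associated Bredon summand to vanish: after unwinding the Mackey-functor coefficients, one is left with contributions whose underlying orbit space involves $|\Pi_m|/_{\Sigma_m}$, which by Kozlov \cite{kozlov2000collapsibility} is contractible for $m\ge 3$ (and reduces to a point for $m=2$). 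Summing over $k$ gives the first claim, that the homology vanishes unless $n$ is a $p$-power.

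When $n=p^a$, only $P\cong \FF_p^a$ acting regularly contributes, with $|\Pi_n|^P=|\BT(\FF_p^a)|^\diamond$ and Weyl group $\GL_a(\FF_p)$. The spectral sequence collapses onto the $\GL_a(\FF_p)$-coinvariants of $\widetilde{\HH}_\ast(|\BT(\FF_p^a)|^\diamond)$ twisted by the value of $\mu_\ast$ on a free orbit. Projecting onto the Steinberg summand (which is extractable over $\FF_p$ since $\prod_{k=1}^{a}(p^k-1)$ is a unit mod $p$, cf. Corollary~\ref{steinbergproj}) and unfolding the building flag by flag via the parabolic restriction Lemma~\ref{BTb} yields an $a$-fold iterated extended-power presentation of the answer, with one layer of $\Sigma_p$-extended power per step of the flag.

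The hardest step will be to match this iterated extended-power presentation with the advertised basis of admissible sequences. At each step, a class of internal degree $i_{j+1}$ generates classes indexed by integers $i_j$ in the mod $p$ homology of the $\Sigma_p$-extended power of a sphere: the congruence $i_j\equiv 0,1\pmod{2(p-1)}$ cuts out precisely the classes surviving the $\Sigma_p$-action (and forces the inequality in condition~(3) to be strict when $p>2$), the inequality $1<i_j<p\cdot i_{j+1}$ is the admissibility condition governing composable power-operation-type classes, and the bound $1<i_a\le (p-1)\ell$ records the bottom class sitting in degree $\ell$. Careful bookkeeping of the suspension shifts introduced by the Bredon spectral sequence and by the iterated parabolic restriction of Lemma~\ref{BTb} then recovers the total degree formula $i_1+\cdots+i_a+\ell+a-1$, completing the proof.
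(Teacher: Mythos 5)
Your overall strategy—Bredon spectral sequence, verification of the Mackey-functor hypotheses, reduction to elementary abelian subgroups via \cite{arone2016bredon}—begins the same way the paper does, but then diverges sharply. The paper cites \cite[Lemma 3.8, Theorem 1.1]{arone2016bredon} \emph{only} for the vanishing and concentration in Bredon degree $a-1$; the actual value of the surviving group is then computed by an Euler-characteristic argument ``in the Bredon direction'': the paper writes down the normalised Bredon chain complex explicitly, decomposes each chain group $\widetilde{\HH}_*(S^{\ell n}/_{K_\sigma})$ via the combinatorics of \emph{$p$-enhancements} of chains of partitions (Definition~\ref{definition: enhancement}, Proposition~\ref{prop: splitting}), cancels ``matching up''/``matching down'' summands in adjacent degrees (Proposition~\ref{prop: matching}), and then counts the surviving \emph{pure} summands, which are indexed by regular chains of partitions of $p^a$, via an inclusion–exclusion over ordered partitions of $a$. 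You instead propose to use the stronger form of \cite[Theorem~1.1]{arone2016bredon} to express the answer as Steinberg-twisted $\GL_a(\FF_p)$-coinvariants of $\mu_*$ on a free orbit, then unwind via the parabolic restriction Lemma~\ref{BTb}. That is a genuinely different route and is in the spirit of the Arone--Mahowald computation for \emph{homotopy} orbits; it is a sensible plan, but your sketch has two concrete problems.

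First, your vanishing argument when $n$ is not a power of $p$ does not work as stated. You claim to deduce vanishing from Corollary~\ref{corollary: el abelian} and Kozlov's contractibility of $|\Pi_m|/_{\Sigma_m}$, but Bredon homology with coefficients in $\mu_*$ does not reduce to singular homology of an orbit space: the $\Sigma_m$-factor in $\Aff_{\FF_p^k}\times\Sigma_m$ acts on $|\Pi_m|^\diamond$ \emph{and} on the coefficient Mackey functor, and these two actions are entangled. Replacing $|\Pi_m|^\diamond$ with its strict $\Sigma_m$-quotient discards exactly the equivariant information that Bredon homology retains, so the appeal to $|\Pi_m|/_{\Sigma_m}\simeq *$ is not a valid step. (The paper sidesteps this entirely by citing \cite{arone2016bredon} for the vanishing, and independently notes that in the Euler-characteristic computation no pure $p$-enhancements exist when $n$ is not a $p$-power, Lemma~\ref{lemma: puregular}.)

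Second, and more centrally, the engine in your final paragraph is miscalibrated. You say each flag step contributes ``mod $p$ homology of the $\Sigma_p$-\emph{extended} power of a sphere.'' But $\mu_*(\Sigma_n/H)=\widetilde{\HH}_*(S^{\ell n}/_H)$ is a \emph{strict} orbit, so the correct input is the homology of \emph{symmetric} powers (Dold, Nakaoka, Milgram—exactly what the paper records in Proposition~\ref{prop: full homology}), not of extended powers (Dyer--Lashof, Cohen--Lada--May). These computations differ in precisely the respect that matters here: extended powers produce unbounded families of operations, whereas symmetric powers impose the finite ``excess'' condition that is the source of the bound $1<i_a\le(p-1)\ell$ in condition~(3) and the upper bound $i_j<p\,i_{j+1}$ in condition~(2). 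You then say ``careful bookkeeping\dots recovers the total degree formula,'' which is where the paper's actual work lives: the splitting of $\widetilde{\HH}_*(S^{\ell n}/_{K_\sigma})$ over $p$-enhancements, the matching-up/matching-down cancellation that reduces to pure summands, and the binomial inclusion–exclusion that isolates the admissible sequences. None of that is present in your outline, so as written the proposal names a plausible alternative route but leaves its decisive step unargued.
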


To reduce the case where $\ell$ is even to the case where $\ell$ is odd, a  new conceptual insight is required. The key observation   is that for $p=2$, the above basis has the same form as the algebraic Andr\'{e}-Quillen homology $\AQ^{\FF_2}_\ast(\FF_2 \oplus \Sigma^{\ell} \FF_2)$ of the trivial square zero extension of $\FF_2$ by a generator in simplicial degree $j$ as computed by Goerss (cf.\ \cite{goerss1990andre}).
This similarity suggests that strict quotients of the partition complex are also the {Andr\'{e}-Quillen homology of  suitably defined objects.}\vspace{3pt}

 Indeed, the correct structure to consider here is that of a  \textit{(strictly) commutative monoid space}, i.e. a space $X$ together with a continuous, commutative, and associative multiplication law,  a unit $1$, and a contracting element $0$. \mbox{Heuristically, these monoids give ``simplicial commutative rings over $\FF_1$''.}

More formally, we consider the pointed model category $\mathbf{CMon}^{aug}$ of strictly commutative monoid spaces augmented over $S^0$, the monoid with two distinct elements $0$ and $1$.
Any pointed space $X$ gives rise to an augmented commutative monoid space $S^0 \vee \ovA{X}$ by declaring that $a\cdot b = 0$ unless $a=1$ or $b=1$. We call this  the \textit{trivial square zero extension} of $S^0$ by $X$. There is a natural notion of \textit{Andr\'{e}-Quillen homology}, denoted by $\AQ$, for these commutative monoid spaces.  \vspace{3pt}

The following result establishes the crucial connection to strict orbits of the partition complex:
\begin{lemman}[\ref{strictlie}]
If $X$ is a well-pointed  space, then 
$\AQ(S^0 \vee\ovA{X})\simeq \bigvee_{n\geq 1}\displaystyle \Sigma |\Pi_n|^\diamond \mywedge{\Sigma_n} X^{\wedge n}$.
\end{lemman}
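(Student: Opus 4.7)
The strategy is to identify $\AQ$ in $\bfCMon^{aug}$ with derived indecomposables, compute these via an operadic bar construction, and then exploit the triviality of the multiplication on $\bar{X}$ to split the resulting simplicial object by arity.

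Let $I = \bar{X}$ denote the augmentation ideal of $A = S^0 \vee \bar{X}$. By construction the multiplication $I \wedge I \to I$ is the constant map at the basepoint $0$, so the non-unital commutative monoid structure on $I$ is \emph{trivial}: all binary, and hence all higher, operations vanish.

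Following the standard model-categorical setup (compare Basterra, Harper, and Kuhn in the parallel settings of $E_\infty$-ring spectra and simplicial commutative rings), the André--Quillen object $\AQ(A)$ is the total left derived functor of indecomposables $I \mapsto I/(I \wedge I)$, applied to a cofibrant replacement of $A$. Equivalently, writing $T$ for the augmented free commutative monoid monad $T(Y) = \bigvee_{n\geq 1} Y^{\wedge n}/\Sigma_n$, one obtains
$$\AQ(A) \ \simeq\ \bigl|\, \Barr_\bullet(\mathrm{id},\, T,\, I) \,\bigr|,$$
a simplicial object with $k$-simplices $T^k(I)$, whose face maps are built from the monad multiplication and the action of $T$ on $I$. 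The well-pointedness hypothesis on $X$ ensures that this bar construction is levelwise well-pointed and that its realization is homotopy-invariant, so it correctly models the derived functor.

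Because the $T$-algebra structure on $I$ is trivial, the final face map $T(I) \to I$ projects onto the arity-$1$ summand, while all the other face and degeneracy maps preserve total arity. The simplicial object $\Barr_\bullet(\mathrm{id},\, T,\, I)$ therefore splits as a wedge over $n \geq 1$ of simplicial $\Sigma_n$-equivariant pieces, each of the form $\Barr_\bullet^{(n)} \mywedge{\Sigma_n} X^{\wedge n}$, where $\Barr_\bullet^{(n)}$ is the arity-$n$ part of the two-sided monadic bar construction on the commutative operad $\Comm$. The classical identification, made explicit in this form by Arone--Mahowald, Ching, and Salvatore, supplies a $\Sigma_n$-equivariant equivalence
$$\bigl|\Barr_\bullet^{(n)}\bigr| \ \simeq\ \Sigma |\Pi_n|^\diamond,$$
coming from the recognition of the nerve of the poset of partitions as the relevant bar complex. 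Assembling these equivalences across all $n \geq 1$ yields the desired splitting.

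The main obstacle will be the first step: ensuring that this monadic bar construction genuinely computes derived $\AQ$ in the \emph{strict} category $\bfCMon^{aug}$, where cofibrant replacement is more delicate than in the $E_\infty$ or spectral setting and where wedge sums, smash products, and orbits interact non-trivially with cofibrancy of symmetric powers. The well-pointedness hypothesis is precisely what is needed to guarantee that each $T^k(I)$ is levelwise well-pointed and that the inclusions of degenerate simplices are closed cofibrations, so that the geometric realization is homotopy-invariant and matches the model-categorical derived functor.
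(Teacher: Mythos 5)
Your proof is correct and follows essentially the same route as the paper: compute $\AQ$ as a monadic bar construction $|\Barr_\bullet(\id,\mathbf{T}^{>0},\ovA{X})|$, use the triviality of the multiplication on $\ovA{X}$ to split it by arity, and identify the arity-$n$ layer with (the simplicial model of) $\Sigma|\Pi_n|^\diamond$. The ``main obstacle'' you flag at the end---that the bar construction genuinely models the derived indecomposables in the strict category $\bfCMon^{aug}$---is precisely what the paper's Proposition~\ref{abc} establishes, so the gap you anticipate is closed by that earlier result.
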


We achieve the reduction  ``from even $\ell$ to odd $\ell$'  by constructing a new cofibre sequence, which we will now outline.
There is a natural notion of suspension $\Sigma^{\otimes}$ in the pointed \mbox{model category  $\mathbf{CMon}^{aug}$.}

For every pointed space, we produce a natural ``EHP-like'' sequence of maps
$$\Sigma^{\otimes} (S^0\vee \ovA{ \Sigma X^{\wedge 2}}) \xrightarrow{\HH} \Sigma^{\otimes}(S^0 \vee \ovA{X}) \xrightarrow{\EE} S^0 \vee \ovA{\Sigma X}.$$
The first map in this sequence is subtle to define and requires us to work with point-set models for strictly commutative monoid spaces.
We then prove:
\begin{theoremn}[\ref{EHPMonoid}]
If  $S^{n}$ is a sphere of even dimension $n\geq 2$, then the EHP sequence 
$$ \Sigma^{\otimes} (S^0 \vee \ovA{S^{2n+1}}) \xrightarrow{\HH}
\Sigma^{\otimes} (S^0 \vee \ovA{S^{n}})\xrightarrow{\EE}  S^0 \vee \ovA{S^{n+1}}$$
is in fact a homotopy cofibre sequence of strictly commutative monoid spaces.
\end{theoremn}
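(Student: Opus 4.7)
The strategy is to detect the cofibre sequence after applying Andr\'{e}-Quillen homology, using Lemma \ref{strictlie} to translate everything into the language of strict Young quotients of the partition complex, and then matching the resulting sequences (summand by summand in the wedge decomposition) to the Takayasu-type cofibre sequence announced in the introduction. The point-set construction of $H$ (outlined above) is needed to write down the map before passing to $\AQ$, but the argument that the sequence is actually a cofibre sequence is best carried out homologically.

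\textbf{Step 1 (constructing the maps and checking nullity).} First, I would spell out point-set models for $\Sigma^{\otimes}$ on trivial square-zero extensions, probably via the bar construction $B(S^0, S^0 \vee \ovA{X}, S^0)$ with respect to the smash product of based spaces. The map $E$ is then the natural projection realising $\Sigma^{\otimes}(S^0 \vee \ovA{X}) \to S^0 \vee \ovA{\Sigma X}$ coming from the fact that a trivial extension has no higher bar terms contributing beyond $\Sigma X$ on quotients. The map $H$ is the delicate commutative analogue of the James-Hopf map, using that for $n$ even there is a $\Sigma_2$-equivariant splitting witnessing $S^{2n+1} \simeq \Sigma S^{2n}$ with $S^{2n} = (S^n)^{\wedge 2}$ carrying trivial $\Sigma_2$-action on homology. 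Once $H$ and $E$ are constructed, the composition $E \circ H$ is null-homotopic essentially by construction, since $H$ lands in the ``weight two and higher'' part of $\Sigma^{\otimes}(S^0 \vee \ovA{S^n})$ while $E$ extracts the weight one part.

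\textbf{Step 2 (reduction to $\AQ$).} To show that the induced map from the homotopy cofibre of $H$ into $S^0 \vee \ovA{S^{n+1}}$ is an equivalence in $\mathbf{CMon}^{aug}$, I would appeal to the fact that $\AQ$ detects equivalences for sufficiently connected augmented commutative monoid spaces (which is the case here since $n \geq 2$). By Lemma \ref{strictlie} together with the identity $\AQ \circ \Sigma^{\otimes} \simeq \Sigma \circ \AQ$, applying $\AQ$ converts the proposed cofibre sequence into
\begin{equation*}
\bigvee_{k \geq 1} \Sigma^2 |\Pi_k|^\diamond \mywedge{\Sigma_k} (S^{2n+1})^{\wedge k} \;\xrightarrow{\AQ(H)}\; \bigvee_{d \geq 1} \Sigma^2 |\Pi_d|^\diamond \mywedge{\Sigma_d} (S^n)^{\wedge d} \;\xrightarrow{\AQ(E)}\; \bigvee_{d \geq 1} \Sigma |\Pi_d|^\diamond \mywedge{\Sigma_d} (S^{n+1})^{\wedge d}.
\end{equation*}
Inspecting the point-set description shows that $\AQ(H)$ sends the $k$-th summand on the left into the $(2k)$-th summand in the middle (this is the ``doubling'' built into the commutative James-Hopf map), while $\AQ(E)$ respects the weight grading.

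\textbf{Step 3 (summand-by-summand analysis).} It suffices to verify that, for every $d \geq 1$, the resulting weight-$d$ subsequence is a cofibre sequence of spectra. For even $d$, this is precisely the Takayasu-type cofibre sequence stated in the introduction,
\begin{equation*}
\Sigma^2 |\Pi_{d/2}|^\diamond \mywedge{\Sigma_{d/2}} (S^{2n+1})^{\wedge d/2} \;\longrightarrow\; \Sigma^2 |\Pi_d|^\diamond \mywedge{\Sigma_d} (S^n)^{\wedge d} \;\longrightarrow\; \Sigma |\Pi_d|^\diamond \mywedge{\Sigma_d} (S^{n+1})^{\wedge d},
\end{equation*}
which should be established independently by combinatorial/poset methods (using complementary collapse applied to $\Sigma_2 \wr \Sigma_{d/2} \subset \Sigma_d$ and the fact that $(S^n)^{\wedge d}$ and $(S^{n+1})^{\wedge d}$ differ by the sign representation once $n$ is even). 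For odd $d$, the left-hand summand vanishes and one must check that $\AQ(E)$ is an equivalence on that summand; here evenness of $n$ again enters, since it ensures that the relevant $\Sigma_d$-equivariant spheres agree up to a shift detectable on $|\Pi_d|^\diamond$.

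\textbf{Main obstacle.} The subtlest step is the construction of $H$ at the point-set level: the strictly commutative structure rules out any straightforward cofree-coalgebra definition, and one must instead use that for $n$ even the equivariant diagonal $S^{2n+1} \to (S^n)^{\wedge 2}$ behaves well only in the strict setting. Once $H$ is available, the hardest \emph{verification} is confirming the Takayasu-style cofibre sequence on each $\Sigma_d$-summand; I would expect to prove this using the filtration of $|\Pi_d|$ by the set of blocks of size two together with the complementary collapse machinery of Theorem \ref{CollapseFansB}, reducing it to a statement about the poset link of a ``pure pairing'' inside $\Pi_d$.
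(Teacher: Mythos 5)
Your plan is circular relative to the paper's logical architecture, and the gap it leaves is not easily filled. In the paper, the Takayasu-type cofibre sequence of Theorem~\ref{EHPLIE} is \emph{derived from} Theorem~\ref{EHPMonoid} by applying $\AQ$ and using Lemma~\ref{strictlie}; it is not established independently. Your Step~3 proposes to prove~\ref{EHPMonoid} by appealing to precisely the spectrum-level cofibre sequences of~\ref{EHPLIE}, so you would need a genuinely independent proof of those. Your sketch -- ``complementary collapse applied to $\Sigma_2\wr\Sigma_{d/2}\subset\Sigma_d$'' -- will not supply one: the complementary collapse machinery (Theorems~\ref{CollapseFansA},~\ref{CollapseFansB}) produces \emph{wedge-sum decompositions} (branching rules), not non-split cofibre sequences, and there is no known way to extract a Takayasu-type cofibration combinatorially; obtaining it is in fact one of the motivations for the monoid-space approach.

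There is also an unjustified step in Step~2: you assert that ``$\AQ$ detects equivalences for sufficiently connected augmented commutative monoid spaces.'' The paper does not use or prove this. It instead uses Lemma~\ref{integralcheck}, which shows that a sequence of \emph{simply connected} augmented commutative monoid spaces is a cofibre sequence if and only if it becomes one after applying the reduced integral chains functor $\ZZ\otimes(-)$; this is a Whitehead-type argument using the observation (Lemma~\ref{sc}) that the cofibre is again simply connected. Detection via $\AQ$ (derived indecomposables) is a stronger, different statement than detection via $\ZZ\otimes(-)$ and would require its own proof. The paper's actual argument then has no need of the Takayasu sequence: after extension of scalars to $\ZZ$, it appeals to Milgram's computation that $\pi_*\bigl(\Sigma^{\otimes}(\ZZ\oplus\Sigma^{2n+1}\ZZ)\bigr)=\Gamma[y_{2n+2}]$, $\pi_*\bigl(\Sigma^{\otimes}(\ZZ\oplus\Sigma^{n}\ZZ)\bigr)=\Lambda[z_{n+1}]\otimes\Gamma[y_{2n+2}]$, and $\pi_*\bigl(\ZZ\oplus\Sigma^{n+1}\ZZ\bigr)=\Lambda[z_{n+1}]$, checks via Propositions~\ref{E1} and~\ref{E2} that $\ZZ\otimes H$ and $\ZZ\otimes E$ send generators to generators with the same name, and runs the $\Tor$ spectral sequence over $\Gamma[y_{2n+2}]$ -- the freeness of $\Lambda[z_{n+1}]\otimes\Gamma[y_{2n+2}]$ over $\Gamma[y_{2n+2}]$ collapses it and identifies the cofibre with $\ZZ\oplus\Sigma^{n+1}\ZZ$. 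This computation is purely algebraic and is where the work actually happens; your proposal replaces it with a spectrum-level cofibration that it cannot assume.
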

Just like the classical EHP sequence gives rise to  the  Takayasu cofibration sequence via Goodwillie calculus (cf.\ Section 4.2 of \cite{arone1999goodwillie}\cite[Remark 2.1.7]{behrens2012goodwillie}), 
our new EHP sequence for commutative monoid spaces induces a strict analogue of the  Takayasu cofibration sequence by applying $\AQ$:
\begin{theoremn}[\ref{EHPLIE}]
For each $d \in \NN$ and each $\ell \in \NN$ even, there is a cofibre sequence of spaces
$$ \Sigma^2 |\Pi_{\frac{d}{2}}|^\diamond \mywedge{\Sigma_{\frac{d}{2}}} (S^{2\ell+1})^{\wedge \frac{d}{2}} 
\rightarrow  \Sigma^2 |\Pi_d|^\diamond \mywedge{\Sigma_d} (S^{\ell})^{\wedge d} \rightarrow  \Sigma |\Pi_d|^\diamond \mywedge{\Sigma_d} (S^{\ell+1})^{\wedge d}.$$
Here we use the convention that the space on the left is contractible if $d$ is odd.
\end{theoremn}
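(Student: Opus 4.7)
The plan is to apply the derived Andr\'e--Quillen functor $\AQ$ to the EHP cofibre sequence of Theorem~\ref{EHPMonoid}, specialised to the sphere $X = S^{\ell}$ (allowed since $\ell$ is assumed even), and then to extract the weight-$d$ wedge summand from each of the three resulting terms via Lemma~\ref{strictlie}.

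Concretely, I would first invoke Theorem~\ref{EHPMonoid} to obtain the homotopy cofibre sequence
\[
\Sigma^{\otimes}\!\bigl(S^{0} \vee \overline{S^{2\ell+1}}\bigr) \xrightarrow{\;H\;} \Sigma^{\otimes}\!\bigl(S^{0} \vee \overline{S^{\ell}}\bigr) \xrightarrow{\;E\;} S^{0} \vee \overline{S^{\ell+1}}
\]
inside $\mathbf{CMon}^{aug}$. Since $\AQ$ is a left Quillen (equivalently, left adjoint) functor from augmented commutative monoid spaces to pointed spaces, it preserves homotopy cofibre sequences, and, as any left adjoint, it carries the categorical suspension $\Sigma^{\otimes}$ to the ordinary topological suspension of pointed spaces, i.e.\ $\AQ(\Sigma^{\otimes} A) \simeq \Sigma\, \AQ(A)$.

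Combining this with Lemma~\ref{strictlie} applied three times converts the sequence above into a homotopy cofibre sequence of pointed spaces
\[
\bigvee_{n \geq 1} \Sigma^{2} |\Pi_{n}|^{\diamond} \mywedge{\Sigma_{n}} (S^{2\ell+1})^{\wedge n} \;\to\; \bigvee_{m \geq 1} \Sigma^{2} |\Pi_{m}|^{\diamond} \mywedge{\Sigma_{m}} (S^{\ell})^{\wedge m} \;\to\; \bigvee_{m \geq 1} \Sigma |\Pi_{m}|^{\diamond} \mywedge{\Sigma_{m}} (S^{\ell+1})^{\wedge m}.
\]
This wedge decomposition carries a natural \emph{weight grading} indexed by $n$ (respectively $m$). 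I would then verify that the map induced by $E$, coming from the suspension counit, is weight-preserving, while the Hopf-type map induced by $H$, constructed via a squaring operation on the monoid, multiplies weight by $2$. Extracting the weight-$d$ summand then yields the asserted cofibre sequence: the middle and right terms pick out the $m=d$ summands, while on the left only $n=d/2$ contributes, giving a contractible source precisely when $d$ is odd, in line with the stated convention.

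The main obstacle will be establishing these two structural properties of $\AQ$ rigorously: first, that the natural equivalence $\AQ(\Sigma^{\otimes}(-)) \simeq \Sigma\,\AQ(-)$ is strictly compatible with the wedge decomposition of Lemma~\ref{strictlie}; and, more delicately, that the James--Hopf style map $H$ genuinely doubles weight at the level of the $\AQ$-splittings. Both should follow by tracing through the explicit point-set construction of $H$ given in the proof of Theorem~\ref{EHPMonoid}, together with the naturality of Lemma~\ref{strictlie} with respect to the monoid multiplication $(S^{0} \vee \overline{X}) \otimes (S^{0} \vee \overline{X}) \to S^{0} \vee \overline{X \wedge X}$ and its effect on the internal weight grading of $\AQ$.
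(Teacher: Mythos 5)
Your overall strategy — apply $\AQ$ to the EHP cofibre sequence of Theorem~\ref{EHPMonoid}, use the fact that $\AQ$ preserves homotopy colimits (so it commutes with the monoid suspension), identify the three terms via Lemma~\ref{strictlie}, and read off the weight-$d$ piece — is exactly the route the paper takes. Two remarks, one of which is a genuine gap.

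First, the weight-bookkeeping that you flag as the ``main obstacle'' is not an obstacle at all in the paper's framework, and chasing the point-set construction of $H$ would be wasted effort. The entire EHP sequence of Theorem~\ref{EHPMonoid} is built in the category $\mathbf{CMon}^{\NN,aug}$ of $\NN$-\emph{graded} commutative monoid spaces, with $S^{\ell}$ and $S^{\ell+1}$ placed in degree $1$ and $S^{2\ell+1}=\Sigma(S^{\ell})^{\wedge 2}$ placed in degree $2$. The functor $\AQ$ and the equivalence of Lemma~\ref{strictlie} are stated for $J$-graded objects, so they are automatically maps of $\NN$-graded spaces. The weight-$n$ piece $\Sigma|\Pi_n|^\diamond \mywedge{\Sigma_n} X^{\wedge n}$ is just the $\NN$-degree-$n$ component when $X$ sits in degree $1$, and the ``weight doubling'' of $H$ is a formal consequence of $S^{2\ell+1}$ being in degree $2$. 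Passing to the degree-$d$ piece of the graded cofibre sequence immediately yields the asserted statement, with no need to re-examine the formula for $H$.

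Second, and this is the genuine gap: Theorem~\ref{EHPMonoid} is only proved for \emph{even spheres of dimension $n\geq 2$}, whereas the statement you are asked to prove allows $\ell=0$. Your argument simply does not reach that case. The paper treats $\ell=0$ by a separate argument: Kozlov's theorem that $|\Pi_d|/_{\Sigma_d}$ is contractible for $d\geq 3$ together with Lemma~\ref{lemma: contractible}, which shows that $\Sigma|\Pi_d|^\diamond \mywedge{\Sigma_d}(S^1)^{\wedge d}$ is contractible for $d>1$; these two facts make the sequence for $\ell=0$ contractible in all three spots (apart from small $d$ which one checks directly). You would need to add this ad hoc case before claiming the theorem as stated.

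Finally, a small imprecision: $\AQ$ itself is a left \emph{derived} functor of the indecomposables functor $Q$, not a left Quillen functor on the nose; and ``left adjoint implies commutes with suspension'' is not quite the right reason. What you actually want — and what the paper invokes — is that $\AQ$ preserves homotopy colimits (hence homotopy pushouts, hence suspensions). This is established in Proposition~\ref{abc} and the surrounding discussion. The conclusion $\AQ(\Sigma^{\otimes}A)\simeq\Sigma\,\AQ(A)$ is correct, just for a slightly different reason than the one you give.
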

Combining this  with Corollary \ref{proposition: orbits again}, we can assemble  the space $\Sigma^2 |\Pi_n|^\diamond/_{\Sigma_{n_1}\times \dots \times \Sigma_{n_k}}$ from atomic  blocks  $\Sigma |\Pi_d|^\diamond \mywedge{\Sigma_d} (S^{\ell})^{\wedge d}$ with $\ell$ \textit{odd}. This will be helpful for our homological considerations. \vspace{3pt}

But first, we give a conceptual interpretation of the homology of strict Young quotients of $|\Pi_n|$, thus explaining the aforementioned similarity between the $\FF_2$-homology of  $ 
  |\Pi_d|^\diamond \mywedge{\Sigma_d} (S^{\ell})^{\wedge d}
$  
and  the algebraic Andr\'{e}-Quillen homology of the trivial square-zero extensions of $\FF_2$  \mbox{computed by Goerss.}

Given any ring $R$, we can apply the reduced chains functor $\tilde{C}_\bullet(-,R)$ to a commutative monoid space $X$ and obtain a \textit{simplicial commutative $R$-algebra}. Heuristically, we extend scalars from \mbox{$\FF_1$ to $R$.} This operation  intertwines Andr\'{e}-Quillen chains for strictly commutative monoid spaces with the classical algebraic Andr\'{e}-Quillen chains  for simplicial commutative $R$-algebras \mbox{(cf.\ Lemma \ref{AQSquare}).}
Moreover, the functor $\tilde{C}_\bullet(-,R)$ sends the trivial square-zero extension $S^0\vee \ovA{X}$ to the trivial square zero extension $R\oplus \tilde{C}_\bullet(X,R)$. 
When combined with Lemma \ref{strictlie}, these observations show  that the  singular homology of strict Young quotients of the doubly suspended partition complex $\Sigma |\Pi_n|^\diamond$ computes algebraic Andr\'{e}-Quillen homology: 
\begin{theoremn}[\ref{surprise}]
If $X$ is a well-pointed space and $R$ is a ring, then 
$$  \widetilde{\HH}_* \bigg(\bigvee_{d\geq 1} \Sigma  |\Pi_d|^\diamond \mywedge{\Sigma_d} X^{\wedge d} , R\bigg)\cong \AQ^R_\ast \bigg(R\oplus \tilde{C}_\bullet (X,R)\bigg).$$
\end{theoremn}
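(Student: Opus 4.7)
The plan is to deduce this theorem by taking the stated equivalence of Lemma~\ref{strictlie} for commutative monoid spaces and transporting it across the reduced chains functor $\tilde{C}_\bullet(-,R)$, using Lemma~\ref{AQSquare} to identify the resulting object on the algebraic side. Schematically, one simply composes the two intertwining lemmas that have already been set up, so the work lies entirely in verifying that the relevant functors play well together.

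Concretely, I would proceed as follows. First, by Lemma~\ref{strictlie}, there is an equivalence of augmented commutative monoid spaces
\[
\AQ(S^0 \vee \ovA{X}) \ \simeq \ \bigvee_{d\geq 1} \Sigma |\Pi_d|^\diamond \mywedge{\Sigma_d} X^{\wedge d}.
\]
Applying the reduced singular chains functor $\tilde{C}_\bullet(-,R)$ and passing to homotopy groups, the right-hand side computes precisely $\widetilde{\HH}_*\bigl(\bigvee_{d\geq 1} \Sigma |\Pi_d|^\diamond \mywedge{\Sigma_d} X^{\wedge d},\,R\bigr)$. For the left-hand side, Lemma~\ref{AQSquare} asserts that $\tilde{C}_\bullet(-,R)$ intertwines the Andr\'e--Quillen chains for strictly commutative monoid spaces with the classical algebraic Andr\'e--Quillen chains for simplicial commutative $R$-algebras, so
\[
\tilde{C}_\bullet\bigl(\AQ(S^0\vee\ovA{X}),R\bigr) \ \simeq \ \AQ^R\bigl(\tilde{C}_\bullet(S^0\vee\ovA{X},R)\bigr).
\]
Combining this with the observation (recorded just before the theorem statement) that $\tilde{C}_\bullet(-,R)$ sends the trivial square-zero extension $S^0\vee\ovA{X}$ to the algebraic trivial square-zero extension $R\oplus \tilde{C}_\bullet(X,R)$ yields the desired identification.

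The main technical obstacle is to ensure that all the functoriality invoked above is legitimate at the level of derived categories. Specifically, one must check: (i) the equivalence of Lemma~\ref{strictlie} takes place in a model category where $\tilde{C}_\bullet(-,R)$ is a left Quillen functor, so that it preserves the equivalence; (ii) the well-pointedness of $X$ is enough to guarantee that the pieces $\Sigma |\Pi_d|^\diamond \mywedge{\Sigma_d} X^{\wedge d}$ are cofibrant, so that the chain complex $\tilde{C}_\bullet(-,R)$ of their wedge computes their homology directly and $\AQ^R$ is modelled on a cofibrant replacement; (iii) $\tilde{C}_\bullet$ commutes with wedges, smash products with symmetric group orbits, and suspensions in the appropriate derived sense. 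All of these are standard, but require care because $\AQ^R$ on the algebraic side is computed by the cotangent complex, which is only homotopy invariant on cofibrant input. Once these verifications are in place, the theorem follows by taking $\pi_*$ of both sides of the resulting chain-level equivalence.
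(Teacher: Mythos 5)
Your proof is correct and is exactly the argument the paper has in mind: it deduces Theorem~\ref{surprise} by composing Lemma~\ref{strictlie} with Lemma~\ref{AQSquare} and the observation that $\tilde{C}_\bullet(-,R)$ sends $S^0\vee\ovA{X}$ to $R\oplus\tilde{C}_\bullet(X,R)$. One small imprecision: the equivalence $\AQ(S^0\vee\ovA{X})\simeq\bigvee_{d\geq 1}\Sigma|\Pi_d|^\diamond\wedge_{\Sigma_d}X^{\wedge d}$ from Lemma~\ref{strictlie} is an equivalence in $\Ho(\mathbf{Top}^J_\ast)$ (the $\AQ$ functor lands in pointed graded spaces, not in commutative monoid spaces), so you should not describe it as ``an equivalence of augmented commutative monoid spaces''; since you only ever apply $\tilde{C}_\bullet(-,R)$ and pass to $\pi_*$, this does not affect the argument, and the model-categorical verifications you flag under (i)--(iii) are already subsumed by the two cited lemmas being stated at the level of homotopy categories.
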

The algebraic  Andr\'{e}-Quillen cohomology  groups of trivial square-zero extensions of $R$  are of particular interest since they parametrise the operations which act naturally on the Andr\'{e}-Quillen cohomology of any simplicial commutative $R$-algebra.\vspace{5pt}

Theorem \ref{surprise} allows us to  observe that our splitting in Corollary \ref{proposition: orbits again} and our cofibration sequence in \mbox{Theorem \ref{EHPLIE}}  imply corresponding results for the algebraic Andr\'{e}-Quillen homology over any ring. Over $\FF_2$, these were first proven by Goerss \cite{goerss1990andre}. We find it remarkable that our combinatorial methods have these consequences  in derived algebraic geometry. 
\mbox{Combining our results, we  prove:}\vspace{-3pt}
\begin{theoremn}[\ref{finalthecohomology}]
Fix integers $\ell_1,\dots , \ell_k\geq 0$ and consider the reduced homology group $$\widetilde{\HH}_*\bigg(\bigvee_d \Sigma |\Pi_d|^\diamond \mywedge{\Sigma_d} (S^{\ell_1}\vee \dots \vee S^{\ell_k})^{\wedge d},R\bigg).$$
This group is given by the algebraic Andr\'{e}-Quillen homology ${\AQ}^R_\ast(R\oplus (\Sigma^{\ell_1} R \oplus \ldots \oplus \Sigma^{\ell_k} R))$ of the trivial square zero extension of $R$ by generators $x_{ 1}, \dots, x_{ k}$ in simplicial  degrees $\ell_1,\dots, \ell_k$.\vspace{3pt}

For $R=\QQ$, the above homology group has a basis indexed by pairs $(e,w)$. Here $w\in B_k(n_1,\ldots,n_k)$ is a Lyndon word. We have $e=0$ if \ $|w|:= \sum_i (1+\ell_i)n_i-1$ is odd and  $e\in \{0,1\}$ if $\ |w|$ is even. 
 The homological degree of $(e,w)$ is $(1+e)|w| + e$ and it lives in multi-weight $(n_1  (1+e) ,\ldots,n_k  (1+e))$.\vspace{3pt}

For $R=\FF_p$, the above homology group has a basis indexed by sequences $(i_1, \ldots, i_a, e,w)$, where \mbox{$w\in B_k(n_1,\ldots,n_k)$} is a Lyndon word and   $e$ lies in  $ \{0,\epsilon\}$. Here
 $\epsilon = 1$ if $p$ is odd and $|w|$ is even or if $|w|=0$. Otherwise, we set $\epsilon = 0$. The sequence $i_1, \ldots, i_a$ consists of positive integers satisfying:
\begin{enumerate}[leftmargin=26pt]
\item Each $i_j$ is congruent to $0$ or $1$ modulo $2(p-1)$. \label{congruence}
\item For all $1\le j<a$, we have  $1<i_j< p i_{j+1}$.
\item We have  $1<i_a\le (p-1)(1+e)|w|+\epsilon$.
\end{enumerate}
The homological degree of $(i_1, \ldots, i_a,e,w)$ is $i_1+\cdots+i_a+(1+e)|w| + e+a$ and it lives in multi-weight $(n_1 p^a (1+e) ,\ldots,n_k  p^a (1+e))$.  Note that $a=0$ is allowed.
\end{theoremn}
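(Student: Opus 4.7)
The proof will combine the Young-restriction splitting of Corollary~\ref{proposition: orbits again} with the identification of strict orbits as Andr\'{e}--Quillen homology (Theorem~\ref{surprise}) to reduce the computation to the homology of individual atoms $\Sigma |\Pi_d|^\diamond \mywedge{\Sigma_d}(S^L)^{\wedge d}$. The atomic computation of Theorem~\ref{theorem: thehomology} will handle the case $L$ odd directly, while the EHP cofibre sequence of Theorem~\ref{EHPLIE} will reduce the case $L$ even to $L$ odd.

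\textbf{Step 1 (Decomposition).} Applying Corollary~\ref{proposition: orbits again} to $X_i = S^{\ell_i}$, suspending once, and regrouping over $d \geq 1$ yields an equivalence
$$\bigvee_{d \geq 1} \Sigma |\Pi_d|^\diamond\mywedge{\Sigma_d}(S^{\ell_1}\vee \cdots \vee S^{\ell_k})^{\wedge d} \ \simeq \ \bigvee_{(d, w)} \Sigma |\Pi_d|^\diamond \mywedge{\Sigma_d}(S^{|w|})^{\wedge d},$$
indexed by pairs of an integer $d \geq 1$ and a Lyndon word $w$ in $k$ letters, where $|w| = \sum_i (1+\ell_i)|w|_i - 1$; the indexed atom sits in multi-weight $(d|w|_1, \ldots, d|w|_k)$. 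Combined with Theorem~\ref{surprise} and $\tilde C_\bullet(S^{\ell_i}, R) \simeq \Sigma^{\ell_i}R$, this identifies the target group with $\AQ^R_*(R\oplus \bigoplus_i \Sigma^{\ell_i}R)$ and reduces us to computing, for each pair $(d,w)$, the $R$-homology of the atom $\Sigma X(d, |w|)$, where $X(n, \ell) := |\Pi_n|^\diamond \mywedge{\Sigma_n}(S^\ell)^{\wedge n}$.

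\textbf{Step 2 (Odd $|w|$: direct application).} When $|w|$ is odd, Theorem~\ref{theorem: thehomology} applies with $\ell = |w|$ and $n = d$: the homology vanishes unless $d = p^a$, and otherwise carries a basis indexed by sequences $(i_1, \ldots, i_a)$ satisfying the three conditions there. After accounting for the outer $\Sigma$, the degree $\sum i_j + |w| + a$ and the multi-weight $(p^a|w|_1, \ldots, p^a|w|_k)$ match exactly the $e = 0$ portion of the final basis, noting that $\epsilon = 0$ whenever $|w|$ is odd and $d = p^a(1+e) = p^a$. For $R = \QQ$ only the $a = 0$ term survives, yielding the single rational class in degree $|w|$.

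\textbf{Step 3 (Even $|w|$: EHP reduction).} When $|w|$ is even, Theorem~\ref{EHPLIE} with $\ell = |w|$ supplies the cofibre sequence
$$\Sigma^2 X(d/2,\, 2|w|+1) \ \longrightarrow \ \Sigma^2 X(d, |w|) \ \longrightarrow \ \Sigma X(d, |w|+1),$$
with leftmost term null for $d$ odd. Both outer terms have odd sphere exponent, so their $R$-homology is computed by Step~2. The associated long exact sequence is analysed degree-by-degree. For $p$ odd the families $\{p^a\}$ and $\{2p^a\}$ are disjoint, so for each $d$ at most one outer term is nonzero; these contribute the $e = 0$ piece (from the third term, $d = p^a$) and the $e = 1$ piece (from the first term, $d = 2p^a$) of the final basis. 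The upper bounds $(p-1)(|w|+1)$ and $(p-1)(2|w|+1)$ from Step~2 collapse under the congruence $i_j \equiv 0, 1 \pmod{2(p-1)}$ to the uniform bound $(p-1)(1+e)|w| + \epsilon$ with $\epsilon = 1$, giving the $\epsilon$-correction. For $p = 2$ both outer terms contribute simultaneously whenever $d = 2^a$ with $a \geq 1$; one must check that the connecting homomorphism is injective on the unique "extra" $i_a = |w|+1$ class of $X(d, |w|+1)$, so that it cancels precisely the single class of $\Sigma^2 X(d/2, 2|w|+1)$ lying in the target degree. The resulting middle homology then recovers exactly the $e = 0$ basis with $\epsilon = 0$, consistently with $e \in \{0, \epsilon\} = \{0\}$ in this case. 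Rationally, only the $a = 0$ atoms on both sides survive, producing two classes in degrees $|w|$ and $2|w|+1$ as claimed.

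\textbf{Main obstacle.} The crux of the argument is the long-exact-sequence analysis in Step~3, most delicately when $p = 2$: one must identify the connecting map precisely on the boundary classes and verify that it pairs up the top-degree generators of the first and third terms of the cofibre sequence. This amounts to careful numerical bookkeeping of the conditions on $(i_1, \ldots, i_a)$ in Theorem~\ref{theorem: thehomology} under the substitutions $\ell \mapsto |w|+1$ and $\ell \mapsto 2|w|+1$, together with the congruence $i_j \equiv 0, 1 \pmod{2(p-1)}$ that selects which boundary classes survive, and is the mechanism producing the $\epsilon$-correction. All remaining ingredients---the decomposition into atoms, the Andr\'{e}--Quillen identification, the odd case, and the rational specialisation---are formal consequences of earlier results.
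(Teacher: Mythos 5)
Your Steps 1 and 2 correctly assemble the atomic decomposition (Corollary~\ref{proposition: orbits again}, Theorem~\ref{surprise}, and Theorem~\ref{theorem: thehomology} for $|w|$ odd), and your Step~3 analysis for $p$ odd is essentially the right argument: it amounts to re-proving the $p$-odd part of Theorem~\ref{EHPhomology}, which the paper simply quotes and then combines with the elementary translation $i_a\le(p-1)(\ell+1)\Leftrightarrow i_a\le(p-1)\ell+1$ forced by the congruence condition.

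There is, however, a genuine gap in your treatment of $p=2$, and it stems from a misreading of the hypothesis of Theorem~\ref{theorem: thehomology}. That theorem asserts its conclusion for \emph{every} $\ell\geq 1$ when $p=2$ (the parity hypothesis ``$\ell$ odd'' is imposed only for $p$ odd). The paper therefore invokes Theorem~\ref{theorem: thehomology} directly for $p=2$, for both parities of $|w|$, and never touches the EHP sequence in this case; the only remaining issue is $\ell=0$, which is handled separately via Kozlov's contractibility theorem. Your proposed $p=2$ analysis, by contrast, routes through the long exact sequence and asks one to ``check that the connecting homomorphism is injective on the unique `extra' $i_a=|w|+1$ class'' -- a step you do not carry out and which is also miscounted: for $d=2^a$ with $a\geq 2$ there is an entire family of admissible sequences $(i_1,\dots,i_{a-1},|w|+1)$ in $\Sigma X(d,|w|+1)$, and an entire family in $\Sigma^2 X(d/2,2|w|+1)$, so the cancellation would require an exact dimension count degree by degree, not a single pairing. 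As written, this is an open-ended computation rather than a proof. Were you to actually need the $p=2$ EHP input, the relevant fact is the \emph{extended} short exact sequence of Theorem~\ref{EHPhomology}, which the paper obtains from Goerss' computation rather than from a connecting-map analysis. Finally, you should also flag the atoms with $|w|=0$ (arising when some $\ell_i=0$), which do not fit the EHP template and require Kozlov's theorem and Lemma~\ref{lemma: contractible}.
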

A sequence $(i_1,\ldots,i_a,e,w)$ satisfying the conditions in the theorem above is called \textit{allowable} with respect to $R=\QQ$ or $R=\FF_p$, respectively.
We can read off the answer to the question raised above: \vspace{-6pt}\vspace{-6pt}\vspace{-6pt}
\begin{corollaryn}[\ref{strqu}]
Let $n=n_1+\ldots+n_k$.

The vector space $\widetilde{\HH}_*(|\Pi_n|/_{\Sigma_{n_1} \times \ldots \times \Sigma_{n_k}},\QQ)$ has a basis consisting of sequences $(e,w\in B(m_1,\ldots,m_k))$ which are  allowable with respect to $\QQ$ and satisfy $ m_i (1+e)= n_i$ for all $i$.  

The vector space $\widetilde{\HH}_*(|\Pi_n|/_{\Sigma_{n_1} \times \ldots \times \Sigma_{n_k}},\FF_p)$ has a basis consisting of all  $\FF_p$-allowable sequences    $(i_1,\ldots,i_a,e,w\in B(m_1,\ldots,m_k))$  satisfying 
$m_i p^a (1+e)  = n_i$ for all $i$.

The sequence $(i_1,\ldots,i_a,e,w)$ sits in homological degree  $i_1+\ldots +i_a +(1+e) |w| + e + a -2$.
\end{corollaryn}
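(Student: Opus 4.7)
My plan is to deduce Corollary \ref{strqu} directly from Theorem \ref{finalthecohomology} by specialising $\ell_1 = \cdots = \ell_k = 0$ and extracting the component of a prescribed multi-weight. With $\ell_i = 0$, the space $S^0 \vee \cdots \vee S^0$ is a pointed set, and its $d$-fold smash power splits $\Sigma_d$-equivariantly as
\[
(S^0 \vee \cdots \vee S^0)^{\wedge d} \ \simeq \ \bigvee_{m_1 + \cdots + m_k = d} (\Sigma_d)_+ \mywedge{\Sigma_{m_1} \times \cdots \times \Sigma_{m_k}} S^0.
\]
Smashing with $|\Pi_d|^\diamond$ and passing to $\Sigma_d$-orbits therefore produces the decomposition
\[
|\Pi_d|^\diamond \mywedge{\Sigma_d}(S^0\vee \cdots \vee S^0)^{\wedge d} \ \simeq\ \bigvee_{m_1+\cdots+m_k=d} |\Pi_d|^\diamond/(\Sigma_{m_1}\times \cdots \times \Sigma_{m_k}).
\]
The $\NN^k$-multi-weight grading used in Theorem \ref{finalthecohomology} records precisely which summand of this decomposition a homology class lives in, so the multi-weight $(n_1, \ldots, n_k)$-piece --- which must come from the $d = n$ term --- is exactly $|\Pi_n|^\diamond/(\Sigma_{n_1}\times \cdots \times \Sigma_{n_k})$.

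Next I would account for the two-fold degree shift between $\Sigma |\Pi_n|^\diamond/G$, whose reduced homology is computed by Theorem \ref{finalthecohomology}, and the strict Young quotient $|\Pi_n|/G$. Unreduced suspension commutes with $G$-orbits, so $|\Pi_n|^\diamond/G \simeq (|\Pi_n|/G)^\diamond$, and it satisfies $\widetilde{H}_i(Y^\diamond, R) \cong \widetilde{H}_{i-1}(Y, R)$. Combined with a further reduced suspension these yield
\[
\widetilde{H}_i\bigl(\Sigma |\Pi_n|^\diamond/G,R\bigr) \ \cong\ \widetilde{H}_{i-2}\bigl(|\Pi_n|/G,R\bigr).
\]
Under this identification, a basis element $(i_1,\ldots,i_a,e,w \in B_k(m_1, \ldots, m_k))$ of the multi-weight $(n_1, \ldots, n_k)$-piece in Theorem \ref{finalthecohomology} contributes a class to $\widetilde{H}_*(|\Pi_n|/G, R)$ in homological degree $i_1+\cdots+i_a+(1+e)|w|+e+a - 2$, exactly matching the formula in the corollary. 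The multi-weight constraints $m_i(1+e) = n_i$ over $\QQ$ and $m_i p^a(1+e) = n_i$ over $\FF_p$ are precisely those asserted.

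The argument is essentially bookkeeping. The substantive work is already contained in Theorem \ref{finalthecohomology}, which assembles the Bredon-homology calculation of Theorem \ref{theorem: thehomology}, the strict Takayasu cofibre sequence of Theorem \ref{EHPLIE} (used to reduce even-dimensional suspension coefficients to odd ones), and the link with algebraic Andr\'e-Quillen homology of Theorem \ref{surprise}. The only mild obstacle at this last step is confirming the compatibility of the $\NN^k$-multi-weight grading on homology with the natural grading by multi-exponent of the smash-power splitting, which however is immediate from $\Sigma_d$-equivariance of the wedge decomposition of $(S^0\vee\cdots\vee S^0)^{\wedge d}$ displayed above.
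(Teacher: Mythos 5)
Your argument is correct and coincides with the paper's (essentially unspoken) proof: specialize $\ell_1=\cdots=\ell_k=0$ in Theorem \ref{finalthecohomology}, identify the multi-weight $(n_1,\ldots,n_k)$ summand of $\Sigma|\Pi_n|^\diamond\mywedge{\Sigma_n}(S^0\vee\cdots\vee S^0)^{\wedge n}$ with $\Sigma(|\Pi_n|/_{\Sigma_{n_1}\times\cdots\times\Sigma_{n_k}})^\diamond$, and shift the degree by $2$. The paper does not spell out this bookkeeping, but what you have written is exactly what is intended.
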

\vspace{-2pt}
We have seen in Corollary \ref{proposition: orbits again} that strict Young quotients of   partition complexes split as wedge sums of   spaces of the form  $\Sigma^{-1}(S^{\ell})^{\wedge d }\mywedge{\Sigma_d} |\Pi_d|^\diamond$, which sit in the cofibre sequences in \mbox{Theorem \ref{EHPLIE}.}

We study the homotopy type of these spaces and prove that $\Sigma^{-1}(S^{\ell})^{\wedge d }\mywedge{\Sigma_d} |\Pi_d|^\diamond$ is:\vspace{-3pt}
\begin{enumerate}[leftmargin=26pt] 
\item rationally contractible unless $d=1$ or $d=2$ and $\ell$ even (this is a special case of \mbox{Theorem \ref{finalthecohomology}}).
\item equivalent to $\Sigma^\ell\reals P^{\ell-1}$ if $d=2$ (this is well-known).
\item equivalent to the $p$-localisation of the strict quotient $\Sigma^{-1} (S^{\ell})^{\wedge p}/_{\Sigma_p}$ if $d=p$ is an odd prime, and $\ell$ is odd (cf.\ Proposition \ref{proposition: prime}). 
\item equivalent to the $p$-localisation of the homotopy cofibre of the map $S^{\ell p-1}\rightarrow  S^{\ell p-1}/_{\Sigma_p}$ if $d=p$ is an odd prime, and $\ell$ is even (cf.\ Proposition \ref{proposition: prime}).
\item contractible if $\ell=1$ or if $\ell=2$ and $d$ is an odd prime (cf.\ Lemma \ref{lemma: contractible} and Corollary \ref{corollary: p-local}).
\end{enumerate} Hence, we  describe the homotopy type of $\Sigma^{-1}(S^{\ell})^{\wedge d}\mywedge{\Sigma_d} |\Pi_d|^\diamond$ when $d$ is  prime or $\ell=1$.
Together with Corollary \ref{proposition: orbits again}, this allows us to  describe  $ |\Pi_{2p^2}|/_{\Sigma_{p^2}\times\Sigma_{p^2}}$ for any prime  $p$.
 Moreover, we\vspace{-3pt}
\begin{enumerate}[leftmargin=26pt] 
\item describe the torsion-free part of $\widetilde{\HH}_*(|\Pi_n|/_{\Sigma_{n_1}\times\cdots\times\Sigma_{n_k}},\ZZ)$ in all cases (but this is not new). 
\item compute the groups $\widetilde{\HH}_*(|\Pi_n|/_{\Sigma_{n_1}\times\cdots\times\Sigma_{n_k}},\QQ)$ and $\widetilde{\HH}_*(|\Pi_n|/_{\Sigma_{n_1}\times\cdots\times\Sigma_{n_k}},\FF_p)$ \mbox{for all primes $p$.}
\item prove that $\widetilde{\HH}_*(|\Pi_n|/_{\Sigma_{n_1}\times\cdots\times\Sigma_{n_k}})$  has $p$-primary torsion only for primes $p$ that satisfy   the inequality \mbox{$p\le \gcd(n_1, \ldots, n_k)$ (cf.\ Lemma~\ref{lemma: torsion}).}
\item describe the homotopy type of $|\Pi_n|/_{\Sigma_{n_1}\times\cdots\times\Sigma_{n_k}}$ when $\gcd(n_1, \ldots, n_k)$ equals $1$ or a prime number (cf.\ Corollary~\ref{corollary: gcd one} and~Proposition~\ref{proposition: prime}). 
\item classify Young subgroups for which the quotient is a wedge of spheres (Corollary~\ref{corollary: wedge of spheres}). 
\end{enumerate}
  
\subsection*{Historical Review}
This paper subsumes the preprint \cite{arone2015branching}, a part of the second-named author's thesis \cite{brantnerthesis}, and additional joint work.
We give a brief historical review. The short discrete Morse-theoretic proof of Theorem \ref{fixedpoints} concerning the fixed points of partition complexes was found by the second-named author in the spring of 2014 and presented at a short presentation in Bonn in May 2015.   In late August of the same year, the first-named  author made his preprint publicly available. Here, the first-named author follows a very different path: He first establishes Theorem \ref{fixedpoints} for certain wreath products. Using this, he  proceeds to prove Theorem  \ref{main} by reducing it to the algebraic Hilton-Milnor theorem on the level of cohomology. From this, he then deduces Theorem \ref{fixedpoints} for all subgroups. Strictly speaking, only the non-simple versions of these two statements were covered. The preprint also contains several other statements, in particular a version of Lemma \ref{lemma: contractible} and Corollary \ref{corollary: wedge of spheres}.  The preprint motivated the second-named author to strengthen his discrete Morse theoretic methods, develop the theory of orthogonality fans, and thereby give the direct combinatorial proof of the strengthened version of Theorem \ref{main} presented in this paper. In this approach, fixed points and restrictions are computed independently by applying a general combinatorial technique.  
\subsection*{Acknowledgements}
The first-named author's initial impetus for this work was provided by Ralf Donau's paper~\cite{donau2012nice} and his desire to understand its connection with~\cite{arone1998homology}, and he is grateful to Donau for several useful email exchanges, and in particular for sharing  the results of some of his computer-aided calculations of the homology groups of quotient spaces of $|\Pi_n|$.

The second-named author's interest in  the partition complex was initially sparked by the study of the Morava $E$-theory of spectral Lie algebras in his thesis, and he would like to thank his advisor Jacob Lurie for his  guidance and support. The second-named author was  supported by an ERP scholarship   and by the Max Planck Institute \mbox{for Mathematics in Bonn.}

Both authors  thank the Hausdorff Research Institute for Mathematics in Bonn for its hospitality, and the anonymous referee for 
many helpful comments that led to an improvement of this paper.

\newpage 
\section{Topological Preliminaries} \label{topp}
We  will now set up the basic  framework used in  the subsequent sections of this article.\vspace{-7pt}
\subsection{Combinatorial Models for Spaces} We briefly review the links between several different  models for spaces  commonly used in homotopy theory and combinatorial topology.\\
Write $\mathbf{Top}$ for the category of  compactly generated weak Hausdorff spaces. We will simply call its objects ``spaces''. All limits and colimits will be implicitly computed in this category rather than the category of mere topological spaces  
(cf.\  \cite{mccord1969classifying}, where compactly generated weak Hausdorff spaces are called  ``compact spaces'').

In homotopy theory, we often use the category $\sSet$ of \textit{simplicial sets}, i.e. contravariant functors from the  category  of nonempty linearly ordered finite  sets $\Delta$  to the category of sets.
The Yoneda embedding $ i: \Delta  \hookrightarrow  \sSet $ sends the ordered set $[n]:=\{0<\ldots<n\}$ to the simplicial  $n$-simplex.

In combinatorial topology, the following notion is commonly used:
\begin{definition}\label{scomplex}
A \textit{simplicial complex} is a pair $(V,F)$ consisting of a set $V$ of {vertices} and a set \mbox{$F\subset \euscr{P}(V)$} of  {finite} nonempty subsets of $V$, called {faces},  such that $F$ is closed under passing to nonempty subsets and contains all singletons. A \textit{morphism $(V,F) \rightarrow (V',F')$ of simplicial complexes}  is a map   $V\rightarrow V'$ sending subsets in $F$ to subsets in $F'$. \mbox{Let $\sCpl$ be the resulting category.}
\end{definition}

Let $\Fin_+$ be the category of nonempty finite sets. We can define a functor $  \Fin_+ \rightarrow  \sCpl$ by sending a  set $B$ to the  simplicial complex $(B,\euscr{P}(B))$  modelling a simplex with $B$ vertices. 

Yet another common model is given by  the category $\CW$ of $\CW$-complexes.

In order to link these models, we use the following gadget (cf.\ \cite{grandis2000higher}, \cite{lawvere1988toposes}, \cite{rosicky2003left}): 
\begin{definition}
The category $\symsset$ of \textit{symmetric simplicial sets} is given by the category of contravariant functors from $\Fin_+$ to $\Set$. 
\end{definition}

There is a natural diagram\hspace{15pt} \ \ \ \ {} \ \  \hspace{15pt}\vspace{-5pt}
\begin{diagram}
& \Delta & \rTo^U & \Fin_+ & & \\
& \dInto & & \dInto & \rdInto(3,2)^F & \\
& \sSet & \rTo^L & \symsset & \rTo^{|-|} & &  \CW& \rTo &   \mathbf{Top} .
\end{diagram}

The vertical arrows are given by Yoneda embeddings, the functor $U$  forgets the order, the functor $L$ is the colimit-preserving extension making the diagram commute, the functor $F$ sends a finite set $B$ to the simplex on $B$ vertices, and the functor $|-|$  extends $F$ \mbox{in a colimit-preserving way.}

Every simplicial complex $X$ gives  a symmetric simplicial set $B \mapsto \Map_{\sCpl}((B,\euscr{P}(B)),X)$, and this assignment is in fact fully faithful.\\
Writing $\Po$ for the category of posets, there is a \textit{nerve functor} $N_\bullet: \Po\rightarrow \sSet$ (defined by considering posets as   categories) and an \textit{order complex functor} $N: \Po\rightarrow \sCpl$ (defined by sending a poset $P$ to the simplicial complex whose vertices are the elements of  $P$ and whose faces are  all subsets which are chains in $P$). These constructions are  compatible -- the following \mbox{diagram commutes:} \vspace{-5pt} 
\begin{equation}\label{diagramofmodels}\begin{diagram}
\Po & \rTo^{N} & \sCpl & & \\
\dTo^{N_\bullet} & & \dTo & & \\
\sSet & \rTo & \symsset & \rTo^{|-|}& \CW & \rTo &  \mathbf{Top} .
\end{diagram}\end{equation}

We will abuse notation and denote all arrows landing in $\CW$ or in $\mathbf{Top}$ by \vspace{5pt}$|-|$.\\For the rest of this section, we fix a finite group $G$. We invite the reader to recall the notion of a  \mbox{$G$-$\CW$-complex} from \cite{luck1989transformation} -- note that this is \textit{not} the same as a $G$-object in $\CW$-complexes. We write $\Po^G$, $\sCpl^G$, $\sSet^G$, and $\symsset^G$ for the categories of objects with $G$-action in the undecorated versions of these respective categories.
One can then obtain a $G$-equivariant version of diagram (\ref{diagramofmodels}) on the preceding page. A  similar  diagram exists in the pointed setting. \vspace{-5pt}
\subsection{Basic Examples}
Many posets have an initial and/or final element which we will denote by $\hat0$ and/or $\hat1$. Given a poset $\Pcal$, we define  $\ovA{\Pcal}:=\Pcal-\{\hat 0 , \hat 1\}$. 
\begin{example}\label{example: simplex}
Let $\Ccal_m$ be the linearly ordered set of integers between $1$ and $m$. Then $|\Ccal_m|=\Delta^{m-1}$ is the standard $(m-1)$-dimensional simplex. Its points can be parametrised  by tuples of numbers $t_0,\ldots,t_{m-1}\geq 0$ with $t_0+\ldots+t_{m-1}=1$. Alternatively, we can specify points in $\Delta^{m-1}$  by tuples  $s_0,\ldots,s_{m-1}$ satisfying $0 \leq s_1\leq \ldots \leq s_{m-1}\leq 1$.
 The simplicial boundary of $\Ccal_{m}$ is the simplicial complex consisting of all the increasing chains in $\Ccal_m$, except for the maximal chain \mbox{$[1<\cdots<m]$}. Its geometric realisation is the topological boundary $\partial \Delta^{m-1}$ \mbox{of $\Delta^{m-1}$}, i.e. the sphere $S^{m-2}$.
\end{example} 
\vspace{5pt}

\begin{minipage}{0.75\textwidth}
\begin{example}\label{example: sphere} Let $\Bcal_m$ be the poset of subsets of $\m = \{1,\dots, m\}$.
 It is easy to see that there is an isomorphism of posets $\Bcal_m \simeq (\Bcal_1)^m$. Since $|\Bcal_1|\simeq [0,1]$, there is a homeomorphism $|\Bcal_m| \simeq [0,1]^m$. Next, we consider  \mbox{$\Bcal_m-\{\hat0\}$} -- the poset of non-empty subsets of $\m$, and $\ovA{\Bcal}_m = \Bcal_m-\{\hat0, \hat1\}$ -- the poset of proper, non-empty subsets of $\m$. We draw $\ovA{\Bcal_4}$:
\vspace{5pt}
\end{example}
\end{minipage} 
\begin{minipage}{0.3 \textwidth} \hspace{8pt} \ \vspace{ 5pt}
\includegraphics[width=0.65 \textwidth]{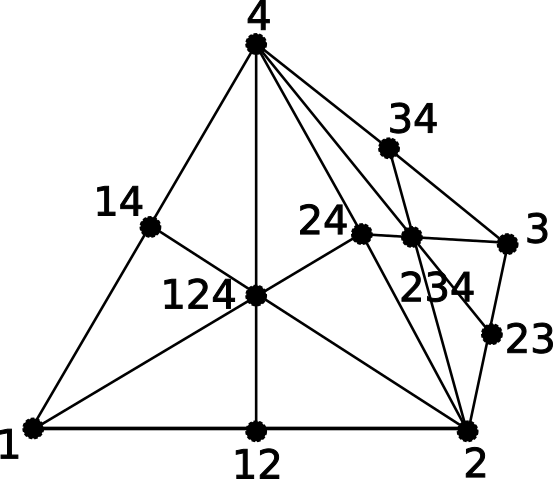}
\end{minipage}   
It is well-known, and not difficult to check, that the order complexes of $\Bcal-\{\hat 0\}$ and $\ovA{\Bcal}_m$ are isomorphic to the barycentric subdivisions of $\Ccal_{m}$ and of its simplicial boundary, respectively (see Example~\ref{example: simplex}). There is a $\Sigma_m$-equivariant homeomorphism $|\ovA{\Bcal}_m| \simeq S^{m-2}$ to the doubly desuspended representation sphere of the standard representation of $\Sigma_m$. It is well-known and not difficult to prove that this action is homeomorphic to the action of $\Sigma_m$ on the unit sphere in $\reals^{m-1}$, where $\reals^{m-1}$ is endowed with the reduced standard action of $\Sigma_m$. \vspace{-5pt}

\subsection{Equivariant Homotopy Theory}
Let $G$ be a finite group. All of our spaces will be homeomorphic, as $G$-spaces, to  geometric realisations of  $G$-simplicial sets.  
A (finite) pointed $G$-space $X$ can be written as a (finite) homotopy colimit of spaces  $G/H_+$, where $H$ is an isotropy group of $X$. 

A $G$-map $f\colon X \rightarrow Y$ is a $G$-equivariant weak equivalence if it induces weak equivalences of fixed point spaces $f^H\colon X^H \rightarrow Y^H$ for every subgroup $H$ of $G$. It is well-known that for nice spaces, an equivariant weak equivalence is an equivariant homotopy equivalence.  
It is in fact not necessary to check the fixed points condition for every subgroup of $G$ --  isotropy groups of $X$ and $Y$ suffice.

Suppose that $K$ and  $H$ are subgroups of $G$ and that $X$ is a pointed space with an \mbox{action of $H$.} One may form the $G$-space $\Ind^G_H X := G_+\wedge_H X$ and consider the space of $K$-fixed points $(\Ind^G_H X)^K$ with its action by the Weyl group $W_{G}(K)=N_G(K)/K$. Write $N_G(K; H)=\{g\in G\mid g^{-1}Kg\subset H\}$ and note that $H$ acts on this set from the right.
 The following is well-known: \vspace{-1pt}
\begin{lemma}\label{lemma: double cosets} 
There is a $W_G(K)$-equivariant  homeomorphism $(\Ind^G_H X)^K \cong \bigvee_{[g]\in N_G(K; H)/H} X^{g^{-1}Kg}$, where the Weyl group $W_G(K)$ acts through the natural left action of $N_G(K)$ on the set $N_G(K; H)$.
Hence if $K$ is not subconjugate to $H$, then $(\Ind^G_H X)^K \cong *$.\vspace{-1pt}
\end{lemma}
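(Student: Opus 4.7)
The plan is to analyse when a point $[g,x]\in G_+\wedge_H X$ is fixed by $K$, then assemble these fixed points into a wedge indexed by double cosets. Recall that $\Ind^G_H X = G_+\wedge_H X$ is the quotient of $G_+\wedge X$ by the relation $[gh,x]\sim[g,hx]$ for $h\in H$, with $G$ acting by $k\cdot[g,x]=[kg,x]$.

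First I would decode the fixed-point condition. A non-basepoint class $[g,x]$ is fixed by $k\in K$ precisely when there exists $h\in H$ with $kg=gh$ and $hx=x$; such an $h$ is necessarily $g^{-1}kg$. Quantifying over all $k\in K$, the class $[g,x]$ is $K$-fixed iff
\[
g^{-1}Kg\subset H \quad\text{and}\quad x\in X^{g^{-1}Kg},
\]
i.e.\ $g\in N_G(K;H)$ and $x$ is fixed by the (now well-defined) action of $g^{-1}Kg\subset H$ on $X$.

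Next I would parametrise the resulting set. For $g\in N_G(K;H)$, the map $x\mapsto[g,x]$ is a pointed homeomorphism from $X^{g^{-1}Kg}$ onto a subspace of $(\Ind^G_H X)^K$; two such embeddings for $g$ and $g'=gh$ have the same image, since $[gh,x]=[g,hx]$ and conjugation by $h$ carries $X^{(gh)^{-1}K(gh)}$ isomorphically to $X^{g^{-1}Kg}$. Distinct cosets $[g],[g']\in N_G(K;H)/H$ give disjoint (off the basepoint) images, so we get a pointed homeomorphism
\[
(\Ind^G_H X)^K \;\cong\; \bigvee_{[g]\in N_G(K;H)/H} X^{g^{-1}Kg}.
\]

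For the equivariance, I would check that left multiplication by $n\in N_G(K)$ preserves the condition $g^{-1}Kg\subset H$ (because $(ng)^{-1}K(ng)=g^{-1}(n^{-1}Kn)g=g^{-1}Kg$) and commutes with the right $H$-action, giving a left $N_G(K)$-action on $N_G(K;H)/H$. On the fixed-point space, $n\cdot[g,x]=[ng,x]$ matches this action on the indexing set, and the subgroup $K\subset N_G(K)$ acts trivially (since $[kg,x]=[g,x]$ for $[g,x]$ already $K$-fixed), so the action descends to $W_G(K)=N_G(K)/K$. Finally, the subconjugacy corollary is immediate: if no conjugate of $K$ lies in $H$, then $N_G(K;H)=\emptyset$, and the wedge collapses to the basepoint. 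No step is really an obstacle here; the only care required is keeping track of the side-conventions in the smash product $G_+\wedge_H X$ so that the identification $h\leftrightarrow g^{-1}kg$ is unambiguous.
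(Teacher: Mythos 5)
Your argument is correct and is the standard one; the paper states this lemma as "well-known" without offering a proof, so there is no alternative route to compare against. Your unwinding of the fixed-point condition $k\cdot[g,x]=[g,x]\iff g^{-1}kg\in H$ and $g^{-1}kg\cdot x=x$, the passage to $N_G(K;H)/H$ via the conjugation intertwiner, and the observation that the left $N_G(K)$-action on $N_G(K;H)$ descends to $W_G(K)$ on the $K$-fixed set are exactly what the statement requires.
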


We will now provide some tools which make it possible to compute the effect of Theorem \ref{main} on fixed points. This can be used to give an alternative proof of some of our results in Section \ref{fixedpoints}, where we have chosen a more direct approach. Let $G\subset \Sigma_d\subset \Sigma_n$, where $d\ |\ n$ and $\Sigma_d$ is embedded diagonally in $\Sigma_n$ (observe the change in the role of $G$).  
\begin{lemma}\label{lemma: closed system} Let $\sigma\in N_{\Sigma_n}(G; \Sigma_d)$ be an element of $\Sigma_n$ that conjugates $G$ to a subgroup of $\Sigma_d$. Then there is an element $\eta\in \Sigma_d$ such that  conjugation by $\eta$ on $G$ is the \mbox{same as conjugation by $\sigma$.} \vspace{-3pt}\vspace{-3pt}\vspace{-3pt}\vspace{-3pt}
\end{lemma}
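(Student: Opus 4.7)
The plan is to exploit the diagonal block structure on $\n$. Identify $\n$ with $\{1,\ldots,n/d\}\times\{1,\ldots,d\}$ so that the diagonal $\Sigma_d\subset\Sigma_n$ acts trivially on the first factor and through its standard action on the second. Let $\rho\colon G\hookrightarrow\Sigma_d$ denote the given inclusion, and define $\rho'\colon G\to\Sigma_d$ by $\rho'(g):=\sigma^{-1}g\sigma$, which lands in $\Sigma_d$ by the hypothesis $\sigma\in N_{\Sigma_n}(G;\Sigma_d)$. I will view $\n$ as a $G$-set in two ways: via $\rho$ (call this $\n_\rho$) and via $\rho'$ (call this $\n_{\rho'}$). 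Since both actions preserve the block decomposition and act identically on every block, there are $G$-set isomorphisms $\n_\rho\cong(n/d)\cdot X_\rho$ and $\n_{\rho'}\cong(n/d)\cdot X_{\rho'}$, where $X_\rho$ and $X_{\rho'}$ denote the $G$-set structures on $\{1,\ldots,d\}$ induced by $\rho$ and $\rho'$, and $(n/d)\cdot Y$ means the disjoint union of $n/d$ copies of $Y$.

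The key observation is that the bijection $\sigma^{-1}\colon\n\to\n$ intertwines the two $G$-actions: for $x\in\n$ and $g\in G$, one has $\sigma^{-1}(g\cdot_\rho x)=\sigma^{-1}gx=(\sigma^{-1}g\sigma)\sigma^{-1}x=\rho'(g)\cdot\sigma^{-1}(x)$. Hence $\n_\rho\cong\n_{\rho'}$ as $G$-sets, and consequently $(n/d)\cdot X_\rho\cong(n/d)\cdot X_{\rho'}$. I will next apply cancellation of finite $G$-sets to produce a $G$-equivariant bijection $\phi\colon X_\rho\to X_{\rho'}$. Regarded as an element of $\Sigma_d$, $\phi$ satisfies $\phi\rho(g)=\rho'(g)\phi$, which after diagonal embedding reads $\phi g\phi^{-1}=\sigma^{-1}g\sigma$ in $\Sigma_n$. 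Setting $\eta:=\phi^{-1}\in\Sigma_d$ then yields $\eta^{-1}g\eta=\sigma^{-1}g\sigma$ for every $g\in G$, so conjugation by $\eta$ agrees with conjugation by $\sigma$ on $G$.

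The only substantive step is the cancellation of finite $G$-sets, which I expect to be the main (but still mild) obstacle. This is a standard fact that I will justify by counting fixed points: for every subgroup $H\subset G$, the isomorphism $(n/d)\cdot X_\rho\cong(n/d)\cdot X_{\rho'}$ gives $(n/d)|X_\rho^H|=(n/d)|X_{\rho'}^H|$, so $|X_\rho^H|=|X_{\rho'}^H|$, and a finite $G$-set is determined up to isomorphism by its table of marks. Everything else reduces to straightforward bookkeeping with diagonal actions and the chosen conjugation convention.
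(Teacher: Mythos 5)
Your proof is correct and follows essentially the same route as the paper's. Both arguments set up the two $G$-set structures on $\mathbf{d}$ coming from the two embeddings of $G$ into $\Sigma_d$, observe that $\sigma$ gives a (possibly twisted) $G$-equivariant bijection between the disjoint unions of $n/d$ copies, cancel the $n/d$ factor to obtain a $G$-isomorphism of the single copies, and read that isomorphism off as the desired $\eta\in\Sigma_d$. The only place where you are more explicit is the cancellation step: the paper simply asserts that an isomorphism of $\coprod_{n/d}\mathbf{d}\cong\coprod_{n/d}\mathbf{d}'$ forces $\mathbf{d}\cong\mathbf{d}'$, while you justify it via fixed-point counting and the fact that a finite $G$-set is determined by its marks. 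That is a sound and standard justification; it tightens the argument without changing it.
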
 \vspace{-3pt}\vspace{-3pt}\vspace{-1pt}
\begin{proof}
An inclusion of $G$ into $\Sigma_n$ is the same thing as an effective action of $G$ on $\n$. If the inclusion factors through $\Sigma_d$, then $\n$ is isomorphic, as a $G$-set, to a disjoint union of $\frac{n}{d}$ copies of the set $\mathbf d$, which is equipped with an effective action of $G$. Specifying an element $\sigma$ of $\Sigma_n$ that conjugates $G$ into a subgroup of $\Sigma_d$ is the same as specifying a second action of $G$ on ${\mathbf d}$ (let ${\mathbf d}'$ denote $\mathbf d$ with the second action of $G$), a group isomorphism $\sigma_G\colon G\rightarrow G$, and a bijection 
$
\sigma\colon \coprod_{\frac{n}{d}}{\mathbf d}\rightarrow \coprod_{\frac{n}{d}}{\mathbf d}'
$
that is equivariant with respect to the isomorphism $\sigma_G$. Such an isomorphism exists if and only if $\coprod_{\frac{n}{d}}{\mathbf d}$ and $\coprod_{\frac{n}{d}}{\mathbf d}'$ are isomorphic permutation representations of $G$. But this is possible if and only if $\mathbf d$ and ${\mathbf d}'$ are isomorphic permutation representations of $G$, which means that the isomorphism $\sigma_G$ can be realised as conjugation by an element of $\Sigma_d$. \vspace{-3pt}
\end{proof}
Let $C_W(G)$ denote the centraliser of $G$ in $W$.
\begin{corollary}\label{corollary: centraliser}
Let $G\subset \Sigma_d\subset W\subset \Sigma_n$  with $\Sigma_d$  embedded diagonally in $\Sigma_n$ and $W$  any intermediate subgroup. \mbox{Then inclusion induces an isomorphism $C_{W}(G)/C_{\Sigma_d}(G) \stackrel{\cong}{\rightarrow} N_{W}(G; \Sigma_d)/\Sigma_d$ of sets.}\vspace{-3pt}\vspace{-3pt}\vspace{-1pt}
\end{corollary}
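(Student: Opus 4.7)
The plan is to show the map is well-defined, then check surjectivity (which is where Lemma~\ref{lemma: closed system} does the real work) and injectivity (which is essentially formal).

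First I would verify that the inclusion $C_W(G)\hookrightarrow N_W(G;\Sigma_d)$ makes sense: any $c\in C_W(G)$ satisfies $c^{-1}Gc = G\subset \Sigma_d$, so indeed $c\in N_W(G;\Sigma_d)$. Moreover $C_W(G)\cap \Sigma_d = C_{\Sigma_d}(G)$ (since $\Sigma_d\subset W$), so the induced map on cosets $C_W(G)/C_{\Sigma_d}(G)\to N_W(G;\Sigma_d)/\Sigma_d$ is automatically injective.

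The heart of the proof is surjectivity. Given $\sigma\in N_W(G;\Sigma_d)$, the element $\sigma$ conjugates $G$ into $\Sigma_d$, so Lemma~\ref{lemma: closed system} produces $\eta\in\Sigma_d$ such that $\eta^{-1}g\eta = \sigma^{-1}g\sigma$ for every $g\in G$. Then $\sigma\eta^{-1}$ commutes with every element of $G$, i.e.\ $\sigma\eta^{-1}\in C_{\Sigma_n}(G)$. Since $\sigma\in W$ and $\eta\in\Sigma_d\subset W$, we also have $\sigma\eta^{-1}\in W$, hence $\sigma\eta^{-1}\in C_W(G)$. As $\sigma$ and $\sigma\eta^{-1}$ differ by right multiplication by $\eta\in\Sigma_d$, they represent the same class in $N_W(G;\Sigma_d)/\Sigma_d$, proving that every coset is hit by an element of $C_W(G)$.

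I do not expect any serious obstacle: the only nontrivial input is Lemma~\ref{lemma: closed system}, whose permutation-representation argument guarantees that the conjugating element $\sigma$ can always be replaced, modulo its effect on $G$, by an element of $\Sigma_d$. Once one has this, the corollary is a short diagram chase with cosets. The only small subtlety to flag is that the statement asserts an isomorphism \emph{of sets} rather than of groups, which is appropriate because neither $C_{\Sigma_d}(G)$ nor $\Sigma_d$ need be normal in the corresponding larger group.
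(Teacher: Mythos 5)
Your proof is correct and follows the same strategy as the paper: Lemma~\ref{lemma: closed system} supplies surjectivity by replacing $\sigma$ with $\sigma\eta^{-1}\in C_W(G)$, and injectivity comes from the observation that $C_W(G)\cap\Sigma_d=C_{\Sigma_d}(G)$ (equivalently, two elements of $C_W(G)$ differ by an element of $\Sigma_d$ iff they differ by one of $C_{\Sigma_d}(G)$). Your closing remark about why this is only a bijection of sets is a worthwhile clarification.
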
  
\begin{proof}
We have $N_{W}(G; \Sigma_d)=\{w\in W\mid w^{-1} G w\subset \Sigma_d\}$. By Lemma~\ref{lemma: closed system} for every such $w$ one can find an $\eta\in \Sigma_d$ such that conjugation by $\eta$ on $G$ coincides with conjugation by $w$, hence  $w\eta^{-1}\in C_{W}(G)$. Thus every element of $N_{W}(G; \Sigma_d)/\Sigma_d$ has a representative in $C_{W}(G)$. \mbox{Two elements} of $C_{W}(G)$ differ by an element of $\Sigma_d$ if and only if they differ by an element of $C_{\Sigma_d}(G)$. \vspace{-3pt} 
\end{proof} 
\begin{corollary}\label{corollary: simplified fixed points}
Suppose $G\subset \Sigma_d\subset W\subset\Sigma_n$ with $\Sigma_d$  embedded diagonally in $\Sigma_n$, and $W$ any intermediate subgroup. Let $X$ be a space with an action of $\Sigma_d$. There is a homeomorphism
$(\Ind^W_{\Sigma_d} X)^G  \cong    \Ind^{C_W(G)}_{C_{\Sigma_d}(G)}X^G$.
In particular, if $X^G$ is contractible, then so is $(W_+ \wedge_{\Sigma_d} X)^G$. \vspace{-3pt} \vspace{-1pt}
\end{corollary}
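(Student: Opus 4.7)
The plan is to chain together the preceding two results with essentially no new input. First I would apply Lemma \ref{lemma: double cosets} (taking $K=G$, $H=\Sigma_d$, and ambient group $W$) to rewrite
\[
(\Ind^W_{\Sigma_d} X)^G \cong \bigvee_{[w] \in N_W(G; \Sigma_d)/\Sigma_d} X^{w^{-1}Gw}.
\]
Then I would invoke Corollary \ref{corollary: centraliser} to replace the indexing set $N_W(G;\Sigma_d)/\Sigma_d$ by $C_W(G)/C_{\Sigma_d}(G)$. The key benefit of this identification is that every coset admits a representative lying in the centraliser $C_W(G)$; for such a representative $w$ one has $w^{-1}Gw = G$, so each summand collapses to the single pointed space $X^G$. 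This produces
\[
(\Ind^W_{\Sigma_d} X)^G \cong \bigvee_{[w] \in C_W(G)/C_{\Sigma_d}(G)} X^G.
\]

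The next step is to recognise the right-hand side as $\Ind^{C_W(G)}_{C_{\Sigma_d}(G)} X^G = C_W(G)_+ \wedge_{C_{\Sigma_d}(G)} X^G$. Note that $C_{\Sigma_d}(G) \subset \Sigma_d$ preserves $X^G$ because it commutes with $G$, so the induction makes sense. The comparison map sends a point $x$ in the summand indexed by $[w]$ to the class $[w, x]$; picking a transversal for $C_{\Sigma_d}(G)$ in $C_W(G)$ realises $C_W(G)_+ \wedge_{C_{\Sigma_d}(G)} X^G$ as precisely this wedge. To upgrade the bijection to a $C_W(G)$-equivariant homeomorphism one checks that the defining relation $(w\tau, x) \sim (w, \tau x)$ for $\tau \in C_{\Sigma_d}(G)$ matches the restriction to the fixed-point space of the $W$-action on $\Ind^W_{\Sigma_d} X$, and that translation $c \cdot [w, x] = [cw, x]$ on both sides agrees.

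The final assertion is immediate: when $X^G$ is contractible, $(\Ind^W_{\Sigma_d} X)^G$ is a finite wedge of contractible (well-pointed) spaces, hence contractible. The main obstacle in this proof is really just bookkeeping---all the substantive content sits in Lemma \ref{lemma: closed system} and Corollary \ref{corollary: centraliser}. The only nontrivial check is verifying that the coset description of the indexing set is compatible with the $C_W(G)$-action on both sides of the asserted homeomorphism, and this reduces to the observation that left multiplication by $c \in C_W(G)$ on the set of representatives $\{w\} \subset C_W(G)$ descends to left multiplication on $C_W(G)/C_{\Sigma_d}(G)$.
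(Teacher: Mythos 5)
Your proposal is correct and follows essentially the same route as the paper: apply Lemma~\ref{lemma: double cosets} with $K=G$, $H=\Sigma_d$, ambient group $W$, then use Corollary~\ref{corollary: centraliser} to reindex the wedge by $C_W(G)/C_{\Sigma_d}(G)$ with centraliser representatives so each summand becomes $X^G$, and finally identify the result with the induced space. The extra equivariance bookkeeping you spell out is exactly the point the paper summarises by saying the presentation $C_W(G)_+\wedge_{C_{\Sigma_d}(G)} X^G$ ``gives the correct action of $C_W(G)$.''
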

\begin{proof}
By Lemma~\ref{lemma: double cosets} there is a homeomorphism
$(W_+ \wedge_{\Sigma_d} X)^G\cong \bigvee_{N_W(G; \Sigma_d)/\Sigma_d} X^{w^{-1}Gw}.$
By Corollary~\ref{corollary: centraliser}, $N_W(G; \Sigma_d)/\Sigma_d\cong C_{W}(G)/C_{\Sigma_d}(G)$. In particular, $w$ can always be chosen in the centraliser of $G$, so $X^{w^{-1}Gw}= X^G$. It follows that $
(W_+ \wedge_{\Sigma_d} X)^G\cong \bigvee_{C_{W}(G)/C_{\Sigma_d}(G)} X^{G}.$
The right hand side is isomorphic to $C_W(G)_+\wedge_{C_{\Sigma_d}(G)} X^G$ but the latter presentation is better in the sense that it gives the correct action of $C_W(G)$.    \vspace{-3pt} 
\end{proof}
 
\subsection{Simple Equivariant Homotopy Theory}
We briefly review the basics of simple equivariant homotopy theory. 
We begin by looking at simplicial complexes and recall a notion from \cite{kozlov2013topology}:

\begin{definition}
An inclusion $(V,F)\subset (V',F')$ of $G$-simplicial complexes is called an \textit{elementary} $G$-\textit{collapse}  if there is a 
$\sigma \in F'$ such that \vspace{-3pt}
\begin{enumerate}[leftmargin=26pt]
\item There is exactly one face in $F'$ which  properly contains $\sigma$.
\item For every $g\in G$ with $g\sigma \neq \sigma$, there does not exist a simplex which  contains both \mbox{$g\sigma$ and $\sigma$.}
\item $F$ is obtained from $F'$ by deleting all faces  which contain $g\sigma$ for some $g\in G$.
\end{enumerate}
\vspace{-3pt}

\end{definition}

There is a corresponding notion for $G$-$\CW$-complexes (cf.\ \cite{luck1989transformation}).  
\mbox{Let $D^k$ be the $k$-dimensional disc.}

\begin{definition}\label{elemexp}\index{Elementary expansion}
An \textit{elementary expansion} consists of a pushout of $G$-$\CW$-complexes  
\begin{diagram}[small]
G/H \times D^{n-1} & \rTo_f & X \\
\dInto & \SEpbk & \dInto^\iota \\
G/H \times D^{n} & \rTo & Y 
\end{diagram}
where $D^{n-1} \rightarrow D^n$  includes the lower hemisphere $D^{n-1}  \hookrightarrow S^n$ of the bounding sphere  $S^n$ of $D^n$, and
 $f(G/H \times \partial D^{n-1}) \subset X_{n-2}$, $ f(G/H \times D^{n-1}) \subset X_{n-1}$, where $X_k$  {denotes the $k$-skeleton of $X$.}
\end{definition}
Given a subspace $X$ of $Y$ for which $X\hookrightarrow Y$ is an elementary expansion, we call any strong $G$-equivariant deformation retract $Y\rightarrow X$ an \textit{elementary collapse}.\index{Elementary collapse}
An elementary collapse between $G$-simplicial complexes induces an elementary collapse between their geometric realisations. 
\begin{definition}
A $G$-simplicial complex (or $G$-$\CW$-complex) is said to be \textit{collapsible} if it can be mapped to the point by a finite number of elementary collapses.
\end{definition}
\begin{definition}
A $G$-map $f:X\rightarrow Y$ between $G$-$\CW$-complexes is a \textit{simple  homotopy equivalence} if it is $G$-homotopic to a finite composition of expansions and collapses.
\end{definition}   
\begin{proposition}\label{con}
Let $A$ be a contractible sub-$G$-$\CW$-complex of a $G$-$\CW$-complex $X$. Then $X/A$ carries a natural $G$-$\CW$-structure and  the quotient map $ X\rightarrow X/A$ is a simple   equivalence. \vspace{-5pt} \vspace{-2pt}
\end{proposition}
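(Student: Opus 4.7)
My plan is to endow $X/A$ with its natural $G$-CW structure — the non-basepoint equivariant cells are precisely the equivariant cells of $X$ not contained in $A$, together with a single $G$-fixed $0$-cell representing the basepoint — and to prove that the cellular quotient map $q \colon X \to X/A$ is a simple $G$-equivalence. Since $A \hookrightarrow X$ is a $G$-cofibration and $A$ is $G$-contractible, $q$ is automatically a $G$-homotopy equivalence, so the essential content is the vanishing of the equivariant Whitehead torsion $\tau^G(q) \in \Wh^G(X/A)$.

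To verify this vanishing, I would factor $q$ through the equivariant mapping cone
\[ Y := X \cup_A CA, \qquad CA := (A \times I)/(A \times \{0\}), \]
as a composite $X \stackrel{\iota}{\hookrightarrow} Y \stackrel{\pi}{\twoheadrightarrow} X/A$, with $\pi$ collapsing $CA$ to the basepoint, and then show that each of $\iota$ and $\pi$ is simple. For $\pi$, the cone $CA$ admits an equivariant sequence of elementary collapses onto its apex: processing the equivariant cells of $A$ in decreasing order of dimension, each top-dimensional cell $\sigma = G/H \times D^k$ of the remaining subcomplex of $CA$ pairs with its cone cell $C\sigma = G/H \times D^{k+1}$ as a free face, yielding an elementary $G$-collapse in the sense of Definition~\ref{elemexp}. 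For $\iota$, the $G$-contractibility of $A$ is essential (without it $\iota$ would fail to be an equivalence, as its cofibre is $G$-equivalent to $\Sigma A$): choosing a $G$-fixed point $a_0 \in A$ and a $G$-equivariant contraction $A \times I \to A$ from $\id_A$ to the constant map at $a_0$, one exhibits $\iota$ as a finite sequence of elementary $G$-expansions, beginning by introducing the apex paired with a cone $1$-cell over $a_0$ and then inductively pairing the remaining cone cells with phantom expansion cells prescribed by the contraction.

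The principal obstacle is the $G$-equivariant combinatorial bookkeeping — in particular verifying that the attaching map of each elementary expansion arising in the argument for $\iota$ genuinely factors through the lower hemisphere as required by Definition~\ref{elemexp}, and that the free-face conditions for the collapses constituting $\pi$ are not disrupted by incidences between cells of $CA$ and cells of $X \setminus A$ in $Y$. Both issues are resolved by invoking the additivity formula for equivariant Whitehead torsion applied to the pushout square defining $Y$, combined with the vanishing of $\tau^G(A \hookrightarrow CA)$ (which follows from the fact that $CA$ is $G$-contractible and collapsible onto $a_0$). The overall argument is an equivariant adaptation of the classical Whitehead theorem that a contractible subcomplex may be collapsed to a point by a simple equivalence, and the finer technical details follow the systematic treatment in~\cite{luck1989transformation}.
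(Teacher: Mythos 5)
Your strategy ultimately reduces to the paper's — invoking additivity of the equivariant Whitehead torsion (Lück's Theorem 4.8) — but the detour through $Y = X \cup_A CA$ is not correctly executed and is unnecessary. The elementary-collapse claims do not hold as stated: the pairs $(\sigma, C\sigma)$ you propose to collapse are generally not free pairs in $Y$, since a cell $\sigma$ of $A$ can lie in the boundary of cells of $X$ other than $C\sigma$; and proving that $\pi\colon Y \to Y/CA = X/A$ is simple is another instance of the very proposition being proved, so that step as written is circular. The ``phantom expansion cells prescribed by the contraction'' likewise do not describe an actual sequence of cell attachments in the sense of Definition~\ref{elemexp}; there is no reason $\iota$ should be a literal composite of elementary expansions rather than merely simple.

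The genuine gap is in your justification of the key vanishing. You assert that $\tau^G(A \hookrightarrow CA) = 0$ follows from $CA$ being $G$-contractible and collapsible onto $a_0$. Collapsibility of $CA$ gives only $\tau^G(CA \to \ast) = 0$. By the composition formula, $\tau^G(A \to \ast) = \tau^G(CA \to \ast) + (\text{pushforward of } \tau^G(A \hookrightarrow CA))$, so $\tau^G(A \hookrightarrow CA)$ vanishes precisely when $\tau^G(A \to \ast)$ does, i.e.\ when $A$ itself is \emph{simply} $G$-contractible. That is a genuine hypothesis on $A$: $\Wh^G(\ast)$ need not vanish for a nontrivial finite group $G$ (it involves Whitehead groups of Weyl groups $W_G(H)$, which are often nonzero), so $G$-contractibility of $A$ alone is not enough. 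In every application in the paper $A$ is literally $G$-collapsible (a cone, or a subcomplex collapsed by discrete Morse theory), so the hypothesis is available, but your argument never supplies it — it is a property of $A$, not a consequence of $CA$ being a cone. Once $\tau^G(A \to \ast) = 0$ is granted, the cleanest route is the paper's: apply additivity directly to the pushout $X/A = X \cup_A \ast$ to identify $\tau^G(q)$ with the pushforward of $\tau^G(A \to \ast)$, with no need to introduce $Y$ at all.
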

\begin{proof}
Recall from  \cite[Section 4]{luck1989transformation}
that the   \textit{Whitehead group} is an abelian group $\Wh^G(Y)$ attached to any $G$-$\CW$-complex $Y$, and the \textit{Whitehead torsion} is an element \mbox{$\tau^G(f)\in \Wh^G(Y)$} for any equivariant map $f:X\rightarrow Y$.
An equivariant equivalence $f$ is   simple if and only if $\tau^G(f)=0$, and the result  follows from additivity of the Whitehead torsion, cf.\   \cite[Theorem 4.8]{luck1989transformation}.  \vspace{-5pt}
\end{proof}
\subsection{The Join}
We will now review the join operation in several contexts. We begin with spaces: 
\begin{definition}
The \textit{join} of two  spaces $X$ and $Y$ is
given by $ X*Y = X \times I \times Y /_\simeq$,
where $\simeq$ identifies $(x,y,0) \simeq(x,y',0)$ and $(x,y,1) \simeq(x',y,1)$
for all $x,x'\in X$ and $ y,y'\in Y$ .

The join is a model for the double mapping cylinder (i.e. homotopy pushout) of the diagram
$
X\leftarrow X\times Y \rightarrow Y.$
This operation is associative up to natural isomorphism.
\end{definition}

If $X,Y$ are $G$-$\CW$-complexes, then  $X*Y$ (computed in $\mathbf{Top}$) inherits   a $G$-$\CW$-structure.

\begin{notation} Write $X^\diamond = S^0 \ast X$ for the unreduced suspension of $X$ with basepoint $0$ (the `south pole'). It  models the pointed homotopy cofibre of   $X_+\rightarrow S^0$. If $X$ is a $G$-$\CW$-complex, there is a simple equivariant equivalence $X^\diamond\simeq (X\times \Delta^1)/_{\stackrel{(x,0)\sim (x',0)}{_{(x,1)\sim (x',1)}}}$ since $\{1\}\ast X$ is contractible.
\end{notation}

It is well-known that if $X$ and $Y$  are pointed, then there is a simple  equivalence $
X*Y\stackrel{\simeq}{\rightarrow} S^1\wedge X \wedge Y.$ This can be generalised slightly to a situation where only one of the spaces is pointed:
\begin{lemma}\label{lemma: join} Let $X$ be a $G$-$\CW$-complex and $(Y,y_0)$ be a pointed $G$-$\CW$-complex. Then there are equivariant simple equivalences $X*Y\stackrel{\simeq}{\rightarrow} X*Y/_{X*\{y_0\}} \stackrel{\simeq}{\rightarrow} X^\diamond \wedge Y$. 
\end{lemma}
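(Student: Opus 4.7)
The plan is to treat the two arrows separately, and to observe that the first is a collapse of a contractible subcomplex while the second is actually a homeomorphism.

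\textbf{First arrow.} The join of any $G$-$\CW$-complex with a fixed point is equivariantly a cone: the join coordinate provides an equivariant strong deformation retraction of $X*\{y_0\}$ onto the apex $y_0$. Thus $X*\{y_0\}$ is an equivariantly contractible sub-$G$-$\CW$-complex of $X*Y$. (To see that it is a sub-$G$-$\CW$-complex, one observes that if $X$ and $Y$ are $G$-$\CW$-complexes then the standard cell decomposition of $X*Y$, coming from the product cell structure on $X\times I\times Y$ followed by the join identifications, makes $X*\{y_0\}$ into a subcomplex; here we use the natural $G$-$\CW$-structure on joins of $G$-$\CW$-complexes.) Proposition~\ref{con} then guarantees that the quotient map
$$X*Y\longrightarrow X*Y/\bigl(X*\{y_0\}\bigr)$$
is a simple $G$-equivariant homotopy equivalence.

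\textbf{Second arrow.} I would show that $X*Y/(X*\{y_0\})$ and $X^\diamond\wedge Y$ are in fact equivariantly \emph{homeomorphic}, so the second map of the lemma is a simple equivalence a fortiori. Both spaces are quotients of $X\times I\times Y$ (taken in the category of compactly generated weak Hausdorff spaces), and the strategy is to check that the two quotient equivalence relations agree. On the left, the join identifications impose $(x,0,y)\sim(x,0,y')$ and $(x,1,y)\sim(x',1,y)$; collapsing $X*\{y_0\}$ additionally forces $(x,t,y_0)\sim *$ for all $x,t$, and because $(x,0,y)\sim(x,0,y_0)$ this in turn forces the whole set $X\times\{0\}\times Y$ to the basepoint. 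On the right, writing $X^\diamond=(X\times I)/(X\times\{0\},\,X\times\{1\})$ with basepoint the south pole, the smash $X^\diamond\wedge Y$ is exactly $X\times I\times Y$ modulo $(x,0,y)\sim *$, $(x,t,y_0)\sim *$, and $(x,1,y)\sim(x',1,y)$. These are manifestly the same relations, and the identity on $X\times I\times Y$ descends to a continuous $G$-equivariant bijection; since all spaces in sight are compactly generated and the map is induced from a continuous map on a Hausdorff quotient with the expected $G$-$\CW$-structure, this bijection is a homeomorphism.

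\textbf{Main obstacle.} The substantive work is the first arrow: checking that the cone $X*\{y_0\}$ is actually a sub-$G$-$\CW$-complex of $X*Y$ (rather than merely a contractible subspace), so that Proposition~\ref{con} applies. This is handled once one fixes the standard $G$-$\CW$-structure on the join coming from the prism $X\times I\times Y$, after which the identification in the second arrow is just bookkeeping on the equivalence relations defining the two quotients.
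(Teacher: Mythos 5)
Your proof is correct and follows essentially the same route as the paper's: both describe $X*Y/_{X*\{y_0\}}$ and the smash product as explicit quotients of $X\times I\times Y$ by the same equivalence relation, and both obtain the first arrow from Proposition~\ref{con} by observing that the cone $X*\{y_0\}$ is an (equivariantly) contractible sub-$G$-$\CW$-complex. Your additional remarks on the sub-$G$-$\CW$-structure and the fixed-point apex $y_0$ are reasonable elaborations of the points the paper takes for granted; note only that you work with the model $X^\diamond\cong(X\times I)/(X\times\{0\},X\times\{1\})$ rather than $S^0*X$, which is also what the paper implicitly does via the notation paragraph preceding the lemma.
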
  \vspace{-5pt} \vspace{-3pt}
\begin{proof} 
Consider the two spaces
$X*Y/ X*\{y_0\} = X \times I \times Y /\simeq_1$
where $\simeq_1$ is spanned by   \vspace{-3pt} \vspace{-3pt}
$$(x,t,y_0) \simeq_1 (x',t',y_0), \ \ \  (x,0,y) \simeq_1 (x,0,y'), \ \ \  (x,1,y) \simeq_1 (x',1,y)  \ \ \ \forall x,x'\in X, y,y' \in Y\vspace{-3pt} $$   
and
$((X\times \Delta^1)/_{\stackrel{(x,0)\sim (x',0)}{ {(x,1)\sim (x',1)}}} ) \wedge Y  = X \times I \times Y /\simeq_2$
where $\simeq_2$ is spanned by   \vspace{-3pt}  
$$(x,t,y_0) \simeq_2 (x',t',y_0)  \simeq_2 (x,0,y) , \ \ \  (x,1,y) \simeq_2 (x',1,y)  \ \ \ \forall x,x'\in X, y,y' \in Y \vspace{-3pt}  $$ 
These  quotients are equal.
Cones are contractible, so the claim follows from Proposition \ref{con}.   \vspace{-3pt}
\end{proof}

It follows that the expression $\Sigma^{-1}(S^{\frac{n}{d}-1})^{\wedge d} \wedge |\Pi_d|^\diamond$ in the Theorem \ref{main} can be replaced with $S^{n-d-1} * |\Pi_d|$. This form naturally occurs in the proof via discrete Morse theory. \vspace{3pt}
\begin{examples}\label{examples: joins}
The empty set acts as a unit for the join operation, i.e. $X* \emptyset \cong \emptyset * X  \cong  X$.

A special case of Lemma \ref{lemma: join} is the well-known homeomorphism $S^m * S^n  \cong S^{m+n+1}$. We think of the empty set as the $(-1)$-dimensional sphere. Then the formula remains valid for $m, n\ge -1$.

The join of two simplices $\Delta^i * \Delta^j$ is homeomorphic to $\Delta^{i+j+1}$. It is also  easy to see that $\Delta^0 * S^j$ is homeomorphic to $\Delta^{j+1}$. It follows that for all $i\ge 0$, $j\ge -1$, $\Delta^i * S^j$ is homeomorphic to $\Delta^{i+j+1}$.  
\end{examples}
 
There is  a version of the join operation for the simplicial complexes from Definition \ref{scomplex};  the  symmetry of this operation is one of the key advantages  of simplicial complexes.
 \begin{definition}
The simplicial join $(V_1,F_1)*(V_2,F_2)$ of two simplicial complexes $(V_1,F_1)$, $(V_2,F_2)$ is defined as  $(V_1 \coprod V_2, F_1*F_2)$, where $F_1*F_2$ is the set of   $S\subset V_1\coprod V_2$ with $S\cap V_i \in F_i$ for $i=1,2$. 
\end{definition}
Geometric realisation takes simplicial to space-level join. \mbox{There is  a corresponding notion for posets:}

\begin{definition}
The join of two posets $\Pcal$ and $\Qcal$ is defined to be the  poset $\Pcal * \Qcal$ whose underlying set is given by $\Pcal \coprod \Qcal$ and whose partial order is defined by keeping the old order on $\Pcal$ and $\Qcal$, and declaring that every element of $\Pcal$ is smaller than every element of $\Qcal$.
\end{definition}
It is easy to see that the order complex of $\Pcal * \Qcal$ is isomorphic to the simplicial join of the two order complexes. It follows that $|\Pcal * \Qcal|\simeq |\Pcal| * |\Qcal|$. \vspace{-3pt}

\subsection{Stars and Links} \label{starsandlinks}
Let $\Pcal$ be a poset. Let $\sigma=[x_0<\cdots < x_k]$ be a non-empty chain in $\Pcal$. \begin{definition} The \textit{star} $\star(\sigma)$ of $\sigma$ is the poset of elements of $\Pcal$ that are comparable \mbox{with  each $x_i$.}\\
\mbox{The \textit{link}  $\link(\sigma)$ of $\sigma$ is the subposet of $\star(\mathcal{P})$ consisting of elements that are distinct from all $x_i$. }
\end{definition}
 The star decomposes as a join
$
\star(\sigma)\cong \mathcal{P}_{<x_0}* \{x_0\} * \mathcal{P}_{(x_0, x_1)}* \{x_1\}*\cdots* \{x_k\} * \mathcal{P}_{>x_k}$. Hence $|\star(\sigma)|$ is contractible.
There is a similar decomposition for the  link as
$
\link(\sigma)\cong \mathcal{P}_{<x_0}*  \mathcal{P}_{(x_0, x_1)}*\cdots* \mathcal{P}_{>x_k}
. \vspace{-3pt}  $ 

\subsection{Indexed Wedges} We recall the theory of indexed wedges (cf.\ Section 2.2.3.\ of \cite{hill2009non}).
Given a $G$-set $J$, we write $\mathcal{B}_JG$ for the category with objects
$\Ob(\mathcal{B}_JG) = J$ and morphisms $\Mor_{\mathcal{B}_JG}(j,j')  = \{ h \in G \ | \ h \cdot j = j'\}$.
The composition law is evident.\\
Given a functor   $X: \mathcal{B}_JG \rightarrow \mathbf{\sSet}_\ast$ with $j \mapsto X_j$,
the wedge sum
$ \bigvee_{j \in J} X_j$ inherits a natural   $G$-action with $g\cdot( x\in X_j)  := X_{(j\xrightarrow{g\cdot} gj)} (x) \in X_{gj}$. The space $\bigvee_{j \in J} |X_j|$ inherits a  $G$-$\CW$-structure.

Clearly, we can rewrite this indexed wedge sum as an induction:
\begin{proposition} \label{indexedwedge}
There is a simple $G$-equivalence
\mbox{$ \bigvee_{[j] \in J/G}  \Ind_{\Stab(j)}^G |X_j | \xrightarrow{\simeq} \bigvee_{j \in J} |X_j|.$}
\end{proposition}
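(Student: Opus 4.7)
The plan is to reduce the proposition to its single-orbit version: the $G$-set $J$ decomposes as a coproduct of orbits $J=\coprod_{[j]\in J/G} G\cdot j$, and since the functor $X\colon \mathcal{B}_JG\to \bfsSet_\ast$ sends every morphism $j\to j'$ to an isomorphism $X_j\xrightarrow{\cong} X_{j'}$, the subcategory $\mathcal{B}_{G\cdot j}G$ is a full connected component of $\mathcal{B}_JG$. Consequently the indexed wedge splits as a $G$-equivariant coproduct (in pointed spaces, i.e.\ a wedge) of indexed wedges over the orbits:
$$\bigvee_{j\in J}|X_j| \;\cong\; \bigvee_{[j]\in J/G}\Bigl(\bigvee_{j'\in G\cdot j}|X_{j'}|\Bigr).$$

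Next I would establish the single-orbit identification $\bigvee_{j'\in G\cdot j} |X_{j'}| \cong \Ind_{\Stab(j)}^G |X_j|$. Fix a representative $j$ with stabiliser $H=\Stab(j)$ and, for each $j'\in G\cdot j$, choose $g_{j'}\in G$ with $g_{j'}\cdot j = j'$ (taking $g_j=e$). The functoriality of $X$ gives isomorphisms $X_{g_{j'}}\colon X_j\xrightarrow{\cong} X_{j'}$. Define the candidate map $\Phi$ on the $j'$-summand as the composite
$$|X_{j'}|\xrightarrow{\;|X_{g_{j'}^{-1}}|\;} |X_j|\xrightarrow{\;[g_{j'},\,\cdot\,]\;} G_+\wedge_H |X_j|.$$
This is a homeomorphism because a choice of coset representatives identifies $G/H$ with $G\cdot j$, and passing to geometric realisations preserves the pointed coproduct structure on both sides.

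To finish, I would check that $\Phi$ is $G$-equivariant. For $x\in |X_{j'}|$ and $g\in G$, on the domain one has $g\cdot x = |X_g|(x)\in |X_{gj'}|$, so $\Phi(g\cdot x)=[g_{gj'},\,|X_{g_{gj'}^{-1}g}|(x)]$. On the target, $g\cdot \Phi(x)=[gg_{j'},\,|X_{g_{j'}^{-1}}|(x)]$. Since $h\definedas g_{gj'}^{-1}gg_{j'}$ fixes $j$, we have $h\in H$ and $gg_{j'}=g_{gj'}h$, whence the relation $[gh', y]=[g, h'\cdot y]$ in $G_+\wedge_H|X_j|$ reconciles the two expressions. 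Taking the wedge of the orbit-wise homeomorphisms yields the desired $G$-homeomorphism, which in particular is a simple $G$-equivalence.

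The whole argument is essentially bookkeeping; the only step that requires care is the verification of equivariance, since it hinges on the chosen coset representatives and on the functoriality relations satisfied by $X$. I expect this to be the main (minor) obstacle, but it reduces to the standard cocycle identity $g_{gj'}^{-1}g\,g_{j'}\in H$ together with the functoriality $X_{h}\circ X_{g_{j'}^{-1}}=X_{hg_{j'}^{-1}}$.
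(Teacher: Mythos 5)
Your proof is correct; the paper actually gives no argument here (it introduces the proposition with ``Clearly, we can rewrite this indexed wedge sum as an induction''), and your verification --- orbit decomposition, the chosen coset representatives $g_{j'}$, the cocycle $h=g_{gj'}^{-1}gg_{j'}\in\Stab(j)$, and the functoriality $X_h\circ X_{g_{j'}^{-1}}=X_{hg_{j'}^{-1}}$ --- is exactly the standard bookkeeping the authors are implicitly appealing to. Since you exhibit a $G$-homeomorphism (not merely a $G$-homotopy equivalence), the conclusion that it is a simple $G$-equivalence is immediate.
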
  
\begin{corollary}
If $X\xrightarrow{\upalpha} Y$ is transformation  of functors $ \mathcal{B}_JG \rightarrow \sSet_\ast$ for which $ |X_j| \rightarrow |Y_j|$   is a simple $\Stab(j)$-equivalence   all $j\in J$, 
then 
$ \bigvee_{j\in J }|X_j| \longrightarrow  \bigvee_{j\in J }|Y_j| $
is a simple $G$-equivalence.  \vspace{-3pt} 
\end{corollary}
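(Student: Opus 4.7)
My plan is to reduce the claim to Proposition \ref{indexedwedge} plus two standard facts about simple equivariant equivalences: that induction $\Ind_H^G$ from a subgroup $H \subset G$ preserves simple equivalences, and that wedge sums of simple $G$-equivalences are simple $G$-equivalences. Both facts follow at the point-set level: an elementary expansion or collapse (Definition \ref{elemexp}) in the $H$-category, applied cellwise, produces an elementary expansion or collapse in the $G$-category after smashing with $G/H_+$, and two elementary $G$-collapses on disjoint wedge summands commute and still compose to a simple equivalence.

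With these in hand, the argument is as follows. First I choose, for each $G$-orbit $[j] \in J/G$, a representative $j$; the restricted transformation $\upalpha_j : X_j \to Y_j$ is a $\Stab(j)$-equivariant transformation whose realisation is a simple $\Stab(j)$-equivalence by hypothesis. Inducing up, I obtain a simple $G$-equivalence
$$
\Ind_{\Stab(j)}^G |X_j| \xrightarrow{\ \Ind \upalpha_j\ } \Ind_{\Stab(j)}^G |Y_j|.
$$
Wedging these maps over all orbit representatives $[j] \in J/G$ yields a simple $G$-equivalence
$$
\bigvee_{[j] \in J/G} \Ind_{\Stab(j)}^G |X_j| \ \longrightarrow \ \bigvee_{[j] \in J/G} \Ind_{\Stab(j)}^G |Y_j|.
$$

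The last step is to compare this with the map $\bigvee_{j \in J} |X_j| \to \bigvee_{j \in J} |Y_j|$ induced by $\upalpha$. Proposition \ref{indexedwedge} supplies horizontal simple $G$-equivalences on both sides, so I need to observe that these are natural in transformations $\upalpha$ of functors $\mathcal{B}_J G \to \sSet_\ast$: both horizontal arrows arise as the canonical maps $G_+ \wedge_{\Stab(j)} |X_j| \to \bigvee_{j' \in Gj} |X_{j'}|$ induced by the $G$-action, and these are clearly natural in the functor. Combining the resulting commuting square with the two-out-of-three property for simple equivalences (which follows from additivity of the Whitehead torsion, used already in Proposition \ref{con}) produces the required conclusion.

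The only mildly delicate point is the verification that induction sends simple $H$-equivalences to simple $G$-equivalences; this is where one must be careful to work with $G$-$\CW$-structures and apply the definition cellwise. Everything else is formal once Proposition \ref{indexedwedge} and its evident naturality are in place.
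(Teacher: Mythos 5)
Your argument is correct and fills in the gap in the natural way: the paper itself omits the proof, evidently regarding the corollary as an immediate consequence of Proposition \ref{indexedwedge} together with standard permanence properties of simple equivariant equivalences, and your reduction via (i) induction preserves simple equivalences, (ii) wedge sums of simple equivalences are simple, (iii) naturality of the equivalence in Proposition \ref{indexedwedge}, and (iv) two-out-of-three from additivity of Whitehead torsion, is exactly that argument. One small slip in wording: the functor $\Ind_H^G(-) = G_+ \wedge_H (-)$ is not ``smashing with $G/H_+$''; what makes the argument work is that $G_+ \wedge_H (H/K)_+ \cong (G/K)_+$ and that $G_+ \wedge_H (-)$ preserves pushouts, so an elementary expansion along $H/K \times D^{n-1} \hookrightarrow H/K \times D^n$ is carried to one along $G/K \times D^{n-1} \hookrightarrow G/K \times D^n$. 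With that phrasing corrected, the proof is complete.
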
 
\subsection{Removing Simplices}\label{remove}
We start by observing the following basic fact:
\begin{proposition} \label{square}\label{propos}
Assume that we are given a square of simplicial $G$-sets \vspace{-3pt}  \begin{diagram} A & \rTo & C \\ \dInto & & \dInto \\B & \rTo & D \end{diagram}  
such that for each simplicial degree $d$, the map of sets $(B-A)_d \rightarrow (D-C)_d$ is bijective.
Taking vertical quotients induces an isomorphism of simplicial $G$-sets.
\end{proposition}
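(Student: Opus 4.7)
The plan is to unwind the definition of the vertical quotients degreewise and observe that the bijection hypothesis forces the induced map on quotients to be bijective in every simplicial degree, after which compatibility with face maps, degeneracy maps, and the $G$-action is automatic.

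First, I would recall that for any inclusion $A\hookrightarrow B$ of simplicial $G$-sets, the quotient $B/A$ is the pointed simplicial $G$-set with $(B/A)_d = (B_d-A_d)\sqcup\{*\}$, where the face and degeneracy operators are inherited from those of $B$, subject to the rule that whenever a face of $x\in B_d-A_d$ happens to lie in $A_{d-1}$, it is replaced by the basepoint $*$, and $*$ is sent to $*$ by all operators. The same description applies to $D/C$. By the universal property of the quotient, the two horizontal maps in the square descend to a unique map of pointed simplicial $G$-sets $\overline{f}\colon B/A \to D/C$ making the obvious diagram commute; explicitly, $\overline{f}$ sends $*$ to $*$, and sends $x\in B_d - A_d$ to its image $f(x)\in D_d$ if $f(x)\notin C_d$, and to $*$ otherwise.

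Now I would invoke the hypothesis: for every $d$, the restriction $f\colon (B-A)_d \to (D-C)_d$ is a bijection. In particular, no element of $B_d-A_d$ has image in $C_d$, so the above case distinction collapses and $\overline{f}_d$ is simply the bijection $(B_d-A_d)\sqcup\{*\}\to (D_d-C_d)\sqcup\{*\}$ induced by $f$ on the left summand and by $*\mapsto*$ on the right. Thus $\overline{f}$ is a degreewise bijection of pointed $G$-sets.

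Finally, I would note that $\overline{f}$ is by construction a map of simplicial $G$-sets (it is induced from a map of simplicial $G$-sets by a natural quotient construction), so a degreewise bijection is automatically an isomorphism. There is no genuine obstacle here; the only thing one needs to be slightly careful about is that the identification $(B/A)_d \cong (B_d-A_d)\sqcup\{*\}$ does \emph{not} respect face maps on the nose (a face of a simplex in $B_d-A_d$ may jump to $*$), but this is precisely the same phenomenon on both sides of $\overline{f}$, and the bijectivity on complements ensures that the jumps occur on matched simplices. So the statement is essentially a tautology once the quotient is written out degreewise.
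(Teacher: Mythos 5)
Your proof is correct, and it is essentially the expected elementary argument for this statement, which the paper records as a basic fact without supplying its own proof. You correctly unwind the quotient degreewise as $(B/A)_d \cong (B_d - A_d)\sqcup\{*\}$, observe that the hypothesis (the map $(B-A)_d \to (D-C)_d$ being a well-defined bijection) forces the induced map $\overline{f}_d$ to be a bijection of pointed $G$-sets in every degree, and conclude that a levelwise bijective map of simplicial $G$-sets is an isomorphism. One tiny stylistic remark: the fact that no $x \in B_d-A_d$ is sent into $C_d$ is not a consequence you deduce from bijectivity so much as it is built into the hypothesis that the displayed map $(B-A)_d\rightarrow(D-C)_d$ is well-defined at all; your phrasing is defensible but could be stated more directly. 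Your closing caveat about face maps ``jumping'' to the basepoint is not logically needed once you know $\overline{f}$ is a map of simplicial $G$-sets and a degreewise bijection, but it does no harm.
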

\vspace{0pt}
\begin{notation} 
If $S$ is a $G$-stable set of nondegenerate simplices in  the $G$-simplicial set $X$,  let $X^{-S}$ be the simplicial subset  of    simplices which do not contain a simplex in $S$. \mbox{If $S=\{\sigma\}$,  set \mbox{$X^{-\sigma } = X^{-\{\sigma\}}$.}} \end{notation} 
We will now remove simplices from the nerve  of a $G$-poset $\mathcal{P}$. Here, we drop $N_\bullet$ from our notation whenever we geometrically realise, e.g.\ write $|\mathcal{P}^{-S}|$ instead of $|N_\bullet(\mathcal{P})^{-S}|$.  
 
\begin{lemma}\label{triple}
Let $S$ be a  $G$-stable family of strictly increasing chains in $\mathcal{P}$ such  that no two chains in $S$
lie  in a larger chain in $\mathcal{P}$.
 The following diagram induces isomorphisms on vertical quotients:\vspace{-0pt}
\begin{diagram}\textstyle
\coprod_{\sigma \in S}  N_\bullet(\St(\sigma))^{-\sigma} &\rTo& N_\bullet(\mathcal{P})^{-S}\\
\dInto & &\dInto  \\ 
\textstyle \coprod_{\sigma\in S}  N_\bullet(\St(\sigma)) & \rTo  &  N_\bullet(\mathcal{P})
\end{diagram} 
We obtain cellular homeomorphisms $ |\mathcal{P}|/_{|\mathcal{P}^{-S}|} \cong   \bigvee_{\sigma \in S} |\St(\sigma)|/_{|\St(\sigma)^{-\sigma}|}$. 
\end{lemma}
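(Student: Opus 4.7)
The plan is to apply Proposition~\ref{propos} twice, once to each of the two squares in the displayed diagram, and then pass to vertical quotients and geometric realisations to extract the asserted cellular homeomorphisms. Throughout, $G$-equivariance is automatic since $S$ is $G$-stable.

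For the left square I fix $\sigma \in S$ and a simplicial degree $d$. The complement $(N_\bullet(\St(\sigma)))_d \setminus (N_\bullet(\St(\sigma))^{-\sigma})_d$ consists of the $d$-simplices whose underlying strict chain contains $\sigma$ as a subchain, and likewise for $\mathcal{P}$. The key observation is that if $\tau$ is any chain in $\mathcal{P}$ refining $\sigma$, then every element of $\tau$ is comparable with every element of $\sigma$, so $\tau$ already lies in $\St(\sigma)$. Thus the inclusion induces a bijection of complements in each simplicial degree, and taking the disjoint union over $\sigma \in S$ preserves this, so Proposition~\ref{propos} applies.

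The main content of the argument lies in the right square, where the ``no common refinement'' hypothesis is used in an essential way. Here the complement $(N_\bullet(\mathcal{P}))_d \setminus (N_\bullet(\mathcal{P})^{-S})_d$ is the set of $d$-simplices whose underlying strict chain contains some $\sigma \in S$. Surjectivity of the map from the coproduct is immediate; injectivity amounts to the claim that every such chain contains exactly one element of $S$: if $\tau$ contained distinct $\sigma_1, \sigma_2 \in S$, then $\tau$ itself would be a common refinement of $\sigma_1$ and $\sigma_2$, contradicting the hypothesis. The main obstacle I anticipate is the book-keeping in simplicial degree $d$: one needs to remember that a possibly degenerate simplex lies in the relevant complement if and only if its underlying non-degenerate chain does, so that the argument genuinely reduces to the level of strict chains.

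Finally, I would pass to geometric realisations. Vertical quotients of a coproduct of simplicial sets are naturally wedges of the pointed quotients, and geometric realisation commutes with both quotients and wedges. The left square therefore yields $\bigvee_{\sigma \in S} |\St(\sigma)|/|\St(\sigma)^{-\sigma}| \cong \bigvee_{\sigma \in S} |\mathcal{P}|/|\mathcal{P}^{-\sigma}|$, while the right square yields $\bigvee_{\sigma \in S} |\mathcal{P}|/|\mathcal{P}^{-\sigma}| \cong |\mathcal{P}|/|\mathcal{P}^{-S}|$. Composing produces the chain of $G$-equivariant cellular homeomorphisms stated in the lemma.
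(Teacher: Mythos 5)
Your proof is correct and follows essentially the same strategy as the paper's, namely applying Proposition~\ref{propos} to the two squares and passing to realisations. The paper compresses both bijections of complements into a single sentence (``any simplex in $N_d(\mathcal{P})-N_d(\mathcal{P})^{-S}$ contains exactly one $\sigma\in S$ and thus has unique preimages in $N_d(\star(\sigma))-N_d(\star(\sigma))^{-\sigma}$ and $N_d(\mathcal{P})-N_d(\mathcal{P})^{-\sigma}$''), whereas you spell out separately that the left square works because any chain refining $\sigma$ has all its elements comparable to all elements of $\sigma$ and hence already lies in $\St(\sigma)$, and that the right square uses the no-common-refinement hypothesis for injectivity; these are the same two observations, just unbundled.
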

\begin{proof}
Any  simplex  in $N_d(\mathcal{P})- N_d(\mathcal{P})^{-S}$  contains exactly one $\sigma \in S$ and thus has unique preimages lying  in $N_d(\star(\sigma))-N_d(\star(\sigma))^{-\sigma}$.  Proposition \ref{propos} proves the claim.
\end{proof}

Let $\sigma= [ y_0 < \dots <y_r]$ be a chain in   $\mathcal{P}$. We can swap the order of the join-factors in the star $\St(\sigma)$ (cf.\ Section \ref{starsandlinks}) and obtain a $ \Stab(y_0)\cap \dots \cap \Stab(y_r) $-equivariant diagram:\vspace{3pt}
\begin{diagram}
 \partial \Delta^{r} \ast| \link(\sigma)| & \rTo &\Delta^{r}\ast |\link(\sigma)|\\
\dTo^{\cong} & & \dTo^{\cong}  \\
|\star(\sigma)^{-\sigma}| & \rTo & |\star(\sigma)|
\end{diagram}\vspace{5pt}
By Lemma \ref{lemma: join}, there are \vspace{-5pt} $  \Stab(y_0)\cap \ldots \cap \Stab(y_r)$-equivariant simple equivalences given by $$|\St(\sigma)|/_{|\St(\sigma)^{- \sigma}|} \simeq S^r\wedge (|\mathcal{P}_{<y_0}| \ast \ldots \ast  |\mathcal{P}_{>y_r}|)^\diamond  \simeq   | {\mathcal{P}}_{<y_0}|^\diamond \wedge  \Sigma | {\mathcal{P}}_{(y_0,y_1)}|^\diamond \wedge  \dots  \wedge  \Sigma | {\mathcal{P}}_{(y_{r-1},y_r)}|^\diamond \wedge   |{\mathcal{P}}_{>y_r}|^\diamond.$$  

\begin{corollary}\label{takingout}
Under the above conditions, there are simple $G$-equivalences 
$$\hspace{-10pt} | {\mathcal{P}}|/_{| {\mathcal{P}}^{-S}|} \cong \bigvee_{\sigma = [y_0< \dots < y_r] \in S}\hspace{-10pt} |\St(\sigma)|/_{|\St(\sigma)^{- \sigma}|} \simeq \bigvee_{\sigma = [y_0< \dots < y_r] \in S} \hspace{-10pt} | {\mathcal{P}}_{<y_0}|^\diamond \wedge  \Sigma | {\mathcal{P}}_{(y_0,y_1)}|^\diamond \wedge  \dots  \wedge  \Sigma | {\mathcal{P}}_{(y_{r-1},y_r)}|^\diamond \wedge   |{\mathcal{P}}_{>y_r}|^\diamond$$
Here $h\in G$ acts on the indexed wedge  by sending $| {\mathcal{P}}_{<y_0}|^\diamond \wedge  \Sigma | {\mathcal{P}}_{(y_0,y_1)}|^\diamond \wedge  \dots  \wedge  \Sigma | {\mathcal{P}}_{(y_{r-1},y_r)}|^\diamond \wedge   |{\mathcal{P}}_{>y_r}|^\diamond$ to $| {\mathcal{P}}_{<h\cdot y_0}|^\diamond \wedge  \Sigma | {\mathcal{P}}_{(h\cdot y_0,h\cdot y_1)}|^\diamond \wedge  \dots  \wedge  \Sigma | {\mathcal{P}}_{(h\cdot y_{r-1},h\cdot y_r)}|^\diamond \wedge   |{\mathcal{P}}_{>h\cdot y_r}|^\diamond$.\vspace{-3pt}
\end{corollary}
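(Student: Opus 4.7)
The first displayed homeomorphism is precisely the content of Lemma \ref{triple}, which already provides a cellular $G$-homeomorphism
$|\mathcal{P}|/|\mathcal{P}^{-S}| \cong \bigvee_{\sigma \in S} |\St(\sigma)|/|\St(\sigma)^{-\sigma}|$, with $G$-action permuting summands according to its action on $S$. It therefore remains to produce a $G$-equivariant simple equivalence identifying each summand with the smash-product expression, and to verify that the stated $G$-action is the correct one.

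For a \emph{single} chain $\sigma = [y_0 < \dots < y_r]$, the paragraph immediately preceding the corollary already does the work: swapping join factors in $\St(\sigma) \cong \mathcal{P}_{<y_0} \ast \{y_0\} \ast \mathcal{P}_{(y_0,y_1)}\ast \dots \ast \{y_r\}\ast \mathcal{P}_{>y_r}$ and applying Lemma \ref{lemma: join} iteratively yields a $\Stab(\sigma)$-equivariant simple equivalence
$$|\St(\sigma)|/|\St(\sigma)^{-\sigma}| \;\xrightarrow{\simeq}\; |\mathcal{P}_{<y_0}|^\diamond \wedge \Sigma|\mathcal{P}_{(y_0,y_1)}|^\diamond \wedge \dots \wedge \Sigma|\mathcal{P}_{(y_{r-1},y_r)}|^\diamond \wedge |\mathcal{P}_{>y_r}|^\diamond,$$
where $\Stab(\sigma) = \Stab(y_0)\cap \dots \cap \Stab(y_r)$. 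I would make this construction functorial in the evident way: view $S$ as a $G$-set and define two functors $\mathcal{B}_S G \to \sSet_\ast$, the first sending $\sigma$ to $|\St(\sigma)|/|\St(\sigma)^{-\sigma}|$ and the second sending $\sigma$ to the smash-product expression above, where a morphism $h \colon \sigma \to h\sigma$ acts through the poset isomorphism $\mathcal{P}_{(y_{i-1},y_i)} \xrightarrow{h\cdot} \mathcal{P}_{(hy_{i-1},hy_i)}$ smashed with its analogues for the outer factors. Since the join-swap and the cone-collapse of Lemma \ref{lemma: join} are natural in poset isomorphisms, the above simple equivalence assembles into a natural transformation between these two functors.

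The corollary following Proposition \ref{indexedwedge} then promotes this levelwise $\Stab(\sigma)$-equivariant simple equivalence to a simple $G$-equivalence between the indexed wedge sums. On the left-hand side this indexed wedge agrees, by construction, with the wedge sum produced by Lemma \ref{triple}; on the right-hand side the functoriality forces precisely the $G$-action described in the statement of the corollary, namely permutation of smash summands by $h\cdot y_i$. Composing with the homeomorphism of Lemma \ref{triple} yields the desired simple $G$-equivalence.

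The only obstacle worth flagging is bookkeeping: one must check that the join-swap really is natural in $\Stab(\sigma)$ (which is clear because it depends only on the abstract join structure) and that the identifications of Lemma \ref{lemma: join} commute with the $G$-action on the poset, so that the two functors $\mathcal{B}_S G \to \sSet_\ast$ are related by a genuine natural transformation rather than a zigzag. Once this is confirmed, everything reduces to combining Lemma \ref{triple}, Lemma \ref{lemma: join}, and Proposition \ref{indexedwedge}.
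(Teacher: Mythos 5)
Your proof is correct and matches the paper's (largely implicit) argument: Lemma \ref{triple} for the first homeomorphism, the join-swap plus Lemma \ref{lemma: join} for the per-summand $\Stab(\sigma)$-equivariant simple equivalence, and the indexed-wedge corollary after Proposition \ref{indexedwedge} to assemble these into a $G$-equivalence with the stated permutation action on summands. No gaps.
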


\subsection{Suspensions and Products}  \label{clock} 
We let $S^1_\bullet= N_\bullet ([1])/_{N_\bullet ([1])^{-[\hat{0}<\hat{1}]}}$ be the simplicial circle, where $[1]$ denotes the poset $\{\hat{0}<\hat{1}\}$. 
If $X$ is a $G$-simplicial set, we define $\Sigma X = S^1_\bullet \wedge X$. By Lemma \ref{lemma: join}, we obtain   a simple equivariant equivalence  
$\Sigma |X|^\diamond = S^1 \wedge (S^0 \ast |X|) \simeq |\{0\} \ast X \ast \{1\}/_{(\{0\} \ast X \ast \{1\})^{-[\hat{0}<\hat{1}]}}|$. We will make use of  this well-known simplicial model for  $\Sigma |X|^\diamond$ in the rest of this \vspace{3pt} work.

Using our models for suspensions, we observe (cf.\ Theorem $5.1.$ in \cite{walker1988canonical} for a related claim):
\begin{proposition} 
For $i=1,\ldots,r$, let $\mathcal{P}_i$ be a poset with distinct minimal and maximal elements $\hat{0}^i \neq \hat{1}^i$. There is a cellular homeomorphism 
$ \Sigma |\ovA{\mathcal{P}_1\times \ldots \times \mathcal{P}_r}|^\diamond \xrightarrow{\cong} \Sigma |\ovA{\mathcal{P}_1}|^\diamond \wedge \ldots \wedge \Sigma |\ovA{\mathcal{P}_r}|^\diamond $ \mbox{given by}
$$\hspace{-10pt}  \Bigg( \begin{diagram}
\{\hat{0}^i\}_{i=1}^r&\leq&\{x^i_0\}_{i=1}^r&\leq&\ldots&\leq&\{x^i_n\}_{i=1}^r &\leq&\{\hat{1}\}_{i=1}^r  \\
t_{-1} & + &t_0&+&\ldots &+ &t_n &+ &t_{n+1} =1
\end{diagram} \Bigg)  \ \ \mapsto  \ \ \Bigg(\begin{diagram}
\hat{0}^i & \leq & x^i_0 &\leq &\ldots&\leq & x^i_n & \leq& \hat{1}^i  \\
t_{-1} & + & t_0&+&\ldots &+ &t_n &+ &t_{n+1} = 1
\end{diagram}\Bigg)_{i=1}^r \ .$$
\end{proposition}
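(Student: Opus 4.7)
The plan is to reduce everything to an isomorphism of pointed simplicial sets and then realize. Using the model for $\Sigma|\ovA{\mathcal{P}}|^\diamond$ introduced in Section~\ref{clock} (and the canonical isomorphism $\{0^i\}\ast\ovA{\mathcal{P}_i}\ast\{1^i\}\cong \mathcal{P}_i$ of posets, which just renames the distinguished min and max), I would first write
\[
\Sigma|\ovA{\mathcal{P}_i}|^\diamond\;\cong\;\bigl|N_\bullet(\mathcal{P}_i)\big/ N_\bullet(\mathcal{P}_i)^{-[\hat{0}^i<\hat{1}^i]}\bigr|,
\qquad
\Sigma|\ovA{\mathcal{P}_1\times\cdots\times\mathcal{P}_r}|^\diamond\;\cong\;\bigl|N_\bullet(\mathcal{P}_{\mathrm{prod}})\big/ N_\bullet(\mathcal{P}_{\mathrm{prod}})^{-[\hat{0}<\hat{1}]}\bigr|,
\]
where $\mathcal{P}_{\mathrm{prod}} = \mathcal{P}_1\times\cdots\times\mathcal{P}_r$ with global extrema $\hat{0}=(\hat{0}^i)_i$ and $\hat{1}=(\hat{1}^i)_i$. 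The candidate map in the statement is the one induced by the tautological isomorphism $N_\bullet(\mathcal{P}_{\mathrm{prod}})\cong N_\bullet(\mathcal{P}_1)\times\cdots\times N_\bullet(\mathcal{P}_r)$ coming from the fact that a functor $[d]\to\mathcal{P}_{\mathrm{prod}}$ is exactly an $r$-tuple of functors $[d]\to\mathcal{P}_i$.

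Since geometric realization in $\mathbf{Top}$ commutes with finite products and with quotients by closed subcomplexes (hence with smash products of pointed simplicial sets), it suffices to show that under the tautological isomorphism of nerves the subsimplicial set to be collapsed on the left, $N_\bullet(\mathcal{P}_{\mathrm{prod}})^{-[\hat{0}<\hat{1}]}$, corresponds to
\[
\bigcup_{i=1}^{r} N_\bullet(\mathcal{P}_1)\times\cdots\times N_\bullet(\mathcal{P}_i)^{-[\hat{0}^i<\hat{1}^i]}\times\cdots\times N_\bullet(\mathcal{P}_r),
\]
which is precisely the subcomplex collapsed to the basepoint in the iterated smash product on the right.

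This reduces the whole proposition to the combinatorial claim: a chain $y_0\leq y_1\leq\cdots\leq y_d$ in $\mathcal{P}_{\mathrm{prod}}$ contains $\hat{0}<\hat{1}$ as a two-element subchain if and only if each projection chain $(y_j^i)_j$ contains $\hat{0}^i<\hat{1}^i$. The forward direction is immediate. For the converse, suppose each projection has some $y_{j_i}^i=\hat{0}^i$; then for $j:=\min_i j_i$, minimality of $\hat{0}^i$ in $\mathcal{P}_i$ forces $y_j^i\leq y_{j_i}^i=\hat{0}^i$ to equal $\hat{0}^i$ for every $i$, hence $y_j=\hat{0}$, and symmetrically some $y_{j'}=\hat{1}$. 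Because each factor satisfies $\hat{0}^i\ne\hat{1}^i$ (the hypothesis of the proposition), the weak-monotonicity of the chain forces $j<j'$. Negating this statement identifies the two subcomplexes above, so the induced map of quotient simplicial sets is an isomorphism.

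The resulting map of CW-complexes is then a cellular bijection; on each non-degenerate simplex it acts exactly as the explicit formula given in the proposition (sending a chain $\{\hat{0}^i\}\leq\{x_0^i\}\leq\cdots\leq\{x_n^i\}\leq\{\hat{1}^i\}$ in $\mathcal{P}_{\mathrm{prod}}$ with barycentric coordinates $t_{-1},\dots,t_{n+1}$ to the smash of the $r$ projected chains carrying the same coordinates), and naturality in the $\mathcal{P}_i$ is clear from functoriality of all the constructions. The main (and essentially only) obstacle is the combinatorial lemma of the third paragraph, which is where the hypothesis that each $\mathcal{P}_i$ has \emph{distinct} minimum and maximum is genuinely used; otherwise the positions of $\hat{0}$ and $\hat{1}$ inside a weakly increasing chain could collide and the identification would break.
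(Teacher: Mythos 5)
Your proposal is correct and is essentially the paper's own argument expanded into full detail: the paper simply asserts the natural isomorphism of pointed simplicial sets $N_\bullet(\mathcal{P}_1\times\cdots\times\mathcal{P}_r)/_{N_\bullet(\mathcal{P}_1\times\cdots\times\mathcal{P}_r)^{-[\{\hat0^i\}<\{\hat1^i\}]}}\cong\bigwedge_i N_\bullet(\mathcal{P}_i)/_{N_\bullet(\mathcal{P}_i)^{-[\hat0^i<\hat1^i]}}$ and realizes, which is exactly what you prove. The combinatorial lemma you isolate (a chain in the product has $[\hat0<\hat1]$ as a face iff every projection has $[\hat0^i<\hat1^i]$ as a face), and the $\min/\max$-of-indices argument verifying its nontrivial direction, is the correct content underlying that asserted isomorphism.
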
\vspace{-5pt}
\begin{proof}
This follows by observing that there is a natural isomorphism of simplicial sets
$$N_\bullet(\mathcal{P}_1\times \ldots \times \mathcal{P}_r)/_{N_\bullet(\mathcal{P}_1\times \ldots \times \mathcal{P}_r)^{-[\{\hat{0}^i\}< \{\hat{1}^i\}]}} \cong \bigwedge_i N_\bullet(\mathcal{P}_i)/_{N_\bullet(\mathcal{P}_i)^{-[\hat{0}^i<\hat{1}^i]}}.\vspace{-5pt}\vspace{-5pt}\vspace{-5pt}$$ \end{proof}\vspace{-5pt}\vspace{-5pt}
\subsection{Explicit Collapse}\label{clong}
Let $\mathcal{P}$ be a $G$-poset with distinct minimal element $\hat{0}$ and maximal element $\hat{1}$. Given  a chain   $\sigma= [y_0 < \ldots < y_r]$ in $\ovA{\mathcal{P}} = \mathcal{P}-\{\hat{0},\hat{1}\}$, we write $\St(\sigma)$ for the star  of \mbox{$\sigma$ in $\ovA{\mathcal{P}}$.}
We  represent   points $P$  in $\Sigma^2 ( |\St(\sigma)|/_{|\St(\sigma)^{-\sigma}|} ) \simeq    |\hat{0}\ast\St(\sigma)\ast\hat{1}|/_{|(\hat{0}\ast\St(\sigma)\ast\hat{1})^{-[\hat{0}<\sigma<\hat{1}]}|}$ by \vspace{5pt}
$$\hspace{-22 pt}\Bigg(\begin{diagram} \hat{0}&<& x_0^0<\ldots<x^0_{n_0}  &< & y_0 & <&  x_0^1<\ldots<x^1_{n_1} &< &y_1  &< &  \ldots&< & y_r  &< & x^{r+1}_0< \ldots < x^{r+1}_{n_{r+1}}&<&\hat{1} \\
s_{-1}&+&t_0^0+\ldots+t^0_{n_0}  &+ &s_0 &+&  t_0^1+\ldots+t^1_{n_1} &+ &s_1 &+&  \ldots  &+ &s_r &+& t^{r+1}_0+ \ldots + t^{r+1}_{n_{r+1}}& +&s_{r+1}& = 1\  \end{diagram}\Bigg)\vspace{5pt}$$ 
Such an expression represents the basepoint precisely if one of the parameters \vspace{3pt} $s_i$ vanishes. 

We  represent points in  $ \Sigma |\ovA{\mathcal{P}}_{(\hat{0},y_0)}|^\diamond  \wedge \Sigma |\ovA{\mathcal{P}}_{(y_0,y_1)}|^\diamond \wedge  \ldots \wedge \Sigma |\ovA{\mathcal{P}}_{(y_r,\hat{1})}|^\diamond $ by $(r+2)$-tuples \vspace{4pt}
$$\Bigg\{  \Bigg(\begin{diagram} 
y_{i-1} &< &z^i_0&<&\ldots&<&z^i_{n_i}& < &y_i &&\\
\ell_{-1}^i &+ &\ell_0^i& +& \ldots &+ & \ell^i_{n_i}&+ &\ell_{n_i+1}^i &=& 1 \ 
 \end{diagram} \Bigg)\Bigg\}_{i=0}^{r+1} \vspace{4pt}$$
where we use the convention $y_{-1} = \hat{0}$ and $y_{r+1} = \hat{1}$. Such a tuple represents the basepoint   if some $l^i_{-1}$ or $l^i_{n_i+1}$ vanishes.
We now define  $\Sigma^2 ( |\St(\sigma)|/_{|\St(\sigma)^{-\sigma}|} ) \rightarrow \Sigma |\ovA{\mathcal{P}}_{(\hat{0},y_0)}|^\diamond   \wedge  \ldots \wedge \Sigma |\ovA{\mathcal{P}}_{(y_r,\hat{1})}|^\diamond $ by sending points $P$ represented  as above to  \vspace{4pt}
$$\Bigg\{ \Bigg(\begin{diagram} 
y_{i-1} &< &x^i_0&<&\ldots&<&x^i_{n_i}& < &y_i &&\\
\frac{s_{i-1}}{C^i}&+ &\frac{t_0^i}{C^i}& +& \ldots &+ &\frac{ t_{n_i}^i}{C^i}&+ &\frac{s_{i} }{C^i} &=& 1\ 
 \end{diagram} \Bigg)\Bigg\}_{i=0}^{r+1} \vspace{4pt} $$
whenever all $C^i := s_{i-1} + t_0^i + \ldots + t^i_{n_i}+s_i$ are nonzero for all $i$. If one of the $C^i$ vanishes, we map $P$ to the basepoint. It is not hard to check that this map is well-defined, continuous, cellular, $ \Stab(y_0)\cap \dots \cap \Stab(y_r) $-equivariant, and that it has a  continuous inverse. \vspace{3pt}

Given a $G$-stable family $S$ of strictly increasing chains in $\ovA{\mathcal{P}}$ such that no two chains have a common refinement, we obtain an explicit description of the   collapse maps in the 
 simple   equivalence \vspace{3pt}
$$\hspace{-7pt}\Sigma^2 ( |\ovA{\mathcal{P}}| /_{ |  \overline{\mathcal{P}}^{-S}|})\cong \bigvee_{\sigma = [y_0< \dots < y_r] \in S}\hspace{-10pt} \Sigma^2|\St(\sigma)|/_{|\St(\sigma)^{- \sigma}|} \xrightarrow{\simeq} \bigvee_{\sigma = [y_0< \dots < y_r] \in S}\hspace{-10pt} \Sigma |\ovA{\mathcal{P}}_{(\hat{0},y_0)}|^\diamond  \wedge \Sigma |\ovA{\mathcal{P}}_{(y_0,y_1)}|^\diamond \wedge  \ldots \wedge \Sigma |\ovA{\mathcal{P}}_{(y_r,\hat{1})}|^\diamond\vspace{3pt}$$
implied by Corollary \ref{takingout}. This doubly suspended   collapse map is  relevant in our applications to spectral Lie algebras. As in Section \ref{remove}, it  is $G$-equivariant, where $h\in G$ acts on the right by taking the wedge sum of action maps  $\Sigma |{\ovA{\mathcal{P}}}_{(\hat{0},y_0)}|^\diamond  \wedge   \ldots  \wedge \Sigma  |\ovA{\mathcal{P}}_{(y_r,\hat{1})}|^\diamond \rightarrow  \Sigma |{\ovA{\mathcal{P}}}_{(\hat{0},h \cdot y_0)}|^\diamond  \wedge   \ldots \wedge  \Sigma |\ovA{\mathcal{P}}_{(h \cdot y_r,\hat{1})}|^\diamond$.
\vspace{4pt}

It is not difficult to also give an explicit description of the unsuspended collapse map 
$$ |\ovA{\mathcal{P}}| /_{ |  \overline{\mathcal{P}}^{-S}|}   \longrightarrow \bigvee_{\sigma = [y_0< \dots < y_r] \in S} |\ovA{\mathcal{P}}_{(\hat{0},y_0)}|^\diamond  \wedge \Sigma |\ovA{\mathcal{P}}_{(y_0,y_1)}|^\diamond \wedge  \ldots \wedge \Sigma   |\ovA{\mathcal{P}}_{(y_{r-1},y_r)}|^\diamond\wedge  |\ovA{\mathcal{P}}_{(y_r,\hat{1})}|^\diamond.$$ \vspace{3pt} We  represent points $P$ in  the star $ |\St(\sigma)|/_{|\St(\sigma)^{-\sigma}|}  $  of a chain  $\sigma = [y_0<\ldots<y_r]$ by expressions\vspace{4pt}
$$\hspace{-17 pt}\Bigg(\begin{diagram} \ \ \hat{0}&<& x_0^0<\ldots<x^0_{n_0}  &< & y_0 & <&  x_0^1<\ldots<x^1_{n_1} &< &y_1  &< &  \ldots&< & y_r  &< & x^{r+1}_0< \ldots < x^{r+1}_{n_{r+1}}&<&\hat{1} \ \ \\
&&t_0^0+\ldots+t^0_{n_0}  &+ &s_0 &+&  t_0^1+\ldots+t^1_{n_1} &+ &s_1 &+&  \ldots  &+ &s_r &+& t^{r+1}_0+ \ldots + t^{r+1}_{n_{r+1}}&  =  & 1\   \end{diagram}\Bigg)\vspace{4pt}$$ 
The unsuspended collapse map then sends such a  point $P$ to the $(r+2)$-tuple\vspace{4pt}
$$\Bigg\{ \Bigg(\begin{diagram} 
y_{i-1} &< &x^i_0&<&\ldots&<&x^i_{n_i}& < &y_i &&\\
\frac{s_{i-1}}{C^i}&+ &\frac{t_0^i}{C^i}& +& \ldots &+ &\frac{ t_{n_i}^i}{C^i}&+ &\frac{s_{i} }{C^i} &=& 1 \ 
 \end{diagram} \Bigg)\Bigg\}_{i=0}^{r+1} \vspace{4pt} \vspace{4pt}$$
whenever all $C^i := s_{i-1} + t_0^i + \ldots + t^i_{n_i}+s_i$ are nonzero for all $i$, where we use the additional convention that $s_{-1} = s_{r+1} = 0$. If one of the $C^i$ vanishes, we again map $P$ to the basepoint. 
We used well-known models for the two unreduced suspensions  $|\ovA{\mathcal{P}}_{(\hat{0},y_0)}|^\diamond$ and  $|\ovA{\mathcal{P}}_{(y_r,\hat{1})}|^\diamond$.
The unsuspended collapse maps again  interact well with the action by $G$.
 \vspace{4pt}

We will also often consider the collapse maps obtained by postcomposing the above maps to wedge summands with the ``projection'' to an individual  summand.
\subsection{Spectra}
In our study of spectral Lie algebras, we shall use the closed symmetric monoidal model category $(\mathbf{Sp},\wedge,S)$ of \textit{$S$-modules},  endowed with the smash product,  as model for spectra (cf.\   \cite{elmendorf2007rings}).  We urge the reader to not get too distracted by this refined
homotopical notion as many of our later arguments will merely require the  homotopy category $\hSp$ of spectra.

There is an adjunction 
$ \Sigma^\infty : \mathbf{Top}_\ast \leftrightarrows \mathbf{Sp} : \Omega^\infty$
whose left component $\Sigma^\infty$ is monoidal. 
Given $S$-modules $X,Y\in \mathbf{Sp}$, we  let $\Map_\mathbf{Sp}(X,Y)$ be the {mapping spectrum}  between $X$ and $Y$ and write $\DD(X)=\Map_\mathbf{Sp}(X,S)$ for the Spanier-Whitehead dual of $X$. Using the functor $\Sigma^\infty$, we can observe that the category $\mathbf{Sp}$ is both tensored and cotensored over $\mathbf{Top}_\ast$. We will   suppress  $\Sigma^\infty$ from our notation whenever the context implies that we are working in spectra.
We will now establish two results which tell us that several strict constructions are homotopically well-behaved:

\begin{proposition}\label{sw1}
Assume that the map $f:X\rightarrow Y$ of spectra is 
\begin{enumerate}[leftmargin=26pt]
\item either obtained by applying $\Sigma^\infty$ to a  weak equivalence of well-pointed spaces in $\mathbf{Top}_\ast$
\item or  a weak equivalence   with $X$ cofibrant and $Y$ a suspension spectrum of a well-pointed space.
\end{enumerate}
Then $\DD(f):\DD(Y) \rightarrow \DD(X)$ is a weak equivalence of $S$-modules.
\end{proposition}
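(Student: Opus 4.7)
The plan is to deduce the statement from two standard facts about the EKMM model category of $S$-modules: (i) the functor $\DD(-) = \Map_\mathbf{Sp}(-, S)$ preserves weak equivalences between cofibrant $S$-modules (this is Ken Brown's lemma applied to the contravariant right Quillen functor $\Map_\mathbf{Sp}(-, S)$, using that $S$ is fibrant), and (ii) in EKMM, the suspension spectrum of a CW complex is a cell $S$-module, and in particular cofibrant.

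I would first address case~(2), which is the heart of the matter. Let $f : X \to Y$ be a weak equivalence with $X$ cofibrant and $Y = \Sigma^\infty Z$ a suspension spectrum of a based space $Z$. Choose a CW-approximation $\epsilon : Z' \to Z$, and set $Y^c := \Sigma^\infty Z'$, which is a cofibrant $S$-module by~(ii), while $\Sigma^\infty \epsilon : Y^c \to Y$ is a weak equivalence. Since every object in EKMM is fibrant and $X$ is cofibrant, the weak equivalence $\Sigma^\infty \epsilon$ induces a bijection $[X, Y^c] \to [X, Y]$ in the homotopy category, so $f$ lifts up to homotopy to a map $g : X \to Y^c$; by $2$-out-of-$3$, $g$ is again a weak equivalence between cofibrant $S$-modules, so $\DD(g)$ is a weak equivalence by~(i). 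The main obstacle is to verify that $\DD(\Sigma^\infty \epsilon) : \DD(Y) \to \DD(Y^c)$ is itself a weak equivalence. This is checked on homotopy groups: for any pointed space $W$, $\pi_n \DD(\Sigma^\infty W)$ computes the $(-n)$-th stable cohomotopy of $W$, which is a cohomology theory and therefore sends weak equivalences of pointed spaces to isomorphisms.

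Case~(1) then follows formally. Given a weak equivalence $f : A \to B$ of well-pointed spaces, pick CW-approximations $A' \to A$ and $B' \to B$, producing a commutative square in which the vertical arrows and one horizontal arrow are weak equivalences of suspension spectra. The top row $\Sigma^\infty A' \to \Sigma^\infty B'$ is a weak equivalence between cofibrant $S$-modules and therefore maps to a weak equivalence under $\DD$ by~(i). Applying case~(2) to the vertical weak equivalences $\Sigma^\infty A' \to \Sigma^\infty A$ and $\Sigma^\infty B' \to \Sigma^\infty B$ shows that $\DD$ also preserves them, and $2$-out-of-$3$ delivers the conclusion for $\DD(\Sigma^\infty f)$.
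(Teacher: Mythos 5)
Your premise (ii) is false: in the EKMM category $\mathbf{Sp}$ of $S$-modules used in this paper, suspension spectra of CW complexes are \emph{not} cell $S$-modules and in particular are not cofibrant. Already the unit $S = \Sigma^\infty S^0$ fails to be cofibrant in $\mathbf{Sp}$; this is one of the well-known pitfalls of the EKMM framework. Consequently your $Y^c = \Sigma^\infty Z'$ is not cofibrant, the lifted map $g \colon X \to Y^c$ is not a weak equivalence between cofibrant $S$-modules, and your premise (i) (Ken Brown's lemma) does not apply to it. If suspension spectra of CW complexes were cofibrant, case (1) would reduce immediately to CW approximation plus Ken Brown, and the paper's citations of Theorem VII.4.6 of EKMM and Proposition 10.3.18 of May--Sigurdsson --- made precisely to establish that $S_c \wedge X$ and $S_c \wedge Y$ are cofibrant when $S_c \to S$ is a cofibrant replacement of the sphere --- would be superfluous.

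The step you call "the main obstacle" has a second, related gap. You assert that $\pi_n \DD(\Sigma^\infty W)$ is the $(-n)$-th stable cohomotopy of $W$ and hence homotopy-invariant in $W$. But the point-set function spectrum $F(\Sigma^\infty W, S)$ computes the derived mapping spectrum only when the source is cofibrant (or otherwise known to be homotopically well-behaved), and that is exactly what is at issue; the asserted homotopy-invariance of $W \mapsto \DD(\Sigma^\infty W)$ on well-pointed spaces is, up to reformulation, case (1) itself. The paper's proof instead smashes $f$ with $S_c$, verifies that $S_c \wedge f$ is a weak equivalence between cofibrant $S$-modules (using EKMM VII.4.6 and III.3.8, and May--Sigurdsson 10.3.18 and 24.3.1), and transfers the conclusion back to $F(X,S) \to F(Y,S)$ via the isomorphism $F(S_c \wedge X, S) \cong F(S_c, F(X,S))$ together with Lemma 4.2.7 of Hovey's book. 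Your approach would need comparable external input about tame or CW spectra rather than relying on cofibrancy of suspension spectra.
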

\begin{proof}
Let $S_c \rightarrow S$ be a cofibrant replacement of the sphere spectrum. 

For $(1)$, Theorem 24.3.1 of \cite{may2006parametrized} shows that $S_c \wedge X \rightarrow S_c \wedge Y$ is a weak equivalence. The spectra $S_c \wedge X$, $S_c \wedge  Y$ are seen to be cofibrant by combining  Theorem VII.$4.6.$ in \cite{elmendorf2007rings} with Proposition $10.3.18 (i)$ of \cite{may2006parametrized}. Since $S$ is fibrant,   $F(S_c \wedge  X,S) \rightarrow F(S_c \wedge  Y,S)$ is a weak equivalence.  
The left horizontal  arrows below are weak equivalences by Lemma $4.2.7$ of \cite{hovey1999model}:\vspace{-0pt}
\begin{diagram}
F(X,S)  &\rTo^\simeq & F(S_c,F(X,S) )  & \rTo^\cong & F(S_c \wedge  X,S) \\
\dTo & & \dTo & & \dTo^\simeq  \\ 
F(Y,S) & \rTo^\simeq&  F(S_c,F(Y,S)) &\rTo^\cong &  F(S_c \wedge  Y,S) 
\end{diagram}
For claim $(2)$, we first note that smashing with $S_c$ preserves weak equivalences by  Theorem III.$3.8.$ of \cite{elmendorf2007rings}  and then  follow a parallel argument. \vspace{-3pt}
\end{proof}
\begin{proposition}
Let $X,Y\in \mathbf{Top}_\ast$ be well-pointed spaces. Fix weak equivalences from cofibrant \textit{spectra} $\widetilde{\DD(Y)}\rightarrow \DD(Y)$, $\widetilde{X}\rightarrow X$, $\widetilde{Y}\rightarrow Y$. Then, we have:
\begin{enumerate}[leftmargin=26pt]
\item  
$\widetilde{X} \wedge \widetilde{Y} \rightarrow X \wedge Y  $ is a  weak equivalence in $\mathbf{Sp}$.
\item   $\widetilde{X} \wedge \widetilde{\DD(Y)} \rightarrow X \wedge \DD(Y)$ is a  weak equivalence of spectra whenever $X$ is a $\CW$-complex. \vspace{-2pt}
\end{enumerate}
\end{proposition}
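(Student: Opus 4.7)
The plan is to prove both parts by factoring the given map into two steps, each of which is a weak equivalence for a different reason. The core tool is Theorem III.3.8 of \cite{elmendorf2007rings} --- smashing with a cofibrant $S$-module preserves all weak equivalences --- supplemented by the flatness of suspension spectra of well-pointed spaces, the same input driving the proof of Proposition \ref{sw1}.

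For part (1), I factor $\widetilde{X}\wedge \widetilde{Y}\to X\wedge Y$ as
$$\widetilde{X}\wedge \widetilde{Y} \xrightarrow{\alpha} \widetilde{X}\wedge Y \xrightarrow{\beta} X\wedge Y.$$
The map $\alpha$ is a weak equivalence by Theorem III.3.8 of \cite{elmendorf2007rings}, since $\widetilde{X}$ is cofibrant and $\widetilde{Y}\to Y$ is a weak equivalence. For $\beta$, I exploit that $Y$ is the suspension spectrum of a well-pointed space, which is flat in the sense that smashing with it preserves all weak equivalences. Concretely, this can be established by smashing with an EKMM cofibrant replacement $S_c\to S$ and reducing to the well-pointed input of Theorem 24.3.1 of \cite{may2006parametrized} at the space level, exactly as in the proof of Proposition \ref{sw1}. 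Applied to $\widetilde{X}\to X$, this gives that $\beta$ is a weak equivalence.

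For part (2), factor analogously as
$$\widetilde{X}\wedge \widetilde{\DD(Y)} \xrightarrow{\gamma} X\wedge \widetilde{\DD(Y)} \xrightarrow{\delta} X\wedge \DD(Y).$$
Again $\gamma$ is a weak equivalence since $\widetilde{\DD(Y)}$ is cofibrant. For $\delta$ the method of part (1) does not apply because $\DD(Y)$ is generally not a suspension spectrum, so I exploit instead that $X$ is a CW-complex and induct over its cellular filtration $X_0\subset X_1\subset \cdots$. Both functors $(-)\wedge \widetilde{\DD(Y)}$ and $(-)\wedge \DD(Y)$ preserve cofiber sequences and sequential homotopy colimits, so by the Gluing Lemma and induction it suffices to check the base case $X=S^n$, which reduces to $\Sigma^n \widetilde{\DD(Y)}\to \Sigma^n\DD(Y)$ being a weak equivalence --- immediate from the hypothesis on $\widetilde{\DD(Y)}\to\DD(Y)$.

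I expect the main obstacle to be step $\beta$ in part (1): carefully verifying that smashing with the not-necessarily-cofibrant $Y$ preserves the weak equivalence $\widetilde{X}\to X$. This requires invoking the tameness of suspension spectra of well-pointed spaces and is the main technical subtlety, echoing the corresponding manoeuvre in Proposition \ref{sw1}.
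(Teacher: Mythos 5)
Your proof of part (1) is correct and, despite using a different factorization (a three-term chain rather than the paper's commutative square), it draws on essentially the same toolkit as the paper: Theorem III.3.8 of \cite{elmendorf2007rings}, the observation that $S_c\wedge(-)$ applied to a suspension spectrum of a well-pointed space is a cell $S$-module, and the strong commutative monoid axiom of \cite{muro2015unit} that $S_c\wedge Z\to Z$ is a weak equivalence for every $Z$. One phrasing caution: you assert that $Y$ ``is flat in the sense that smashing with it preserves all weak equivalences,'' but $Y$ itself is not a cell $S$-module, so this is not literally what EKMM hands you. What is flat is $S_c\wedge Y$, and your parenthetical remark about first smashing with $S_c\to S$ and then rearranging is the actual argument; it should be stated as such rather than offered as a verification of the literal flatness claim.

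For part (2), your factorization through $X\wedge \widetilde{\DD(Y)}$ agrees with the paper's, but your treatment of the second map $\delta$ by CW-induction over $X$ is genuinely different: the paper instead notes that $S_c\wedge X$ is a cell $S$-module whenever $X$ is a CW-complex and simply invokes Theorem III.3.8 of \cite{elmendorf2007rings} once more. Your induction is a valid alternative, but it silently commits you to checking several facts that are not automatic for the non-cofibrant spectrum $\DD(Y)$: that tensoring a fixed $S$-module with an $h$-cofibration of spaces yields an $h$-cofibration of $S$-modules, that pushouts along $h$-cofibrations are homotopy pushouts (part of the Cofibration Hypothesis), and that sequential colimits along $h$-cofibrations commute with weak equivalences. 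The base case likewise needs the EKMM identification of $\Sigma^\infty S^n\wedge_S\DD(Y)$ with $\Sigma^n\DD(Y)$ and the fact that $\Sigma^n$ preserves all weak equivalences of $S$-modules. None of these is false, but they all need to be invoked explicitly. The paper's route buys a one-step argument with no side conditions, since it is precisely the statement that cell $S$-modules are flat; yours is more elementary in flavour but considerably longer to make airtight.
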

\begin{proof} We again let $S_c \rightarrow S$ be a cofibrant replacement of $S$. 
For $(1)$,  consider the following square: \vspace{-5pt}\vspace{-6pt}
\begin{diagram}
(S_c \wedge \widetilde{X}) \wedge (S_c \wedge  \widetilde{Y}) &  &\rTo^\simeq & &(S_c \wedge X) \wedge (S_c \wedge Y) \\
\dTo^\simeq  & & & & \dTo^\simeq  \\
  \widetilde{X}\wedge  \widetilde{Y}&  &\rTo&&  X\wedge Y
\end{diagram}\vspace{-5pt}

The vertical arrows are weak equivalences by  the very strong commutative monoid axiom of \cite{muro2015unit}, asserting that  smashing $S_c \rightarrow S$ with \textit{any} spectrum gives a weak equivalence. Theorem III.$3.8.$ of \cite{elmendorf2007rings} implies that smashing with $S_c$ preserves  weak equivalences. The top horizontal arrow is a weak equivalence as it is obtained by smashing two weak equivalences between cofibrant spectra ($S_c \wedge X$, $S_c \wedge Y$ are  cofibrant by the same argument \mbox{as in   Proposition \ref{sw1}).}

For $(2)$, we consider the diagram \vspace{-5pt}
\begin{diagram}
(S_c \wedge \widetilde{X}) \wedge   \widetilde{\DD(Y)} &   &\rTo^\simeq  & &   (S_c \wedge {X}) \wedge   \widetilde{\DD(Y)} &  &\rTo^\simeq  & & (S_c \wedge X) \wedge  \DD(Y)  \\
\dTo^\simeq  & & & & \dTo^\simeq  & & & & \dTo^\simeq  \\
 \widetilde{X}\wedge  \widetilde{\DD(Y)} & & \rTo & &   {X}\wedge  \widetilde{\DD(Y)} & & \rTo & & X\wedge \DD(Y)
\end{diagram}
The vertical arrows are equivalences as smashing with $S_c\rightarrow S$ preserves these. The top right  hori-zontal  arrow is an equivalence by   \cite[Theorem III.3.8]{elmendorf2007rings} as $S_c \wedge X$ \mbox{is a cell $S$-module.} The top left  horizontal   arrow  is obtained by smashing weak equivalences between cofibrant $S$-modules.
\end{proof}

\newpage

\newpage 
\section{Complementary Collapse}\label{c4}
We will now present an algorithm which collapses large subcomplexes of order complexes attached to lattices and thereby produces equivariant simple homotopy equivalences to wedge sums of smaller spaces. Fix a finite group $G$ throughout this section.\vspace{-5pt}
\subsection{A Reminder on Discrete Morse Theory} We review  the basics of discrete Morse theory: 
\begin{definition}\label{perfectmatch}
A  \textit{G}--equivariant matching on a simplicial $G$-complex $(V,F)$ with fixed point $x$ consists of an equivalence relation $\sim$ on the face set $F$ satisfying the following conditions:
\begin{itemize}[leftmargin=26pt]
\item The relation $\sim$ is $G$-invariant, i.e. $\sigma \sim \tau$ implies $g\sigma \sim g\tau$ for all $g\in G$.
\item Every equivalence class is either equal to $\{x\}$ or has the form $\{\sigma^-, \sigma^+\}$, where $\sigma^- $ is a face of codimension $1$ of $\sigma^+$.
\end{itemize}
Such a matching is   \textit{acyclic} if there does  \textit{not} exist a chain 
\mbox{$\sigma_1^- < \sigma_1^+  > \sigma_2^- < \sigma_2^+ > \dots > \sigma_n^- < \sigma_n^+ > \sigma_1^-$}
with $n>1$ and all $\sigma_i$ distinct.
The following statement is due to Forman \cite{forman1998morse} and Freij \cite{freij2009equivariant}:
\end{definition}
\begin{theorem} 
If there exists  a $G$-equivariant acyclic matching on a simplicial $G$-complex \mbox{$X=(V,F)$} with fixed point $x$, then there is a $G$-equivariant collapse
$|X| \simeq_G \{x\} $.
\end{theorem}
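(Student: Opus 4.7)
The plan is to induct on the number of non-fixed $\sim$-equivalence classes, using the acyclicity hypothesis to locate, at each stage, a $G$-orbit of matched pairs that can be removed by a single elementary $G$-collapse in the sense of the definition from Section~2. The base case, where $F=\{x\}$, is trivial: then $|X|=\{x\}$ is already a point. Throughout, the matched pairs that I peel off will be chosen so that the restricted matching on the smaller complex remains $G$-equivariant and acyclic with the same fixed point $x$, so that the inductive hypothesis applies.

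The crux of the inductive step is the following lemma, which is where the acyclicity hypothesis does the real work: \emph{there exists a matched pair $\{\sigma^-,\sigma^+\}$ such that $\sigma^+$ is a maximal face of $X$ and $\sigma^-$ is a free face, i.e.\ $\sigma^+$ is the unique proper super-simplex of $\sigma^-$ in $F$.} To produce such a pair, I would work with the modified Hasse diagram associated with $\sim$: the directed graph on matched pairs with an edge $\{\alpha^-,\alpha^+\}\to\{\beta^-,\beta^+\}$ whenever $\beta^-$ is a codimension-$1$ face of $\alpha^+$ distinct from $\alpha^-$. The acyclicity of $\sim$ is exactly the statement that this graph has no directed cycle, so by finiteness it admits a sink $\{\sigma^-,\sigma^+\}$. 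Combining this sink property with the observation that $x$ is the only critical simplex (and a vertex), together with an argument that considers a pair of maximal-dimensional $\sigma^+$ among all sinks, one can verify that $\sigma^+$ is a maximal face of $X$ and that $\sigma^-$ is free.

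Granted such a pair $\{\sigma^-,\sigma^+\}$, the plan is to collapse its entire $G$-orbit at once. The conditions of the paper's definition of an elementary $G$-collapse are verified as follows. Condition~(1) is exactly the freeness of $\sigma^-$. Condition~(3) is automatic, since we are deleting all simplices containing $g\sigma^-$ for some $g\in G$. Condition~(2) is the subtle one: if $g\sigma^-\neq\sigma^-$ and some simplex $\tau$ contains both $\sigma^-$ and $g\sigma^-$, then $\tau$ properly contains $\sigma^-$, so freeness forces $\tau\supseteq\sigma^+$, and maximality of $\sigma^+$ then forces $\tau=\sigma^+$. The symmetric argument gives $\sigma^+\supseteq g\sigma^+$, hence $\sigma^+=g\sigma^+$. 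But then $g$ preserves the two-element set $\{\sigma^-,\sigma^+\}$; since its elements have different dimensions, $g$ must fix each of them, contradicting $g\sigma^-\neq\sigma^-$.

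Performing this elementary $G$-collapse produces a simplicial $G$-subcomplex $X'\subset X$, and the restriction of $\sim$ to $X'$ is evidently still a $G$-equivariant acyclic matching with the same fixed point $x$ (removing matched pairs can only destroy, not create, V-path cycles). The induction hypothesis yields a $G$-equivariant collapse $|X'|\simeq_G\{x\}$, which composed with the elementary collapse $|X|\to |X'|$ gives the desired $G$-equivariant collapse $|X|\simeq_G\{x\}$. The main obstacle is the existence lemma for a matched pair with maximal top and free bottom: once that is in hand, the equivariance check and the inductive bookkeeping are routine.
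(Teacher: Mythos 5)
Your overall strategy—peel off a $G$-orbit of pairs $\{\sigma^-,\sigma^+\}$ with $\sigma^+$ maximal and $\sigma^-$ a free face, then induct on the number of cells—is the standard one for this kind of result, and your verification of conditions (1)--(3) of the paper's definition of an elementary $G$-collapse, given such a pair, is correct. (The paper cites this statement to Forman and Freij without supplying its own proof, so there is no internal argument to compare against.)

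However, your existence lemma contains a genuine error: you take a \emph{sink} in the gradient-path digraph with $\dim\sigma^+$ maximal and claim $\sigma^-$ is free, but this is false. Being a sink only forces the other codimension-one faces of $\sigma^+$ to be matched downward; it says nothing about what contains $\sigma^-$. Concretely, take $X$ with facets $\{1,2,3\},\{1,2,4\},\{1,2,5\}$ (three triangles sharing the edge $\{1,2\}$), and the acyclic matching $\{1,2\}\sim\{1,2,3\}$, $\{2,4\}\sim\{1,2,4\}$, $\{2,5\}\sim\{1,2,5\}$, $\{3\}\sim\{1,3\}$, $\{4\}\sim\{1,4\}$, $\{5\}\sim\{1,5\}$, $\{2\}\sim\{2,3\}$, with critical cell $\{1\}$. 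With your edge convention, the unique sink among $2$-dimensional pairs is $\{\{1,2\},\{1,2,3\}\}$, yet $\{1,2\}$ lies in all three facets and is not free. The correct extremal object is a \emph{source} among the matched pairs of top dimension $d$: if $\{\sigma^-,\sigma^+\}$ has no incoming edge, then no $d$-simplex other than $\sigma^+$ contains $\sigma^-$ (any such simplex is matched with a face distinct from $\sigma^-$, which would give an incoming edge), and since $d$ is the top dimension, $\sigma^+$ is the unique proper superset of $\sigma^-$. Sources exist by the same finiteness-plus-acyclicity argument you invoked. With this reorientation—it is a correction of direction, not of idea—the rest of your proposal (orbit freeness, the check of condition (2), the induction bookkeeping) goes through unchanged.
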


\subsection{Complementary Collapse against Points}\vspace{-5pt}
A finite $G$-lattice $\mathcal{P}$ is a $G$-poset whose underlying poset is a finite lattice, which means that  every two elements $x,y$ have a meet $x\wedge y$ and a join $x\vee y$. 
Fix a finite  $G$-lattice $\mathcal{P}$ and write $\ovA{\mathcal{P}} := \mathcal{P} - \{\hat{0}, \hat{1}\}$, where $\hat{0}$ is the least and $\hat{1}$ the largest \mbox{element of $\mathcal{P}$}.
\begin{definition}
The complement of a $G$-stable  $x\in \ovA{\mathcal{P}}$ is 
\mbox{$x^\perp := \{y \in \ovA{\mathcal{P}} \ | \ x \wedge y = \hat{0}, x \vee y = \hat{1} \}$.}
\end{definition}
Before stating complementary collapse in full generality, we will present a special case:

\begin{theorem} \label{CC1}
If $x \in \ovA{\mathcal{P}}$ is a $G$-stable vertex, 
then $N(\ovA{\mathcal{P}})^{-x^\perp}$
is  $G$-equivariantly collapsible.
Here $N(\ovA{\mathcal{P}})^{-x^\perp}$ is the  complex containing all simplices in $N(\ovA{\mathcal{P}})$ which do not \mbox{contain a vertex in $x^\perp$.}
\end{theorem}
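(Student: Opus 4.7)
The plan is to apply equivariant discrete Morse theory by constructing a $G$-equivariant acyclic matching on the simplicial complex $K := N(\ovA{\mathcal{P}})^{-x^\perp}$ whose unique unmatched simplex is the vertex $\{x\}$; Forman-Freij then gives the desired $G$-equivariant collapsibility.

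A preliminary observation is that every $y \in x^\perp$ is automatically incomparable with $x$---if $y \leq x$ then $y \wedge x = y \neq \hat 0$, contradicting $y \in x^\perp$, and dually for $y \geq x$. Consequently the closed star $\St(x) = \{\sigma \in K : \sigma \cup \{x\} \text{ is still a chain}\}$ is a $G$-invariant subcomplex of $K$ (no vertex of such a chain can lie in $x^\perp$). On $\St(x)$ the standard ``cone'' matching that pairs $\sigma$ with $\sigma \bigtriangleup \{x\}$ is acyclic and leaves exactly $\{x\}$ critical.

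For the remaining chains $\sigma \in K \setminus \St(x)$---those containing at least one element incomparable with $x$---I will define a complementary matching using the closure-type map
$$\phi(y) := \begin{cases} y \vee x, & y \vee x \neq \hat 1,\\ y \wedge x, & y \vee x = \hat 1,\end{cases}$$
which is well-defined on $\ovA{\mathcal{P}} \setminus x^\perp$: the fallback $y \wedge x \neq \hat 0$ is guaranteed precisely by $y \notin x^\perp$. Since $\phi(y)$ is comparable with $x$ and lies in $\ovA{\mathcal{P}}$, it cannot be in $x^\perp$, so $\phi(y) \in \ovA{\mathcal{P}}^{-x^\perp}$. For each $\sigma \in K \setminus \St(x)$ I select, in a $G$-equivariant way, a canonical reference element $y_\sigma \in \sigma$ incomparable with $x$---taking $y_\sigma$ to be maximal among such is a natural first choice---and pair $\sigma$ with $\sigma \bigtriangleup \{\phi(y_\sigma)\}$.

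The main obstacle will be verifying that this second matching is well-defined and truly involutive: one must check that $\phi(y_\sigma)$ can indeed be inserted into (respectively removed from) $\sigma$ while keeping the result a chain in $K \setminus \St(x)$ with the same reference element $y_\sigma$. The case $\phi(y_\sigma) = y_\sigma \vee x$ works cleanly, since every element of $\sigma$ strictly above $y_\sigma$ is comparable with $x$ and hence---being $> y_\sigma$ with $y_\sigma \not\leq x$---must lie above $x$, whence above $\phi(y_\sigma)$. The dual case $\phi(y_\sigma) = y_\sigma \wedge x$ is more delicate because incomparable elements of $\sigma$ lying below $y_\sigma$ need not be comparable with $y_\sigma \wedge x$; this forces a more intricate selection rule, typically by iterating $\phi$ downwards through the chain or by defining $y_\sigma$ as the first position at which $\phi$ acts ``upward''. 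Once the matching is defined, acyclicity follows because any alternating cycle $\sigma_1^- < \sigma_1^+ > \sigma_2^- < \cdots$ would produce an infinite strictly monotone sequence among the values of $\phi$ applied to elements of the finite lattice $\mathcal{P}$, a contradiction. Finally, combining the two matchings on $\St(x)$ and $K \setminus \St(x)$ via the Cluster / Patchwork Lemma yields a global $G$-equivariant acyclic matching with unique critical cell $\{x\}$, completing the proof.
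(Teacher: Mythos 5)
Your overall plan --- split $K := N(\ovA{\mathcal{P}})^{-x^\perp}$ into $\St(x)$ and its complement, use the cone matching on $\St(x)$ (which is correct, $G$-equivariant, and acyclic with unique critical cell $\{x\}$), and then patch in a matching on $K\setminus\St(x)$ via the Patchwork/Cluster Lemma --- is a reasonable route. The fact that $\psi(\sigma)=[\sigma\in \St(x)?]$ is order-preserving is also right, so the patching step would go through \emph{if} the second matching were well-defined and acyclic. However, there is a genuine gap in the second matching, which you yourself flag: when $\phi(y_\sigma)=y_\sigma\wedge x$, the element to be inserted need not be comparable with lower elements of $\sigma$ that are incomparable with $x$. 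This is not a delicacy but an outright failure. Concretely, take $\mathcal{P}=\Bcal_4$, $x=\{1,2\}$, and $\sigma=[\{3\}<\{1,3,4\}]$. Neither vertex lies in $x^\perp$ (since $\{3\}\vee x\neq\hat 1$ and $\{1,3,4\}\wedge x=\{1\}\neq\hat 0$), both are incomparable with $x$, and $y_\sigma=\{1,3,4\}$ has $y_\sigma\vee x=\hat 1$, so $\phi(y_\sigma)=\{1\}$ --- which is incomparable with $\{3\}$, so insertion fails. Taking $y_\sigma$ minimal instead of maximal dualizes the problem: then the upward case breaks on chains with several incomparable elements above $y_\sigma$. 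A ``first position where $\phi$ acts upward'' rule, or iterating $\phi$, is not enough detail to constitute a proof; one must then re-verify involutivity and acyclicity for the new rule, which is the hard part. Your acyclicity sketch is also too quick: along an alternating path $\sigma_1^-<\sigma_1^+>\sigma_2^-<\cdots$ both the reference element $y_\sigma$ and the value $\phi(y_\sigma)$ can change in ways that are not monotone in the lattice, so no strictly monotone sequence in $\mathcal{P}$ falls out of this argument.

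For comparison, the paper does not split off $\St(x)$ at all; it obtains CC1 as a special case of the general Theorem on Complementary Collapse against Fans (Theorem \ref{CollapseFansA}), applied to the single-function fan $F$ with $F([\hat0])=x$ and $F(\sigma)=\hat 1$ otherwise. The matching there tracks a pair of indices $(i,j)$ for each chain $\sigma=[y_0<\cdots<y_k]$: $i$ is the largest index with $y_i\wedge x=\hat 0$, $j\geq i$ the largest index with $y_j\leq y_i\vee x$, and the matched simplex is obtained by inserting or removing $(y_i\vee x)\wedge y_{j+1}$. This single element simultaneously encodes both the ``join'' and ``meet'' sides of the interaction with $x$, which is exactly what your one-element rule $\phi(y_\sigma)$ cannot do; it is also what makes insertability automatic (the candidate lies between $y_j$ and $y_{j+1}$ by construction) and what makes the acyclicity argument work, via the monotone behaviour of the structure triple $(i,j,\mathbf T)$ along alternating paths. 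In short: the decomposition through $\St(x)$ buys you a clean treatment of the easy part, but the complementary matching requires the two-index bookkeeping (or an equivalent closure-operator argument \`a la Bj\"orner--Walker), and that is precisely what is missing here.
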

Using Corollary \ref{takingout} and Proposition \ref{indexedwedge}, we immediately obtain:
\begin{theorem}\label{Maesterkey} \label{CC2}
If $x\in \ovA{\mathcal{P}}$ is a $G$-stable element for which $x^\perp$ is discrete, then there are simple homotopy equivalences of $G$-spaces
$$
|\ovA{\mathcal{P}} |  \longrightarrow   \bigvee_{y\in x^\perp}   |\ovA{\mathcal{P}}_{<y}|^\diamond  \wedge   |\ovA{\mathcal{P}}_{>y}|^\diamond   \longrightarrow  \bigvee_{[z]\in x^\perp/G}  \Ind^G_{\Stab(z)} ( |\ovA{\mathcal{P}} _{<z}|^\diamond \wedge | \mathcal{P}_{>z}|^\diamond ) $$
\end{theorem}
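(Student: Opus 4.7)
The proof proposal is essentially an assembly of tools already established in the excerpt, and the excerpt itself signals that the result follows ``immediately'' from Corollary \ref{takingout} and Proposition \ref{indexedwedge}. The plan is to run through the following four steps, each delegating the real work to an earlier result.

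First, I would invoke Theorem \ref{CC1} applied to the $G$-stable vertex $x$ to conclude that the subcomplex $N(\ovA{\mathcal{P}})^{-x^{\perp}}$ of simplices avoiding $x^{\perp}$ is $G$-equivariantly collapsible, and in particular $G$-contractible. Note that $x^{\perp}$ is automatically a $G$-stable subset of $\ovA{\mathcal{P}}$: if $y \in x^{\perp}$ and $g \in G$, then $x \wedge gy = g(x) \wedge g(y) = g(x \wedge y) = g\hat{0} = \hat{0}$, and similarly for the join, so that $gy \in x^{\perp}$.

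Second, I would apply Proposition \ref{con} to the inclusion $|N(\ovA{\mathcal{P}})^{-x^{\perp}}| \hookrightarrow |\ovA{\mathcal{P}}|$ to obtain a simple $G$-equivalence
$$|\ovA{\mathcal{P}}| \xrightarrow{\ \simeq \ } |\ovA{\mathcal{P}}|\bigm/|N(\ovA{\mathcal{P}})^{-x^{\perp}}|.$$
Third, I would apply Corollary \ref{takingout} with $S = x^{\perp}$ viewed as a family of length-$0$ chains in $\ovA{\mathcal{P}}$. The hypothesis that no two chains in $S$ admit a common refinement holds precisely because $x^{\perp}$ is discrete (no two of its elements are comparable), and the resulting wedge has only $r=0$ terms, producing the identification
$$|\ovA{\mathcal{P}}|\bigm/|N(\ovA{\mathcal{P}})^{-x^{\perp}}| \xrightarrow{\ \simeq \ } \bigvee_{y \in x^{\perp}} |\ovA{\mathcal{P}}_{<y}|^{\diamond} \wedge |\ovA{\mathcal{P}}_{>y}|^{\diamond},$$
$G$-equivariantly, with $g \in G$ permuting wedge summands via $y \mapsto gy$ and acting by the evident maps on each factor.

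Finally, since the indexing is over a $G$-set, Proposition \ref{indexedwedge} packages the $G$-action into an induction over the orbit set $x^{\perp}/G$, yielding the second arrow
$$\bigvee_{y \in x^{\perp}} |\ovA{\mathcal{P}}_{<y}|^{\diamond} \wedge |\ovA{\mathcal{P}}_{>y}|^{\diamond} \xrightarrow{\ \simeq \ } \bigvee_{[z] \in x^{\perp}/G} \Ind^{G}_{\Stab(z)}\bigl(|\ovA{\mathcal{P}}_{<z}|^{\diamond} \wedge |\ovA{\mathcal{P}}_{>z}|^{\diamond}\bigr).$$
Because all genuine combinatorial content—namely the existence of an equivariant acyclic matching on $N(\ovA{\mathcal{P}})^{-x^{\perp}}$—is already encoded in Theorem \ref{CC1}, no step here should present a real obstacle; the only thing to double-check is the bookkeeping for the $G$-action on the wedge, which is straightforward since $x^{\perp}$ is $G$-stable and the intervals $\ovA{\mathcal{P}}_{<y}, \ovA{\mathcal{P}}_{>y}$ are permuted compatibly by $G$.
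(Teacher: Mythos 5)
Your proof is correct and matches the paper's own reasoning: the paper introduces Theorem \ref{CC2} with the words ``Using Corollary \ref{takingout} and Proposition \ref{indexedwedge}, we immediately obtain,'' and your four steps (collapse via Theorem \ref{CC1}, pass to the quotient via Proposition \ref{con}, identify the quotient via Corollary \ref{takingout} applied to the discrete family of singleton chains $x^\perp$, then repackage with Proposition \ref{indexedwedge}) simply unpack that ``immediately.'' Your verification that $x^\perp$ is $G$-stable and that discreteness supplies the no-common-refinement hypothesis of Corollary \ref{takingout} are exactly the minor checks that make the abbreviated argument rigorous.
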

\begin{remark}
The contractibility of $N(\ovA{\mathcal{P}})^{-x^\perp}$ and the resulting branching rule is originally due to Bj\"{o}rner-Walker  \cite{bjorner1983homotopy} in the nonequivariant and Welker \cite{welker1990homotopie} in the equivariant case. The collapsibility in the nonequivariant case also follows from work by Kozlov \cite{kozlov1998order} on \mbox{nonevasiveness.}
\end{remark}
\subsection{Orthogonality Fans}\vspace{-5pt}
We will now prove a generalisation of Theorems \ref{CC1} and \ref{CC2}   in  which the ``reference simplex" $x$ is allowed to vary across the poset. We introduce some natural notation:
\begin{notation} Write $\mathcal{F}_\mathcal{P}$ for the collection of nonempty chains $\sigma = [x_0<\ldots<x_n]$ with $x_i \in \mathcal{P}$.
Given {any} chain $\sigma$ and any element $z$, we let $\sigma^{<z}$  denote the subchain of $\sigma$ spanned by all elements $x$ in $\sigma$ which satisfy $x<z$. We write $\sigma<z$  if  all elements $x$ in $\sigma$  satisfy $x<z$ and define $[\sigma<z]$ to be the chain obtained by adding $z$ to $\sigma$. We define the subchain  $\sigma^{>z}$, the  condition $z<\sigma$, and the chain $[z<\sigma]$ analogously. 

If $F:\mathcal{F}_\mathcal{P}\rightarrow \mathcal{P}$ is any function and $y\in \mathcal{P}$, we can define two new functions 
$$F^{\leq y}: \mathcal{F}_{[\hat{0},y]} \rightarrow [\hat{0},y],  \ \ \ \ \ \ \ \ \ \ \ \ \ \ F^{\geq y}: \mathcal{F}_{[y,\hat{1}]} \rightarrow [y,\hat{1}]$$ 
$$\ \ \ \  \ F^{\leq y}(\sigma) := F(\sigma) \wedge y, \ \ \ \ \  \ \ \ \ \ \ \ \ \ \ F^{\geq y}(\sigma) := F([\hat{0}<\sigma])\vee y$$ 
The variation of the reference simplex will be parametrised by the following structure:
\end{notation}
\begin{definition}\label{fans}
A list of  functions $(F_1,\dots,F_r)$ from  $\mathcal{F}_\mathcal{P}$ to $\mathcal{P}$
is an \textit{orthogonality fan} if\mbox{ $r=0$ or }
\begin{enumerate}[leftmargin=26pt]
\item $F_i$ is $G$-equivariant and  increasing for all $i$  (i.e. $y\leq F_i(\sigma)$ for all $\sigma \in \mathcal{F}_\mathcal{P}$ and all $y\in \sigma$.)
\item The subposet $F_1([\hat{0}])^\perp$ is discrete.
\item If $ F_1([\hat{0}])\neq \hat{1}$ , then we have for any $y\in F_1([\hat{0}])^\perp$:\\
The list  \ \ \ \ \ \ \ $(F_2^{\leq y},\dots,F_r^{\leq y}) $ is an orthogonality fan on the $\Stab_y$-lattice $[\hat{0},y]$.\\
The list $(F_1^{\geq y},F_2^{\geq y},\dots,F_r^{\geq y}) $ is an orthogonality fan on the $\Stab_y$-lattice $[y,\hat{1}]$.
\end{enumerate}
\end{definition}

\begin{definition}\label{globalinv} 
A (possibly empty) chain $\sigma=[y_0<\ldots<y_k]$ in $\ovA{\mathcal{P}}$ is said to be \textit{invisible} for some orthogonality fan $\mathbf{F}=(F_1,\dots,F_r)$ if 
$r=0$, or $F_1([\hat{0}])=\hat{1}$,  or  there is a (necessarily unique) element  $y\in [\sigma < \hat{1}]$ with
\begin{enumerate}[leftmargin=26pt]  \item $y\perp F_1([\hat{0}])$
\item $\sigma^{<y}\mbox{ is } \ \ \ \ \ \ \  (F_2^{\leq y},\dots,F_r^{\leq y})\mbox{--invisible}$
\item $\sigma^{>y}\mbox{ is } (F_1^{ \geq y},F_2^{ \geq y},\dots,F_r^{\geq y})\mbox{--invisible}$.
\end{enumerate}
\end{definition}
These two recursive definitions terminate.
An $\mathbf{F}$-invisible chain $\sigma$ is said to be \mbox{\textit{$\mathbf{F}$-orthogonal}  if it is} minimally invisible, i.e. if  none of its proper subchains are $\mathbf{F}$-invisible. Write $\sigma \perp \mathbf{F}$ and observe:
\begin{lemma}\label{containsorthogonal}
Every $\mathbf{F}$-invisible chain $\sigma$ contains a unique orthogonal chain.
\end{lemma}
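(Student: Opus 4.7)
The plan is to prove existence and uniqueness simultaneously by induction, with the induction parameter being the pair (length $r$ of the fan, size of the ambient sublattice $\mathcal{P}$) ordered lexicographically. This matches the structure of the recursive Definitions \ref{fans} and \ref{globalinv}, and both sub-fans $(F_2^{\leq y}, \ldots, F_r^{\leq y})$ on $[\hat 0, y]$ and $(F_1^{\geq y}, \ldots, F_r^{\geq y})$ on $[y, \hat 1]$ appearing in the recursion are themselves orthogonality fans (by Definition \ref{fans}) on strictly smaller sublattices, so the induction is well-founded.

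For the base cases of the recursion, when $r = 0$ or $F_1([\hat 0]) = \hat 1$, every chain in $\ovA{\mathcal{P}}$ (including the empty chain) is $\mathbf{F}$-invisible by Definition \ref{globalinv}. Hence the empty chain is itself invisible, and since it is contained in every chain and has no proper subchains, it is automatically the unique minimally invisible subchain.

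For the main recursive case, with $r \geq 1$ and $F_1([\hat 0]) \neq \hat 0, \hat 1$, let $\sigma = [y_0 < \ldots < y_k]$ be $\mathbf{F}$-invisible with a pivot $y \in [\sigma < \hat 1]$ as in Definition \ref{globalinv}. The key observation is that $y$ is uniquely determined by $\sigma$: one cannot have $y = \hat 1$, since then $\hat 1 \wedge F_1([\hat 0]) = F_1([\hat 0]) \neq \hat 0$, so $y \in \sigma$; and because $F_1([\hat 0])^\perp$ is discrete by Definition \ref{fans} while $\sigma$ is a chain, at most one element of $\sigma$ can satisfy $y \perp F_1([\hat 0])$. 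Applying the inductive hypothesis to $\sigma^{<y}$ and $\sigma^{>y}$, which are invisible for the respective smaller sub-fans, yields unique orthogonal subchains $\tau_< \subseteq \sigma^{<y}$ and $\tau_> \subseteq \sigma^{>y}$.

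The candidate $\tau := [\tau_< < y < \tau_>]$ is invisible using $y$ as pivot, and I would verify minimality by a case split on a proper subchain $\mu \subsetneq \tau$: if $y \notin \mu$, then $\mu$ has no admissible pivot in $\sigma$ (by uniqueness of $y$ above, together with $\hat 1 \notin F_1([\hat 0])^\perp$), so $\mu$ is not invisible; if $y \in \mu$, then either $\mu^{<y} \subsetneq \tau_<$ or $\mu^{>y} \subsetneq \tau_>$, and one of these sides fails invisibility by minimality of $\tau_<$ or $\tau_>$. For uniqueness, any orthogonal $\mu \subseteq \sigma$ is in particular invisible, forces the same pivot $y$ by the discreteness argument applied to $\mu \subseteq \sigma$, and must satisfy $\mu^{<y} \supseteq \tau_<$ and $\mu^{>y} \supseteq \tau_>$ by the uniqueness clause of the inductive hypothesis; minimality of $\mu$ upgrades these inclusions to equalities, so $\mu = \tau$. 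The only place where care is required is ensuring the discreteness of $F_1([\hat 0])^\perp$ is applied correctly so as to rule out $y = \hat 1$ and to pin down the pivot on both $\sigma$ and its invisible subchains, but otherwise the argument is purely bookkeeping.
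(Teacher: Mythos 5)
Your overall strategy is sound and essentially parallel to the paper's: you reduce uniqueness to uniqueness of the pivot via discreteness of $F_1([\hat{0}])^\perp$, and then recurse on the two halves of the chain. You do extra work by constructing the orthogonal subchain $\tau$ explicitly rather than observing (as the paper implicitly does) that existence is free since any finite invisible chain has a minimally invisible subchain; this extra work is harmless and your verification of minimality and uniqueness for $\tau$ is correct as far as it goes.

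The gap is a missing case in your trichotomy. Your base cases are $r=0$ and $F_1([\hat{0}])=\hat{1}$, and your ``main recursive case'' explicitly assumes $F_1([\hat{0}])\neq\hat{0},\hat{1}$; but the possibility $F_1([\hat{0}])=\hat{0}$ with $r\geq 1$ is never addressed, and it cannot be excluded. Even if $F_1([\hat{0}])\neq\hat{0}$ at the top level, the recursive sub-fans need not inherit this: for instance $F_2^{\leq y}([\hat{0}])=F_2([\hat{0}])\wedge y$ may well equal $\hat{0}$, so the induction genuinely passes through this configuration. In that case your key claim ``one cannot have $y=\hat{1}$'' fails --- indeed $\hat{1}$ is then the \emph{only} admissible pivot, since $z\perp\hat{0}$ forces $z=\hat{1}$. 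The repair is short: when $F_1([\hat{0}])=\hat{0}$ the pivot is forced to be $\hat{1}$ for every invisible subchain, $\sigma^{>\hat{1}}=\emptyset$, and $\sigma^{<\hat{1}}=\sigma$ is invisible for the shorter fan $(F_2,\ldots,F_r)$ on the same lattice; since your induction parameter orders by fan length first, this is still a strictly smaller instance, and the candidate orthogonal chain is just the one produced there. You should add this branch explicitly so that the case analysis is exhaustive.
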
\vspace{-5pt}
\begin{proof}
Let $\tau_1,\tau_2$ be two  orthogonal subchains of an $\mathbf{F}$-invisible chain $\sigma$.
If $F_1([\hat{0}])=\hat{1}$ or $r=0$, then both $\tau_i$ are empty.
Otherwise pick  $y$ in $[\sigma< \hat{1}]$ with $y \perp F_1([\hat{0}])$. Since $\tau_i$ is invisible for $i=1,2$, the chain  $[\tau_i<\hat{1}]$ must contain $y$. The chain $\tau_i^{>y}$ in $\sigma^{> y}$ must be $
(F_1^{\geq y},\dots, F_r^{\geq y})$-orthogonal. By induction, this implies $\tau_1^{>y} = \tau_2^{> y}$. Similarly, we conclude that $\tau_1^{<y} = \tau_2^{< y}$.  
\end{proof}\vspace{-5pt}
Before stating the main theorem of this section, we unravel this definition in two cases of interest.
\begin{example} 
If the orthogonality fan $\mathbf{F}=(F_1)$ consists of a single function, then   a chain $\sigma = [y_0<\ldots<y_k]$ in $\ovA{\mathcal{P}}$ is $(F_1)$-invisible if and only if there are indices $i_1<\ldots < i_\ell$ with
\begin{minipage}{0.6\textwidth}$_{}$ \vspace{-5pt}
\begin{enumerate}[leftmargin=26pt]
\item For $t=1, \ldots,  \ell$, we have \\ 
  $_{} \ \ \ \ \ \ \ \ \ \ \ \ \ \ y_{i_t} \wedge F_1([\hat{0} < y_{i_1}<\ldots< y_{i_{t-1}}]) = y_{i_{t-1}}$ \\ 
 $_{} \ \ \ \ \ \ \ \ \ \ \ \ \ \ y_{i_t} \vee F_1([\hat{0} < y_{i_1}<\ldots< y_{i_{t-1}}]) = \hat{1}$ 
\item $F_1([\hat{0} <y_{i_1}<\ldots<y_{i_\ell}]) \in \{y_{i_\ell},\hat{1}\}$.
\end{enumerate}
\end{minipage}
\begin{minipage}{0.4\textwidth}$_{}$ \\ \\ \ \ \ \ \ \ \ \ 
\includegraphics[width=0.95\textwidth]{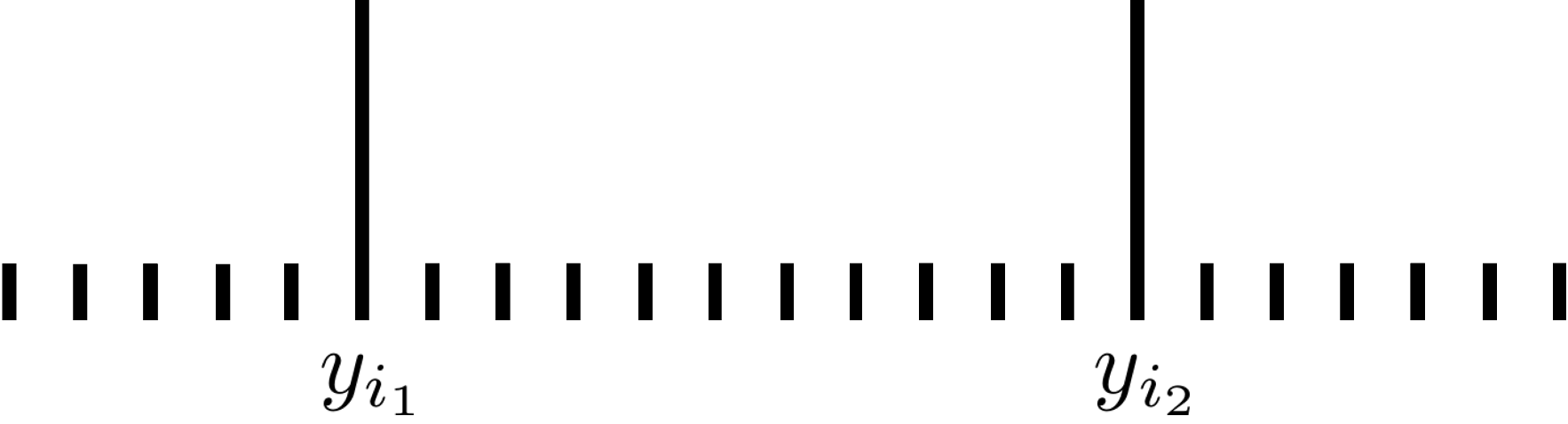}
\end{minipage}  
\ \vspace{-5pt}

The sequence $[y_{i_1}< \ldots < y_{i_\ell}]$ is then orthogonal. 
We use the convention $y_{i_0} = \hat{0}$. For the empty sequence of indices ($\ell = 0$), the first two conditions are automatically satisfied whereas the third condition reads $F_1([\hat{0}]) \in \{\hat{0},\hat{1}\}.$ The empty sequence is orthogonal precisely if $F_1([\hat{0}])\in \{\hat{0},\hat{1}\}$. 
\end{example}  
\begin{example}\label{exb2}
If   $\mathbf{F}=(F_1,F_2)$, then   $\sigma = [y_0<\ldots<y_k]$   is $(F_1,F_2)$-invisible \mbox{if and only if there are}
\mbox{ $j^1_1<\ldots< j^1_{m_1} \ \ \ < i_1< \ \ \ j^2_1<\ldots< j^2_{m_2}  \ \ \ <i_2 <\ \ \ \ldots \ \ \ < i_\ell <  \ \ \  j^{\ell+1}_1<\ldots< j^{\ell+1}_{m_{\ell+1}}$}
\mbox{such that:}
\begin{enumerate}[leftmargin=26pt]
\hspace{-15pt} \vspace{5pt} \begin{minipage}{0.6\textwidth} 
  \item For $t=1, \ldots,  \ell$, we have \\ 
  $_{} \ \ \ \ \ \ \ \ \ \ \ \ \ \ y_{i_t} \wedge F_1([\hat{0} < y_{i_1}<\ldots< y_{i_{t-1}}]) = y_{i_{t-1}}$ \\ 
 $_{} \ \ \ \ \ \ \ \ \ \ \ \ \ \ y_{i_t} \vee F_1([\hat{0} < y_{i_1}<\ldots< y_{i_{t-1}}]) = \hat{1}$ 
 \item $F_1([\hat{0} <y_{i_1}<\ldots<y_{i_\ell}]) \in \{y_{i_\ell},\hat{1}\}$ 
\end{minipage}\hspace{-35pt} 
\begin{minipage}{0.4\textwidth}$_{}$ \\ \\ \ \ \ \ \  \ \ \ 
\includegraphics[width=0.95\textwidth]{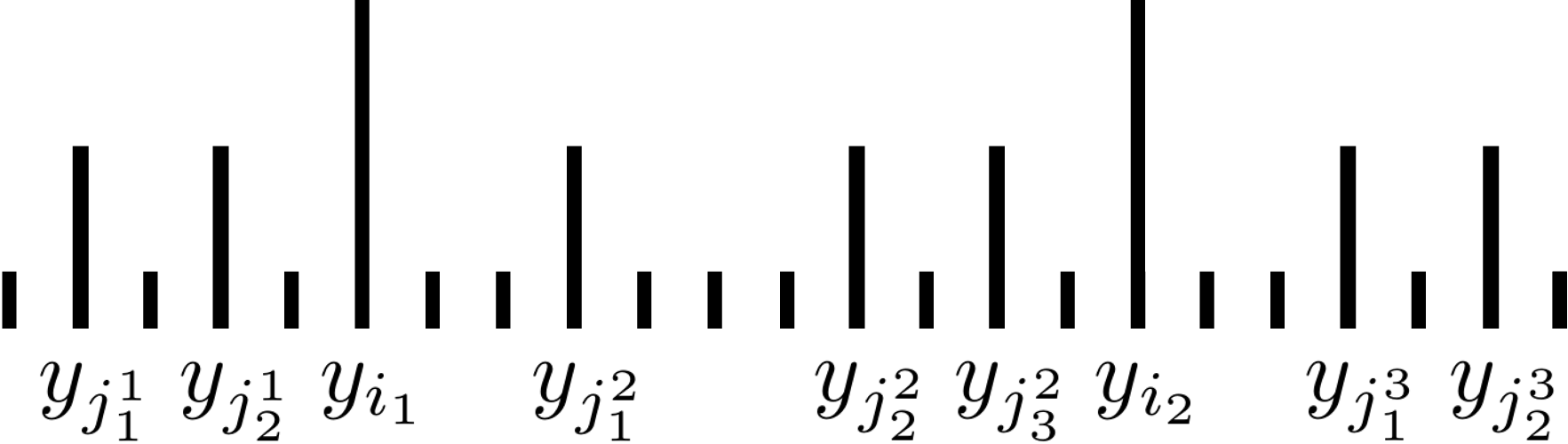}
\end{minipage}
\vspace{-5pt}
\item For $t = 1,\ldots,\ell+1$ and $s=1, \ldots,  m_t$, we have \\
$_{} \ \ \ \ \ \ \ \ \ \ \ \ \ \  y_{j_s^t} \wedge \bigg(F_2([\hat{0} < y_{i_1}< \ldots < y_{i_{t-1}}\ \ \  <\ \ \  y_{j_1^t}<\ldots< y_{j_{s-1}^t}]) \wedge y_{i_t}\bigg) = y_{j_{s-1}^t}\\ _{}
 \ \ \ \ \ \ \ \ \ \ \ \ \ \ y_{j_s^t} \vee\bigg(F_2([\hat{0} < y_{i_1}<\ldots < y_{i_{t-1}} \ \ \ <\ \ \  y_{j_1^t}<\ldots< y_{j_{s-1}^t}]) \wedge y_{i_t}\bigg) = y_{i_t}$
\item  \mbox{For $t = 1,\ldots,\ell+1$, we have $F_2([\hat{0} < y_{i_1}<\ldots < y_{i_{t-1}} < y_{j_1^t}<\ldots< y_{j_{m_t}^t}])\wedge y_{i_t} \in \{y_{j_{m_t}^t},{y_{i_t}}\}$.}
\end{enumerate}
We use the conventions  that $y_{i_0} = \hat{0}$ and $y_{j^t_0} = y_{i_{t-1}}$ for all $t$. Moreover, we set $y_{i_{\ell+1}} = \hat{1}$ if  $F_1([\hat{0} <y_{i_1}<\ldots<y_{i_\ell}])  = y_{i_\ell}$
and $y_{i_{\ell+1}} =  y_{i_\ell}$ if $F_1([\hat{0} <y_{i_1}<\ldots<y_{i_\ell}])  = \hat{1}$.
\end{example}

We can now state the two main theorems of this section:
\begin{theorem}[Complementary Collapse against Fans]\label{CollapseFansA}
Let $\mathbf{F}$ be an orthogonality fan on a finite $G$-lattice $\mathcal{P}$ with $F_1([\hat{0}]) \neq \hat{0},\hat{1}$. Let $N(\ovA{\mathcal{P}})^{-\mathbf{F}^\perp}$ be the simplicial complex obtained from $N(\ovA{\mathcal{P}})$ by deleting all  $\mathbf{F}$-invisible chains. 
Then  $N(\ovA{\mathcal{P}})^{-\mathbf{F}^\perp}$ collapses $G$-equivariantly  to the point $F_1([\hat{0}]) $.
\end{theorem}
\begin{theorem}\label{CollapseFansB}
If $\mathbf{F}$ is an orthogonality fan on a finite $G$-lattice $\mathcal{P}$ with $F_1([\hat{0}]) \neq \hat{0},\hat{1}$, then there is a simple $G$-equivariant homotopy  equivalence obtained by collapsing the subcomplex $N(\ovA{\mathcal{P}})^{-\mathbf{F}^\perp}$:
$$| \ovA{\mathcal{P}}| \    \xrightarrow{\ \ \simeq \ \ }
 \bigvee_{[ y_0 < \dots < y_r ] \perp \mathbf{F}}  | \ovA{\mathcal{P}}_{(\hat{0},y_0)}|^\diamond \wedge \Sigma| \ovA{\mathcal{P}}_{(y_0,y_1)}|^\diamond \wedge \dots \wedge \Sigma| \ovA{\mathcal{P}}_{(y_{r-1},y_r)}|^\diamond \wedge | \ovA{\mathcal{P}}_{(y_r,\hat{1})}|^\diamond $$
\end{theorem}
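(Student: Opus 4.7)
The plan is to reduce this statement to Theorem~\ref{CollapseFansA} by means of the simplex-removal machinery from Section~\ref{remove}. The condition $F_1([\hat 0])\neq \hat0,\hat 1$ is there precisely to guarantee that $F_1([\hat0])\in \ovA{\mathcal{P}}$ is a bona fide vertex around which Theorem~\ref{CollapseFansA} can be applied; the rest of the argument has no further obstructions.

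First I would let $S$ denote the $G$-stable collection of $\mathbf{F}$-orthogonal chains in $\ovA{\mathcal{P}}$. Unravelling the recursive definitions of ``invisible'' and ``orthogonal'', a simplex of $N(\ovA{\mathcal{P}})$ belongs to the subcomplex $N(\ovA{\mathcal{P}})^{-\mathbf{F}^{\perp}}$ exactly when it is \emph{not} $\mathbf{F}$-invisible, which by Lemma~\ref{containsorthogonal} is the same as not containing any chain in $S$ as a subchain. Thus, in the notation of Section~\ref{remove}, the subcomplex $N(\ovA{\mathcal{P}})^{-\mathbf{F}^{\perp}}$ coincides with $N(\ovA{\mathcal{P}})^{-S}$. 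Moreover, Lemma~\ref{containsorthogonal} tells us that each invisible chain contains a \emph{unique} orthogonal subchain; consequently, no two distinct chains in $S$ admit a common refinement, which is exactly the hypothesis needed to apply Corollary~\ref{takingout}.

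Next I would combine Theorem~\ref{CollapseFansA} with Proposition~\ref{con}: the former asserts that $N(\ovA{\mathcal{P}})^{-\mathbf{F}^{\perp}}$ is $G$-equivariantly collapsible to the fixed vertex $F_1([\hat 0])$, and the latter then yields a simple $G$-equivariant homotopy equivalence
$$|\ovA{\mathcal{P}}| \xrightarrow{\ \simeq\ } |\ovA{\mathcal{P}}|\big/\,|N(\ovA{\mathcal{P}})^{-\mathbf{F}^{\perp}}|.$$
Finally, Corollary~\ref{takingout} applied to the $G$-stable family $S$ of $\mathbf{F}$-orthogonal chains provides an identification
$$|\ovA{\mathcal{P}}|\big/\,|N(\ovA{\mathcal{P}})^{-S}| \;\xrightarrow{\ \simeq\ }\; \bigvee_{[y_0<\dots<y_r]\perp \mathbf{F}} |\ovA{\mathcal{P}}_{(\hat{0},y_0)}|^\diamond \wedge \Sigma|\ovA{\mathcal{P}}_{(y_0,y_1)}|^\diamond \wedge \dots \wedge |\ovA{\mathcal{P}}_{(y_r,\hat{1})}|^\diamond,$$
as a simple $G$-equivalence, with the $G$-action on the right being the one inherited from the action on orthogonal chains. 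Composing the two maps gives the desired collapse.

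The only real content hiding here is Theorem~\ref{CollapseFansA}, which will be the genuine technical heart of the matter; the passage from \ref{CollapseFansA} to \ref{CollapseFansB} is essentially formal once the bookkeeping in the previous paragraph is in place. I expect that \ref{CollapseFansA} itself will be proved by building a $G$-equivariant acyclic matching on the $\mathbf{F}$-visible chains (in the sense of Forman--Freij, cf. Definition~\ref{perfectmatch}), matching a chain $\sigma$ with either $[\sigma<F_1(\sigma)]$ or $\sigma-\{F_1(\sigma)\}$ when $F_1(\sigma)$ lies inside $\ovA{\mathcal{P}}$, and otherwise recursing into the orthogonality fans $(F_2^{\le y},\dots)$ and $(F_1^{\ge y},F_2^{\ge y},\dots)$ produced by Definition~\ref{fans}(3); the acyclicity will follow by induction on the rank of $\mathcal{P}$ using the recursive structure of the fan. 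Given that Theorem~\ref{CollapseFansA} is assumed, the argument above yields Theorem~\ref{CollapseFansB}.
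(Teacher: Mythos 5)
Your argument is correct and is exactly the paper's proof, whose entire text is ``Combine Theorem~\ref{CollapseFansA}, Proposition~\ref{containsorthogonal}, and Corollary~\ref{takingout}''; you have merely spelled out the bookkeeping (that $N(\ovA{\mathcal{P}})^{-\mathbf{F}^\perp}=N(\ovA{\mathcal{P}})^{-S}$, that uniqueness in Lemma~\ref{containsorthogonal} rules out common refinements, and that Proposition~\ref{con} converts collapsibility into a simple equivalence to the quotient). The concluding speculation about how Theorem~\ref{CollapseFansA} itself is proved is not quite the matching the paper uses, but since you explicitly treat that theorem as given, this does not affect the correctness of the derivation.
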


Complementary collapse for fans can be used to deduce  the weaker   statements in Theorem  \ref{CC1} and Theorem  \ref{CC2}:
\begin{proof}[Proof of Theorems \ref{CC1}, \ref{Maesterkey}]
Let $x$ be a $G$-stable element in a finite \mbox{$G$-lattice $\ovA{\mathcal{P}}$.}
\mbox{Consider the function } $F$ with $F([\hat{0}]) =x$ and  $F(\sigma) = \hat{1}$ if $\sigma \neq [\hat{0}]$.
If $x^\perp$ is discrete, we can apply Theorem \ref{CollapseFansA} to the fan $(F)$ consisting of only {one} function $F$.
A chain $\sigma$ is $F$-invisible iff it contains some $y\perp x$ and it is $F$-orthogonal iff it is of the form $\sigma = [y]$ for $y\perp x$.
The general proof  presented in Section \ref{theproof2} in fact demonstrates Theorem  \ref{CC1} without the assumption that \mbox{$x^\perp$ is discrete.}
\end{proof}
\begin{remark}
One could use Theorem \ref{Maesterkey} and induction to prove the mere existence of the equivalence in  \mbox{Theorem \ref{CollapseFansB}}. However, the equivalence produced in this way would not be easily accessible due to its inductive definition. On the other hand, the equivalence asserted in \mbox{Theorem \ref{CollapseFansB}} is obtained by collapsing a large subcomplex all at once -- the involved maps are therefore entirely transparent. Moreover, the collapsibility  of this large subcomplex asserted in Theorem \ref{CollapseFansA} does not follow from \mbox{Theorem \ref{CollapseFansB}}, but can be of independent interest.
\end{remark}
Since the axiom $(3)$ in Definition \ref{fans} is difficult to check, we introduce the following simpler notion:
\begin{definition}\label{orthogonalityfunction}
Let $G$ be a finite group and $\mathcal{P}$ a finite $G$-lattice  with face set $\mathcal{F}_\mathcal{P}$. 

A function
$F: \mathcal{F}_\mathcal{P} \rightarrow \mathcal{P}$ is called an \textit{orthogonality function} if 
\begin{enumerate}[leftmargin=26pt]
\item $F$ is $G$-equivariant and  increasing  (i.e. $y\leq F(\sigma)$ for all $\sigma \in \mathcal{F}_\mathcal{P}$ and all $y\in \sigma$.)
\item For all $\sigma = [y_0 < \dots < y_m] \in  \mathcal{F}_\mathcal{P}$ and any $z>y_{m}$, the following subposet   is discrete:
$$\{\ \  y_m  < t< z \  \ \ |\ \  \ t \wedge F(\sigma) =y_m\ \  , \ \ (t\vee F(\sigma)) \wedge z = z \ \   \}. $$
\end{enumerate}
\end{definition}
Orthogonality functions will provide us with many examples of orthogonality fans:
\begin{lemma}\label{fansfromfunctions}
If $\mathbf{F} = (F_1,\dots,F_r)$ is a list of orthogonality functions, then $\mathbf{F}$ is an orthogonality fan in the sense of Definition \ref{fans}.
\end{lemma}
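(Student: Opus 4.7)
The plan is to proceed by strong induction on $|\mathcal{P}|$, proving the statement simultaneously for all group actions and all values of $r$. The base case $|\mathcal{P}|=1$ is immediate: $\hat{0}=\hat{1}$ forces $F_1([\hat{0}])=\hat{1}$ so that condition (3) of Definition \ref{fans} is vacuous, while $F_1([\hat{0}])^\perp=\emptyset$ is trivially discrete.

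For the inductive step I would verify the three conditions of Definition \ref{fans} directly. Condition (1) is immediate from the first axiom of an orthogonality function. For condition (2), apply the second axiom of the orthogonality function $F_1$ to the singleton chain $\sigma=[\hat{0}]$ and $z=\hat{1}$; since $(t\vee F_1([\hat{0}]))\wedge\hat{1}=t\vee F_1([\hat{0}])$, the resulting discrete subposet of $\mathcal{P}$ is precisely $F_1([\hat{0}])^\perp$. The heart of the argument is condition (3): given $y\in F_1([\hat{0}])^\perp$, the very definition of the complement forces $y\in\ovA{\mathcal{P}}$, hence $y\neq\hat{0},\hat{1}$, so both sublattices $[\hat{0},y]$ and $[y,\hat{1}]$ are strictly smaller than $\mathcal{P}$. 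The inductive hypothesis then reduces condition (3) to the following sublemma.

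The sublemma asserts that each restriction $F_i^{\leq y}$ is an orthogonality function on the $\Stab_y$-lattice $[\hat{0},y]$, and each $F_i^{\geq y}$ is an orthogonality function on $[y,\hat{1}]$. Equivariance and the increasing property follow by routine unwinding of definitions. The main obstacle is the second axiom, because meets and joins in a sublattice do not in general factor through the operations in $\mathcal{P}$ (the ambient lattice is not assumed modular or distributive). My strategy is to show that the subposet $A$ in the sublattice whose discreteness is required embeds into the analogous subposet $B\subseteq\mathcal{P}$ provided by the orthogonality of $F_i$ itself. For $F^{\leq y}$ with $\sigma\in\mathcal{F}_{[\hat{0},y]}$ and $z\leq y$, any $t\leq y$ satisfies $t\wedge(F_i(\sigma)\wedge y)=t\wedge F_i(\sigma)$, while $z\leq t\vee(F_i(\sigma)\wedge y)$ immediately yields $z\leq t\vee F_i(\sigma)$, giving the inclusion $A\subseteq B$. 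For $F^{\geq y}$ with $\sigma\in\mathcal{F}_{[y,\hat{1}]}$, one tests orthogonality of $F_i$ on the augmented chain $\sigma'=[\hat{0}<\sigma]$: the sandwich $y_m\leq t\wedge F_i(\sigma')\leq t\wedge(F_i(\sigma')\vee y)=y_m$ forces $t\wedge F_i(\sigma')=y_m$, while the constraint $t\geq y$ collapses $t\vee F_i(\sigma')\vee y$ to $t\vee F_i(\sigma')$, again producing $A\subseteq B$. Since $B$ is discrete in $\mathcal{P}$, so is $A$, completing the sublemma and therefore the induction.
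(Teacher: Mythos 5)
Your proof is correct and takes essentially the same approach as the paper: verify axioms (1) and (2) directly (with (2) coming from condition (2) of the orthogonality-function definition at $\sigma=[\hat 0]$, $z=\hat 1$), then reduce axiom (3) to the claim that each $F_i^{\leq y}$ and $F_i^{\geq y}$ is again an orthogonality function on the smaller lattice, and conclude by induction on $|\mathcal{P}|$; the paper merely asserts this claim as an observation, while you supply the subposet-embedding verification. One small remark: since $[\hat 0,y]$ and $[y,\hat 1]$ are closed intervals in a lattice, their meets and joins coincide with those of $\mathcal{P}$ automatically (no modularity or distributivity is needed), so the ``obstacle'' you flag about sublattice operations does not actually arise, though your inclusion argument of course goes through unchanged.
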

\begin{proof}
The first axiom of orthogonality fans is evidently satisfied. The second axiom follows by condition $(2)$ of Definition \ref{fansfromfunctions} for $F_1$ with $\sigma = [\hat{0}]$ and $z=1$. To verify the third axiom, we fix some nonzero $y\perp F_1([\hat{0}])$. We observe that $F_i^{\geq y}$ and $F_i^{\leq y}$ are again orthogonality functions for all $i$. By induction, this implies that $(F_1^{\geq y},F_2^{\geq y},\dots , F_r^{\geq y})$ and 
$(F_2^{\leq y},\dots , F_r^{\leq y})$ are both orthogonality fans on the relevant lattices. 
\end{proof}

\subsection{Proof of Complementary Collapse against Fans}\label{theproof2}
As before, let $\mathbf{F} = (F_1,\dots,F_r)$ be an orthogonality fan on a  finite $G$\mbox{-lattice $\mathcal{P}$.} The following technical gadget will allow us to ``scan'' a chain $\sigma$ in order to find out whether or not it is  $\mathbf{F}$-invisible:

\begin{definition}
The \textit{orthogonality tree} $\mathbf{T}_\mathbf{F}(\sigma)$ of a chain $\sigma = [y_0<\dots< y_k ]$ in $\ovA{\mathcal{P}}$ is an empty {or} planar rooted tree whose nodes $\mathbf{w}$ are labelled  by pairs $(Z\in I_\mathbf{w}) $
consisting of an interval \mbox{$I_\mathbf{w} = [y_\upalpha, y_\omega]$} in $\mathcal{P}$
and a \ $\Stab_{y_\upalpha} \cap \Stab_{y_\omega} $- stable point $Z\in I_\mathbf{w}$. 

The tree is defined by the following recursion:
\begin{enumerate}[leftmargin=26pt]
\item If $r=0$, then the tree is empty.
\item Otherwise, we create a root $\mathbf{v}$ of the tree  $\mathbf{T}_\mathbf{F}(\sigma)$ and label it by
$(F_1([\hat{0}])\in [\hat{0} , \hat{1}]) $.

\begin{itemize}[leftmargin=8pt]
\item If $F_1(\hat{0})=\hat{1}$ or there does not exist a vertex of $[\sigma<\hat{1}]$ lying in $F_1([\hat{0}])^\perp$, we stop.
\item Otherwise assume $y$ is the necessarily unique vertex of $[\sigma<\hat{1}]$ which lies in $F_1([\hat{0}])^\perp$. \\
Let $L$ be the orthogonality tree of the chain $\sigma^{<y}$ for the fan  \ \ \ \ \ \ \ $(F_2^{\leq y},\dots,F_r^{\leq y})$.\\
Let $R$ be the orthogonality tree of the chain $\sigma^{>y}$ for the fan $(F_1^{\geq y}, F_2^{\geq y},\dots,F_r^{\geq y})$.\\
$_{}$ \ \ \ \   Here $[\hat{0},y]$ and $[y,\hat{1}]$ are considered as $\Stab(y)$-lattices.

We create the labelled rooted planar tree $\mathbf{T}_\mathbf{F}(\sigma)$ by declaring  the root of $L$ to be the left child and the root of $R$ to be the right child of  $\mathbf{v}$. Note that $L$  may be empty. \end{itemize}\end{enumerate} \end{definition}
We call a vertex of the orthogonality tree a \textit{leaf} if it has no children.\\

\begin{minipage}{0.35\textwidth}
\begin{example}
Assume that $\mathbf{F}$ consists of \mbox{two functions $  (F_1,F_2)$.}
In the illustration in Example  \ref{exb2}, we considered an invisible chain $\sigma$ with $y_{i_1}<y_{i_2}$ (indicated by long bars) and  for which  $y_{j^1_1}<y_{j^1_2}$, 
$y_{j^2_1}<y_{j^2_2}<y_{j^2_3}$, and $y_{j^3_1}<y_{j^3_2}$ (indicated by short bars). 

The orthogonality tree  $\mathbf{T}_{\mathbf{F}}(\sigma)$ of $\sigma$ is  drawn on the right. The leftmost dotted 
node  is included   if $F_2([\hat{0}<y_{j^1_1} <y_{j^1_2}])\wedge y_{i_1}) = y_{j^1_2}$. The existence of the two other dotted \mbox{nodes depends on similar conditions.}
\end{example} \end{minipage} \ \ \ 
\begin{minipage}{0.6\textwidth} \includegraphics[width=1.25\textwidth]{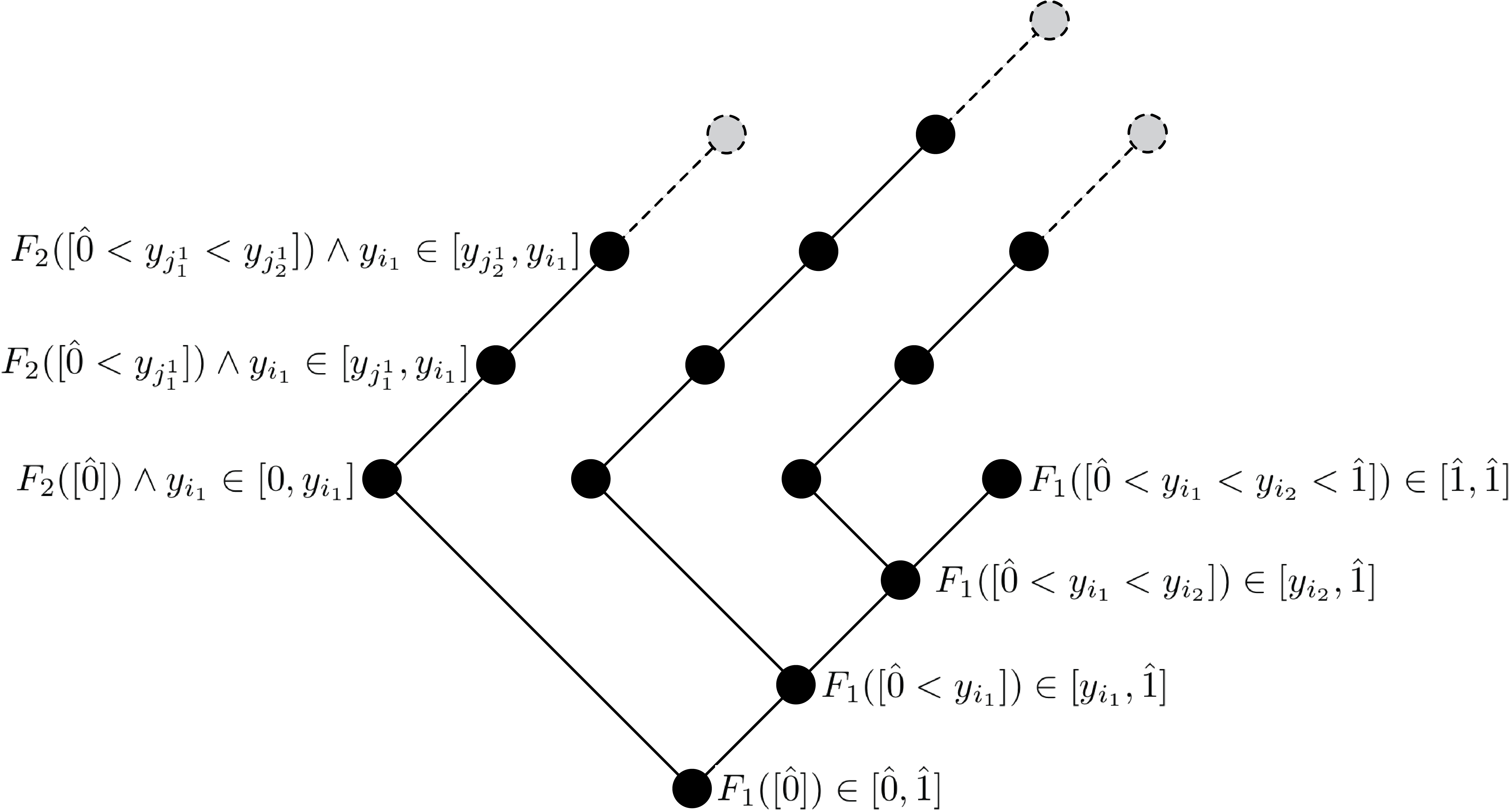}
\end{minipage}
 \ \\

\begin{definition}\label{local}
Let $\sigma$ be a chain and $\mathbf{w}$ a  {leaf} of $\mathbf{T}_{\mathbf{F}}(\sigma)$  with  label $(Z\in  [y_\upalpha, y_\omega] )$. The simplex $\sigma$ is said to be $\mathbf{F}$-\textit{invisible at }$\mathbf{w}$ if  we have $Z = y_{\omega}$.  
\end{definition}
\begin{lemma}\label{iffinv}
A chain $\sigma$  is invisible for an orthogonality fan $\mathbf{F}$ (cf.\ Definition \ref{globalinv}) if  $\sigma$ is \mbox{$\mathbf{F}$-{invisible at every leaf}} $\mathbf{w}$ of its orthogonality tree $\mathbf{T}_{\mathbf{F}}(\sigma)$ (cf.\ Definition \ref{local}).
\end{lemma}\vspace{-5pt}
\begin{proof}
If $r=0$ or $F_1([\hat{0}])=\hat{1}$,  then the equivalence is obvious, and we may therefore assume without restriction that $F_1([\hat{0}])\neq \hat{1}$. \\Suppose $\sigma$ is $\mathbf{F}$-invisible. Then there is a vertex $y$ in $[\sigma < \hat{1}]$ with $y\perp F_1([\hat{0}])$ such that $\sigma^{> y}$ is $(F_1^{\geq y}, \dots  F_r^{\geq y})$-invisible and such that $\sigma^{< y}$ is $(F_2^{\leq y}, \dots  F_r^{\leq y})$-invisible.
By induction, this happens if and only if the two orthogonality trees $L$ and $R$ used in the definition of $\mathbf{T}_\mathbf{F}(\sigma)$ only have invisible leaves, which in turn is equivalent to $\mathbf{T}_\mathbf{F}(\sigma)$ only having invisible leaves.
\end{proof}\vspace{-5pt}
The next two statements follow by similarly straightforward inductions:\vspace{-2pt}
\begin{lemma}\label{sub}
If $\sigma\leq \tau$ is a subsimplex, then $\mathbf{T}_{\mathbf{F}}(\sigma) \leq \mathbf{T}_{\mathbf{F}}(\tau)$ is naturally a (labelled) subtree.\vspace{-2pt}
\end{lemma}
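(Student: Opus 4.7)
The plan is to proceed by induction on the length $r$ of the orthogonality fan $\mathbf{F} = (F_1,\dots,F_r)$. The base case $r=0$ is immediate, since both $\mathbf{T}_{\mathbf{F}}(\sigma)$ and $\mathbf{T}_{\mathbf{F}}(\tau)$ are empty by construction.

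For the inductive step, first observe that the root of each orthogonality tree is labelled by $(F_1([\hat{0}]) \in [\hat{0},\hat{1}])$, a pair that depends only on $\mathbf{F}$ and $\mathcal{P}$, not on the chain. So the two roots carry identical labels, and we may identify them. If $F_1([\hat{0}]) = \hat{1}$, both trees reduce to this single root and we are done. Otherwise, I split into cases according to whether $[\sigma<\hat{1}]$ and $[\tau<\hat{1}]$ contain a vertex of $F_1([\hat{0}])^\perp$.

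The crucial case is when $[\sigma < \hat{1}]$ does contain such a vertex $y_\sigma$. Since $\sigma \subseteq \tau$, we have $y_\sigma \in [\tau < \hat{1}]$ as well; the uniqueness clause in the construction of the orthogonality tree (which rests on the discreteness of $F_1([\hat{0}])^\perp$, axiom (2) of Definition \ref{fans}) then forces $y_\tau = y_\sigma =: y$. At this common splitting element, the left subtree of each orthogonality tree is the orthogonality tree of $\sigma^{<y}$ (resp. $\tau^{<y}$) for the fan $(F_2^{\leq y},\dots,F_r^{\leq y})$ on $[\hat{0},y]$, and the right subtree is the orthogonality tree of $\sigma^{>y}$ (resp. $\tau^{>y}$) for the fan $(F_1^{\geq y},F_2^{\geq y},\dots,F_r^{\geq y})$ on $[y,\hat{1}]$. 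Since $\sigma^{<y}\subseteq \tau^{<y}$ and $\sigma^{>y}\subseteq \tau^{>y}$, and since these fans each have length strictly less than $r$ (for the left subtree) or involve the same length but a strictly smaller lattice (for the right subtree, which we handle by a secondary induction on $|\mathcal{P}|$, or equivalently by noting that the recursion terminates), the inductive hypothesis gives natural labelled planar subtree inclusions on each side; grafting these under the common root yields the desired inclusion $\mathbf{T}_{\mathbf{F}}(\sigma) \leq \mathbf{T}_{\mathbf{F}}(\tau)$.

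The remaining subcases — where $[\sigma<\hat{1}]$ contains no such $y$ while $[\tau<\hat{1}]$ may or may not — are easy: $\mathbf{T}_{\mathbf{F}}(\sigma)$ is just the (single) root, which sits tautologically as a labelled subtree of $\mathbf{T}_{\mathbf{F}}(\tau)$. The only minor subtlety, and the single point worth checking with care, is that along any path of the form ``go into the right subtree some number of times, then possibly into the left subtree,'' the labels on the two trees agree. This holds because labels at depth $k$ are values $F_j^{\geq y_{i_1} \cdots \geq y_{i_s}}(\cdot)\wedge\cdots$ evaluated on subchains that only use the vertices $y_{i_1},\dots,y_{i_s}$ already chosen by the recursion, and we have just seen that these splitting vertices coincide for $\sigma$ and $\tau$.
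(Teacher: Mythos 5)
Your proof is correct and follows the same inductive strategy the paper invokes (the paper only remarks that the lemma ``follows by a similarly straightforward induction''). The one small point worth tightening is the induction variable: since the right subtree's fan still has length $r$, it is cleaner to induct on the cardinality of $\mathcal{P}$ alone (both $[\hat{0},y]$ and $[y,\hat{1}]$ are strictly smaller because $y\in\ovA{\mathcal{P}}$) rather than primarily on $r$.
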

\begin{lemma}\label{sametree}
\mbox{Fix a simplex $\sigma$ with tree $\mathbf{T}_{\mathbf{F}}(\sigma)$ whose \textit{leaf} $\mathbf{w}\in \mathbf{T}_{\mathbf{F}}(\sigma)$  has label $(Z\in [y_\upalpha, y_\omega])$. }\vspace{-5pt}
\begin{itemize}[leftmargin=26pt]
\item \mbox{Adding   $x\in (y_\upalpha, y_\omega)$ to $\sigma$ with
$x\wedge Z \neq y_\upalpha$ or $x\vee Z \neq y_\omega$ 
gives  a $\sigma^+ \geq \sigma$ with equal   tree.}\vspace{-2pt}
\item Removing $x\in (y_\upalpha,y_\omega)$ from $\sigma$ gives  a simplex $\sigma^- \leq \sigma$ \mbox{with equal  tree.}\vspace{-2pt}
\end{itemize}
\end{lemma}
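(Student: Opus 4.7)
I would prove this by induction on $r$, handling both bullet points simultaneously. The lemma is exactly what is needed to set up an acyclic matching (in the sense of Definition~\ref{perfectmatch}) whose critical cells will yield Theorem~\ref{CollapseFansA}: we want to pair $\sigma$ with $\sigma \cup \{x\}$ or $\sigma \smallsetminus \{x\}$ as $x$ ranges over the open interval associated with the leaf $\mathbf{w}$, and the pairing must respect the stratification of simplices by their orthogonality tree. The key technical input at every step is the second axiom in Definition~\ref{fans}, namely discreteness of $F_1([\hat{0}])^\perp$, which forces any chain in $\ovA{\mathcal{P}}$ to contain at most one element of $F_1([\hat{0}])^\perp$ (two such elements would be comparable, violating discreteness).

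The base case $r=0$ is vacuous, as the tree is empty and has no leaves. For the inductive step, I would first handle the case that the root of $\mathbf{T}_\mathbf{F}(\sigma)$ is itself the leaf $\mathbf{w}$. This happens precisely when $F_1([\hat{0}]) = \hat{1}$, or when no vertex of $[\sigma < \hat{1}]$ lies in $F_1([\hat{0}])^\perp$; in both situations $[y_\upalpha, y_\omega] = [\hat{0}, \hat{1}]$ and $Z = F_1([\hat{0}])$. In the first situation the tree depends only on the label of the root and not on the chain, so altering $\sigma$ by any $x$ trivially preserves it. In the second situation, the stated condition ``$x \wedge Z \neq y_\upalpha$ or $x \vee Z \neq y_\omega$'' is exactly the statement that $x \notin F_1([\hat{0}])^\perp$, which ensures that $\sigma^+$ still has no vertex orthogonal to $F_1([\hat{0}])$; removing $x$ can only shrink the set of candidate vertices, so the root-leaf property is preserved automatically.

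When the root has children, there is a unique $y \in \sigma$ (unique because $\sigma$ is a chain and $F_1([\hat{0}])^\perp$ is discrete) lying in $F_1([\hat{0}])^\perp$, and $\mathbf{w}$ lives in exactly one of the two subtrees, say (by symmetry) the left one, attached to the chain $\sigma^{<y}$ and the restricted fan $(F_2^{\leq y}, \ldots, F_r^{\leq y})$ on $[\hat{0}, y]$. Then $[y_\upalpha, y_\omega] \subseteq [\hat{0}, y]$, so $x \in (y_\upalpha, y_\omega) \subset [\hat{0}, y)$, and in particular $x \neq y$. For the addition case I need to verify that (i) the root of $\mathbf{T}_\mathbf{F}(\sigma^+)$ is still labelled by $(F_1([\hat{0}]) \in [\hat{0}, \hat{1}])$ with $y$ as its unique orthogonal splitter, and (ii) the left subtree is unchanged. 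For (i), both $x$ and $y$ sit in the chain $[\sigma^+ < \hat{1}]$; discreteness of $F_1([\hat{0}])^\perp$ forbids both from being orthogonal, and since $y$ is, $x$ is not. For (ii), the meet and join operations in the sublattice $[\hat{0}, y]$ coincide with those of $\mathcal{P}$, so the leaf-level hypothesis on $x$ transfers verbatim and the inductive hypothesis on the fan $(F_2^{\leq y}, \ldots, F_r^{\leq y})$ applies. Removal is handled symmetrically: $y \neq x$ remains in the reduced chain, the left subtree is updated by removing $x$ from $\sigma^{<y}$, and induction yields invariance. The case when $\mathbf{w}$ lies in the right subtree is entirely analogous, using the fan $(F_1^{\geq y}, F_2^{\geq y}, \ldots, F_r^{\geq y})$.

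The main obstacle I anticipate is bookkeeping: making sure that the hypothesis ``$x \wedge Z \neq y_\upalpha$ or $x \vee Z \neq y_\omega$'' at the leaf is exactly the condition needed at the bottom of the recursion after iterated restriction, and that the uniqueness argument propagates along a path of arbitrary length from root to $\mathbf{w}$. Both reduce to the single observation above --- that meets, joins, and the discreteness property restrict cleanly to any sublattice $[a,b] \subseteq \mathcal{P}$ --- so the induction machinery closes without additional input. Combined with Lemma~\ref{sub}, which already gives $\mathbf{T}_\mathbf{F}(\sigma^-) \leq \mathbf{T}_\mathbf{F}(\sigma) \leq \mathbf{T}_\mathbf{F}(\sigma^+)$ as labelled subtrees, the inductive argument upgrades both inclusions to equalities.
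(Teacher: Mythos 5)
Your proof is structurally sound and, modulo one issue, follows the route the paper intends (the paper itself only asserts the lemma ``follows by a similarly straightforward induction''). The decomposition into the case where the root is a leaf versus the case where it has children, the use of discreteness of $F_1([\hat{0}])^\perp$ to isolate the unique splitting vertex $y$ and conclude that the added $x$ is not orthogonal, the observation that meets and joins of elements of $[\hat 0,y]$ (or $[y,\hat 1]$) computed in $\mathcal{P}$ agree with those computed in the sublattice, and the fact that the hypothesis at the leaf is exactly what is needed at the base of the recursion — all of this is correct and is exactly the content one would put into a complete proof.

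There is, however, a genuine gap in the induction. You induct on $r$, the length of the fan, and for the left subtree this is fine since $(F_2^{\leq y},\dots,F_r^{\leq y})$ has length $r-1$. But for the right subtree the relevant fan is $(F_1^{\geq y},F_2^{\geq y},\dots,F_r^{\geq y})$, which still has length $r$, so the inductive hypothesis as stated does not apply; dismissing that side as ``entirely analogous by symmetry'' hides the fact that the fan does \emph{not} shrink there. What does shrink is the ambient lattice $[y,\hat 1]$ (because $y\in F_1([\hat 0])^\perp\subset\ovA{\mathcal P}$ forces $y>\hat 0$), the chain $\sigma^{>y}$ (because $y\in\sigma$ is discarded), and most transparently the depth of the leaf $\mathbf{w}$ inside the orthogonality tree. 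The cleanest fix is to induct on the depth of $\mathbf{w}$ in $\mathbf{T}_\mathbf{F}(\sigma)$ — or, equivalently, on the number of nodes of $\mathbf{T}_\mathbf{F}(\sigma)$, or lexicographically on $(|\ovA{\mathcal P}|,r)$ — since any of these strictly decrease when passing to either subtree. With that replacement the induction closes and your argument is complete; as written, the base case ``$r=0$'' and the appeal to the inductive hypothesis for the right subtree are not compatible.
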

We can now prove complementary collapse for fans:\vspace{-2pt}
\begin{proof}[Proof of Theorem \ref{CollapseFansA}] 
We  define a $G$-equivariant perfect matching with fixed point $F_1([\hat{0}])$ (cf.\ Definition \ref{perfectmatch}).
Fix a nondegenerate $\ell$-simplex $\sigma = [y_0 < \dots < y_k]$ in $N(\overline{\mathcal{P}})^{-\mathbf{F}^\perp}$\mbox{  other than $F_1([\hat{0}]) $}. Let  $\mathbf{w}=\mathbf{w}_{\sigma}$ be the leftmost \textit{leaf} of the orthogonality tree $\mathbf{T}_\mathbf{F}(\sigma)$ such that $\sigma$ is \textit{not} \mbox{$\mathbf{F}$-invisible at $\mathbf{w}$} (it exists by Lemma \ref{iffinv}). 
Write $( Z\in[y_{\upalpha},  y_{\omega}] )$ for the label of $\mathbf{w}$.
\mbox{Since $\sigma$ is not $\mathbf{F}$-invisible at $\mathbf{w}$,} we have strict inequalities $y_{\upalpha}<Z< y_{{\omega}}$. We use the convention that $y_{-1} = \hat{0}$ and $y_{k+1} = \hat{1}$.\\
Let $i\geq \upalpha$ be the largest index with $y_i \wedge Z= y_{\upalpha}$.  Note that $i<{\omega}$ and
observe that $y_i \vee Z < y_{\omega}$  as otherwise $\mathbf{w}$ would  {not} be a leaf.
Let $j\geq i$ be maximal with  $y_j \leq y_i\vee Z $.  We have $j<\omega$.
\\
We call  $(i,j,\mathbf{T})=(i(\sigma), j(\sigma),\mathbf{T}_\mathbf{F}(\sigma))$ the \textit{structure triple} of $\sigma$.
\\
The element $(y_{i} \vee  Z )\wedge  y_{j+1}$ is larger than $y_{\upalpha}$ since it contains  $y_{j+1} \wedge Z $ and smaller than $y_\omega$ as it is contained in 
$ y_{i} \vee  Z < y_{\omega}$.
\\
If $y_{j} <(y_{i} \vee  Z) \wedge y_{j+1}$, match $\sigma$ and \mbox{$\sigma^+ := [\ldots <y_{\upalpha}<\ldots < y_{j} <  (  (y_{i} \vee  Z )\wedge y_{j+1}  )< y_{j+1} < \ldots ]$}.
If  $y_{j} =(y_{i} \vee  Z) \wedge y_{j+1}$,  match  $\sigma$ and $\sigma^- :=[ \ldots <y_{\upalpha} <\ldots  < y_{j-1} <   y_{j+1} <\ldots  ]$.

In the first case, we consider the orthogonality tree of $\sigma^+$. Since $y_{\upalpha}<((y_i \vee Z)\wedge y_{j+1})\wedge Z  $, we know by Lemma \ref{sametree} that $\sigma$ and $\sigma^+$ have the same orthogonality tree. Hence $\mathbf{w}$ is also the leftmost non-invisible node of the orthogonality tree for $\sigma^+$ and it is also labelled by $( Z\in [y_{\upalpha},  y_{\omega}])$.
We can now observe that $i(\sigma^+)=i(\sigma) $, $j(\sigma^+) = j(\sigma)+1$, and hence $(\sigma^+)^-=\sigma$.

In the second case, we first observe that
$j>i$ as otherwise we would have
$y_{i} =(y_{i} \vee Z) \wedge y_{i+1}$ which is absurd as $y_{\upalpha} = y_{i} \wedge Z$ and $y_{\upalpha} <( (y_{i}\vee Z) \wedge y_{i+1} ) \wedge Z$. We therefore remove a vertex $y_j$ in the open interval $(y_{\upalpha},y_{\omega})$ and again conclude by Lemma \ref{sametree} that   the orthogonality trees of $\sigma$ and $\sigma^-$ are equal .
We therefore have $i(\sigma^-) = i(\sigma) $ , $j(\sigma^-)= j(\sigma)   -1$, and  $\mathbf{T}(\sigma) = \mathbf{T}(\sigma^-)$.  We conclude that $(\sigma^-) ^+ = \sigma$.
We have thus defined a matching with fixed point $F(\hat{0})$, and it is evidently $G$-equivariant.

To see that the matching is acyclic, assume for the sake of contradiction that we are given a cycle $ \sigma_1< \sigma_1^+> \  \sigma_2=d_{t_1}(\sigma_1^+)< \  \sigma_2^+>\ \sigma_3=d_{t_2}(\sigma_2^+)<\  \dots \ \sigma_N^+>\   \sigma_1 =d_{t_N}(\sigma_N^+)  $ of distinct nondegenerate simplices in $N(\ovA{\mathcal{P}})^{-\mathbf{F}^\perp}$ for $N>1$. Here $d_t$ denotes the $t^{th}$ face map which forgets   the $t^{th}$ element of a given chain.

Let $(i_s,j_s,\mathbf{T}_s)$ be the structure triple of $\sigma_s$.
We have observed above that the triple attached to $\sigma_s^+$ is $(i_s,j_s+1,\mathbf{T}_s)$. 
By Lemma \ref{sub}, we have  $\mathbf{T}_{s+1} \leq \mathbf{T}_{s}$. Since the above is a cycle, the orthogonality tree $ \mathbf{T}_{s}$ must therefore be constantly equal to  $\mathbf{T}$, say. Let $\mathbf{w}$ be the leftmost non-invisible node of $\mathbf{T}$ with label $(Z\in [y_{\upalpha},  y_{\omega}])$. 

We will now examine how $i$ and $j$ change as $s$ increases. Fix $s$.
By definition, the number $i_{s+1}$ is  the largest number with
$\upalpha < i_{s+1}(< \omega)$ such that the $i_{s+1}$-th vertex of  \mbox{$\sigma_{s+1} = d_{t_s}(\sigma_s^+)$intersects $Z$ in $y_{\upalpha}$.}

Since $\sigma_{s+1} \neq \sigma_{s}$, we know that $t_s \neq j_s+1$.
If $t_s \leq i_s$ then $i_{s+1} =   i_{s}-1 $.
If $t_s >i_s $ and $t_s \neq j_s+2$, then $\sigma_{s+1}$ is an upper simplex in the matching, a contradiction. 
If $t_s = j_s+2$, then  $(i_{s+1},j_{s+1})  =(i_{s},j_{s}+1)$.

The function $(i_s,j_s,\mathbf{T}_s)$ hence cannot visit the same point twice --  the above cycle cannot exist.
\end{proof}
\begin{proof}[Proof of Theorem \ref{CollapseFansB}]Combine Theorem \ref{CollapseFansA}, Proposition \ref{containsorthogonal}, and Corollary \ref{takingout}.
\end{proof}

\newpage

\section{Lie Algebras and the Partition Complex } \label{LAPC} \vspace{-5pt}
We  will now discuss the connection between   partition complexes $|\Pi_n|$ and the theory of classical and spectral Lie algebras. Moreover, we shall  explain  some straightforward algebraic consequences of our later topological results on Young restrictions of $|\Pi_n|$ in Section \ref{Youngsection}.\vspace{-4pt}
\subsection{Free Lie Algebras }\label{section: hilton} We begin by recalling the following basic notion:
\begin{definition}\label{LAdef}
A \textit{Lie algebra} (over $\ZZ$) is an abelian group $\mathfrak{g}$ together with a binary operation $[-,-] $ satisfying $[u,v] = - [v,u]$ and $[u, [v, w]] +[w, [u, v]] +[v, [w, u]] =0$ \mbox{for all $u,v,w \in \mathfrak{g}$.} \vspace{-3pt}
\end{definition}

Given an abelian group $V$, we let $\Lie[V]$ be the free Lie algebra  generated by $V$. 
If $V$ is free with  $\integers$-basis $\{c_s \}_{s\in S}$, we write $\Lie [S]$, and if $S  = \mathbf{k}=\{1,\ldots,k\}$,  we will use the notation $\Lie[c_1,\ldots,c_k]$.

\vspace{2pt}
In fact, $\Lie[S]$ can be constructed as the quotient of the free non-associative and non-unital algebra on $\{c_s \}_{s\in S}$ by the two-sided ideal generated by the antisymmetry  and the Jacobi relation.

Throughout the paper, \textit{monomials} are understood to be in non-commutative and non-associative variables. We will also refer to them as parenthesised monomials. Elements of $\Lie[S]$ can be represented as linear combinations of parenthesised monomials in the  letters $\{c_s\}_{s\in S}$. As is customary in the context of Lie algebras, we will use square brackets to indicate the parenthesisation. For example, the monomial $[[c_1, [c_2, c_1]], c_2]$ represents an element of $\Lie[c_1, c_2]$. 

The symmetric group $\Sigma_S$ acts additively on $\Lie[S]$: given a monomial $w$ in letters $\{c_s \}_{s\in S}$ and a permutation $h \in \Sigma_S$,  the monomial $h \cdot w$ is obtained by replacing each occurrence of $c_s$ by $c_{h(s)}$.
\begin{definition}  A monomial in the Lie algebra $\Lie[c_1, \ldots, c_k]$ has multi-degree $(n_1, \ldots, n_k)$ if it contains $n_i$ copies of the letter $c_i$  for  all $i $.  For example, $[[c_1, [c_2, c_1]], c_2] $  has multi-degree $(2,2)$.\vspace{0pt}
\end{definition}

\mbox{There is a second common notion of Lie algebras over the integers:}
\begin{definition}\label{isotropic}
A \textit{totally isotropic Lie algebra} (over $\ZZ$) is an abelian group $\mathfrak{g}$   with a binary operation $[-,-] $ satisfying $[u,u] = 0$ and $[u, [v, w]] +[w, [u, v]] +[v, [w, u]] =0$ \mbox{for all $u,v,w \in \mathfrak{g}$.}
\end{definition}
\begin{remark} Some sources use the term ``Lie algebra" for what we call ``totally isotropic Lie algebra", and the term ``quasi-Lie algebra" for what we call ``Lie algebra".  \vspace{1pt}
\end{remark}

Write $\Lie^i[V]$ for the free totally isotropic Lie algebra on an abelian group. If $V$ is free on some set $S$, we use the notation $\Lie^i[S]$, and if $S=\mathbf{k}$, we write $\Lie^i[c_1,\ldots,c_k]$.
It is evident that  the free Lie algebra $\Lie[V]$ on an abelian group $V$ maps to the free totally isotropic \mbox{Lie algebra $\Lie^i[V]$ on $V$.}\vspace{1pt}

 {Totally isotropic Lie algebras} have the   property that the free totally isotropic Lie algebra $\Lie^i[V]$  on a free abelian group $V$ is  again  a free abelian group (cf.\   \cite[Corollary 0.10]{reutenauer2003free}). This is \textit{not} true for   free Lie algebras  in the sense of \mbox{Definition \ref{LAdef}.} For example, the free Lie algebra  $\Lie[c_1]$ on one generator $c_1$ has underlying $\ZZ$-module  $\ZZ \oplus \ZZ/2$ generated by $c_1$ and  $[c_1,c_1]$ with $2[c_1,c_1]=0$.\vspace{1pt}

However,  totally isotropic Lie algebras are overly restrictive in our context: first, they cannot be defined as algebras over an operad. Moreover, several natural examples of interest (related to algebraic Andr\'{e}-Quillen homology  groups  or  spectral Lie algebras over   $\FF_2$, cf.\ \cite{goerss1990andre}, \cite{camarena2016mod}) only satisfy the weaker antisymmetry axiom $[u,v] = -[v,u]$. 
We will therefore focus on  Lie algebras. \vspace{-13pt}

\subsection{Lyndon Words}\label{Lyndon}\vspace{-6pt}
\mbox{ We will use the following classical notion  (cf.\ \cite{shirshov1958free}, \cite{chen1958free}):}
\begin{definition}\label{Lyndon word}  
A word $w$ in letters $c_1,\dots,c_k$ is  a \textit{(weak) Lyndon word} if it is (weakly)  smaller than any of its cyclic rotations in the lexicographic order with $c_1<\dots < c_k$. Write  $B(n_1,\dots,n_k)$ (or $B^w(n_1,\dots,n_k)$) for the set of  (weak) Lyndon words which involve $c_i$  precisely $n_i$ times. \vspace{1pt}
\end{definition}
A Lyndon word $w$ of length $\ell>1$ can be written uniquely as $w = u v$ with $u<v$ both Lyndon words and $u$ as long as possible  -- this is  the \textit{(left) standard factorisation}\index{Standard factorisation}. Given any two Lyndon words $u<v$, the word $w=uv$ is again a Lyndon word. The factorisation $w=u  v$  is standard if and only if   $v$ is a  letter \textit{or} it has standard factorisation $v=xy$ with $x\leq u$.

There is a unique injection $\phi$ from the set of Lyndon words in $c_1,\dots,c_k$ to  $\Lie[c_1,\dots,c_k]$ sending each letter $c_i$ to $c_i\in \Lie[c_1,\dots,c_k]$ and   satisfying $\phi(w) = [\phi(u),\phi(v)]$ for each standard factorisation $w = u v$.  Elements in the image of $\phi$ are  called \textit{basic monomials}, and we will often   identify Lyndon words with \mbox{their image under $\phi$.}

\begin{remark} Basic monomials  \textit{do not} generate  $\Lie[c_1,\dots,c_k]$ due to the presence of self-brackets. However,  their images  under  $\Lie[c_1,\dots,c_k] \rightarrow \Lie^i[c_1,\dots,c_k]$ form a $\ZZ$-basis (cf.\ \cite{reutenauer2003free}). \hspace{3pt}
\end{remark}

Any  {weak} Lyndon word $w$ can be written uniquely as $w=u^d$ with $u$ a Lyndon word. We call $d$ the \textit{period} of $w$. This gives an identification
$B^w(n_1,\dots,n_k) = \coprod_{d|\gcd(n_1,\dots,n_k)} B(\frac{n_1}{d},\dots,\frac{n_k}{d} ) $.
\vspace{2pt}

Now let $S$ be a finite set and assume $g:S\rightarrow \{1,\dots,k\}$ is a map whose fibre $C_i$ over $i$ has size $n_i$.
\begin{definition}\label{laLyn}
An $(S,g)$-\textit{labelling} of a weak Lyndon word $w=u^d$ in letters $c_1,\ldots,c_k$ is represented by a \mbox{bijection $f$} from $S$ to the  letters of $w$ with the property that for each $s\in C_i$, the symbol $f(s)$ is  $c_i$. Two bijections are considered to represent the same labelling  if one can be obtained from the other by permuting the various copies of $u$ in $w$.

 Write $B^w_{(S,g)}(n_1,\dots,n_k) $ for the set of $(S,g)$-labelled weak Lyndon words in  $B^w(n_1,\dots,n_k) $. This set comes endowed with a natural (left) $\Sigma_{C_1} \times \dots \times \Sigma_{C_k}$-action: given a permutation $h$ and a weak Lyndon word $w$ whose labelling is represented by the bijection $f$ from $S$ to the letters of $w$, we form a new labelled word $h\cdot w$ \mbox{by changing $f$ to $f\circ h^{-1}$.}\vspace{0pt}
\end{definition}

\mbox{Labelled words can be used to  define  \textit{multilinear}   monomials containing each generator once:}
\begin{definition}\label{resolution}\index{Resolution of a labelled Lie word}
Let $w$ be a Lyndon word  with an $(S,g)$-labelling represented by \mbox{the bijection $f$.} The $(S,g)$-{\it resolution} $\tilde w^f\in \Lie[S]$ of $w$ is represented by the multilinear monomial obtained from $w$ by replacing  each letter in $w$ with its preimage  in $S$ under $f$.

We observe that this resolution procedure intertwines the action on labels in Definition \ref{laLyn} with the action on words in   Section \ref{section: hilton}, i.e.  if $h \in \Sigma_{C_1} \times \dots \times \Sigma_{C_k}$, then 
$h \cdot \tilde{w}^{f} = \widetilde{h\cdot w}^{h^{-1}(f)}$.

\begin{remark}\label{standardchoice}
Given a decomposition $n_1+\ldots+n_k=n$, there is a standard choice for $(S,g)$: we take $S = \mathbf{n}= \{1,\dots,n\} $ for $n = \sum n_i$ and use the unique order-preserving  map $g:\mathbf{n} \rightarrow \mathbf{k}$ with $|g^{-1}(i)|=n_i$ for all $i$. We  write $B^w_{\mathbf{n}}(n_1,\dots,n_k) := B^w_{(S,g)}(n_1,\dots,n_k)$.
The quotient of 
 $B^w_{(S,g)}(n_1,\ldots,n_k)$ by the group $\Sigma_{n_1}\times \ldots \times \Sigma_{n_k}$ can be identified with the set of weak Lyndon words $B^w(n_1,\dots,n_k) = \coprod_{d|\gcd(n_1,\dots,n_k)} B(\frac{n_1}{d},\dots,\frac{n_k}{d} ) $ containing the $i^{th}$ generator $n_i$ times.
 
Each orbit in the $\Sigma_{n_1} \times \ldots \times \Sigma_{n_k}$-set $B^w_{\mathbf{n}}(n_1,\dots,n_k)$ contains a unique labelled weak Lyndon word $w=u^d$ with the property that for each  copy of $u$, all labels are congruent mod $d$ and increase on all occurrences of a letter $c_i$ in this copy from left to right. We call this the \textit{standard labelling} of the underlying weak Lyndon word $w=u^d $.
The various copies of $u$ then partition $n$ as $S_1 = \{1,d+1, \ldots\}$, $S_2 = \{2,d+2, \ldots\}$, \ldots. via the labelling.
 The stabiliser of $w$ in  $B^w_{(S,g)}(n_1,\ldots,n_k)$ is equivalent to $\Sigma_d$, embedded diagonally as $\Sigma_d \rightarrow \Sigma_d^{ \frac{n}{d}} \rightarrow \Sigma_{n_1}\times \ldots \times \Sigma_{n_k}$. 
\end{remark}
\end{definition}
\vspace{-10pt}

\subsection{The Algebraic Lie Operad}
\begin{definition} Given a finite set $S$, the \textit{Lie representation} $\Lierep_S$ of the symmetric group $\Sigma_S$ is given by the  submodule of $\Lie[S]$  spanned by all words  which contain each generator $c_s$   once.\end{definition}
It is well-known that $\Lierep_S$ is a free abelian group.
The $\Sigma_S$-modules $\Lierep_S$ assemble to   the \textit{Lie operad}. Hence every bijection \mbox{$  \coprod_{i\in D} S_i \stackrel{ \ \ \phi\ \ }{\rightarrow} S$} gives  rise to a homomorphism 
\mbox{$
s_\phi\colon\Lierep_{D}\otimes \bigotimes_{i \in D} \Lierep_{S_i}\rightarrow \Lierep_S
$}
which is natural in the bijection $\phi$ and  with respect to \mbox{permutations of the sets $S_i$.}

Let us describe the map $s_\phi$ explicitly. Recall that $\Lierep_S$ is generated by  multilinear monomials in {variables $\{c_s \}_{s\in S}$.} Let $w_D \in \Lierep_{D}$ and $\{ \ w_{S_i} \in \Lierep_{S_i} \ |  \ i\in D\ \} $ be represented by monomials.
Then $s_\phi(w_D \otimes \bigotimes_{i\in D} w_{S_i})\in \Lierep_S$ is obtained by taking $w_d$,  substituting the monomial $w_{S_i}$ for the variable corresponding to $i\in D$, and finally replacing \mbox{each letter $c_s$ with $s\in S_i$ by the letter $c_{\phi(s)}$.}

\begin{remark}
 {Lie algebras in the sense of Definition \ref{LAdef}  are just algebras over the Lie operad (cf.\ \cite{fresse346koszul}, 1.1.11.)}.
The well-known formula for a free algebra over an operad gives a natural isomorphism  $\epsilon\colon\  \bigoplus_{n} \ \Lierep_{\mathbf{n}} \myotimes{\integers[\Sigma_n]} V^{\otimes n} \xrightarrow{ \ \ \cong \ \ } \Lie[V]$ (cf.\ ~\cite{markl2007operads}, \mbox{Propositions 1.25., 1.27.)}.

If $w$ is a multilinear monomial in letters $c_1, \ldots c_n$, then $\epsilon$ sends $w\otimes v_1\otimes\cdots\otimes v_n$ to $w(v_1, \ldots, v_n)$.    \vspace{-5pt}\end{remark}

\subsection{The Algebraic Branching Rule}\label{section: lie} 
 We will now explain an algebraic branching rule for  the $\Sigma_n$-module $\Lierep_{\mathbf{n}}$, which will follow from our later topological  Theorem \ref{main}  by applying homology.\vspace{0pt}
 
  Fix   a decomposition \mbox{$n=n_1+\ldots+n_k$}. Let $S=\mathbf{n}$ and $g:\mathbf{n}\rightarrow \mathbf{k}$ be  order-preserving  with $|g^{-1}(i)| = n_i$.
Suppose $w= u^d\in B^w_{\mathbf{n}}(n_1,\dots,n_k)$ is an $\mathbf{n}$-labelled weak Lyndon word with \mbox{period $d$.} The different copies of $u$ in $w$  induce a partition $\mathbf{n} = \coprod_{S_i \in \mathbf{d}_w} S_i$ via the labelling. Here $\mathbf{d}_w$ denotes the set of classes of this partition. It has cardinality $d$, but no preferred identification with $\mathbf{d}=\{1,\ldots,d\}$. 
For each $S_i\in \mathbf{d}_w$, we can restrict the labelling  $f$ on $w$ to an $(S_i,g|_{S_i})$-labelling $f_i$ on $u$ and form a word \mbox{$\tilde{u}^{f_i} \in \Lierep_{S_i}$} by replacing each  $c_j$ with $j\in \mathbf{k}$ \mbox{by the corresponding   $c_s$ for $s\in S_i$.}

Let  $\uptheta_w: \Lierep_{\mathbf{d}_w} \rightarrow  \Lierep_{\mathbf{n}}$ be defined
as follows: given a multilinear  monomial $w_d\in  \Lierep_{\mathbf{d}_w}  $ in letters $\{ c_{S_i} \}_{S_i \in \mathbf{d}_w }$, \mbox{replace each  $c_{S_i}$ in $w_d$ with  $\tilde{u}^{f_i}$.}
The group $\Sigma_{\mathbf{d}_w}$ acts  naturally on $\Lierep_{\mathbf{d}_w}$. It acts on $\mathbf{n}$ (and hence   $\Lierep_{\mathbf{n}}$)  by letting $h \in \Sigma_{\mathbf{d}_w}$ send   $x\in S_i$ to the unique $h\cdot x \in h(S_{i})$ for which $x$ and $h\cdot x$ label corresponding letters in the respective copies of $u$.  The map $\uptheta_w$ is $\Sigma_{\mathbf{d}_w}$-equivariant.

\begin{example}\label{ex3}
\mbox{For $n_1=n_2=4$ and $d=2$,  consider the $\mathbf{n}$-labelled Lyndon word $w=u^2$ given by\vspace{-10pt}} 
\begin{diagram}
1 & \  \ &3& \ \  & 5 & \ \ &7 & \  \ &2 &  \  \ & 4 &  \  \ &6 & \ \  & 8\vspace{-10pt}\ \\
c_1& \ \  & c_1&   \  \  &c_2&  \  \   &c_2&  \  \ &c_1 &  \ \   &c_1& \  \  &c_2 & \  \ &  c_2 \vspace{-5pt}
\end{diagram} 
The Lyndon word $u$ corresponds to  $[[c_1,[c_1, c_2]], c_2]\in B(2,2)$. The induced partition on $\mathbf{8}$ has classes $S_1 = \{1,3,5,7\}$ and $S_2=\{2,4,6,8\}$. The group $\Sigma_{\mathbf{d}_2}\cong \Sigma_2$ acts on $\mathbf{8}$ via $(1,2)(3,4)(5,6)(7,8)$.

The homomorphism $\uptheta_w:\Lierep_2\rightarrow \Lierep_8$ is defined follows. Suppose $w_2(c_{S_1}, c_{S_2})$ is a monomial representing an element of $\Lierep_{\mathbf{d}_2}$. Then $w_2$ is sent to $w_2([[c_1,[c_3, c_5]], c_7], [[c_2,[c_4, c_6]], c_8]).$
In particular our $\Sigma_2$-equivariant homomorphism acts as follows on words of length $2$:  \vspace{-5pt}
$$ [c_2, c_1]\mapsto [[[c_2,[c_4, c_6]], c_8],[[c_1,[c_3, c_5]], c_7]] \ \ \ \ \ \ \ \ \ \ \ \ 
[c_1, c_2]\mapsto [[[c_1,[c_3, c_5]], c_7], [[c_2,[c_4, c_6]], c_8]].\vspace{-2pt}$$
\end{example}

Summing  up  $\uptheta_w$ over all labelled weak Lyndon words $w$, we obtain an equivariant homomorphism 
$ \bigoplus \uptheta_w  :\bigoplus_{w \in B^w_{\mathbf{n}}(n_1,\ldots,n_k)}\Lierep_{\mathbf{d}_w} \longrightarrow \Lierep_{\mathbf{n}} $, where 
$\Sigma_{n_1}\times \ldots \times \Sigma_{n_k}$ acts on the left as follows: given a permutation $h$ and a word $w_d \in \Lierep_{\mathbf{d}_w}$,  define  $h\cdot w_d \in \Lierep_{\mathbf{\mathbf{d}_{h \cdot w}}} $
 by replacing the letter $c_{S_i}$ corresponding to    \mbox{$S_i \in \mathbf{d}_w$} by the letter $c_{h (S_i)}$ corresponding to    \mbox{$h(S_i) \in \mathbf{d}_{h\cdot w}$.}  As before, $h \cdot w$ is the labelled weak Lyndon word obtained from $w$ by precomposing the labelling with $h^{-1}$ \mbox{(cf.\ Definition \ref{laLyn}).}
\begin{example}
In the above Example \ref{ex3}, we take $h = (124) (68)$. Then $h \cdot w$ is given by\vspace{-2pt}
\begin{diagram}
2 & \  \ &3& \ \  & 5 & \ \ &7 & \  \ &4 &  \  \ & 1 &  \  \ &8 & \ \  & 6\vspace{-10pt}\ \\
c_1& \ \  & c_1&   \  \  &c_2&  \  \   &c_2&  \  \ &c_1 &  \ \   &c_1& \  \  &c_2 & \  \ &  c_2 \vspace{-7pt}
\end{diagram} 
\mbox{Hence $h\cdot \left([c_{S_2},c_{S_1}] \in \Lierep_{\mathbf{d}_w}\right) = \left([c_{h(S_2)},c_{h(S_1)}] \in \Lierep_{\mathbf{d}_{h\cdot}}\right)$ for $h(S_1) = \{2,3,5,7\}$, $h(S_2) = \{1,4,6,8 \}$.}
\end{example}
\vspace{3pt}
{The following  will be proven on p.\pageref{algebraicbranchingproof}
by applying homology to  Theorem \ref{main}:}
\begin{lemma}[Algebraic Branching Rule] \label{equation: branching for lie} The map $\bigoplus  \uptheta_w$ gives $\Sigma_{n_1}\times\cdots\times \Sigma_{n_k}$-isomorphisms\vspace{-2pt}
$$
\bigoplus_{d|\gcd(n_1, \ldots, n_k)}    \ \  \bigoplus_{u\in B\left(\frac{n_1}{d},\ldots, \frac{n_k}{d}\right)}
\integers[\Sigma_{n_1}\times\cdots\times \Sigma_{n_k}] \myotimes{\integers[\Sigma_d]}
\Lierep_d\ \ \xrightarrow{\  \cong \   } \ \ \bigoplus_{w \in B^w_{\mathbf{n}}(n_1,\ldots,n_k)}\Lierep_{\mathbf{d}_w}\ \   \xrightarrow{\  \cong   \ } \ \  \Lierep_n.\vspace{-3pt}$$
\end{lemma}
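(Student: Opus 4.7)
The plan is to derive this algebraic statement by applying $(n-3)$-rd reduced homology, twisted by the sign representation, to the topological branching rule of Theorem \ref{main}. The crucial classical input is the identification
$$\widetilde{\HH}_{n-3}(|\Pi_n|, \ZZ) \otimes \sgn_n \cong \Lierep_n$$
of $\Sigma_n$-modules (Joyal, Klyachko, Barcelo--Stanley), which gives the target its Lie-theoretic meaning.

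First I would apply $\widetilde{\HH}_{n-3}(-,\ZZ) \otimes \sgn_n$ to the equivalence in Theorem \ref{main} and analyse each wedge summand. By the K\"unneth theorem, the top homology of $\Sigma^{-1}(S^{n/d-1})^{\wedge d} \wedge |\Pi_d|^\diamond$ is the tensor product $\widetilde{\HH}_{n-d}\big((S^{n/d-1})^{\wedge d}\big) \otimes \widetilde{\HH}_{d-3}(|\Pi_d|)$. The top homology of the smash sphere with its block-permutation $\Sigma_d$-action is $\ZZ$ twisted by $\sgn_d^{n/d-1}$, and the classical identification contributes $\Lierep_d \otimes \sgn_d$. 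Finally, the restriction of $\sgn_n$ along the diagonal $\Sigma_d \hookrightarrow \Sigma_d^{n/d} \subset \Sigma_{n_1}\times\cdots\times\Sigma_{n_k}$ equals $\sgn_d^{n/d}$. The three sign characters $\sgn_d^{n/d-1} \otimes \sgn_d \otimes \sgn_d^{n/d}$ multiply to the trivial character, so each summand abstractly reduces to $\ZZ[\Sigma_{n_1}\times\cdots\times\Sigma_{n_k}] \myotimes{\ZZ[\Sigma_d]} \Lierep_d$, one copy for every Lyndon word $u \in B(n_1/d, \ldots, n_k/d)$. This already establishes that the two sides of the asserted isomorphism have the correct $\Sigma_{n_1}\times\cdots\times\Sigma_{n_k}$-module structure.

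The remaining, more delicate task is to identify the map induced on top homology with the explicit combinatorial homomorphism $\bigoplus \uptheta_w$. For this I would appeal to the explicit point-set description of the collapse maps developed in Section \ref{clong}. Under the standard bijection between basic Lie monomials and top-dimensional homology classes of partition complexes (sending a Lyndon bracket to the fundamental class of the chain cut out by its nested partitions), the cellular collapse associated with a period-$d$ Lyndon factorisation $w=u^d$ precisely performs the substitution operation defining $\uptheta_w$: an outer word $w_d \in \Lierep_{\mathbf{d}_w}$ indexes a chain in $|\Pi_{\mathbf{d}_w}|$, and the geometric collapse inserts the $(S_i, g|_{S_i})$-labelled Lyndon word $u$ at each leaf.

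The main obstacle is the bookkeeping required in this last step: one must verify that signs, orderings of induced partitions on $\mathbf{n}$, and Lyndon conventions are matched consistently between the geometric collapse and the combinatorial substitution $\uptheta_w$, and that the equivariance with respect to the relabelling action of $\Sigma_{n_1}\times\cdots\times\Sigma_{n_k}$ described in Definition \ref{laLyn} is reproduced correctly by the induction. A cleaner, if less self-contained, reformulation of the same argument is to pass from the operadic interpretation of the collapse maps developed in Sections \ref{LAPC1}--\ref{LAPC2} and Corollary \ref{BBBB} down to the classical Lie operad by applying homotopy and rewriting the resulting statement in terms of multilinear monomials.
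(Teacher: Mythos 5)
Your proposal is correct in outline, and the ``cleaner reformulation'' you mention at the end is in fact essentially the route the paper takes. The primary approach you sketch --- applying $\widetilde{\HH}_{n-3}(-,\ZZ)\otimes\sgn_n$ to Theorem~\ref{main} and bookkeeping the K\"unneth factors --- does establish the abstract isomorphism of $\Sigma_{n_1}\times\cdots\times\Sigma_{n_k}$-modules, and your sign computation (a transposition in $\Sigma_d$ acts on $\widetilde{\HH}_{n-d}((S^{n/d-1})^{\wedge d})$ by $(-1)^{n/d-1}$, and $\sgn_d^{n/d-1}\otimes\sgn_d\otimes\sgn_d^{n/d}$ is trivial since the exponent $2n/d$ is even) is accurate.

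However, the step you flag as ``the main obstacle'' is where the real work lies, and it is more than bookkeeping: the equivalence in Theorem~\ref{main} is a simplicial collapse map, not the operadic ungrafting map, and the induced map on homology is therefore not \emph{a priori} identifiable with the operadic substitution $\uptheta_w$. The content of Sections~\ref{LAPC1}--\ref{LAPC2} --- the homotopies $K_x$, $L_\sigma$, $M_\sigma$ of Propositions~\ref{propbb}, \ref{propaa}, \ref{ta} interpolating between collapse and ungrafting, and the resulting Corollary~\ref{branchingforungrafting} replacing $M(0,-)$ by $M(1,-)=(\id\wedge\bigwedge a_{T_{i,w}})\circ u_{x_w}$ --- exists precisely to make that identification rigorous. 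The paper's actual proof then applies $\Map_{\Sp}(S^1,(S^1)^{\wedge n})\wedge\DD(-)$ to that equivalence, takes $\widetilde{\HH}_0$, and invokes Corollary~\ref{commutes} (and Proposition~\ref{propc}) to recognise the resulting map on $\pi_0(\mathbf{Lie})\cong\Lierep$ as $\bigoplus\uptheta_w$. So your ``alternative'' is the paper's argument, and your primary route, carried out honestly, would have to reconstruct essentially the same homotopies --- it would not be shorter or more elementary.
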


\subsection{The Algebraic Hilton-Milnor Theorem}
In topology, the Hilton-Milnor theorem is a  homotopy decomposition of the space $\Omega\Sigma(X_1\vee\ldots\vee X_k)$ as a weak product indexed by basic monomials in $k$ variables $X_1, \ldots, X_k$. Equivalently, one can read the theorem as a  homotopy decomposition of the pointed free topological group generated by $X_1, \ldots, X_k$, where $X_i$ are connected spaces.  

The free Lie algebra over $\ZZ$ (in the sense of Definition \ref{LAdef}) on a direct sum has an analogous decomposition, which we will call the \textit{algebraic} Hilton-Milnor theorem. There are differences between the two versions: the algebraic version gives an isomorphism, rather than an equivalence, \mbox{and it does not require a connectivity hypothesis.}  {In our later Corollary \ref{BBBB}, we will in fact generalise the algebraic Hilton-Milnor theorem   to spectral Lie algebras. }\vspace{5pt}

We fix free abelian groups $V_1, \ldots, V_k$ and an order-preserving map  $g:\mathbf{m}\rightarrow \mathbf{k}$ with $m_i = |g^{-1}(i)|$. Every Lyndon word $u\in B (m_1, \ldots, m_k)$  has a unique  $(\mathbf{m},g)$-labelling for which the labels of each letter $c_i$  increase from left to right in $u$. Write $\tilde{u}$ for the corresponding resolution \mbox{(cf.\ Definition \ref{resolution}).} We define a map of abelian groups
$
\beta_u\colon V_1^{\otimes m_1}\otimes \cdots\otimes V_k^{\otimes m_k} \rightarrow \Lie[V_1 \oplus \cdots\oplus V_k]$ by setting $
\beta_u (\otimes_{i=1}^k\otimes_{j=1}^{m_i} v_i^j )=\tilde u(v_1^1, v_1^2, \ldots, v_1^{m_1}, v_2^1, \ldots,v_2^{m_2}, \ldots, v^{1}_k,\ldots, v_k^{m_k})
$, where $v_i^j\in V_i$. Since $\tilde u$ is  multilinear, this homomorphism is well-defined.
\begin{example}
The Lyndon word $u=[[c_1,[c_1, c_2]], c_2]$ determines the group homomorphism $\beta_u\colon V_1^{\otimes 2}\otimes V_2^{\otimes 2}\rightarrow \Lie[V_1\oplus V_2]$ defined by
$
\beta_u(v_1^1 \otimes v_1^2 \otimes v_2^1\otimes v_2^2)= \tilde u (v_1^1, v_1^2, v_2^1, v_2^2) = [[v_1^1, [v_1^2, v_2^1 ]], v_2^2]
$.
\end{example}\vspace{3pt}

The map $\beta_u$ extends to a map of Lie algebras \mbox{
$\tilde\beta_u\colon \Lie[V_1^{\otimes m_1}\otimes \cdots\otimes V_k^{\otimes m_k}] \rightarrow \Lie[V_1 \oplus \cdots\oplus V_k].$}
Using the algebraic branching rule in Lemma \ref{equation: branching for lie} , we can deduce the following result:
\begin{theorem}[Algebraic Hilton-Milnor Theorem]\label{theorem: hilton-milnor}
The homomorphism of abelian groups 
$$
\oplus \ \tilde\beta_u \ : \ \underset{\underset{u\in B(m_1,\ldots, m_k)}{m_1, \ldots, m_k\ge 0}}{\bigoplus} \Lie[V_1^{\otimes m_1}\otimes \cdots\otimes V_k^{\otimes m_k}] \rightarrow \Lie[V_1 \oplus \cdots\oplus V_k].\vspace{-5pt}
$$
obtained by taking the direct sum of all maps $\tilde\beta_u$  is an  isomorphism.\vspace{-5pt} 
\end{theorem}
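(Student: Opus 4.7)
The plan is to derive Theorem \ref{theorem: hilton-milnor} from the algebraic branching rule (Lemma \ref{equation: branching for lie}) by applying the operad--free--algebra isomorphism, combined with careful bookkeeping of $\Sigma_d$-actions.

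First, I would invoke the natural isomorphism $\epsilon\colon \bigoplus_{n\ge 1} \Lierep_n \myotimes{\integers[\Sigma_n]} V^{\otimes n}\xrightarrow{\cong} \Lie[V]$ and apply it to $V= V_1\oplus\cdots\oplus V_k$. Expanding the tensor power gives the standard decomposition $(V_1\oplus\cdots\oplus V_k)^{\otimes n} \cong \bigoplus_{n_1+\cdots+n_k=n} \integers[\Sigma_n]\myotimes{\integers[\Sigma_{n_1}\times\cdots\times\Sigma_{n_k}]} V_1^{\otimes n_1}\otimes\cdots\otimes V_k^{\otimes n_k}$, so that
$$\Lie[V_1\oplus\cdots\oplus V_k] \cong \bigoplus_{n_1,\ldots,n_k\ge 0} \Lierep_n\myotimes{\integers[\Sigma_{n_1}\times\cdots\times\Sigma_{n_k}]} V_1^{\otimes n_1}\otimes\cdots\otimes V_k^{\otimes n_k}.$$

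Next I would apply the algebraic branching rule to decompose the $\Sigma_{n_1}\times\cdots\times\Sigma_{n_k}$-module $\Lierep_n$. This turns each summand on the right into
$$\bigoplus_{d\mid\gcd(n_1,\ldots,n_k)}\ \bigoplus_{u\in B(n_1/d,\ldots,n_k/d)} \Lierep_d\myotimes{\integers[\Sigma_d]} \left(V_1^{\otimes n_1}\otimes\cdots\otimes V_k^{\otimes n_k}\right),$$
where the $\Sigma_d$-action on the tensor-product factor is inherited from the diagonal embedding $\Sigma_d\hookrightarrow \Sigma_d^{n/d}\subset \Sigma_{n_1}\times\cdots\times\Sigma_{n_k}$. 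The crucial identification is then that, through this diagonal, there is a $\Sigma_d$-equivariant ``transpose of indices'' isomorphism
$$V_1^{\otimes dm_1}\otimes\cdots\otimes V_k^{\otimes dm_k}\ \cong\ W^{\otimes d}, \qquad W := V_1^{\otimes m_1}\otimes\cdots\otimes V_k^{\otimes m_k},$$
with $m_i = n_i/d$ and $\Sigma_d$ acting on the right by permuting the $d$ tensor factors. Substituting and reindexing by $m_1,\ldots,m_k,d$ converts the whole expression into
$$\bigoplus_{m_1,\ldots,m_k\ge 0}\ \bigoplus_{u\in B(m_1,\ldots,m_k)}\ \bigoplus_{d\ge 1} \Lierep_d\myotimes{\integers[\Sigma_d]}W^{\otimes d} \ =\ \bigoplus_{m_1,\ldots,m_k,\ u\in B(m_1,\ldots,m_k)} \Lie\!\left[V_1^{\otimes m_1}\otimes\cdots\otimes V_k^{\otimes m_k}\right],$$
which matches the left-hand side of the theorem as an abstract abelian group.

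Finally, I would verify that the resulting isomorphism is indeed given by $\oplus \tilde\beta_u$. For a Lyndon word $u\in B(m_1,\ldots,m_k)$ with resolution $\tilde u\in\Lierep_m$ under its standard labelling, the operadic substitution homomorphism $\uptheta_{u^d}$ from Lemma \ref{equation: branching for lie} sends a Lie word $w_d\in\Lierep_{\mathbf{d}_w}$ to the Lie word obtained by plugging $d$ labelled copies of $\tilde u$ into the slots of $w_d$. Once tensored with $w_1\otimes\cdots\otimes w_d\in W^{\otimes d}$, this is by construction the element $w_d(\beta_u(w_1),\ldots,\beta_u(w_d))\in \Lie[V_1\oplus\cdots\oplus V_k]$, which is precisely $\tilde\beta_u$ applied to the Lie monomial $w_d(w_1,\ldots,w_d)\in \Lie[W]$. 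Since elements of this form span $\Lie[W]$ under the operadic isomorphism, the two maps agree.

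The main obstacle will be the index bookkeeping in the middle step: carefully identifying the diagonal $\Sigma_d$-action inherited from $\Sigma_d\hookrightarrow\Sigma_{n_1}\times\cdots\times\Sigma_{n_k}$ with the factor-permutation action on $W^{\otimes d}$, and then matching this abstract identification term-by-term with the operadic definition of $\uptheta_w$ in Section \ref{section: lie}. Once this dictionary is set up cleanly, the compatibility with $\tilde\beta_u$ is essentially tautological. Notably the argument gives an isomorphism of $\integers$-modules directly, with no connectivity hypothesis needed, reflecting the purely algebraic nature of the statement compared to its topological namesake.
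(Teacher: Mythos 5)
Your proposal is correct and follows essentially the same route as the paper: both start from the operadic isomorphism $\epsilon$, expand $(V_1\oplus\cdots\oplus V_k)^{\otimes n}$ over Young subgroups, plug in the algebraic branching rule for $\Lierep_n$, and reindex via the $\Sigma_d$-equivariant rearrangement $V_1^{\otimes dm_1}\otimes\cdots\otimes V_k^{\otimes dm_k}\cong W^{\otimes d}$. The paper packages this as a single commutative square with the top map built from the branching isomorphism and checks commutativity by a "routine diagram chase," which is exactly the term-by-term verification against $\oplus\tilde\beta_u$ that you describe in your final paragraph.
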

\begin{proof}
We  construct a commutative square\vspace{-1pt} 
\begin{diagram} 
\underset{\underset{u\in B(m_1,\ldots, m_k)}{m_1, \ldots, m_k\ge 0}}{\bigoplus}\ \underset{d\geq 1}{\bigoplus}\  \Lierep_d \myotimes{\Sigma_d}(V_1^{\otimes m_1}\otimes \ldots\otimes V_k^{\otimes m_k})^{\otimes d} & \rTo & &  \underset{n\ge 1}{\bigoplus}\Lierep_n\myotimes{\Sigma_n}(V_1 \oplus \ldots\oplus V_k)^{\otimes n}\\
  & & &    \\
  \dTo  & & & \dTo \\
\underset{\underset{u\in B(m_1,\ldots, m_k)}{m_1, \ldots, m_k\ge 0}}{\bigoplus}\Lie[V_1^{\otimes m_1}\otimes \cdots\otimes V_k^{\otimes m_k}] & \rTo^{\oplus \tilde\beta_u} &  &  \Lie[V_1 \oplus \cdots\oplus V_k] 
\end{diagram} 
The vertical isomorphisms are given by our operadic description of free Lie algebras.
The  top  map on the summand corresponding to  $m_1, \ldots, m_k$,    $u\in B(m_1,\ldots, m_k)$, $d\geq 1$ 
is obtained as follows. \mbox{First, apply $\Sigma_{dm_1 }\times  ... \times \Sigma_{dm_k }$-orbits to the following map  induced by algebraic branching for $\Lierep_n$:}\vspace{-4pt} 
$$
(\integers[\Sigma_{dm_1 }\times\cdots\times \Sigma_{d m_k }] \myotimes{\integers[\Sigma_d]}
\Lierep_d) \otimes (V_1^{\otimes m_1  }\otimes \ldots\otimes V_k^{\otimes  m_k  })^{\otimes d}  \longrightarrow \Lierep_n \otimes V_1^{\otimes d m_1}\otimes \ldots\otimes V_k^{\otimes d m_k}\vspace{-8pt} $$
Here  $n = dm_1+ \ldots + dm_k$.
Then compose with  $ \Lierep_n\myotimes{\integers[\Sigma_{dm_1}\times\cdots\times \Sigma_{dm_k}]} (V_1^{\otimes dm_1}\otimes \ldots\otimes V_k^{\otimes dm_k}) \rightarrow \Lierep_n\myotimes{\Sigma_n}(V_1 \oplus \ldots\oplus V_k)^{\otimes n}$.
A routine diagram chase shows that the square in question commutes. The top horizontal map is an isomorphism by the algebraic branching rule in Theorem \ref{equation: branching for lie}.
\end{proof}
\begin{remark}
There is a corresponding decomposition for totally isotropic Lie algebras, which is  folklore and can, for example,  be easily deduced   from Example $8.7.4$ in Neisendorfer's book \cite{neisendorfer2010algebraic}.
\end{remark}

\subsection{The Spectral Lie Operad}\label{section: lie to partitions}
There is a close connection between the Lie representation and the cohomology of the partition complex: given a finite set $S$, there is an isomorphism of \mbox{$\Sigma_S$-representations} \mbox{$\Lierep_S\ \cong  \     \sgn_S   \otimes \ {\widetilde{\HH}}^{|S|-1}(\Sigma |\Pi_S|^\diamond,\ZZ) , $}
where $\sgn_S$ denotes the sign representation. This  was first proven by H. Barcelo~\cite{barcelo1990action} and later generalised, and perhaps simplified, by Wachs~\cite{wachs1998co}. Salvatore \cite{salvatore1998configuration}  and
Ching \cite{ching2005bar} have subsequently endowed the collection of  spectra $\{\Map_{\Sp}(S^1,(S^1)^{\wedge S})\wedge \DD(\Sigma |\Pi_S|^\diamond) \}_{S}$ with the structure of an operad in a way that is compatible with the operad structure on $\{\Lierep_S\}_S$.  \vspace{3pt}

We will now explain these relations. Let $\mathcal{P}_S$ be the poset of partitions of $S$, so that \mbox{$\Pi_S = \mathcal{P}_S-\{\hat{0},\hat{1}\}$}. The group $\Sigma_S$ acts by letting a permutation $h\in \Sigma_S$ send a partition  $\coprod_i S_i$ to the partition $\coprod_i h(S_i)$.

For $|S|>1$, we use the simplicial model $N_\bullet (\mathcal{P}_S)/_{N_\bullet(\mathcal{P}_S-\hat{0}) \cup N_\bullet(\mathcal{P}_S-\hat{1})}$ 
for $\Sigma |\Pi_S|^\diamond$ (cf.\ Section \ref{clock}). For $|S|=1$, we model  $\Sigma |\Pi_S|^\diamond=S^0$ by the simplicial $0$-circle.
The set  of $k$-simplices of the simplicial model for $\Sigma |\Pi_S|^\diamond$ is given by   $ \{[\hat{0}\leq x_1\leq \ldots\leq x_{k-1}\leq \hat{1}]\} \coprod \{\ast\} $, i.e. by the set of all chains starting in $\hat{0}$ and ending in $\hat{1}$ together with an additional  basepoint $\ast$. \vspace{3pt}

In order to prove  \mbox{$\Lierep_S \ \cong \  \sgn_S \ \otimes \  {\widetilde{\HH}}^{|S|-1}(\Sigma |\Pi_S|^\diamond,\ZZ)  \  \cong \ {\widetilde{\HH}}_0\left(\Map_{\Sp}(S^1,(S^1)^{\wedge S})\wedge \DD(\Sigma |\Pi_S|^\diamond)  ,\ZZ\right)$} and construct the operadic structure maps, 
we recall  the language of weighted trees from \cite{ching2005bar}:
\begin{definition}\label{n=1}
A \textit{(rooted) tree} consists of a finite poset $T$ containing a unique minimal \mbox{element $r$,} the \textit{root}, and another element $b>r$ with $b\leq t$ for all $t\in T-\{r\}$, such that:\vspace{-4pt}
\begin{enumerate}[leftmargin=26pt]
\item  If $u,v,$ and $t$ are any elements of $T$ with $u\leq t$ and $v\leq t$, then $u\leq v$ or $v\leq u$.
\item   If $t,u$ are elements in $T$ with $t<u $ and $r\neq t$, then there exists a $v$ with $t<v $ and $u\nleq v$.\vspace{-4pt}\end{enumerate} 
Maximal elements are called  \textit{leaves}.
Elements which are neither roots nor leaves are \textit{internal nodes}. An \textit{edge} is an inequality $(u<v)$ such that there does not exist a  $t$ with $u<t<v$. 
Given $t\in T$, write $i(t)$ for the set of \textit{incoming edges} $(t<v)$. A \textit{planar structure} on $T$ is given by a total order on each of the sets $i(t)$ of incoming edges.
The tree $T$ is \textit{binary} if every internal node has two incoming edges.
Given a set $S$, an \textit{$S$-labelling} on $T$ is a bijection $\iota$ from $S$ to the set of leaves of $T$.

A \textit{weighting} on  $T$ consists of an assignment of nonnegative numbers to all edges of $T$ such that the ``distance'' from the root to any leaf is $1$. Write $w(T)\subset [0,1]^{\{{edges\ of\ T}\}}$ for the space of all weightings and $\partial w(T)$ for the subspace of weightings which have at least one vanishing weight. 
\end{definition} \vspace{3pt}

Weighted $S$-labelled trees can be used to model the space $\Sigma |\Pi_S|^\diamond$. Starting with the simplicial model introduced above, we observe that points  in $\Sigma |\Pi_S|^\diamond$  are either equal to the basepoint or can be represented by pairs of the form  $\left([\hat{0} = x_0\leq x_1\leq \ldots\leq x_{k-1}\leq x_k=\hat{1}],t_0+\ldots+t_k = 1\right)$.\vspace{-5pt}
\begin{itemize}[leftmargin=26pt] 
\item Given a chain of partitions $\sigma = [\hat{0} = x_0\leq x_1\leq \ldots\leq x_{k-1}\leq x_k=\hat{1}]$ of $S$,  we let $T_\sigma$ be the set of equivalence classes of all partitions $x_i$  that are not classes in the preceding partition $x_{i-1}$ (here $x_{-1}$ has no classes by convention), partially ordered under refinement \textit{from coarsest to finest}, together with an {additional} minimal element $r$, the root. This poset is a tree and it carries a canonical $S$-labelling.
\item Given $(t_0,\ldots,t_k)\in [0,1]^k$ with $t_0+\ldots+t_k = 1$, we give $T_\sigma$ the unique weighting for which the distance from the root  $r$ to each class in $x_i$ is precisely $t_i+\ldots+t_k$.\vspace{-10pt}
\begin{center}
\includegraphics[width=1.1\textwidth]{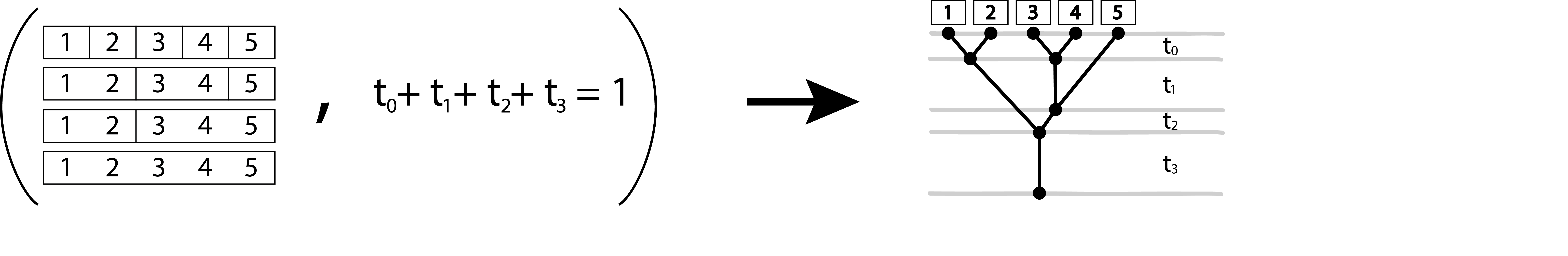}\vspace{-5pt}
\end{center}
\end{itemize}
Clearly, different weighted $S$-labelled trees can represent the same point in the space $\Sigma |\Pi_S|^\diamond$. In particular, any tree with weight $0$ on a leaf edge or the root edge represents the basepoint.

Given a permutation $h\in \Sigma_S$ and an $S$-labelled weighted tree $T$, we define a new $S$-labelled weighted tree $h\cdot T$ by precomposing the labelling function from $S$ to the set of leaves with $h^{-1}$; in other words, we replace each  label $s\in S$ by $h(s)$. This gives a concrete description of the $\Sigma_S$-action on $\Sigma|\Pi_S|^\diamond$ induced by the $\Sigma_S$-action on the poset of partitions $\Pi_S$.\vspace{5pt}
\label{expl}

In order to define    \textit{spectral Lie algebras}, we  need one further ingredient from \cite{ching2005bar}:
\begin{definition} 
Assume that we are given a bijection of finite sets $\phi\ \colon  \coprod_{i\in D} S_i \xrightarrow{\  \  \cong\ \  }  S$.
Let $T$ be an $S$-labelled weighted  tree  representing a point in $\Sigma |\Pi_S|^\diamond$. 

The \textit{ungrafting map} $ u_\phi:  \Sigma |\Pi_S|^\diamond  \ \longrightarrow \  \Sigma |\Pi_{D}|^\diamond  \wedge \bigwedge_{S_i \in D} \Sigma |\Pi_{S_i}|^\diamond $ has the following two properties:
\begin{enumerate}[leftmargin=26pt]
\item If we can find ``partitioning edges'' $\{ (v_i^-<v_i)\}_{i\in   D}$ such that for each $i\in D$, the leaf labels  above $v_i$ are given by $\phi(S_i)$, then we create a  tree $T'$ by removing all vertices and edges from $T$ which lie above the upper vertex $v_i$ of \textit{any} partitioning edge. Weight $T'$ by giving  ``non-partitioning'' edges the same weights as in $T$ and then scaling the weights of the partitioning edges $\{   (v_i^-<v_i)\}_{ i\in   D}$ appropriately. For each $i\in D$, we label the leaf $v_i$ in $T'$  by $i\in D$.\\
Furthermore, for each $i\in   D$, we create an $S_i$-labelled weighted tree $T_i$ from $T$ by only taking those vertices and edges in $T$ which lie above the lower vertex $v_i^-$ of the $i^{th}$ partitioning edge $(v_i^-<v_i)$ and then simultaneously scaling the weights inherited from $T$ appropriately.\\
{The ungrafting map $u_\phi$  sends $T$ to the point represented by $\left(T',\{T_i \}_{i \in  D}\right)$.}\vspace{5pt}

\item If, on the other hand, such partitioning edges \textit{cannot} be found, then the ungrafting map $u_\phi$ sends an $S$-labelled weighted tree $T$  to the basepoint.
\end{enumerate}
\end{definition}

\begin{definition}\label{spectralliedef} 
Given a nonempty finite set $S$, set $\mathbf{Lie}_S := \Map_{\mathbf{Sp}}(S^1 , (S^1)^{\wedge S}) \mywedge{} \DD(\Sigma |\Pi_S|^\diamond) $. For $S=\emptyset$,   define  $\mathbf{Lie}_{\emptyset} = 0$. This assignment gives  a \textit{symmetric sequence}, i.e.  a functor   from the category $\Fin^{\cong}$ of finite sets and bijections to $\mathbf{Sp}$: given a bijection $f:S_1 \xrightarrow{\cong} S_2$,  define \mbox{$\mathbf{Lie}_{S_1} \xrightarrow{\mathbf{Lie}_{f} } \mathbf{Lie}_{S_2}$} by smashing  $(S^1)^{\wedge f}: (S^1)^{\wedge S_1}  \rightarrow (S^1)^{\wedge S_2}$ with $\DD\left(\Sigma |\Pi_{f^{-1}}|^{\diamond}: \Sigma |\Pi_{S_2}|^{\diamond} \rightarrow  \Sigma |\Pi_{S_1}|^{\diamond}\right)$.

We endow $\mathbf{Lie}$  with the structure of an operad as follows: given  a bijection $\phi\colon \coprod_{i\in D} S_i \stackrel{\cong}{\rightarrow} S$,   define $\mu: \mathbf{Lie}_{D} \wedge \bigwedge_{i\in D} \mathbf{Lie}_{S_i}   \longrightarrow \mathbf{Lie}_S   $ as the composite
\begin{diagram}\bigg(\Map_{\mathbf{Sp}}(S^1 , (S^1)^{\wedge D}) \wedge \DD(\Sigma |\Pi_{D}|^\diamond)  \bigg) \ \   \wedge \ \ \bigwedge_{i\in D}  \bigg(  \Map_{\mathbf{Sp}}(S^1 , (S^1)^{\wedge S_i })\wedge  \DD( \Sigma |\Pi_{S_i}|^\diamond)\Bigg) \\
\dTo \\ 
 \\ \Map_{\mathbf{Sp}}(S^1 , (S^1)^{\wedge D}) \wedge 
\bigwedge_{i\in D}    \Map_{\mathbf{Sp}}(S^1 , (S^1)^{\wedge S_i }) \ \ \wedge \ \ 
\DD(\Sigma |\Pi_{D}|^\diamond)  \wedge \bigwedge_{i\in D}  \DD( \Sigma |\Pi_{S_i}|^\diamond) \\
 \dTo \\ 
 \\
\Map_{\mathbf{Sp}}(S^1 , (S^1)^{\wedge S}) \wedge \DD(\Sigma|\Pi_S|^\diamond)    
\end{diagram}
The top map is simply given by reordering factors. The left component of the lower map is given in an evident manner by ``plugging in''. The right component of the lower map is obtained by applying $\DD(  - )$ to the \textit{``ungrafting map''}\vspace{0pt}
$u_\phi:  \Sigma |\Pi_S|^\diamond  \ \longrightarrow \  \Sigma |\Pi_{D}|^\diamond  \wedge \bigwedge_{i \in D} \Sigma |\Pi_{S_i}|^\diamond $  defined above. \vspace{3pt}

The \textit{spectral Lie operad} $\underline{\mathbf{Lie}}$ is a cofibrant replacement of $\mathbf{Lie}$ in the model category of constant-free operads in $\mathbf{Sp}$ (\cite{batanin2017homotopy}, cf.\ also Proposition 3.6.(2) in \cite{caviglia2017model}). 

 In particular, there is a map  $\underline{\mathbf{Lie}} \rightarrow \mathbf{Lie}$  of operads in $\mathbf{Sp}$ which is a trivial fibration in the functor category  $\Fun(\Fin^{\cong}, \mathbf{Sp})$.
The spectral Lie operad $\underline{\mathbf{Lie}}$ is \textit{$\Sigma$-cofibrant} (cf.\ \cite[Proposition 4.3]{berger2003axiomatic}), which means that it is cofibrant in the projective model structure on $\Fun(\Fin^{\cong}, \mathbf{Sp})$. This implies that the associated monad  $X \mapsto \bigoplus_n \ {\underline{\mathbf{Lie}}}_n \mywedge{\Sigma_n}X ^{\wedge n}$ on $\mathbf{Sp}$  preserves weak equivalences.

We let $\Alg_{\Lierep}(\Sp)$ be the $\infty$-category of algebras for the induced monad  $\Free_{\Lierep}$ on the underlying $\infty$-category $\Sp$ of $\mathbf{Sp}$. Its objects  are called \textit{spectral Lie algebras}.
The induced monad    on the homotopy category  $\hSp$ is simply given by \mbox{$X \mapsto \bigoplus_n \ \mathbf{Lie}_n \mywedge{\hobased \Sigma_n}X ^{\wedge n}.$}\end{definition}
 
\subsection{Algebraic vs. Spectral Lie Operad} \label{c3po} We will  construct a $\Sigma_S$-equivariant isomorphism $\Lierep_S \cong \pi_0(\mathbf{Lie}_S) \cong  \tilde{\HH}_0(\mathbf{Lie}_S,\ZZ)\cong  \tilde{\HH}_0(\underline{\mathbf{Lie}}_{\hspace{0.3 pt}S},\ZZ)$ and show that it is compatible with the operadic structures. The existence of such an isomorphism was established abstractly via Koszul duality in Example $9.50$ of \cite{ching2005bar} -- we shall give an explicit construction. 
For the rest of this section, we will work  in the \textit{homotopy category} $\hSp$ of spectra. Here $\mathbf{Lie}_S$ and $\underline{\mathbf{Lie}}_{\hspace{0.3 pt}S}$ are canonically isomorphic.\label{subT}

Let $T$ be a binary $S$-labelled tree whose partial order is denoted by $\prec$.
Write $(\Sigma |\Pi_S|^\diamond)_T\subset \Sigma |\Pi_S|^\diamond$ for the open subspace of all points which can be represented  (cf.\ p.\pageref{expl}) by an $S$-labelled weighted tree that has  nonzero weights on all edges and is  isomorphic to $T$ (as an $S$-labelled tree).
 Specifying a point in $(\Sigma |\Pi_n|^\diamond)_T$ is equivalent to giving a weight in $w(T)$ all of whose components are positive. There is a homeomorphism \mbox{$\left( \Sigma |\Pi_S|^\diamond \right)/_{\Sigma |\Pi_S|^\diamond-(\Sigma |\Pi_S|^\diamond)_T} \cong w(T)/_{\partial w(T)}$.}\vspace{5pt}\label{standardorder}

Fix a binary planar $S$-labelled tree  $T$. We   define the \textit{standard order} $<$ (refining $\prec$) on the non-root nodes  $v_1< \ldots <  v_{2|S|-1}$   of $T$  by first listing the vertex above the root, then  the non-root nodes of the left subtree $T_L$ (ordered by recursion), and finally the non-root  nodes of the right subtree  $T_R$ (ordered by recursion).   
Consider the subspace $V\subset \RR^{2|S|-1}$ cut out by the following equations \vspace{-5pt}  $$ \hspace{-5pt} \bigg\{x_{i_1}+\ldots + x_{i_k} = 1 \ \ \mbox{whenever} \ \ (r \prec v_{i_1}), (v_{i_1} \prec v_{i_2}) , \ldots, (v_{i_{k-1}} \prec v_{i_k})   \mbox{ are edges of $T$ and } v_{i_k} \mbox{ is a leaf }   \bigg\}\vspace{-5pt}$$
The space $V$ is an intersection of transversely intersecting hyperplanes (cf.\ Lemma $3.9$ in \cite{ching2005bar}) and  hence has dimension $\dim(V) = |S|-1 $. There is a natural identification $w(T) \cong [0,1]^{2|S|-1} \cap V$.  Consider the map $ [0,1]^{|S|-1} \cong \{(t_1,\ldots,t_{2|S|-1})\in [0,1]^{2|S|-1} \ | \   t_i  = 1 \mbox{ if $v_i$ is a leaf in $T$} \}    \longrightarrow    w(T) $
 \vspace{-2pt} $$ \{t_i\}_{i=1}^{2|S|-1}  \ \ \mapsto \ \  \bigg\{t_i \cdot \prod_{ v_j \prec v_i} (1-t_j)\bigg\}_{i=1}^{2|S|-1} \vspace{-1pt}.$$
In other words, we send a tuple $(t_1,\ldots,t_{2|S|-1})$ to the unique weighting on $T$ with the property that if $v_i$ is the $i^{th}$ node (in standard order) with parent $v_i^-\prec v_i$, then the ratio $\frac{d(v_i^-,v_i)}{1-d(r,v_i^-)}$ between the distance   from $v_i^-$ to $v_i$ and the distance  from $v_i^-$ to  any leaf above $v_i$ is exactly $t_i$.

Passing to quotients by the respective boundaries gives rise to a homeomorphism $   S^{|S|-1} \cong w(T)/_{\partial w(T)}$.\vspace{5pt}
\begin{definition}\label{amap} Let $T$ be a binary planar $S$-labelled tree.
The map $a_T$ is defined as the composite \vspace{-13pt}
$$a_T: \Sigma |\Pi_S|^\diamond \longrightarrow  (\Sigma |\Pi_S|^\diamond)/_{(\Sigma |\Pi_S|^\diamond)-(\Sigma |\Pi_S|^\diamond)_T}  \cong w(T)/_{\partial w(T)} \cong S^{|S|-1}$$\vspace{-1pt}\vspace{-15pt}  

The map  $b_T:S^{|S|} \longrightarrow (S^{1})^{\wedge S} $ sends the $k^{th}$ coordinate in $S^{|S|} $ to the coordinate in $ (S^{1})^{\wedge S}$  which corresponds to the  label $s_k\in S$ of the $k^{th}$ leaf from the \mbox{left in $T$.} \vspace{3pt}
\end{definition}
\begin{minipage}{0.85\textwidth}
\begin{definition} The \textit{signature} $\sgn(T)$ of a binary planar tree $T$ is $\sgn(T) = 1$ if $T$ has no internal nodes. If $T$  is obtained by first identifying the roots of a left subtree $T_L$ and a right subtree $T_R$ and then adding a new minimal element $r$, the root, then  we set $\sgn(T) = (-1)^{ (\#\ internal  \ nodes \ of\ T_L) \cdot (\#\ {leaves} \ of \ T_R)} \cdot \sgn(T_L) \cdot  \sgn(T_R)$. \end{definition}
\end{minipage} \ \ 
\begin{minipage}{0.15\textwidth}   \includegraphics[width=0.95\textwidth]{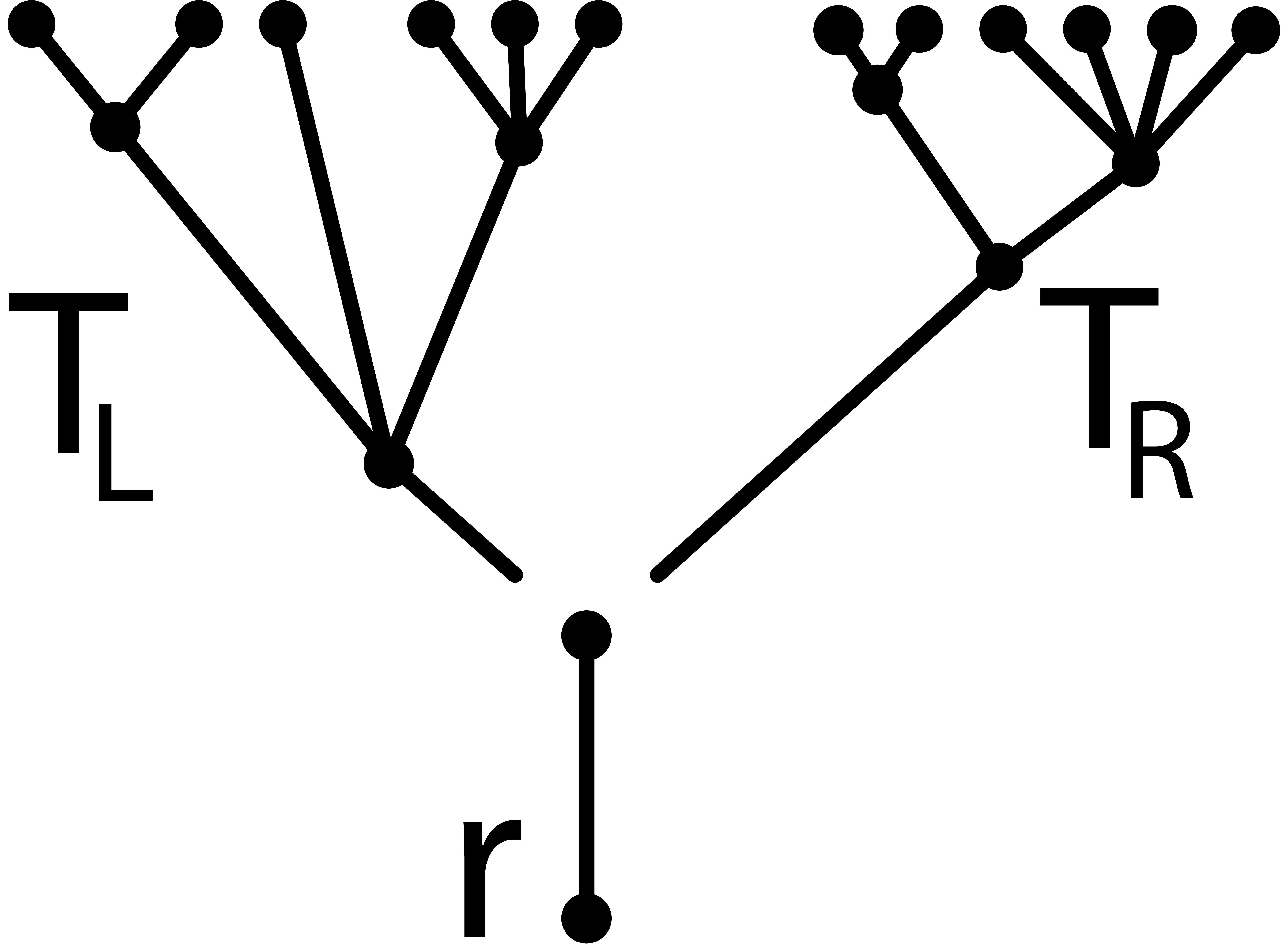}

\end{minipage}

Alternatively, $\sgn(T)$ is the sign of the shuffle permutation of the non-root nodes of $T$ required to transform the standard ordering    into the ordering which first lists all leaves  and then lists all internal nodes, where these two sets are internally still ordered as in the standard order.
\vspace{4pt}

Every multilinear  monomial $w$ in letters  $\{c_s \}_{s\in S}$ canonically gives rise to a binary planar $S$-labelled tree $T_w$: if $w = c_s$ is a letter, then $T_w$ has a root and a leaf labelled by $s$. If $w=[u,v]$, then $T_w$ is obtained by first identifying the roots of $T_u$ and  $T_v$, then adding a new minimal element, and finally placing $T_u$ to the left of $T_v$. We can now attach homology classes to  words:\vspace{2pt}
\begin{definition}\label{gammamaps}
\mbox{Given a multilinear monomial $w$ in letters $\{c_s \}_{s\in S}$,  define a map $\gamma_w $ in $\hSp$ as} 
$$ S^{0} \xrightarrow{\sgn(T_w)} \ S^0 {\simeq} \    \Map_{\mathbf{Sp}}(S^1,S^{|S|}) \wedge \DD(  S^{|S|-1})\xrightarrow{b_{T_w}\wedge  \DD( a_{T_w}) } \Map_{\mathbf{Sp}}(S^1,(S^1)^{\wedge S})  \wedge  \DD(\Sigma  |\Pi_S|^\diamond)  = \mathbf{Lie}_S.  $$ 
We define $x_w \in {\widetilde{\HH}}_0(\mathbf{Lie}_S,\ZZ)$  by pushing forward the fundamental class in $ \widetilde{\HH}_{0}(S^0,\ZZ)$ along $\gamma_w$.
\end{definition} 
If $h\in \Sigma_S$ and $w\in \Lierep_S$ is represented by a  multilinear monomial, we observe that  $\gamma_{h\cdot w} = \mathbf{Lie}_h \circ \gamma_w$.\vspace{4pt}

\begin{proposition}\label{propc}
If $w_d\in \Lierep_{\mathbf{d}}$  and $v_1\in \Lierep_{S_1}, \ldots , v_d \in \Lierep_{S_d}$ are represented by multilinear  monomials, then we can form the element   $w_d(v_1,\ldots,v_d)\in \Lierep_S$ for $S = \coprod_{i\in \mathbf{d}} S_i$ by substitution. 
The following diagram commutes \vspace{-3pt}in $\hSp$:  
\begin{diagram}
S^0 & \rTo^{\gamma_w \wedge \gamma_{v_1} \wedge \ldots \wedge \gamma_{v_d} \ \  } & &  \mathbf{Lie}_{\mathbf{d}}\wedge  \mathbf{Lie}_{S_1}\wedge  \ldots \wedge\mathbf{Lie}_{S_d} \\
 & \rdTo_{\gamma_{w(v_1,\ldots,v_d)}}(3,2) &  & \dTo_\mu \\
  & &  & \mathbf{Lie}_{S}
\end{diagram}
\end{proposition}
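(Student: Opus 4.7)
The plan is to verify the diagram by unfolding both composites and comparing their three constituents separately: the sign prefactor, the sphere map $b_T$, and the collapse map $a_T$. First I would identify the tree $T := T_{w_d(v_1,\ldots,v_d)}$ as precisely the \emph{grafted} tree obtained from $T_{w_d}$ by attaching each $T_{v_i}$ onto the $i$-th leaf of $T_{w_d}$ (identifying the root of $T_{v_i}$ with this leaf). This is immediate from the recursive rule $T_{[u,v]} = r \ast (T_u \sqcup_{\text{root}} T_v)$ applied to the substitution $w_d(v_1,\ldots,v_d)$.

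For the sphere factor: since the leaves of $T$, read from left to right, partition naturally as the leaves of $T_{v_1}$, then $T_{v_2}$, and so on, the map $b_T : S^{|S|} \to (S^1)^{\wedge S}$ factors as the smash product $b_{T_{w_d}} \wedge b_{T_{v_1}} \wedge \cdots \wedge b_{T_{v_d}}$ followed by the ``plug-in'' composition $\Map_{\mathbf{Sp}}(S^1,(S^1)^{\wedge \mathbf{d}}) \wedge \bigwedge_i \Map_{\mathbf{Sp}}(S^1,(S^1)^{\wedge S_i}) \to \Map_{\mathbf{Sp}}(S^1,(S^1)^{\wedge S})$ built into $\mu$. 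This matches the left-hand factor of $\mu$ on the nose.

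For the collapse factor: I would check that a weighting of $T$ with all positive edge weights is equivalent, after a canonical rescaling, to the combined data of a weighting of $T_{w_d}$ together with normalized weightings of each $T_{v_i}$; explicitly, the weight on the $i$-th leaf edge of $T_{w_d}$ corresponds to the total weight of any root-to-leaf path through the grafted copy of $T_{v_i}$. Passing to one-point compactifications of the open cells $(\Sigma|\Pi_{(-)}|^\diamond)_{T_{(-)}}$ and dualizing, this identifies $a_T$, up to a combinatorial sign, with the composite $u_\phi$ followed by $a_{T_{w_d}} \wedge a_{T_{v_1}} \wedge \cdots \wedge a_{T_{v_d}}$, recovering the right-hand factor in the definition of $\mu$.

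The main obstacle will be matching signs. Two contributions arise: the Koszul signs produced when reordering the smash factors in the definition of $\mu$ (moving each $\mathbb{D}(\Sigma|\Pi_\bullet|^\diamond)$ past the $\Map_{\mathbf{Sp}}(S^1,(S^1)^{\wedge \bullet})$ terms), and the rescaling sign relating the top cell of $w(T)/\partial w(T)$ to the smash product of the top cells of $w(T_{w_d})/\partial w(T_{w_d})$ and the $w(T_{v_i})/\partial w(T_{v_i})$. Using the alternative shuffle description of $\sgn(T)$ --- as the sign of the permutation rearranging the non-root nodes from the standard order into the ``leaves-first, internal-nodes-second'' order --- an induction on the syntactic structure of $w_d$ shows that $\sgn(T) = \sgn(T_{w_d}) \cdot \prod_i \sgn(T_{v_i})$ multiplied by exactly the sign needed to cancel the other two contributions. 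This combinatorial identity is the technical heart of the argument; once it is in hand, the three factors assemble to give equality of the two maps $S^0 \to \mathbf{Lie}_S$ in $\hSp$.
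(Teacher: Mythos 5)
Your proposal is correct and follows essentially the same route as the paper's proof: identify $T_{w_d(v_1,\ldots,v_d)}$ as the grafted tree, establish that the sphere map $b_T$ and the collapse map $a_T$ each factor through the corresponding product (the latter via the ungrafting map, with weightings matching because ungrafting preserves the defining ratios), and then reduce to the sign identity $\sgn(T_{w_d})\prod_i\sgn(T_{v_i})\cdot\sgn(\tau)=\sgn(T_{w_d(v_1,\ldots,v_d)})$, which the paper verifies directly using the same shuffle characterisation of $\sgn(T)$ you invoke.
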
  \vspace{0pt}
\begin{proof} We will first  explain the following two squares:
$$\hspace{-15pt}
\begin{diagram}
\Sigma |\Pi_S|^\diamond & \rTo & & \Sigma|\Pi_{\mathbf{d}}|^\diamond \wedge \Sigma|\Pi_{S_1}|^\diamond \wedge \ldots \wedge  \Sigma|\Pi_{S_d}|^\diamond \\
\dTo^{a_{T_{w(v_1,\ldots,v_d)}}} && & \dTo^{a_{T_w} \wedge a_{T_{v_1}} \wedge \ldots \wedge  a_{T_{v_d}} } \\ 
S^{|S|-1}& \rTo  & & S^{|\mathbf{d}|-1} \wedge S^{|S_1|-1}\wedge\ldots \wedge  S^{|S_d|-1}
\end{diagram} \ \ \ \ \ \ \ \ \ \  \ \ \  \begin{diagram}
S^{|S_1|}\wedge \ldots \wedge S^{|S_d|} & \rTo & S^{|S|} \\
\dTo_{b_{T_{v_1}}\wedge \ldots \wedge b_{T_{v_d}}} & & \dTo_{b_{T_{w(v_1,\ldots,v_d)}}} \\
(S^1)^{\wedge S_1 }\wedge \ldots \wedge (S^1)^{\wedge S_d} & \rTo & (S^1)^{\wedge S} 
\end{diagram}$$
We begin with the left square. Its top horizontal map is given by tree ungrafting (cf.\ Definition \ref{spectralliedef}). The lower horizontal map  is induced by the shuffle permutation of the internal nodes of $T_{w(v_1,\ldots,v_d)}$ which changes the standard order to the order that firsts lists the $|d|-1$ internal nodes of $T_w\subset T_{w(v_1,\ldots,v_d)}$ (in standard order), then the $|S_1|-1$ internal nodes of  $T_{v_1} \subset T_{w(v_1,\ldots,v_d)}$ (in standard order), and so on. Since ungrafting preserves the ratios appearing in the definition of the maps $a_T$ above, the left square commutes

We move on to the right square above. Its lower map is canonical. The top horizontal map rearranges the various $S^{|S_i|}$'s so that the corresponding trees $T_{v_i} $ appear from left to right in  $ T_{w(v_1,\ldots,v_d)}$.  
Applying $\DD( -)$ to the first square and $S^{-1} \wedge (-)$ to the second,  {smashing both, and rearranging the terms gives  a commutative square}\vspace{3pt}
\begin{diagram}
  \Map_{\mathbf{Sp}}(S^1,S^{|\mathbf{d}|}) \wedge S^{1-|\mathbf{d}|}  \wedge \Map_{\mathbf{Sp}}(S^1,S^{|S_1|}) \wedge S^{1-|S_1|} \wedge  \ldots   & & \rTo && \Map_{\mathbf{Sp}}(S^1,S^{|S| })  \wedge S^{1-|S|} \\
\dTo   & &&& \dTo \\ 
 \Map_{\mathbf{Sp}}(S^1,S^{\wedge \mathbf{d}})\wedge \DD(\Sigma |\Pi_{\mathbf{d}}|^\diamond) \wedge  \Map_{\mathbf{Sp}}(S^1,(S^1)^{\wedge S_1 }) \wedge \DD(\Sigma |\Pi_{S_1}|^\diamond) \wedge  \ldots  & & \rTo && \Map_{\mathbf{Sp}}(S^1, (S^1)^{\wedge S})\wedge \DD(\Sigma  |\Pi_S|^\diamond)   
\end{diagram}\vspace{3pt}

The ``down-right'' path of this diagram is   $ \mu \circ( \gamma_w \wedge \gamma_{v_1} \wedge \ldots \wedge \gamma_{v_d}) \circ (\sgn(T_w)\sgn(T_{v_1})\ldots \sgn(T_{v_d}))$.

Assume now that the letters $c_1,\ldots,c_d$ appear in the order $c_{\sigma(1)},\ldots,c_{\sigma(d)}$ in the word $w$ .
The top path $S^0\rightarrow S^0$ of this diagram is then homotopy equivalent to the map \vspace{-5pt}
$$ S^{-1} \wedge S^{1-|\mathbf{d}|} \wedge (S^{|S_{\sigma(1)}|} \wedge S^{1-|S_{\sigma(1)}|}) \wedge \ldots \wedge (S^{|S_{\sigma(d)}|} \wedge S^{1-|S_{\sigma(d)}|})\ \ \  \longrightarrow \ \ \   S^{-1} \wedge S^{|S|} \wedge S^{1-|S|} \vspace{-5pt}$$ 
which shuffles negative spheres other than the leftmost $S^{-1}$ to the right (while preserving their internal order). The sign of this map is equal to the sign of a corresponding permutation $\tau$ of the nodes of $T_{w(v_1,\ldots,v_d)}$: internal nodes corresponds to a copies of $S^{-1}$ and leaves \mbox{correspond to copies of $S^{1}$.}

We can now observe that $\sgn(T_w)\sgn(T_{v_1})\ldots \sgn(T_{v_d}) \sgn(\tau) = \sgn(T_{w(v_1,\ldots,v_n)})$ since both are equal to the sign of the permutation of the non-root nodes of $T_{w(v_1,\ldots,v_n)}$ which moves all leaves to the left while preserving the internal standard order of leaves and internal nodes. \vspace{-5pt}
\end{proof}

\begin{corollary} \label{thisco}
If $v_1,v_2,v_3$ are monomials in pairwise disjoint sets of letters $S_1,S_2,S_3$, then \vspace{-5pt}$$\gamma_{[v_1,v_2]} = -\gamma_{[v_2,v_1]} \ \ \ \ \mbox{ and } \ \ \ \ \gamma_{[v_1,[v_2,v_3]]} + \gamma_{[v_2,[v_3,v_1]]} + \gamma_{[v_3,[v_1,v_2]]} = 0 \ .  \vspace{-5pt}$$
\end{corollary}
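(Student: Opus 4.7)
The plan is to reduce both identities to the case of single letters via Proposition~\ref{propc} and then verify them in the two base cases $|S|=2$ and $|S|=3$ using the $\Sigma_S$-equivariance of $\gamma$. Indeed, Proposition~\ref{propc} gives $\gamma_{[v_1,v_2]} = \mu\circ(\gamma_{[c_1,c_2]}\wedge \gamma_{v_1}\wedge \gamma_{v_2})$ and $\gamma_{[v_2,v_1]} = \mu\circ(\gamma_{[c_2,c_1]}\wedge \gamma_{v_1}\wedge \gamma_{v_2})$, with analogous formulas for the three cyclically rotated brackets appearing in the Jacobi relation. Since the induced operadic composition on $\widetilde{\HH}_0$ is multilinear in each slot, it suffices to prove
\[
\gamma_{[c_1,c_2]} + \gamma_{[c_2,c_1]} = 0 \text{ in } \widetilde{\HH}_0(\mathbf{Lie}_{\mathbf 2}) \text{ and } \gamma_{[c_1,[c_2,c_3]]} + \gamma_{[c_2,[c_3,c_1]]} + \gamma_{[c_3,[c_1,c_2]]} = 0 \text{ in } \widetilde{\HH}_0(\mathbf{Lie}_{\mathbf 3}).
\]

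Both base cases follow from the equivariance $\gamma_{h\cdot w} = \mathbf{Lie}_h\circ \gamma_w$ recorded right after Definition~\ref{gammamaps}, combined with a short computation of the $\Sigma_S$-action on $\widetilde{\HH}_0(\mathbf{Lie}_S)$. For antisymmetry, equivariance yields $\gamma_{[c_2,c_1]} = (12)\cdot\gamma_{[c_1,c_2]}$. Since $|\Pi_2|$ is empty, the $\Sigma_2$-action on $\Sigma|\Pi_2|^\diamond = S^1$ is trivial, so the only source of a non-trivial $\Sigma_2$-action on $\widetilde{\HH}_0(\mathbf{Lie}_{\mathbf 2})\cong \ZZ$ is the permutation of smash factors in $\Map_{\mathbf{Sp}}(S^1,(S^1)^{\wedge 2})$, which acts by the sign on top-dimensional homology. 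Hence $\widetilde{\HH}_0(\mathbf{Lie}_{\mathbf 2})\cong \sgn_2$ and $(12)\cdot x = -x$ for every $x$, giving the first identity.

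For Jacobi, the three cyclic rotations of $[c_1,[c_2,c_3]]$ correspond to applying $1,(123),(132)\in \Sigma_3$, so by equivariance the identity reduces to showing that the $A_3$-norm $1+(123)+(132)$ annihilates $\widetilde{\HH}_0(\mathbf{Lie}_{\mathbf 3})$. The poset $\Pi_3$ consists of three isolated points on which $\Sigma_3$ acts by its standard permutation action, so $|\Pi_3|^\diamond$ is homeomorphic to a theta graph and $\widetilde{\HH}^1(|\Pi_3|^\diamond)\cong \ZZ^3/\ZZ(1,1,1)$ is the standard $2$-dimensional representation $V$ of $\Sigma_3$. A K\"unneth computation combined with the sign action of $\Sigma_3$ on $\widetilde{\HH}_3((S^1)^{\wedge 3})$ produces a $\Sigma_3$-equivariant isomorphism $\widetilde{\HH}_0(\mathbf{Lie}_{\mathbf 3})\cong \sgn_3\otimes V$. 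Since $A_3$ acts trivially on $\sgn_3$, the $A_3$-norm acts on this tensor product solely through its action on $V$, and on $V$ we compute
\[
\bigl(1+(123)+(132)\bigr)\cdot\overline{(a,b,c)} \;=\; \overline{(a,b,c)} + \overline{(c,a,b)} + \overline{(b,c,a)} \;=\; (a+b+c)\cdot\overline{(1,1,1)} \;=\; 0 .
\]

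The main obstacle is representation-theoretic bookkeeping: distributing the $\Sigma_S$-action correctly between the sign-carrying factor $\Map_{\mathbf{Sp}}(S^1,(S^1)^{\wedge S})$ and the Spanier--Whitehead dual of $\Sigma|\Pi_S|^\diamond$, and identifying $\widetilde{\HH}^1$ of the theta graph as the standard representation. Once these identifications are in place, the two relations are forced by pure representation theory together with the operadic compatibility of $\gamma$ from Proposition~\ref{propc}.
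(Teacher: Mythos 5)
Your proposal is correct, and the reduction to single letters via Proposition~\ref{propc} together with the $\Sigma_2$-sign argument for antisymmetry is exactly what the paper does. Where you diverge is the Jacobi relation: the paper, after reducing (as you do) to showing that $1+\sigma+\sigma^2$ kills $\gamma_{[c_1,[c_2,c_3]]}$ in $\pi_0(\mathbf{Lie}_{\mathbf 3})$, invokes an external result (Proposition~5.2 of~\cite{camarena2016mod}) asserting that $1+\sigma+\sigma^2$ is null-homotopic as a self-map of $\DD(\Sigma|\Pi_3|)$. You instead identify $\pi_0(\mathbf{Lie}_{\mathbf 3})\cong \sgn_3\otimes\widetilde{\HH}^1(|\Pi_3|^\diamond)$ directly from the definition (using that $|\Pi_3|^\diamond$ is the unreduced suspension of the three-element discrete $\Sigma_3$-set, hence a bipartite graph homotopy equivalent to $S^1\vee S^1$ with $\widetilde{\HH}^1\cong\ZZ^3/\ZZ(1,1,1)$), and observe that the $A_3$-norm visibly annihilates that $\Sigma_3$-module. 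Since $\mathbf{Lie}_{\mathbf 3}$ is a wedge of $0$-spheres, showing the norm vanishes on $\pi_0$ is equivalent to showing the composite with $\gamma_{[c_1,[c_2,c_3]]}:S^0\to\mathbf{Lie}_{\mathbf 3}$ is zero, so your lighter computation does suffice for this corollary. The trade-off: the citation in the paper records a (slightly stronger) fact about the actual spectrum map that is used elsewhere, whereas your representation-theoretic check is self-contained and avoids an external dependence. One tiny caveat: there are two integrally inequivalent forms of the rank-two ``standard'' $\Sigma_3$-module (kernel vs.\ cokernel of the augmentation), and a careful chase shows $\widetilde{\HH}^1(|\Pi_3|^\diamond)$ is the cokernel form $\ZZ^3/\ZZ(1,1,1)$, as you state; but the $A_3$-norm vanishes on both forms, so the conclusion is insensitive to this distinction.
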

\begin{proof}
For the first claim,  we use Proposition \ref{propc} for $w=[c_1,c_2]$  and $w=[c_2,c_1]$ to see that  $\gamma_{[v_1,v_2]} $ and $\gamma_{[v_2,v_1]} $ agree up to precomposition by the  map $\mathbf{Lie}_{\mathbf{2}} \rightarrow \mathbf{Lie}_\mathbf{2}$ induced by the nontrivial permutation in $\Sigma_2$. This map is homotopic to multiplication by $(-1)$.

For the second claim, we use  Proposition \ref{propc} for the words $w=[c_1,[c_2,c_3]]$,  $w=[c_2,[c_3,c_1]]$, and $w=[c_3,[c_1,c_2]]$ to observe that it suffices to show that    the map 
$\Map_{\mathbf{Sp}}(S^1,S^3) \wedge \DD(\Sigma |\Pi_3|) \xrightarrow{1+\sigma+\sigma^2}  \Map_{\mathbf{Sp}}(S^1,S^3) \wedge  \DD(\Sigma |\Pi_3|)  
$ is zero, where $\sigma$ denotes the action map induced by the cycle $(1\ 2 \ 3) \in \Sigma_3$. Since this cycle has even sign, the induced maps $1,\sigma,\sigma^2: S^3\rightarrow S^3$ are in fact all homotopic, and it therefore suffices to prove that $\DD(\Sigma |\Pi_3|)   \xrightarrow{1+\sigma+\sigma^2}     \DD(\Sigma |\Pi_3|) 
$
is null. This fact was proven by the second-named author and Antol\'{i}n-Camarena and is written in Proposition $5.2.$ of \cite{camarena2016mod}\vspace{-5pt}
\end{proof} 
Proposition \ref{propc} and Corollary \ref{thisco} imply that the assignments $w\mapsto  \gamma_w \mapsto x_w$ from Definition \ref{gammamaps} descend  to  $\Sigma_n$-equivariant homomorphisms $\Lierep_n \rightarrow \pi_0(\mathbf{Lie}) \rightarrow \widetilde{\HH}_0(\mathbf{Lie},\ZZ)$ -- our later branching rule implies that they are in fact isomorphisms. 
Using Proposition \ref{propc}, we deduce:
\begin{corollary}\label{commutes}The maps \mbox{$w\mapsto \gamma_w \mapsto  x_w$ give  isomorphisms of operads $\Lierep \cong  \pi_0(\mathbf{Lie}) \cong {\widetilde{\HH}}_0(\mathbf{Lie}_{},\ZZ)$.}\vspace{2pt}
\end{corollary}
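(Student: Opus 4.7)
The corollary bundles together three assertions: that the assignments $w \mapsto \gamma_w$ and $\gamma_w \mapsto x_w$ descend to well-defined $\Sigma_n$-equivariant homomorphisms on $\Lierep_n$, that they are isomorphisms of abelian groups, and that they intertwine the operadic compositions. The paragraph immediately before the corollary already establishes the first assertion: equivariance is the identity $\gamma_{h\cdot w} = \mathbf{Lie}_h \circ \gamma_w$ noted after Definition \ref{gammamaps}, while antisymmetry and Jacobi in $\Lierep$ are respected thanks to Corollary \ref{thisco}. Bijectivity is explicitly deferred to the branching rule (Theorem \ref{main}), and I would take it as a given input from later in the paper. Thus the substantive content of the corollary lies in the operadic compatibility.

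My plan for operadic compatibility is to reduce it to Proposition \ref{propc}. The operadic composition on $\Lierep$ attached to a bijection $\phi : \coprod_{i \in D} S_i \xrightarrow{\cong} S$ is, by construction, substitution of multilinear monomials: it sends $(w_d, v_1, \ldots, v_d)$ to $w_d(v_1, \ldots, v_d) \in \Lierep_S$. Both this composition and the one on $\pi_0(\mathbf{Lie})$ induced from $\mu$ are multilinear and $\Sigma$-equivariant, so it is enough to verify compatibility on pure tensors of multilinear monomials. This is precisely the commutative triangle $\gamma_{w_d(v_1,\ldots,v_d)} = \mu \circ (\gamma_{w_d} \wedge \gamma_{v_1} \wedge \cdots \wedge \gamma_{v_d})$ established in Proposition \ref{propc}. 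Postcomposing with the Hurewicz map $\pi_0 \to \widetilde{\HH}_0$, which is an isomorphism because $\mathbf{Lie}_n$ is a connective wedge of sphere spectra, then transports the same identity to the classes $x_w$. Hence both $\Lierep \to \pi_0(\mathbf{Lie})$ and $\Lierep \to \widetilde{\HH}_0(\mathbf{Lie}, \ZZ)$ are maps of operads, and combined with the bijectivity input this yields isomorphisms of operads.

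The main (rather mild) obstacle I anticipate is matching up conventions: specifically, checking that the $\Sigma_d$-action on $\Lierep_d$ coming from relabelling substitution slots coincides with the $\Sigma_d$-equivariance of the operadic map $\mathbf{Lie}_D \wedge \bigwedge_{i \in D} \mathbf{Lie}_{S_i} \to \mathbf{Lie}_S$ built from the ungrafting maps, and that the signs appearing in the construction of $\gamma_w$ via $\sgn(T_w)$ are consistent with the Koszul signs when factors in $\Map_{\mathbf{Sp}}(S^1, -) \wedge \DD(-)$ are rearranged. Both issues were in effect already handled inside the proof of Proposition \ref{propc}, so no genuinely new calculation is required to upgrade from the single-composition identity of that proposition to the full operadic statement.
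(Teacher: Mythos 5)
Your proposal is correct and follows essentially the same route as the paper: well-definedness of the $\Sigma_n$-equivariant homomorphisms $\Lierep_n\to\pi_0(\mathbf{Lie}_n)\to\widetilde{\HH}_0(\mathbf{Lie}_n,\ZZ)$ via the equivariance identity together with Corollary \ref{thisco}, bijectivity deferred to the branching rule, and operadic compatibility read off from the commutative triangle in Proposition \ref{propc} (applied to multilinear monomials and then extended by linearity and equivariance). The paper states this argument even more tersely—just as a two-line assembly of \ref{propc}, \ref{thisco}, and the branching rule—so your write-up is a faithful, slightly more explicit unpacking of the same proof.
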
 

Hence, the homology and homotopy groups of  spectral Lie algebras form \textit{graded Lie algebras}, i.e. \mbox{$\ZZ$-graded} abelian groups $\mathfrak{g}_\ast$ with a bilinear product $[-,-]:\mathfrak{g}_i \times \mathfrak{g}_j \rightarrow \mathfrak{g}_{i+j}$ such that for all homogeneous \mbox{ $u,v,w$,} the  graded antisymmetry relation $[u,v] = -(-1)^{\deg(u)\deg(v)}[v,u]$ and the graded Jacobi identity
$(-1)^{\deg(u)\deg(w)} [u,[v,w]] + (-1)^{\deg(w)\deg(v)}[w,[u,v]] +(-1)^{\deg(v)\deg(u)} [v,[w,u]] = 0$ hold true.
Equivalently, graded Lie algebras  are algebras over the algebraic Lie operad $\{\Lierep_S\}$ in the category of $\ZZ$-graded abelian groups (endowed with the Koszul symmetric monoidal structure).

An easy variation of our arguments establishes the corresponding statements for the shifted spectral Lie operad (whose $n^{th}$ term is given by $\DD(\Sigma |\Pi_n|^\diamond)$) and shifted Lie algebras.\vspace{-5pt}

\subsection{Collapse and Ungrafting} \label{LAPC1}
Let $x$ be a proper nontrivial partition of the finite set $S$ given by $S=\coprod_{S_i\in \mathbf{d}_x}S_i$, where $\mathbf{d}_x$ denotes the set of classes of $x$. We observe evident {isomorphisms of posets $(\mathcal{P}_S)_{[\hat{0},x]} \cong \prod_{S_i \in \mathbf{d}_x} \mathcal{P}_{S_i}$ and $(\mathcal{P}_S)_{[x,\hat{1}]} \cong \mathcal{P}_{\mathbf{d}_x}$.} We consider the following two maps:

\begin{enumerate}[leftmargin=26pt]
\item \hspace{-2pt} The map $\nu_x: \Sigma |\Pi_S|^\diamond \rightarrow\Sigma |(\Pi_S)_{(\hat{0},x )}|^\diamond \wedge \Sigma |(\Pi_S)_{(x,\hat{1})}|^\diamond \rightarrow   \Sigma |\Pi_{ \mathbf{d}_x }|^\diamond \wedge  ( \bigwedge_{S_i\in \mathbf{d}_x} \Sigma |\Pi_{S_i}|_{}^\diamond  )   $ uses the   collapse map for $x$ (cf.\ Section \ref{clong}) and the product map for $(\Pi_S)_{(\hat{0},x)}$  \mbox{(cf.\ Proposition \ref{clock}).}
\item The ungrafting map $ u_x: \Sigma |\Pi_S|^\diamond  \rightarrow   \Sigma |\Pi_{ \mathbf{d}_x }|^\diamond \wedge  ( \bigwedge_{S_i\in \mathbf{d}_x} \Sigma |\Pi_{S_i}|_{}^\diamond  )$ corresponding to the bijection $\phi:\coprod_{S_i\in \mathbf{d}_x}S_i = S$ (cf.\ Definition \ref{spectralliedef}).\vspace{3pt}
\end{enumerate}
\begin{proposition}\label{propbb}
There is a homotopy $   {K}_x: [0,1]\times \Sigma |\Pi_S|^\diamond \longrightarrow  \Sigma |\Pi_{ \mathbf{d}_x }|^\diamond \wedge  ( \bigwedge_{S_i\in \mathbf{d}_x} \Sigma |\Pi_{S_i}|_{}^\diamond  ) $ from the collapse map  $ \nu_x$ to the ungrafting map $ u_x  $  for $x$. \vspace{2pt}

If $h \in \Sigma_S$, then $ {K}_{h \cdot x}(t,h \cdot T) = h\cdot  {K}_x(t,T) \in \Sigma |\Pi_{ \mathbf{d}_{h\cdot x} }|^\diamond \wedge  ( \bigwedge_{h(S_i)\in \mathbf{d}_{h\cdot x}} \Sigma |\Pi_{h(S_i)}|_{}^\diamond  ) $ for all  $t, T$. We have used the natural  action map $ 
\Sigma |\Pi_{ \mathbf{d}_{ x} }|^\diamond \wedge  ( \bigwedge_{S_i\in \mathbf{d}_{ x}} \Sigma |\Pi_{S_i}|_{}^\diamond  ) \xrightarrow{h\cdot} \Sigma |\Pi_{ \mathbf{d}_{h\cdot x} }|^\diamond \wedge  ( \bigwedge_{h(S_i)\in \mathbf{d}_{h\cdot x}} \Sigma |\Pi_{h(S_i)}|_{}^\diamond  )$. 
\end{proposition}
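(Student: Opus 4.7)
The plan is to construct $K_x$ explicitly on the tree model for $\Sigma |\Pi_S|^\diamond$ described in Section \ref{section: lie to partitions}. I would begin by translating both maps into operations on weighted $S$-labelled trees: a point corresponds to a weighted tree $T$ with total distance $1$ from the root to each leaf, and both $\nu_x$ and $u_x$ produce a point in the common target by splitting $T$ at the partition $x$ into a weighted $\mathbf{d}_x$-labelled tree $T'$ together with weighted $S_i$-labelled trees $\{T_i\}_{S_i\in \mathbf{d}_x}$.

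The essential observation is that the two maps differ only in which partitions of $S$ they regard as visible splits of $T$. The collapse map $\nu_x$ gives a non-basepoint output only when $x$ itself appears in the horizontal chain of partitions read off from $T$, equivalently when the partitioning vertices $\{v_i^-<v_i\}_{i\in\mathbf{d}_x}$ all sit at a common height; the explicit formula of Section \ref{clong} combined with the product decomposition $(\mathcal{P}_S)_{(\hat{0},x)} \cong \prod_{S_i} \mathcal{P}_{S_i}$ shows that in this case it renormalizes the sub-weights above and below $x$ separately to sum to $1$. The ungrafting map $u_x$ is non-basepoint whenever the partitioning vertices merely exist (possibly at different heights), and renormalizes each of $T'$ and the $T_i$ individually. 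When the $v_i^-$ happen to lie at a common height, the two outputs coincide.

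The homotopy $K_x$ is then obtained by linearly sliding the partitioning vertices to a common height. Concretely, if $h_i$ denotes the original height of $v_i^-$ in $T$ and $\bar h := \max_i h_i$, I would set the $t$-time height of the $i$-th partitioning vertex to $(1-t)\bar h + t\, h_i$, rescaling the weights on the subtree above $v_i^-$ and on the path from the root to $v_i^-$ so that all root-to-leaf distances remain $1$. At $t=1$ one recovers $u_x$; at $t=0$ all partitioning vertices coincide at height $\bar h$, and a direct comparison with the collapse-map formulas of Section \ref{clong} yields $\nu_x$. When $u_x(T)$ is already the basepoint (no partitioning vertices exist), $\nu_x(T)$ is also the basepoint and we take $K_x(t,T)$ to be the constant basepoint.

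The main obstacle will be continuity at the boundary, namely on trees with some vanishing weights, trees where partitioning vertices merge or where some $h_i$ approaches $0$, and trees whose partitioning vertex at height $\bar{h}$ is being pushed through a non-partitioning internal node as $t\to 0$. In each such limiting case one must check that the interpolated weighted trees either converge to a tree in the non-basepoint locus whose image agrees on both sides of the limit, or collapse to the basepoint of the target. Once these degeneration checks are in place, equivariance $K_{h\cdot x}(t, h\cdot T) = h\cdot K_x(t, T)$ is immediate from the manifestly natural construction, since both the choice of partitioning vertices and the height rescaling depend only on the combinatorial data of $(T,x)$ and respect relabelling of leaves by $h \in \Sigma_S$.
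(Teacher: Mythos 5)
Your intermediate claim --- that $\nu_x$ and $u_x$ coincide when the $v_i^-$ lie at a common height --- is false, and this is fatal to the proposed homotopy. The two maps produce the \emph{same} upper trees $T_i$ in that case (the subtree above $v_i^-$ coincides with the subtree above the $m$-line when $f_i=m$ for all $i$), but they produce \emph{different} lower trees $T'$. The collapse map $\nu_x$ restricts $T$ below the $M$-line, where $M=\min_i d(r,v_i)$, and rescales the whole lower tree by $1/M$, so non-partitioning edges of weight $w$ become $w/M$. The ungrafting map $u_x$ keeps non-partitioning edges at their \emph{original} weights and only stretches the partitioning edges to length $1-f_i$. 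These only agree when $M=1$, i.e., when all $v_i$ are leaves. For a concrete counterexample take $S=\{1,2,3,4\}$, $x=\{12|34\}$, and the tree coming from the chain $[\hat0<\{12|3|4\}<\{12|34\}<\hat1]$ with all four weights $t_0,t_1,t_2,t_3>0$: here $v_1^-=v_2^-$ is literally the same node at height $t_3$, so your sliding does nothing and gives $u_x(T)$ at both $t=0$ and $t=1$; but $\nu_x(T)$ has lower tree with edge weights $\bigl(\tfrac{t_3}{t_2+t_3},\tfrac{t_2}{t_2+t_3}\bigr)$ while $u_x(T)$ has $(t_3,\,1-t_3)$.

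There is also a secondary inconsistency: you propose to rescale ``the path from the root to $v_i^-$'' so that its length changes from $h_i$ to $(1-t)\bar h + t h_i$, but these paths share edges for different $i$, and the required scaling factors $((1-t)\bar h + t h_i)/h_i$ disagree on shared edges whenever the $h_i$ differ. The paper avoids both problems by running the deformation in the opposite direction: it leaves $T$ undeformed at $t=0$ (so $K_x(0,T)=\nu_x(T)$ is automatic), scales \emph{all} edges below every $v_i^-$ by a single $i$-independent factor $(1-t)+tM_1$, compensates on the partitioning edges, and chooses $a=\min_i\frac{e_i}{1-f_i}$, $b=\max_i f_i$, $M_1=\frac{a}{1+ab-b}$, $m_1=bM_1$ precisely so that at $t=1$ every $v_i^-$ sits at height $\le m_1$ and every $v_i$ at height $\ge M_1$; one then \emph{computes} that applying $\nu_x$'s cut-and-rescale at $[m_1,M_1]$ to this deformed tree reproduces $u_x(T)$. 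The genuine content is in that computation and the continuity analysis at degenerate trees, not in a slide-to-common-height picture. I recommend redoing the construction in the paper's direction and, before attempting continuity, verifying the identity $K_x(1,T)=u_x(T)$ explicitly in the four-element example above.
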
 \vspace{-2pt}
\begin{proof}
We begin by describing the map $\nu_x$ explicitly. Let $T$ be a weighted $S$-labelled tree in $\Sigma |\Pi_S|^\diamond$. \vspace{-10pt}

\begin{minipage}{0.65 \textwidth} 
Assume  that  cutting $T$ along edges $\{(v_i^- \prec v_i)\}_{S_i\in \mathbf{d}_x}$ partitions $S$ as $\coprod_{S_i\in \mathbf{d}_x}S_i$ and that for $m=\max_i d(r,v_i^-)$  and $M=\min_i d(r,v_i)$, we have $m<M$; this is precisely what it means for $T$ to lie in the interior of a  simplex corresponding to a chain of \mbox{partitions $[ \ldots<x_{i-1}<x<x_{i+1}<\ldots ]$ containing $x$.}
 
 For each $S_i\in \mathbf{d}_x$,  we form an $S_i$-labelled weighted tree $T_i$ by 
\mbox{restricting to the corresponding component \mbox{above the ``$m$-line'',} adding a root   if necessary, and
 }

\end{minipage}
\begin{minipage}{0.35 \textwidth} \hspace{8pt} \ \vspace{ 15pt} \hspace{-10pt}
\includegraphics[width=0.95 \textwidth]{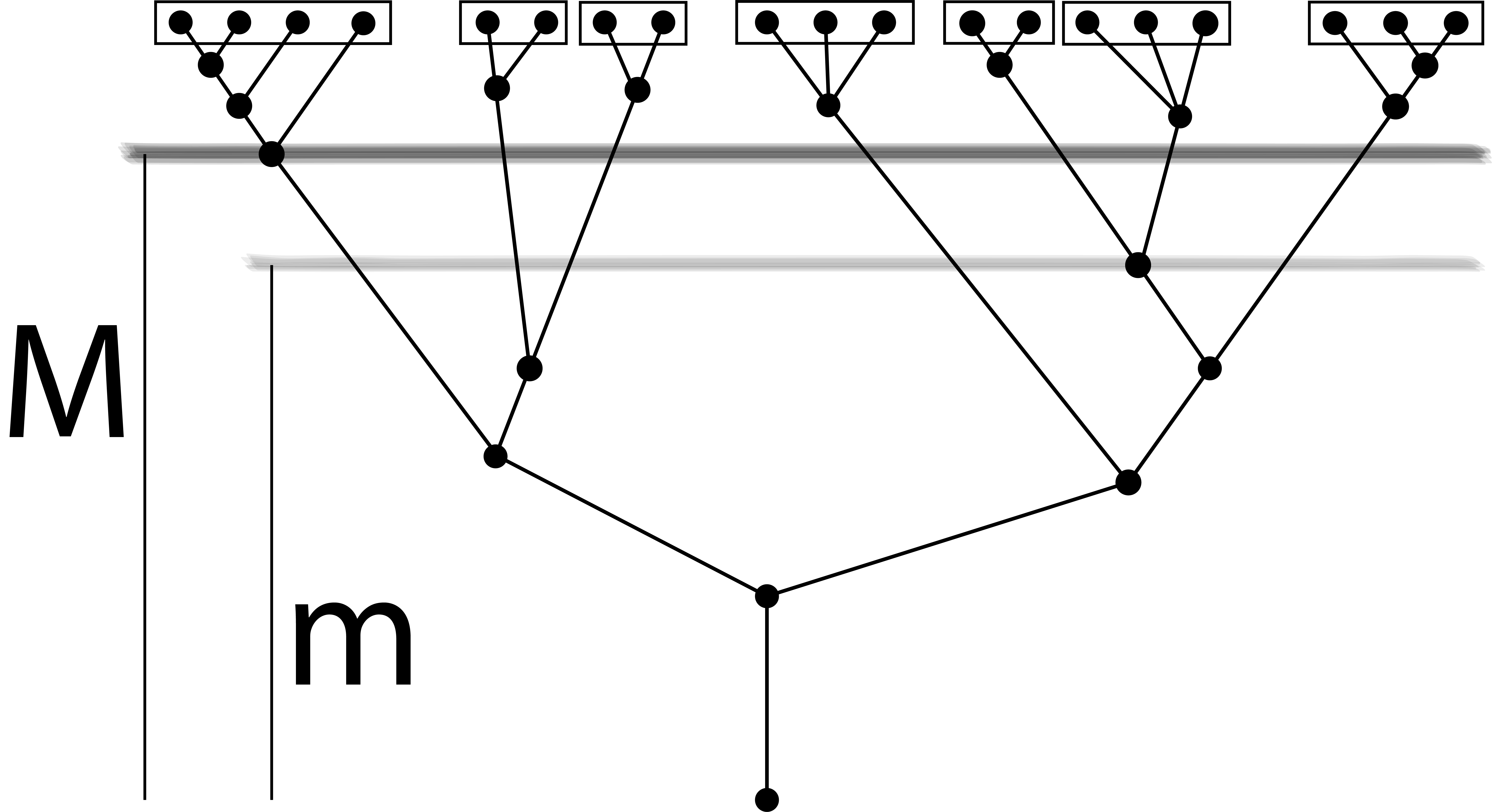}
\end{minipage} \vspace{-4pt}
\vspace{0pt}
 
then rescaling the weights. Form a weighted $\mathbf{d}_x$-labelled tree $T'$ by first restricting to the component below the ``$M$-line'', then adding leaves where necessary, and finally rescaling the weights. The collapse map $\nu_x$ from Section \ref{clong} then sends $T$ to $(T',\{ T_i \}_{S_i \in \mathbf{d}_x})$. 

If $m\geq M$ or there do not even exist such edges $(v_i^- \prec v_i)$,    then    $\nu_x$ sends $T$ to the basepoint.  \vspace{3pt}

Let $(\Sigma |\Pi_S|^\diamond)_{x}\subset \Sigma |\Pi_S|^\diamond $ be the open subspace of all weighted $S$-labelled trees for which cutting along some edges $\{(v_i^- \prec v_i)\}_{S_i\in \mathbf{d}_x}$ with  nonzero weights $\{e_i\}_{S_i\in \mathbf{d}_x}$ partitions  the set $S$ as \mbox{$S=\coprod_{S_i\in \mathbf{d}_x}S_i$}. The ungrafting map $u_x$  and the collapse  map  $\nu_x$  naturally factor through  maps $\ovB{u}_x, \ovB{\nu}_x: ( \Sigma |\Pi_S|^\diamond )/_{ \Sigma |\Pi_S|^\diamond - (\Sigma |\Pi_S|^\diamond)_{x}} \longrightarrow      \Sigma |\Pi_{\mathbf{d}_x}|^\diamond \wedge ( \bigwedge_{S_i\in \mathbf{d}_x} \Sigma |\Pi_{S_i}|_{}^\diamond  )  $. 
We shall now produce an explicit  homotopy from   $\ovB{\nu}_x$ to $\ovB{u}_x$ (and hence also from $\nu_x$ to $u_x$). 
 \vspace{-1pt}

Given an $S$-labelled weighted tree  $T \in (\Sigma |\Pi_S|^\diamond)_{x }$, we again write $e_i$ for the length of the edge $(v_i^- \prec v_i)$ and let  $f_i<1$ be the distance from the root $r$ of $T$  to $v_i^-$.  
 We set $\displaystyle a = \min_i \left( \frac{e_i}{1-f_i} \right)$ and $b =\displaystyle \max_i \left(  f_i \right)$. Moreover, we define $M_1 = \frac{a}{1+ab-b}$ and $m_1 = b M_1$; these   will
be the new heights of the $M$- and $m$-lines. \vspace{1pt} Note that $M_1\leq 1$.

Given some $t\in [0,1]$, we define a new $S$-labelled weighted tree $\ovA{H}_x(t,T)$ by taking the tree $T$ and:\vspace{-2pt}
\begin{itemize}[leftmargin=26pt]
\item Scaling the weights  of edges lying above the node $v_i$ by a factor of $(1-t) + t\frac{1-m_1}{1-f_i}$.
\item Giving the edge $(v_i^- \prec v_i)$ below $v_i$ the weight  
 $e_i +  t  \cdot \left(    m_1 - M_1f_i  +   e_i \frac{f_i-m_1}{1-f_i} \right) $. 
\item Scaling all edges  below some $v_i^-$ by a factor of $(1-t) + t M_1 \leq 1$.\vspace{1pt}
\end{itemize}
An easy check shows that summing up the numbers  in $\ovA{H}_x(t,T)$ along an increasing path from the root to any leaf \mbox{gives $1$.} To see that we have  
defined a valid weighting for all $t$, we need to show that all weights are nonnegative. This is clear for the first and last case in the definition of $\ovA{H}_x(t,T)$. For the middle case, it suffices to prove the claim for $t=1$. 
We observe that $1-(\frac{1-m_1}{1-f_i})(1-e_i-f_i) \geq M_1$ (with equality attained for at least one $i$) and that $M_1\cdot f_i  \leq m_1 $ (with equality attained for at least one $i$).
Hence the weight of $(v_i^- \prec v_i)$ in $\ovA{H}_x(1,T)$ is at least $M_1-m_1\geq 0$.

The assignment $(t,T)\mapsto \ovA{H}_x(t,T)$ is readily checked to be a well-defined and continuous function $[0,1]\times (\Sigma |\Pi_S|^\diamond)_{x } \rightarrow (\Sigma |\Pi_S|^\diamond)_{x }$.  Define  $\overline{K}_x: [0,1] \times  (\Sigma |\Pi_S|^\diamond)_{x}  \longrightarrow  ( \bigwedge_{S_i\in \mathbf{d}_x} \Sigma |\Pi_{S_i}|_{}^\diamond  )   \wedge \Sigma |\Pi_{\mathbf{d}_x}|^\diamond$ by 
postcomposing $\ovA{H}_x$ with  $(\Sigma |\Pi_S|^\diamond)_{x} \hookrightarrow ( \Sigma |\Pi_S|^\diamond )/_{ \Sigma |\Pi_S|^\diamond - (\Sigma |\Pi_S|^\diamond)_{x}} \xrightarrow{\overline{\nu}_x}     ( \bigwedge_{S_i\in \mathbf{d}_x }\Sigma |\Pi_{S_i}|_{}^\diamond  )   \wedge \Sigma |\Pi_{\mathbf{d}_x}|^\diamond$.

The function $\ovA{K}_x$ can  be continuously extended to a function  $$\ovA{K}_x: [0,1] \times ( \Sigma |\Pi_S|^\diamond )/_{ \Sigma |\Pi_S|^\diamond - (\Sigma |\Pi_S|^\diamond)_{x}} \longrightarrow  ( \bigwedge_{S_i\in \mathbf{d}_x }\Sigma |\Pi_{S_i}|_{}^\diamond  )   \wedge \Sigma |\Pi_{\mathbf{d}_x}|^\diamond. \vspace{-2pt} $$  
 
Indeed, let  \vspace{1pt}$(t^n,T^n)\in  [0,1] \times  (\Sigma |\Pi_S|^\diamond)_{x} \subset  [0,1] \times ( \Sigma |\Pi_S|^\diamond )/_{ \Sigma |\Pi_S|^\diamond - (\Sigma |\Pi_S|^\diamond)_{x}} $ \mbox{be a sequence such that} $t^n \rightarrow t$ and with $T^n\in (\Sigma |\Pi_S|^\diamond)_{x} $ \mbox{converging to the \vspace{1pt} basepoint in $( \Sigma |\Pi_S|^\diamond )/_{ \Sigma |\Pi_S|^\diamond - (\Sigma |\Pi_S|^\diamond)_{x}}$. \hspace{-1pt}We will}  show that   $\ovA{K}_x(t^n,T^n) = \ovB{\nu}_x(\ovA{H}_x(t^n,T^n))$  \vspace{1pt}converges to the basepoint in \mbox{$ (\bigwedge_{S_i\in \mathbf{d}_x }\Sigma |\Pi_{S_i}|_{}^\diamond  )   \wedge \Sigma |\Pi_{\mathbf{d}_x}|^\diamond$.}

\mbox{Using   notation from above, we let $e_i^n, f_i^n, a^n, b^n, m^n_1, M^n_1, $ be the various numbers attached to $T^n$. }\vspace{5pt}
 
In $\ovA{H}_x({t_n},T_n)$, the distance $M^n$ from the root $r$ to the lowest vertex $v_i$ is given by \vspace{-5pt}
$$M^n:=\min_i  \bigg( (1-t^n)(e_i^n+f_i^n) + t^n(m^n_1+\frac{e_i^n}{1-f_i^n}  (1-m_1^n))\bigg) \leq (1-t^n) +t^nM^n_1 \vspace{-5pt} $$
The distance from the root $r$  to the highest  $v_i^-$ in $\ovA{H}_x(t_n,T_n)$ is simply
$ m^n_{}:= \left((1-t^n)+t^n M_1^n\right) b^n $.

After possibly decomposing the sequence into subsequences, we may assume without restriction that one of the following cases holds true:
\begin{enumerate}[leftmargin=26pt] 
\item $m^n \geq M^n$ for all $n$.
\item $m^n <  M^n$ for all $n$ and the sequence $f_i^n$ tends to $1$ for some $i$.
\item \mbox{$m^n <  M^n$ for all $n$, there is a $\kappa>0$ with  $f_j^n<1-\kappa$ for all $n, j$, and  $e_i^n\longrightarrow 0$       for some $i$.} 
\item $m^n <  M^n$ for all $n$, there is a $\kappa >0$ with  $f_j^n<1-\kappa$ and $\kappa < e_j^n$  for all $n, j$, and the length $r^n$ of the root edge of $T^n$  tends to zero.
\item $m^n <  M^n$ for all $n$, there is a $\kappa >0$ with  $f_j^n<1-\kappa$ and $\kappa < e_j^n$  for all $n, j$, and the length of the leaf edge of $T^n$ labelled by $s\in S_i$ tends to zero.\vspace{-2pt}
\end{enumerate}
We write $\ovA{K}_x(t^n,T^n) = (T^n{}',\{T_i^n \}_{S_i\in \mathbf{d}_x})$.\\
For case $(1)$, we observe that the tree $\ovA{K}_x(t^n,T^n)$ is constantly equal to the basepoint.

For case $(2)$, we note that   the $i^{th}$ leaf-edge of $T^n{}'$   has length 
$1-\frac{((1-t^n)+t^nM_1^n) f_i^n}{M^n} \ \  \geq  \ \ 1-f_i^n \ \ \longrightarrow \ \ 0$

For case $(3)$, we check that  the length of the root edge of the $i^{th}$ tree $T^n_i$ is bounded above by  $\frac{e_i^n}{1-m^n}$. This number converges to $0$ since  $e_i^n \longrightarrow  0$ and $1-m^n \geq 1- b^n > \kappa $.

\mbox{For case $(4)$, we compute   $M^n >\kappa$. The length of the root edge of $T^n{}'$ thus tends to zero.}

For case $(5)$, we use  again that $1-m^n \geq \kappa$ to see that the length of the leaf edge in $T^n_i$ labelled by $s$ tends to zero. This concludes the proof of the existence of the continuous extension $\ovA{K}_x$.\vspace{3pt}

Define $K_x$ as  \mbox{ $[0,1] \times   \Sigma |\Pi_S|^\diamond  \rightarrow  [0,1] \times ( \Sigma |\Pi_S|^\diamond )/_{ \Sigma |\Pi_S|^\diamond - (\Sigma |\Pi_S|^\diamond)_{x}} \xrightarrow{\ovB{K}_x} ( \bigwedge_{S_i\in \mathbf{d}_x }\Sigma |\Pi_{S_i}|_{}^\diamond  )   \wedge \Sigma |\Pi_{\mathbf{d}_x}|^\diamond $}.

We clearly have $ {K}_x(0,T) =  {\nu_x}(0,T) $ for all $T$. To compute $ {K}_x(1,T) =  {u}_x(T)$, we note  that for $T$ a weighted $S$-labelled tree in $(\Sigma |\Pi_S|^\diamond)_{x }$, our above observations show that  all nodes $v_i$ (in the above notation) lie above the ``$M_1$-line'' in  $\ovA{H}_x(1,T)$ (with at least one on this line). Similarly,  all nodes   $v_i^-$ lie below the ``$m_1$-line'' (with at least one having distance $m_1$ from the root). We can then read off the identification $ { K}_x(1,T)= {u}_x(T)$ from our explicit description of $\nu_x$ in the beginning of this proof.
Finally, the equivariance claim  is evident by the symmetry of the definition of $\ovA{H}_x$.
 \vspace{-5pt}  
\end{proof}
\subsection{Binary Chains} \label{binarypage}\label{LAPC2} Let $S$ be a finite set.
We will  analyse collapse maps for the following chains:
\begin{definition}\label{definition: binary}
An increasing chain of partitions $\sigma=[x_1\leq \cdots\leq x_r]$ of $S$ is   {\it binary} if for all $i$, each equivalence class in $x_i$ is the union of \textit{at most two} equivalence classes in $x_{i-1}$. For $i=0$, this means that each component of $x_1$ has at most two elements. However, note that we do \textit{not} require $x_r$ to have  only  two equivalence classes.    \vspace{-5pt}
\end{definition}
\subsubsection*{Binary Chains of Length One}
We begin our analysis with the following simple observation:
\begin{proposition}\label{veryeasy}
Let $[x\leq y]$ be a binary chain. Write $N_{[x,y]}$ for the set of classes of $y$ which are obtained by merging two classes of $x$. There is an isomorphism of posets $(\Pi_S)_{(x,y)} \cong \ovA{\Bcal}_{N_{[x,y]}}$ between the interval $(x,y)$ in $\Pi_S$ and the poset of proper nonempty subsets of $N_{[x,y]}$.\end{proposition}
By Example \ref{example: sphere}, there is a homeomorphism $|(\Pi_S)_{(x,y)}| \cong S^{N_{[x,y]}-2}$ to the doubly desuspended standard representation sphere of $\Sigma_{N_{[x,y]}}$.  Conventions for $|N_{[x,y]}|=0, 1$  are as  in \mbox{Remark \ref{smallspheres}.}

We can describe the induced homeomorphism $\Sigma |(\Pi_S)_{(x,y)}|^\diamond \xrightarrow{\cong} \Sigma  |\ovA{\Bcal}_{N_{[x,y]}}|^\diamond \xrightarrow{\cong} S^{ N_{[x,y]}}$ by sending an $S$-labelled weighted  tree $T$ in $\Sigma |(\Pi_S)_{(x,y)}|^\diamond$  to the point in $S^{ N_{[x,y]}} = I^{N_{[x,y]}}/_{\partial  I^{N_{[x,y]}}}$ whose coordinate  at $n\in N_{[x,y]}$ is   the distance from the root   to the maximal node lying under all \mbox{elements of $n$.}

\subsubsection*{Binary Chains ending in $\hat{1}$}
Let $\sigma = [\hat{0}= x_0\leq \ldots \leq x_r = \hat{1}]$ be  a  binary chain of partitions starting in $\hat{0}$ and ending in $\hat{1}$. Moreover, assume that we are given a planar structure on the $S$-labelled rooted tree $T_\sigma$ associated with this chain (cf.\ Section \ref{section: lie to partitions}).
Consider the map  
 $$\upalpha_\sigma : \Sigma |\Pi_S|^\diamond \longrightarrow  \Sigma |(\Pi_S)_{(x_0,x_1)}|^\diamond  \wedge \ldots   \wedge\Sigma |(\Pi_S)_{(x_{r-1},x_{r})}|^\diamond  \cong S^{N_{[x_{0},x_1]}} \wedge \ldots  \wedge S^{N_{[x_{r-1},x_{r}]}} \cong S^{|S|-1}.  $$
which is obtained by composing the collapse map $ \Sigma |\Pi_S|^\diamond \rightarrow \Sigma^2 ( |\St(\sigma)|/_{|\St(\sigma)^{-\sigma}|} )$ explained in Section  \ref{clong} with the homeomorphism  {$S^{N_{[x_{0},x_1]}} \wedge \ldots  \wedge S^{N_{[x_{r-1},x_{r}]}}$}$\cong S^{|S|-1}$ defined by sending the coordinate corresponding to the $i^{th}$ internal node in 
standard order (on the set $\cup_{i=1}^{r} N_{[x_{i-1},x_i]}$ 
of internal nodes  of $T$, cf.\ Section \ref{standardorder})  to the $i^{th}$ coordinate on the right.
We can link    $\upalpha_\sigma$ to the  map $a_{T_\sigma}$ which we have associated with the binary planar tree $T_\sigma$ in in Definition  \ref{amap}:\vspace{2pt}
\begin{proposition}\label{propaa}
There is a homotopy $ L_\sigma:[0,1]\times \Sigma |\Pi_S|^\diamond \rightarrow   S^{|S|-1} $ from $\upalpha_\sigma$ to   \mbox{$  a_{T_\sigma}$.} 
If $h\in \Sigma_S$, then $L_{h\cdot \sigma} (t,h\cdot T) = L_{\sigma}(t, T)$ for all $t,T$. Here $T_{h\cdot \sigma}$ inherits a planar structure from $T_{\sigma}$. 
\end{proposition}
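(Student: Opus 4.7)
The plan is to build $L_\sigma$ by iterating the homotopy $K_x$ of Proposition \ref{propbb} along the chain $\sigma$, using the fact that the binary condition identifies each interval $(\Pi_S)_{(x_{i-1}, x_i)}$ with $\ovA{\Bcal}_{N_{[x_{i-1}, x_i]}}$ via Proposition \ref{veryeasy}. I would proceed by induction on $r$. In the base case $\sigma = [\hat{0}<\hat{1}]$, so $|S|=1$ or $|S|=2$ (depending on $N_{[\hat{0},\hat{1}]}$), both $\upalpha_\sigma$ and $a_{T_\sigma}$ reduce to the same canonical identification $\Sigma|\Pi_S|^\diamond \cong S^{|S|-1}$, and the constant homotopy suffices.

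For the inductive step I would factor $\upalpha_\sigma$ by first applying the one-step collapse $\nu_{x_r}\colon \Sigma|\Pi_S|^\diamond \to \Sigma|(\Pi_S)_{(\hat{0},x_r)}|^\diamond \wedge \Sigma|(\Pi_S)_{(x_r,\hat{1})}|^\diamond$ and then decomposing the left factor through the canonical product isomorphism $(\Pi_S)_{(\hat{0},x_r)} \cong \prod_{S_i \in \mathbf{d}_{x_r}} \Pi_{S_i}$ to reach $\bigwedge_{S_i} \Sigma|\Pi_{S_i}|^\diamond \wedge \Sigma|\Pi_{\mathbf{d}_{x_r}}|^\diamond$. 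The chain $\sigma$ restricts to a shorter binary chain on each $S_i$ and, via the quotient to $\mathbf{d}_{x_r}$, to a binary chain $[x_r < \hat{1}]$ on $\mathbf{d}_{x_r}$; the planar tree $T_\sigma$ decomposes correspondingly as the grafting of the subtrees $T_{\sigma|S_i}$ onto a root vertex, itself binary. Applying the inductive hypothesis to each $\sigma|S_i$ and smashing with the homotopy $K_{x_r}$ from Proposition \ref{propbb} gives a homotopy from $\upalpha_\sigma$ to the composite ``ungraft along $x_r$, then apply $a_{T_{\sigma|S_i}}$ on each piece''.

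To close the induction, one must verify that this composite equals $a_{T_\sigma}$. This is essentially an unwinding of the standard-order parametrisation of $w(T_\sigma)$ from Section \ref{standardorder}: the ungrafting procedure rescales the weights of edges in each subtree $T_{\sigma|S_i}$ by the factor $1/(1 - d(r, v_i^-))$ coming from the ancestor edges, and iterated ungrafting composes these rescalings into exactly the product $\prod_{v_j \prec v_i}(1 - t_j)$ appearing in the formula $\{t_i \prod_{v_j \prec v_i}(1 - t_j)\}$ that defines $a_{T_\sigma}$.

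The main obstacle will be this final coordinate bookkeeping: carefully tracking how the sphere factors $S^{N_{[x_{i-1}, x_i]}}$ used in $\upalpha_\sigma$ correspond, under iterated ungrafting plus normalisation, to the coordinates produced by the standard order on non-root nodes of $T_\sigma$ in the definition of $a_{T_\sigma}$. Once the construction is set up, equivariance of $L_\sigma$ is automatic: each $K_{x_r}$ is equivariant by Proposition \ref{propbb}, the planar structure on $T_{h\cdot \sigma}$ is inherited from $T_\sigma$ by relabeling leaves, and the standard order on non-root nodes is preserved under this relabeling, so the inductive homotopies assemble $\Sigma_S$-equivariantly.
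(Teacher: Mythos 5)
Your proposal is correct and takes essentially the same route as the paper: induct on the length of the chain, factor $\upalpha_\sigma$ at the coarsest nontrivial partition $x_r$ via $\nu_{x_r}$, apply the homotopy $K_{x_r}$ from Proposition \ref{propbb}, and invoke the inductive hypothesis on the restrictions of $\sigma$ to the classes of $x_r$. The only difference is cosmetic: the paper's write-up fixes the root split as a left subtree plus a singleton right leaf (a ``WLOG'' that is not really needed), whereas you treat all $\mathbf{d}_{x_r}$-pieces symmetrically, which is cleaner.
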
	\vspace{-5pt}
\begin{proof}
We proceed by induction on $r$, the statement being trivially  true for $r=0, 1$.
For $r\geq 2$, we assume that the tree $ T_\sigma$ is constructed by first identifying the roots of a left subtree $T_{\sigma_1}$ and a right subtree  $T_{\sigma_2}$  and then  adding an additional minimal element $r$, the root. For $i=1,2$, let $S_i$  be the set of labels of $T_{\sigma_i}$.  Write $\sigma_i =  [x_0^i\leq x_1^i\leq \ldots \leq x_{r}^i]$ for the binary chain of partitions of $S_i$ obtained by restricting the   partitions of $\sigma$ to $S_i$.
We consider the following two composite maps: 
$$
 \Sigma |\Pi_S|^\diamond  \xrightrightarrows[u_{x_r}]{ \   \nu_{x_r} \   } \Sigma |\Pi_{\mathbf{2}}|^\diamond \wedge    \bigg( \bigwedge_{i=1}^2 \Sigma |\Pi_{S_i}|_{}^\diamond \bigg)   
 \xrightrightarrows[\ \  \id\wedge a_{T_{\sigma_1}} \wedge a_{T_{\sigma_2}}\ \ ]{\id \wedge \upalpha_{\sigma_1} \wedge   \upalpha_{\sigma_2} } \Sigma |\Pi_{\mathbf{2}}|^\diamond    \wedge   \bigg( \bigwedge_{i=1}^2  S^{|S_i|-1}   \bigg). 
$$ 
Here $\upalpha_{\sigma_i}$ is the collapse map associated with the chain $\sigma_i$, the map $a_{T_{\sigma_i}}$ is defined in terms of the $S_i$-labelled binary planar tree $T_{\sigma_i}$ (cf.\ Definition \ref{amap}), the symbol $ \nu_{x_r}$ denotes the collapse map for  the \mbox{partition $x_r$}, and the map $ u_{x_r}$ is the ungrafting map with respect to $x_r$ (cf.\ Section \ref{LAPC1}).

Using our explicit description of collapse maps in Section \ref{clong}, we see that the top composition agrees with $\upalpha_\sigma$. The  bottom composition  evidently gives the map $a_{T_\sigma}$.
By Proposition \ref{propbb}, there is a homotopy $ {K}_{x_r}$ from $\nu_{x_r}$ to $u_{x_r}$. By induction hypothesis, there are homotopies $L_{\sigma_i}$ from $\alpha_{\sigma_i}$ to $a_{T_{\sigma_i}}$. Setting $L_{\sigma} :=( \id \wedge L_{\sigma_1} \wedge L_{\sigma_2}) \circ  {K}_{x_r}$ gives the desired homotopy. 
\end{proof}
 
\subsubsection*{General Binary Chains}\label{GeneralBinary} To analyse the collapse map associated with general binary chains, let $\sigma = [\hat{0} = x_0\leq  x_1\leq \ldots\leq x_r] $ be a binary chain of proper nontrivial partitions of $S$ such that $x_r$ partitions $S$ as $S=\coprod_{S_i \in\mathbf{d}_{x_r}} S_i$, where $\mathbf{d}_{x_r}$ is the set of classes of $x_r$. Write $x^i_j$ for the partition obtained by restricting $x_j$ to the set $S_i$. 
Suppose that for each $i$, we have chosen a planar structure on the $S_i$-labelled binary tree $T_{\sigma_i}$ associated with the binary chain $\sigma_i=[\hat{0}^i\leq x_1^i \leq \cdots \leq x_r^i \leq \hat{1}^i]$, where $\hat{0}^i$ and $\hat{1}^i$ denote the minimal and maximal partition on   $S_i$.
Write $\upalpha_{ {\sigma_i}}: \Sigma |\Pi_{S_i}|^\diamond \rightarrow S^{|S_i|-1}$ for the corresponding \mbox{collapse map described above Proposition \ref{propaa}.}\vspace{5pt}

Observe that  $ \Sigma |\Pi_S|^\diamond \xrightarrow{\nu_{x_r}}  \Sigma |\Pi_{\mathbf{d}_{x_r}}|^\diamond  \wedge    ( \bigwedge_{S_i\in\mathbf{d}_{x_r}} \Sigma |\Pi_{S_i}|_{}^\diamond )   \xrightarrow{ \id  \wedge (\bigwedge_i \upalpha_{\sigma_i}) } \Sigma |\Pi_{\mathbf{d}_{x_r}}|^\diamond \wedge ( \bigwedge_{S_i\in\mathbf{d}_{x_r}} S^{|S_i|-1} ) $
factors through  $\Sigma |\Pi_S|^\diamond \rightarrow \Sigma |(\Pi_S)_{(\hat{0},x_0)}|^\diamond  \wedge \ldots \wedge \Sigma |(\Pi_S)_{(x_r,\hat{1})}|^\diamond \simeq ( \bigwedge_{S_i\in\mathbf{d}_{x_r}} S^{|S_i|-1} )   \wedge \Sigma |\Pi_{\mathbf{d}_{x_r}}|^\diamond$, where the first map is the collapse map associated with the chain $[x_0<\ldots<x_r] $. \vspace{5pt}

Combining the two homotopies from Proposition \ref{propbb} and Proposition \ref{propaa}, we deduce:
\begin{proposition}[Collapse and ungrafting] \label{ta}There is a homotopy $$M_{\sigma}:[0,1] \times \Sigma |\Pi_S|^\diamond \longrightarrow  \Sigma |\Pi_{\mathbf{d}_{x_r}}|^\diamond \wedge ( \bigwedge_{S_i\in\mathbf{d}_{x_r}} S^{|S_i|-1} ) $$ from   $M_{\sigma}(0,-) = (\id \wedge (\bigwedge_i \upalpha_{\sigma_i}) ) \circ \nu_{x_r}$ (defined in terms of collapses) to  \mbox{$M_{\sigma}(1,-)=(\id \wedge a_{T_{\sigma_i}}) \circ u_{x_r}$} (defined using  ungrafting of trees). \vspace{3pt}

If $h\in \Sigma_S$, then $M_{h{\cdot}\sigma}(t,h{\cdot}T) = h{\cdot}M_{ \sigma}(t,T) \in \Sigma |\Pi_{\mathbf{d}_{h{\cdot}x_r}}|^\diamond \wedge ( \bigwedge_{h(S_i)\in\mathbf{d}_{h{\cdot}x_r}} S^{|h(S_i)|-1} ) $ for all $t,T$. Here we have used the  map $ 
\Sigma |\Pi_{ \mathbf{d}_{ x} }|^\diamond \wedge  ( \bigwedge_{S_i\in \mathbf{d}_{ x}} S^{|S_i|-1} ) \xrightarrow{h\cdot} \Sigma |\Pi_{ \mathbf{d}_{h\cdot x} }|^\diamond \wedge  ( \bigwedge_{h(S_i)\in \mathbf{d}_{h\cdot x}} S^{|h(S_i)|-1}  )$ and have given the  binary trees associated with $h\cdot \sigma$  the planar structures induced by the planar trees $T_{\sigma_i}$.
\end{proposition} 
\vspace{7pt}
In the unsuspended case, the collapse map takes the form\vspace{3pt}
$$ \hspace{-10pt} |\Pi_S|  \longrightarrow  |(\Pi_S)_{(\hat{0},x_1)}|^\diamond  \wedge\Sigma |(\Pi_S)_{(x_1,x_2)}|^\diamond  \wedge \ldots \wedge \Sigma |(\Pi_S)_{(x_{r-1},x_r)}|^\diamond\wedge  |(\Pi_S)_{(x_r,\hat{1})}|^\diamond \cong  \Sigma^{-1}  (\hspace{-2pt}\bigwedge_{S_i\in \mathbf{d}_{x_r}} \hspace{-5pt}S^{|S_i|-1})   \wedge  |\Pi_{\mathbf{d}_{x_r}}|^\diamond.   $$
Here  $\Sigma^{-1} \bigwedge_{S_i \in \mathbf{d}_{x_r}} \Sigma |\Pi_{S_i}|_{}^\diamond  $ denotes a desuspension of  the sphere  $\bigwedge_{S_i\in \mathbf{d}_{x_r}} S^{|S_i|-1} $. 

The unsuspended collapse map interacts with the group action in the expected manner.

\newpage  
\section{Restrictions}\label{c5} We will  now use the full strength of complementary collapse against orthogonality fans to compute Young restrictions of the partition complex and parabolic restrictions of Bruhat-Tits buildings.

\vspace{-6pt}
\subsection{Young Restrictions of the Partition Complex}\label{Youngsection}
Let $\mathcal{P}_S$ be the lattice of partitions of the finite set $S$, ordered under refinement, so that $\Pi_S = \ovA{\mathcal{P}}_S = \mathcal{P}_S-\{\hat{0},\hat{1}\} $. Fix a map $g:S\rightarrow \mathbf{k}$ and write $C_i = g^{-1}(i)$. Let $\Sigma_g = \Sigma_{C_1} \times \dots \times \Sigma_{C_k}$ be the associated Young subgroup.

\vspace{-3pt}
\subsubsection*{An orthogonality fan on the partition complex}\ \label{previossection}

Given a chain of partitions $\sigma = [ y_0 < \dots < y_m] \in  \mathcal{F}_{\mathcal{P}_S}$ and a class $K$ of $y_m$, we  attach a word \mbox{$w_K = w_K(\sigma)\in F \langle c_1, \dots ,c_k\rangle$}  in the free group on $k$ generators to $K$:
\begin{itemize}[leftmargin=26pt]
\item If $m=0$, we attach the word
$c_1^{|K\cap C_1|} \dots c_k^{|K\cap C_k|}$ to $K$.
\item If $m>0$, we first use the chain $[y_0< \dots < y_{m-1}]$ to attach a word to all classes in $y_{m-1}$. \\We then let $w_K$ be the product of all words attached to $y_{m-1}$-classes which are \mbox{contained in $K$,} multiplied in ascending lexicographical order (where $c_1<\dots<c_k$).
\end{itemize}

\begin{example}\vspace{-2pt}
For $S = \{1,\dots,6\}$, $k=3$, and $C_1=\{1,2\}$, $C_2 = \{3,4,5\}$, $C_3=\{6\}$, we send the chain 
$ [\{1|23|4|5|6\}<\{1|23|45|6\}< \{123|456\}  ] $
to the words
$ c_1^2 c_2,\  c_2^2 c_3$.
\end{example}\vspace{2pt}

For $\sigma=  [ y_0 < \dots < y_m]$ a chain, we record the words attached to $y_m$-classes  in ascending lexicographic order as $w_a<w_{b_1}<\dots < w_{b_s}$, where possibly $s=0$.
Let $A$ be the set of $y_m$-classes whose associated word is the minimal $w_a$. Define $B_1,\dots,B_s$ in a similar manner and set $B= \cup_i B_i$. 

Define $F_1(\sigma) = F_1(S,g) (\sigma)$ to be the partition obtained from $y_m$ by merging all classes in $B$.\\
Define $F_2(\sigma)= F_2(S,g) (\sigma)$ to be the partition obtained from $y_m$ by merging all classes in $A$.\vspace{-5pt}
\begin{figure}[hb]
  \centering
  \includegraphics[scale=0.13]{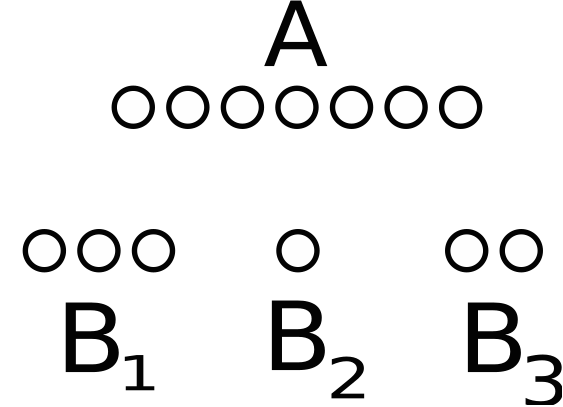} \ \ \ \ \ \ \ \ \ \ \ \ \ \ \ \ \ \ 
  \includegraphics[scale=0.13]{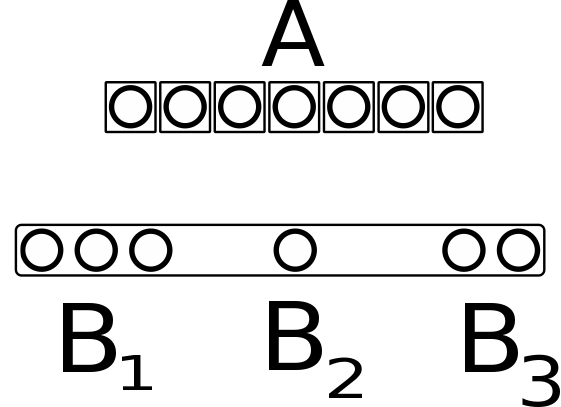} \ \ \ \ \ \ \ \ \ \ \ \ \ \ \ \ \ \ 
  \includegraphics[scale=0.13]{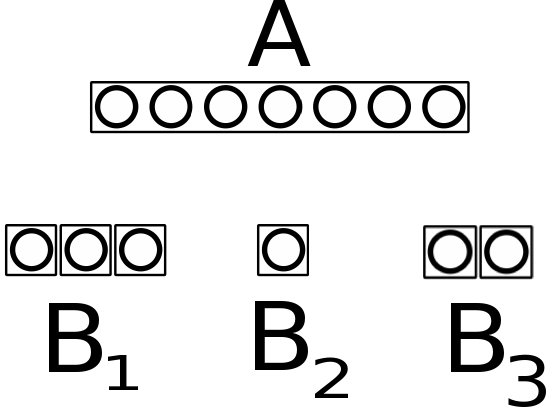}\vspace{-10pt}
\caption*{We sketch $\sigma, F_1(\sigma),$ and $F_2(\sigma)$. The bullets  represent classes of $y_m$.}
\end{figure}\vspace{-10pt}\vspace{2pt}
\begin{theorem}\label{isafan}
\mbox{The pair $\mathbf{F}(S,g)  = (F_1(S,g),F_2(S,g))$ is an orthogonality fan on the $\Sigma_g$-lattice $\mathcal{P}_S$.}
\end{theorem}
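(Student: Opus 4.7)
The plan is to apply Lemma~\ref{fansfromfunctions} and verify that each of $F_1 = F_1(S,g)$ and $F_2 = F_2(S,g)$ satisfies the two axioms of an orthogonality function from Definition~\ref{orthogonalityfunction}. Equivariance under $\Sigma_g = \Sigma_{C_1} \times \cdots \times \Sigma_{C_k}$ is immediate because the word $w_K(\sigma)$ attached to any $y_m$-class $K$ depends only on the numerical data $|K \cap C_i|$ together with the (recursively defined) words of its subclasses in the lower partitions of $\sigma$, all of which are preserved by permutations within each $C_i$. The increasing property is clear since $F_j(\sigma)$ is obtained from $y_m := \max \sigma$ by merging some of its classes, so $y_i \leq y_m \leq F_j(\sigma)$ for every $y_i \in \sigma$.

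The substantive task is the discreteness condition. Fix $\sigma = [y_0 < \ldots < y_m]$ and $z > y_m$, and let $P = \pi_0(y_m)$. Under the canonical isomorphism $[y_m,\hat{1}] \cong \mathcal{P}_P$, both $F_1(\sigma)$ and $F_2(\sigma)$ map to partitions of $P$ of a very restricted shape: ``one distinguished class $B' \subseteq P$ together with singletons for $A' := P \setminus B'$'' (with $B' = B$ for $F_1$, and $B' = A$ for $F_2$). Writing $\bar F, \bar z, \bar t$ for the images in $\mathcal{P}_P$, the discreteness condition reduces to the purely lattice-theoretic claim that, for every $\bar z > \hat{0}$ and every $\bar F \in \mathcal{P}_P$ of the above shape, the set
\[
\mathcal{S} := \bigl\{\bar t \in \mathcal{P}_P \ \big|\ \hat{0} < \bar t < \bar z,\ \bar t \wedge \bar F = \hat{0},\ \bar z \leq \bar t \vee \bar F\bigr\}
\]
is an antichain.

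To establish this, I would localise to the $\bar z$-classes $Z_1, \ldots, Z_r$. Since $\bar t \leq \bar z$ decomposes $\bar t$ into a family of partitions $\bar t_\alpha := \bar t|_{Z_\alpha} \in \mathcal{P}_{Z_\alpha}$, and the meet condition clearly localises to $\bar t_\alpha \wedge \bar F_\alpha = \hat{0}_{Z_\alpha}$, the main technical point---which I expect to be the chief obstacle---is to show that the join condition localises to $\bar t_\alpha \vee \bar F_\alpha = \hat{1}_{Z_\alpha}$ on every $Z_\alpha$. This is exactly where the ``single class plus singletons'' shape of $\bar F$ becomes crucial: any $(\bar t \vee \bar F)$-chain linking two elements of $Z_\alpha$ can only exit $Z_\alpha$ via an $\bar F$-step within $B'$ (since $\bar t$-steps stay inside a single $\bar z$-class), so any excursion $b \rightsquigarrow b'$ with $b, b' \in B' \cap Z_\alpha$ can be short-circuited to a single $\bar F$-jump $b \to b'$ inside $Z_\alpha$, using that $B'$ forms a single $\bar F$-equivalence class.

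Once this reduction is in place, $\bar t_\alpha$ is a complement of $\bar F_\alpha$ in $\mathcal{P}_{Z_\alpha}$, and $\bar F_\alpha$ itself has the same ``one class $B' \cap Z_\alpha$ plus singletons'' shape. An elementary combinatorial count then shows that every complement of such a partition has the same number of parts: the meet condition forces each part of $\bar t_\alpha$ to meet $B' \cap Z_\alpha$ in at most one element, while the join condition forces every $a \in A' \cap Z_\alpha$ to share a part with some element of $B' \cap Z_\alpha$ (since such an $a$ is $\bar F_\alpha$-isolated), so the parts are parametrised by $B' \cap Z_\alpha$. Consequently the elements of $\mathcal{S}$ all restrict to partitions of the same rank within each $Z_\alpha$; since comparability in $\mathcal{S}$ is detected factorwise across the $Z_\alpha$, the set $\mathcal{S}$ is an antichain, completing the verification that $F_1$ and $F_2$ are orthogonality functions.
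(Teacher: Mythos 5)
Your proof is correct and follows the same overall strategy as the paper: invoke Lemma~\ref{fansfromfunctions}, dispose of equivariance and the increasing property, and reduce the discreteness axiom to a statement about complements of a ``one merged block $B'$ plus singletons'' partition inside the interval $[y_m,\hat 1]\cong\mathcal{P}_P$. The core observation on which both arguments rest is the same --- within any $\bar z$-block $Z_\alpha$ meeting $B'$, the meet condition forces each $\bar t_\alpha$-class to meet $B'\cap Z_\alpha$ in at most one element, and the join condition forces each class to meet it in at least one. The two proofs then diverge only in how they extract discreteness from this: the paper packages the data of $\bar t$ into a natural injection from the subposet in question into the set of $z$-compatible functions $A'\to B'$ (leaving implicit that comparable $\bar t$'s are sent to the same function, hence must coincide), whereas you conclude via a rank count --- all complements have the same number of parts $|B'\cap Z_\alpha|$ in each block, and comparability is detected blockwise, so the subposet is an antichain. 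Your version has the advantage of spelling out the localisation of the join condition to the $\bar z$-blocks (the ``short-circuit through $B'$'' argument), a step the paper's terse proof does not address. One small imprecision: when $Z_\alpha$ is disjoint from $B'$ (or, for the $F_2$ version, from $A'$), the statement ``parts are parametrised by $B'\cap Z_\alpha$'' would literally give zero parts; in that degenerate case $\bar t|_{Z_\alpha}$ is forced to be the indiscrete partition, so the constant-rank conclusion still holds, but the case deserves one sentence. With that caveat filled in, the argument is complete and essentially the one the authors had in mind, presented in somewhat fuller detail.
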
\vspace{-5pt}
\begin{proof} 
By Lemma \ref{fansfromfunctions}, it suffices to check that $F_1$ and $F_2$ are \textit{orthogonality functions} in the sense of Definition \ref{orthogonalityfunction}.
The functions $F_1$, $F_2$ are clearly increasing and equivariant.\\
To check axiom $(3)$, let $\sigma=[y_0< \dots < y_{m}]$ be a chain of partitions in $\mathcal{P}_S$ and let $z>y_m$. 
Define the sets of $y_m$-classes $A$ and $B_1,\ldots,B_s$ as above. Let $A'\subset A$ be the collection of $y_m$-classes which are merged with a class in $B=\cup B_i$ in $z$. 
Similarly, write $B' \subset \cup B_i$ for the family of classes  in $B$ which are merged with a class in $A$ in the partition $z$.
For $F_1$,  we observe a natural injection
$$  \{y_m < y < z  \ \ \  |\ \  \ y\wedge F_1(\sigma) = y_m,\ \  (y\vee F_1(\sigma))\wedge z =z  \} \ \ \hookrightarrow  \ \ \{A'\xrightarrow{f} B' \ \ | \ \ \  \forall a\in A' : \ \ a\simeq_z f(a)  \} 
 $$
\begin{minipage}{0.8\textwidth}
 obtained by sending  $a\in A'$ to the unique $f(a)\in B'$ with $a\simeq_{y} f(a)$. We draw a suggestive example on the right.
The subposet is therefore discrete and $F_1$ an orthogonality function. The statement for $F_2$ follows from a parallel argument.
\end{minipage} \hspace{5pt} 
\begin{minipage}{0.2 \textwidth} 
\includegraphics[width=0.7\textwidth]{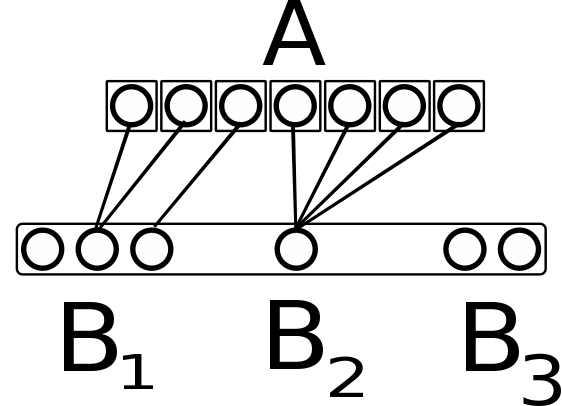}  
\end{minipage}   
\end{proof}
 
\vspace{-15pt}
\subsubsection*{Orthogonal Chains from Labelled Lyndon Words}
We will now give a convenient description of the chains of partitions which are orthogonal to the fan $\mathbf{F}(S,g)$ from Theorem \ref{isafan}.   
\mbox{Let $\mathbf{k}^\ast$ be the set of} words of finite length in $\{1,\ldots,k\}$, ordered lexicographically, and consider the free group $F\langle c_\upalpha \rangle_{\upalpha \in \mathbf{k}^\ast}$. This free group inherits a lexicographic order based on the order on $\mathbf{k}^\ast$. A word $w\in F\langle c_\upalpha \rangle_{\upalpha \in \mathbf{k}^\ast}$ is called a \textit{weak Lyndon word} if $w\leq \tilde{w}$ for any cyclic rotation $\tilde{w}$ of $w$.
\begin{definition}
The \textit{reduction function} $(-)': F\langle c_\upalpha \rangle_{\upalpha \in \mathbf{k}^\ast} \rightarrow F\langle c_\upalpha \rangle_{\upalpha \in \mathbf{k}^\ast}$   takes a word \mbox{$w=c_{\upalpha_1} \ldots c_{\upalpha_m}$} and replaces a string
$c_{\upalpha_i} c_{\upalpha_{i+1}}$  by $c_{\upalpha_i \upalpha_{i+1}}$ whenever $\upalpha_i$ is minimal in $\{\upalpha_1,\ldots,\upalpha_m\}$ and $\upalpha_{i+1}>\upalpha_{i}$.
\end{definition}
Given $w \in F\langle c_\alpha \rangle$, we define a sequence $w_0, w_1, w_2 \ldots$ by $w_0 = w'$ and $w_i = w_{i-1}'$ for $i>0$.\vspace{-0pt}  
\begin{example} \label{exampleword} For $k=3$ and $w=c_1c_1 c_2 c_1 c_1 c_3 c_2 c_3$, we have:\vspace{-7pt}  
$$w_0 = c_1c_{12} c_1 c_{13} c_2 c_3, \ \ \ \ w_1 = c_{112} c_{113} c_2 c_3, \ \ \ \    w_2 = c_{112113} c_2 c_3, \ \ \ \  w_3 = c_{1121132} c_3, \ \ \ \   w_4= c_{11211323}$$
\end{example}\vspace{-5pt}  
\begin{proposition}\label{preserveslyndon}
The reduction function $(-)'$ preserves weak Lyndon words in $F\langle c_\alpha \rangle$.
\end{proposition}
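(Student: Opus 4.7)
The plan is to argue by contraposition: assuming some cyclic rotation $\sigma$ of $w'$ satisfies $\sigma<w'$ strictly in the $\mathbf{k}^*$-lex order, construct a cyclic rotation $\tau$ of $w$ with $\tau<w$, contradicting the weak Lyndon property of $w$. Write $w'=c_{\beta_1}\cdots c_{\beta_n}$ and denote by $p(1)<\cdots<p(n)$ the starting positions in $w$ of the blocks produced by the reduction. If $\sigma=c_{\beta_{k+1}}\cdots c_{\beta_n}c_{\beta_1}\cdots c_{\beta_k}$ is the rotation by $k$ blocks, I would take $\tau$ to be the cyclic rotation of $w$ that starts at position $p(k+1)$ in $w$; the critical feature is that this is an honest block boundary, not an arbitrary position.

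The first substep is to show $(\tau)'=\sigma$. This rests on two observations. First, because $\tau$ begins at a block boundary of $w$, no merged pair of $w$ is split across the seam of the rotation, so all existing merges in $w$ survive as merges in $\tau$. Second, weak Lyndon-ness of $w$ forces $\alpha_1=\alpha_{\min}$ (since otherwise a rotation starting at a position of $\alpha_{\min}$ would beat $w$), and hence the new seam in $\tau$ places the final letter $\alpha_{p(k+1)-1}$ immediately before $\alpha_1=\alpha_{\min}$; the merge rule requires the successor to be strictly greater than the minimum, which $\alpha_{\min}$ is not, so no new merge appears. Thus the reduction blocks of $\tau$ are exactly the cyclic shift of those of $w$, and $(\tau)'=\sigma$.

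The second substep is to transfer the block-level inequality $\sigma<w'$ into a letter-level inequality $\tau<w$. Let $j$ be the first block index at which $\sigma$ and $w'$ disagree, so $\tilde\beta_j<\beta_j$ while $\tilde\beta_{j'}=\beta_{j'}$ for $j'<j$. A left-to-right greedy analysis of the reduction shows that the first $j-1$ blocks of $w$ and $\tau$ occupy the same positions and carry identical letter content, so $w$ and $\tau$ already agree as letter sequences on positions $1,\ldots,p(j)-1$. The first letter-level difference then occurs at position $p(j)$ (or $p(j)+1$, inside block $j$), and one performs a short case analysis on the sizes of $B_j$ and $\tilde B_j$ (each being $1$ or $2$), using the structural facts that a size-$2$ block begins with $\alpha_{\min}$ and that $\alpha_{\min}$ is a proper prefix of any strictly larger string beginning with $\alpha_{\min}$.

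The hard part will be the case where $B_j$ and $\tilde B_j$ have \emph{different} sizes: the block-level inequality then compares a concatenation such as $\alpha_{\min}\alpha_{p(j)+1}$ against a single letter $\tilde\alpha_{p(j)}$, while the letter-level comparison examines $\alpha_{p(j)}=\alpha_{\min}$ against $\tilde\alpha_{p(j)}$ alone. The resolution exploits the prefix structure of the $\mathbf{k}^*$-lex order: when a concatenation $\alpha_{\min}\gamma$ is strictly smaller than a single letter $\tilde\alpha_{p(j)}$, a prefix argument forces $\alpha_{\min}$ to be a proper prefix of $\tilde\alpha_{p(j)}$, and hence $\alpha_{p(j)}=\alpha_{\min}<\tilde\alpha_{p(j)}$ at the letter level, yielding $\tau<w$ and the desired contradiction. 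Verifying each subcase and pinning down the direction of the induced inequality is the bookkeeping-heavy part of the argument.
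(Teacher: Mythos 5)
Your strategy is the same as the paper's one‑line proof: lift each cyclic rotation $\sigma$ of $w'$ to the cyclic rotation $\tau$ of $w$ starting at the corresponding block boundary, check $\tau' = \sigma$, and transfer $\sigma < w'$ into $\tau < w$. The lifting step is correct and is argued more carefully than in the paper. The transfer step, however, contains a genuine gap --- the same one hidden in the paper's terse ``Observe that if $\widetilde{w'}<w'$ then $\widetilde{w}<w$.'' You claim that because the first $j-1$ merged labels of $w$ and $\tau$ agree, the first $j-1$ blocks also ``occupy the same positions and carry identical letter content.'' This does not follow from equality of merged labels. A size‑two block $c_{\mu}c_{\gamma}$ (with $\mu$ the minimal subscript, $\gamma>\mu$) and a size‑one block $c_{\mu\gamma}$ have the same merged label $\mu\gamma$ but different letter content and occupy a different number of positions, so the letter‑level and block‑level comparisons can diverge at a block index strictly less than $j$. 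Your case analysis at block $j$ cannot repair that earlier misalignment, because the two words are already out of step.

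The gap is fatal to the statement as written: for an arbitrary weak Lyndon $w\in F\langle c_{\upalpha}\rangle$ the proposition is false. Take $k=3$ and $w = c_1 c_2 c_{13} c_1 c_3 c_{12}$. One checks directly that $w$ is strictly Lyndon, while $w' = c_{12}c_{13}c_{13}c_{12}$ has the strictly smaller rotation $c_{12}c_{12}c_{13}c_{13}$, so $w'$ is not weak Lyndon. The lift of that rotation is $\tau = c_{12}c_1c_2c_{13}c_1c_3$, which is strictly \emph{larger} than $w$ because $1<12$ in $\mathbf{k}^{\ast}$ --- so the lifted inequality goes the wrong way, exactly through the mechanism above (the block $c_1c_2$ of $w$ versus the block $c_{12}$ of $\tau$). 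The proposition does hold for the words actually produced by the paper's iterated reduction of a weak Lyndon word on $c_1,\ldots,c_k$, because the subscripts arising along that iteration have a structure (in particular, a letter after $c_\mu$ always has a subscript beginning with $\mu$) that forbids a letter $c_{\mu\gamma}$ from coexisting with a merge‑pair $c_\mu c_\gamma$ in the same word. Any correct proof --- yours or the paper's --- has to state and carry this invariant along the iteration rather than argue for an arbitrary weak Lyndon $w$.
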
\vspace{-5pt}  
\begin{proof}
Given a weak Lyndon word $w=c_{\upalpha_1} \ldots c_{\upalpha_m}$, any rotation $\widetilde{w'}$ of $w'$ lifts to a   rotation $\widetilde{w}$ of $w$ satisfying $(\widetilde{w})' = \widetilde{w'}$ (since $w$ is weak Lyndon). Observe that if  $\widetilde{w'}<{w'}$, \mbox{then  $\widetilde{w}<w$.}
\end{proof}\vspace{-5pt}  
\begin{lemma}
If $w = u^d$ is a weak Lyndon word for which $u$ is not a letter, then $w' \neq w$.
\end{lemma}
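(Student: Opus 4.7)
The strategy is to exhibit an explicit position in $w$ at which the reduction function acts non-trivially. Write $u = c_{\beta_1}c_{\beta_2}\cdots c_{\beta_\ell}$ with $\ell \geq 2$ and set $\alpha^{*} := \min\{\beta_1,\ldots,\beta_\ell\}$; since $w = u^d$, this is the same as the minimum letter index appearing anywhere in $w$. By the very definition of $(-)'$, it suffices to produce a single index $j < \ell$ with $\beta_j = \alpha^{*}$ and $\beta_{j+1} > \alpha^{*}$: such a $j$ gives a consecutive string $c_{\alpha^{*}}c_{\beta_{j+1}}$ inside $w$ to which the reduction applies, so $w' \neq w$.

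I would first observe that $\beta_1 = \alpha^{*}$. Indeed, $u$ is a (strict) Lyndon word, so $u$ is strictly smaller than each of its nontrivial cyclic rotations. Comparing $u$ with the rotation beginning in position $i$ shows $\beta_1 \leq \beta_i$ for every $i$, hence $\beta_1 = \alpha^{*}$. Moreover, a strict Lyndon word of length $\geq 2$ cannot be constant, since a constant word coincides with every rotation of itself.

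The heart of the argument is then to rule out the case $\beta_\ell = \alpha^{*}$. Assume for contradiction that $\beta_\ell = \alpha^{*}$ and compare $u$ lexicographically with its nontrivial rotation $r := c_{\beta_\ell}c_{\beta_1}c_{\beta_2}\cdots c_{\beta_{\ell-1}}$. Both words begin with $c_{\alpha^{*}}$. Strict inequality $u < r$ then forces $\beta_2 \leq \beta_1 = \alpha^{*}$, so $\beta_2 = \alpha^{*}$; the same comparison at the third position forces $\beta_3 = \alpha^{*}$, and iterating gives $\beta_1 = \beta_2 = \cdots = \beta_\ell = \alpha^{*}$. But then $u$ is constant, contradicting the previous paragraph. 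Hence $\beta_\ell > \alpha^{*}$, and therefore the largest position $j$ with $\beta_j = \alpha^{*}$ satisfies $j < \ell$; the letter immediately following it has $\beta_{j+1} \geq \alpha^{*}$ and $\beta_{j+1} \neq \alpha^{*}$ (by maximality of $j$), so $\beta_{j+1} > \alpha^{*}$. This produces the desired ascent inside $u \subset w$, completing the proof.

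The only real subtlety is the inductive comparison with the rotation $r$, which is a short but essential use of the strict Lyndon property; everything else is straightforward bookkeeping about where the minimum letter $\alpha^{*}$ can appear.
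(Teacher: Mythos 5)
Your proof is correct, but you take a slightly longer route than the paper. The paper's argument is essentially one sentence: since $w = c_{\alpha_1}\cdots c_{\alpha_m}$ is weak Lyndon, the first letter $\alpha_1$ is the minimum; if $w' = w$ then no reduction applies at any position, which forces (scanning from the left, using only minimality of $\alpha_1$) every $\alpha_i$ to equal $\alpha_1$, making $w$ constant and hence $u$ a letter. Taking the contrapositive finishes it. Your argument instead works inside the strict Lyndon core $u$ and uses the strictness (via the rotation comparison with $r = c_{\beta_\ell}c_{\beta_1}\cdots c_{\beta_{\ell-1}}$) to show the \emph{last} letter $\beta_\ell$ is strictly above the minimum, then locates the last occurrence of $\alpha^*$ in $u$ and reads off the ascent there. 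Both arguments are sound; the paper's is leaner because it exploits only that the first letter is minimal (already true for weak Lyndon $w$) and does not need the rotation comparison or the strict Lyndon property of $u$. What your version buys is a direct construction of the ascent position rather than a contrapositive, and it makes explicit where the reduction acts, which can be conceptually useful even if it is not strictly necessary here.
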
 \vspace{-5pt}  
\begin{proof}
If $w=c_{\upalpha_1} \ldots c_{\upalpha_m}$, then $\upalpha_1$ is minimal in $\{\upalpha_1,\ldots \upalpha_k\}$. Hence either $w=c_{\upalpha_1}^d$ or $w'\neq w$.
\end{proof}\vspace{-5pt}  
\begin{corollary}
If $w$ is weak Lyndon, then $w_0,w_1 \ldots$ stabilises in a  word of the form $c_\upalpha^i$.
\end{corollary}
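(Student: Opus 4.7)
The plan is to combine the two preceding statements with a simple termination argument. First I would observe, by induction on $i$ using Proposition \ref{preserveslyndon}, that every $w_i$ in the sequence is a weak Lyndon word. Next, I would note that the reduction function $(-)'$ can only decrease the total number of letters: each merge replaces two adjacent generators $c_{\upalpha_i}c_{\upalpha_{i+1}}$ by a single generator $c_{\upalpha_i\upalpha_{i+1}}$ and leaves all other letters untouched. Consequently, the lengths form a non-increasing sequence
\[
|w_0| \geq |w_1| \geq |w_2| \geq \cdots
\]
of positive integers, and so must stabilise at some index $n$. Once $|w_n| = |w_{n+1}|$ no merge occurred at step $n+1$, which forces $w_{n+1} = w_n$; in other words, $w_n$ is a fixed point of $(-)'$.

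Since $w_n$ is weak Lyndon, we may write $w_n = u^d$ with $u$ a Lyndon word, as recalled in Section \ref{Lyndon}. The immediately preceding lemma asserts that if $u$ is not a single letter, then $(w_n)' \neq w_n$, which would contradict the fact that $w_n$ is fixed. Hence $u = c_\upalpha$ for some $\upalpha \in \mathbf{k}^\ast$, and $w_n = c_\upalpha^d$ has the desired form. The sequence is then constant from index $n$ onwards, since $c_\upalpha^d$ is manifestly a fixed point of $(-)'$: no two consecutive letters satisfy the strict inequality $\upalpha_{i+1} > \upalpha_i$ needed to trigger a merge.

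The argument presents no real obstacle; it is purely a fixed-point analysis bootstrapped off the two preceding results, with the only mild point of care being the observation that $(-)'$ does not increase length (so that the sequence must actually stabilise, not just cycle).
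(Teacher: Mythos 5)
Your argument is correct and is the one the paper intends (the paper gives no explicit proof of this corollary, since it follows immediately from the preceding Proposition and Lemma). You correctly observe that the length sequence is non-increasing and bounded below, so it stabilises; that a step which preserves length produces no merge and hence a fixed word; and that a weak Lyndon fixed point of $(-)'$ must, by the contrapositive of the preceding lemma, have the form $c_\upalpha^d$.
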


We use this construction to produce chains of partitions. First, we mildly extend terminology from  Section \ref{Lyndon}.
 Fix a finite set $S$ and a map $g: S\rightarrow \mathbf{k}^\ast$.    An $(S,g)$-labelling of a weak Lyndon word $w = u^d$ in letters $\{c_\upalpha \}_{\upalpha \in \mathbf{k}^\ast}$ is a bijection between $S$ and \mbox{the   letters of $w$ such} that each $s\in S$ labels  a symbol $c_{g(s)}$. Two bijections give the same labelling if they agree \mbox{after permuting the copies of $u$.}\ \\

\vspace{-2pt}\vspace{-2pt}
\begin{minipage}{0.51\textwidth} \label{previouspage}
Let $w$ be a weak Lyndon word in letters $\{c_\upalpha \}_{\upalpha \in \mathbf{k}^\ast}$ together with an $(S,g)$-label $f$. For each $i$, we have  a canonical function from $S$ to the letters of $w_i$. 

Let $x_i$  be the partition on $S$ which identifies two points if they have the same image under this function. Let $S_{x_i} = S/x_i$ be the set of classes of $x_i$ and $g_{x_i}:S_{x_i}\rightarrow \mathbf{k}^\ast$ be the natural function recording the ``type'' $\upalpha$  of the letter $c_{\upalpha}$ in $w_i$ corresponding to a given class in $S_{x_i} $. \mbox{Then \mbox{$w_i$ is naturally  $(S_{x_i},g_{x_i})$-labeled, and $x_i,S_{x_i}$, and  $g_{x_i}$ are well-defined for all $i$.} }
\end{minipage}  \ \ 
\begin{minipage}{0.2 \textwidth} \vspace{-20pt} 
  \includegraphics[scale=0.5]{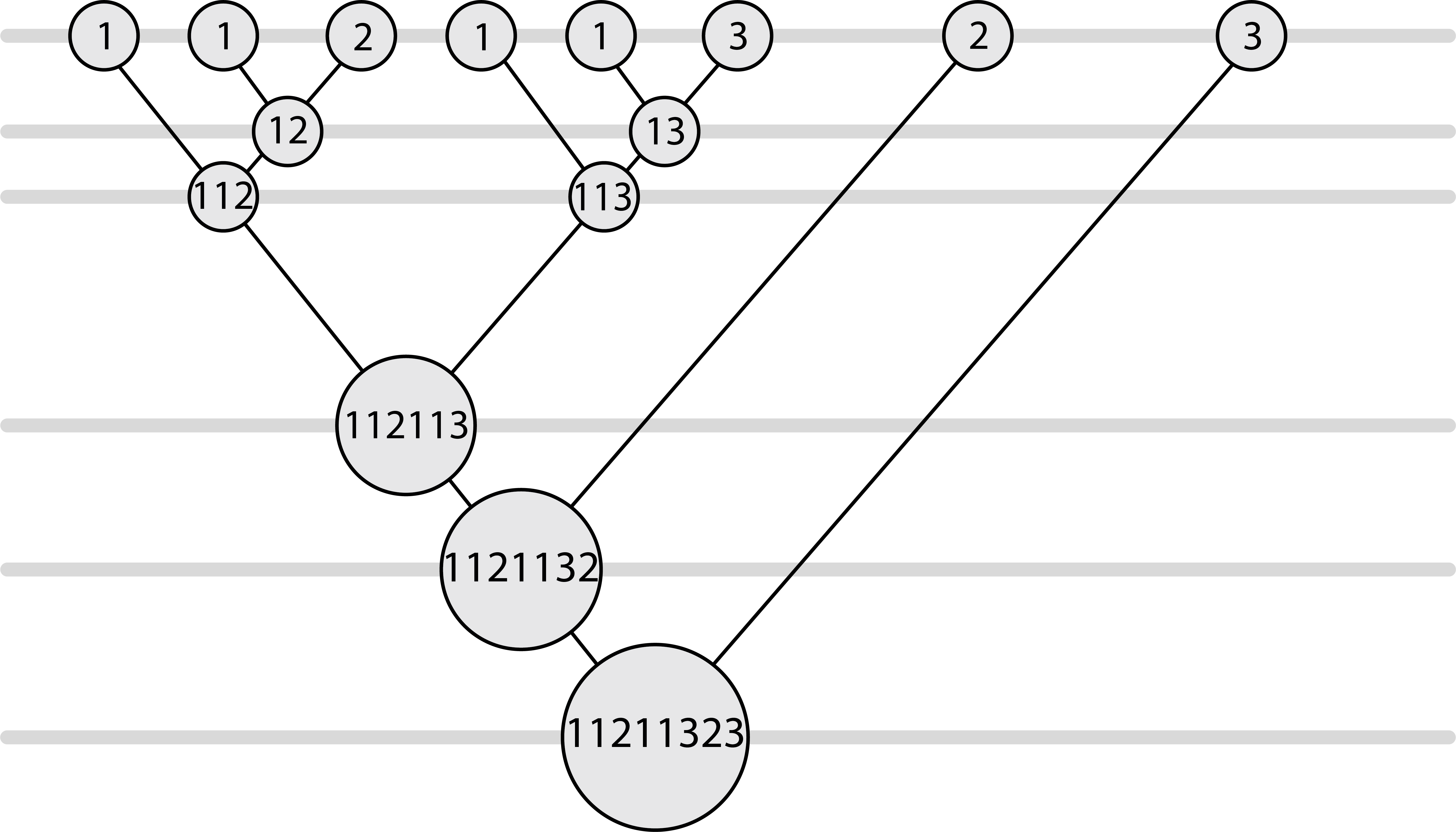} 
\end{minipage}   
\ \\ 

The illustration shows the chain of partitions attached to   $w=c_1c_1 c_2 c_1 c_1 c_3 c_2 c_3$ (cf.\ Example \ref{exampleword}).

\begin{definition}\label{chainfromlyndon} The   \textit{binary chain  of partitions $\sigma_w$ attached to a labelled weak Lyndon word} $w$ is given by  $\sigma_w=[x_0<x_1<\ldots<x_r]$, where $r$ is chosen maximal with $x_r \neq \hat{1}$. 
\end{definition} 
Let now $g:S\rightarrow \mathbf{k}$ be a map with fibres $C_i = g^{-1}(i)$ of size $n_i$.   Using Definition \ref{laLyn}, we prove:
\begin{lemma}\label{thewords}
The rule $w\mapsto \sigma_w$ gives a $\Sigma_{C_1} \times \dots \times \Sigma_{C_k}$-bijection 
$B^w_{(S,g)}(n_1,\dots,n_k) \xrightarrow{\cong} \mathbf{F}^\perp(S,g).$
\end{lemma}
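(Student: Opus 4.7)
The proof is by induction on $|S|$, with trivial base cases for $|S|\le 2$ (as $\ovA{\mathcal{P}}_S$ is empty). Equivariance is immediate: the reduction $w\mapsto w'$ depends only on the sequence of formal symbols $c_\alpha$ and the lexicographic order on their indices, not on the underlying labels in $S$; hence for $h\in \Sigma_g$ each partition $x_i(h\cdot w)$ is obtained from $x_i(w)$ by permuting elements via $h$, giving $\sigma_{h\cdot w}=h\cdot\sigma_w$.

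For the forward direction, I will show $\sigma_w\in \mathbf{F}^\perp$ for every labelled weak Lyndon word $w=u^d$. By Definition~\ref{globalinv}, it suffices to exhibit a distinguished $y_w\in [\sigma_w<\hat{1}]$ with $y_w\perp F_1([\hat{0}])$ such that the sub-chains $\sigma_w^{<y_w}$ and $\sigma_w^{>y_w}$ are invisible with respect to the restricted fans. The correct choice of $y_w$ is the earliest partition $x_i$ in $\sigma_w$ whose classes each contain \emph{exactly one} position of $w$ carrying a non-minimal letter (and possibly several positions carrying the globally minimal letter $c_{\min}$); when $w=c_{\min}^d$ this stage is $\hat{1}$. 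That such a stage is reached within $\sigma_w$ follows from the structure of the reduction, which successively merges each non-minimal letter with its adjacent minimal neighbours. The orthogonality condition $y_w\perp F_1([\hat{0}])$ then holds because $F_1([\hat{0}])$ collapses precisely the non-minimal $\hat{0}$-classes into a single block $B$: the meet is $\hat{0}$ since each $y_w$-class contributes only one point of $B$, and the join is $\hat{1}$ since each $y_w$-class meets $B$ nontrivially (via that unique non-minimal letter). Applying the inductive hypothesis, one identifies $\sigma_w^{<y_w}$ with a product of labelled weak Lyndon chains on each $y_w$-class $K$ (obtained by restricting $w$ to the positions of $K$, which form a shorter labelled weak Lyndon word), and $\sigma_w^{>y_w}$ with the labelled weak Lyndon chain on $S/y_w$ (obtained by viewing each class as an aggregated letter). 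These sub-chains are orthogonal to $(F_2^{\leq y_w},\ldots)$ and $(F_1^{\geq y_w}, F_2^{\geq y_w})$ respectively by the inductive hypothesis.

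For the inverse direction, given an $\mathbf{F}$-orthogonal chain $\tau$, Lemma~\ref{containsorthogonal} together with Definition~\ref{globalinv} yields a unique $y\in [\tau<\hat{1}]$ with $y\perp F_1([\hat{0}])$ and with $\tau^{<y}$, $\tau^{>y}$ orthogonal to the restricted fans. Applying the inductive hypothesis to each interval $[\hat{0},K]$ for $K$ a $y$-class (with fan $(F_2^{\leq y},\ldots)$) and to $[y,\hat{1}]\cong \mathcal{P}_{S/y}$ (with fan $(F_1^{\geq y},F_2^{\geq y})$), one recovers labelled weak Lyndon words $w_K$ on each $K$ and $w_{S/y}$ on $S/y$. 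These assemble into a single labelled weak Lyndon word $w$ on $(S,g)$ by substituting each $w_K$ in place of the corresponding symbol in $w_{S/y}$; one verifies that this substitution respects both the Lyndon ordering and the period-power structure. A direct check confirms $\sigma_w=\tau$, and uniqueness follows from the rigidity of the Lyndon standard factorisation together with the uniqueness of $y$.

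The main obstacle is the precise verification that the reduction-theoretic choice of $y_w$ satisfies the orthogonality relations, and that the restrictions carry the correct labelled weak Lyndon structure. This hinges on the characterisation of Lyndon words as lexicographically minimal among their cyclic rotations: this property forces $w$ to have the same pattern of minimal-versus-non-minimal letters in each of its $d$ periodic blocks, and forces the reduction $w\mapsto w'$ to perform exactly the adjacent merges prescribed by $F_2$ within each block, with the period $d$ governing the top-level $F_1$-step. Particular care must be taken for the degenerate cases $F_1([\hat{0}])\in\{\hat{0},\hat{1}\}$ (occurring when the multi-set of labels is constant or $k=1$), where the recursion bottoms out with $y_w=\hat{1}$ and $\sigma_w=\emptyset$.
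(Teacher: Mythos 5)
Your overall strategy agrees with the paper's: induct on $|S|$, pick the distinguished partition $y_w$ at the stage where all occurrences of the minimal letter have been absorbed, verify $y_w\perp F_1([\hat0])$ by a direct orbit count, and recurse. The identification of $y_w = x_m$ (your description ``each class has exactly one non-minimal position'' agrees with the paper's ``$m$ minimal such that $w_m$ contains no $c_1$''), the check that $y_w\perp F_1([\hat0])$, and the observation that weak Lyndon words other than $c_1^n$ never end in $c_1$ (so every class has a non-minimal position) are all correct.

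However, there is a genuine gap in how you apply the inductive hypothesis. You claim that \emph{both} sub-chains $\sigma_w^{<y_w}$ and $\sigma_w^{>y_w}$ are orthogonal ``by the inductive hypothesis.'' This works for the upper interval $[y_w,\hat1]\cong\mathcal{P}_{S_{y_w}}$, where the restricted fan $(F_1^{\geq y_w},F_2^{\geq y_w})$ really is of the form $\mathbf{F}(S_{y_w},g_{y_w})$, so Lemma~\ref{thewords} for the smaller set $S_{y_w}$ applies. But for the lower interval the relevant fan, per Definition~\ref{globalinv}, is the \emph{one-function} fan $(F_2^{\leq y_w})$ on the \emph{product} lattice $[\hat0,y_w]\cong\prod_K\mathcal{P}_K$. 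Lemma~\ref{thewords} is a statement about the two-function fan $\mathbf{F}(S',g')$ on a single partition lattice $\mathcal{P}_{S'}$; it does not speak to one-function fans, nor to products. Restricting $w$ to a $y_w$-class $K$ does give a shorter labelled weak Lyndon word $c_1^a c_j$, but what the IH would tell you about it is orthogonality to $\mathbf{F}(K,g|_K)$, which is not what is needed. The paper avoids this entirely: $\sigma_w^{<y_w}=[x_0<\cdots<x_{m-1}]\perp F_2^{\leq x_m}$ is verified \emph{directly} as an observation (it is a straightforward check once one sees that each $x_j$ peels off exactly one layer of $c_1$'s against $F_2$), and induction is invoked only for the upper interval. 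The same gap recurs in your inverse direction; the paper instead exhibits an explicit bijection $D$ (with inverse $E$) from $B^w_{(S,g)}(n_1,\dots,n_k)$ to the coproduct indexed by $y\perp F_1([\hat0])$ and lower-chains $\perp F_2^{\leq y}$, and then compares it against the analogous decomposition of $\mathbf{F}^\perp$ given directly by Definition~\ref{globalinv}, closing the induction only at the top interval. To repair your argument you would either need a strengthened inductive statement covering one-function fans on products of partition lattices, or you need to replace the appeal to IH for the lower interval by a direct verification as in the paper.
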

\begin{proof} \vspace{-5pt} We freely use notation from above.
Equivariance is evident.
For bijectivity, we induct on $n=n_1+\ldots + n_k$. Assume without restriction that $n_1 >0$. 
If $n=n_1$, then the map identifies the unique weak Lyndon word $c_1^n$ with the  unique (empty) orthogonal chain.\\  
If $n>n_1$, \vspace{2pt}  we can remove all occurrences of $c_1$ by iterated application of the \mbox{reduction function $(-)'$}.  
Indeed, fix \mbox{$w\in B^w_{(S,g)}(n_1,\dots,n_k)$}. We will show that the chain $\sigma_w$ associated with $w$ is $\mathbf{F}(S,g)$-orthogonal in the sense of Definition \ref{globalinv}. Pick $m$ minimal such that $w_{m}$ \mbox{does not contain $c_1$.} We have a  ``type function'' $g_{x_m}: S_{x_m} \rightarrow \{\ldots , 1^22,\ldots,1^2k, 12,\ldots,1k,2,\ldots,k\}$ with finite linearly ordered image, and the word $w_m$ is an $(S_{x_m},g_{x_m})$-labeled weak Lyndon word in letters $\{\dots , c_{1^22},\ldots,c_{1^2k}, c_{12},\ldots,c_{1k},c_2,\ldots,c_k \}$ by Proposition \ref{preserveslyndon}. 
Note that   \mbox{$x_m \perp F_1([\hat{0}])$} and  $(\sigma_w)^{<x_m} = [x_0 <\dots<x_{m-1}] \perp F_2^{\leq x_m}$, using that  $F_1([\hat{0}])$  is  the partition identifying
 all elements labelled by $2, 3,\ldots $, while $F_2([\hat{0}])$ is the partition identifying   all elements labelled by $1$.
Observe that $(\sigma_w)^{>x_m}$ is $(F_1^{\geq x_m}, F_2^{\geq x_m})$-orthogonal precisely if $\sigma_{w_m}$ is an $\mathbf{F}(S_{x_m},g_{x_m})$-orthogonal chain in the poset of partitions  on $S_{x_m}$. This holds by induction as $w_m$ is shorter   than $w$. Hence $\sigma_w$ is $\mathbf{F}(S,g)$-orthogonal.\vspace{3pt}

After having shown  that the map $B^w_{(S,g)}(n_1,\dots,n_k) \xrightarrow{ } \mathbf{F}^\perp(S,g)$ is well-defined,  we will now prove bijectivity.
Sending $w$ to $(x_m, [x_0 <\dots<x_{m-1}],w_m)$ gives  a \mbox{$\Sigma_{C_1} \times \dots \times \Sigma_{C_k}$-equivariant map}
$$ D: B^w_{(S,g)}(n_1,\dots,n_k) \xrightarrow{ } \coprod_{\substack{y\perp F_1([\hat{0}]) \\ [z_0 <\dots<z_{m-1}] \perp F_2^{\leq y}}}  B^w_{(S_y,g_y)}( \ldots , n_{1^2 2}, \ldots ,  n_{1^2 k} , n_{1 2}, \ldots ,  n_{1 k},  n_{2} , \ldots ,  n_{k}) \vspace{-5pt} $$ 
In order to prove that the map $D$ is bijective, we produce an inverse $E$. Given $y\perp F_1([\hat{0}])$, a chain $[z_0 <\dots<z_{m-1}] \perp F_2^{\leq y}$, and a   word $w\in B^w_{(S_y,g_y)}( \ldots , n_{1^2 2}, \ldots ,  n_{1^2 k} , n_{1 2}, \ldots ,  n_{1 k},  n_{2} , \ldots ,  n_{k})$, we first produce a word $\ovB{w}$ by replacing all letters of the form $c_{1^a i}$ by the string $c_1^a c_i$ and observe that $\ovB{w}$ is again a weak Lyndon word.
We define an $(S,g)$-labelling of $\ovB{w}$:  if $s\in S$ maps to $[s]\in S_y$ with $g_y([s]) =  {1^i a}$, then we can pick a unique $t\in [s]$  with  $g(t) = a$.
We then declare that  $s$ labels the $(r+2)^{nd}$ letter from the right in the corresponding string \mbox{$c_1 c_1 \ldots c_1 c_a$ in $\ovB{w}$} if $r$ is the minimal index with $s\sim t$ in the partition $z_r$ (using the convention $z_{-1} = \hat{0}$). \vspace{3pt} \mbox{We have defined the inverse $E$.}

A parallel construction can be carried out on chains of partitions: given any partition $y\perp F_1([\hat{0}])$ of $S$, we write $S_y$ for the collection of equivalence classes of $y$ and obtain a ``type function''  \mbox{$g_y:S_y \rightarrow \{\ldots , 1^22,\ldots,1^2k, 12,\ldots,1k,2,\ldots,k\}$} from $g$.  By Definition \ref{globalinv} of orthogonality, we obtain a $\Sigma_{C_1} \times \dots \times \Sigma_{C_k}$-equivariant bijection $ \mathbf{F}(S,g)^\perp \xrightarrow{\cong}  \coprod_{\substack{y\perp F_1([\hat{0}]) \\ [z_0 <\dots<z_{m-1}] \perp F_2^{\leq y}}}  \mathbf{F}(S_y,g_y)^\perp$.\vspace{3pt}

Combining the two bijections for words and chains, we obtain the following commutative square whose lower horizontal    map is bijective by induction:\vspace{0pt} 
\begin{diagram}\hspace{-25pt}
B^w_{(S,g)}(n_1,\dots,n_k) & \rTo& & \mathbf{F}(S,g)^\perp   \\
\\ \hspace{-25pt}\dTo^\cong  & && \dTo^\cong \\  
\hspace{-25pt} \coprod_{\substack{y\perp F_1([\hat{0}]) \\ [z_0 <\dots<z_{m-1}] \perp F_2^{\leq y}}} \hspace{-15pt} B^w_{(S_y,g_y)}( \ldots , n_{1^2 2}, \ldots ,  n_{1^2 k} , n_{1 2}, \ldots ,  n_{1 k},  n_{2} , \ldots ,  n_{k})
& \rTo^{\cong} & &\hspace{-15pt}\coprod_{\substack{y\perp F_1([\hat{0}]) \\ [z_0 <\dots<z_{m-1}] \perp F_2^{\leq y}}} \hspace{-15pt} \mathbf{F}(S_y,g_y)^\perp
\end{diagram} \end{proof}  
\vspace{-20pt}

\subsubsection*{Topological Branching Rule}\vspace{-10pt}
Fix a decomposition $n = n_1+ \ldots + n_k $. Let $S = \mathbf{n}$ and $g:\mathbf{n} \rightarrow \mathbf{k}$ be the unique order-preserving function with $|g^{-1}(i)| = n_i$.
Given any $(\mathbf{n},g)$-labelled weak Lyndon word $w = u^d \in B_{\mathbf{n}}^w(n_1 , \dots , n_k)$, we  have attached  a binary chain of partitions  \mbox{$\sigma_w  = [x_0<\ldots<x_r] \in  \mathbf{F}^\perp(\mathbf{n},g)$} in Definition \ref{chainfromlyndon}. 
The  copies of $u$ partition $\mathbf{n}$ into $d$ sets via the labelling; we write $\mathbf{d}_w$ for the set of classes of this partition. 
Observe that for each \mbox{$S_i \in \mathbf{d}_w$,} the $S_i$-labelled tree associated with the $(S_i, g|_{S_i})$-labelled Lyndon word $u$ carries a  planar structure. Using our analysis   in Section \ref{GeneralBinary}, we can describe  {the collapse map associated with $\sigma_w$ as} $$|\Pi_n| \longrightarrow   | (\Pi_n)_{(\hat{0},x_0)}|^\diamond \wedge \Sigma| (\Pi_n)_{(x_0,x_1)}|^\diamond \wedge \dots \Sigma| (\Pi_n)_{(x_{r-1},x_r)}|^\diamond \wedge | (\Pi_n)_{(x_r,\hat{1})}|^\diamond\xrightarrow{\simeq} \Sigma^{-1} (S^{\frac{n}{d}-1})^{\wedge {\mathbf{d}_w}} \wedge |\Pi_{\mathbf{d}_w}|^\diamond$$

Combining this observation with Complementary Collapse in Theorem \ref{CollapseFansB}, our Lemma \ref{thewords} concerning weak Lyndon words and orthogonal chains, and Remark \ref{standardchoice}, we obtain: 
\begin{theorem}\label{main}
For $n=n_1 + \dots + n_k$, there  are simple   $\Sigma_{n_1} \times \dots \times \Sigma_{n_k}$- equivariant  equivalences
$$ |\Pi_n|  \ \xrightarrow{\ \ \simeq \ \ }  \hspace{-3pt}  \bigvee_{w \in B_{\mathbf{n}}^w(n_1 , \dots , n_k)}  \hspace{-12pt}   \Sigma^{-1} (S^{\frac{n}{d}-1})^{\wedge {\mathbf{d}_w}} \wedge |\Pi_{\mathbf{d}_w}|^\diamond     \ \xrightarrow{\ \ \simeq \ \ } \hspace{-3pt} \bigvee_{\substack{d \ | \ \gcd(n_1 , \dots , n_k) \\ B(\frac{n_1}{d}, \dots, \frac{n_k}{d})}} \hspace{-12pt}   \Ind^{\Sigma_{n_1} \times \dots \times \Sigma_{n_k}}_{\Sigma_d}  (\Sigma^{-1} (S^{\frac{n}{d}-1})^{\wedge d} \wedge |\Pi_d|^\diamond  )  $$
\end{theorem}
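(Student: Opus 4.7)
The plan is to assemble the statement by chaining together the four ingredients the paper has already prepared: the orthogonality fan $\mathbf{F}(\mathbf{n},g) = (F_1,F_2)$ of Theorem \ref{isafan}, the Complementary Collapse equivalence of Theorem \ref{CollapseFansB}, the bijection between labelled weak Lyndon words and orthogonal chains of Lemma \ref{thewords}, and the identification of binary chain collapses in Section \ref{GeneralBinary}. First I would note that Theorem \ref{isafan} makes $(F_1,F_2)$ an orthogonality fan on the $\Sigma_g = \Sigma_{n_1}\times\cdots\times\Sigma_{n_k}$-lattice $\mathcal{P}_{\mathbf{n}}$, with $F_1([\hat 0]) = \hat 1$ only when $k=1$ (in which case the whole statement is vacuous), so that Theorem \ref{CollapseFansB} applies and yields a $\Sigma_g$-equivariant simple homotopy equivalence
$$|\Pi_n| \xrightarrow{\ \simeq\ } \bigvee_{[y_0<\cdots<y_r]\perp \mathbf{F}} |\ovA{\mathcal{P}_{\mathbf{n}}}_{(\hat 0,y_0)}|^\diamond \wedge \Sigma|\ovA{\mathcal{P}_{\mathbf{n}}}_{(y_0,y_1)}|^\diamond \wedge \cdots \wedge |\ovA{\mathcal{P}_{\mathbf{n}}}_{(y_r,\hat 1)}|^\diamond.$$

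Next I would reindex this wedge. By Lemma \ref{thewords}, the set $\mathbf{F}^\perp$ of orthogonal chains is $\Sigma_g$-equivariantly in bijection with the set $B^w_{\mathbf{n}}(n_1,\ldots,n_k)$ of $(\mathbf{n},g)$-labelled weak Lyndon words, via $w\mapsto \sigma_w$. Given such a word $w=u^d$ with orthogonal chain $\sigma_w = [x_0<\cdots<x_r]$, inspection of the construction shows that $\sigma_w$ is a binary chain whose top partition $x_r$ has class set $\mathbf{d}_w$, and that for each $S_i\in\mathbf{d}_w$ the restriction of $\sigma_w$ to $S_i$ is the binary chain associated with one copy of $u$. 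In particular, Proposition \ref{veryeasy} gives $(\Pi_\mathbf{n})_{(x_{j-1},x_j)}\cong\ovA{\Bcal}_{N_{[x_{j-1},x_j]}}$, so Proposition \ref{ta} (together with the planar structure on each $S_i$-labelled tree $T_u$) identifies the wedge summand indexed by $\sigma_w$ with
$$\Sigma^{-1}\bigl(S^{\tfrac{n}{d}-1}\bigr)^{\wedge \mathbf{d}_w} \wedge |\Pi_{\mathbf{d}_w}|^\diamond$$
in a $\Sigma_g$-equivariant manner. Summing over $w$ gives the first equivalence of the theorem.

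Finally I would pass from $B^w_{\mathbf{n}}(n_1,\ldots,n_k)$ to the induced form. By Remark \ref{standardchoice}, the $\Sigma_g$-set $B^w_{\mathbf{n}}(n_1,\ldots,n_k)$ decomposes as $\coprod_{d\mid\gcd(n_1,\ldots,n_k)} B(\tfrac{n_1}{d},\ldots,\tfrac{n_k}{d})$ after quotienting, with the orbit of a standard representative $w=u^d$ having stabiliser the diagonally embedded $\Sigma_d\subset \Sigma_d^{n/d}\subset \Sigma_{n_1}\times\cdots\times \Sigma_{n_k}$. Applying Proposition \ref{indexedwedge} to the $\Sigma_g$-indexed wedge then converts it into the desired induced wedge, producing the second equivalence.

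The only genuinely substantial step here is verifying that the collapse map out of $|\Pi_n|$ associated with the binary chain $\sigma_w$ does indeed realise the smash decomposition claimed, and that this identification is compatible with the $\Sigma_g$-action needed for the reindexing to the induced form. The hard part is thus \emph{bookkeeping}: Proposition \ref{ta} supplies the space-level map and its equivariance, Lemma \ref{thewords} supplies the orbit structure of $\mathbf{F}^\perp$, and one must check that the standard labelling chosen in Remark \ref{standardchoice} matches the diagonal embedding of $\Sigma_d$ used on the right-hand side. Everything else is an application of the general machinery already proven.
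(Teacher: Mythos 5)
Your proposal reproduces the paper's own proof with the same ingredients in the same order --- Theorem \ref{isafan}, Theorem \ref{CollapseFansB}, Lemma \ref{thewords}, the binary-chain analysis of Section \ref{GeneralBinary}, Remark \ref{standardchoice}, and Proposition \ref{indexedwedge} --- so the argument is essentially correct. Two small attribution and edge-case slips to tidy: the space-level identification of each wedge summand comes from the unsuspended collapse-map discussion at the very end of Section \ref{GeneralBinary} together with Proposition \ref{veryeasy} and Example \ref{example: sphere} (not Proposition \ref{ta}, which is the homotopy from collapse to ungrafting), and the degenerate value to exclude is $F_1([\hat 0]) = \hat 0$, not $\hat 1$, which occurs precisely when $n_2+\cdots+n_k \le 1$; this includes the non-vacuous case $k=2$, $n_2=1$ (the $\Sigma_{n-1}\times\Sigma_1$ example), which the paper also passes over silently and which is handled by relabelling or by using the fan $(F_2,F_1)$ in its place.
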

\subsection{Branching and Ungrafting}
\mbox{Theorem \ref{main} has  consequences for  spectral Lie algebras.}

{Fix a decomposition $n=n_1+\ldots +n_k$} and let $g:\mathbf{n}\rightarrow \mathbf{k}$ be the increasing function with $|g^{-1}(i)| = n_i$. 

Let $w=u^d\in B_{\mathbf{n}}^w(n_1,\ldots,n_k)$ be a weak Lyndon word  with labelling represented by $f$. The various copies of $u$ give rise to a partition  $x_w$, written as $\mathbf{n}=\coprod_{S_i\in \mathbf{d}_w} S_i$. For each $S_i\in \mathbf{d}_w$, the Lyndon word $u$ has an  $(S_i,g|_{S_i})$-labelling represented by the restriction $f_i$ of $f$ to $S_i$. We write $T_{i,w}$ for the 
 $S_i$-labelled binary planar tree $T_{\tilde{u}^{f_i}}$ corresponding to the Lie\vspace{3pt} monomial $\tilde{u}^{f_i}$ obtained \mbox{by resolving $u$.} 
An $\mathbf{n}$-labelled weighted tree is said to be \textit{grafted from $w$} if it has nonzero weights on all edges and we can find edges $\{ (v_i^-<v_i) \}_{S_i \in \mathbf{d}_w}$ such that for each $S_i$,  the component of $T$  above $v_i^-$ is isomorphic to the $S_i$-labelled tree $T_{i,w}$. 
We write $(\Sigma |\Pi_n|^\diamond)_{(x_w,T_{i,w})}$ for the open subspace of $\Sigma |\Pi_n|^\diamond$ consisting of all points which can be represented by trees   grafted from $w$. 
\begin{proposition} Every point in $\Sigma |\Pi_n|^\diamond$ can lie in at most one subspace $(\Sigma |\Pi_n|^\diamond)_{(x_w,T_{i,w})}$. \vspace{-5pt}  \vspace{-2pt} 
\end{proposition}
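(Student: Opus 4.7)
My plan is to extract the Lyndon word $u$ and the partition $x_w$ directly from the underlying tree of $T$, and then invoke the primitivity of Lyndon words. First, since $T$ has nonzero weights on all edges, its underlying $\mathbf{n}$-labelled binary planar rooted tree is uniquely determined by the point in $\Sigma|\Pi_n|^\diamond$ that it represents. Moreover, given any grafting via cuts $\{(v_i^-<v_i)\}_{S_i\in\mathbf{d}_w}$, each upper vertex $v_i$ is forced to be the least common ancestor in $T$ of its descendant leaf set $S_i$, so the set of cut edges is already recoverable from the induced partition $x_w$. Hence the entire grafting structure is encoded in the partition.

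Now suppose that the same tree $T$ is grafted both from $w=u^d$, with cut set $E$, and from $w'=(u')^{d'}$, with cut set $E'$. The classes of the resulting partitions $x_w$ and $x_{w'}$ are each leaf sets of subtrees of $T$; since two such leaf sets in a tree are always either disjoint or nested, the partitions $x_w$ and $x_{w'}$ are comparable. Without loss of generality $x_w$ refines $x_{w'}$, so each class $S_j'$ of $x_{w'}$ decomposes as $\coprod_{i\in I_j}S_i$.

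The main step will be to show $|I_j|=1$ for every $j$. Fix $j$; the upper subtree of $T$ above $v_j'$ is isomorphic, as an $S_j'$-labelled binary planar tree, to $T_{j,w'}$, so reading the $g$-types of its leaves in planar order yields the Lyndon word $u'$. Within this subtree, each $v_i$ for $i\in I_j$ is an internal node whose descendant leaves form a consecutive planar block, and the planar subtree above $v_i$ is isomorphic to $T_{i,w}$, so those $g$-types spell out $u$. Concatenating the blocks in their planar order exhibits $u'$ as $|I_j|$ consecutive copies of $u$, that is, $u'=u^{|I_j|}$ as a word. Since $u'$ is Lyndon and therefore primitive, this forces $|I_j|=1$.

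Consequently $x_w=x_{w'}$, the cut sets $E$ and $E'$ coincide, and the upper subtrees of $T$ are simultaneously isomorphic to both $T_{i,w}$ and $T_{i,w'}$. Reading off $g$-types gives $u=u'$, and the induced leaf labellings match class by class, so $w=w'$ as labelled weak Lyndon words by Definition~\ref{laLyn}. The hard part will be the step identifying $u'$ with the literal concatenation $u^{|I_j|}$: one must verify both that the descendants of the $v_i$ give consecutive, non-interleaved blocks in the planar order of $T_{j,w'}$ and that each such block reads as $u$ under the planar isomorphism to $T_{i,w}$. Once these compatibilities are in place, primitivity of Lyndon words closes the argument immediately.
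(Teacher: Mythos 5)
Your approach is genuinely different from the paper's. The paper attaches a weak Lyndon word to \emph{every} node of $T$ by a canonical, planarity-free rule (multiply the children's words in lexicographically increasing order), and then characterizes the cut nodes $\{v_i\}$ intrinsically as the nodes at which the attached word transitions from strict Lyndon (at and above the node) to merely weak Lyndon (strictly below). This shows directly that the partition and the word $u$ are determined by $T$ alone, without ever comparing two competing graftings. You instead compare two hypothetical partitions $x_w$, $x_{w'}$ and appeal to primitivity of Lyndon words, which is a nice idea but leaves two gaps.

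The first gap is in the comparability step. From ``two subtree leaf sets are always disjoint or nested'' it does not follow that two partitions built from such leaf sets are comparable: in the tree $((1,2),(3,4))$, the partitions $\{\{1,2\},\{3\},\{4\}\}$ and $\{\{1\},\{2\},\{3,4\}\}$ have all classes pairwise disjoint-or-nested yet neither refines the other. What rescues the step is that $x_w$ and $x_{w'}$ are \emph{regular}: every class of $x_w$ has size $|u|$ and every class of $x_{w'}$ has size $|u'|$, and disjoint-or-nested plus unequal block sizes forces the smaller-blocked partition to refine the other. The second gap is the one you flag as ``the hard part,'' and it is the real crux. Grafting only supplies an isomorphism of \emph{labelled} trees, not labelled planar trees, between the subtree of $T$ above $v_i$ and $T_{i,w}$, so it is not immediate that the planar order inherited from $T_{j,w'}$ on that subtree reads out $u$. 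To close it one needs a uniqueness lemma: a labelled binary tree admits at most one planar structure such that at every internal node the $g$-types of the leaves above it, read in planar order, spell a Lyndon word (at the root, if the two child-words were equal neither order would be Lyndon, and if unequal only one order is). The paper's word-attachment rule is exactly the canonical, planarity-independent way to encode this uniqueness, which is why its proof sidesteps the difficulty you identified.
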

\begin{proof}
Suppose a point in $(\Sigma |\Pi_n|^\diamond)_{(x_w,T_{i,w})}$ is represented by the tree $T$ having nonzero weights on all edges. We  attach a weak Lyndon word in letters $c_1,\ldots,c_k$ to   the nodes of $T$ as follows. First, we assign the letter $c_{g(s)}$ to each  leaf labelled by $s\in \mathbf{n}$. Then, we label all non-leaves recursively: if the node $t$ has incoming nodes $t_1,\ldots,t_k$, we multiply the words attached to $t_1,\ldots,t_k$ in lexicographically increasing order and attach this product to $t$. The nodes $v_i$ are then characterised by the property that for all nodes above them, the attached words are \textit{strict} Lyndon words, whereas for all nodes strictly below them, they are only  weak Lyndon words. The word $u$ is the label \mbox{of any $v_i$,} and hence $w = u^d$ is determined.   \vspace{-5pt}  \end{proof}
 
We obtain a continuous $\Sigma_{n_1} \times \ldots \times \Sigma_{n_k}$-equivariant  map \vspace{-2pt}
$$\Sigma |\Pi_n|^\diamond \xrightarrow{\ \ \ \ \ \ \ \ \ \ \   } \bigvee_{w\in B_{\mathbf{n}}^w(n_1,\ldots,n_k)}  \Sigma |\Pi_n|^\diamond/_{\Sigma |\Pi_n|^\diamond- (\Sigma |\Pi_n|^\diamond)_{(x_w,T_{i,w})}}.$$
For each $w$, the homotopy 
$M_{\sigma_w}:[0,1] \times \Sigma |\Pi_n|^\diamond \rightarrow  \Sigma |\Pi_{\mathbf{d}_{x_k}}|^\diamond \wedge ( \bigwedge_{S_i\in\mathbf{d}_{x_k}} S^{|S_i|-1} ) $
from \mbox{Proposition \ref{ta}}  sends $[0,1] \times \left(\Sigma |\Pi_n|^\diamond- (\Sigma |\Pi_n|^\diamond)_{(x_w,T_{i,w})}\right)$ to the basepoint. Together with its established equivariance properties, this implies the existence of a $\Sigma_{n_1} \times \ldots \times \Sigma_{n_k}$-equivariant homotopy\vspace{-1pt}
$$M:[0,1] \times \Sigma |\Pi_n|^\diamond   \xrightarrow{\ \ \ \ \ \ \ \ \ \ \   }  \   \bigvee_{w \in B_{\mathbf{n}}^w(n_1 , \dots , n_k)}  \hspace{-12pt} \Sigma |\Pi_{\mathbf{d}_w}|^\diamond  \wedge    (S^{\frac{n}{d}-1})^{\wedge {\mathbf{d}_w}}\vspace{-1pt} $$
whose composition with the projection to $ \Sigma |\Pi_{\mathbf{d}_w}|^\diamond \wedge (S^{\frac{n}{d}-1})^{\wedge {\mathbf{d}_w}}   $ is $M_{\sigma_w}$ for all $w\in B_{\mathbf{n}}^w(n_1,\ldots,n_k)$.
The map $M(0,-)$ appears in Theorem \ref{main} and is an equivariant equivalence. We deduce:
\begin{corollary}\label{branchingforungrafting}
There is a  $\Sigma_{n_1} \times \ldots \times \Sigma_{n_k}$-equivariant  homotopy equivalence \vspace{-2pt}$$M(1,-): \Sigma |\Pi_n|^\diamond  \xrightarrow{\ \ \ \ \ \simeq \ \ \ \ \   } \bigvee_{w \in B_{\mathbf{n}}^w(n_1 , \dots , n_k)}  \hspace{-12pt}  \Sigma  |\Pi_{\mathbf{d}_w}|^\diamond \wedge (S^{\frac{n}{d}-1})^{\wedge {\mathbf{d}_w}}\vspace{-2pt}$$
such that composing with the projection to the summand corresponding to  $w$ \mbox{gives  
  $(\id \wedge \bigwedge a_{T_{i,w}}) \circ u_{x_w}$.}
\end{corollary}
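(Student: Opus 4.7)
The plan is to observe that Corollary \ref{branchingforungrafting} follows almost formally by combining Theorem \ref{main} with the equivariant homotopy $M$ constructed immediately before the corollary statement. The first step is to note that by Theorem \ref{main}, the map $M(0,-)$ is itself a simple $\Sigma_{n_1}\times\cdots\times\Sigma_{n_k}$-equivariant equivalence: indeed, on each wedge summand corresponding to $w\in B^w_{\mathbf{n}}(n_1,\ldots,n_k)$, the composition of $M(0,-)$ with the projection is precisely $(\id\wedge \bigwedge_i \upalpha_{\sigma_{i,w}})\circ \nu_{x_w}$ from Proposition \ref{ta}, which unravels to the collapse map along the binary chain $\sigma_w$ attached to $w$, followed by the product/suspension identifications from Section \ref{clock}. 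This is exactly the form of the equivalence produced by Theorem \ref{main} via Lemma \ref{thewords} (and Remark \ref{standardchoice}), which biject labelled weak Lyndon words with orthogonal chains.

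Next, I would verify that the map $M(1,-)$ on the summand indexed by $w$ equals $(\id\wedge \bigwedge_i a_{T_{i,w}})\circ u_{x_w}$. This is a direct consequence of Proposition \ref{ta}, which provides for each $w$ a homotopy $M_{\sigma_w}$ starting from the collapse-based map and ending at the ungrafting-based map, and which in addition has the equivariance $M_{h\cdot\sigma_w}(t,h\cdot T)=h\cdot M_{\sigma_w}(t,T)$. The crucial point (already used in the construction of $M$) is that the various homotopies $M_{\sigma_w}$ are compatible under permutations in $\Sigma_{n_1}\times\cdots\times\Sigma_{n_k}$ permuting weak Lyndon words in $B^w_{\mathbf{n}}(n_1,\ldots,n_k)$, which is exactly what allows one to assemble them into a single $\Sigma_{n_1}\times\cdots\times\Sigma_{n_k}$-equivariant homotopy $M$ landing in the indexed wedge.

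Finally, since $M$ is a $\Sigma_{n_1}\times\cdots\times\Sigma_{n_k}$-equivariant homotopy between $M(0,-)$ and $M(1,-)$ and $M(0,-)$ is an equivariant equivalence, $M(1,-)$ is an equivariant equivalence as well. The description of $M(1,-)$ on each summand in terms of $u_{x_w}$ and the maps $a_{T_{i,w}}$ from Definition \ref{amap} is then exactly the content of the corollary. I do not anticipate any serious obstacle: the real work has already been carried out in Section \ref{LAPC2}, where the homotopies from collapse maps to compositions of ungrafting maps with the tree-based maps $a_T$ were constructed and their equivariance verified. The present corollary simply packages these pointwise interpolations into a single global equivariant homotopy, using that the open subspaces $(\Sigma|\Pi_n|^\diamond)_{(x_w,T_{i,w})}$ are pairwise disjoint (as shown in the proposition preceding the corollary) so that the interpolating trees outside any such subspace are mapped to the basepoint in every summand.
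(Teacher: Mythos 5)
Your proposal is correct and follows the paper's own argument: establish that $M(0,-)$ is the equivalence from Theorem \ref{main}, that $M(1,-)$ restricted to the $w$-summand is $(\id\wedge\bigwedge a_{T_{i,w}})\circ u_{x_w}$ by Proposition \ref{ta}, and then conclude via the equivariant homotopy $M$ assembled from the $M_{\sigma_w}$. The paper itself treats this as immediate ("The map $M(0,-)$ appears in Theorem \ref{main} and is an equivariant equivalence. We deduce:"), and your write-up supplies exactly the same chain of reasoning, including the role of the pairwise-disjoint open subspaces $(\Sigma|\Pi_n|^\diamond)_{(x_w,T_{i,w})}$ in making the global homotopy well-defined.
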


We can now deduce the algebraic branching rule by applying homology: \vspace{-5pt}
\begin{proof}[Proof of Lemma \ref{equation: branching for lie}] \label{algebraicbranchingproof}
Let  $w=u^d\in   B_{\mathbf{n}}^w(n_1,\ldots,n_k)$. Write $x_w$  for the partition  \mbox{$\mathbf{n}=\coprod_{S_i \in  \mathbf{d}_w} S_i$} induced by the various copies of $u$ in $w$ via the labelling. For each $S_i \in \mathbf{d}_w$,   let $f_i$ be the $(S_i,g|_{S_i})$-labelling on $u$ obtained by restriction and write $T_{i,w} = T_{\tilde{u}^{f_i}}$.
By \mbox{Corollary \ref{commutes},} \vspace{3pt} we can obtain the map  $\uptheta_w$ appearing in the algebraic branching rule by applying $\tilde{\HH}_0(-,\ZZ)$ to the composite   $\vartheta_w$ given by
$\vspace{3pt}\mathbf{Lie}_{\mathbf{d}_w} \xrightarrow{\id \wedge \bigwedge \gamma_{\tilde{u}^{f_i}}} \mathbf{Lie}_{\mathbf{d}_w}\wedge   \bigwedge_{S_i \in \mathbf{d}_w} \mathbf{Lie}_{S_i} \xrightarrow{ \ \   \ \ } \mathbf{Lie}_n$.
\mbox{After} pre- and postcomposing with suitable equivalences,  the map $\bigoplus \vartheta_w$ is  readily obtained by applying $\Map_{\mathbf{Sp}}(S^1, (S^1)^{\wedge n}) \wedge \DD(-)$ to the equivalence $\Sigma |\Pi_n|^\diamond \xrightarrow{\ \simeq \ }  \bigvee_{w \in B_{\mathbf{n}}^w(n_1 , \dots , n_k)}  \hspace{-0pt}  \Sigma  |\Pi_{\mathbf{d}_w}|^\diamond \wedge (S^{\frac{n}{d}-1})^{\wedge {\mathbf{d}_w}}$ in Corollary \ref{branchingforungrafting}.
\end{proof}

\subsection{Free Lie Algebras on Multiple Generators}\label{freelie}
We will now describe free Lie algebras on direct sums of spectra $X_1,\ldots, X_k$ in terms of free  Lie algebras on individual spectra.

Assume that we are given a \textit{strict} Lyndon word $w   \in B(|w|_1,\ldots, |w|_k)$ of length $|w| = \sum_i |w|_i$. 
Let  $S=\{1,\ldots,|w|\}$ and write $g: S \rightarrow \mathbf{k}$ for the unique order-preserving function with $|g^{-1}(i)| = |w|_i$. We give $w$ the $(S,g)$-label $f$ which labels all copies of $c_i$ by elements in $g^{-1}(i)$ in increasing order from left to right.
Write $\tilde{w} = \tilde{w}^f \in \Lierep_{|w|}$ for the  Lie monomial obtained by resolving $w$.\vspace{0pt}

\mbox{In Section \ref{c3po}, we have constructed a map $\gamma_{\tilde{w}}: S^0 \rightarrow  {\mathbf{Lie}}_{|w|}$ in the homotopy category  of spectra.}  {Since $\underline{\mathbf{Lie}}_{|w|}$ is nonequivariantly equivalent to a wedge of $0$-spheres, we can pick a morphism} \mbox{$\upgamma_{\tilde{w}} : S_c \rightarrow \underline{\mathbf{Lie}}_{|w|}$ in $\mathbf{Sp}$} from a connected
space of representatives for $\gamma_{\tilde{w}}$.  Here $S_c$ is a cofibrant replacement of the unit.
Inducing up, we obtain a map $ \Ind_1^{\Sigma_{|w|_1} \times \ldots \times \Sigma_{|w|_k}} S_c \rightarrow  \underline{\mathbf{Lie}}_{|w|}$ in the category\vspace{3pt} $\Fun(B(\Sigma_{|w|_1} \times \ldots \times \Sigma_{|w|_k}), \mathbf{Sp})$.

Given cofibrant $X_1,\ldots,X_k\in \mathbf{Sp}$, we apply  $  ( - \wedge (X_1^{\wedge  {|w|_1} } \wedge \ldots \wedge X_k^{\wedge  { |w|_k} })^{ } )_{\Sigma_{|w|_1}\times \ldots \times \Sigma_{|w|_k}}$
and obtain $$ S_c \wedge X_1^{\wedge  {|w|_1} } \wedge \ldots \wedge X_k^{\wedge  { |w|_k} }  \rightarrow  \underline{\mathbf{Lie}}_{|w|}\mywedge{\Sigma_{|w|_1}\times \ldots \times \Sigma_{|w|_k}} (X_1^{\wedge  {|w|_1} } \wedge \ldots \wedge X_k^{\wedge  { |w|_k} }) \rightarrow \underline{\mathbf{Lie}}_{|w|} \mywedge{\Sigma_{|w|}} (X_1\vee \ldots \vee  X_k)^{\wedge |w|} $$
The final spectrum includes into $\bigoplus_n \ {\underline{\mathbf{Lie}}}_n \mywedge{\Sigma_n}(X_1\vee \ldots \vee  X_k)^{\wedge n}$. \vspace{3pt} Passing to underlying \mbox{$\infty$-categories} gives a \mbox{morphism $F_w:  X_1^{\wedge  {|w|_1} } \wedge \ldots \wedge X_k^{\wedge  { |w|_k} } \rightarrow  \Free_{ {\Lierep}}(X_1\vee \ldots \vee X_k) $ in $\Sp$.} By Corollary \ref{commutes} and a straightforward diagram chase, the effect of $F_w$ on homotopy \vspace{3pt} is as \mbox{explained on p.\pageref{effectonhomotopy}.} 

\begin{corollary}\label{BBBB}
Inducing up the maps $F_w$ and summing  over all Lyndon words gives an equivalence of spectra
$$\bigvee_{w\in B_k} \ovA{F}_w  \ : \   \ \bigvee_{w\in B_k}\Free_{  {\Lierep}}( X_1^{\wedge |w|_1} \wedge \ldots \wedge X_k^{\wedge |w|_k})\ \  \xrightarrow{\simeq}\ \  \Free_{ {\Lierep}}(X_1 \vee\ldots \vee X_k) .$$
\end{corollary}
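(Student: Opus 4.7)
The plan is to deduce this from the pointed $\Sigma_{n_1}\times\cdots\times\Sigma_{n_k}$-equivariant equivalence of Corollary \ref{branchingforungrafting} by Spanier-Whitehead duality. First I would apply the functor $\Map_\mathbf{Sp}(S^1,(S^1)^{\wedge n})\wedge \DD(-)$ to
$$\Sigma|\Pi_n|^\diamond \xrightarrow{\ \simeq\ } \bigvee_{w\in B^w_\mathbf{n}(n_1,\ldots,n_k)} \Sigma|\Pi_{\mathbf{d}_w}|^\diamond \wedge (S^{n/d-1})^{\wedge \mathbf{d}_w}.$$
The key sphere arithmetic is that $\DD((S^{n/d-1})^{\wedge \mathbf{d}_w})\simeq S^{-(n-d)}$ $\Sigma_d$-equivariantly (via the $\Sigma_d$-homeomorphism between $S^d$ and the suspension of the reduced standard representation sphere), while $\Map_\mathbf{Sp}(S^1,(S^1)^{\wedge n})\simeq S^{n-1}$; these cancel with the dualised sphere factor to leave exactly $\mathbf{Lie}_{\mathbf{d}_w} = S^{d-1}\wedge \DD(\Sigma|\Pi_{\mathbf{d}_w}|^\diamond)$ in each summand. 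The duality and smash-product propositions of Section \ref{topp} ensure that this manipulation is homotopically well-behaved on well-pointed spaces, yielding a $\Sigma_{n_1}\times\cdots\times\Sigma_{n_k}$-equivariant equivalence $\mathbf{Lie}_n \simeq \bigvee_w \mathbf{Lie}_{\mathbf{d}_w}$ in $\hSp$, which lifts to $\underline{\mathbf{Lie}}$ via the trivial fibration of $\Sigma$-cofibrant operads.

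Second, I would smash with the cofibrant spectrum $X_1^{\wedge n_1}\wedge\cdots\wedge X_k^{\wedge n_k}$, rearrange its factors as $(X_1^{\wedge m_1}\wedge\cdots\wedge X_k^{\wedge m_k})^{\wedge d}$ with $m_i = n_i/d$, and take $\Sigma_{n_1}\times\cdots\times\Sigma_{n_k}$-homotopy orbits. Reducing the $B^w_\mathbf{n}(n_1,\ldots,n_k)$-indexed wedge to a wedge over orbits via Proposition \ref{indexedwedge}, and identifying those orbits with $\coprod_{d|\gcd(n_i)} B(m_1,\ldots,m_k)$ with diagonally embedded $\Sigma_d$-stabilisers as in Remark \ref{standardchoice}, produces
$$\underline{\mathbf{Lie}}_n \mywedge{\Sigma_{n_1}\times\cdots\times\Sigma_{n_k}} (X_1^{\wedge n_1}\wedge\cdots\wedge X_k^{\wedge n_k})\ \simeq\ \bigvee_{\substack{d|\gcd(n_i)\\ u\in B(m_1,\ldots,m_k)}} \underline{\mathbf{Lie}}_d \mywedge{\Sigma_d} (X_1^{\wedge m_1}\wedge\cdots\wedge X_k^{\wedge m_k})^{\wedge d}.$$
Summing over $n\geq 1$ and all compositions $n=n_1+\cdots+n_k$, and splitting $(X_1\vee\cdots\vee X_k)^{\wedge n}\simeq \bigvee \Ind_{\Sigma_{n_1}\times\cdots\times\Sigma_{n_k}}^{\Sigma_n}(X_1^{\wedge n_1}\wedge\cdots\wedge X_k^{\wedge n_k})$ into weight summands, will then deliver the claimed equivalence of spectra.

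The main obstacle is identifying the resulting map with $\bigvee_w \ovA{F}_w$ as constructed in Section \ref{freelie}. Corollary \ref{branchingforungrafting} furnishes the explicit description of the summand projection as $(\id \wedge \bigwedge_i a_{T_{i,w}})\circ u_{x_w}$, built from the ungrafting map $u_{x_w}$ for the partition $x_w$ together with the cell maps $a_{T_{i,w}}$ attached to the binary planar trees $T_{i,w}=T_{\tilde{u}^{f_i}}$. Dualising and combining with the $b_{T}$-maps of Definition \ref{amap} converts this, by the very construction, into the operadic multiplication $\mu$ on $\mathbf{Lie}$ applied to the classes $\gamma_{\tilde{u}^{f_i}}$ of Definition \ref{gammamaps}; by Proposition \ref{propc} and Corollary \ref{commutes} this operadic insertion is represented, up to the $\Sigma_d$-action permuting the copies of $u$, by the class $\gamma_{\tilde{w}}$ used to define $F_w$. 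The remaining verification is a routine diagram chase, essentially the spectral analogue of the argument carried out on p.\pageref{algebraicbranchingproof} for the algebraic branching rule.
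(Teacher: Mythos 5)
Your proposal follows the same basic route as the paper: dualise the equivariant equivalence from Corollary~\ref{branchingforungrafting}, do the sphere arithmetic to recognise the summands as $\mathbf{Lie}_{\mathbf{d}_w}$, smash with $X_1^{\wedge n_1}\wedge\cdots\wedge X_k^{\wedge n_k}$, and pass to orbits. The sphere arithmetic and the identification of the map with $\bigvee_w\ovA{F}_w$ via $u_{x_w}$, the $a_T$- and $b_T$-maps, Proposition~\ref{propc}, and Corollary~\ref{commutes} are all in order.

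There is, however, a gap at the orbit step that the paper handles explicitly and your write-up elides. You say ``take $\Sigma_{n_1}\times\cdots\times\Sigma_{n_k}$-\emph{homotopy} orbits'', but the corollary is a statement about $\Free_{\Lierep}$, which is built from \emph{strict} $\Sigma_n$-coinvariants of $\underline{\mathbf{Lie}}_n\wedge(-)^{\wedge n}$; your displayed formula already silently switches to the strict-orbit notation $\wedge_{\Sigma_{n_1}\times\cdots\times\Sigma_{n_k}}$. The identification of strict coinvariants with homotopy coinvariants is not automatic: it requires knowing that the relevant $\Sigma_{n_1}\times\cdots\times\Sigma_{n_k}$-spectra are equivariantly cofibrant. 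The paper does exactly this, invoking Lemma~9.20 of~\cite{arone2011operads} to verify that both $\Ind_{\Sigma_d}^{\Sigma_{n_1}\times\cdots\times\Sigma_{n_k}}(\underline{\mathbf{Lie}}_d\wedge S_c^{\wedge d})\wedge X_1^{\wedge n_1}\wedge\cdots\wedge X_k^{\wedge n_k}$ and $\underline{\mathbf{Lie}}_n\wedge X_1^{\wedge n_1}\wedge\cdots\wedge X_k^{\wedge n_k}$ are cofibrant, so that it suffices to check the map is an underlying equivalence. You gesture at the $\Sigma$-cofibrancy of $\underline{\mathbf{Lie}}$ but do not supply this step, and without it the passage from an underlying equivalence to an equivalence of strict coinvariants is unjustified.

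A smaller organisational difference worth noting: the paper \emph{constructs} the map at the point-set level first (as $\bigvee_w\ovA{F}_w$, via the chosen lifts $\upgamma_{\tilde w}:S_c\to\underline{\mathbf{Lie}}_{|w|}$ from Section~\ref{freelie}), and only then checks it is an equivalence by comparing with the dualised branching equivalence. You instead produce an abstract equivalence in $\hSp$ and then have to identify it a posteriori with $\bigvee_w\ovA{F}_w$. Both can be made to work, but your ordering shifts more of the burden onto the final ``routine diagram chase'', which must then also account for the choice of cofibrant representatives that makes the strict-orbit construction meaningful.
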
 
\begin{proof}
It suffices to prove that for each $(n_1,\ldots,n_k)$ with $n=\sum_i n_i$, the  component of $\bigvee_{w\in B_k}  \ovA{F}_w$ of multi-degree $(n_1,\ldots,n_d)$
is an equivalence. In $\mathbf{Sp}$, we can obtain this component by applying \mbox{$(- \wedge X_1^{\wedge n_1} \wedge  \ldots \wedge  X_k^{\wedge n_k})_{ \Sigma_{n_1} \times \ldots \times \Sigma_{n_k}  }$} to the following  map of spectra with  $\Sigma_{n_1} \times \ldots \times \Sigma_{n_k} $-action:\vspace{2pt}
$$\hspace{-20pt}\bigvee_{\substack{d\ | \ n \\ w\in B_k(\frac{n_1}{d},\ldots,\frac{n_k}{d})}}   \hspace{-15pt}\Ind_{\Sigma_d}^{\Sigma_{n_1} \times \ldots \times \Sigma_{n_k}}(\underline{\mathbf{Lie}}_{\hspace{0.4pt} d}\wedge (S_c)^{\wedge {d}}) \xrightarrow{ \id \wedge \upgamma_{\tilde{w}}^{\wedge d}} \hspace{-15pt}\bigvee_{\substack{d\ | \ n \\ w\in B_k(\frac{n_1}{d},\ldots,\frac{n_k}{d})}} \hspace{-15pt} \Ind_{\Sigma_d}^{\Sigma_{n_1} \times \ldots \times \Sigma_{n_k}} \left(\underline{\mathbf{Lie}}_{\hspace{0.4pt} d}  \wedge  \underline{\mathbf{Lie}}_{\hspace{0.4pt} |w|}^{\wedge d} \right) \longrightarrow  \underline{\mathbf{Lie}}_{\hspace{0.4pt} n}  $$

Lemma $9.20.$ of \cite{arone2011operads} implies  that
$ \Ind_{\Sigma_d}^{\Sigma_{n_1} \times \ldots \times \Sigma_{n_k}}(\underline{\mathbf{Lie}}_{\hspace{0.4pt} d}\wedge (S_c)^{\wedge {d}}) \wedge X_1^{\wedge n_1} \wedge  \ldots \wedge  X_k^{\wedge n_k}$ 
and \mbox{$\underline{\mathbf{Lie}}_{\hspace{0.4pt} n}  \wedge X_1^{\wedge n_1} \wedge  \ldots \wedge  X_k^{\wedge n_k}$} are both  $\Sigma_{n_1} \times \ldots \times \Sigma_{n_k}$-cofibrant. It is therefore sufficient to prove that applying $(- \wedge X_1^{\wedge n_1} \wedge  \ldots \wedge  X_k^{\wedge n_k})_{ h(\Sigma_{n_1} \times \ldots \times \Sigma_{n_k})  }$ to the above map  gives an equivalence. For this, it is in turn enough to check that the above equivariant map is a weak equivalence of underlying spectra, i.e. an isomorphism in the homotopy category. Here, this map can be obtained, after precomposing with an equivalence,  by applying $ \Map_{\Sp}(S^1,(S^1)^{\wedge n}) \wedge \DD( -) $ to the equivalence 
$     \Sigma |\Pi_n|^\diamond \xrightarrow{ }  \bigvee_{\substack{d\ | \ n \\ w\in B_k(\frac{n_1}{d},\ldots,\frac{n_k}{d})}} \Ind_{\Sigma_d}^{\Sigma_{n_1} \times \ldots \times \Sigma_{n_k}} \left(\Sigma |\Pi_{\mathbf{d}}|^\diamond \wedge (S^{|w|-1})^{\wedge d}\right)  \vspace{5pt} $
established in \mbox{Corollary \ref{branchingforungrafting}.} By Proposition \ref{sw1}, this implies the result.\vspace{-5pt}
\end{proof} 
 
\begin{remark} Exactly the same strategy can also be used to construct an equivalence of spectra {$\bigvee_{w\in B_k} \ovA{G}_w  \ : \   \ \bigvee_{w\in B_k}\Free_{  \Sigma {\Lierep}}(S^{1-|w|} \wedge X_1^{\wedge |w|_1} \wedge \ldots \wedge X_k^{\wedge |w|_k})\ \  \xrightarrow{\simeq}\ \  \Free_{ \Sigma {\Lierep}}(X_1 \vee\ldots \vee X_k)  $} for the shifted Lie operad whose $n^{th}$ term is  given by $\DD(\Sigma | \Pi_n|^\diamond)$. Here,   $\ovA{G}_w$ is the free map induced by the map of spectra $G_w$, which   is defined by replacing $\gamma_{\tilde{w}} $ by $\DD(a_{T_{\tilde{w}}})$ in the definition of $F_w$. The effect of $G_w$ on homotopy groups is given by the shifted variant of the rule described on p.\ \pageref{effectonhomotopy}.  
\end{remark}

\subsection{Symmetry Breaking}
Complementary collapse   also gives  an asymmetric version of Theorem \ref{main}.
Fix a partition $S = A \cup B_1 \cup \dots \cup B_k$  corresponding to $g:S\rightarrow \{1,\dots, k+1\}$. Write $x$ for the partition which identifies all points in $B = \displaystyle \cup_i B_i$.
Set $F'_1( [\hat{0}] ) = x$ and $ F'_1(\sigma )= \hat{1}$ else.
Let $F_2' = F_2(S,g)$ be as in section \ref{Youngsection} on p.\pageref{previossection} for the orbits ordered as $A<B_1<\dots<B_k$.
\mbox{Theorem \ref{isafan}}  shows that $\mathbf{F}'=(F_1',F_2')$ is an orthogonality fan, and 
we deduce:
\begin{theorem}[Symmetry Breaking]\label{inductivestep}
\mbox{There is a simple $(\Sigma_{A} \times\Sigma_{B_1} \times  \dots  )$-equivariant  equivalence}
$$ |\Pi_n|   \ \longrightarrow \   \bigvee_{ \substack{A = A_1 \coprod \dots \coprod A_{\ell}, \ A_i \neq \emptyset \\ f_i : A_i \hookrightarrow B \\ \mbox{ s.t. } \im(f_{i+1}) \subset \im(f_i)  }} \Sigma^{-1} S^{|{A_1}|} \wedge\dots \wedge  S^{|{A_\ell}|}  \wedge |\Pi_B|^\diamond \vspace{-4pt}$$ 
 \end{theorem}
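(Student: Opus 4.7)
The plan is to apply complementary collapse (Theorem \ref{CollapseFansB}) to the orthogonality fan $\mathbf{F}' = (F_1', F_2')$; the authors have already indicated that this is an orthogonality fan, by essentially the same argument used for $\mathbf{F}(S,g)$ in Section \ref{Youngsection}. What remains is to classify the $\mathbf{F}'$-orthogonal chains, put them in bijection with the indexing data of the theorem, and compute the resulting wedge summands.

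First, I would pin down which element of a chain serves as the ``witness'' for invisibility under $\mathbf{F}'$. Since $F_1'$ sends every nonempty chain to $\hat 1$, the function $F_1'^{\geq y}$ is constantly $\hat 1$ on $[y,\hat 1]$ for any $y \perp x$, which makes the upper part of the chain $(F_1'^{\geq y}, F_2'^{\geq y})$-invisible automatically. Minimality of an $\mathbf{F}'$-orthogonal chain $[y_0 < \cdots < y_{r-1}]$ then forces the witness to be the top element, so that $y_{r-1} \perp x$ and the lower subchain $[y_0 < \cdots < y_{r-2}]$ is $(F_2'^{\leq y_{r-1}})$-orthogonal inside $[\hat 0, y_{r-1}]$.

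Next I would identify orthogonal chains with the indexing set of the theorem by induction on $r$. The condition $y_{r-1} \in x^\perp$ forces each class of $y_{r-1}$ to contain exactly one element of $B$ (and at least one $A$-element in its class whenever we need $y_{r-1} \vee x = \hat 1$), so such $y_{r-1}$ correspond bijectively to functions $f\colon A \to B$, with fibers $f^{-1}(b)$ giving the $A$-part of each class. Unpacking orthogonality under the single-function fan $F_2'^{\leq y_{r-1}}$ is then an asymmetric variant of Lemma \ref{thewords}: since $F_2'$ merges classes whose orbit word is lexicographically minimal and $A$ is placed first among the orbits, the inductive ``peel'' decomposes $f$ into an ordered sequence of injective partial pieces $f_i = f|_{A_i}\colon A_i \hookrightarrow B$ with $A = A_1 \sqcup \cdots \sqcup A_r$ and, necessarily, $\im(f_{i+1}) \subset \im(f_i)$ — each successive step can only attach new $A$-elements to anchor points of $B$ that have already been used.

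Finally I would compute the intervals contributing to each summand. Using that $y_i$ corresponds to $f|_{A_1 \sqcup \cdots \sqcup A_{i+1}}$, the interval $(\hat 0, y_0)$ refines the $|A_1|$ pairs $\{a, f_1(a)\}$ and is isomorphic to $\ovA{\mathcal{B}}_{|A_1|}$, contributing $|\ovA{\mathcal{B}}_{|A_1|}|^\diamond \simeq S^{|A_1|-1}$ by Example \ref{example: sphere}; each intermediate interval $(y_{i-1}, y_i)$ parameterises the subset of $A_{i+1}$ already attached, giving $\Sigma |\ovA{\mathcal{B}}_{|A_{i+1}|}|^\diamond \simeq S^{|A_{i+1}|}$; and $(y_{r-1}, \hat 1)$ is the partition lattice on the $|B|$ classes of $y_{r-1}$, contributing $|\Pi_B|^\diamond$. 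Smashing yields $S^{|A_1|-1} \wedge S^{|A_2|} \wedge \cdots \wedge S^{|A_r|} \wedge |\Pi_B|^\diamond \simeq \Sigma^{-1} S^{|A_1|} \wedge \cdots \wedge S^{|A_r|} \wedge |\Pi_B|^\diamond$, as claimed. Equivariance is automatic since $\mathbf{F}'$ is $\Sigma_A \times \Sigma_{B_1} \times \cdots \times \Sigma_{B_k}$-equivariant and the bijection with indexing data respects the group action.

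The main obstacle is the recursive identification of $(F_2'^{\leq y_{r-1}})$-orthogonal chains in the second step: producing the ordered partition of $A$ and verifying the nesting condition directly from Definition \ref{globalinv}. This requires a careful induction parallel to (but simpler than) the proof of Lemma \ref{thewords}, where the asymmetry between $A$ and the $B_j$'s ensures that the lexicographic minimum is always achieved by classes involving elements of $A$, forcing the peel order and the nested-images condition.
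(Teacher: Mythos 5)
Your proposal is correct and follows essentially the same route as the paper: apply Theorem \ref{CollapseFansB} to the fan $\mathbf{F}' = (F_1', F_2')$, identify the $\mathbf{F}'$-orthogonal chains with successive transversals $A_1, A_2, \ldots$ of $f$ (which is exactly the paper's bullet-pointed description), and compute the interval contributions. One small wording slip: $F_1'([\hat 0]) = x \neq \hat 1$, so it is not quite true that ``$F_1'$ sends every nonempty chain to $\hat 1$''; what you actually use is that $[\hat 0 < \sigma]$ always has at least two elements when $\sigma \in \mathcal{F}_{[y,\hat 1]}$, so $F_1'^{\geq y}$ is indeed constantly $\hat 1$.
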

\begin{proof} 
Choosing a chain $[z_1<\dots<z_\ell<y]$ orthogonal to $\mathbf{F}'$ amounts to the following data: 
\begin{itemize}[leftmargin=26pt] 
\item Choose a function $f:A \rightarrow B$ corresponding to a partition $y=y_f \perp x$. 
\item  Choose a set $A_1$ containing one $f$-preimage for each point in $f(A)\subset B$. We obtain a partition $z_1 = z_{A_1}$ by identifying each point in $A_1$ with its image under $f$.
\item Choose a set $A_2 \subset A-A_1$ containing one $f$-preimage for each point in $f(A-A_1)$. We obtain a partition $z_2$ taking $z_1$ and merging all points in $A_2$ with their image under $f$.  \end{itemize}
Proceeding in this way gives  all $[z_1<..<z_\ell<y]\perp \mathbf{F}'$ -- the claim follows by \mbox{Theorem \ref{CollapseFansB}.}\vspace{-5pt}
\end{proof}

\begin{remark}
Symmetry breaking can also be used to give an inductive proof of Theorem \ref{main}. The disadvantage of this approach is  that it is hard to describe the involved collapse maps.\vspace{-5pt}
\end{remark}

\subsection{Parabolic Restrictions of Bruhat-Tits Buildings}
Let $V$ be  a finite-dimensional vector space over a finite field $k$.  Fix a flag $\mathbf{A} = [A_0<\dots < A_r] $
with   parabolic subgroup $P_{\mathbf{A}}$. We define an orthogonality fan $(F)$ of \mbox{length $1$} by
$F([ B_0 < \dots < B_i] ) =   A_{r-i} \vee B_i  $.
A flag $[ C_0< \dots <C_r]$ is then  $(F)$-orthogonal if  
$C_i \perp A_{r-i}$ for $i=0,\ldots,r$.
Choosing a  flag $\mathbf{B}$  complementary to $\mathbf{A}$ with parabolic $P_\mathbf{B}$ and intersecting Levi $L_{\mathbf{A}\mathbf{B}} = P_\mathbf{A}\cap P_\mathbf{B}$, we read off from Theorem \ref{CollapseFansB}:
\begin{lemma}\label{BTb} There is a simple  $P_\mathbf{A}$-equivalence
$|\BT(V)| \cong \Ind^{P_\mathbf{A}}_{L_{\mathbf{A}\mathbf{B}}} (\Sigma^{r} \bigwedge_{i = 0}^{r+1} |\BT(\gr^i(\mathbf{B}))|^\diamond )$.
Here $\gr^i(\mathbf{B})=B_i/ {B_{i-1}}$ for $i=1,\ldots ,r$ and we set  $\gr^0(\mathbf{B}) = B_0$  and $\gr^{r+1}(\mathbf{B}) = V/ {B_r}$.
\end{lemma}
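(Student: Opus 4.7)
The plan is to apply complementary collapse (Theorem~\ref{CollapseFansB}) to the orthogonality function $F$ defined in the paragraph preceding the lemma, and then rewrite the resulting wedge as an induction from the Levi $L_{\mathbf{A}\mathbf{B}}$. Throughout I use the convention $A_j := 0$ for $j<0$, so that $F([B_0<\cdots<B_i]) := A_{r-i}\vee B_i$ is defined on chains of all lengths in the $P_\mathbf{A}$-lattice of subspaces of $V$.

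First I would verify that $F$ is an orthogonality function in the sense of Definition~\ref{orthogonalityfunction}. The $P_\mathbf{A}$-equivariance is immediate since $P_\mathbf{A}$ stabilises each $A_j$, and $F$ is increasing by construction. For axiom (2), fix $\sigma = [B_0<\cdots<B_i]$ and $z>B_i$; passing to the quotient $W := z/B_i$ and writing $A' := ((A_{r-i}+B_i)\cap z)/B_i$, the candidate subposet is identified with the collection of vector-space complements of $A'$ in $W$. Any two such complements share the same dimension, hence are never strictly comparable, so the subposet is discrete. By Lemma~\ref{fansfromfunctions}, $(F)$ is an orthogonality fan; note $F([\hat 0]) = A_r$ is proper and nonzero, so the hypothesis of Theorem~\ref{CollapseFansB} is satisfied.

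Next I would identify the $F$-orthogonal chains. Unwinding Definition~\ref{globalinv} in the single-function case, a chain $[C_0<\cdots<C_{\ell-1}]$ of length $\ell$ (with the convention $C_{-1} := \hat 0$) is $(F)$-orthogonal iff, for every $s = 0,1,\ldots,\ell-1$,
\[
C_s \cap (A_{r-s}+C_{s-1}) = C_{s-1}
\qquad\text{and}\qquad
C_s + A_{r-s} + C_{s-1} = V,
\]
together with the terminal condition $A_{r-\ell} + C_{\ell-1} \in \{C_{\ell-1},V\}$. A short induction on $s$ shows that the displayed pair forces $C_s$ to be a vector-space complement of $A_{r-s}$ in $V$. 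A dimension count then rules out $\ell\leq r$ (where $A_{r-\ell}\cap C_{\ell-1}=0$ and $\dim A_{r-\ell} < \dim V - \dim C_{\ell-1}$), so the orthogonal chains have length exactly $r+1$; they are precisely the flags $[C_0<\cdots<C_r]$ with $C_i\perp A_{r-i}$, i.e.\ the flags complementary to $\mathbf{A}$.

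Applying Theorem~\ref{CollapseFansB} thus yields a $P_\mathbf{A}$-equivariant simple equivalence
\[
|\BT(V)| \;\xrightarrow{\;\simeq\;}\; \bigvee_{\mathbf{C}} |\BT(V)_{(\hat 0,C_0)}|^\diamond \wedge \Sigma|\BT(V)_{(C_0,C_1)}|^\diamond \wedge \cdots \wedge \Sigma|\BT(V)_{(C_{r-1},C_r)}|^\diamond \wedge |\BT(V)_{(C_r,\hat 1)}|^\diamond,
\]
where the wedge ranges over flags $\mathbf{C}$ complementary to $\mathbf{A}$. Each open interval $(a,b)$ in the subspace lattice is isomorphic to the subspace lattice of $b/a$, so each summand rewrites as $\Sigma^r\bigwedge_{i=0}^{r+1}|\BT(\gr^i(\mathbf{C}))|^\diamond$. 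To finish, I would use the standard fact that $P_\mathbf{A}$ acts transitively on the set of flags complementary to $\mathbf{A}$ (the unipotent radical of $P_\mathbf{A}$ acts simply transitively on the Levi decompositions of $P_\mathbf{A}$, each cut out by such a flag), with the stabiliser of $\mathbf{B}$ equal to $P_\mathbf{A}\cap P_\mathbf{B} = L_{\mathbf{A}\mathbf{B}}$. Proposition~\ref{indexedwedge} then rewrites the indexed wedge as $\Ind^{P_\mathbf{A}}_{L_{\mathbf{A}\mathbf{B}}}\bigl(\Sigma^r\bigwedge_{i=0}^{r+1}|\BT(\gr^i(\mathbf{B}))|^\diamond\bigr)$, as desired. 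The main obstacle is the careful identification of $F$-orthogonal chains with complementary flags; the other steps follow either from the general machinery or from a standard Lie-theoretic transitivity fact.
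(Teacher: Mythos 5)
Your proof is correct and follows essentially the same argument as the paper: apply complementary collapse (Theorem~\ref{CollapseFansB}) for the fan given by the single orthogonality function $F([B_0<\cdots<B_i]) = A_{r-i}\vee B_i$, identify the $F$-orthogonal chains as exactly the flags complementary to $\mathbf{A}$, and use transitivity of $P_\mathbf{A}$ on such flags (with stabiliser $L_{\mathbf{A}\mathbf{B}}$) together with Proposition~\ref{indexedwedge} to rewrite the wedge as an induction. The paper's argument consists essentially of the brief paragraph preceding the lemma; you have correctly supplied the verifications it leaves implicit, namely the check that $F$ is an orthogonality function, the induction showing orthogonal chains have length exactly $r+1$ and consist of complements $C_s\perp A_{r-s}$, and the identification of open intervals with Tits buildings of subquotients.
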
 
Lemma \ref{BTb} gives a new proof of a nontrivial result in modular representation theory, cf.\ \cite{steinberg1957prime}. 
Write $\St_n$ for the \textit{integral Steinberg module}, i.e. the $\GL_n(\FF_p)$-module given by  $\tilde{\HH}_{n-2}(\BT(\FF_p^n),\ZZ)$.  
\begin{corollary}\label{steinbergproj} Let $k = \FF_q$ be a finite field and assume that $R$ is any  ring in which the number $\prod_{k=1}^n  ({q^k-1}) $ is invertible.
Then $\St \otimes R$ is a \vspace{-2pt} projective $R[\GL_n(\FF_q)]$-module. \vspace{-1pt}
\end{corollary}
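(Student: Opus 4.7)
The plan is to exploit Lemma \ref{BTb} in the special case where $\mathbf{A}$ is a complete (i.e.\ maximal) flag, so that $r = n-2$ and $P_{\mathbf{A}} = B$ is a Borel subgroup of $G = \GL_n(\FF_q)$. Picking a complementary complete flag $\mathbf{B}$, the Levi $L_{\mathbf{A}\mathbf{B}} = T$ is a split maximal torus, and each graded piece $\gr^i(\mathbf{B})$ is one-dimensional. Consequently $|\BT(\gr^i(\mathbf{B}))|$ is empty and $|\BT(\gr^i(\mathbf{B}))|^{\diamond} = S^0$, so the lemma produces a $B$-equivariant simple homotopy equivalence
$$|\BT(V)| \ \simeq \ \Ind_T^B(S^{n-2}),$$
where $T$ acts on $S^{n-2}$ through a one-dimensional character $\chi \colon T \to \{\pm 1\}$ obtained by assembling the actions of $T$ on the one-dimensional pieces $\gr^i(\mathbf{B})$.

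Taking reduced $(n-2)$-dimensional homology and tensoring with $R$, one obtains an isomorphism of $RB$-modules
$$\Res_B^G(\St \otimes R) \ \cong \ \Ind_T^B(\chi \otimes R).$$
Since $|T| = (q-1)^n$ divides $\prod_{k=1}^n (q^k-1)$, it is a unit in $R$, so $R[T]$ is semisimple by Maschke, and every $RT$-module (in particular $\chi \otimes R$) is projective. Induction along $T \subset B$ preserves projectivity, hence $\Res_B^G(\St \otimes R)$ is a projective $RB$-module.

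To promote this to projectivity over $RG$, note that the index
$$[G:B] \ = \ \prod_{k=1}^n (q^k - 1) \Big/ (q-1)^n$$
is a unit in $R$. The counit $\Ind_B^G \Res_B^G(\St \otimes R) \to \St \otimes R$ then admits a $G$-equivariant section $m \mapsto [G:B]^{-1} \sum_{gB \in G/B} g \otimes g^{-1} m$, exhibiting $\St \otimes R$ as a direct summand of the projective $RG$-module $\Ind_B^G \Res_B^G(\St \otimes R)$. Hence $\St \otimes R$ is projective over $R[\GL_n(\FF_q)]$, as claimed.

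The main technical point to verify is that the $T$-action on the sphere $S^{n-2}$ appearing in Lemma \ref{BTb} really does factor through a one-dimensional character of $T$; this amounts to unwinding how the equivalence in that lemma is assembled from the smash factors attached to the one-dimensional graded pieces, on each of which $T$ tautologically acts by a character. Beyond this, the argument is standard relative-projectivity bookkeeping using that the two relevant indices $[T:1]$ and $[G:B]$ are both invertible in $R$.
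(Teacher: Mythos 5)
Your argument is correct, but it takes a different route from the paper's. The paper picks the same complete flag and Borel $P_{\mathbf A}=B$, but then restricts all the way down to the unipotent radical $P\subset B$ (upper unitriangular matrices): since $P\cap L_{\mathbf A\mathbf B}=1$ and $P$ is normal in $B$, the double coset formula applied to the $B$-space $\Ind_{T}^{B}(S^{n-2})$ from Lemma~\ref{BTb} shows that $\Res_{P}^{G}(\St\otimes R)$ is \emph{free} over $R[P]$, and then the single invertibility hypothesis is exactly $[G:P]=\prod_{k=1}^{n}(q^{k}-1)$. You instead stop at $B$, identify $\Res_{B}^{G}(\St\otimes R)\cong\Ind_{T}^{B}(\chi\otimes R)$, invoke Maschke over $R[T]$ (using that $|T|=(q-1)^{n}$ is invertible), and then use that $[G:B]$ is invertible. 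Both work; the paper's version is marginally cleaner because the given quantity $\prod_{k=1}^{n}(q^{k}-1)$ appears directly as $[G:P]$, while your route needs to factor it as $|T|\cdot[G:B]$ and observe that each factor is separately invertible (which does follow, since a product being a unit forces each factor to be a unit). The conclusion you reach at the Borel stage is also slightly weaker (projective rather than free over $R[B]$), though this makes no difference for the final statement.

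One small point you flag as needing verification — that the $T$-action on the sphere $S^{n-2}$ in Lemma~\ref{BTb} factors through a character — is in fact immediate and the character is trivial. Tracing through Corollary~\ref{takingout}, the $\Sigma^{r}$ in Lemma~\ref{BTb} is the sphere $\Delta^{r}/\partial\Delta^{r}$ spanned by the orthogonal chain $\sigma$, and $T=L_{\mathbf A\mathbf B}$ fixes every vertex of that simplex, hence acts trivially on $\Sigma^{r}$; and each smash factor $|\BT(\gr^{i}(\mathbf B))|^{\diamond}=S^{0}$ also carries the trivial $T$-action since the poset being suspended is empty. So $\Res_{B}^{G}(\St\otimes R)\cong R[B/T]$ as an $R[B]$-module, and the rest of your bookkeeping is standard and correct.
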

\begin{proof}
Recall that whenever $H\subset G$ are finite groups with $\frac{|G|}{|H|}$ invertible in $R$, then an $R[G]$-module $M$  is projective if and only if its restriction  to $R[H]$ is a projective $R[H]$-module.

Let  $P_{\mathbf{A}}\subset \GL_n(\FF_q)$ be the subgroup of upper triangular invertible matrices stabilising the flag $\langle e_1 \rangle, \langle e_1, e_2 \rangle, \ldots$ and write $H\subset P_{\mathbf{A}}$ for the subgroup of matrices with all diagonal \mbox{entries equal to $1$.} Note that    the unipotent radical $H$  is also  the Sylow $p$-subgroup, where $p=\chara(\FF_q)$.
Since $H$ intersects the Levi $L_{\mathbf{A}\mathbf{B}}$ corresponding to any complementary flag $\mathbf{B}$  trivially, 
Lemma $\ref{BTb}$ and the double coset formula imply that the restriction of $\St \otimes R$ to $R[H]$ is a free  $R[H]$-module. The claim follows since $\frac{|\GL_n(\FF_q)|}{|H|} = \prod_{k=1}^n  ({q^k-1}) $ by a well-known counting argument.\end{proof}

\newpage

\newpage
\section{Fixed Points}
Given a subgroup $G\subset \Sigma_n$, the subspace $|\Pi_n|^G$ of $G$-\textit{fixed points} carries a natural action of the  {Weyl group} $W_{\Sigma_n}(G) = N_{\Sigma_n}(G)/G$. We will now provide an answer to the following question:
\begin{nonquestion}
What is the $W_{\Sigma_n}(G)$-equivariant simple homotopy type of $|\Pi_n|^G$?
\end{nonquestion}

In order to present our analysis, we single out a special class of subgroups:
\begin{definition}
A subgroup $G\subset \Sigma_n$ is   \textit{isotypical}  if all $G$-orbits are equivariantly isomorphic.
\end{definition}

Our result reduces the general case of the question raised above to the transitive situation:
\begin{theorem}\label{fixedpoints}
If $G\subset \Sigma_n$ acts isotypically on $\{1,\dots,n\}$, we may assume after relabelling   that $G$ is a transitive subgroup of   $\Sigma_d\xrightarrow{\Delta} \Sigma_d^{ \frac{n}{d}}\subset \Sigma_n$ for $d\ |\ n$ and $\Delta$ the diagonal embedding.\\
Then there is a $W_{\Sigma_n}(G)=N_{\Sigma_n}(G)/G$ 
-equivariant simple homotopy equivalence 
$$|\Pi_n|^G  \ \ \ \xrightarrow{\ \  \simeq \ \  }\ \ \Ind^{W_{\Sigma_n}(G)}_{W_{\Sigma_d}(G)\times  \Sigma_{\frac{n}{d}}} \ \  (|\Pi_d|^G )^\diamond \wedge |\Pi_{\frac{n}{d}}|^\diamond$$
\end{theorem}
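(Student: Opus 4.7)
The plan is to apply complementary collapse against a point (Theorem~\ref{CC2}) to the lattice $\mathcal{P}_n^G$ of $G$-invariant partitions of $\{1,\ldots,n\}$. This is a sublattice of $\mathcal{P}_n$ sharing the same $\hat{0}$ and $\hat{1}$, since joins and meets of $G$-invariant partitions remain $G$-invariant, and $|\Pi_n|^G = |\overline{\mathcal{P}_n^G}|$ carries a natural action of the Weyl group $W_{\Sigma_n}(G) = N_{\Sigma_n}(G)/G$.

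First I would fix as reference the \emph{copy partition} $q \in \mathcal{P}_n^G$ whose $n/d$ classes are the copies of $\{1,\ldots,d\}$ inside $\{1,\ldots,n\}$. This partition is invariant under all of $W_{\Sigma_n}(G)$ because any $\sigma \in N_{\Sigma_n}(G)$ must preserve the $G$-orbit decomposition of $\{1,\ldots,n\}$, which equals $q$; in other words $N_{\Sigma_n}(G) \subseteq \Sigma_d \wr \Sigma_{n/d} = \mathrm{Stab}_{\Sigma_n}(q)$. The central combinatorial step is then to prove that $q^\perp \cap \mathcal{P}_n^G$ consists of a single discrete $W_{\Sigma_n}(G)$-orbit, namely the orbit of the \emph{diagonal partition} $p$ whose classes are $\{k_1, \ldots, k_{n/d}\}$ for $k \in \{1,\ldots,d\}$. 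The condition $y \wedge q = \hat{0}$ forces each class of $y$ to meet each copy in at most one element; together with $G$-invariance and the transitivity of $G$ on each copy, this implies that $y$ has exactly $d$ classes forming a single $G$-orbit, each a transversal of $q$. A short parameterisation of such $y$ by tuples in $N_G(H)/H$ (with $H$ the stabiliser of a point in $\{1,\ldots,d\}$) then classifies the complements as Weyl translates of $p$ and verifies that no two are comparable.

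Next I would identify the stabiliser of $p$ in $W_{\Sigma_n}(G)$. A direct calculation in $\mathrm{Stab}_{\Sigma_n}(p) = \Sigma_{n/d} \wr \Sigma_d$ shows that the normaliser condition forces the inner $\Sigma_{n/d}$-components to coincide, yielding $\mathrm{Stab}_{N_{\Sigma_n}(G)}(p) = \Delta(N_{\Sigma_d}(G)) \times \Sigma_{n/d}$ and hence $\mathrm{Stab}_{W_{\Sigma_n}(G)}(p) = W_{\Sigma_d}(G) \times \Sigma_{n/d}$: the factor $\Sigma_{n/d}$ permutes copies while preserving each class of $p$ setwise, and diagonal elements of $N_{\Sigma_d}(G)$ act on the $d$ classes of $p$ via the $N_{\Sigma_d}(G)$-action on $\{1,\ldots,d\}$. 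The two intervals bounded by $p$ in $\mathcal{P}_n^G$ are then identified straightforwardly: coarsenings of $p$ correspond to $G$-invariant partitions of the $G$-set of classes of $p$ (which is $G$-equivariantly $\{1,\ldots,d\}$), giving a $W_{\Sigma_d}(G)$-equivariant isomorphism $(\mathcal{P}_n^G)_{\geq p} \cong \mathcal{P}_d^G$; refinements of $p$ are determined by $G$-equivariance from an unrestricted partition of $\{1,\ldots,n/d\}$ on one class of $p$, yielding a $\Sigma_{n/d}$-equivariant isomorphism $(\mathcal{P}_n^G)_{\leq p} \cong \mathcal{P}_{n/d}$.

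Applying Theorem~\ref{CC2} with $x = q$ and assembling the resulting wedge as an induction over the single Weyl orbit via Proposition~\ref{indexedwedge} then yields the asserted $W_{\Sigma_n}(G)$-equivariant simple homotopy equivalence. The main obstacle will be the classification of $q^\perp$: establishing both its discreteness and the transitivity of the $W_{\Sigma_n}(G)$-action requires a careful interaction between the transversal combinatorics, the transitivity of $G$ on each copy, and the structure of the Weyl group $N_G(H)/H$.
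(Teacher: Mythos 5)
Your proposal is correct and follows essentially the same route as the paper's own proof: both apply complementary collapse (Theorem~\ref{CC2}) to the $G$-fixed lattice $\mathcal{P}_n^G$ against the partition into $G$-orbits (your $q$, the paper's $x$), identify the complements as the single Weyl orbit of the diagonal partition (your $p$, the paper's $z$), verify transitivity of the $W_{\Sigma_n}(G)$-action on that orbit and the stabiliser $W_{\Sigma_d}(G)\times\Sigma_{n/d}$, and then read off the two intervals as $\Pi_{n/d}$ and $\Pi_d^G$. The only cosmetic difference is that the paper proves transitivity by directly exhibiting a permutation in $C_{\Sigma_d}(G)^{n/d}\subset N_{\Sigma_n}(G)$ sending one complement to another, whereas you propose a parameterisation of $q^\perp$ by tuples in $N_G(H)/H$; also note that your argument that each $y\in q^\perp$ has exactly $d$ classes, all full transversals, does rely on the join condition $q\vee y=\hat 1$ (connectivity of the orbit graph through $y$), which you should make explicit alongside $G$-invariance and transitivity.
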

We again use the convention that if $\mathcal{P}$ is a poset with one element, then $|\overline{\mathcal{P}}|^\diamond \wedge X^\diamond = X$ for any $X$.
\begin{lemma}\label{easy}
If $G$ acts non-isotypically, then $|\Pi_n|^G$ is $W_{\Sigma_n}(G)$-equivariantly collapsible.\end{lemma}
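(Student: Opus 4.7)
The plan is to apply Theorem \ref{CC1} (complementary collapse against a single point) to the lattice $\mathcal{P}_n^G$ of $G$-invariant partitions, with the \emph{orbit partition} $\pi_{\text{orb}}$ (whose blocks are the $G$-orbits on $\mathbf{n}$) as the reference element. First, I would identify $|\Pi_n|^G$ with $|\Pi_n^G|$: since $G$ acts by order-preserving bijections on $\Pi_n$, a simplex of $N(\Pi_n)$ is pointwise $G$-fixed iff each of its vertices is. The poset $\mathcal{P}_n^G$ is a finite $W$-lattice for $W = W_{\Sigma_n}(G)$ because meets and joins of $G$-invariant partitions remain $G$-invariant, and its extremal elements coincide with those of $\mathcal{P}_n$. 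The element $\pi_{\text{orb}}$ is $W$-stable since any $w\in N_{\Sigma_n}(G)$ sends $G$-orbits to $G$-orbits, using $g\cdot(w\cdot x)=w\cdot(w^{-1}gw)\cdot x$. Non-isotypicality guarantees that $G$ has at least two non-isomorphic orbits, so $\pi_{\text{orb}}$ is proper and nontrivial and lies in $\Pi_n^G$.

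By Theorem \ref{CC1}, it will be enough to show $\pi_{\text{orb}}^\perp = \emptyset$ in $\mathcal{P}_n^G$; then $N(\Pi_n^G)^{-\pi_{\text{orb}}^\perp}$ equals $N(\Pi_n^G)$ and the theorem produces a $W$-equivariant collapse of $|\Pi_n^G|$ onto the vertex $\pi_{\text{orb}}$. Suppose, toward a contradiction, that $\pi\in \mathcal{P}_n^G$ is a complement of $\pi_{\text{orb}}$. The meet condition $\pi\wedge\pi_{\text{orb}}=\hat{0}$ means every block of $\pi$ intersects each $G$-orbit in at most one element, so each $\pi$-block $B$ has a well-defined \emph{orbit signature} $S(B)=\{i : B\cap O_i\neq\emptyset\}$, and the set $B_S$ of blocks with a given signature $S$ is $G$-stable.

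The key observation is the rigidity of transitive $G$-sets: for any $i\in S$ with $B_S\neq \emptyset$, the map $\phi_i\colon B_S\to O_i$ sending a block to its unique representative in $O_i$ is a $G$-equivariant injection (distinct blocks are disjoint). Its image is a $G$-stable subset of the transitive $G$-set $O_i$, hence is all of $O_i$, so $\phi_i$ is a $G$-equivariant bijection. Consequently $O_i\cong_G B_S\cong_G O_j$ for any $i,j\in S$, i.e.\ every nonempty signature consists entirely of mutually isomorphic orbits.

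Therefore the relation generated by $\pi$ and $\pi_{\text{orb}}$ can only merge pairs of $G$-isomorphic orbits, so $\pi\vee\pi_{\text{orb}}$ refines the partition of $\mathbf{n}$ by $G$-isomorphism class of orbit. Non-isotypicality of $G$ means this latter partition is strictly coarser than $\pi_{\text{orb}}$ but strictly finer than $\hat{1}$, contradicting $\pi\vee\pi_{\text{orb}}=\hat{1}$. Hence $\pi_{\text{orb}}^\perp=\emptyset$ and Theorem \ref{CC1} delivers the desired $W$-equivariant collapse of $|\Pi_n|^G$. The only real content is choosing the right $W$-stable vertex; once $\pi_{\text{orb}}$ is selected, the rigidity argument for transitive $G$-sets makes emptiness of the complement essentially immediate.
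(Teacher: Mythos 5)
Your proof is correct and follows essentially the same route as the paper: take the orbit partition $x=\pi_{\text{orb}}$ (which is $\hat{0},\hat{1}$-avoiding and $W$-stable), show that $x^{\perp}=\emptyset$ in the $G$-fixed lattice, and apply Theorem~\ref{CC1}. The only difference is in how the emptiness of $x^{\perp}$ is verified: the paper chains together $G$-equivariant orbit isomorphisms built directly from a hypothetical complement $y$, while you package the same idea via orbit signatures and the rigidity of transitive $G$-sets; these are interchangeable. One small wording slip: the partition of $\mathbf{n}$ by $G$-isomorphism class of orbit need not be \emph{strictly coarser} than $\pi_{\text{orb}}$ (it equals $\pi_{\text{orb}}$ when no two orbits are isomorphic); what you actually use, and what non-isotypicality gives, is that it is strictly finer than $\hat{1}$, which already contradicts $\pi\vee\pi_{\text{orb}}=\hat{1}$. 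The rest of the argument is tight.
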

\begin{remark}
We could also view $|\Pi_n|^G$ as a $N_{\Sigma_n}(G)$-space or a $C_{\Sigma_d}(G)^{ \frac{n}{d}}$-space. In this case, the above result implies equivalences
$$|\Pi_n|^G  \cong \Ind^{N_{\Sigma_n}(G)}_{N_{\Sigma_d}(G)\times  \Sigma_{\frac{n}{d}}} \ \  (|\Pi_d|^G)^\diamond \wedge |\Pi_{\frac{n}{d}}|^\diamond , \ \ \ \ \ \ \ |\Pi_n|^G  \cong \Ind^{C_{\Sigma_d}(G)^{ \frac{n}{d}}}_{C_{\Sigma_d}(G)} \ \  (|\Pi_d|^G)^\diamond \wedge |\Pi_{\frac{n}{d}}|^\diamond $$
\end{remark}

\begin{proof}[Proof of Theorem \ref{fixedpoints} and Lemma \ref{easy}]
Write $x\in \Pi_n$ for the partition of $\{1,\dots,n\}$ into $G$-orbits. 
We assume that $G$ is nontrivial and that the action is not transitive. This implies  that $x\neq \hat{0},\hat{1}$.

\begin{claim}
The group $G$ is isotypical if and only if $x^\perp \neq \emptyset$.\vspace{-5pt}
\end{claim}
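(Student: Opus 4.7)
My plan is to prove both directions of the equivalence via explicit combinatorial constructions, working inside the lattice of $G$-invariant partitions of $\{1,\ldots,n\}$ (which is closed under meet and join in $\mathcal{P}_n$).

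For the forward direction, assuming $G$ acts isotypically, I would decompose $\{1,\ldots,n\} = O_1 \sqcup \cdots \sqcup O_{n/d}$ into the $G$-orbits and, using isotypicality, fix $G$-equivariant bijections $\phi_i \colon O_1 \xrightarrow{\cong} O_i$ (with $\phi_1 = \id$). I would then define $y \in \mathcal{P}_n$ to have as blocks the transversals $B_a = \{\phi_i(a) : 1 \le i \le n/d\}$, indexed by $a \in O_1$. Equivariance of the $\phi_i$ gives the identity $g \cdot B_a = B_{g \cdot a}$, so $y$ is $G$-invariant. Since each block $B_a$ meets every orbit in exactly one point, the meet $y \wedge x$ is discrete ($=\hat{0}$) and every block connects all orbits, so $y \vee x = \hat{1}$. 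The assumptions that $G$ is nontrivial (so $|O_1|>1$, hence $y \neq \hat{1}$) and non-transitive (so there is more than one orbit, hence $y \neq \hat{0}$) guarantee $y \in x^\perp$.

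For the reverse direction, given some $y \in x^\perp$, I would form the ``orbit incidence graph'' $\Gamma$ whose vertex set is $\{O_1, \ldots, O_{n/d}\}$ and whose edges connect $O_i$ and $O_j$ whenever some block of $y$ meets both. A standard computation of joins in the partition lattice identifies the condition $y \vee x = \hat{1}$ with connectedness of $\Gamma$. The crux is then the following step: for every edge of $\Gamma$, the two endpoints are $G$-equivariantly isomorphic. Indeed, pick a block $B$ of $y$ with $B \cap O_i = \{a\}$ and $B \cap O_j = \{b\}$ (these are singletons by $y \wedge x = \hat{0}$). For any $g \in \Stab_G(a)$, the block $gB$ is again a block of $y$ (by $G$-invariance) containing $g \cdot a = a$, so by disjointness of blocks $gB = B$; hence $g \cdot b \in gB \cap O_j = B \cap O_j = \{b\}$, giving $g \in \Stab_G(b)$. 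By symmetry $\Stab_G(a) = \Stab_G(b)$, so $O_i \cong G/\Stab_G(a) \cong O_j$ as $G$-sets. Chaining such isomorphisms along the connected graph $\Gamma$ shows that all orbits are mutually $G$-isomorphic, i.e.\ the action is isotypical.

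The only non-formal step is the stabilizer-equality argument, and this is where both defining conditions of $x^\perp$ must be used simultaneously: the meet condition forces the singleton intersections $B \cap O_i, B \cap O_j$ that make the disjointness argument go through, while the join condition supplies the connectedness of $\Gamma$ needed to propagate orbit-isomorphisms globally. I do not foresee any further technical obstacle.
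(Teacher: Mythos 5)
Your proof is correct and follows essentially the same route as the paper: in one direction you build $y$ from transversals of chosen $G$-equivariant orbit isomorphisms, and in the other you use $x\wedge y=\hat 0$ to pair elements across orbits and $x\vee y = \hat 1$ for connectivity. The only cosmetic difference is that the paper constructs an explicit $G$-equivariant bijection $\Orb_G(a)\to\Orb_G(b)$ by tracking $y$-blocks, whereas you deduce $\Stab_G(a)=\Stab_G(b)$ and invoke $O_i\cong G/\Stab_G(a)$; these are the same argument in different clothing.
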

\begin{proof}[Proof of Claim]
Let $y\in x^\perp$ be a partition with corresponding equivalence relation $\simeq_y$ on $\{1,\dots,n\}$. Given two elements $a,b$ with $a\simeq_y b$, we observe that for each $c\in \Orb_G(a)$, there is a $d \in \Orb_G(b)$ with $c\simeq_y d$, and that this $d$ must be unique as $x \wedge y = \hat{0}$. We obtain an $G$-equivariant function $\Orb_G(a) \rightarrow \Orb_G(b)$ which by symmetry is bijective. Since $y \vee x = \hat{1}$, this implies that all orbits are isomorphic $G$-sets.\\
For the converse, assume that the action is isotypical with orbits $O_1,\ldots,O_k$. We pick $G$-equivariant isomorphisms
$f_i: O_i \rightarrow O_{i+1}$ and observe that the finest partition $y$ of $\{1,\dots,n\}$ satisfying  $(a\in O_i)\simeq_y (f_i(a)\in O_{i+1})$ for all $a$ and all $i$ satisfies $y\perp x$.\vspace{-5pt}
\end{proof}
\begin{claim}\label{transitive}
The action of $W_{\Sigma_n}(G)$ on $x^\perp$ is transitive.\vspace{-5pt}
\end{claim}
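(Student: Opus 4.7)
The plan is to exhibit, for any pair $y, y' \in x^\perp$, an element $\sigma \in C_{\Sigma_n}(G) \subseteq N_{\Sigma_n}(G)$ such that $\sigma \cdot y = y'$. Since $G$ acts trivially on the set of $G$-invariant partitions of $\{1,\ldots,n\}$, this yields transitivity of the induced action of $W_{\Sigma_n}(G) = N_{\Sigma_n}(G)/G$. The element $\sigma$ will be constructed orbit by orbit as a composite of $G$-equivariant bijections between orbits which are canonically associated to $y$ and $y'$.

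First I would establish the following structural description: for any $y \in x^\perp$, each $y$-class meets each $G$-orbit $O_i$ in \emph{exactly} one element. The bound "at most one" is immediate from $x \wedge y = \hat{0}$. For "at least one", observe that since $y$ is $G$-invariant, $G$ acts on the set $\mathcal{O}$ of $y$-classes. The condition $x \vee y = \hat{1}$ forces this action to be transitive, since otherwise the union of classes in a single $G$-orbit of $\mathcal{O}$ would form a proper $G$-invariant subset of $\{1,\ldots,n\}$, i.e.\ a proper union of $G$-orbits of $\{1,\ldots,n\}$, and the equivalence relation generated by $\simeq_x$ and $\simeq_y$ would not be connected. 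Transitivity of the $G$-action on $\mathcal{O}$ makes every $y$-class have the same size $s$, and also gives $\Stab_G(C) = \bigcap_{e \in C}\Stab_G(e)$ of order at most $|H|$ (using isotypicality). Consequently $|\mathcal{O}| = |G|/|\Stab_G(C)| \geq d$, and together with $s\cdot |\mathcal{O}| = n = kd$ and $s \leq k$ this forces $|\mathcal{O}|=d$ and $s=k$.

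Using this characterization, each $y \in x^\perp$ determines, for every $i$, a $G$-equivariant bijection $\phi_i^y \colon O_1 \xrightarrow{\ \cong\ } O_i$ sending $e \in O_1$ to the unique element of $O_i$ lying in the $y$-class of $e$ ($G$-equivariance is immediate from $G$-invariance of $y$ combined with the fact that $G$ preserves each orbit setwise). Given $y, y' \in x^\perp$, I then define $\sigma \in \Sigma_n$ by $\sigma|_{O_1} = \mathrm{id}_{O_1}$ and $\sigma|_{O_i} = \phi_i^{y'} \circ (\phi_i^y)^{-1}$ for $i \geq 2$. Each restriction is a $G$-equivariant bijection on the respective orbit, so $\sigma$ is $G$-equivariant, i.e.\ $\sigma \in C_{\Sigma_n}(G) \subseteq N_{\Sigma_n}(G)$. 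By construction $\sigma$ sends the $y$-class of any $e \in O_1$ to the $y'$-class of $e$; by $G$-equivariance of $\sigma$, of $y$, and of $y'$, this extends to every $y$-class being carried onto the corresponding $y'$-class, so $\sigma \cdot y = y'$.

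The main obstacle is the structural step identifying elements of $x^\perp$ with what are effectively $(k-1)$-tuples of $G$-equivariant bijections $O_1 \to O_i$; once this is in hand the construction of $\sigma$ is entirely formal, and the argument never leaves the centralizer $C_{\Sigma_n}(G)$. It is worth noting that this simultaneously sets up the parametrization $x^\perp \cong W_G(H)^{k-1}$ (after fixing a basepoint in $O_1$ with stabilizer $H$) that one would use in the sequel to identify the stabilizer $\Stab_{W_{\Sigma_n}(G)}(y)$ with $W_{\Sigma_d}(G) \times \Sigma_{n/d}$ and complete the proof of Theorem~\ref{fixedpoints} via Theorem~\ref{CC2}.
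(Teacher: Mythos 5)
Your proof is correct and follows essentially the same route as the paper's: both construct $\sigma$ orbit-by-orbit as a product of $G$-equivariant self-bijections determined by pairing $y$-classes with $y'$-classes, landing in $C_{\Sigma_d}(G)^{\,n/d}\subset C_{\Sigma_n}(G)$. The only difference is that you spell out the "simple argument" (that $\sigma$ is a well-defined permutation in the centraliser) by first establishing that each $y$-class is a transversal of the $G$-orbits, whereas the paper leaves this to the reader.
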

\begin{proof}[Proof of Claim]
Given $y,z \in x^\perp$, we define $\sigma \in \Sigma_n$ by setting $\sigma(i) = j $ if and only if $i,j$ lie in the same $G$-orbit and there is an $g\in G$ with $i \simeq_y g(1)$ and $j \simeq_z g(1)$. A simple argument shows that this gives a permutation $\sigma$ in $C_{\Sigma_d}(G)^{ \frac{n}{d}} \subset N_{\Sigma_n}(G)$ satisfying $\sigma(i) \simeq_z \sigma(j)$ if and only if $i\simeq_y j$, which means that $\sigma \cdot z = y$.
\end{proof}
We can now deduce  Theorem \ref{fixedpoints} and Lemma \ref{easy}.
The Lemma follows immediately from Theorem \ref{CC2} since $x^\perp = \emptyset$. 
In order to prove Theorem \ref{fixedpoints}, we take $z\in x^\perp$ to be the $G$-invariant partition with $i \simeq_z (i+kd)$ for all \mbox{numbers $i,k$.} The transitivity of the $W_{\Sigma_n}(G)$-action on $x^\perp$, Theorem \ref{CC2}, and Proposition \ref{indexedwedge} together imply the existence of a  simple $W_{\Sigma_n}(G)$-equivariant equivalence  
 {$ |\Pi_n|^G \cong \Ind^{W_{\Sigma_n}(G)}_{\Stab(z)} \ \  (|\Pi_{n,<z}|^G)^\diamond\wedge (| \Pi_{n,>z} |^G)^\diamond$.}
The result   follows by observing the identifications 
$ \Stab(z) = W_{\Sigma_d}(G)\times  \Sigma_{\frac{n}{d}}$ and $ \Pi^G_{n,<z} \cong \Pi_{\frac{n}{d}}$, and $\Pi^G_{n,>z} \cong \Pi_d^G$.
\end{proof}

It remains to analyse the transitive actions. This case is the subject of a lemma of Klass ~\cite{klass1973enumeration}:
\begin{lemma}\label{lemma: transitive}
 If $G\subset \Sigma_n$ is transitive  and $H$ is the stabiliser of $1$, then the poset of $G$-invariant partitions of $\{1,\dots,n\}$ is isomorphic to the  poset of subgroups $H\subseteq K \subseteq G$. The non-trivial partitions correspond to subgroups $H\subsetneq K \subsetneq G$.
\end{lemma}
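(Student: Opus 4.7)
The plan is to write down the standard bijection between intermediate subgroups and $G$-invariant partitions and verify that it is an order-isomorphism. Transitivity lets us fix a $G$-equivariant identification $\{1,\ldots,n\} \cong G/H$ sending $1$ to the coset $H$, so throughout we can work with the $G$-set $G/H$.

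First I would define the map $\Phi$ from subgroups to partitions. Given $K$ with $H\subseteq K\subseteq G$, the quotient map $G/H \twoheadrightarrow G/K$ induces a partition $\Phi(K)$ of $G/H$ whose blocks are the fibres $gK/H$. Since left multiplication by any $g'\in G$ permutes left $K$-cosets, the partition $\Phi(K)$ is $G$-invariant. The two extreme cases $K=H$ and $K=G$ recover $\hat 0$ and $\hat 1$. Next I would define the inverse $\Psi$. Given a $G$-invariant partition $P$ of $G/H$, let $B\in P$ be the block containing $H$, and set
\[
\Psi(P) \;=\; \{\, k\in G \ |\ kH \in B\,\}.
\]
Containment of $H$ is immediate since $hH=H\in B$ for $h\in H$. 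Closure under multiplication is the one substantive check: if $k,k'\in \Psi(P)$, then $G$-invariance of $P$ says the permutation $k\cdot$ sends the block $B$ to \emph{some} block of $P$; since $kH\in B$ and $k\cdot H = kH$, we have $k\cdot B = B$, and therefore $kk'H = k\cdot(k'H) \in k\cdot B = B$, so $kk'\in \Psi(P)$. Inverses follow from the same argument applied to $k^{-1}$, so $\Psi(P)$ is a subgroup containing $H$.

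I would then verify $\Phi\circ\Psi = \mathrm{id}$ and $\Psi\circ\Phi = \mathrm{id}$. For the first, the block of $\Phi(\Psi(P))$ containing $H$ is exactly $\Psi(P)\cdot H/H$, which equals $B$ by definition of $\Psi$; every other block of $\Phi(\Psi(P))$ is the $G$-translate of this one obtained by multiplication by a coset representative, and the same is true for $P$ by $G$-invariance, so the partitions coincide. For the second, $\Psi(\Phi(K)) = \{k\in G \ |\ kH \in K/H\} = K$. Finally both $\Phi$ and $\Psi$ are order-preserving: $K\subseteq K'$ makes $\Phi(K)$ a refinement of $\Phi(K')$ (fibres of a surjection get coarser), and a refinement of partitions yields an inclusion of the corresponding block-stabilisers at $H$. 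Under this isomorphism the proper nontrivial partitions correspond precisely to the strict chain $H\subsetneq K\subsetneq G$.

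The only delicate point is the closure argument showing $\Psi(P)$ is a subgroup, which is where $G$-invariance of $P$ is genuinely used; everything else is bookkeeping about cosets. All other verifications are formal consequences of the coset description.
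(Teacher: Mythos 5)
Your proposal is correct and follows essentially the same route as the paper: identify $\mathbf{n}$ with $G/H$, send an intermediate subgroup $K$ to the partition by left $K$-cosets, send a $G$-invariant partition to the set of $g$ with $gH$ in the block of $H$, and use $G$-invariance to check that this set is closed under multiplication and inverses. Your translate-the-block-$B$ phrasing of the closure step is a mild repackaging of the paper's cancellation-with-$g_2$ computation, but the underlying idea is identical.
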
 

\begin{proof} 
We  identify $\mathbf{n}$ with the $G$-set $G/H$. Let $x$ be a $G$-invariant partition of $G/H$. \\
Let $K=\{g\in G\mid g(eH)\simeq_x eH\}$. Then $K$ is a subgroup of $G$. Indeed, suppose that we have $eH\simeq_x g_1H\simeq_x g_2H$. Multiplying the first equivalence by $g_2$ we obtain that $g_2H \simeq_x g_2g_1H$. Since $g_2H\simeq_x eH$, it follows that $g_2g_1\in K$. This shows that $K$ is closed under multiplication. A similar argument shows that $K$ is closed under taking inverses. Clearly, $H\subseteq K$. It is easy to see that a finer partition will lead to a smaller group $K$.

Conversely, let $K$ be a group, $H\subseteq K\subseteq G$. Let $x$ be the partition of $G/H$ defined by the rule $g_1H \simeq_x g_2H$ if and only if $g_2^{-1}g_1\in K$. It is easy to check that $x$ is a well defined $G$-invariant partition, and that we have defined two maps between posets that are inverse to each other.  
\end{proof}

We can obtain specific contractibility results for certain subgroups:

Let us suppose that $d \ |\ n$ and $d$ factors as a product $d=d_1\cdots d_l$ of integers $d_1,\ldots, d_l>1$ with $l>1$. Consider the iterated wreath product $\Sigma_{d_1}\wr\cdots\wr\Sigma_{d_l}$ as a subgroup of $\Sigma_n$ via the inclusions
\[
\Sigma_{d_1}\wr\cdots\wr\Sigma_{d_l}\hookrightarrow \Sigma_d\xrightarrow{\Delta} (\Sigma_d)^{ \frac{n}{d}}\hookrightarrow \Sigma_n.
\]
\begin{lemma}\label{lemma: isotypical wreath} 
The space $|\Pi_n|^{\Sigma_{d_1}\wr\cdots\wr\Sigma_{d_l}}$ is  
collapsible. 
\end{lemma}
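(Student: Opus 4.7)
The plan is to reduce, via Theorem \ref{fixedpoints}, to the transitive fixed-point space $|\Pi_d|^G$, which I will then identify as a simplex using Lemma \ref{lemma: transitive}. Since $G = \Sigma_{d_1}\wr\cdots\wr\Sigma_{d_l}$ acts transitively on $\{1,\ldots,d\}$, its diagonal embedding in $\Sigma_n$ gives an isotypical action, and Theorem \ref{fixedpoints} yields a simple $W_{\Sigma_n}(G)$-equivariant equivalence
\[
|\Pi_n|^G \ \xrightarrow{\ \simeq\ }\ \Ind^{W_{\Sigma_n}(G)}_{W_{\Sigma_d}(G)\times \Sigma_{n/d}}\bigl((|\Pi_d|^G)^\diamond \wedge |\Pi_{n/d}|^\diamond\bigr).
\]
It therefore suffices to exhibit the right-hand side as equivariantly collapsible.

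For the transitive action of $G$ on $\{1,\ldots,d\}$, Lemma \ref{lemma: transitive} identifies $\mathcal{P}_d^G$ with the lattice of subgroups $H \subseteq K \subseteq G$, where $H$ is the stabiliser of a point. Realising $G$ as the automorphism group of a rooted tree of depth $l$ with branching factors $d_1,\ldots,d_l$, a short induction on $l$ identifies these intermediate subgroups as precisely the pointwise stabilisers of successively larger initial subtrees containing the fixed leaf: they form a chain $H = K_0 \subsetneq K_1 \subsetneq \cdots \subsetneq K_l = G$ of length $l$. Equivalently, the $G$-invariant partitions of $\{1,\ldots,d\}$ are the ``tree-level'' partitions that group leaves by their common ancestor at a given height. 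Removing $\hat0$ and $\hat1$, the poset $\Pi_d^G$ is a totally ordered chain of $l-1$ elements, so $|\Pi_d|^G$ is the simplex $\Delta^{l-2}$.

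The hypothesis $l>1$ ensures this simplex is nonempty, and every vertex is $W_{\Sigma_d}(G)$-fixed because a totally ordered poset admits no nontrivial order-preserving automorphisms. By Examples \ref{examples: joins}, the unreduced suspension $(|\Pi_d|^G)^\diamond$ is the simplex $\Delta^{l-1}$, which is $W_{\Sigma_d}(G)$-equivariantly collapsible onto its basepoint $0 \in S^0$ (the south pole). Smashing with $|\Pi_{n/d}|^\diamond$ preserves equivariant collapsibility, since the pointed collapse of $(|\Pi_d|^G)^\diamond$ onto its basepoint extends cellwise to a $W_{\Sigma_d}(G) \times \Sigma_{n/d}$-equivariant collapse of the smash product onto its basepoint; likewise, induction up to $W_{\Sigma_n}(G)$ preserves collapsibility, since $\Ind_H^W Y \cong \bigvee_{W/H} Y$ can be collapsed wedge summand by wedge summand in a $W$-equivariant fashion. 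Combining these collapses with the equivalence from Theorem \ref{fixedpoints} then yields the lemma.

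The main obstacle is the purely group-theoretic step of verifying the chain structure of the subgroup lattice between $H$ and $G$ in an iterated wreath product; once that is in place (via the tree-automorphism picture and induction on $l$), the rest of the argument is formal and mirrors the reasoning already used in the proofs of Lemma \ref{easy} and Theorem \ref{fixedpoints}.
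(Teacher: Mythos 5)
Your proposal takes a genuinely different route from the paper's. The paper's proof is a short two-step argument that does not invoke Theorem~\ref{fixedpoints} or Lemma~\ref{lemma: transitive}: the subgroup $H := (\Sigma_{d_1}\wr\cdots\wr\Sigma_{d_{l-1}})^{d_l}$ is normal in $G = \Sigma_{d_1}\wr\cdots\wr\Sigma_{d_l}$ and acts non-isotypically on $\{1,\ldots,n\}$ (its $d_l$ orbits within one copy of $\{1,\ldots,d\}$ have pairwise non-conjugate stabilisers in the direct product), so Lemma~\ref{easy} shows that $|\Pi_n|^H$ is $W_{\Sigma_n}(H)$-equivariantly collapsible; since $|\Pi_n|^G = (|\Pi_n|^H)^{G/H}$ and the fixed points of an equivariant collapse again form a collapse, the lemma follows. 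Your explicit identification of $\Pi_d^G$ with a totally ordered poset of $l-1$ elements is correct and is in fact finer information than the paper's proof extracts (one can prove it by showing inductively that every $G$-invariant partition must be a union of $d_1$-blocks and reducing to $\Sigma_{d_2}\wr\cdots\wr\Sigma_{d_l}$), but it is not needed here.

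The genuine gap is the final step of your argument. Theorem~\ref{fixedpoints} produces a \emph{simple homotopy equivalence}, and simple equivalence alone does not transport collapsibility backwards: for finite complexes, being simple-homotopy-equivalent to a point is the same as being contractible, which is strictly weaker than being collapsible. To salvage the argument you would need to use that the specific map constructed in the proof of Theorem~\ref{fixedpoints} is, via Theorems~\ref{CC1}, \ref{CC2}, Corollary~\ref{takingout} and Lemma~\ref{lemma: join}, a composition of cellular homeomorphisms and quotients $X\to X/A$ by equivariantly collapsible subcomplexes $A$, and then lift collapses through each such quotient: the non-basepoint cells of $X/A$ biject with the cells of $X$ outside $A$, so a collapse of $X/A$ to a point lifts to a collapse of $X$ onto $A$, after which one collapses $A$. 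A related issue arises in your smash-product step: an elementary collapse of $(|\Pi_d|^G)^\diamond$ does not literally ``extend cellwise'' to a single elementary collapse of $(|\Pi_d|^G)^\diamond\wedge|\Pi_{n/d}|^\diamond$. Since the $W_{\Sigma_d}(G)$-action on the simplex $(|\Pi_d|^G)^\diamond$ is trivial, one could instead pass through the join $|\Pi_d|^G * |\Pi_{n/d}|^\diamond$ (a cone, hence collapsible) via Lemma~\ref{lemma: join}, but that again requires the quotient-lifting argument. In short, your idea can be made to work, but the transfer of collapsibility needs to be justified with care; the paper's route avoids the issue entirely.
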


\begin{proof}Without restriction, we may assume by Theorem \ref{fixedpoints} that $d=n$.\\
We observe that $\Sigma_{d_1} \wr \dots \wr \Sigma_{d_l}= N_{\Sigma_{d_1} \wr \dots \wr \Sigma_{d_l}}((\Sigma_{d_1} \wr \dots \wr \Sigma_{d_{l-1}} )^{ d_l})$.
Hence, we have
$$|\Pi_n|^{\Sigma_{d_1} \wr \dots \wr \Sigma_{d_l}} = (|\Pi_n|^{(\Sigma_{d_1} \wr \dots \wr \Sigma_{d_{l-1}} )^{ d_l}} )^{N_{\Sigma_{d_1} \wr \dots \wr \Sigma_{d_l}}((\Sigma_{d_1} \wr \dots \wr \Sigma_{d_{l-1}} )^{ d_l})} $$
By Lemma \ref{fixedpoints}, the space $|\Pi_n|^{(\Sigma_{d_1} \wr \dots \wr \Sigma_{d_{l-1}})^{{d_l}}}$ is $W_{\Sigma_{d_1} \wr \dots \wr \Sigma_{d_l}}((\Sigma_{d_1} \wr \dots \wr \Sigma_{d_{l-1}} )^{ d_l})$-equivariantly collapsible since the action of $(\Sigma_{d_1} \wr \dots \wr \Sigma_{d_{l-1}})^{{d_l}}$ on $\{1,\ldots,n\}$ is not isotypical.
\end{proof}

We can classify the isotropy groups of simplices in $|\Pi_n|$ which act isotypically:
\begin{lemma}\label{lemma: isotropy}
Consider the action of $\Sigma_{n_1}\times \cdots \times\Sigma_{n_k}$ on $\Pi_n$. Let $G\subset \Sigma_{n_1}\times \cdots \times\Sigma_{n_k}$ be an isotropy group of a simplex in $\Pi_n$. Suppose that $G$ acts isotypically on $\n$. Then $G$ is conjugate to a group of the form $\Sigma_{d_1}\wr\cdots\wr \Sigma_{d_l}$, where $l\ge 1$, and $d_1\cdots d_l | \gcd(n_1, \ldots, n_k)$. As usual, the group $G$ acts diagonally on blocks of size $d_1\cdots d_l$. The possibility that $G$ is the trivial group is included.
\end{lemma}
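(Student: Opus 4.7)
Since $G$ acts isotypically on $\n$, I will first invoke the structural argument from the proof of Theorem~\ref{fixedpoints}: all $G$-orbits on $\n$ share a common cardinality $d$ and are mutually $G$-isomorphic, say to $G/H$ for a fixed $H\subset G$. Because $G \subset \Sigma_{n_1}\times\cdots\times\Sigma_{n_k}$ preserves the color function $g:\n\to\{1,\ldots,k\}$, each orbit is monochromatic. Hence each $n_i$ is a sum of orbit sizes, so $d\mid n_i$ for every $i$, and therefore $d\mid\gcd(n_1,\ldots,n_k)$. Relabeling by a conjugation inside $\Sigma_n$, I may assume that the inclusion $G\subset\Sigma_n$ factors as $G\hookrightarrow\Sigma_d\xrightarrow{\Delta}\Sigma_d^{n/d}\subset\Sigma_n$, with $G$ acting transitively on the single block $\{1,\ldots,d\}$ and each ``column'' $\{1,\ldots,d\}\times\{j\}$ monochromatic.

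Next I restrict the chain $\sigma=[y_0<\cdots<y_m]$ to a single orbit $O\cong\{1,\ldots,d\}$, obtaining a weakly increasing chain of $G$-invariant partitions on $O$. By Lemma~\ref{lemma: transitive} (Klass's lemma), the distinct restrictions $y_{j_0}|_O<\cdots<y_{j_s}|_O$ correspond to a strictly increasing chain of subgroups $H\subsetneq K_0\subsetneq K_1\subsetneq\cdots\subsetneq K_s\subsetneq G$, where $H=\Stab_G(1)$. Since $G$ acts transitively on $O$, it acts transitively on the classes of each $y_{j_i}|_O$, so all classes at the same level have the same size. Equivalently, the rooted tree $T|_O$ whose internal nodes are the classes appearing in $\sigma|_O$ is balanced; let $d_1,\ldots,d_l$ be its branching numbers reading from the leaves up to the root, so that $d_1 d_2\cdots d_l = d$. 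The full automorphism group of the balanced rooted tree $T|_O$ is the iterated wreath product $\Sigma_{d_1}\wr\Sigma_{d_2}\wr\cdots\wr\Sigma_{d_l}$, and the natural action of $G$ on $O$ gives an inclusion $G\hookrightarrow\Sigma_{d_1}\wr\cdots\wr\Sigma_{d_l}$.

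The crux of the proof is then to upgrade this inclusion to an equality $G=\Sigma_{d_1}\wr\cdots\wr\Sigma_{d_l}$. For this, I will show that after a further relabeling of the column identifications (using elements of $N_G(H)/H$), every $y_i$ is put into genuinely diagonal form: each class of $y_i$ becomes a product $C\times J$ with $C$ a class of $y_i|_O$ and $J$ a block of an induced partition of $[n/d]$. Once $\sigma$ is in such diagonal form, the diagonal extension $\tau^{n/d}\in\Sigma_d^{n/d}$ of any element $\tau\in\Sigma_{d_1}\wr\cdots\wr\Sigma_{d_l}$ preserving $\sigma|_O$ manifestly preserves each $y_i$ and each color (which depends only on the column index). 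It therefore belongs to the Young stabilizer of $\sigma$, and by the maximality assumption on $G$ this forces $\tau^{n/d}\in G$, so $\tau\in G$. This yields the reverse inclusion. The trivial case $G=\{e\}$ is covered by allowing $l=1$ and $d_1=1$ in the statement.

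The main obstacle is the diagonalization step in the last paragraph. I expect to prove it by showing that the $G$-equivariant ``matching'' data encoding how a single class of $y_i$ spans several columns is controlled by central-type elements in $N_G(H)/H$, and then using that a \emph{chain} $y_0<\cdots<y_m$ has compatible matching data across all levels, so that a single simultaneous relabeling of column basepoints trivializes the cross-column twists for every $y_i$ at once. The delicate point is verifying this compatibility along the whole chain; this is where I would spend the bulk of the technical work, following the Klass-style double-coset analysis of $G$-invariant partitions of $\bigsqcup_{j}G/H$.
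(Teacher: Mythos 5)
Your proposal takes a genuinely different route from the paper's, and while I think it could in principle be completed, it leaves a significant gap exactly where you flag one, and does substantially more work than is necessary.

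The paper's argument does not look at a single $G$-orbit or invoke Klass's lemma at all. Instead it works with the \emph{coarsest} partition $x_i$ in the stabilized chain $\sigma$: the classes of $x_i$ are sorted by ``type'' (i.e., $G$-orbit), and $G$ is identified as $K_1\wr\Sigma_{t_1}\times\cdots\times K_s\wr\Sigma_{t_s}$, where $K_j$ is the stabilizer (inside the relevant Young subgroup) of the restriction of $\sigma$ to a class of type~$j$ and $t_j$ is the number of such classes. This purely structural description of the isotropy group makes the upgrade to a wreath product automatic: isotypicality rules out $s\geq 2$ (otherwise $G$ would have orbits with genuinely non-isomorphic kernels, forcing $G$ trivial), so $G\cong K_1\wr\Sigma_{t_1}$, and $K_1$ is again an isotropy group acting isotypically on a strictly smaller set, so one closes by induction. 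No re-coordinatization of orbits or linking data is needed.

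By contrast, your route first conjugates $G$ into the diagonal $\Sigma_d$, uses Klass's lemma on a single orbit to embed $G$ into the wreath automorphism group of the balanced orbit-tree, and then tries to show the reverse containment by simultaneously ``diagonalizing'' the whole chain so that every $y_i$ becomes a product $C\times J$. This is where the gap lies: as you yourself note, whether the $G$-equivariant linking bijections $\phi^{(i)}_{jj'}\in N_G(H)/H$ encountered at different levels of the chain can all be killed by a single choice of column identifications is a nontrivial compatibility claim. Two things need to be verified that your plan only gestures at: (i) the chain condition $y_0<\cdots<y_m$ really does force the linking data at different levels to lie in a single coherent cocycle, so a simultaneous trivialization exists; and (ii) this re-identification can be taken inside the Young subgroup so that the ``maximality'' step ($\tau^{n/d}\in G$) is legitimate. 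Point (ii) is essentially fine because orbits are monochromatic and relabeling within orbits preserves the coloring, but point (i) is exactly the bulk of the missing technical work. Until it is supplied, the reverse inclusion $\Sigma_{d_1}\wr\cdots\wr\Sigma_{d_l}\subseteq G$ is unproven, and the proposal does not yet establish the lemma. The coarsest-partition decomposition sidesteps this entirely and would be the faster path to a complete argument.
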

\begin{proof}
Let $\sigma = [x_0 < \cdots < x_i]$ be a chain of partitions of $\n$. Let $G$ be the stabiliser of this chain in $\Sigma_{n_1}\times \cdots \times\Sigma_{n_k}\subset \Sigma_n$. We will analyse the general form of $G$. The group $G$ acts on the set of equivalence classes of each $x_j$. Recall that $x_i$ is the coarsest partition of the chain. Let us say that two equivalence classes of $x_i$ are of the same type  if they are in the same orbit of the action of $G$. This is an equivalence relation on the set of equivalence classes of $x_i$. We say that there are $s$ different types of equivalence classes of $x_i$, and there are $t_1$ classes of type $1$, $t_2$ classes of type $2$ and so forth. It is easy to see that in this case $G$ is isomorphic to a group of the form $G\cong K_1\wr\Sigma_{t_1} \times \cdots \times K_s\wr \Sigma_{t_s}$
Here $K_j$ is the group of automorphisms (in $\Sigma_{n_1}\times \cdots \times\Sigma_{n_k}$) of the restriction of the chain  $\sigma = [x_0 < \cdots < x_i]$ to an equivalence class of $x_i$ of type $j$. We conclude that if $G$ acts isotypically on $\n$, then either $G$ is trivial, or $s=1$ and $G\cong K_1\wr \Sigma_{t_1}$. Furthermore, if $G$ acts isotypically on $\n$, then $K_1$ has to act isotypically on a component of $x_i$. The lemma follows by induction.
\end{proof}
Homotopy theory often deals with computations ``one prime at a time", and sometimes  $p$-local questions  about a given $G$-space $X$ can   be answered using only its fixed point spaces under $p$-groups. For example, Theorem $6.4$ of \cite{dwyer1998sharp} (cf.\ also Theorem $A$ of  \cite{webb1991split} and Proposition 4.6. of \cite{arone2016bredon}) specifies conditions under which the Bredon homology $\widetilde{\HH}_\ast^{\br}(X,\mu)$ of $X$ with coefficients in a Mackey functor $\mu$ can be computed in terms of the fixed points  of $X$ \mbox{under $p$-groups.} \vspace{3pt}

The fixed point spaces $|\Pi_n|^P$ of partition complexes under $p$-groups  are therefore of distinguished importance. The first-named author, William  Dwyer, and Kathryn Lesh have shown in  Proposition $6.2.$ of \cite{arone2016bredon} that $|\Pi_n|^P$ is contractible unless the $p$-group $P$ is abelian and acts freely. 
Our Theorem \ref{fixedpoints} allows us to compute all non-contractible fixed point spaces of partition complexes under $p$-groups:
\begin{corollary}\label{corollary: el abelian}
Let $P\subset \Sigma_n$ be a $p$-group.
\begin{enumerate}[leftmargin=26pt]
\item If $P\cong \FF_p^k$ is elementary abelian acting freely on $\n$ for  $n=mp^k$ \mbox{(here $p$ may divide $m$),}  {we have}  $\Aff_{\FF_p^k} := N_{\Sigma_{p^k}} ({\FF_p^k}) \cong  \FF_p^k \rtimes \GL_k(\FF_p)$ and  \mbox{$\Aff_{\FF_p^k \wr \Sigma_m}:= N_{\Sigma_n} (\FF_p^k ) \cong (\FF_p^k)^{ m} \rtimes (\GL_k(\FF_p) \times \Sigma_m)$.}\vspace{3pt} \\
There is an $\Aff_{\FF_p^k \wr \Sigma_m}$-equivariant simple  homotopy equivalence\vspace{-5pt}
$$|\Pi_n|^P \ \ \ \xrightarrow{\ \  \simeq \ \  }\ \ \  \Ind^{\Aff_{\FF_p^k \wr \Sigma_m}}_{\Aff_{\FF_p^k} \times \Sigma_m}  ( |\BT(\FF_p^k)|^\diamond \wedge |\Pi_m|^\diamond).\vspace{-5pt}$$ 
As before, $\BT(\FF_p^k)$ denotes  the poset of proper nontrivial subspaces of the vector space $\FF_p^k$. 
Nonequivariantly, this implies that $|\Pi_n|^P$ is a bouquet of $(m-1)! \cdot p^{k(m-1)+{k\choose 2}}$ spheres of dimension ${m+k-3}.$
\item If the action of $P$ is not of this form, then $|\Pi_n|^P$ is $W_{\Sigma_n}(P)$-equivariantly contractible.
\end{enumerate}
\end{corollary}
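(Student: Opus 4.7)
The plan is to dichotomize according to whether $P$ acts isotypically on $\mathbf{n}$, assembling Theorem~\ref{fixedpoints}, Lemma~\ref{easy}, Lemma~\ref{lemma: transitive}, and the identification in \cite{arone2016bredon} of those $p$-groups for which $|\Pi_n|^P$ fails to be contractible. First, if $P$ acts non-isotypically, Lemma~\ref{easy} gives $W_{\Sigma_n}(P)$-equivariant collapsibility and case~(2) holds. So I may assume $P$ acts isotypically; then all orbits have the same size, in particular $P$ acts freely on each orbit, so after relabelling I can place $P$ as a transitive subgroup of $\Sigma_d$ embedded diagonally in $\Sigma_d^{n/d}\subset\Sigma_n$, with $n=md$. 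Theorem~\ref{fixedpoints} now supplies the $W_{\Sigma_n}(P)$-equivariant simple equivalence
\[
|\Pi_n|^P \;\xrightarrow{\ \simeq\ }\; \Ind^{W_{\Sigma_n}(P)}_{W_{\Sigma_d}(P)\times\Sigma_m}\bigl(|\Pi_d|^P\bigr)^\diamond\wedge|\Pi_m|^\diamond.
\]

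Next I invoke the result of~\cite{arone2016bredon} referenced in the paper: unless $P$ is elementary abelian acting freely on $\mathbf{d}$, the space $|\Pi_d|^P$ is contractible, so the wedge on the right becomes equivariantly contractible, finishing case~(2). It remains to handle $d=p^k$ and $P\cong\FF_p^k$ acting freely and transitively on $\mathbf{p^k}$. Here Lemma~\ref{lemma: transitive} applied with trivial point stabilizer identifies the poset of proper nontrivial $P$-invariant partitions of $\mathbf{p^k}$ with the poset of proper nontrivial subgroups of $P\cong\FF_p^k$, i.e.\ with $\BT(\FF_p^k)$. A direct check shows this identification is equivariant with respect to $W_{\Sigma_{p^k}}(\FF_p^k)=\GL_k(\FF_p)$ acting in the standard way on both sides. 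Substituting $|\Pi_{p^k}|^P\cong|\BT(\FF_p^k)|$ into the displayed equivalence and translating the Weyl groups via $N_{\Sigma_n}(P)=\Aff_{\FF_p^k\wr\Sigma_m}$ and $N_{\Sigma_{p^k}}(P)\times\Sigma_m=\Aff_{\FF_p^k}\times\Sigma_m$ yields the stated formula.

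For the nonequivariant sphere count, I would quote Solomon--Tits to see that $|\BT(\FF_p^k)|$ is a wedge of $p^{\binom{k}{2}}$ spheres of dimension $k-2$, together with the classical identification of $|\Pi_m|$ as a wedge of $(m-1)!$ spheres of dimension $m-3$. Taking unreduced suspensions bumps dimensions by one, the smash product gives a wedge of $(m-1)!\cdot p^{\binom{k}{2}}$ spheres of dimension $m+k-3$, and inducing along $\Aff_{\FF_p^k}\times\Sigma_m\hookrightarrow\Aff_{\FF_p^k\wr\Sigma_m}$ multiplies the count by the index $|(\FF_p^k)^m|/|\FF_p^k|=p^{k(m-1)}$, producing the claimed $(m-1)!\cdot p^{k(m-1)+\binom{k}{2}}$ spheres. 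The main obstacle is a careful bookkeeping step: verifying that the Weyl-group identifications are compatible and, in particular, that Klass's isomorphism in Lemma~\ref{lemma: transitive} does transport the $N_{\Sigma_{p^k}}(P)/P$-action on invariant partitions to the standard linear $\GL_k(\FF_p)$-action on subspaces of $\FF_p^k$; once this is in hand, everything else assembles from the three cited results.
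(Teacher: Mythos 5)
Your proof is essentially correct and follows the paper's line of reasoning (Theorem~\ref{fixedpoints} for the reduction, Lemma~\ref{lemma: transitive} for the identification with $\BT(\FF_p^k)$, Solomon--Tits and the standard count for $|\Pi_m|$, and bookkeeping of the index $p^{k(m-1)}$). Two small remarks, neither of which vitiates the argument. First, the intermediate assertion \emph{``then all orbits have the same size, in particular $P$ acts freely on each orbit''} is false as stated: isotypical only means all orbits are $P$-equivariantly isomorphic, which for a faithful action forces the common point stabiliser to have trivial normal core, not to be trivial (e.g.\ $D_4$ acting on copies of $D_4/\langle s\rangle$ is faithful and isotypical but not free). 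Fortunately you never use freeness for the reduction — Theorem~\ref{fixedpoints} only requires isotypicality — and the subsequent appeal to~\cite{arone2016bredon} handles the non-free transitive case, so the conclusion is unaffected; but the claim should be deleted or corrected. Second, your route to case~(2) (non-isotypical via Lemma~\ref{easy}; isotypical via Theorem~\ref{fixedpoints} reducing to the transitive case and then~\cite{arone2016bredon}) is a slight detour: the paper simply cites Proposition~6.2 of~\cite{arone2016bredon} directly, which already covers all $p$-groups $P\subset\Sigma_n$ without first reducing to the transitive case. Your longer route is logically valid, and does have the minor advantage of exhibiting how the non-isotypical half follows from the paper's own collapse machinery, but it still leans on the transitive case of~\cite{arone2016bredon}, so it is not a genuinely independent derivation of~(2).
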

\begin{proof}
Case $(2)$ is Proposition $6.2.$ of \cite{arone2016bredon}. 

For $(1)$, we observe that the action is \mbox{isotypical.} Theorem \ref{fixedpoints} (in its instance with normaliser actions) gives an $N_{\Sigma_n}(\FF_p^k)$-equivariant equivalence \mbox{$|\Pi_n|^{\FF_p^k}  {\simeq}  \Ind^{N_{\Sigma_n}(\FF_p^k)}_{N_{\Sigma_{p^k}}(\FF_p^k)\times  \Sigma_{m}} (|\Pi_{\FF_p^k}|^{\FF_p^k} )^\diamond \wedge |\Pi_{m}|^\diamond$.}\vspace{-5pt}
 By Lemma \ref{lemma: transitive}, we have an isomorphism of posets $\Pi_{p^k}^{\FF_p^k} \cong \BT(\FF_p^k)$, which implies  \mbox{$|\Pi_n|^{\FF_p^k} \cong |\BT(\FF_p^k)|$}. 
 
 The standard action of   $\FF_p^k \rtimes \GL_k(\FF_p)$  on  $\FF_p^k$ induces an injection \mbox{$\FF_p^k \rtimes \GL_k(\FF_p) \hookrightarrow \Sigma_{p^k}$,} and it is well-known and not hard to check that its image is precisely given by $N_{\Sigma_{p^n}}(\FF_p^n)$.
 
More generally, we consider  the semidirect  product $(\FF_p^k)^{ m} \rtimes (\GL_k(\FF_p) \times \Sigma_m) $, where $\GL_k(\FF_p)$ acts diagonally via the standard action and $\Sigma_m$ permutes the various coordinates of  $(\FF_p^k)^{ m} $. The standard action of this group  on   $\mathbf{n} = \coprod_{i=1}^m \FF_p^k$ gives rise to an embedding \mbox{$(\FF_p^k)^{ m} \rtimes (\GL_k(\FF_p) \times \Sigma_m) \ \subset\ \Sigma_{n}$.} We can check on generators that 
$(\FF_p^k)^{ m} \rtimes (\GL_k(\FF_p) \times \Sigma_m) \ \subset \ N_{\Sigma_n}(\FF_p^k)$. Given any  $\tau \in N_{\Sigma_n}(\FF_p^k)$, we can find  $\sigma \in (\FF_p^k)^{m} \rtimes (\GL_k(\FF_p) \times \Sigma_m)$ for which $\sigma \tau$ sends each $\FF_p^k$-orbit  to itself, fixes the zero elements in each copy of $\FF_p^k$, and restricts to the identity on the first copy. These conditions imply that $\sigma \tau$ is the identity and hence $\tau \in (\FF_p^k)^{ m} \rtimes (\GL_k(\FF_p) \times \Sigma_m)$. Claim $(1)$ follows.\vspace{3pt}

For the nonequivariant  claim in $(1)$, we count that the index of $(\FF_p^k \rtimes \GL_k(\FF_p) )\times \Sigma_m$ inside 
 $(\FF_p^k)^{ m} \rtimes (\GL_k(\FF_p) \times \Sigma_m)$ is given by $(p^k)^{m-1}$ and combine this with  the well-known facts that $|\Pi_m| $ is nonequivariantly equivalent to a wedge sum of $(m-1)!$ spheres of dimension $m-3$, that $|\BT(\FF_p^k)|$ is  nonequivariantly equivalent to a wedge of $p^{{k\choose 2}}$ spheres of dimension $k-2$.
\end{proof}
 \begin{minipage}{0.85 \textwidth}\
\begin{example}
For   $p=5$, $k=2$, and $ m=3$, we have $n = 75$ and the space
$|\Pi_{75}|^{\FF_5^2}$ of $\FF_5^2$-invariant partitions of the set $\{1,\ldots,75\}\cong \coprod_3 \FF_5^2$ illustrated on the right  splits nonequivariantly as a bouquet of $6250$ copies of the $2$-dimensional sphere $S^2$.\\
\end{example} 
\end{minipage}  \ \ \ \ \ \ 
 \begin{minipage}{0.3 \textwidth}
\includegraphics[width=0.5 \textwidth]{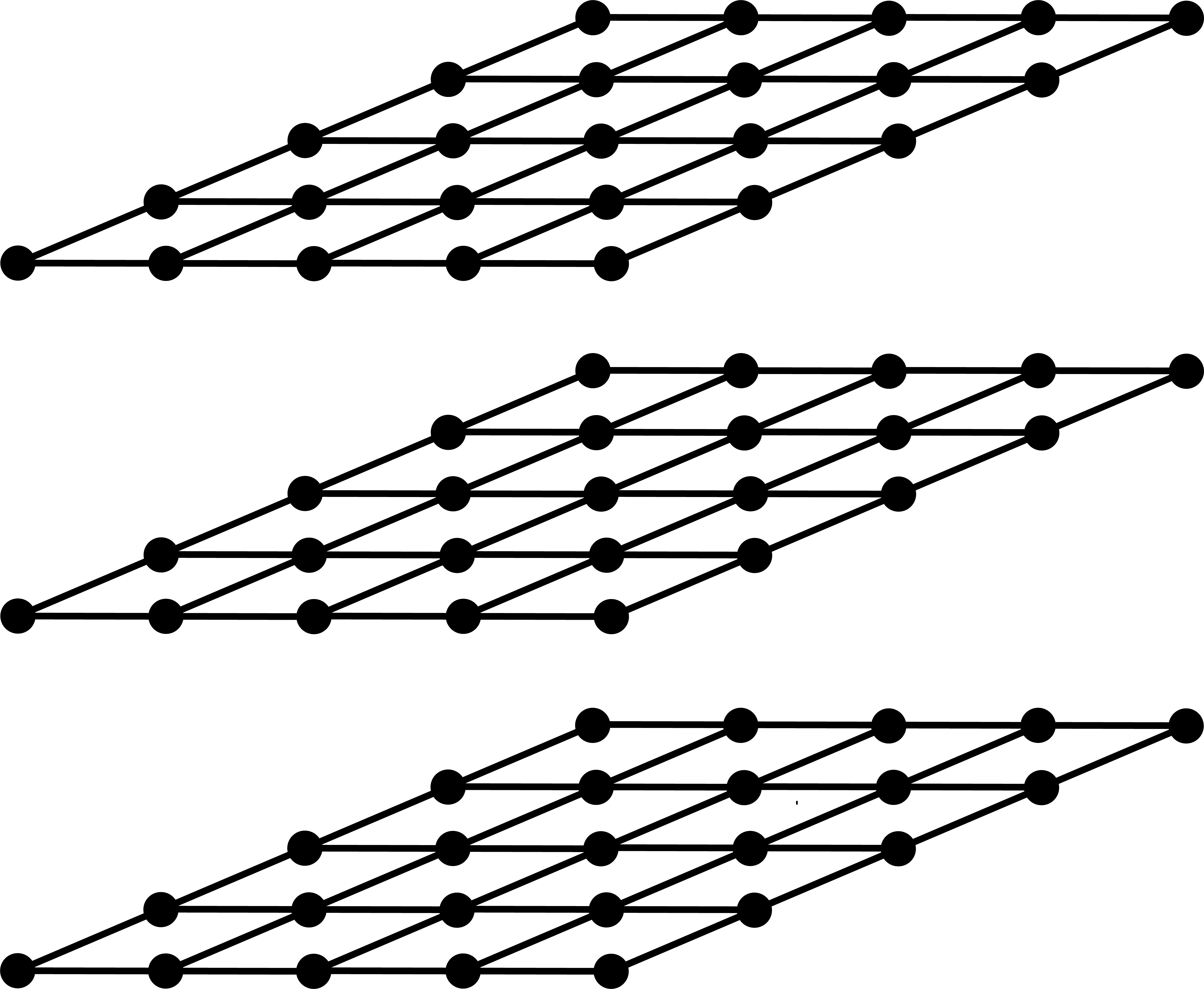}
\end{minipage}

\newpage
\section{An EHP Sequence for Commutative Monoid Spaces}
In this section, we will construct an EHP-like sequence for strictly commutative monoid spaces. This sequence will be a key tool in our subsequent study of strict Young quotients of the partition complex and our computation of the the algebraic Andr\'{e}-Quillen homology of trivial square-zero extensions over $\FF_p$ for $p$ odd. \vspace{-5pt} 
 
\subsection{Commutative Monoid Spaces and Simplicial Commutative Monoids}\label{indexingmonoid}
We begin by setting up the theory of commutative monoid spaces. 
Heuristically, it is often helpful to think of commutative monoid spaces as simplicial commutative rings over the field with one element.

We use point-set models in order to  facilitate our later definition of a new EHP-like sequence. Write $\mathbf{Top}_\ast$ for the category of pointed (compactly generated weak Hausdorff) spaces, endowed with the usual Quillen model structure. 
Our definitions and results will be given for \textit{graded}  commutative monoid spaces -- this will allow us to decompose the EHP-like sequence into  \mbox{graded components.}

We fix a commutative indexing monoid $J$ with identity $0\in J$ (in sets) once and for all. We write $\mathbf{Top}_\ast^J$ for the category of functors $X_{(-)}: J\rightarrow \mathbf{Top}_\ast$ and give it the model structure whose fibrations, cofibrations, and weak equivalences are defined pointwise. Objects $X\in \mathbf{Top}_\ast^J$ will be called  \textit{$J$-graded spaces}. We will often think of them as pointed spaces $X$ together with a wedge decomposition $X= \bigvee_{j\in J} X_j$, by gluing together all basepoint $0_j\in X_j$ to a single point $0$.

The  category  $\mathbf{Top}_\ast^J$ is endowed with a symmetric monoidal structure given by \textit{Day convolution}: we set $(X \wedge Y)_k  := \bigvee_{a+b = k} X_a \wedge Y_b $. The unit is given by $S^0$ concentrated in degree $0\in J$.
\begin{definition}
 
A  \textit{$J$-graded commutative monoid space}  is a commutative monoid object in the symmetric monoidal category $\mathbf{Top}_\ast^J$. We write $\mathbf{CMon}^J$ for the category of $J$-graded commutative monoid spaces and let $\mathbf{T}$ be the monad on $\mathbf{Top}_\ast^J$ building the free such monoid.
\end{definition}
 
We will now unravel this definition. A   $J$-graded monoid space $R$ consists of a  pointed space $R$ together with a decomposition $\bigvee_{j\in J} R_j= R$, a continuous multiplication map $\cdot : R\times R\rightarrow R$, and a distinguished element $1\in R_0$, such that the following three conditions hold true: 
\begin{enumerate}[leftmargin=26pt]
\item The multiplication is associative and commutative.
\item The element $1$ acts as a unit and the basepoint $0$ satisfies $0\cdot x = 0$ for all $x\in R$.
\item The multiplication respects the grading, i.e. $x \cdot y \in R_{j_1+j_2}$ for $x\in R_{j_1}$ and $y\in R_{j_2}$.
\end{enumerate}  
\begin{theorem}[Schw\"anzl --Vogt \cite{schwanzl1991categories}] 
The category $\mathbf{CMon}_{}^J$ of $J$-graded commutative monoid spaces admits the structure of a cofibrantly generated  
model category, where a map $f$ is a fibration or weak equivalences if and only if the underlying map of in $\mathbf{Top}_\ast^J$ has the corresponding property.
\end{theorem}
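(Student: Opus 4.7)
The plan is to apply a standard lifting/transfer theorem for cofibrantly generated model structures along the free--forgetful adjunction
$$
\mathbf{T}\colon \mathbf{Top}_\ast^J \rightleftarrows \mathbf{CMon}^J : U,
$$
with Kan's criterion as the target. First I would record that $\mathbf{Top}_\ast^J$, as a product of copies of $\mathbf{Top}_\ast$, is itself a cofibrantly generated proper model category with generating (trivial) cofibrations $I_J$ (resp.\ $J_J$) obtained by placing the usual generators in each grading $j\in J$. Then I would observe that $\mathbf{CMon}^J$ is bicomplete: limits and filtered colimits are created by $U$, and general colimits are built out of filtered colimits, reflexive coequalizers (also created by $U$), and coproducts, which in $\mathbf{CMon}^J$ are computed as Day-convolution smash products of the underlying functors.

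Kan's theorem then reduces the theorem to two verifications. The first is that the domains of $\mathbf{T}(I_J)$ and $\mathbf{T}(J_J)$ are small relative to $\mathbf{T}(I_J)$-cells; this follows from smallness in $\mathbf{Top}_\ast^J$ because $U$ preserves filtered colimits, and because $\mathbf{T}$ sends finite CW complexes (in a single degree) to countable CW complexes. The second, and genuinely substantive, verification is that for every generating trivial cofibration $f\colon K \hookrightarrow L$ in $\mathbf{Top}_\ast^J$ and every pushout in $\mathbf{CMon}^J$
\begin{equation*}
\begin{array}{ccc}
\mathbf{T}(K) & \longrightarrow & R \\
\downarrow & & \downarrow \\
\mathbf{T}(L) & \longrightarrow & R'
\end{array}
\end{equation*}
the underlying map $U(R)\to U(R')$ is a weak equivalence, and that transfinite compositions of such maps remain weak equivalences.

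The core of the argument is therefore the pushout analysis. Following the classical recipe, I would filter $R'$ by $R'_n$, where $R'_n$ is obtained from $R$ by adjoining all monomials involving at most $n$ factors from $L$ not already present in $K$. The subquotient $R'_n/R'_{n-1}$ is then identified with $R\wedge \bigl(L^{\wedge n}/\Gamma_n\bigr)_{\Sigma_n}$, where $\Gamma_n \subset L^{\wedge n}$ is the ``fat wedge'' of monomials containing at least one factor from $K$. Since $f$ is a trivial cofibration in $\mathbf{Top}_\ast^J$ and $\Sigma_n$ acts freely on the off-diagonal portion, the pair $(L^{\wedge n},\Gamma_n)$ is a $\Sigma_n$-free relative CW pair whose underlying inclusion is a weak equivalence (being an iterated pushout-product of $f$ with itself). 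Passing to $\Sigma_n$-orbits and smashing with $R$ preserves this weak equivalence because the orbit quotient is a relative CW complex built cell by cell from free $\Sigma_n$-cells. Assembling the filtration by induction on $n$ and passing to a transfinite colimit shows $U(R)\to U(R')$ is a weak equivalence, and the same argument handles relative $\mathbf{T}(J_J)$-cell complexes.

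The hard part will be the symmetric-power bookkeeping in the subquotient identification: verifying that Day convolution, graded by $J$, genuinely decomposes $R'_n/R'_{n-1}$ as asserted, and that the $\Sigma_n$-action is free away from the fat wedge so that orbits are homotopy orbits. Once this is pinned down, the remaining axioms (the retract, 2-out-of-3, and lifting axioms) follow formally from the transfer setup, and the model structure is cofibrantly generated by $\mathbf{T}(I_J)$ and $\mathbf{T}(J_J)$ by construction.
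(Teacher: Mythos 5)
The overall transfer-theorem architecture you propose (lift along the free--forgetful adjunction $\mathbf{T}\dashv U$ using Kan's criterion, with the key step being that pushouts of $\mathbf{T}$-images of generating trivial cofibrations remain weak equivalences) is the right strategy, and the paper itself does not reprove this result but cites Schw\"{a}nzl--Vogt and points to White's framework. However, there is a genuine error in your core step.

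Your claim that ``$\Sigma_n$ acts freely on the off-diagonal portion, [so] the pair $(L^{\wedge n},\Gamma_n)$ is a $\Sigma_n$-free relative CW pair'' is false. After collapsing the fat wedge $\Gamma_n$ (the locus where at least one coordinate lies in $K$), the quotient $L^{\wedge n}/\Gamma_n$ is identified with $(L/K)^{\wedge n}$, and the $\Sigma_n$-action on this is \emph{not} free away from the basepoint: any diagonal point $(x,\ldots,x)$ with $x\in L\setminus K$ is fixed by all of $\Sigma_n$. If freeness held one could run this argument verbatim in the category of spectra, where it is well known to \emph{fail} --- strict symmetric powers of naive cofibrant spectra do not preserve weak equivalences, which is exactly why one needs the positive model structure or $E_\infty$-operads there. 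The reason the conclusion survives for $\mathbf{Top}_\ast$ is a special, non-formal fact about topological spaces: strict symmetric smash powers $X\mapsto X^{\wedge n}/\Sigma_n$ preserve weak equivalences between well-pointed (cofibrant) spaces. This is essentially Dold's theorem on symmetric products, and in the language the paper uses, it is the \emph{strong commutative monoid axiom} for $\mathbf{Top}_\ast$, which the paper records by citing Theorem~5.4 of White's paper \cite{white2017model}. To repair your argument, replace the incorrect freeness claim with an appeal to this axiom: the relative cell filtration of the pushout has subquotients of the form $R\wedge \bigl((L/K)^{\wedge n}\bigr)/\Sigma_n$, and what makes these cofibrations (resp.\ trivial cofibrations) of underlying spaces is precisely that $\Sigma_n$-quotients of iterated pushout-products of (trivial) cofibrations remain (trivial) cofibrations in $\mathbf{Top}_\ast^J$ --- a property that must be supplied as a theorem and cannot be obtained from equivariant-CW generalities as you suggest. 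Once that input is in hand, the rest of your outline (smallness, transfinite composition, the retract and lifting axioms being formal from the transfer) is sound.
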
 
  
Any cofibration $f$ in $\mathbf{CMon}^J$ can be written as a retract of a transfinite  composition of \textit{relatively free} maps $A\rightarrow B$, i.e. maps given by pushouts\vspace{-2pt} 
\begin{diagram}
& \mathbf{T}(X) & \rTo^{ \mathbf{T}(f)} &  &\mathbf{T}(Y) \\
& \dTo & & \SEpbk &  \dTo \\
& A & \rTo & &B
\end{diagram}
for $f : X \rightarrow Y $  pointwise a coproduct of generating cofibration of pointed spaces. 

Strict pushouts of $J$-graded commutative monoid spaces are computed by relative smash products: 
\begin{proposition}
If
$C\leftarrow A \rightarrow B$
is a span of $J$-graded commutative monoid spaces, then $B\mywedge{A} C$ is the pushout of $B \leftarrow A \rightarrow C$. Here $B\mywedge{A} C$ is defined as the coequaliser $\coeq(B\wedge A \wedge C \rightrightarrows B\wedge C) $, endowed with  the multiplication
$ (b_1,c_1) \cdot (b_2,c_2) = (b_1b_2,c_1c_2)$.
\end{proposition}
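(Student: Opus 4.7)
The plan is to verify directly that $B\wedge_A C$, as defined, satisfies the universal property of a pushout in $\mathbf{CMon}^J$. This is the standard ``relative tensor product'' construction, adapted to the pointed Day-convolution setting, and it works because commutativity forces the left and right $A$-actions on $B$ and $C$ to agree, so the coequalizer inherits a well-defined commutative monoid structure.

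First I would check that $B\wedge_A C$ is a $J$-graded commutative monoid space. The two parallel arrows $B\wedge A\wedge C \rightrightarrows B\wedge C$ are the module action maps $(b,a,c)\mapsto (b\cdot a,c)$ and $(b,a,c)\mapsto (b,a\cdot c)$, obtained from the monoid morphisms $A\to B$ and $A\to C$. The proposed multiplication on $B\wedge C$ is $\Sigma_J$-graded (it respects Day convolution) and commutative; I would show it descends to $B\wedge_A C$ by checking, on the underlying $J$-graded spaces, that the two composites $(B\wedge A\wedge C)\wedge(B\wedge C) \rightrightarrows (B\wedge C)\wedge(B\wedge C)\to B\wedge C$ agree after passing to the coequalizer. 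Commutativity of $A$ and of $B,C$ is what makes this work: an element $a$ arising on the middle factor can equivalently be transported to act on either side. The basepoint $1_B\wedge 1_C$ is a strict unit, and associativity is inherited from $B\wedge C$.

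Next I would define the structural maps $i_B\colon B\to B\wedge_A C$ by $b\mapsto[b\wedge 1_C]$ and $i_C\colon C\to B\wedge_A C$ by $c\mapsto [1_B\wedge c]$. Both are clearly morphisms in $\mathbf{CMon}^J$, and the square commutes because, for $a\in A$, one has $[a\cdot 1_B\wedge 1_C]=[1_B\wedge a\cdot 1_C]$ directly from the coequalizer relation. For the universal property, given any $J$-graded commutative monoid space $D$ with monoid maps $\phi\colon B\to D$ and $\psi\colon C\to D$ agreeing on $A$, I would define
\[
\Phi\colon B\wedge C\longrightarrow D,\qquad (b,c)\longmapsto \phi(b)\cdot\psi(c).
\]
This is a $J$-graded pointed map, and the identity $\Phi(b_1,c_1)\cdot\Phi(b_2,c_2)=\phi(b_1b_2)\psi(c_1c_2)$ — which needs commutativity of $D$ to swap $\psi(c_1)$ past $\phi(b_2)$ — shows it is a monoid morphism. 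It factors through the coequalizer because $\phi(b\cdot a)\psi(c)=\phi(b)\phi(a)\psi(c)=\phi(b)\psi(a)\psi(c)=\phi(b)\psi(a\cdot c)$, where the middle equality uses the hypothesis that $\phi|_A=\psi|_A$. Uniqueness is automatic: the images of $i_B$ and $i_C$ generate $B\wedge_A C$ as a commutative monoid, so any monoid morphism out of it is determined by its restrictions.

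The main conceptual point, rather than a real obstacle, is the descent of multiplication to the coequalizer; this is where commutativity of $A,B,C$ is essential, and it is exactly the reason the analogous statement for noncommutative monoid spaces would require a two-sided bar construction instead of a plain coequalizer. Everything else is a straightforward diagram chase in the symmetric monoidal category $(\mathbf{Top}_\ast^J,\wedge)$, so I would present the proof as ``direct verification of the universal property,'' keeping the commutativity use explicit at the one step where it matters.
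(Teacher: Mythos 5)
Your proof is correct and follows the standard direct verification of the universal property; the paper states this proposition without proof, so there is nothing to compare against. Two small points: first, $1_B\wedge 1_C$ is the \emph{unit} of $B\wedge_A C$, not its basepoint (the basepoint is the collapsed zero); second, to make the descent of the multiplication fully rigorous you are implicitly using that the Day convolution smash product on $\mathbf{Top}_*^J$ preserves coequalizers in each variable (it does, being a left adjoint), so that $(B\wedge_A C)\wedge(B\wedge_A C)$ is itself presented as a coequalizer against which the two parallel maps can be compared. Your closing observation about why a plain coequalizer suffices only in the commutative setting is well taken.
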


We can place $\mathbf{CMon}_{}$  in the framework of Batanin-Berger \cite{batanin2017homotopy} and White \cite{white2017model}. 
By (an evident graded generalisation of) Theorem 5.4. in \cite{white2017model} and Lemma 2.3.  in \cite{hovey1998monoidal},  the category  $\mathbf{Top}_\ast^J$ satisfies  White's  {strong commutative monoid axiom} and the monoid axiom, respectively.

We say that a  $J$-graded commutative monoid space is \textit{well-pointed} if  each of its components is a well-pointed space.  
Proposition $3.5$ in  \cite{white2017model} implies that if $M\rightarrow N$ is a cofibration of $J$-graded commutative monoid spaces and $M$ is well-pointed, then so is $N$. In particular, every cofibrant such monoid space is well-pointed. The following result appears as Theorem 4.18 in \cite{white2017model} (generalising Theorem 3.1 in \cite{batanin2017homotopy}):
\begin{theorem}[Batanin-Berger, White]
The model structure on  $\mathbf{CMon}^J$  is \textit{relatively left proper}. This means that given a  cofibration $A\xrightarrow{g} C$ and a weak equivalence $A\xrightarrow{f} B$ in $\mathbf{CMon}^J$ with $A,B$ well-pointed, the following pushout  gives rise to a weak equivalence $h$:
\begin{diagram}
A & \rTo^f  & B\\
\dTo^g & \SEpbk  &  \dTo \\
C & \rTo_h  &  D.
\end{diagram}

\end{theorem}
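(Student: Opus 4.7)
The plan is to verify that the hypotheses of the relative left properness theorem of Batanin--Berger (Theorem 3.1 in \cite{batanin2017homotopy}) and White (Theorem 4.18 in \cite{white2017model}) are satisfied in our specific $J$-graded setting, and then to sketch the structural core of their argument. The excerpt has already recorded the key ingredients: the strong commutative monoid axiom and the monoid axiom for $\mathbf{Top}_\ast^J$, together with the fact that cofibrations in $\mathbf{CMon}^J$ preserve well-pointedness. So in essence, the statement is a direct application of their general framework once we confirm that the passage from $\mathbf{Top}_\ast$ to the Day-convolution category $\mathbf{Top}_\ast^J$ preserves every relevant property degree-wise.

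First, I would reduce to the case of a single relatively free cell attachment. Since $g\colon A \to C$ is a cofibration in $\mathbf{CMon}^J$, it is a retract of a transfinite composition of pushouts of generating cofibrations of the form $\mathbf{T}(X_\alpha) \to \mathbf{T}(Y_\alpha)$, where each $X_\alpha \to Y_\alpha$ is a (trivial) generating cofibration of well-pointed $J$-graded spaces. Assuming the single-cell case, weak equivalences between well-pointed objects are preserved under transfinite compositions and retracts of such pushouts by a standard small-object-style argument; so the problem reduces to showing that when $C = A \wedge_{\mathbf{T}(X)} \mathbf{T}(Y)$ for one generating cofibration $X \to Y$, the induced map $h\colon C \to D := B \wedge_{\mathbf{T}(X)} \mathbf{T}(Y)$ is a weak equivalence.

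Second, for this base case, I would analyze $D$ via the skeletal filtration $D_0 \subset D_1 \subset \cdots$, where $D_n$ is obtained from $D_{n-1}$ by a pushout
\begin{equation*}
\begin{diagram}
B \wedge Q^n_{\Sigma_n} & \rTo & B \wedge (Y/X)^{\wedge n}_{\Sigma_n} \\
\dTo & \SEpbk & \dTo \\
D_{n-1} & \rTo & D_n
\end{diagram}
\end{equation*}
with $Q^n$ denoting the $n$-th iterated pushout product of $X\to Y$ and the $\Sigma_n$-coinvariants taken with respect to the canonical action on the factors. The strong commutative monoid axiom on $\mathbf{Top}_\ast^J$ (which, since Day convolution respects cofibrations and weak equivalences in each $J$-degree, is inherited immediately from the corresponding axiom on $\mathbf{Top}_\ast$) asserts that these $\Sigma_n$-quotient maps remain cofibrations in $\mathbf{Top}_\ast^J$. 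Smashing with the well-pointed object $B$ then preserves weak equivalences at each filtration stage, so comparing the analogous filtration of $C$ with that of $D$ and passing to the colimit over $n$ yields that $h$ is a weak equivalence in $\mathbf{Top}_\ast^J$, hence in $\mathbf{CMon}^J$.

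The main obstacle will be the filtration analysis itself: one must verify that the relative smash product $B \wedge_{\mathbf{T}(X)} \mathbf{T}(Y)$ admits the claimed skeletal description as iterated pushouts of pushout-product quotients, which requires a careful combinatorial bookkeeping of the symmetric-group actions intertwined with the relative tensor structure. This is precisely the technical heart of \cite{white2017model} and \cite{batanin2017homotopy}, and no new issues arise from the $J$-grading because Day convolution commutes with all the constructions involved. Well-pointedness of $A$ and $B$ enters only to guarantee that smashing with either object preserves weak equivalences at each stage, a fact which would fail without it and whose necessity explains why the statement is only \emph{relatively} left proper.
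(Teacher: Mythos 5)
Your proposal takes the same route as the paper: the paper itself gives no proof of this theorem, but simply verifies the strong commutative monoid axiom and the monoid axiom degreewise for $\mathbf{Top}_\ast^J$, notes that cofibrations preserve well-pointedness (Proposition 3.5 in White), and then cites Theorem 4.18 of White / Theorem 3.1 of Batanin--Berger. You do the same hypothesis verification and correctly identify the citation, so the approaches match.

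One small slip in your supplementary sketch of the underlying filtration argument: the pushout square building $D_n$ from $D_{n-1}$ should have $B \wedge Y^{\wedge n}_{\Sigma_n}$ in the top right corner, not $B \wedge (Y/X)^{\wedge n}_{\Sigma_n}$. There is no natural map $Q^n \to (Y/X)^{\wedge n}$ (the former is a subobject of $Y^{\wedge n}$, the latter a quotient), and the square you wrote would not reproduce the relative free extension. The quotient $(Y/X)^{\wedge n}_{\Sigma_n}$ only appears as the \emph{cofiber} of $Q^n_{\Sigma_n} \hookrightarrow Y^{\wedge n}_{\Sigma_n}$, hence as the filtration quotient $D_n/D_{n-1}$. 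Since this sketch is extra material not contained in the paper's proof (which stops at the citation), the slip does not affect the comparison, but it would matter if you were to write out the argument in full.
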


Relative left properness allows us to explicitly compute certain homotopy pushouts:
\begin{lemma} \label{pushout}
Assume we are given a pushout of $J$-graded commutative monoid spaces 
\begin{diagram}
A & \rTo & B\\
\dTo^j &  \SEpbk & \dTo \\
C & \rTo & D
\end{diagram}
with $j$ a cofibration and $A$, $B$, $C$ well-pointed. Then the square is a \textit{homotopy pushout} square.
\end{lemma}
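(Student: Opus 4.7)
The plan is to reduce the claim to relative left properness by factoring the map $A\to B$ through a cofibration, comparing the resulting strict pushout to the given one, and invoking the relatively left proper axiom supplied by Batanin--Berger and White.

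First I would factor the monoid map $A\to B$ as $A\xrightarrow{i} \tilde B \xrightarrow{p} B$, where $i$ is a cofibration in $\mathbf{CMon}^J$ and $p$ is a trivial fibration. Since $A$ is well-pointed and cofibrations in $\mathbf{CMon}^J$ preserve well-pointedness (this is Proposition~3.5 of White that was recalled above), the intermediate monoid space $\tilde B$ is also well-pointed. Set $\tilde D := C\mywedge{A} \tilde B$, the strict pushout of the modified span $C\leftarrow A\rightarrow \tilde B$. The key claim is that $\tilde D$ represents the homotopy pushout of the original span $C\leftarrow A\rightarrow B$.

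Once that is in hand, the conclusion follows from one application of relative left properness. Indeed, by associativity of pushouts, $D=B\mywedge{\tilde B} \tilde D$, so we obtain a pushout square
\begin{diagram}
\tilde B & \rTo^{p} & B\\
\dTo & \SEpbk & \dTo \\
\tilde D & \rTo & D
\end{diagram}
The left vertical map $\tilde B\to \tilde D$ is a pushout of the cofibration $j$, hence a cofibration. The horizontal map $p$ is a weak equivalence between the well-pointed objects $\tilde B$ and $B$. Relative left properness therefore forces $\tilde D\to D$ to be a weak equivalence, so $D$ indeed presents the homotopy pushout.

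The main obstacle is to verify the key claim that the strict pushout $\tilde D$ of the span $C\leftarrow A\rightarrow \tilde B$ (both maps cofibrations, all three corners well-pointed) computes the homotopy pushout. In a fully left proper model category this would be automatic, but here only the relative form is available. I would handle this as follows: take a cofibrant replacement $q\colon \tilde A\xrightarrow{\simeq} A$ in $\mathbf{CMon}^J$; since $\tilde A$ is cofibrant it is in particular well-pointed. Factor the composites $\tilde A\to A\to C$ and $\tilde A\to A\to \tilde B$ as cofibrations followed by trivial fibrations, giving cofibrant replacements $\tilde A\hookrightarrow C'\xrightarrow{\simeq}C$ and $\tilde A\hookrightarrow \tilde B'\xrightarrow{\simeq}\tilde B$ of the legs. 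The strict pushout $C'\mywedge{\tilde A}\tilde B'$ is a standard model for the homotopy pushout. Now apply relative left properness twice: first push the weak equivalence $C'\xrightarrow{\simeq}C$ out along the cofibration $\tilde A\hookrightarrow \tilde B'$, and then push the resulting weak equivalence out along the cofibration $j$ (after absorbing a comparison with $q$). Each of these steps is legitimate precisely because every relevant source object is well-pointed, and the intermediate pushouts remain well-pointed by the cofibration-preservation property cited above. Chasing the diagram yields a zig-zag of weak equivalences between $C'\mywedge{\tilde A}\tilde B'$ and $\tilde D$, establishing the key claim and completing the proof.
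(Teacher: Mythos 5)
Your proof is correct and takes essentially the same approach as the paper, which simply cites ``the usual argument for left proper model categories in the relative setting.'' You spell out that argument: take a Reedy cofibrant replacement $C'\leftarrow \tilde A\rightarrow B'$ of the span, observe the strict pushout $C'\mywedge{\tilde A}B'$ computes the homotopy pushout, and then use relative left properness (applicable because every object in sight is well-pointed, cofibrant objects being well-pointed and cofibrations preserving well-pointedness) to carry the comparison through to $D$.

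One remark: the initial factorisation $A\xrightarrow{i}\tilde B\xrightarrow{p}B$ and the detour through $\tilde D=C\mywedge{A}\tilde B$ are unnecessary. Trace through your own ``key claim'' argument and notice it never uses that the second leg $A\to\tilde B$ is a cofibration — only the first leg $j\colon A\to C$ and the legs $\tilde A\hookrightarrow C'$, $\tilde A\hookrightarrow B'$ of the cofibrant replacement need to be cofibrations. So that same cofibrant-replacement-plus-two-applications-of-relative-left-properness chain can be run directly on $C\leftarrow A\rightarrow B$ to compare $C'\mywedge{\tilde A}B'$ with $D$, eliminating the first paragraph entirely. What you have is not wrong, just longer than it needs to be: you apply relative left properness three times (once in the reduction to $\tilde D$, twice in the key claim) where two suffice.
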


\begin{proof}
We apply  the usual argument for left proper model categories in the relative setting.
\end{proof}

We  introduce some variants. Let $S^0$ be the monoid with two elements \mbox{  in degree $0$ (the unit of $J$).}

Define the category  $\mathbf{CMon}^{J,aug}$ of \textit{augmented $J$-graded commutative monoid spaces} as the overcategory $\mathbf{CMon}^J_{/S^0}.$ It inherits a natural model category structure from $\mathbf{CMon}^{J}$ such that the forgetful functor preserves fibrations, cofibrations, and weak equivalences.

We write $\mathbf{CMon}^{J,nu}$ for the category of \textit{nonunital} $J$-graded commutative monoid spaces, which are algebras over the monad $\mathbf{T}^{>0}(X) = \bigvee_{n>0} X^{\wedge n}/_{\Sigma_n}$. Once more, this category is endowed with a model category structure whose fibrations and weak equivalences are defined on the level \mbox{of spaces.}

The \textit{augmentation ideal functor} $I_{(-)} :\mathbf{CMon}_{ }^{J,aug} \rightarrow \mathbf{CMon}_{ }^{J,nu}$ takes an augmented monoid space $A\rightarrow S^0$ and assigns the preimage of the basepoint $0\in S^0$.
Every $J$-graded nonunital commutative monoid space $X$ gives rise to a unital augmented monoid space $S^0\vee  {X}$ by adding a disjoint unit $1$. The functors $I$ and $S^0 \vee (-)$ do \textit{not} assemble to an equivalence.

We can  carry out a similar construction for the symmetric monoidal model category $(\sSet_
\ast,\wedge, S^0)$ of pointed simplicial sets, following Quillen to define a model structure on its category $\mathbf{SCM}^J_{ }$ of  $J$-graded simplicial commutative monoids.
Weak equivalences and fibrations are  defined using the forgetful functor to $\sSet_\ast^J$. 
Again, there is are augmented/nonunital \mbox{versions  $\mathbf{SCM}^{J,aug}$, $\mathbf{SCM}^{J,nu}$.}

\subsection{Extension of Scalars}
Given a ring $R$ and a commutative monoid  space $X$, we will produce a simplicial commutative $R$-algebra $R\otimes X$. Heuristically speaking, we extend scalars from \mbox{$\FF_1$ to $R$.}
Consider the Quillen adjunction $\tilde{F}_R: \mathbf{sSet}_\ast  \leftrightarrows\mathbf{sMod}_R:U$, where  $U$ is the forgetful functor and \mbox{$\tilde{F}_R(\ast \rightarrow X)  =  \coker (\Free_{\Mod_R}(\ast) \rightarrow \Free_{\Mod_R}(X) )$}
is the reduced levelwise free $R$-module construction. This above adjunction is monoidal.
Passing to $J$-graded monoids, we obtain Quillen adjunctions
$ \mathbf{SCM}^J_{ } \leftrightarrows \mathbf{SCR}^J_{R}$ and
\mbox{$ \mathbf{SCM}^{J,aug}_{ } \leftrightarrows \mathbf{SCR}^{J,aug}_{R}$}, whose constituent left and right components will also be denoted by $\tilde{F}_R$ and $U$, respectively. Here $\mathbf{SCR}^J_{R}$  denotes the model category of   $J$-graded simplicial commutative $R$-algebras, i.e. commutative monoids in the category of functors $J\rightarrow \mathbf{sMod}_R$ (equipped with Day convolution) with fibrations and weak equivalences defined pointwise on underlying simplicial sets. The model category $\mathbf{SCR}^{J,aug}_{R}$ is a corresponding augmented variant. Note that no Koszul sign rule is imposed ``in the $J$-direction''.
\begin{definition}
The functor $R\myotimes{ }(-): \bfCMon^{J}_{ } \xrightarrow{\Sing_\bullet} \mathbf{SCM}^{J}_{ }\xrightarrow{\tilde{F}_R} \mathbf{SCR}^{J}_{R} $
given by composing singular chains  with the levelwise free $R$-module construction is called \textit{extension of scalars to $R$}.
We denote the corresponding functor $ \bfCMon^{J,aug}_{ } \xrightarrow{}\mathbf{SCR}^{J,aug}_{R}$ by the same name.
\end{definition}

\begin{proposition}\label{preserves}
The functor $R\otimes (-)$ preserves weak equivalences and homotopy colimits.
\end{proposition}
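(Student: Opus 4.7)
The plan is to prove preservation of weak equivalences and of homotopy colimits separately, and in each case to split the composition $R\otimes(-) = \tilde F_R \circ \Sing_\bullet$ into its two factors, using crucially that weak equivalences in both $\bfCMon^J$ and $\mathbf{SCM}^J$ are detected on underlying $J$-graded (simplicial) sets.

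For preservation of weak equivalences, the argument works entirely on underlying objects. The functor $\Sing_\bullet$ is the right adjoint in the Quillen equivalence $|-| \dashv \Sing_\bullet$ between $\mathbf{Top}_\ast$ and $\sSet_\ast$, and because every space is fibrant it preserves all weak equivalences. The functor $\tilde F_R$ sends a pointed simplicial set $X$ to $R[X]/R[\ast]$, whose normalised Moore complex computes reduced singular $R$-homology $\tilde H_\ast(X;R)$; since weak equivalences of pointed simplicial sets induce isomorphisms on $\tilde H_\ast(-;R)$, the functor $\tilde F_R$ also preserves weak equivalences, and hence so does the composite $R\otimes(-)$.

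For preservation of homotopy colimits, I would first argue that $\Sing_\bullet$ preserves them: although $\Sing_\bullet$ is formally a right adjoint, the adjunction $|-| \dashv \Sing_\bullet$ lifts to a Quillen equivalence between $\mathbf{SCM}^J$ and $\bfCMon^J$ — because $|-|$ is strong symmetric monoidal, both $\mathbf{Top}_\ast^J$ and $\sSet_\ast^J$ satisfy the commutative monoid axiom (cf.\ Section \ref{indexingmonoid}), and the transfer results of Batanin--Berger \cite{batanin2017homotopy} and White \cite{white2017model} apply. For any diagram $D:\mathcal{I}\to \bfCMon^J$, Reedy cofibrant replacement $Q\Sing_\bullet D \to \Sing_\bullet D$ combined with the fact that the counit $|Q\Sing_\bullet D| \to D$ is a levelwise weak equivalence and that the left Quillen functor $|-|$ commutes with homotopy colimits of cofibrant diagrams yields a natural zigzag $\Sing_\bullet \hocolim D \simeq \hocolim \Sing_\bullet D$. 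For the second factor, $\tilde F_R$ extends to a left Quillen functor $\mathbf{SCM}^J \to \mathbf{SCR}^J_R$ that preserves all weak equivalences; since homotopy colimits are computed via Reedy cofibrant replacement in the projective model structure and $\tilde F_R$ preserves Reedy cofibrations and all weak equivalences, it preserves homotopy colimits as well. The main obstacle will be making rigorous this lift of the Quillen equivalence $|-| \dashv \Sing_\bullet$ to $J$-graded commutative monoid objects, but the commutative monoid axiom framework already cited in Section \ref{indexingmonoid} provides the necessary transfer machinery, after which the remaining arguments are routine.
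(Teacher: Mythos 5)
Your proof is correct and takes essentially the same approach as the paper: decompose $R\otimes(-)=\tilde F_R\circ\Sing_\bullet$, handle weak equivalences on underlying objects, and for homotopy colimits use that $\Sing_\bullet$ is the right half of a Quillen equivalence with fibrant source while $\tilde F_R$ is left Quillen and preserves all weak equivalences. The only differences are cosmetic: you re-derive by a zigzag through $|-|$ what the paper gets by directly citing that the right derived functor of a Quillen equivalence preserves homotopy colimits (and then identifying $\Sing_\bullet$ with $R\Sing_\bullet$ via fibrancy), and for a general small indexing category the appropriate cofibrant replacement of diagrams is projective rather than Reedy, though this does not affect the argument.
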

\begin{proof}
If $M\rightarrow N$ is a weak equivalence of $J$-graded commutative monoid spaces, then it is a weak equivalence of underlying $J$-graded spaces. The morphisms $\Sing_\bullet (M) \rightarrow \Sing_\bullet(N)$ and $(\tilde{F}_R \circ \Sing_\bullet) (M) \rightarrow (\tilde{F}_R \circ \Sing_\bullet)(N)$
are weak equivalences of simplicial sets and hence also weak equivalences of simplicial commutative monoids and simplicial $R$-algebras, respectively.

To see preservation of homotopy colimits, we observe that the functor $\Sing_\bullet: \bfCMon_{ }^J \rightarrow \mathbf{SCM}_{ }^J$ is the right half of a Quillen \textit{equivalence}, which implies that its right derived functor $R\Sing_\bullet$ preserves homotopy colimits.  Since  monoid spaces are fibrant, $\Sing_\bullet$ computes its own right derived functor and thus preserves homotopy colimits.
The functor $\tilde{F}_R$ is left Quillen, and so its left-derived functor $L\tilde{F}_R$ preserves homotopy colimits.
If $Q\rightarrow \id$ is the cofibrant replacement functor on $\mathbf{SCM}^J$, we use that $\tilde{F}_R$ preserves weak equivalences to see that
$L\tilde{F}_R(X)\cong\tilde{F}_R(QX) \xrightarrow{\sim} \tilde{F}_R(X) $
is a weak equivalence.
Hence, given   \mbox{$D: I \rightarrow \mathbf{SCM}_{ }^J$,} we compute $\tilde{F}_R(\myhocolim{I} D) \xleftarrow{\sim}  L\tilde{F}_R(\myhocolim{I} D)  \simeq  \myhocolim{I}   ((L\tilde{F}_R)\circ D ) \xrightarrow{\sim} \myhocolim{I} (\tilde{F}_R\circ D) $.
In the last step, we used that the pointwise equivalence of diagrams
$ \tilde{F}_R \circ Q \circ D \rightarrow  \tilde{F}_R \circ  D $
induces an equivalence on homotopy colimits.
\end{proof}    
\subsection{Monoid Modules over Commutative Monoid Spaces}
A monoid $R\in \mathbf{CMon}^J$ is simply an algebra object in $(\mathbf{Top}_\ast^J, S^0, \wedge)$, and so there is   a naturally defined theory of $R$-modules. We write $\mathbf{MMod}^J_R$ for the resulting category and call its members \textit{$J$-graded $R$-monoid modules}.
We unpack the definition: a $J$-graded $R$-{monoid module}  is a  pointed space $M$ together with a decomposition $M= \bigvee_{j\in J} M_j$ and continuous action maps
$ \cdot: R\times M \rightarrow M$ such that:
\begin{enumerate}[leftmargin=26pt]  
\item The action of $R$ on $M$ is associative.
\item The element $1\in R$ acts as the identity and  $0\cdot m = 0_M$ for all $m\in M$.
\item The multiplication respects the gradings, i.e. $r\cdot m \in M_{j_1+j_2}$ for $r\in R_{j_1}$ and $m \in M_{j_2}$.
\end{enumerate}  
We glued together all basepoints $0_{M,j}$ of $M_j$ to a single point $0_M$.

We can also think of $J$-graded $R$-monoid modules as  algebras over the monad $X \mapsto R\wedge X $.
\begin{warning}  
If $R$ is a topological ring, then the notion of an $R$-module is (evidently) \textit{not} equivalent to the notion of a monoid module over the underlying commutative monoid space.
\end{warning}    
The category  $\mathbf{MMod}^J_R$ carries a symmetric monoidal structure given by the relative smash product
$M\mywedge{R} N := \coeq(M\wedge R \wedge N \rightrightarrows M\wedge N) $. By applying Theorem $2.3.$ in  \cite{schwede2000algebras}, we have:   
\begin{theorem} [Schwede-Shipley]
The category $(\mathbf{MMod}_R^J, \mywedge{R}, R)$ of $R$-monoid objects in $\mathbf{Top}_\ast^J$ carries the structure of a cofibrantly generated monoidal model category where a map $f$ is a fibration or weak equivalences iff the underlying map of pointed spaces has this property. Moreover, $\mywedge{R}$ satisfies the monoid axiom.    
\end{theorem}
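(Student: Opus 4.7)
The plan is to verify the hypotheses of Schwede--Shipley's Theorem~2.3 in \cite{schwede2000algebras} for the $J$-graded pointed space category, and then invoke that theorem directly. Accordingly, I will not construct the model structure by hand but will instead check the three input conditions: that $\mathbf{Top}_\ast^J$ with Day convolution is a cofibrantly generated \emph{monoidal} model category, that it satisfies the \emph{monoid axiom}, and that every object is small with respect to the relevant class of maps.

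First I would record that $\mathbf{Top}_\ast^J$ is a cofibrantly generated model category with generating (trivial) cofibrations obtained by taking the usual generators $I$, $J$ of $\mathbf{Top}_\ast$ and smashing on the left with the representables $S^0_j := j_! S^0$ for $j \in J$, i.e.\ the sets $\{S^0_j \wedge i : i \in I\}_{j \in J}$ and $\{S^0_j \wedge j' : j' \in J\}_{j \in J}$. Smallness follows because every pointed compactly generated weak Hausdorff space is small relative to the subcategory of topological inclusions, and Day convolution preserves this smallness pointwise. The pushout product axiom for $(\mathbf{Top}_\ast^J,\wedge)$ reduces, via the bilinearity of Day convolution and the fact that $S^0_{j_1} \wedge S^0_{j_2} \cong S^0_{j_1+j_2}$, to the pushout product axiom in $\mathbf{Top}_\ast$, which is classical.

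Next I would verify the monoid axiom: every map in the saturation (under transfinite composition and cobase change) of $\{S^0_j \wedge j' \wedge X : j' \in J,\ X \in \mathbf{Top}_\ast^J\}$ is a weak equivalence. This is precisely the statement already quoted from White (Theorem~5.4 of \cite{white2017model}) and Hovey--Shipley (Lemma~2.3 of \cite{hovey1998monoidal}) in the excerpt for the ungraded case; the graded version is immediate because weak equivalences and smash products in $\mathbf{Top}_\ast^J$ are computed wedge-summand-wise, so any such pushout/composition decomposes as a wedge over $J$ of instances of the ungraded statement. At this point Schwede--Shipley's Theorem~2.3 applies and produces the cofibrantly generated model structure on $\mathbf{MMod}_R^J$ whose weak equivalences and fibrations are detected by the forgetful functor $\mathbf{MMod}_R^J \to \mathbf{Top}_\ast^J$, with generators obtained by applying the free $R$-module functor $R \wedge (-)$ to the generating (trivial) cofibrations of $\mathbf{Top}_\ast^J$.

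Finally, for the monoidal structure on $\mathbf{MMod}_R^J$ given by $\wedge_R$ and unit $R$, I would invoke the second half of Schwede--Shipley's theorem: because $R$ is a commutative monoid, $\wedge_R$ is symmetric monoidal, and the pushout product axiom for $\wedge_R$ together with the monoid axiom for $\wedge_R$ both follow from the corresponding statements for $\wedge$ on $\mathbf{Top}_\ast^J$ by the standard reduction (writing $M \wedge_R N$ as a reflexive coequaliser of free $R$-modules and checking on generators). The only potential obstacle is the subtle point that the monoid axiom for $\wedge_R$ is not automatic from the monoid axiom for $\wedge$; however, since the relative smash product of a trivial cofibration of $R$-modules with an arbitrary $R$-module is a transfinite composition of pushouts of maps of the form $R \wedge (\text{trivial cofibration in }\mathbf{Top}_\ast^J)$ smashed with something, which reduces to the monoid axiom already established in $\mathbf{Top}_\ast^J$. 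This completes all hypotheses of Schwede--Shipley, giving the theorem.
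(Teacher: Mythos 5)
Your proposal is correct and follows essentially the same route as the paper: one observes that $(\mathbf{Top}_\ast^J, \wedge, S^0)$ is a cofibrantly generated monoidal model category satisfying the monoid axiom (which the paper establishes via White's and Hovey--Shipley's results) and then cites Schwede--Shipley's Theorem~2.3 directly. One small remark: your final paragraph treats the monoid axiom for $\wedge_R$ as an extra issue to argue around, but for commutative $R$ that conclusion is already part of the output of Schwede--Shipley's theorem (their Theorem~2.3(2)), so no additional verification is needed once the hypotheses on $\mathbf{Top}_\ast^J$ are in place.
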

Again, cofibrations are  retracts of transfinite compositions of \textit{relatively free}  maps $A\rightarrow B$. In this context, these are maps which can be obtained as pushouts 
\begin{diagram}
&R\wedge X  & \rTo^{R\wedge f} & &R\wedge Y \\
&\dTo & & \SEpbk & \dTo \\
&A & \rTo & & B
\end{diagram}
for $f:X \rightarrow Y$ a pointwise coproduct of generating cofibrations of pointed spaces. We observe:
\begin{proposition}
The pushout of a diagram $M_1\leftarrow M_0\rightarrow M_2$ in $\mathbf{MMod}^J_R$ is given by taking the  pushout $\displaystyle M_1 \coprod_{M_0} M_2 $ in $\mathbf{Top}_\ast^J$ and endowing it with the evident \mbox{$R$-multiplication.}
\end{proposition}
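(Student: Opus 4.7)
The plan is to show that the forgetful functor $U\colon \mathbf{MMod}^J_R \to \mathbf{Top}_\ast^J$ creates pushouts; the statement of the proposition then follows immediately, because the $R$-action on $M_1\coprod_{M_0}M_2$ inherited by this creation is unambiguously the ``evident'' one induced componentwise from $M_1$ and $M_2$.

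First I would invoke the standard monadicity observation: the category $\mathbf{MMod}^J_R$ is, by construction, the category of algebras for the monad $T=R\wedge(-)$ on $\mathbf{Top}_\ast^J$, with $U$ the corresponding forgetful functor. Since $(\mathbf{Top}_\ast^J,\wedge,S^0)$ is closed symmetric monoidal (with Day convolution and the associated internal hom), the endofunctor $R\wedge(-)$ is a left adjoint and in particular preserves all small colimits; the same is then true of the functor $TT = R\wedge R\wedge(-)$. A routine application of the dual of Beck's theorem (or rather the explicit form recalled, e.g., in Borceux's handbook) gives that $U$ creates all colimits that are preserved by both $T$ and $T^2$, and hence creates pushouts.

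To make this concrete for the reader I would then spell out the resulting construction. Form $P := M_1\coprod_{M_0}M_2$ in $\mathbf{Top}_\ast^J$, with canonical maps $\iota_i\colon M_i\to P$. Because $R\wedge(-)$ preserves pushouts, the square obtained by smashing with $R$ is again a pushout. The module action maps $R\wedge M_i\to M_i\xrightarrow{\iota_i}P$ agree on $R\wedge M_0$ (since they come from the maps $M_0\to M_i$ in $\mathbf{MMod}^J_R$), so they induce a unique morphism $\mu_P\colon R\wedge P\to P$ in $\mathbf{Top}_\ast^J$. The associativity and unit diagrams for $\mu_P$ can then be checked after precomposing with the jointly epimorphic pair $\{R\wedge\iota_i\}_{i=1,2}$ (associativity uses, in addition, that $R\wedge R\wedge(-)$ preserves the pushout), and each such precomposition reduces to the corresponding diagram for $M_i$. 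Hence $(P,\mu_P)$ is a well-defined object of $\mathbf{MMod}^J_R$, and by construction $\iota_1,\iota_2$ are $R$-equivariant.

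Finally I would verify the universal property: given an $R$-monoid module $N$ and equivariant maps $f_i\colon M_i\to N$ coequalising the pair from $M_0$, the universal property of $P$ in $\mathbf{Top}_\ast^J$ produces a unique pointed map $f\colon P\to N$ with $f\iota_i=f_i$. Equivariance of $f$ amounts to the equality $f\mu_P = \mu_N(R\wedge f)$, which again follows after precomposing with $R\wedge\iota_i$ from the equivariance of each $f_i$. No obstacle is genuinely serious here; the only minor thing to be careful about is that the relevant diagrams are checked using that $R\wedge(-)$, and $R\wedge R\wedge (-)$, preserve the pushout, which is precisely what allows the monadic argument to go through.
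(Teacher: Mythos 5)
The paper states this proposition as an observation without supplying a proof, so there is nothing to compare against directly; your argument is the standard way to establish it and is correct. The key point is exactly the one you isolate: $\mathbf{Top}_\ast^J$ with Day convolution is closed symmetric monoidal, so $R\wedge(-)$ is a left adjoint and preserves all colimits; hence the monad $T=R\wedge(-)$ and $T^2$ both preserve the pushout, and the forgetful functor from $T$-algebras creates it (this is the classical Beck/Linton criterion, which does indeed require preservation by both $T$ and $T^2$ as you note). Your subsequent unwinding — forming $P=M_1\coprod_{M_0}M_2$, inducing $\mu_P$ from the pushout of $R\wedge(-)$ applied to the square, checking associativity and unitality after precomposition with the jointly epimorphic $R\wedge\iota_i$, and then checking the universal property — is exactly the explicit content of that criterion. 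This is more detail than the authors chose to record, but it is the proof one would give; there is no gap.
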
   
Given a $J$-graded $R$-monoid module $M$ and a $J$-graded pointed space $X$, we endow the $J$-graded space $M \wedge X$ with the structure of an $R$-monoid module by acting only on the  $M$-factor.

We call a $J$-graded $R$-monoid module \textit{well-pointed} if its underlying $J$-graded space \mbox{has this property.}
\begin{lemma}\label{flat}
Smashing with a cofibrant $J$-graded $R$-monoid module $M$ preserves weak equivalences between well-pointed $J$-graded $R$-monoid modules.   
\end{lemma}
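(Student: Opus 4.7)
The plan is to reduce the statement to the case where $M$ is a relatively free monoid module $R\wedge X$ with $X$ a cofibrant pointed $J$-graded space. In that case one has $(R\wedge X)\mywedge{R} N \cong X\wedge N$, and the lemma becomes the classical ``flatness'' statement that smashing with a cofibrant pointed space preserves weak equivalences between well-pointed spaces -- a fact that holds componentwise in $\mathbf{Top}_\ast$ and therefore, via Day convolution, also in $\mathbf{Top}_\ast^J$.

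First I would appeal to the cell description of cofibrations in $\mathbf{MMod}^J_R$ recalled just before the lemma: every cofibrant $M$ is a retract of a transfinite composition $\ast = M_0 \to M_1 \to \cdots$ in which each step $M_\alpha \to M_{\alpha+1}$ is a pushout along a relatively free map $R\wedge X_\alpha \to R\wedge Y_\alpha$, where $X_\alpha\to Y_\alpha$ is pointwise a coproduct of generating cofibrations in $\mathbf{Top}_\ast^J$. Since $(-)\mywedge{R} N$ commutes with retracts and colimits, it suffices to treat each $M_\alpha$, and I would induct transfinitely on $\alpha$. The successor step rests on the natural identification
\[
M_{\alpha+1}\mywedge{R} N \ \cong \ (M_\alpha\mywedge{R} N)\cup_{X_\alpha\wedge N}(Y_\alpha\wedge N),
\]
which decomposes the inductive step into (i) the previously established map $M_\alpha\mywedge{R} N \to M_\alpha\mywedge{R} N'$ being a weak equivalence, (ii) the maps $X_\alpha\wedge N\to X_\alpha\wedge N'$ and $Y_\alpha\wedge N\to Y_\alpha\wedge N'$ being weak equivalences (the base-case flatness), and (iii) the pushout along the cofibration $X_\alpha\wedge N\to Y_\alpha\wedge N$ preserving these weak equivalences. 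The last ingredient is an instance of left properness along cofibrations between well-pointed objects, which holds componentwise since each component is formed in the left proper category $\mathbf{Top}_\ast$ along a cofibration of well-pointed spaces. At limit ordinals I would invoke the standard fact that sequential colimits of weak equivalences along cofibrations between well-pointed spaces are again weak equivalences.

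The main obstacle I expect is propagating well-pointedness through the induction. To apply the left-properness step at stage $\alpha+1$, I need $M_\alpha\mywedge{R} N$ to remain well-pointed and $X_\alpha\wedge N\to Y_\alpha\wedge N$ to be a cofibration between well-pointed $J$-graded spaces. Both facts follow from the Schwede--Shipley monoid axiom already in force for $\mathbf{MMod}^J_R$, together with the preservation of well-pointedness under cell attachments of this form -- the same input that underlies White's strong commutative monoid axiom invoked earlier in the paper for $\bfCMon^J$. Once this bookkeeping is carried out, the retract argument transports the conclusion from each stage $M_\alpha$ to the original cofibrant $M$ and finishes the proof.
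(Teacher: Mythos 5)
Your proof takes essentially the same approach as the paper: reduce to a transfinite cell decomposition of the cofibrant $M$, handle the free stages $R\wedge X$ via the classical flatness of smashing with a well-pointed $J$-graded space (since $(R\wedge X)\mywedge{R} N\cong X\wedge N$), and conclude each cell-attachment step with the gluing lemma applied to the cube obtained by smashing $S\rightarrow T$ into the defining pushout. One small remark: the paper sidesteps the well-pointedness bookkeeping you anticipate because the gluing step only needs the top maps $A_\alpha\wedge S\rightarrow B_\alpha\wedge S$ to be cofibrations of well-pointed $J$-graded spaces (which already makes both pushout squares homotopy pushouts in the left proper category $\mathbf{Top}^J_\ast$), so well-pointedness of $M_\alpha\mywedge{R}S$ itself is never required.
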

\begin{proof}
  Say that $M$ has property ($\ast$) if it satisfies the conclusion of the theorem.
Since retracts of modules with property ($\ast$) have property ($\ast$), we may assume that $0\rightarrow M$ is given by a transfinite composition
$ 0 = M_0 \rightarrow M_1 \rightarrow \dots \rightarrow M $, where $M_i$ is obtained from $M_{i-1}$  through a pushout
\begin{diagram}
 &R\wedge A_i & \rTo^{ R\wedge f_i} &  &R\wedge B_i \\
&\dTo & & \SEpbk & \dTo \\
&M_{i-1} & \rTo && M_i &  \ \  .
\end{diagram}
Here the top map $f_i:A_i \rightarrow B_i $ is a coproduct of generating cofibration of $J$-graded pointed spaces.
 Since the filtered colimit of weak equivalences of $J$-graded pointed spaces is a weak equivalence, it suffices to check that if $M_{i}$ satisfies ($\ast$), then so does $M_{i+1}$.\\
We first observe that if $X$  is a  well-pointed $J$-graded space,  then $R\wedge X $  has property  ($\ast$).\\
We will now prove that if  $M_i$ satisfies ($\ast$), then so does $M_{i+1}$.
Let $S\rightarrow T$ be a weak equivalence of well-pointed $J$-graded $R$-monoid modules. We consider the cube
\begin{diagram}
S\mywedge{R} ( R\wedge A_i ) & & \rTo&  & S\mywedge{R} (  R\wedge B_i )& & \\
\dTo&   \rdTo^{\simeq}& && \dTo & \rdTo^{\simeq} & &  \\
&  &  T\mywedge{R}(   R\wedge A_i  )& \rTo & &  &T\mywedge{R} (  R\wedge B_i)&\\
& & & \ \   \SEpbk &  & & \dTo\\
S\mywedge{R} M_{i} &\rTo & & &  S\mywedge{R} M_{i+1} &  &  \\
&\rdTo^{\simeq}& \dTo & & \ \ \ \  \ \ \ \ \ \ \  \SEpbk& \rdTo   & & & &\\
& & T\mywedge{R} M_{i} & & \rTo& &  T\mywedge{R} M_{i+1}  &. & & 
\end{diagram} 
The three indicated arrows are weak equivalences by our previous argument and the induction hypothesis. The horizontal maps in the top square are given by smashing a cofibration of $J$-graded  well-pointed  spaces with the $J$-graded well-pointed  spaces $S$ and $T$ respectively, hence these two maps are cofibrations of $J$-graded well-pointed spaces.
Since pushouts of $J$-graded monoid modules are computed in $J$-graded pointed spaces, we conclude that the front and back square are  {homotopy pushouts} of pointed spaces. This implies that the lower right map is a weak equivalence of $J$-graded spaces, hence of $J$-graded $R$-monoid modules.  
\end{proof}
\subsection{Derived Smash Product of Monoid Modules}
We will now define an explicit model for the \textit{derived relative smash product} of graded monoid modules. For this, it will be convenient to use $ \Delta^k = \{ 0 \leq t_1 \leq \dots \leq t_k \leq 1 \ \ | \ \ t_i\in \RR \}$ as a model for the $k$-simplex. We think of $\Delta^k_+$ as a $J$-graded pointed space concentrated in degree $0$.

Fix a well-pointed   $R\in \mathbf{CMon}^J$ and  two well-pointed monoid modules $M,N \in \mathbf{MMod}_R^J$.
For $k\in \NN$, we write points in the $J$-graded space $\Delta^k_+ \wedge M \wedge R^{\wedge k} \wedge N$ as
$$\Bigg\{  \Bigg(\begin{diagram}
\  0&\leq & t_1&\leq & \dots& \leq &  t_k &\leq &1 \ \\
  \ m&,     & r_1&   ,   & \dots&   ,  &r_k& ,&n \  
\end{diagram} \Bigg) \ \ \ \ \bigg| \ \  \ t_i\in \RR,  r_i\in R, m\in M, n\in N\Bigg\} $$
For the sake of readability, we will often drop the second clause of this expression. We define:
 
\begin{definition}\label{homotopypush}
Given
$M,N\in \mathbf{MMod}_R$, we define the space 
$$M \mywedgetwo{R}{\hobased} N =|\Barr_\bullet(M,R,N)| =\bigg( \bigvee_{k \geq 0}\Delta^k_+  \wedge M \wedge R^{\wedge n} \wedge N   \bigg)\bigg/\sim$$ 
Here $\sim$ divides out by face and degeneracy maps.
More explicitly, we quotient out by: \\ \\
$ \Bigg(\begin{diagram}
\  0&\leq& \dots &\leq & t_i&=& t_{i+1}& \leq&  \dots &\leq&1 \ \\
  \ m&,&\dots&,& r_i&, &  r_{i+1}&, &\dots&,&n \  
\end{diagram} \Bigg)
  \ \ \sim \ \  \Bigg(\begin{diagram}
 0& \leq&  \dots& \leq &  t_i& \leq &  \dots & \leq  & 1\\
 \ m&,&\dots&,& r_i \cdot r_{i+1}&,& \dots&,&n \ 
\end{diagram}\Bigg)$
\\ \\ \\ \ 
$\Bigg(\begin{diagram}
\  0  &\leq & \dots &\leq & t_i& \leq& t_{i+1}&   \leq &\dots& \leq &1 \ \\
  \ m&,& \dots&,&               r_i&, & 1&,&\dots&,&n \  
\end{diagram} \Bigg)
  \ \   \sim \ \  \Bigg(\begin{diagram}
 0& \leq   & \dots &\leq &  t_i& \leq  & t_{i+2} & \dots &\leq  & 1\\
 m&,&\dots&,& r_i&, &   r_{i+2} &, \dots  &, & n \ 
\end{diagram}\Bigg)$.
The space $\displaystyle M \mywedgetwo{A}{\hobased} N$ is naturally  a $J$-graded $R$-monoid module by multiplication  {from the left}.
\end{definition}
If the unit $S^0 \rightarrow R$ is a cofibration of   spaces and both $M$ and $N$ are well-pointed, then $M \mywedgetwo{R}{\hobased} N$ is well-pointed since the map $S^0\rightarrow M \mywedgetwo{R}{\hobased} N$ is the realisation of a levelwise cofibration between pointwise good $J$-graded simplicial pointed spaces.
 
\subsection{Cofibrant Replacement of Monoid Modules}
We can use the construction $``M\mywedgetwo{R}{\hobased} N"$ to define an explicit  {cofibrant replacement} functor.  
For this, let $R\in \mathbf{CMon}^J$ be a well-pointed graded commutative monoid space such that the unit $S^0\rightarrow R$ is a cofibrant map of spaces.
\begin{proposition} 
Given a well-pointed $J$-graded   $R$-monoid module $M$,  the monoid module $R \mywedgetwo{R}{\hobased} M $ is  cofibrant and the multiplication map
$ R \mywedgetwo{R}{\hobased}  M \rightarrow M  $ is a weak equivalence. 
\end{proposition}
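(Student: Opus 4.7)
The plan is to recognise $R\mywedgetwo{R}{\hobased}M$ as the geometric realisation of the simplicial $J$-graded $R$-monoid module
$\Barr_\bullet(R,R,M)$ with $\Barr_n(R,R,M)=R\wedge R^{\wedge n}\wedge M$ (viewed as an $R$-monoid module through the leftmost copy of $R$), whose face maps are given by the multiplications $R\wedge R\to R$ and the action $R\wedge M\to M$, and whose degeneracies insert the unit $S^0\to R$. Under this identification, the multiplication map $R\mywedgetwo{R}{\hobased}M\to M$ is the realisation of the augmentation $\Barr_\bullet(R,R,M)\to M$ into the constant simplicial object, given levelwise by iterated multiplication.

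For the weak equivalence, I would use the classical extra degeneracy argument. The augmented simplicial $J$-graded pointed space underlying $\Barr_\bullet(R,R,M)\to M$ admits an extra degeneracy $s_{-1}$ inserting the unit $1\in R$ in a freshly created leftmost slot; one checks directly that $s_{-1}$ is compatible with the simplicial identities and thus furnishes a simplicial contraction onto the constant object at $M$. Because the underlying simplicial $J$-graded pointed space is good (its degeneracies are cofibrations, since $S^0\to R$ is one and smashing with the well-pointed factors $R^{\wedge n-1}\wedge M$ preserves cofibrations by the pushout-product / monoid axioms established for $\mathbf{Top}_\ast^J$), geometric realisation turns this simplicial homotopy equivalence into a weak equivalence of $J$-graded pointed spaces, and hence of $R$-monoid modules since the forgetful functor creates weak equivalences.

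For cofibrancy, I would use the skeletal filtration $F_n=|\sk_n\Barr_\bullet(R,R,M)|$ of the realisation inside $\mathbf{MMod}_R^J$. Each inclusion $F_{n-1}\hookrightarrow F_n$ is a pushout of $R$-monoid modules along the relatively free map obtained by smashing $\partial\Delta^n_+\hookrightarrow\Delta^n_+$ with $R\wedge(L_nR^{\wedge\bullet}\hookrightarrow R^{\wedge n})\wedge M$, where $L_n$ denotes the $n$-th latching object. The hypothesis that $S^0\to R$ is a cofibration of $J$-graded pointed spaces, iterated through the pushout-product axiom in $\mathbf{Top}_\ast^J$ and combined with well-pointedness of $M$, shows that each latching inclusion is a cofibration of $J$-graded pointed spaces. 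Hence each $F_{n-1}\hookrightarrow F_n$ is a relatively free (hence cofibrant) map of $R$-monoid modules, $F_0=R\wedge M$ is cofibrant, and $R\mywedgetwo{R}{\hobased}M=\mycolim{n}F_n$ is cofibrant as a transfinite composition of such.

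The main obstacle will be the Reedy-cofibrancy / latching computation in the third paragraph: one has to chase the iterated pushout-products carefully to see that $L_n(R^{\wedge\bullet})\hookrightarrow R^{\wedge n}$ remains a cofibration of $J$-graded pointed spaces under the sole assumption that $S^0\to R$ is one. Once this bookkeeping is in place, both the equivalence and the cofibrancy reduce to standard simplicial formalism and to the monoid axiom already recorded for $(\mathbf{Top}_\ast^J,\wedge,S^0)$.
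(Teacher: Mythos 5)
Your proof is correct and follows essentially the same route as the paper's: the weak equivalence is established by the extra-degeneracy (backwards contracting homotopy) argument together with goodness of the simplicial bar object forced by $S^0\hookrightarrow R$ being a cofibration, and cofibrancy is obtained by exhibiting the skeletal filtration of $|\Barr_\bullet(R,R,M)|$ as a transfinite composition of relatively free maps, identified via the pushout-product of $\partial\Delta^K_+\hookrightarrow\Delta^K_+$ with the latching inclusion into $R^{\wedge K}$ smashed with $R\wedge(-)\wedge M$. The only cosmetic difference is that the paper makes the latching object explicit as $L_K=\{(r_1,\dots,r_K)\mid\exists\,i:\ r_i=1\}\subset R^{\wedge K}$, whereas you keep it abstract.
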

\begin{proof}
For each $K\in \ZZ_{\geq 0}$, consider  $D_K = | \sk_K \Barr_\bullet(R,R,M)| =  \bigg( \bigvee_{k= 0}^K\Delta^k_+  \wedge  R \wedge R^{\wedge k} \wedge M   \bigg)\bigg/\sim$.
An  $R$-monoid module structure on $D_K$ can be defined using the  leftmost  copy of $R$.
We obtain a transfinite composition of $R$-monoid modules
$ R \wedge M= D_0 \hookrightarrow D_1 \hookrightarrow \dots \hookrightarrow R \mywedgetwo{R}{\hobased} M$.
Since transfinite compositions of cofibrations are cofibrations and the monoid module $D_0$ is  cofibrant, it suffices to prove that all of the above maps are cofibrations in order to establish the first claim.

Let $L_{K} \subset R^{\wedge K}$ denote the subspace  of all points
$\{ (r_1 ,   \dots ,  r_k)    \ |\  r_i\in R , \ \exists i \ \mbox{s.t.}\  r_i=1\}$.
There is a natural pushout of $J$-graded $R$-monoid modules 
\begin{diagram}
(\partial \Delta^K _+ \wedge R\wedge R^{\wedge K} \wedge M) \coprod_{\partial \Delta^K_+ \wedge R \wedge L_{K}\wedge M}   ( \Delta^K_+ \wedge R\wedge L_{K}\wedge M) & \rInto & & \Delta^K_+  \wedge R\wedge R^{\wedge K} \wedge M  \\
  & & &  \\
\dTo & & & \dTo \\
D_{K-1}& \rInto& &  D_{K} & .
\end{diagram} 
Since cofibrations are closed under cobase change, it is enough to prove that the top horizontal  map is a cofibration of $J$-graded monoid modules.  This map is obtained by 
starting with the cofibration of $J$-graded pointed spaces $(\partial \Delta^K  \wedge R^{\wedge K} \wedge M) \coprod_{\partial \Delta^K_+ \wedge L_{K}\wedge M}   ( \Delta^K_+  \wedge L_{K}\wedge M) \rightarrow \Delta^K_+  \wedge R^{\wedge K} \wedge M $
and applying the left Quillen functor $R\wedge(-)$ -- it is therefore a cofibration. The first claim follows.

To prove that the map $ R \mywedgetwo{R}{\hobased}  M \rightarrow M  $ is an equivalence, it suffices to show that this holds true for the underlying map of spaces. We note that this map is obtained from the augmented simplicial space
$X_\bullet:  \Barr_\bullet(R,R,M) \rightarrow M$.
The unit of $R$ gives rise to  a {backwards contracting homotopy} $S: X_n \rightarrow X_{n+1}$. This implies that we have a weak equivalence after applying $\myhocolim{\Delta^{op}}$. Since $S^0\rightarrow R$ is a cofibration, this homotopy colimit is  computed by the geometric realisation.   
\end{proof}
Since $M \mywedgetwo{R}{\hobased} N \cong M \mywedgetwo{R}{}(R \mywedgetwo{R}{\hobased} N) $, we see that the derived smash product deserves its name. 
\subsection{Derived Pushout of Commutative Monoid Spaces }
We construct the derived pushout:
\begin{definition}
If $B \leftarrow A \rightarrow C$ is a span of well-pointed   $J$-graded commutative monoid spaces, we endow 
the space $B \mywedgetwo{A}{\hobased} C$ with a multiplication satisfying
\\
\\
$\Bigg(\begin{diagram}
   0 &\leq  &    t_{i_1 }&\leq& \dots & \leq &  t_{i_n}& \leq& 1\\
\ b_1&,& a_{i_1}&, & \dots &,& a_{i_n}&,& c_1 \ 
\end{diagram} \Bigg) \ \ \ \ \ \cdot \ \ \ \ \ \ 
\Bigg( \begin{diagram}
 0 & \leq &    t_{j_1 }&\leq& \dots & \leq  & t_{j_m}& \leq & 1\\
\ b_2& ,& a_{j_1}&, & \dots &, &a_{j_m}&, & c_2 \  
\end{diagram} \Bigg)   \ \ \ = \ \ \  \\ 
$
$$ _{} \ \ \ \ \ \ \ \ \ \ \ \ \  \ \ \ \ \ \ \ \ \ \   \ \ \ \ \ \ \ \ \ \ \ \ \ \ \  \ \ \ \ \ \ \ \ \ \ \ \ \ \ \ \ \  \ \ \ \ \ 
\Bigg( \begin{diagram}
  0 & \leq &    t_{1}&\leq& \dots & \leq &  t_{n+m} & \leq & 1\\
\ b_1b_2&,& a_{1}&, & \dots &, & a_{n+m}&, &c_1c_2 \  
\end{diagram} \Bigg) 
$$\\
for any disjoint union 
$\{1<\dots< m+n\} \   \ =\  \  \{i_1 < \dots < i_n\} \coprod \{j_1 < \dots < j_m\}  $.

Observe that this multiplication is well-defined, commutative and associative.
\end{definition}
\begin{proposition}
Assume we are given a span of well-pointed   $J$-graded commutative monoid spaces $B \leftarrow A \rightarrow C$ and that the unit of $A$ is a cofibration of spaces. The square
\begin{diagram}
A &  \rInto & A\mywedgetwo{A}{\hobased} C \\
\dTo & \SEpbk& \dTo \\
B    & \rInto & B \mywedgetwo{A}{\hobased} C
\end{diagram}
is a pushout, and in fact a homotopy pushout in $\mathbf{CMon}_{ }^J$.
\end{proposition}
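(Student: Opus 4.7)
The proposition has two assertions: that the square is a strict pushout in $\bfCMon^J$, and, more substantively, that it is a homotopy pushout. My plan is to address them in order.

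For the strict pushout, I would use an adjunction argument. The change-of-base functor $B \mywedgetwo{A}{}(-) \colon \bfCMon^{J,A/} \to \bfCMon^{J,B/}$ is left adjoint to restriction along $A \to B$, and therefore preserves all colimits. Since $A \mywedgetwo{A}{\hobased} C = |\Barr_\bullet(A,A,C)|$ is a geometric realization and $B \mywedgetwo{A}{}(A \wedge A^{\wedge k} \wedge C) \cong B \wedge A^{\wedge k} \wedge C$, one obtains the identification
\[
B \mywedgetwo{A}{}\bigl(A \mywedgetwo{A}{\hobased} C\bigr)\ \cong\ |\Barr_\bullet(B,A,C)|\ =\ B \mywedgetwo{A}{\hobased} C.
\]
Combined with the earlier proposition identifying strict pushouts in $\bfCMon^J$ with relative smash products, this exhibits $B \mywedgetwo{A}{\hobased} C$ as the strict pushout of the cospan $B \leftarrow A \to A \mywedgetwo{A}{\hobased} C$. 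Well-pointedness of $A \mywedgetwo{A}{\hobased} C$ follows from the remark after Definition \ref{homotopypush}, using the hypothesis that $S^0 \to A$ is a cofibration of spaces.

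For the homotopy pushout claim I would reduce to Lemma \ref{pushout} (relative left properness) via cofibrant replacement. First, factor $A \to B$ in $\bfCMon^J$ as $A \xrightarrow{j} \tilde B \xrightarrow{p} B$ with $j$ a cofibration and $p$ a trivial fibration; since $A$ is well-pointed and cofibrations preserve well-pointedness, $\tilde B$ is well-pointed. Lemma \ref{pushout} then says that the strict pushout $\tilde B \mywedgetwo{A}{} C$ is the homotopy pushout of $\tilde B \leftarrow A \to C$, hence also of $B \leftarrow A \to C$. To finish, I would compare $\tilde B \mywedgetwo{A}{} C$ with $B \mywedgetwo{A}{\hobased} C$ through the natural zigzag
\[
B \mywedgetwo{A}{\hobased} C\ \xleftarrow{\,p_*\,}\ \tilde B \mywedgetwo{A}{\hobased} C\ \xrightarrow{\;\varepsilon\;}\ \tilde B \mywedgetwo{A}{} C,
\]
where $p_*$ is induced by the weak equivalence $p$ and $\varepsilon$ is the bar augmentation.

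The main obstacle is showing that the two arrows in the zigzag are weak equivalences. For $p_*$, this is a homotopy invariance statement for the bar construction in the first slot, which I would establish by comparing skeletal filtrations of $\Barr_\bullet(\tilde B,A,C)$ and $\Barr_\bullet(B,A,C)$; the hypothesis that $S^0 \to A$ is a cofibration of spaces ensures that both simplicial objects are good, so that their realizations compute homotopy colimits. For $\varepsilon$, one needs the bar construction to collapse to the strict smash whenever $j \colon A \to \tilde B$ is a cofibration — this is the crux, and it is handled by an extra-degeneracy argument on the augmented simplicial object $\Barr_\bullet(\tilde B,A,C) \to \tilde B \mywedgetwo{A}{} C$. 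Running the usual acyclic extra-degeneracy lemma in $\bfCMon^J$ requires a section of $j$ of the appropriate sort, which can be extracted from a cellular presentation of $j$ as a transfinite composition of relatively free cofibrations, paralleling the skeletal argument used in the preceding proposition on cofibrant replacements of monoid modules. Combining these two equivalences with the strict pushout identification from the first paragraph yields the desired homotopy pushout.
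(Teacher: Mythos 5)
Your strict-pushout argument (change-of-base adjunction plus the identification $B\mywedge{A}(A\mywedgetwo{A}{\hobased}C)\cong B\mywedgetwo{A}{\hobased} C$) is fine and fills in what the paper only asserts. Your opening move for the homotopy-pushout claim is also the paper's: factor $A\to B$ as a cofibration followed by a trivial fibration and invoke Lemma \ref{pushout} via relative left properness. The divergence, and the gap, is in the comparison step.

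You propose to close a zigzag
\[
B\mywedgetwo{A}{\hobased} C \ \xleftarrow{\,p_*\,}\ \tilde B\mywedgetwo{A}{\hobased} C \ \xrightarrow{\;\varepsilon\;}\ \tilde B\mywedge{A} C
\]
and claim $\varepsilon$ is a weak equivalence by an extra-degeneracy argument, with the required ``section of $j$'' extracted from a cellular presentation of $j\colon A\to\tilde B$. This step fails. A cofibration of commutative monoid spaces need not admit a retraction; a cellular presentation exhibits $\tilde B$ as built from $A$ by attaching free cells, and cell attachments are not split off. So $\Barr_\bullet(\tilde B,A,C)\to \tilde B\mywedge{A}C$ has no extra degeneracy in general. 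The fact that the bar augmentation collapses when $j$ is a cofibration is a \emph{flatness} phenomenon, not a contracting-homotopy phenomenon, and would require a separate lemma (that a $\bfCMon^J$-cofibration has cofibrant, or at least flat, underlying $A$-monoid module) that the paper never proves or needs.

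The paper avoids the issue by never leaving the bar-construction world. After factoring $A\to B$ as $A\to B'\to B$ with $A\to B'$ a cofibration, it applies Lemma \ref{pushout} to the square with corners $A$, $A\mywedgetwo{A}{\hobased}C$, $B'$, $B'\mywedgetwo{A}{\hobased}C$, which is a strict pushout with cofibration leg $A\to B'$ by exactly your change-of-base identification. The remaining comparison $B'\mywedgetwo{A}{\hobased}C \to B\mywedgetwo{A}{\hobased}C$ is then recognised as $(-)\mywedge{A}(A\mywedgetwo{A}{\hobased}C)$ applied to the weak equivalence $B'\to B$ of well-pointed $A$-monoid modules; since $A\mywedgetwo{A}{\hobased}C$ is a cofibrant $A$-monoid module by the preceding proposition, Lemma \ref{flat} gives a weak equivalence. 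One map, no zigzag, no augmentation $\varepsilon$. Your argument for $p_*$ (skeletal filtration and goodness) would in effect re-prove Lemma \ref{flat} in this special case; you should quote that lemma directly and drop the $\varepsilon$ leg of your zigzag altogether.
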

\begin{proof}
It is evident hat the square is indeed an ordinary pushout square.
In order to prove the second claim, we factor $(A\xrightarrow{g} B) = (A \xrightarrow{g'} B' \xrightarrow{w} B)$ as a cofibration followed by a trivial fibration in  $\mathbf{CMon}^J_{}$. The fact that $A$ is well-pointed and $A \xrightarrow{g'} B' $ is a cofibration implies that $B'$ is well-pointed. The space $ A\mywedgetwo{A}{\hobased} C$ also has this property.  
We obtain a diagram  
\begin{diagram}
A & \rTo & A\mywedgetwo{A}{\hobased} C \\
\dTo & \SEpbk_{\hobased}& \dTo \\
B' & \rTo & B' \mywedgetwo{A}{\hobased} C\\
\dTo & & \dTo \\
B & \rTo & B\mywedgetwo{A}{\hobased} C & .\\
\end{diagram}
The upper square is a homotopy pushout by Lemma \ref{pushout} since  $A \rightarrow B'$ is a cofibration. 

The (underlying) map of \textit{$A$-monoid modules} $B'\mywedgetwo{A}{\hobased} C   \rightarrow B\mywedgetwo{A}{\hobased}C$ 
can be obtained by smashing the weak equivalence  $B'\rightarrow B$ of well-pointed $A$-monoid modules with the cofibrant $J$-graded $A$-monoid module $(A\mywedgetwo{A}{\hobased} C)$.  
This implies by Lemma \ref{flat} that  $B'\mywedgetwo{A}{\hobased} C   \rightarrow B\mywedgetwo{A}{\hobased} C$  is a weak equivalence of $A$-monoid modules and hence also of graded commutative monoid spaces. 
\end{proof}
We now turn to \textit{augmented} commutative monoid spaces. As explained in  Lemma $1.3.$ of \cite{hirschhorn2015overcategories}, pushouts are computed in monoid spaces.

\begin{definition}
The suspension $\Sigma^\otimes A\in \mathbf{CMon}^{J,aug}$ of an augmented well-pointed   $J$-graded  commutative monoid space $A\rightarrow S^0$ is given by $\displaystyle S^0 \mywedgetwo{A}{\hobased} S^0$.
\end{definition}
\begin{remark} {This construction also appears in the work of 
{McCord} \cite{mccord1969classifying} and 
Kuhn \cite{kuhn2003mccord}}.
It is a model for the suspension in the pointed $\infty$-category of augmented   commutative \mbox{monoid spaces.}
\end{remark}

\begin{definition} A sequence $A\rightarrow B \rightarrow C$ in $\mathbf{CMon}^{J,aug}$ (or in $\mathbf{SCR}^{J,aug}$)  is a \textit{cofibre sequence} if the following square is a homotopy pushout: 
\begin{diagram}
A & \rTo & B\\
\dTo & \SEpbk_{\hobased}& \dTo\\
S^0 & \rTo & C .
\end{diagram} 
\end{definition} 
We can detect cofibre sequences of monoid spaces algebraically:
\begin{lemma} \label{integralcheck}
Let $ A \xrightarrow{} B \rightarrow C$
be a sequence of  monoid spaces in $\mathbf{CMon}^{J,aug}$
for which the induced sequence $ \ZZ\myotimes{}A \rightarrow \ZZ\myotimes{}B \rightarrow \ZZ\myotimes{}C $
is a cofibre sequence in $\mathbf{SCR}^{aug}$.\\
Then $ A \rightarrow B \rightarrow C$ is a cofibre sequence of $J$-graded augmented commutative monoid spaces.
\end{lemma}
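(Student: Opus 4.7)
The plan is to construct the would-be cofibre and compare it with $C$ by passing to integral chains, where the hypothesis becomes directly usable.

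First I would form the homotopy pushout $D := S^0 \myotimes{\hobased}_A B$ in $\mathbf{CMon}^{J,aug}$, using the bar-construction model $S^0 \mywedgetwo{A}{\hobased} B$ introduced before the lemma. After cofibrantly replacing the span $S^0 \leftarrow A \rightarrow B$ (in particular making the unit $S^0 \rightarrow A$ a cofibration), Lemma \ref{sc} guarantees that $D$ is again simply connected, and by the universal property of the pushout there is a canonical comparison map $\upvarphi \colon D \rightarrow C$. Showing that $A \rightarrow B \rightarrow C$ is a cofibre sequence is the same as showing $\upvarphi$ is a weak equivalence in $\mathbf{CMon}^{J,aug}$, which (since weak equivalences are detected on underlying $J$-graded spaces) is the same as showing that each component $D_j \rightarrow C_j$ is a weak homotopy equivalence of pointed spaces.

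Next I would apply $\ZZ\myotimes{}(-)$ to $\upvarphi$. By Proposition \ref{preserves}, extension of scalars to $\ZZ$ preserves weak equivalences and homotopy colimits, so $\ZZ\myotimes{} D$ computes the homotopy pushout of the span $\ZZ\myotimes{}S^0 \leftarrow \ZZ\myotimes{}A \rightarrow \ZZ\myotimes{}B$ in $\mathbf{SCR}^{aug}$; by the hypothesis of the lemma this homotopy pushout is weakly equivalent to $\ZZ\myotimes{}C$, so $\ZZ\myotimes{}\upvarphi \colon \ZZ\myotimes{}D \rightarrow \ZZ\myotimes{}C$ is a weak equivalence in $\mathbf{SCR}^{aug}$. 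Since weak equivalences of simplicial commutative rings are detected on underlying simplicial abelian groups, the induced map on reduced integral singular chains $\tilde{C}_\bullet(D_j,\ZZ) \rightarrow \tilde{C}_\bullet(C_j,\ZZ)$ is a quasi-isomorphism for every $j\in J$; equivalently, $\upvarphi_j$ induces an isomorphism on reduced integral homology in every degree $j$.

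Finally, because both $D$ and $C$ are simply connected, each $D_j$ and $C_j$ is a simply connected pointed space. The classical integral Whitehead theorem therefore upgrades the homology isomorphism to a weak homotopy equivalence $D_j \rightarrow C_j$ for every $j$. Combining all degrees gives that $\upvarphi$ is a pointwise weak equivalence of $J$-graded spaces, hence a weak equivalence in $\mathbf{CMon}^{J,aug}$, which is exactly the claim.

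The main obstacle is making the reduction to Whitehead's theorem legitimate; this requires precisely the simply-connectedness of the strict pushout $D$, which is the sole reason for the hypotheses on fibres over $0$ and $1$ in the definition, and which must be arranged via a cofibrant replacement of the span so that Lemma \ref{sc} applies. Once this is in place, the rest is purely formal: preservation of homotopy colimits by $\ZZ\myotimes{}(-)$ transports the cofibre-sequence hypothesis across, and the Hurewicz/Whitehead comparison does the remaining work.
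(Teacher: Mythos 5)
Your proposal follows the paper's own proof quite closely: form the cofibre via the bar-model homotopy pushout, compare with $C$, apply $\ZZ\otimes(-)$ and use Proposition~\ref{preserves} to transport the hypothesis across, then close with a Hurewicz/Whitehead argument on simply connected spaces. The only issue is a small but genuine imprecision in the final step. When you write that ``because both $D$ and $C$ are simply connected, each $D_j$ and $C_j$ is a simply connected pointed space,'' this is not literally true in degree $j=0$: by the paper's definition, a simply connected augmented monoid space has fibre over $1$ equal to a point in degree $0$, so $D_0$ decomposes as $D_0^0\vee S^0$ with $D_0^0$ simply connected, and this wedge is \emph{disconnected}. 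The classical Whitehead theorem therefore does not apply directly to $\upvarphi_0\colon D_0\to C_0$. The fix is precisely what the paper does: since $\upvarphi$ is a map of augmented objects it splits as $\upvarphi^0\vee\id_{S^0}$ where $\upvarphi^0$ is the map on fibres over $0$; the homology isomorphism then forces $\upvarphi^0$ to be a homology isomorphism between simply connected spaces in every degree, and Whitehead applies to $\upvarphi^0$, whence $\upvarphi$ is a weak equivalence. With that one-line correction inserted, your argument matches the paper's.
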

\begin{proof}
We  take the cofibre of the first map, i.e. the homotopy pushout of $f$ against  $A\rightarrow S^0$:
\begin{diagram}
A & \rTo_f & B &\rTo & C \\
 &           &  & \rdTo & \uTo_k  \\
 & &    &   &   \cofib(f)
\end{diagram}
The original sequence is a homotopy cofibre sequence if and only if $k$ is a weak equivalence of $J$-graded spaces.  We apply the reduced integral chains functor $\ZZ\otimes (-)$ and obtain a diagram
\begin{diagram}
\ZZ \myotimes{ } A & \rTo_{\ZZ \myotimes{}f} & \ZZ \myotimes{ }B &\rTo & \ZZ \myotimes{ } C \\
 &           &  & \rdTo & \uTo_{ \ZZ \myotimes{ } k}  \\
 & &    &   &   \ZZ \myotimes{ }\cofib(f).
\end{diagram}
As  $\ZZ\myotimes{} (-)$ preserves homotopy colimits (cf.\ Proposition \ref{preserves}), we see that
\mbox{$ \ZZ \myotimes{}\cofib(f) \simeq \cofib(\ZZ \myotimes{} f) $.}
Our assumption implies that $\ZZ \myotimes{}k$ is a weak equivalence, and hence $k$ induces an isomorphism on integral homology.
Commutative monoid spaces are in particular $H$-spaces, and therefore simple, so Whitehead's theorem implies that $k$ is a weak equivalence.
\end{proof}

\subsection{Derived Pullbacks of Commutative Monoid Spaces}
The theory of pullbacks is substantially more straightforward since fibrations are transported from $J$-graded pointed spaces.

\begin{definition}\label{hpbk}
Given a cospan $B \xrightarrow{f} A \xleftarrow{g} C$ of $J$-graded   commutative monoid spaces, we write $A^I$ for the $J$-graded space whose $j^{th}$ component is $A_j^I := \Map_{\mathbf{Top}}([0,1], A_j)$.
We endow the $J$-graded space $B\mytimestwo{A}{\hobased} C $ defined by
$(B\mytimestwo{A}{\hobased} C)_j := \left\{(b, \upalpha, c) \in B_j\times A^I_j \times C_j \  \  |  \ \  \upalpha(0) = f(b), \upalpha(1) = g(c) \right\} $
with a commutative monoid structure given by 
$(b_1, \upalpha_1, c_1) \cdot (b_2, \upalpha_2, c_2) = (b_1 b_2, \upalpha_1 \upalpha_2, c_1c_2) $.
Here the path $\upalpha_1 \upalpha_2 $ is defined using pointwise multiplication in $A$.
\end{definition} 
\begin{proposition} The homotopy pullback of a 
cospan $B \xrightarrow{f} A \xleftarrow{g} C$ of $J$-graded   commutative monoid spaces is given by $B\mytimestwo{A}{\hobased} C $ (in the sense of Definition \ref{hpbk} above).

\end{proposition}
\begin{proof}
We define $D$ by a homotopy pullback in $J$-graded commutative monoid spaces:
\begin{diagram}
B\mytimestwo{A}{\hobased} C & &  & &  \\
& \rdTo(2,4)  \rdDashto \rdTo(4,2)&  &  &\\
&& D\SEpbk&\rTo  &  C\\
 & & \dTo &{}^h& \dTo \\
 &  & B& \rTo & A
\end{diagram} 
As the forgetful functor $U: \bfCMon^J_{} \rightarrow \mathbf{Top}^J_\ast$ is right Quillen, its right derived functor preserves homotopy pullbacks. The functor $U$ in fact computes its own right derived functor since all graded commutative monoid spaces are fibrant. Thus, $U$ preserves homotopy pullbacks. After applying $U$, the dashed arrow is a 
weak equivalence   since $B\mytimestwo{A}{\hobased} C$ computes the \mbox{homotopy pullback in $\mathbf{Top}^J_\ast$.}
\end{proof}

\begin{definition}
Given a $J$-graded commutative monoid space  $A \in \bfCMon^J_{}$, the 
homotopy pullback 
$S^0 \mytimestwo{A}{\hobased }S^0  $
is denoted by $\Omega^{\otimes} A$.
\end{definition}
\begin{remark}
The monoid $ \Omega^{\otimes} A$  models the loop object of $A$  in the pointed $\infty$-category of augmented $J$-graded commutative monoid spaces.
\end{remark}
\begin{remark}
If $A\rightarrow S^0$ has fibres $A^0$ and $A^1$ over $0$ and $1$, then 
the underlying  $J$-graded space of $\Omega^{\otimes} A$ is given by $\Omega A = \Omega A^0  \coprod \Omega A^1$.\end{remark} 
\subsection{The Suspension-Loops Adjunction}
We will now set up a Quillen adjunction 
$$\Sigma^{\otimes} : \bfCMon_{}^{J,aug} \leftrightarrows \bfCMon_{}^{J,aug} : \Omega^{\otimes}$$
by defining the unit $\eta_A$ and the counit  $\epsilon_A$ by
by 
$$\eta_A: A \rightarrow \Omega^{\otimes} \Sigma^{\otimes} A  \ \ \mbox{ by } \ \ a \ \ \ \mapsto  \  \ \ \upalpha_a := \Bigg((0\leq s \leq 1)\mapsto \Bigg(\begin{diagram}
\  0&\leq & s&\leq &1 \ \\
  \ & & a &  \  
\end{diagram} \Bigg)\Bigg)$$
$$\epsilon_A:  \Sigma^{\otimes} \Omega^{\otimes} A\rightarrow A\ \ \mbox{ by } \ \ \Bigg(\begin{diagram}
\  0&\leq & t_1&\leq& \dots& \leq  &  t_n  &\leq &1 \ \\
  \ & & \upalpha_1&, & \dots&, &\upalpha_n& &  \  
\end{diagram} \Bigg)  \ \  \ \ \mapsto  \ \ \  \ \upalpha_1(t_1) \cdot \ldots \cdot \upalpha_n(t_n)$$
It is readily verified that $ \epsilon_{ \Sigma^{\otimes}} \circ \Sigma^{\otimes}\eta $ and $\Omega^{\otimes} \epsilon \circ \eta_{\Omega^{\otimes}}$ are indeed given by the identity transformations and we have therefore defined an adjunction. 
\begin{lemma} 
The adjunction $ \Sigma^{\otimes} : \bfCMon_{}^{J,aug} \leftrightarrows \bfCMon_{}^{J,aug} : \Omega^{\otimes} $ 
is Quillen. 
\end{lemma}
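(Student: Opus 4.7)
My plan is to verify the adjunction is Quillen by showing that the right adjoint $\Omega^{\otimes}$ preserves fibrations and trivial fibrations. Both classes of maps in $\mathbf{CMon}^{J,aug}$ are created on underlying $J$-graded pointed spaces by the forgetful functor $U$, which is a right adjoint to the free augmented commutative monoid functor $\mathbf{T}$ and hence preserves all limits. Consequently $U\Omega^{\otimes}A$ is the iterated strict pullback of $S^{0} \to A \leftarrow A^{I} \to A \leftarrow S^{0}$ computed in $\mathbf{Top}_{*}^{J}$, where $A^{I}$ is the pointwise cotensor with $[0,1]$ and the two maps $S^{0} \to A$ pick out the basepoint and the unit. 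This reduces the entire question to a statement about pointed $J$-graded spaces.

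Next I would invoke two standard facts about $\mathbf{Top}_{*}^{J}$: (1) cotensoring with the cofibrant object $[0,1]$ is right Quillen, so $(-)^{I}$ preserves fibrations and trivial fibrations, and for any $A$ the endpoint evaluation $A^{I} \to A \times A$ is a fibration (the usual path-object factorization, inherited pointwise from $\mathbf{Top}_{*}$); (2) both fibrations and trivial fibrations are stable under pullback. Given a (trivial) fibration $f \colon A \to B$ in $\mathbf{CMon}^{J,aug}$, the induced map $f^{I} \colon A^{I} \to B^{I}$ is therefore a (trivial) fibration of $J$-graded pointed spaces, and $\Omega^{\otimes} f$ is the induced map of the two strict pullback diagrams
\[ S^{0}\vee S^{0} \longrightarrow A \longleftarrow A^{I}, \qquad S^{0}\vee S^{0}\longrightarrow B \longleftarrow B^{I}, \]
linked vertically by $f$ and $f^{I}$.

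The main obstacle is to organise this comparison of pullbacks cleanly. My approach is to factor $\Omega^{\otimes} f$ as a composite of two pullback maps: first one forms the intermediate object $\Omega^{\otimes} B \times_{B^{I}} A^{I}$, so that $\Omega^{\otimes} f$ becomes $\Omega^{\otimes} A \to \Omega^{\otimes} B \times_{B^{I}} A^{I} \to \Omega^{\otimes} B$, where the first map is a pullback of $f^{I}$ along $\Omega^{\otimes} B \to B^{I}$ and the second is a pullback of $f$ along $\Omega^{\otimes} B \to B$; each is therefore a (trivial) fibration by the stability property, and the composite $\Omega^{\otimes} f$ is too. No right-properness is required, since we are dealing with strict pullbacks of honest (trivial) fibrations throughout. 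The proof for trivial fibrations is formally identical to that for fibrations.
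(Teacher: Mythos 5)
Your overall strategy---prove $\Omega^{\otimes}$ is right Quillen by reducing to a pullback argument in $\mathbf{Top}^{J}_{*}$---is sound, and it is a genuinely different route from the paper's. (The paper observes, via the remark just before the lemma, that the underlying $J$-graded space of $\Omega^{\otimes} A$ is $\Omega A^{0} \amalg \Omega A^{1}$ for $A^{0},A^{1}$ the fibres of the augmentation over $0$ and $1$, and that $\Omega^{\otimes} f = \Omega f_{0} \amalg \Omega f_{1}$; this is immediate since $\Omega$ of a based space preserves (trivial) fibrations.) However, your proposed factorisation has a genuine gap.

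You claim $\Omega^{\otimes} f$ factors as $\Omega^{\otimes} A \to \Omega^{\otimes} B \times_{B^{I}} A^{I} \to \Omega^{\otimes} B$ with ``the first map a pullback of $f^{I}$ along $\Omega^{\otimes} B \to B^{I}$ and the second a pullback of $f$ along $\Omega^{\otimes} B \to B$.'' Both identifications are off. Pulling back $f^{I}$ along $\Omega^{\otimes} B \to B^{I}$ yields the \emph{second} map $\Omega^{\otimes} B \times_{B^{I}} A^{I} \to \Omega^{\otimes} B$, which is therefore indeed a (trivial) fibration; that part is fine. But the \emph{first} map is the problem. Unwinding the definitions: a point of $\Omega^{\otimes} B \times_{B^{I}} A^{I}$ is a triple $(b_{1},b_{2},\alpha)$ with $b_{i}\in S^{0}$, $\alpha \in A^{I}$, satisfying only $f(\alpha(i)) = \iota_{B}(b_{i})$; a point of $\Omega^{\otimes} A$ additionally satisfies the stronger condition $\alpha(i) = \iota_{A}(b_{i})$. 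So the first map is the closed inclusion of the subspace where the endpoints of $\alpha$ land exactly on the unit section, rather than merely in the fibre of $f$ over the unit. This inclusion is \emph{not} a fibration in general: a homotopy in the target can slide $\alpha(i)$ off $\iota_{A}(b_{i})$ within the fibre $f^{-1}(\iota_{B}(b_{i}))$, and such a homotopy has no lift. It is also not a pullback of $f$ along $\Omega^{\otimes} B \to B$, which would produce $\Omega^{\otimes} B \times_{B} A$, a different object.

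What your facts (1) and (2) actually need to be upgraded to is the pullback-hom (SM7) axiom for $\mathbf{Top}_{*}^{J}$: if $f\colon A\to B$ is a (trivial) fibration, then the corner map
\[
A^{I} \longrightarrow (A\times A)\times_{(B\times B)} B^{I}
\]
associated to the cofibration $\partial I \hookrightarrow I$ is a (trivial) fibration. This strictly contains your facts (``$A^{I}\to A\times A$ is a fibration'' is the case $B=*$, and ``$f^{I}$ is a fibration'' is the case $\partial I=\emptyset$) but does not follow from them. With SM7 in hand, there is no need for a two-step factorisation: the unit sections $S^{0}\to A$, $S^{0}\to B$ induce a map $\Omega^{\otimes} B \to (A\times A)\times_{(B\times B)} B^{I}$ sending $((b_{1},b_{2}),\beta)$ to $((\iota_{A}(b_{1}),\iota_{A}(b_{2})),\beta)$, and one checks directly that $\Omega^{\otimes} f$ is the pullback of the corner map along it. That single pullback yields the result. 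Alternatively, adopt the paper's coproduct decomposition, which avoids SM7 entirely.
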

\begin{proof}\
Let $f: X\rightarrow Y$ be a fibration or acyclic fibration in $\bfCMon_{}^{J,aug}$. Then $f$ is a fibration or acyclic fibration of underlying graded spaces. Write $f_0: X_0 \rightarrow Y_0$ and $f_1: X_1 \rightarrow Y_1$ for the maps induced on fibres over $0$ and $1$. The map $\Omega^\otimes (f)$ is given on graded spaces by $\Omega(f_0) \coprod \Omega(f_1)$ and hence a fibration or acyclic fibration of graded spaces. This implies that $\Omega^\otimes (f)$ is a fibration or acyclic fibration of commutative monoid spaces. 
\end{proof}
\subsection{Andr\'{e}-Quillen Homology for Commutative Monoid Spaces}
We will follow Quillen's general approach in the context of $J$-graded commutative monoid spaces. 
\begin{definition}
The \textit{indecomposables functor} $Q: \bfCMon_{}^{J,nu} \rightarrow \mathbf{Top}_\ast^J$ 
assigns to a $J$-graded nonunital commutative monoid space $A$ the quotient space
$ Q(A) = A /{ A\cdot A}$

Here  $A\cdot A \subset A$ is the $J$-graded space consisting of all elements which can be decomposed into a product of two elements in $A$. We can also form square-zero extensions:\ 
\end{definition}
 
\begin{definition}
Given a  $J$-graded space $X$, write $\ovA{X}$ for the nonunital $J$-graded commutative monoid space obtained by declaring all products $x\cdot y$ of points in $X$ to be equal to $0$.\ 
\end{definition} 
As expected, these functors form a Quillen adjunction 
$\begin{diagram}
Q: \bfCMon_{}^{J,nu}  & & \pile{  \rTo^{} \\  \lTo_{}}&  & \mathbf{Top}^J_\ast : \ovA{(-)} 
\end{diagram}$.
 
\begin{definition}
The \textit{Andr\'{e}-Quillen chains} of a {nonunital} $J$-graded commutative monoid space $A \in \bfCMon_{}^{J,nu}$ are the value of  the left derived functor $L(Q)$ at $A$, i.e.\ \mbox{$\AQ(A) = L(Q)(A)\in \Ho(\mathbf{Top}^J_\ast) $.}
The \textit{Andr\'{e}-Quillen chains} of an {augmented} $L$-graded commutative monoid space are given by $\AQ(I_A)$, where $I_A$ denotes the augmentation ideal of $A$ (i.e. the fibre over $0$).

The \textit{Andr\'{e}-Quillen homology} of an augmented monoid space 
is given by $\widetilde{\HH}_\ast^Q(A) := \pi_\ast(\AQ(A))$. 
\end{definition} 
We give a formula for the Andr\'{e}-Quillen chains of a graded commutative monoid space. Write $\mathbf{T}^{>0}$ for the monad building the free $J$-graded nonunital commutative monoid space. Then: 
\begin{proposition}\label{abc}
If $A\in \bfCMon^{J,nu}_{}$ is a nonunital commutative monoid space, then the Andr\'{e}-Quillen chains of $A$ are given by 
$ \AQ(A) \simeq \myhocolim{\Delta^{op}}( \Barr_\bullet (1,\mathbf{T}^{>0},A))$.
\end{proposition}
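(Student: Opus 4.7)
The plan is to compute $L(Q)(A)$ via the standard monadic bar resolution. Recall that for any monad $\mathbf{T}^{>0}$ on $\mathbf{Top}_\ast^J$ whose category of algebras is $\bfCMon^{J,nu}$, the simplicial object $\Barr_\bullet(\mathbf{T}^{>0},\mathbf{T}^{>0},A)$ has $k$-simplices $(\mathbf{T}^{>0})^{k+1}(A)$ with face maps coming from the monad multiplication and the algebra structure, and degeneracies from the unit; this is an augmented simplicial object over $A$. I would  show that the natural augmentation $|\Barr_\bullet(\mathbf{T}^{>0},\mathbf{T}^{>0},A)|\to A$, or rather its derived version $\hocolim_{\Delta^{op}}\Barr_\bullet(\mathbf{T}^{>0},\mathbf{T}^{>0},A)\to A$, is a cofibrant replacement of $A$ in $\bfCMon^{J,nu}$.

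First I would check cofibrancy: each $k$-simplex $\Barr_k(\mathbf{T}^{>0},\mathbf{T}^{>0},A)=\mathbf{T}^{>0}((\mathbf{T}^{>0})^k A)$ is a free nonunital commutative monoid space, hence cofibrant whenever $(\mathbf{T}^{>0})^k A$ is a well-pointed $J$-graded space (which we may arrange by first replacing $A$ with a cofibrant $A'\simeq A$, so that all iterated applications of $\mathbf{T}^{>0}$ produce well-pointed spaces). Using the Reedy-type argument and the strong commutative monoid axiom of White cited earlier, one checks that the latching maps are cofibrations, so that $\hocolim_{\Delta^{op}}$ agrees with realisation $|{-}|$ and produces a cofibrant object. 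To see it is a resolution, I would exhibit the usual extra degeneracy on the underlying simplicial $J$-graded space coming from the unit $\eta_{\mathbf{T}^{>0}}$; this gives a simplicial homotopy equivalence of underlying $J$-graded spaces between $|\Barr_\bullet(\mathbf{T}^{>0},\mathbf{T}^{>0},A)|$ and $A$, so the augmentation is a weak equivalence in $\bfCMon^{J,nu}$ (weak equivalences being detected on underlying $J$-graded spaces).

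Next I would apply the left Quillen functor $Q$ degreewise. For any $J$-graded pointed space $X$, the free nonunital commutative monoid space $\mathbf{T}^{>0}(X)$ has $Q(\mathbf{T}^{>0}(X))=X$, because quotienting $\bigvee_{n\geq 1}X^{\wedge n}/\Sigma_n$ by its decomposable part $\bigvee_{n\geq 2}X^{\wedge n}/\Sigma_n$ leaves only the weight-one summand $X$. Applied levelwise to the bar resolution, this gives the identification
\[
Q\bigl(\Barr_\bullet(\mathbf{T}^{>0},\mathbf{T}^{>0},A)\bigr)\;\cong\;\Barr_\bullet(1,\mathbf{T}^{>0},A)
\]
as simplicial $J$-graded pointed spaces, where the face maps on the right use the monad multiplication and the algebra structure on $A$ but no longer an outer $\mathbf{T}^{>0}$.

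Finally I would combine the two facts: since $Q$ is left Quillen, it commutes (up to weak equivalence) with the homotopy colimit of the Reedy-cofibrant simplicial diagram of cofibrant objects, so
\[
\AQ(A)=L(Q)(A)\;\simeq\;Q\bigl(\hocolim_{\Delta^{op}}\Barr_\bullet(\mathbf{T}^{>0},\mathbf{T}^{>0},A)\bigr)\;\simeq\;\hocolim_{\Delta^{op}}\Barr_\bullet(1,\mathbf{T}^{>0},A).
\]
The main obstacle is the technical verification that the bar construction is Reedy cofibrant and that its geometric realisation models the homotopy colimit; this is where one needs to feed in the well-pointedness and strong commutative monoid axiom recalled in Section~\ref{indexingmonoid}, plus the fact that $\mathbf{T}^{>0}$ preserves the relevant cofibrations (ensuring each $(\mathbf{T}^{>0})^k A$ stays well-pointed and cofibrant enough that the free algebras $\mathbf{T}^{>0}((\mathbf{T}^{>0})^k A)$ are cofibrant in $\bfCMon^{J,nu}$). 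Once these point-set details are in place, the identification above is purely formal.
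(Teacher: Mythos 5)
Your proof is correct and follows essentially the same route as the paper's: resolve $A$ by the monadic bar construction $\Barr_\bullet(\mathbf{T}^{>0},\mathbf{T}^{>0},A)$ (a weak equivalence after realisation by the extra-degeneracy/contracting-homotopy argument on underlying spaces), observe that the left Quillen functor $Q$ preserves the homotopy colimit, and use $Q(\mathbf{T}^{>0}(Y))=Y$ to identify the levelwise application of $Q$ with $\Barr_\bullet(1,\mathbf{T}^{>0},A)$. The paper phrases this more compactly as ``$LQ$ preserves homotopy colimits'' rather than explicitly verifying Reedy cofibrancy, but the underlying argument and its ingredients are identical to yours.
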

\begin{proof}
The $J$-graded  nonunital  simplicial commutative monoid space
$\Barr_\bullet (\mathbf{T}^{>0},\mathbf{T}^{>0},A)\rightarrow A  $
has a contracting homotopy. Hence
$\myhocolim{\Delta^{op}}( \Barr_\bullet (\mathbf{T}^{>0},\mathbf{T}^{>0},A))\cong A$.
Since the left derived functor $LQ$ preserves homotopy colimits, we compute
$$ \AQ(A) = L(Q)(A)\simeq \myhocolim{\Delta^{op}}\ L(Q) ( \Barr_\bullet (\mathbf{T}^{>0},\mathbf{T}^{>0},A) )
 \simeq  \myhocolim{\Delta^{op}}\  ( \Barr_\bullet (1,\mathbf{T}^{>0},A) )\vspace{-10pt}$$ \vspace{-10pt}
\end{proof}  
We will now see that Andr\'{e}-Quillen chains for $J$-graded commutative monoid spaces behave well under ``base-change" to ordinary rings. 
We fix an ordinary ring $R$ and write \mbox{$Q:\mathbf{SCR}^{J,nu}_R \rightarrow \mathbf{sMod}^J_R$} for the indecomposables functor on nonunital $J$-graded simplicial commutative $R$-algebras. Recall:
\begin{definition}The Andr\'{e}-Quillen chains of a nonunital $J$-graded simplicial commutative  $R$-algebra $A$ are defined as
$\AQ^R(A):=L(Q)(A) \in  \Ho(\mathbf{sMod}_R)$.  We write $\AQ^R_\ast(A)$ for the homology and $\AQ_R^\ast(A)$ for the cohomology groups of $\AQ^R(A)$. \vspace{3pt}
\end{definition}
Let  $\mathbf{Sym}_R^{>0}(X) = \bigoplus_{n>0} X^{\myotimes{R} n}_{\Sigma n}$ be the free nonunital $J$-graded simplicial commutative $R$-algebra monad on $J$-graded simplicial $R$-modules. 
We can again give an explicit formula for the Andr\'{e}-Quillen chains as
$ \AQ^R(A) =\myhocolim{\Delta^{op}} \Barr_\bullet(1, \mathbf{Sym}_R^{>0} , A) $.

We recall that \textit{reduced} $R$-valued chains  $\tilde{C}_\bullet(-,R)$ interact well with symmetric powers, i.e. $$\tilde{C}_\bullet(\mathbf{T}^{>0}(X),R) \cong \mathbf{Sym}^{>0}(\tilde{C}_\bullet(X,R))$$
for all $J$-graded spaces $X$. Andr\'{e}-Quillen chains therefore intertwine with extension of scalars:
\begin{lemma}\label{AQSquare}
There is a commutative square
\begin{diagram}
\Ho (\bfCMon_{}^{J,nu})  & \rTo^{\AQ} & \Ho( \mathbf{Top}^J_\ast )\\ 
\dTo^{R\myotimes{}(-)} & & \dTo_{\tilde{C}_\bullet(-,R)} \\
 \Ho(\bfCAlg_{R}^{J,nu} ) & \rTo^{\AQ^R} & \Ho(\mathbf{sMod}^J_R)
\end{diagram}
\end{lemma}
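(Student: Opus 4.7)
The plan is to trace a fixed nonunital $J$-graded commutative monoid space $A$ around the square and use the explicit bar-resolution formula from Proposition \ref{abc}. Going top-then-right produces
\[
\tilde{C}_\bullet(\AQ(A),R)\ \simeq\ \tilde{C}_\bullet\!\left(\myhocolim{\Delta^{op}}\Barr_\bullet(1,\mathbf{T}^{>0},A),\,R\right),
\]
while going down-then-right gives
\[
\AQ^R(R\myotimes{} A)\ \simeq\ \myhocolim{\Delta^{op}}\Barr_\bullet(1,\mathbf{Sym}_R^{>0},R\myotimes{} A).
\]
The task is to identify these two homotopy colimits naturally in $A$.

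First I would observe that $\tilde{C}_\bullet(-,R)=\tilde{F}_R\circ\Sing_\bullet$ preserves homotopy colimits, by exactly the argument given for $R\myotimes{}(-)$ in Proposition \ref{preserves}: $\Sing_\bullet$ is a right Quillen equivalence whose right derived functor is computed on the point set (all spaces are fibrant), while $\tilde{F}_R$ is left Quillen and preserves all weak equivalences, so agrees with its left derived functor. This lets me commute $\tilde{C}_\bullet(-,R)$ past the homotopy colimit, reducing the comparison to a simplicial-degreewise statement.

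The key step is to upgrade the noted compatibility $\tilde{C}_\bullet(\mathbf{T}^{>0}(X),R)\cong\mathbf{Sym}_R^{>0}(\tilde{C}_\bullet(X,R))$ to a natural isomorphism of \emph{monads} $\tilde{C}_\bullet\circ\mathbf{T}^{>0}\cong\mathbf{Sym}_R^{>0}\circ\tilde{C}_\bullet$, i.e.\ one compatible with units and multiplications. Iterating this isomorphism yields at simplicial degree $n$ a natural isomorphism
\[
\tilde{C}_\bullet\bigl((\mathbf{T}^{>0})^n(A),R\bigr)\ \cong\ (\mathbf{Sym}_R^{>0})^n\bigl(\tilde{C}_\bullet(A,R)\bigr)\ =\ (\mathbf{Sym}_R^{>0})^n(R\myotimes{} A),
\]
and the monad-compatibility forces these isomorphisms to intertwine the face and degeneracy maps of $\Barr_\bullet(1,\mathbf{T}^{>0},A)$ and $\Barr_\bullet(1,\mathbf{Sym}_R^{>0},R\myotimes{} A)$ (which are all assembled from monad multiplication together with the two canonical augmentations $\mathbf{T}^{>0}\to 1$ and $\mathbf{T}^{>0}(A)\to A$). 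Taking $\myhocolim{\Delta^{op}}$ then gives the desired natural equivalence and establishes commutativity of the square.

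The main subtlety lies in verifying the monad-isomorphism $\tilde{C}_\bullet\circ\mathbf{T}^{>0}\cong\mathbf{Sym}_R^{>0}\circ\tilde{C}_\bullet$, not just an isomorphism of underlying functors. Unwinding, this amounts to the classical fact that the Eilenberg--Zilber/shuffle comparison $\tilde{C}_\bullet(X\wedge Y,R)\cong \tilde{C}_\bullet(X,R)\myotimes{R}\tilde{C}_\bullet(Y,R)$ is symmetric monoidal and equivariant for the symmetric group actions, so that it descends to the quotients defining $\mathbf{T}^{>0}$ and $\mathbf{Sym}_R^{>0}$ and is compatible with the obvious units; granting this compatibility, the rest of the argument is formal.
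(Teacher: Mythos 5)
Your proposal is correct and follows essentially the same route as the paper's own proof: trace $A$ both ways around the square, use Proposition~\ref{abc} to realise both $\AQ(A)$ and $\AQ^R(R\otimes A)$ as geometric realisations of monadic bar constructions, commute $\tilde{C}_\bullet(-,R)$ past $\hocolim_{\Delta^{\mathrm{op}}}$, and identify the two bar complexes levelwise via $\tilde{C}_\bullet(\mathbf{T}^{>0}(X),R)\cong\mathbf{Sym}_R^{>0}(\tilde{C}_\bullet(X,R))$. The only difference is that you spell out explicitly why this last identification must be a monad isomorphism (rather than a mere isomorphism of underlying functors) in order to intertwine the simplicial structure maps -- a point the paper treats as implicit; your elaboration is correct and fills in precisely the detail one would want.
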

\begin{proof}
Given a nonunital $J$-graded commutative monoid space $A$, we compute
$$\tilde{C}_\bullet(\AQ(A),R)\simeq
(\tilde{C}_\bullet(- ,R)\circ L(Q))(A) \simeq
\myhocolim{\Delta^{op}}  (\tilde{C}_\bullet(\Barr_\bullet (1,\mathbf{T}^{>0},A) ,R) )$$
$$ \ \ \ \ \  \ \ \ \ \  \simeq \myhocolim{\Delta^{op}} \   (\Barr_\bullet (1,{\mathbf{Sym}^{>0}},\tilde{C}_\bullet(A ,R) )\simeq
 \AQ^R(R\myotimes{} A)$$
Here we used that $\tilde{C}_\bullet(-,R)$ preserves homotopy colimits. 
\end{proof} 
\subsection{An EHP Sequence in $\mathbf{CMon}$}
In this section, we will construct a new cofibre sequence for $J$-graded commutative monoid spaces. The ``Hopf map"  is difficult to define, and this is where our point-set approach becomes helpful. We recall the following notation:
\begin{definition}
Given a $J$-graded space $X$, the \textit{trivial square-zero extension} of $S^0$ on $X$, written $S^0\vee\ovA{X}\in \mathbf{CMon}^{aug}$, is given by endowing the space $S^0\vee X$ with the multiplication  $$ x\cdot y = \begin{cases}  x & \mbox{ if } y = 1  \\ y & \mbox{ if } x = 1 \\ 0  & \mbox{ else }  \end{cases}$$
\end{definition}
Algebraic and topological square-zero extensions interact well, i.e.
$R\otimes (S^0 \vee\ovA{X}) = R \oplus \tilde{C}_\bullet(X,R)$.

The monoid suspension $\Sigma^{\otimes}$ of a trivial square zero extension has an explicit description:
\begin{proposition}
Given a $J$-graded pointed space $X$, there is a splitting of \textit{$J$-graded  spaces}
$$ \Sigma^{\otimes} (S^0 \vee \ovA{X})  \cong \bigvee_{n\geq 0} S^n \wedge X^{\wedge n}$$
\end{proposition}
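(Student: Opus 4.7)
The plan is to unwind the definition of $\Sigma^\otimes$ as a two-sided bar construction and then exploit the vanishing of products in a trivial square-zero extension to cut down the realization to a manageable wedge sum.

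First, I would expand $\Sigma^\otimes(S^0 \vee \ovA{X}) = S^0 \mywedgetwo{(S^0\vee\ovA{X})}{\hobased} S^0$, so by Definition \ref{homotopypush} its points are represented by tuples
\[
\Bigl(0 \leq t_1 \leq \dots \leq t_n \leq 1 ;\ a_1,\dots,a_n\Bigr),\qquad a_i \in S^0\vee\ovA{X},
\]
modulo the standard simplicial identifications: inner face relations merge $a_i,a_{i+1}$ via multiplication when $t_i=t_{i+1}$, outer face relations act via the augmentation $S^0\vee\ovA{X}\to S^0$ when $t_1=0$ or $t_n=1$, and degeneracy relations delete any coordinate $a_i=1$ together with its $t_i$.

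The key observation is that in the trivial square-zero extension, every product of two elements of $\ovA{X}$ is the basepoint $0$, and the augmentation sends $\ovA{X}$ to $0$. Therefore any relation that multiplies two adjacent $a_i,a_{i+1}\in\ovA{X}$ (because $t_i=t_{i+1}$), or forces $a_1\in\ovA{X}$ through $t_1=0$, or forces $a_n\in\ovA{X}$ through $t_n=1$, produces the basepoint of the realization. Combining this with the degeneracy relation (which removes any $a_i=1$), every nonbasepoint point in $\Sigma^\otimes(S^0\vee\ovA{X})$ admits a unique representative of the normal form
\[
\bigl(0 < t_1 < \dots < t_n < 1;\ x_1,\dots,x_n\bigr),\qquad x_i\in\ovA{X}\setminus\{0\},
\]
for some $n\geq 0$ (with $n=0$ giving the distinguished unit of $\Sigma^\otimes$).

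For each fixed $n$, the set of such normal forms is the product of the open simplex $\mathring\Delta^n=\{0<t_1<\dots<t_n<1\}$ with $(\ovA{X}\setminus\{0\})^n$. All degenerate boundary configurations (some $t_i$ attaining $0$, $1$, or meeting $t_{i+1}$, or some $x_i=0$) collapse to the single basepoint, so the $n$-th stratum is precisely $(\Delta^n/\partial\Delta^n)\wedge X^{\wedge n} \cong S^n\wedge X^{\wedge n}$. The strata are disjoint as subsets of the realization, and the natural map
\[
\bigvee_{n\geq 0} S^n\wedge X^{\wedge n}\longrightarrow \Sigma^\otimes(S^0\vee\ovA{X})
\]
sending $(t_1,\dots,t_n,x_1,\dots,x_n)$ to the class of the corresponding bar tuple is the claimed bijection. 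The grading matches on each summand, since the bar tuple $(x_1,\dots,x_n)$ carries total $J$-grading $\sum_i\deg(x_i)$, which is exactly the grading of the corresponding element of $X^{\wedge n}$ under Day convolution.

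The remaining technical point, and the only mildly delicate one, is to verify that this set-theoretic bijection is a homeomorphism of $J$-graded spaces. This follows from the standard skeletal filtration of the geometric realization: the $n$-th skeleton is obtained from the $(n{-}1)$-st by attaching $\Delta^n_+\wedge (\ovA{X})^{\wedge n}$ along the map that collapses $\partial\Delta^n_+\wedge(\ovA{X})^{\wedge n}$ to the basepoint (by the identifications above), yielding an attaching cell of the form $S^n\wedge X^{\wedge n}$. Assembling the skeleta gives the stated wedge decomposition in $\mathbf{Top}_\ast^J$.
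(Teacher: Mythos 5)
Your proposal is correct and takes essentially the same approach as the paper: the paper's proof is the one-line observation that the splitting ``is evident from Definition \ref{homotopypush} since points in $X$ multiply to zero in $S^0 \vee \ovA{X}$,'' and you have simply unwound that hint — normal forms, vanishing of products, and the skeletal filtration giving $\sk_n/\sk_{n-1}\cong S^n\wedge X^{\wedge n}$ with split attaching maps — into an explicit argument.
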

\begin{proof}
This is evident from Definition \ref{homotopypush} since points in $X$ multiply to zero in  $S^0 \vee \ovA{X}$.
\end{proof}
For  the {Hopf map},  fix a $J$-graded pointed space $X$ and define a map $\phi$ of $J$-graded \mbox{pointed spaces  as}
$$S^1 \wedge  X^{\wedge 2} \xrightarrow{}   \Omega (S^2 \wedge X^{\wedge 2}) \hookrightarrow  \Omega ( \bigvee_{n\geq 1} S^n \wedge X^{\wedge n}  ) \cong \Omega^{\otimes} \Sigma^{\otimes} (S^0 \vee \ovA{X}) $$
$$(0\leq t \leq 1, x,y)  \ \ \  \ \ \  \mapsto \ \ \  \ \ \ 
\Bigg( s \mapsto \Bigg(\begin{diagram}
\  0&\leq & ts&\leq  & s  &\leq &1 \ \\
  \ & & x & &y& &  \  
\end{diagram} \Bigg)\Bigg) $$ 
\begin{proposition}
Given two points  $a_1, a_2 \in S^1 \wedge X^{\wedge 2} $, we have $\phi(a_1) \cdot \phi(a_2) = 0$.
\end{proposition}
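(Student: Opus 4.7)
The plan is to establish the identity already on the point-set level, namely to verify that $\bigl(\phi(a_1)\cdot\phi(a_2)\bigr)(s) = *$ in $\Sigma^\otimes(S^0 \vee \ovA X)$ for every $s \in [0,1]$. This suffices because the monoid product on $\Omega^\otimes$ was defined as pointwise multiplication along the path parameter.

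Write $a_i = (t_i, x_i, y_i)$ with $t_i \in [0,1]$ and $x_i, y_i \in X$. The claim is trivial when any coordinate of either $a_i$ is at its basepoint, so we may assume $t_i \in (0,1)$ and $x_i, y_i$ away from the basepoint. Then
$$\phi(a_i)(s) = \bigl(\,0 \leq t_i s \leq s \leq 1\,;\, x_i,\, y_i\,\bigr)$$
is a $2$-cell of the bar-construction model of $\Sigma^\otimes(S^0 \vee \ovA X) = S^0 \wedge^h_{S^0\vee \ovA X} S^0$. I would then apply the explicit shuffle formula for the commutative monoid product on $\Sigma^\otimes$. Assuming $t_1 \leq t_2$ (the other case is symmetric) and choosing any allowed interleaving, the formula gives
$$\phi(a_1)(s)\cdot \phi(a_2)(s) = \bigl(\,0 \leq t_1 s \leq t_2 s \leq s \leq s \leq 1\,;\, x_1, x_2, y_1, y_2\,\bigr).$$
The key step is then the face relation $\sim$ from the definition of $M \wedge^h_R N$, which identifies any tuple with consecutive equal $t$-parameters with the one obtained by collapsing that pair and multiplying the corresponding monoid entries. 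Applied to the two equal $s$'s at the top, it replaces the pair $y_1, y_2$ with the product $y_1 \cdot y_2$ computed in $S^0 \vee \ovA X$. But in the trivial square-zero extension any product of two elements of $\ovA X$ vanishes, so $y_1 \cdot y_2 = 0$, whence the resulting tuple has a basepoint entry and therefore represents the wedge basepoint of $\bigvee_k \Delta^k_+ \wedge A^{\wedge k}$.

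The one subtlety I will need to address is the well-definedness of the shuffle choice: the two legitimate interleavings of the top coordinates produce top labels $(y_1, y_2)$ and $(y_2, y_1)$ respectively, and these must be reconciled using both commutativity of $S^0 \vee \ovA X$ and the face relation collapsing the equal pair. The boundary parameters $s=0$ and $s=1$ require no separate treatment, since at those values additional $t$-coordinates coincide and force further square-zero products to vanish. I do not foresee delicate combinatorics; the only potential trap is a careless application of the bar-construction relations, but the square-zero condition on the target monoid makes the computation essentially immediate.
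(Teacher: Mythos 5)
Your proposal is correct and uses essentially the same argument as the paper: multiply pointwise, observe that the coordinate $s$ is repeated in the shuffle product, and invoke the face relation together with the square-zero multiplication $y_1\cdot y_2=0$ in $S^0\vee\ovA{X}$ to kill the tuple. The paper compresses this to a single line ("a coordinate is repeated"), while you spell out the interleaving and the role of commutativity in making the shuffle well-defined; that extra care matches the "Observe that this multiplication is well-defined" remark accompanying the paper's definition of the product on $B\mywedgetwo{A}{\hobased}C$.
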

\begin{proof}
Write $ a_i = (0 \leq t_i \leq 1, x_i, y_i )$ for $i=1,2$. Then: 
$$ \phi(a_1) \cdot \phi(a_2) =\Bigg(s \mapsto 
\Bigg(\begin{diagram}
\  0&\leq & t_1s&\leq&  s  &\leq &1 \ \\
  \ & & x_1&,  &y_1& &  \  
\end{diagram}  \Bigg)  \cdot 
\Bigg(\begin{diagram}
\  0&\leq & t_2s&\leq&  s  &\leq &1 \ \\
  \ & & x_2&,  &y_2& &  \  
\end{diagram}  \Bigg) \Bigg) =  \bigg(s \mapsto 0 \bigg) $$ 
We used that  a coordinate is repeated and  products  of elements in $X$ \mbox{vanish in $S^0 \vee \ovA{X}$.}
\end{proof}
We  get a map $\phi: (S^0 \vee \ovA{\Sigma X^{\wedge 2}}) \rightarrow \Omega^{\otimes} \Sigma^{\otimes} (S^0 \vee \ovA{X})$ of $J$-graded augmented \mbox{commutative monoid spaces.}
\begin{definition}
The \textit{Hopf map} for $J$-graded augmented commutative monoid spaces is given by the adjoint $\HH: \Sigma^{\otimes} (S^0 \vee \ovA{ \Sigma X^{\wedge 2}}) \longrightarrow 
\Sigma^{\otimes} (S^0 \vee \ovA{X}) $ of $\phi$.
\end{definition}  
More explicitly, $\HH$ is given by the following map:
$$\Bigg(\begin{diagram}
\  0&\leq &s_1 &\leq & \ldots & \leq&  s_n & \leq & 1 \ \\
  \  &      & (t_1, x_1,y_1) &    &  \ldots &  &  (t_n, x_n,y_n)&  &   \  
\end{diagram}  \Bigg)\ \ \ \ \ \  \ \ \ \ \ \ \ \ \ \ \ \ \ \ \ \  \ \ \ \ \ \  \ \ \ \ \ \ \ \ \ \ \ \ \ \ \ \ \ \ \ \ \ \  \ \ \ \ \ \ \ \ \ \ \ \ \ \ \ \ \ \ \ \ \ \  \ \ \ \ \ \ \ \ \ \ \ \ \ \ \ \ $$ $$ \ \ \ \ \ \  \ \ \ \ \ \ \ \ \ \ \ \ \ \ \ \ \mapsto \ \ \ \ \ \ 
\Bigg(\begin{diagram}
\  0&\leq & t_1s_1&\leq&  s_1  &\leq &1 \ \\
  \ & & x_1&,  &y_1& &  \  
\end{diagram}  \Bigg)  \cdot \ldots \cdot \Bigg(\begin{diagram}
\  0&\leq & t_ns_n&\leq&  s_n  &\leq &1 \ \\
  \ & & x_n&,  &y_n& &  \  
\end{diagram}  \Bigg) 
$$ 
\begin{proposition}\label{E1}
There is a commutative diagram
\begin{diagram}
S^1\wedge (\Sigma X^{\wedge 2}) & \rTo^\sim & S^2 \wedge X^{\wedge 2} \\
\dInto & & \dInto \\
\Sigma^{\otimes} (S^0 \vee \ovA{\Sigma X^{\wedge 2}})  & \rTo^{\HH}  &
\Sigma^{\otimes} (S^0 \vee \ovA{X})
\end{diagram}
The top map is the weak equivalence
$$\Bigg(\begin{diagram}
\  0&\leq &s &\leq &1 \ \\
  \  && (t, x,y) &&  \  
\end{diagram}  \Bigg)  \ \ \ \ \ \mapsto \ \ \ \ \ \ 
\Bigg(\begin{diagram}
\  0&\leq & ts&\leq&  s  &\leq &1 \ \\
  \ & & x&,  &y& &  \  
\end{diagram}  \Bigg) 
$$
\end{proposition}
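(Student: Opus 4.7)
The plan is to verify commutativity by direct computation on a representative point and then separately check that the top horizontal arrow is a weak equivalence. First, I take an arbitrary point $(0\leq s\leq 1,\ (t,x,y))$ in $S^1\wedge (\Sigma X^{\wedge 2})$. Going down-then-right, the inclusion lands in the $n=1$ summand of $\Sigma^{\otimes}(S^0\vee\ovA{\Sigma X^{\wedge 2}})\cong \bigvee_n S^n\wedge (\Sigma X^{\wedge 2})^{\wedge n}$, giving the symbol $\left(\begin{smallmatrix}0\leq s\leq 1\\ (t,x,y)\end{smallmatrix}\right)$. Applying the explicit formula for $H$ with $n=1$ I would then get exactly
$$\Bigg(\begin{diagram} 0&\leq & ts&\leq & s&\leq & 1\\ & & x &,& y&& \end{diagram}\Bigg).$$
Going right-then-down, the top map sends $(s,(t,x,y))$ to $(ts,s,x,y)\in S^2\wedge X^{\wedge 2}$, and the right vertical inclusion into the $n=2$ summand of $\Sigma^{\otimes}(S^0\vee\ovA{X})\cong\bigvee_n S^n\wedge X^{\wedge n}$ expresses this same symbol. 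Thus the two composites agree literally on the point-set level, with no intermediate identification needed.

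The only nontrivial content is then that the top horizontal map $\psi:S^1\wedge (\Sigma X^{\wedge 2})\to S^2\wedge X^{\wedge 2}$ is a weak equivalence. Here I would use the two standard models for $S^2$: the square $I^2/\partial I^2$ on the source side (obtained from $S^1\wedge S^1 =S^1\wedge\Sigma\simeq S^2$ with coordinates $(s,t)\in I\times I$) and the simplex $\Delta^2/\partial\Delta^2$ with coordinates $(u,v)$ satisfying $0\leq u\leq v\leq 1$ on the target side. The map $(s,t)\mapsto (ts,s)$ sends $\partial(I^2)$ into $\partial(\Delta^2)$ (each face maps to a face where $u=0$, $v=1$, or $u=v$) and restricts to a homeomorphism of open interiors (the inverse is $u=ts$, $v=s$, so $s=v$, $t=u/v$, which is continuous on the open triangle). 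Hence the induced map between the boundary quotients is a degree $\pm 1$ homeomorphism of $S^2$, and smashing with $X^{\wedge 2}$ preserves this equivalence.

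Putting the two ingredients together yields the proposition; no additional verifications are required once the explicit formulas are unpacked. The main point to be careful about is the combinatorial bookkeeping in the first step, since the symbols on both sides use slightly different conventions (ordered tuples $(s_1,\ldots,s_n)$ versus a single scalar $s$ together with an internal $\Sigma$-coordinate $t$); the potential obstacle is ensuring that the collapse identifications in $\Sigma^{\otimes}(S^0\vee\ovA{-})$ (which identify points when any inner product vanishes or a unit appears) are not lost in the identification — but this is automatic because the bijection $(s,t)\leftrightarrow (ts,s)$ carries boundary to boundary, so basepoints match under both inclusions.
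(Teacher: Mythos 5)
Your proof is correct and supplies exactly the verification the paper leaves implicit (the proposition is stated without a written proof, being considered a direct unwinding of the explicit formula for $H$). Both the point-level commutativity check — tracing $(s,(t,x,y))$ through the $n=1$ summand under $H$ versus through the top map into the $n=2$ summand — and the homeomorphism argument for the top map (the square-to-simplex change of coordinates $(s,t)\mapsto(ts,s)$ carrying boundary to boundary and restricting to a homeomorphism on interiors) are sound, and your final remark about the basepoint identifications matching on both sides correctly resolves the only subtle point.
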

We define the second map $\EE$ in our EHP sequence:
\begin{definition}
The  map  $\EE:  \Sigma^{\otimes} (S^0 \vee \ovA{X}) \rightarrow (S^0 \vee \ovA{\Sigma X})$ of $J$-graded augmented commutative monoid spaces
given by collapsing all higher summands to the basepoint is called the \textit{Einh\"{a}ngung}.
\end{definition}  
\begin{proposition} \label{E2}
Note that the following diagram commutes:
\begin{diagram}
\Sigma X & & \\
 \dTo & \rdTo&  \\
\Sigma^{\otimes} (S^0 \vee \ovA{X}) & \rTo & S^0 \vee \ovA{ \Sigma X}.& &
\end{diagram}
\end{proposition}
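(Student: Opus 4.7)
The plan is to unpack all three maps in the triangle directly from their definitions and observe that the composite reduces to the diagonal by inspection; no genuine argument is needed beyond bookkeeping, which is consistent with the proposition being flagged as an easy observation.

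First I would invoke the explicit splitting
$$\Sigma^{\otimes}(S^0 \vee \ovA{X}) \;\cong\; \bigvee_{n\geq 0} S^n \wedge X^{\wedge n}$$
established earlier in the section, under which a point of the $n$-th summand is represented by a bar symbol of length $n$ whose top row is an ordered tuple $0 \leq s_1 \leq \cdots \leq s_n \leq 1$ and whose bottom row consists of $n$ elements of $X$. Under this splitting the Einh\"angung $E$ is, by definition, the projection onto the $n=0$ summand $S^0$ together with the $n=1$ summand $S^1 \wedge X$, the latter identified with $\ovA{\Sigma X} \subset S^0 \vee \ovA{\Sigma X}$ via the canonical homeomorphism $S^1 \wedge X \cong \Sigma X$; all higher summands are collapsed to the basepoint.

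Second, I would identify the vertical arrow $\Sigma X \to \Sigma^{\otimes}(S^0 \vee \ovA{X})$ as the inclusion of the $n=1$ summand, namely the adjoint under $(\Sigma^{\otimes} \dashv \Omega^{\otimes})$ of the restriction of the unit $\eta_{S^0 \vee \ovA{X}}$ to the wedge summand $X \subset S^0 \vee \ovA{X}$. Using the explicit point-set formula
$\eta_A(a) = \bigl(s \mapsto (0 \leq s \leq 1,\, a)\bigr)$
recorded in the excerpt, the adjoint sends $(s,x) \in \Sigma X$ to the length-one bar symbol with $s$ on top and $x$ on the bottom, which is precisely the image of $(s,x)$ under the inclusion of $S^1 \wedge X$ into the wedge. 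The diagonal arrow $\Sigma X \to S^0 \vee \ovA{\Sigma X}$ is likewise the tautological inclusion of the wedge summand $\ovA{\Sigma X}$.

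Given these three identifications, commutativity is a one-line check: an element $(s,x) \in \Sigma X$ is sent up into the $n=1$ summand $S^1 \wedge X$ of $\Sigma^{\otimes}(S^0 \vee \ovA{X})$, and $E$ sends this element back to its image in $\ovA{\Sigma X}$, matching the diagonal. The only mild point to be careful about is that the identification $S^1 \wedge X \cong \Sigma X$ used to define $E$ must agree with the one used in the vertical inclusion, but both come from the same convention (namely the standard homeomorphism $S^0 \ast X \cong S^1 \wedge X$ recalled in Section~\ref{topp}), so no sign or basepoint subtlety intervenes.
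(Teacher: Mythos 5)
Your proof is correct and follows exactly the unwinding of definitions the paper intends, which is why the paper presents the statement without proof as ``an easy observation.'' The one small imprecision is phrasing the vertical map as the adjoint of $\eta|_X$ under $(\Sigma^{\otimes}\dashv\Omega^{\otimes})$ — that adjunction lives at the level of augmented commutative monoid spaces and $X$ is merely a wedge summand, so strictly you should adjoint over the underlying $(\Sigma\dashv\Omega)$ on spaces — but the explicit point-set formula you extract is the right one, and it identifies the vertical arrow with the inclusion of the $n=1$ summand $S^1\wedge X$, after which commutativity is indeed immediate from the definition of $E$.
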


\begin{definition}\label{ehpdef}
The EHP sequence based at a $J$-graded space $X$ is given by the following sequence of $J$-graded augmented commutative monoid spaces:
$$ \Sigma^{\otimes}( S^0 \vee \ovA{ \Sigma X^{\wedge 2}}) \xrightarrow{\HH}
\Sigma^{\otimes} (S^0 \vee \ovA{X})\xrightarrow{\EE} S^0 \vee \ovA{\Sigma X} \ \ .$$
\end{definition}
\begin{proposition}
The composite $\EE \circ \HH$ factors through the zero object $S^0$.
\end{proposition}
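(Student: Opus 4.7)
The plan is to verify the factorisation by direct inspection, using the explicit point-set formulas for $H$ and $E$ together with the splitting $\Sigma^{\otimes}(S^0 \vee \ovA{X}) \cong \bigvee_{n \geq 0} S^n \wedge X^{\wedge n}$ of $J$-graded spaces.

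First I would note that, under this splitting, the Einh\"angung $E : \Sigma^{\otimes}(S^0 \vee \ovA{X}) \to S^0 \vee \ovA{\Sigma X}$ is by definition the identity on the $n=0$ summand $S^0$, the canonical identification on the $n=1$ summand $S^1 \wedge X \cong \ovA{\Sigma X}$, and zero on every summand with $n \geq 2$. Hence to prove the proposition it suffices to show that the image of $H$ is contained in the wedge of summands with $n = 0$ and $n \geq 2$, and that on the $n=0$ piece $H$ restricts to the structure map $S^0 \to \Sigma^{\otimes}(S^0 \vee \ovA{X})$.

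Next I would trace the explicit formula for $H$: a point of $\Sigma^{\otimes}(S^0 \vee \ovA{\Sigma X^{\wedge 2}})$ represented by an $n$-fold bracket with entries $(t_i, x_i, y_i)$ is sent to the product of $n$ brackets each containing two $X$-coordinates $x_i, y_i$. After collecting these coordinates via the monoid multiplication in $\Sigma^{\otimes}(S^0 \vee \ovA{X})$, the result is a single bracket involving $2n$ entries from $X$, which therefore lands in the summand $S^{2n} \wedge X^{\wedge 2n}$. The unit $1$ (corresponding to the empty bracket, $n=0$) is sent to the unit $1$, and the basepoint $0$ is sent to the basepoint $0$; both lie in the $n=0$ summand.

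Combining the two observations, $E \circ H$ vanishes on every summand with $n \geq 1$ (whose image under $H$ lies in $S^{2n} \wedge X^{\wedge 2n}$ with $2n \geq 2$) and on the $n=0$ summand agrees with the composite $S^0 \xrightarrow{=} S^0 \hookrightarrow S^0 \vee \ovA{\Sigma X}$. Hence $E \circ H$ factors as the augmentation $\Sigma^{\otimes}(S^0 \vee \ovA{\Sigma X^{\wedge 2}}) \to S^0$ followed by the unit $S^0 \to S^0 \vee \ovA{\Sigma X}$, and since $S^0$ is the zero object in $\mathbf{CMon}^{J,aug}$ this is the desired factorisation. There is no serious obstacle here: the whole statement amounts to counting how many $X$-coordinates appear in the image of $H$, which is manifest from the point-set formula.
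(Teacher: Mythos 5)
Your argument is correct and is essentially the paper's own (one-line) proof unpacked: the paper simply notes that the factorisation "follows immediately from our explicit description of the map $H$", and you spell this out by observing that $H$ sends the $n$-fold summand into $S^{2n}\wedge X^{\wedge 2n}$ (or the basepoint), which $E$ kills once $n\geq 1$. The only small inaccuracy is the phrase "the result is a single bracket involving $2n$ entries" — the image may also be the basepoint if parameters coincide or hit the endpoints — but this does not affect the conclusion, since in every case the image avoids the $n=1$ summand.
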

\begin{proof}This follows immediately from our explicit description of the map $\HH$.
\end{proof}
Let $J = \NN$ be the monoid of nonnegative integers with +.
We prove  the main theorem \mbox{of this section:}

\begin{theorem}\label{EHPMonoid}
For $X=S^{n}$ a sphere of even dimension, thought of as an $\NN$-graded space placed in degree $1$, the EHP sequence 
$$ \Sigma^{\otimes} (S^0 \vee \ovA{S^{2n+1}}) \xrightarrow{\HH}
\Sigma^{\otimes} (S^0 \vee \ovA{S^{n}})\xrightarrow{\EE}  S^0 \vee \ovA{S^{n+1}}$$
is a homotopy cofibre sequence of augmented $\NN$-graded strictly commutative monoid spaces.

Here $S^{n+1} = \Sigma S^n$ is placed in degree $1$ and $ S^{2n+1} = \Sigma (S^n)^{\wedge 2}$ is placed in degree $2$. 
\end{theorem}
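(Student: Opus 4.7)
The strategy is to reduce the problem to an algebraic cofibre sequence of augmented simplicial commutative $\ZZ$-algebras via the integral detection principle of Lemma~\ref{integralcheck}, and then to verify this algebraic cofibre sequence by analyzing the two-sided bar construction and comparing Andr\'e--Quillen chains.

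\emph{Step 1 (Simple connectivity).} First I would verify the simple connectivity hypothesis of Lemma~\ref{integralcheck}. Using the splitting $\Sigma^{\otimes}(S^0\vee \ovA{X}) \cong \bigvee_{k\ge 0} S^k \wedge X^{\wedge k}$ of underlying $\NN$-graded spaces, the fibre of $\Sigma^{\otimes}(S^0\vee \ovA{S^n})$ over $0\in S^0$ in grading $k\ge 1$ is $S^{k(n+1)}$, which is simply connected since $n+1\ge 3$. The analogous check for $\Sigma^\otimes(S^0\vee\ovA{S^{2n+1}})$ yields fibres $S^{k(2n+2)}$ in grading $2k$, also simply connected for $k\ge 1$; and $S^0\vee\ovA{S^{n+1}}$ has fibre $S^{n+1}$ in grading $1$, which is simply connected. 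In all three cases, the fibre over $1\in S^0$ is a point concentrated in grading $0$, so the hypotheses of Lemma~\ref{integralcheck} are met.

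\emph{Step 2 (Reduction to $\mathbf{SCR}^{aug}_\ZZ$).} By Lemma~\ref{integralcheck}, it suffices to show that applying $\ZZ\otimes(-)$ yields a cofibre sequence in $\mathbf{SCR}^{J,aug}_\ZZ$. Since extension of scalars preserves homotopy colimits (Proposition~\ref{preserves}), it commutes with the suspension $\Sigma^\otimes$, and on trivial square zero extensions it gives $\ZZ\otimes(S^0\vee\ovA{S^m})\simeq \ZZ\oplus\Sigma^m\ZZ$. The problem therefore reduces to showing that
\begin{equation*}
\Sigma^\otimes_\ZZ(\ZZ\oplus \Sigma^{2n+1}\ZZ) \to \Sigma^\otimes_\ZZ(\ZZ\oplus \Sigma^n\ZZ) \to \ZZ\oplus \Sigma^{n+1}\ZZ
\end{equation*}
is a homotopy cofibre sequence of augmented simplicial commutative $\ZZ$-algebras.

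\emph{Step 3 (Algebraic verification).} I would model each $\Sigma^\otimes_\ZZ(\ZZ\oplus M)$ by the two-sided bar construction $B(\ZZ,\ZZ\oplus M,\ZZ)$. Under this identification, $\ZZ\otimes E$ is the natural projection to the length-one filtration piece of the bar construction, while $\ZZ\otimes H$ translates, via an Eilenberg--Zilber shuffle argument applied to the geometric formula for $H$, into an explicit squaring/shuffle operation between the relevant bar complexes. Letting $C := \Sigma^\otimes_\ZZ(\ZZ\oplus\Sigma^n\ZZ)\otimes^{L}_{\Sigma^\otimes_\ZZ(\ZZ\oplus\Sigma^{2n+1}\ZZ)} \ZZ$ denote the algebraic pushout, the comparison map $C \to \ZZ\oplus \Sigma^{n+1}\ZZ$ induced by $\ZZ\otimes E$ is a map of simply connected augmented simplicial commutative $\ZZ$-algebras (by Lemma~\ref{sc}), and by the Hurewicz-type theorem for such algebras it is a weak equivalence iff it induces an equivalence on Andr\'e--Quillen chains. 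The required equivalence on $\AQ^\ZZ$ then follows from the transitivity cofibre sequence
\begin{equation*}
\AQ^\ZZ\bigl(\Sigma^\otimes_\ZZ(\ZZ\oplus\Sigma^{2n+1}\ZZ)\bigr) \to \AQ^\ZZ\bigl(\Sigma^\otimes_\ZZ(\ZZ\oplus\Sigma^n\ZZ)\bigr) \to \AQ^\ZZ(C),
\end{equation*}
together with the fact that Proposition~\ref{E1} identifies the bottom-cell effect of $H$ with the tautological equivalence $S^{2n+2}\simeq S^2\wedge (S^n)^{\wedge 2}$, which is precisely what is needed for the comparison with the linear summand.

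\emph{Main obstacle.} The most delicate step is the chain-level identification in Step~3: the Hopf map $H$ was defined point-set-theoretically via paths in the loop monoid $\Omega^\otimes \Sigma^\otimes (S^0\vee \ovA{X})$, and translating this into an explicit algebraic operation on bar resolutions requires a careful Eilenberg--Zilber-type argument tracking how the pointwise multiplication in the monoid interacts with path concatenation. Proposition~\ref{E1} is the crucial geometric input that anchors this comparison, since it pins down $H$ on the bottom sphere.
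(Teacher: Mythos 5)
Your Steps 1 and 2 are correct and follow the paper exactly: verify simple connectivity so Lemma~\ref{integralcheck} applies, then reduce to proving a cofibre sequence of augmented simplicial commutative $\ZZ$-algebras via $\ZZ\otimes(-)$. The divergence—and the gap—is in Step 3.

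The problem is that your Step 3 is essentially circular. You want to prove that the comparison map $C \to \ZZ\oplus\Sigma^{n+1}\ZZ$ is a weak equivalence by showing it induces an equivalence on $\AQ^\ZZ$. The transitivity cofibre sequence tells you $\AQ^\ZZ(C) \simeq \cofib\bigl(\Sigma\AQ^\ZZ(\ZZ\oplus\Sigma^{2n+1}\ZZ)\to\Sigma\AQ^\ZZ(\ZZ\oplus\Sigma^{n}\ZZ)\bigr)$, so what you still must show is that this cofibre agrees with $\AQ^\ZZ(\ZZ\oplus\Sigma^{n+1}\ZZ)$. But by Theorem~\ref{surprise}, $\AQ^\ZZ(\ZZ\oplus\Sigma^m\ZZ)\simeq \widetilde{\HH}_*\bigl(\bigvee_d\Sigma|\Pi_d|^\diamond\wedge_{\Sigma_d}(S^m)^{\wedge d},\ZZ\bigr)$—a genuinely complicated object with classes in many degrees—so this is exactly the statement of the strict Takayasu cofibration sequence (Theorem~\ref{EHPLIE}) applied to $\ZZ$-chains, which is in turn deduced from the very theorem you are trying to prove. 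Proposition~\ref{E1} only pins down the bottom cell; it says nothing about the higher partition-complex classes, so it cannot by itself force the identification of cofibres. Unless you supply an independent computation of the integral homology of $\Sigma|\Pi_d|^\diamond\wedge_{\Sigma_d}(S^m)^{\wedge d}$ for all $m$ (which the paper never does directly, and which for even $m$ is itself extracted from the EHP sequence), the Andr\'e--Quillen detour does not close.

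The paper avoids this by computing something much simpler: the \emph{homotopy groups} (not the AQ chains) of the three simplicial commutative rings. Using the bar model $\Sigma^\otimes_\ZZ(\ZZ\oplus\Sigma^m\ZZ) = B(\ZZ,\ZZ\oplus\Sigma^m\ZZ,\ZZ)$, one has $\pi_*\Sigma^\otimes_\ZZ(\ZZ\oplus\Sigma^m\ZZ)\cong\Tor^{\ZZ\oplus\Sigma^m\ZZ}_*(\ZZ,\ZZ)$, which Milgram's computation identifies as $\Gamma[y_{2n+2}]$ and $\Lambda[z_{n+1}]\otimes\Gamma[y_{2n+2}]$ respectively. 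Propositions~\ref{E1} and~\ref{E2} verify that $\ZZ\otimes H$ and $\ZZ\otimes E$ carry generators to generators. Then the $\Tor$ spectral sequence for the algebraic pushout $D$ collapses because $\Lambda[z_{n+1}]\otimes\Gamma[y_{2n+2}]$ is free over $\Gamma[y_{2n+2}]$, directly yielding $\pi_*(D)\cong\Lambda[z_{n+1}]$ and hence the equivalence $D\simeq\ZZ\oplus\Sigma^{n+1}\ZZ$. To repair your argument, you should replace your Step 3 with this explicit homotopy-group computation; the AQ-Whitehead detour gives you no leverage here because the relevant AQ groups are the very quantities the EHP sequence is designed to compute.
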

\begin{proof}  By Lemma \ref{integralcheck}, it suffices to prove that this sequence is a cofibre sequence after extension of scalars to $\ZZ$.
Hence, we    need to check that the following sequence of simplicial commutative rings is a cofibre sequence:
\begin{diagram}
 \Sigma^{\otimes} (\ZZ \oplus \Sigma^{2n+1}\ZZ)&  \rTo^{\ \ZZ \myotimes{ }\HH\ }& \Sigma^{\otimes} (\ZZ \oplus \Sigma^n \ZZ)& \rTo^{\  \ZZ \myotimes{ }\EE\ } &\ZZ \oplus \Sigma^{n+1}\ZZ.\\
\end{diagram}
Here we  made use of the weak equivalence $\tilde{C}_\bullet (S^0 \vee S^m) \cong \ZZ \oplus \Sigma^m \ZZ$.

Let $D$ be the cofibre of the first map.
By the folklore computation of Tor groups of graded exterior algebras (cf.\ e.g.\ \cite[Chapter 7]{mccleary2001user}), we have:
$$\hspace{-5pt} \pi_\ast ( \Sigma^{\otimes} (\ZZ \oplus \Sigma^{2n+1} \ZZ) ) = \Gamma[y_{2n+2}] \ \ \ \ \  \ \ \  \  \pi_\ast (\Sigma^{\otimes} (\ZZ \oplus \Sigma^n \ZZ) ) =\Lambda[z_{n+1}] \otimes \Gamma[y_{2n+2}]  \ \ \ \ \  \ \ \  \ \pi_\ast( \ZZ \oplus \Sigma^{n+1}\ZZ ) =\Lambda[z_{n+1}] $$
 Propositions \ref{E1} and \ref{E2} show  that $\ZZ\otimes \HH$ and $\ZZ\otimes \EE$ map elements to elements with  \mbox{same name.}
There is a spectral sequence
$E^2_{p,q}= (\Tor_p^{ \Gamma[y_{2n+2}] } (\ZZ,\Lambda[z_{n+1}] \otimes \Gamma[y_{2n+2}]) )_q \Rightarrow \pi_{p+q} (D)$.
Since $\Lambda[z_{n+1}] \otimes \Gamma[y_{2n+2}]$ is free over $\Gamma[y_{2n+2}]$,
 the edge homomorphism 
$ E^2_{0,q } = \ZZ \myotimes{ \Gamma[y_{2n+2}]} (\Lambda[x_{n+1}] \otimes \Gamma[y_{2n+2}]) \rightarrow \pi_q (D)$
is an isomorphism.
The composite 
$ \ZZ \myotimes{ \Gamma[y_{2n+2}]}(\Lambda[z_{n+1}] \otimes \Gamma[y_{2n+2}])  \rightarrow \pi_\ast (D) \rightarrow \pi_\ast( \ZZ \oplus \ZZ[n+1] ) \cong \Lambda[z_{n+1}]$
is evidently an isomorphism as well. This implies that $D \rightarrow  \ZZ \oplus \ZZ[n+1] $ is a weak equivalence of simplicial commutative rings.
\end{proof}
\newpage

\section{Strict Quotients}
Given a Young subgroup $\Sigma_{n_1} \times \dots \times \Sigma_{n_k} \subset \Sigma_n$, we can ask the following natural question:
\begin{question}
What is the homology and homotopy type of the \textit{strict} quotient 
$\Sigma |\Pi_n|^\diamond/_{\Sigma_{n_1} \times \dots \times \Sigma_{n_k}}\ \  ? $
\end{question}
In this section, we will  relate these quotients to the Andr\'{e}-Quillen homology of commutative monoid spaces and simplicial commutative rings, describe the conditions under which they are wedges of spheres,   use our EHP-like sequence and the branching rule to decompose them into simpler ``atoms'',  and  finally compute their  homology with coefficients in $\QQ$ and $\FF_p$ for any \mbox{prime $p$.}

\subsection{Atomic Decomposition} \ Given a commutative  indexing monoid $J$ in sets with unit $0$, we recall the symmetric monoidal category $\mathbf{Top}^J_\ast$ of $J$-graded pointed spaces from Section \ref{indexingmonoid}.

We consider the functor $C_{\Lie}: \mathbf{Top}^J_\ast \rightarrow   \mathbf{Top}^J_\ast$ \mbox{defined as}
$C_{\Lie}(X) =  \bigvee_{n\geq 1} \Sigma  |\Pi_n|^\diamond \mywedge{\Sigma_n} X^{\wedge n}$, where $ \Sigma  |\Pi_n|^\diamond$ is placed in degree $0\in J$.
In order to answer the above question, it evidently suffices to analyse $C_{\Lie}(X)$ for $X=S^{a_1} \vee \ldots \vee S^{a_k}$ a wedge sum of $k$  spheres, thought of as a $\ZZ^k$-graded space, where the spheres are placed  in degrees \vspace{3pt}$e_1 = (1,0,\ldots,0)$, $e_2= (0,1,\ldots,0),$ $\ldots,$ $e_k= (0,0,\ldots,1)$. 
We will now decompose the spaces $C_{\Lie}(S^{a_1} \vee \ldots \vee S^{a_k})$  into atomic building blocks of the form $ C_{\Lie}(S^\ell) = \bigvee_{n\geq 1} \Sigma  |\Pi_n|^\diamond \mywedge{\Sigma_n} (S^{\ell})^{\wedge n} $
for $\ell$ an odd natural number.

\subsubsection*{From wedges of spheres to spheres:}
Given pointed spaces $X_1\vee \ldots \vee X_k$, graded over $\ZZ^k$ by placing $X_i$ in multi-degree $e_i$, our Theorem \ref{main} on Young restrictions gives an equivalence 
$$ C_{\Lie}(X_1\vee\dots\vee X_k)  \cong \bigvee_{\substack{\ell_1,\dots,\ell_k\\w\in B(\ell_1,\dots,\ell_k) }} C_{\Lie}(S^{\ell_1 + \dots + \ell_k -1} \wedge X_1^{\wedge \ell_1} \wedge \dots \wedge X_k^{\wedge \ell_k} )$$
The  $d^{th}$ piece in the summand $C_{\Lie}(S^{\ell_1 + \dots + \ell_k -1} \wedge X_1^{\wedge \ell_1} \wedge \dots \wedge X_k^{\wedge \ell_k} )$  has multi-degree $(d\ell_1,\dots,d \ell_k)$.

\begin{corollary}\label{spheres-hm}
 If $X_i = S^{a_i}$, then we obtain an equivalence of $\NN^k$-graded spaces
$$ C_{\Lie}(S^{a_1} \vee \ldots \vee S^{a_k} )  \cong \bigvee_{\substack{\ell_1,\dots,\ell_k\\w\in B(\ell_1,\dots,\ell_k) }} C_{\Lie}(S^{\wedge (a_1+1) \ell_1 + \ldots + (a_k+1)\ell_k-1} )$$
\end{corollary}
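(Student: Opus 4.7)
The plan is to derive this directly from the equivalence displayed immediately before the corollary, which is itself a consequence of Theorem~\ref{main}. Setting $X_i = S^{a_i}$, one substitutes into
$$C_{\Lie}(X_1\vee\cdots\vee X_k)\simeq\bigvee_{\substack{\ell_1,\dots,\ell_k\\ w\in B(\ell_1,\dots,\ell_k)}}C_{\Lie}\bigl(S^{\ell_1+\cdots+\ell_k-1}\wedge X_1^{\wedge \ell_1}\wedge\cdots\wedge X_k^{\wedge \ell_k}\bigr).$$
The space appearing inside $C_{\Lie}$ on the right is then a smash product of spheres, whose total dimension is computed by adding $(\ell_1+\cdots+\ell_k-1) + \sum_i a_i\ell_i = \sum_i(a_i+1)\ell_i - 1$. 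So as an underlying pointed space it is a sphere of the asserted dimension.

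The only care needed is to record the $\Sigma_d$-action on this sphere when it is plugged into the $d^{th}$ summand $\Sigma|\Pi_d|^\diamond\wedge_{\Sigma_d}(-)^{\wedge d}$ of $C_{\Lie}$. The action is the one produced by Theorem~\ref{main}: the desuspension of the reduced standard representation sphere of $\Sigma_d$ supplies the nontrivial factor, while each copy of $(S^{a_i})^{\wedge \ell_i}$ is a single sphere that contributes no extra twist. The smash-power notation $S^{\wedge(\cdots)}$ on the right-hand side of the corollary records precisely this structure. Finally, the degree-$d$ piece of the $(\ell_1,\dots,\ell_k)$-summand visibly contains $d\ell_i$ copies of $S^{a_i}$, hence lives in multi-degree $(d\ell_1,\dots,d\ell_k)$, as claimed. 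There is no serious obstacle: the corollary is a bookkeeping exercise on top of Theorem~\ref{main}.
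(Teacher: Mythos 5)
Your argument is correct and matches the paper's implicit reasoning: the corollary is stated without proof as an immediate specialisation of the displayed equivalence preceding it, and your dimension count $(\ell_1+\cdots+\ell_k-1)+\sum_i a_i\ell_i=\sum_i(a_i+1)\ell_i-1$ together with the multi-degree bookkeeping is exactly what is needed. One small remark: the $\wedge$ in the exponent $S^{\wedge(\cdots)}$ in the corollary's display is almost certainly a typo for a plain sphere $S^{(\cdots)}$, so you need not read structural content into it; the $\Sigma_d$-action on $(S^N)^{\wedge d}$ appearing inside $C_{\Lie}(S^N)$ is just the permutation of smash factors, as you say.
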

Hence, it suffices to study the value of $C_{\Lie}$ on spheres to describe its behaviour on \mbox{wedges of spheres.}

Restricting to multi-degree $(n_1,\dots,n_k)$ with $n= \sum n_i$, we obtain the double suspension of the following statement (which also follows immediately from Theorem \ref{main}):
\begin{proposition}\label{proposition: orbits again}
Suppose $n=n_1+\cdots+n_k$.  There is an equivalence $$
|\Pi_n|/_{\Sigma_{n_1}\times\cdots\times\Sigma_{n_k}} \xrightarrow{\ \ \simeq \ \ } \bigvee_{\substack{d|\gcd(n_1, \ldots, n_k) \\ w \in B(\frac{n_1}{d}, \ldots, \frac{n_k}{d})}}  \left(\Sigma^{-1}(S^{\frac{n}{d}-1})^{\wedge d}\mywedge{\Sigma_d} |\Pi_d|^\diamond\right).
$$
More generally, given pointed spaces $X_1,\ldots,X_k$, there is an equivalence
$$ |\Pi_n|^\diamond \mywedge{\Sigma_n}(X_1\vee \ldots \vee X_k)^{\wedge n} \ \ \  \xrightarrow{\ \ \simeq \ \ } \ \ \  \bigvee_{\substack{n=n_1+\ldots+n_k \\ d|\gcd(n_1, \ldots, n_k) \\ w \in B(\frac{n_1}{d}, \ldots, \frac{n_k}{d})}} \left( (S^{\frac{n}{d}-1}\wedge X_1^{\frac{n_1}{d}} \wedge \ldots \wedge X_k^{\frac{n_k}{d}})^{\wedge d}\mywedge{\Sigma_d} |\Pi_d|^\diamond\right). $$
\end{proposition}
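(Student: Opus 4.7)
The plan is to derive both equivalences directly from Theorem~\ref{main} by combining the functor of strict $\Sigma_{n_1}\times\cdots\times\Sigma_{n_k}$-orbits with a simple $\Sigma_n$-equivariant decomposition of $(X_1\vee\cdots\vee X_k)^{\wedge n}$.

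First I would handle the special case. Theorem~\ref{main} is an equivariant simple equivalence of $\Sigma_{n_1}\times\cdots\times\Sigma_{n_k}$-spaces, and strict orbits commute with wedge sums. For any pointed $\Sigma_d$-space $Y$, there is the canonical identification
$$\bigl((\Sigma_{n_1}\times\cdots\times\Sigma_{n_k})_+ \mywedge{\Sigma_d} Y\bigr)\big/_{\Sigma_{n_1}\times\cdots\times\Sigma_{n_k}} \;\cong\; Y/_{\Sigma_d}.$$
Applying this summand by summand to Theorem~\ref{main} produces the first displayed equivalence.

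For the general statement, I would first decompose $(X_1\vee\cdots\vee X_k)^{\wedge n}$ as a $\Sigma_n$-equivariant wedge. Distributing the smash product yields a summand for each function $f\colon \mathbf{n}\to\mathbf{k}$, and the $\Sigma_n$-orbit of $f$ is determined by the multiplicity tuple $(|f^{-1}(1)|,\ldots,|f^{-1}(k)|)$. This gives a $\Sigma_n$-equivariant splitting
$$(X_1\vee\cdots\vee X_k)^{\wedge n}\;\cong\;\bigvee_{n_1+\cdots+n_k=n}(\Sigma_n)_+ \mywedge{\Sigma_{n_1}\times\cdots\times\Sigma_{n_k}}(X_1^{\wedge n_1}\wedge\cdots\wedge X_k^{\wedge n_k}).$$
Smashing with $|\Pi_n|^\diamond$ and taking $\Sigma_n$-orbits reduces the left-hand side of the corollary to a wedge, indexed by compositions $n=n_1+\cdots+n_k$, of the strict quotients $|\Pi_n|^\diamond\mywedge{\Sigma_{n_1}\times\cdots\times\Sigma_{n_k}}(X_1^{\wedge n_1}\wedge\cdots\wedge X_k^{\wedge n_k})$.

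It remains to identify each piece with the announced summand. Applying the natural equivalence $|\Pi_n|^\diamond\simeq\Sigma |\Pi_n|$ and the compatibility of reduced suspension with induction and wedge sums, Theorem~\ref{main} yields a $\Sigma_{n_1}\times\cdots\times\Sigma_{n_k}$-equivariant equivalence
$$|\Pi_n|^\diamond\;\xrightarrow{\simeq}\;\bigvee_{d,w}\Ind^{\Sigma_{n_1}\times\cdots\times\Sigma_{n_k}}_{\Sigma_d}\bigl((S^{n/d-1})^{\wedge d}\wedge|\Pi_d|^\diamond\bigr),$$
in which the formal desuspension $\Sigma^{-1}$ of Theorem~\ref{main} has been absorbed. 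Smashing with $X_1^{\wedge n_1}\wedge\cdots\wedge X_k^{\wedge n_k}$, taking $\Sigma_{n_1}\times\cdots\times\Sigma_{n_k}$-orbits, and applying the induction/orbit identity above converts each summand into $\bigl((S^{n/d-1})^{\wedge d}\wedge X_1^{\wedge n_1}\wedge\cdots\wedge X_k^{\wedge n_k}\bigr)\mywedge{\Sigma_d}|\Pi_d|^\diamond$. The final bookkeeping, which is the only non-formal ingredient, is the $\Sigma_d$-equivariant rearrangement
$$(S^{n/d-1})^{\wedge d}\wedge X_1^{\wedge n_1}\wedge\cdots\wedge X_k^{\wedge n_k}\;\cong\;(S^{n/d-1}\wedge X_1^{n_1/d}\wedge\cdots\wedge X_k^{n_k/d})^{\wedge d},$$
which is immediate from the fact that $\Sigma_d$ is diagonally embedded in $\Sigma_d^{n/d}\subset\Sigma_{n_1}\times\cdots\times\Sigma_{n_k}$ and therefore simultaneously permutes the $d$ sphere factors and the $d$ blocks of $n_i/d$ copies of each $X_i$. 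I do not anticipate any genuine difficulty: the whole argument is a transparent reshuffling of Theorem~\ref{main}, the orbit decomposition of the smash power, and the induction/orbit identity.
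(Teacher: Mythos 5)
Your argument is correct and is essentially the intended derivation: the paper treats Proposition~\ref{proposition: orbits again} as an immediate consequence of Theorem~\ref{main}, deducing the general form by passing through the graded functor $C_{\Lie}(-)=\bigvee_n \Sigma|\Pi_n|^\diamond\wedge_{\Sigma_n}(-)^{\wedge n}$, which is just the aggregate-over-$n$ version of your explicit $\Sigma_n$-equivariant decomposition of $(X_1\vee\cdots\vee X_k)^{\wedge n}$ followed by the projection formula $(G_+\wedge_H Y)/_G\cong Y/_H$. Your identification of the $\Sigma_d$-equivariant rearrangement via the diagonal embedding $\Sigma_d\hookrightarrow\Sigma_d^{n/d}\subset\Sigma_{n_1}\times\cdots\times\Sigma_{n_k}$ is the same bookkeeping that underlies the paper's statement, so there is no gap.
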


\begin{corollary}\label{corollary: gcd one}
If $\gcd(n_1, \ldots, n_k)=1$ then $
|\Pi_{n}|/_{\Sigma_{n_1} \times \dots \times \Sigma_{n_k}}  \simeq \bigvee_{B({n_1}, \ldots, {n_k})} S^{n-3}.$
\end{corollary}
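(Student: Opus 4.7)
The plan is to derive this directly from Proposition \ref{proposition: orbits again}, which already contains essentially all the work. Under the hypothesis $\gcd(n_1, \ldots, n_k) = 1$, the only positive divisor $d$ appearing in the indexing set of that proposition is $d = 1$, so the double wedge collapses to a single wedge indexed by $B(n_1, \ldots, n_k)$. Each summand has the form
$$\Sigma^{-1}(S^{n-1})^{\wedge 1} \mywedge{\Sigma_1} |\Pi_1|^\diamond.$$

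To finish, I would identify this summand with $S^{n-3}$ using the conventions set up after Theorem~\ref{main}. Specifically, $\Sigma^{-1}S^{n-1}$ is canonically $S^{n-2}$, and since $S^{n-2}$ is homeomorphic to the unreduced suspension $(S^{n-3})^\diamond$, the convention $X^\diamond \wedge |\Pi_1|^\diamond := X$ (applied with $X = S^{n-3}$) yields $S^{n-3}$. The $\Sigma_1$-orbit is of course trivial. Summing over all $w \in B(n_1, \ldots, n_k)$ gives the claimed bouquet.

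The only genuine bookkeeping is the convention-based reduction of the degenerate summand involving $|\Pi_1|^\diamond$; I expect no substantive obstacle, since the combinatorial content (Lyndon words indexing the wedge) and the topological content (desuspension of a sphere) have both already been established in Proposition \ref{proposition: orbits again} and in the preceding discussion of sphere dimensions. Nonequivariantly this also recovers the classical fact that $|\Pi_n|$ is a wedge of $(n-1)!$ copies of $S^{n-3}$ when $k = n$ and all $n_i = 1$, a useful sanity check since in that case $|B(1, \ldots, 1)| = (n-1)!$ by Witt's formula.
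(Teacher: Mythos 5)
Your proposal is correct and follows exactly the route the paper intends: Corollary~\ref{corollary: gcd one} is stated as an immediate consequence of Proposition~\ref{proposition: orbits again}, where the hypothesis $\gcd(n_1,\ldots,n_k)=1$ forces $d=1$ in the indexing, and the degenerate summand $\Sigma^{-1}S^{n-1}\wedge|\Pi_1|^\diamond$ reduces to $S^{n-3}$ via the convention $X^\diamond\wedge|\Pi_1|^\diamond := X$ with $X=S^{n-3}$ (equivalently, $S^{n-d-1}*|\Pi_1| = S^{n-2}*S^{-2} = S^{n-3}$). The sanity check via Witt's formula giving $|B(1,\ldots,1)|=(n-1)!$ is also correct.
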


\subsubsection*{From spheres to odd spheres:} 
To reduce to the case of odd spheres (which is a key simplification for our  cohomological computations), we need a conceptual understanding of the \mbox{functor $C_{\Lie}$,}
which  is closely related to \textit{square zero extensions} in commutative monoids by the following observation:
\begin{lemma}\label{strictlie}
If $X$ is a well-pointed $J$-graded space, then 
$\AQ(S^0 \vee\ovA{X})\simeq C_{\Lie}(X) $.\vspace{-5pt}
\end{lemma}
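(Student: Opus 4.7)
By Proposition \ref{abc}, $\AQ(S^0 \vee \ovA X) = \AQ(\ovA X) \simeq \myhocolim{\Delta^{op}} \Barr_\bullet(1, \mathbf{T}^{>0}, \ovA X)$, where unwinding the construction shows that $\Barr_k(1, \mathbf{T}^{>0}, \ovA X) \cong (\mathbf{T}^{>0})^k(\ovA X)$ as a pointed $J$-graded space. All constructions respect the $J$-grading, so it is enough to produce, for each $n \geq 1$, a $\Sigma_n$-equivariant equivalence between the weight-$n$ realisation of this simplicial pointed space and $\Sigma|\Pi_n|^\diamond \mywedge{\Sigma_n} X^{\wedge n}$.

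The standard parameterisation of iterated symmetric products by rooted trees with labelled leaves yields, for each $k \geq 2$, a canonical pointed $\Sigma_n$-isomorphism
\[
(\mathbf{T}^{>0})^k(\ovA X)_n \ \cong \ (N_{k-2}\mathcal{P}_n)_{+} \mywedge{\Sigma_n} X^{\wedge n},
\]
where $N_{k-2}\mathcal{P}_n$ is the set of weakly increasing chains of length $k-1$ in the partition lattice $\mathcal{P}_n$ (with the obvious boundary description for $k = 0, 1$). This is precisely the indexing set for the nondegenerate $k$-simplices of the simplicial model from Section \ref{clock} for $\Sigma|\Pi_n|^\diamond$, namely $N_\bullet(\mathcal{P}_n)/(N_\bullet(\mathcal{P}_n - \hat 0) \cup N_\bullet(\mathcal{P}_n - \hat 1))$, smashed over $\Sigma_n$ with $X^{\wedge n}$.

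The main content is to match simplicial structures under this identification. The internal faces $d_i$ ($0 < i < k$) of the bar construction arise from the monad multiplication $\mathbf{T}^{>0} \circ \mathbf{T}^{>0} \to \mathbf{T}^{>0}$ at the $i$-th position, and correspond to the ordinary face maps of the nerve (forgetting an intermediate partition in the chain). The extreme face $d_k$ is induced by the algebra structure map $\mathbf{T}^{>0}(\ovA X) \to \ovA X$; since $\ovA X$ is the trivial square-zero extension, this algebra map is the projection onto the weight-$1$ summand, which under the combinatorial dictionary collapses to the basepoint every chain whose extremal partition is not $\hat 0$, thereby exactly enforcing the quotient by $N_\bullet(\mathcal{P}_n - \hat 0)$. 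A symmetric analysis of $d_0$ coupled with the $Q$-truncation $B \mapsto B/(B\cdot B)$ enforces the quotient by $N_\bullet(\mathcal{P}_n - \hat 1)$ (up to the reindexing that reverses chain orientation). Assembling, the two simplicial pointed $\Sigma_n$-spaces agree.

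Well-pointedness of $X$ renders the resulting simplicial space proper, so its homotopy colimit is computed by the ordinary geometric realisation and equals $\Sigma|\Pi_n|^\diamond \mywedge{\Sigma_n} X^{\wedge n}$. Summing over $n \geq 1$ yields the asserted equivalence. The principal technical obstacle is the detailed bookkeeping of face maps through the translation between (i) the iterated monadic bar construction with $Q$-truncation and (ii) the nerve quotient $N_\bullet(\mathcal{P}_n)/(N_\bullet(\mathcal{P}_n - \hat 0) \cup N_\bullet(\mathcal{P}_n - \hat 1))$; a more abstract route, useful as a sanity check, realises the statement as an incarnation of the Koszul duality between the commutative operad and the partition complex (in the sense of Ching's operadic bar construction) applied to the trivially square-zero algebra $\ovA X$.
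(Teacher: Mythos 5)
Your proof is correct and takes essentially the same route as the paper: both start from Proposition~\ref{abc} and identify the resulting bar construction with the simplicial model $N_\bullet(\mathcal{P}_n)/(N_\bullet(\mathcal{P}_n-\hat 0)\cup N_\bullet(\mathcal{P}_n-\hat 1))$ for $\Sigma|\Pi_n|^\diamond$ from Section~\ref{section: lie to partitions}. The only difference is presentational --- the paper packages the identification through the monoidal functor $F\colon\SSeq(\mathbf{Top}_\ast)\to\End(\mathbf{Top}_\ast^J)$ and the operadic bar construction $\Barr_\bullet(\mathbf{O}^{nu}_{\Comm})$, citing $(\Barr_\bullet(\mathbf{O}^{nu}_{\Comm}))_n\simeq\Sigma|\Pi_n|^\diamond$ as well-known, whereas you unwind the monadic bar construction and verify that same identification by matching simplices and face maps directly.
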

\begin{proof}
Write $\mathbf{O}^{nu}_{\Comm}$ for the reduced operad in $\mathbf{Top}_\ast$ whose component at a finite set $S$ is given by  $S^0$ if $|S|\neq 0$ and by $\ast$ if $S=\emptyset$. All structure maps are the identity.
We can think of $\mathbf{O}^{nu}_{\Comm}$ as an algebra object in the category $ {\SSeq}(\mathbf{Top}_\ast)$ of symmetric sequences in pointed spaces. Here  the \textit{composition product} serves as the monoidal structure. 

There is a natural monoidal functor $F:  {\SSeq}(\mathbf{Top}_\ast)\rightarrow \End(\mathbf{Top}_\ast^J)$ sending a symmetric sequence $\mathbf{O}$ to the functor $F_{\mathbf{O}}(-) = \bigvee_{n} \mathbf{O}_n \mywedge{\Sigma_n} (-)^{\wedge n}$.
We have $\mathbf{T}^{>0} = F_{\mathbf{O}^{nu}_{\Comm}} = \bigvee_{n\geq 1} (-)^{\wedge n}/_{\Sigma_n}$. Using Proposition \ref{abc}, we see that $\AQ(S^0 \vee\ovA{X})$ is equivalent to
$$\myhocolim{\Delta^{op}}\  \Barr_\bullet(1, F_{\mathbf{O}^{nu}_{\Comm}},\ovA{X}) \simeq \myhocolim{\Delta^{op}} \   F_{\Barr_\bullet({\mathbf{O}^{nu}_{\Comm}})} (\ovA{X}) \simeq \bigvee_{n\geq 1} \Sigma |\Pi_n|^\diamond \mywedge{\Sigma_n} X^{\wedge n} = C_{\Lie}(X).\vspace{-3pt}$$
In the second equivalence, we have used the well-known  identification (cf.\ \cite{fresse346koszul} \cite{ching2005bar}) between the simplicial set $(\Barr_\bullet({\mathbf{O}^{nu}_{\Comm}}))_n$ and the simplicial model for $\Sigma |\Pi_n|^\diamond$ described in\vspace{-5pt} Section \ref{clock}.
\end{proof}

Combining Lemma \ref{strictlie}   with our EHP sequence for commutative monoid spaces in Theorem \ref{EHPMonoid}, \mbox{we can   reduce the study of   $C_{\Lie}$ evaluated on spheres to the study  of $C_{\Lie}$ evaluated on \textit{odd} spheres:}
\begin{theorem}\label{EHPLIE}
For $X$ a $J$-graded space, there is a natural sequence of pointed $J$-graded spaces\vspace{-2pt}
$$\Sigma  C_{\Lie}(\Sigma X^{\wedge 2}) \xrightarrow{\HH}  \Sigma C_{\Lie}(X) \xrightarrow{\EE} C_{\Lie} (\Sigma X ).$$
For $X=S^n$ an even sphere in degree $1\in J=\NN$, we get a cofibre sequence of\vspace{-2pt} pointed \mbox{$\NN$-graded spaces}
$$\Sigma  C_{\Lie}(S^{2n+1}) \xrightarrow{\HH}  \Sigma C_{\Lie}(S^{n}) \xrightarrow{\EE} C_{\Lie} (S^{n+1}).\vspace{-1pt}$$
More explicitly, for each $d \in \NN$, there is a cofibre sequence
$$ \Sigma^2 |\Pi_{\frac{d}{2}}|^\diamond \mywedge{\Sigma_{\frac{d}{2}}} (S^{2n+1})^{\wedge \frac{d}{2}} 
\rightarrow  \Sigma^2 |\Pi_d|^\diamond \mywedge{\Sigma_d} (S^{n})^{\wedge d} \rightarrow  \Sigma |\Pi_d|^\diamond \mywedge{\Sigma_d} (S^{n+1})^{\wedge d}.\vspace{-1pt}$$
Here we use the convention that the space on the left is equal to the point if $d$ is  odd.
\end{theorem}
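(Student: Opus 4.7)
The plan is to deduce this theorem from our EHP sequence for strictly commutative monoid spaces (Definition \ref{ehpdef} and Theorem \ref{EHPMonoid}) by applying the Andr\'{e}-Quillen chains functor $\AQ$ and using Lemma \ref{strictlie} to identify the resulting terms.

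For the first part of the statement (existence of the sequence for any $J$-graded space $X$), I would apply $\AQ$ to the natural EHP sequence of augmented $J$-graded commutative monoid spaces
$$ \Sigma^{\otimes}(S^0 \vee \ovA{\Sigma X^{\wedge 2}}) \xrightarrow{H} \Sigma^{\otimes}(S^0 \vee \ovA{X}) \xrightarrow{E} S^0 \vee \ovA{\Sigma X}$$
constructed in Definition \ref{ehpdef}. Since the suspension $\Sigma^{\otimes}$ in $\mathbf{CMon}^{aug}$ is, by definition, the homotopy pushout $S^0 \mywedgetwo{(-)}{\hobased} S^0$, and since $\AQ = L(Q \circ I)$ is a composition of (derived) left adjoints that preserves homotopy colimits, there is a natural equivalence $\AQ(\Sigma^{\otimes} A) \simeq \Sigma \AQ(A)$ of $J$-graded pointed spaces. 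Combining this with Lemma \ref{strictlie}, which identifies $\AQ(S^0\vee\ovA{X}) \simeq C_{\Lie}(X)$, yields precisely the claimed sequence $\Sigma C_{\Lie}(\Sigma X^{\wedge 2}) \xrightarrow{H} \Sigma C_{\Lie}(X) \xrightarrow{E} C_{\Lie}(\Sigma X)$; the vanishing of the composite follows from the corresponding vanishing at the monoid level.

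For the second part, when $X = S^n$ is an even sphere placed in degree $1$, Theorem \ref{EHPMonoid} upgrades the EHP sequence to a genuine homotopy cofibre sequence in $\mathbf{CMon}^{aug}$. Since $\AQ$ preserves homotopy cofibre sequences, applying it produces the desired homotopy cofibre sequence
$$\Sigma C_{\Lie}(S^{2n+1}) \xrightarrow{H} \Sigma C_{\Lie}(S^n) \xrightarrow{E} C_{\Lie}(S^{n+1})$$
of $\NN$-graded pointed spaces. To extract the explicit form, I would read off the degree-$d$ component: with $S^n$ in degree $1$ (so $(S^n)^{\wedge d}$ in degree $d$) and $S^{2n+1} = \Sigma (S^n)^{\wedge 2}$ in degree $2$, the degree-$d$ piece of $\Sigma C_{\Lie}(S^{2n+1})$ is $\Sigma^2 |\Pi_{d/2}|^{\diamond} \mywedge{\Sigma_{d/2}}(S^{2n+1})^{\wedge d/2}$ for $d$ even and a point for $d$ odd, while the other two terms unpack to $\Sigma^2 |\Pi_d|^{\diamond}\mywedge{\Sigma_d}(S^n)^{\wedge d}$ and $\Sigma |\Pi_d|^{\diamond}\mywedge{\Sigma_d}(S^{n+1})^{\wedge d}$.

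The main technical obstacle will be making precise the assertion $\AQ(\Sigma^{\otimes} A) \simeq \Sigma \AQ(A)$. This requires checking that the homotopy pushout of commutative monoid spaces underlying $\Sigma^{\otimes}$ is correctly transported under the composition of derived functors defining $\AQ$: first, the augmentation ideal functor $I\colon \mathbf{CMon}^{aug} \to \mathbf{CMon}^{nu}$ sends $\Sigma^{\otimes} A = S^0\mywedgetwo{A}{\hobased}S^0$ to the suspension of $I(A)$ in $\mathbf{CMon}^{nu}$ (one has to verify compatibility with the $S^0$-factors); second, the left Quillen functor $Q$ from Section 7.9 preserves homotopy pushouts, identifying the resulting object as the ordinary pointed suspension of $\AQ(A)$. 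Once this compatibility is established, Proposition \ref{abc}, Proposition \ref{preserves}, and Lemma \ref{strictlie} interlock to complete the proof.
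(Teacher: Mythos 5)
Your approach is essentially the paper's: apply $\AQ$ to the EHP sequence from Definition \ref{ehpdef}, use preservation of homotopy colimits by $\AQ$ to commute it past $\Sigma^\otimes$, invoke Theorem \ref{EHPMonoid} to get a cofibre sequence in $\mathbf{CMon}^{aug}$, and finally read off the statement via Lemma \ref{strictlie}. However, there is one genuine gap: you invoke Theorem \ref{EHPMonoid} for ``$X = S^n$ an even sphere,'' but Theorem \ref{EHPMonoid} carries the hypothesis $n \geq 2$. The theorem you are proving also includes $n = 0$ (equivalently $\ell = 0$), and for $n = 0$ the monoid $S^0 \vee \ovA{S^0}$ fails the simple-connectedness hypothesis needed for Lemma \ref{integralcheck}, so the proof of Theorem \ref{EHPMonoid} does not go through. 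The paper handles $n=0$ by a separate direct argument, combining Kozlov's theorem (that $|\Pi_d|/_{\Sigma_d}$ is contractible for $d \geq 3$, which settles the middle term) with Lemma \ref{lemma: contractible} (that $|\Pi_d|^\diamond \mywedge{\Sigma_d}(S^1)^{\wedge d}$ is contractible for $d > 1$, which settles the left and right terms), leaving only a trivial check for small $d$.

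Your concern in the final paragraph about rigorously establishing $\AQ(\Sigma^\otimes A) \simeq \Sigma \AQ(A)$ is legitimate, but the paper treats this as an immediate consequence of Proposition \ref{preserves} and Proposition \ref{abc}: $\AQ$ is a composition of derived left adjoints and so preserves homotopy pushouts, and $\Sigma^\otimes A = S^0 \mywedgetwo{A}{\hobased} S^0$ is by definition such a pushout whose image under $\AQ$ is $* \leftarrow \AQ(A) \rightarrow *$, i.e.\ $\Sigma \AQ(A)$. You should either accept this at face value (as the paper does) or carry out the verification you sketch; either is fine, but you should explicitly record the compatibility of $I$ with the $S^0$-factors if you go the long route. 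Apart from these points, your unpacking of the degree-$d$ component matches the paper.
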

\begin{example}
We unpack the EHP sequence at weight $d=2$. Here, symmetric squares can be expressed in terms of real projective spaces (cf.\ \cite{james1963symmetric}) and we in fact recover the cofibre sequence $
\Sigma^{n+3} S^{n-1} \xrightarrow{\HH} \Sigma^{n+3} \RR P^{n-1} \xrightarrow{\EE} \Sigma^{n+3} \RR P^{n}  
$.
\end{example}

\begin{notation} Extending the above cofibre sequence gives a map  $P:  C_{\Lie} (S^{n+1})\xrightarrow{ }  \Sigma^2  C_{\Lie}(S^{2n+1})$.  
\end{notation}
\begin{remark} We suspect that  the above sequence is induced by applying a conjectural strict variant of Goodwillie calculus to the 
 classical EHP sequence in topology.
\end{remark}

\begin{proof}[Proof of Theorem \ref{EHPLIE}] The first claim follows by applying $\AQ$ to the EHP sequence in Definition \ref{ehpdef} and using that $\AQ$ preserves homotopy colimits to commute suspension past the $\AQ$ functor.
 
For the second claim, we   again use that $\AQ$ preserves homotopy colimits to deduce that  the EHP-cofibre sequence 
$ \Sigma^{\otimes} (S^0 \vee \ovA{S^{2n+1}}) \xrightarrow{\HH}
\Sigma^{\otimes} (S^0 \vee \ovA{S^{n}})\xrightarrow{\EE}  S^0 \vee \ovA{S^{n+1}}$
from Theorem \ref{EHPMonoid} gives   a homotopy cofibre sequence of $\NN$-graded spaces
$ \Sigma  \AQ (S^0 \vee \ovA{S^{2n+1}}) \xrightarrow{\HH}
\Sigma \AQ (S^0 \vee \ovA{S^{n}})\xrightarrow{\EE}  \AQ (S^0 \vee \ovA{S^{n+1}})$.
The result follows by applying Lemma \ref{strictlie}, and  the final  expression follows by passing to  degrees.
\end{proof}

We have   assembled $C_{\Lie}(S^{\ell_1} \vee \ldots \vee S^{\ell_k})$ from the simpler spaces
$C_{\Lie}(S^\ell)$ for $\ell$  {odd}. This reduces the problem of analysing $\Sigma |\Pi_n|^\diamond/_{\Sigma_{n_1}\times\cdots\times\Sigma_{n_k}}$ to spaces of the form  \vspace{3pt}$\Sigma |\Pi_d|^\diamond \mywedge{\Sigma_d} (S^\ell)^{\wedge d}$ \mbox{for $\ell \ge 1$ odd.}

\subsection{The space $C_{\Lie}(S^1)$} We will now  compute   the Andr\'{e}-Quillen homology of the \mbox{monoid $S^0 \vee \ovA{S^1}$:}
\begin{lemma}\label{lemma: contractible}
There is an equivalence of spaces
$C_{\Lie}(S^1) =\bigvee_{d\geq 1} \Sigma  |\Pi_d|^\diamond \mywedge{\Sigma_d}(S^1)^{\wedge d} \simeq S^1 $.
In fact, the space $ \displaystyle  |\Pi_d|^\diamond \mywedge{\Sigma_d} (S^1)^{\wedge d}$ is contractible for all $d>1$.\vspace{-2pt}
\end{lemma}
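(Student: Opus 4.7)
The plan is to identify the trivial square-zero extension $S^0\vee\ovA{S^1}$ with the free augmented commutative monoid space $\Sym(S^1)=\bigvee_{n\ge 0}(S^1)^{\wedge n}/\Sigma_n$ inside $\mathbf{CMon}^{\NN,\mathrm{aug}}$, and then to combine this identification with Lemma~\ref{strictlie} together with the standard formula $\AQ(\Sym(X))\simeq X$ for free commutative monoid spaces (which holds because $Q$ is the derived left adjoint to the trivial square-zero extension functor, so $\AQ\circ\mathbf{T}^{>0}\simeq\mathrm{id}$ on cofibrant $X$). I will first construct the canonical map $\phi\colon \Sym(S^1)\to S^0\vee\ovA{S^1}$ of augmented $\NN$-graded commutative monoid spaces extending the identity on the weight-one piece $S^1$; this is well defined because any two elements of $S^1$ multiply to zero in the target. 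In each weight $n\ge 2$, the map $\phi$ is the collapse $\Sym^n(S^1)=(S^1)^{\wedge n}/\Sigma_n\to\ast$, while in weights $0$ and $1$ it is the identity.

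The geometric input needed to make $\phi$ a weak equivalence is that $\Sym^n(S^1)\simeq\ast$ for every $n\ge 2$. I will deduce this from the classical computation of symmetric products of the circle: the Dold--Thom theorem identifies $SP^\infty(S^1)\simeq K(\ZZ,1)\simeq S^1$, and using the product-in-$U(1)$ maps $SP^n(S^1)\to S^1$ one checks that each inclusion $SP^{n-1}(S^1)\hookrightarrow SP^n(S^1)$ is a weak equivalence, so the successive cofibres $SP^n(S^1)/SP^{n-1}(S^1)\cong \Sym^n(S^1)$ are contractible.

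Granted this, $\phi$ is a weak equivalence in $\mathbf{CMon}^{\NN,\mathrm{aug}}$, and applying $\AQ$ together with Lemma~\ref{strictlie} yields a weight-graded equivalence
\[
\bigvee_{n\ge 1}\Sigma|\Pi_n|^\diamond\mywedge{\Sigma_n}(S^1)^{\wedge n}=C_{\Lie}(S^1)\simeq \AQ(\Sym(S^1))\simeq S^1,
\]
with $S^1$ sitting in weight one. Reading off individual weights proves the first assertion and already shows that $\Sigma|\Pi_d|^\diamond\mywedge{\Sigma_d}(S^1)^{\wedge d}\simeq \ast$ for every $d\ge 2$. For the refined (unsuspended) second assertion, I will treat $d=2$ directly ($|\Pi_2|^\diamond=S^0$ carries the trivial $\Sigma_2$-action, so the target is $S^2/\Sigma_2\cong D^2$, which is contractible), and for $d\ge 3$ use that the smash $|\Pi_d|^\diamond\wedge(S^1)^{\wedge d}$ is highly connected, since $(S^1)^{\wedge d}\cong S^d$ is $(d-1)$-connected and $|\Pi_d|^\diamond$ is path-connected, together with the existence of a $\Sigma_d$-fixed cone point in $|\Pi_d|^\diamond$ to argue that the strict $\Sigma_d$-orbit space remains simply connected; the suspension-level vanishing already established, combined with Hurewicz and Whitehead, then forces contractibility. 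The main obstacle will be this last simple-connectivity check for the strict orbit space; everything else is formal once one has $\Sym^n(S^1)\simeq\ast$ for $n\ge 2$.
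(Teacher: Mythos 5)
Your approach is correct and genuinely different from the paper's. The paper's proof is direct and purely combinatorial: by Lemma~\ref{lemma: preliminary contractible}, $S^{d-1}/_W$ is contractible for every isotropy group $W$ of $|\Pi_d|^\diamond$ (each such $W$ sits between a nontrivial Young subgroup of $\Sigma_d$ and its normaliser), and since $|\Pi_d|^\diamond \mywedge{\Sigma_d} S^{d-1}$ is a pointed homotopy colimit of such quotients, it is contractible; smashing with a circle then gives the unsuspended claim, and in fact the paper proves something a desuspension stronger. Your route instead recognises $S^0\vee\ovA{S^1}$ as the free augmented commutative monoid $\mathbf{T}(S^1)$ — the geometric content being that $\Sym^n(S^1)=SP^n(S^1)/SP^{n-1}(S^1)\simeq\ast$ for $n\ge 2$, which follows from $SP^n(S^1)\simeq S^1$ for all $n\ge 1$ — and then feeds this through Lemma~\ref{strictlie} and the formula $\AQ(\mathbf{T}^{>0}(X))\simeq X$. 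This is conceptually appealing: it exhibits the statement as a reflection of the Dold--Thom computation of $SP^\infty(S^1)$. But it only yields the once-suspended vanishing $\Sigma|\Pi_d|^\diamond\mywedge{\Sigma_d}(S^1)^{\wedge d}\simeq\ast$ directly, and you must then descend.

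That descent is where your gap is, as you acknowledge. The phrase ``the existence of a $\Sigma_d$-fixed cone point \ldots\ to argue that the strict $\Sigma_d$-orbit space remains simply connected'' is not yet an argument: quotients of simply connected spaces by finite groups need not be simply connected, and the obvious cell structure on $(S^1)^{\wedge d}$ with a single $d$-cell is not a $\Sigma_d$-CW structure (the stabiliser of the top cell acts nontrivially on it), so naive cell-counting in the quotient does not work either. Two fixes are available. The cleanest is to note that $(S^1)^{\wedge d}\cong S^1\wedge S^{d-1}$ as $\Sigma_d$-spaces with $\Sigma_d$ acting trivially on the $S^1$ factor, so $|\Pi_d|^\diamond\mywedge{\Sigma_d}(S^1)^{\wedge d}\cong\Sigma\bigl(|\Pi_d|^\diamond\mywedge{\Sigma_d}S^{d-1}\bigr)$ is itself a suspension of a path-connected space, hence simply connected for every $d\ge 2$; combined with the vanishing of reduced homology supplied by your $\AQ$ computation, Hurewicz and Whitehead finish the argument uniformly in $d$ (no need to treat $d=2$ separately). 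Alternatively, invoke Armstrong's theorem on fundamental groups of orbit spaces: since the basepoint is $\Sigma_d$-fixed, every element of $\Sigma_d$ has a fixed point, so the quotient of a simply connected CW complex is simply connected. Either way you obtain a complete proof, but note that the paper's isotropy-group argument sidesteps Dold--Thom, the Hurewicz/Whitehead step, and the simple-connectivity issue entirely.
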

In our computations, we may therefore often assume  $\ell\neq 1$ (which implies that \mbox{$ \Sigma^{-1} |\Pi_d|^\diamond\mywedge{\Sigma_d} (S^{\ell})^{\wedge d}$} is simply connected). Moreover, we can give a new proof of  Kozlov's theorem \cite[Corollary 4.3]{kozlov2000collapsibility}:
\begin{corollary}[Kozlov]\label{Kozlov} The quotient $|\Pi_d|/\Sigma_d$ is contractible for all $d\geq 2$.\vspace{-5pt}
\end{corollary}
\begin{proof}
Combine Lemma \ref{lemma: contractible} with our EHP sequence (for $n=0$) in Theorem \ref{EHPLIE}.\vspace{-5pt}
\end{proof}

To prove Lemma \ref{lemma: contractible}, we need an auxiliary result: 
\begin{lemma}\label{lemma: preliminary contractible}
Let $Y:=\Sigma_{d_1}\times\cdots\times \Sigma_{d_i}$ be a non-trivial Young subgroup of $\Sigma_d$, where $d=d_1+\cdots+d_i$. Let $N$ be the normaliser of $Y$ and let $W$ be any group satisfying $Y\subseteq W \subseteq N$. Equip $S^{d-1}$ with the standard action of $\Sigma_d$. Then the orbit space $S^{d-1}/_W$ is contractible.\vspace{-5pt}
\end{lemma}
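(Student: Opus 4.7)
The plan is to decompose the sphere using $W$-equivariant join decompositions, reducing contractibility of $S^{d-1}/W$ to a manageable question about the action of $W/Y$ on a single affine simplex. First I would note that the diagonal vector $\mathbf{1} \in \RR^d$ is $\Sigma_d$-fixed and hence $W$-fixed, so $F := (\RR^d)^W$ is nonzero. Writing $\RR^d = F \oplus F^\perp$ as orthogonal $W$-representations gives a $W$-equivariant join decomposition $S^{d-1} \cong S(F) * S(F^\perp)$, and since $W$ acts trivially on $S(F)$, we obtain $S^{d-1}/W \cong S(F) * (S(F^\perp)/W)$. Because a join with any contractible space is contractible, it then suffices to prove that $S(F^\perp)/W$ is contractible.

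To achieve this, I would further decompose $F^\perp$ using the Young structure. Setting $F_Y := (\RR^d)^Y$, the orthogonal complement $F_Y^\perp$ is the sum $\bigoplus_{j=1}^{i} V_j$ of the reduced standard representations $V_j$ of the factors $\Sigma_{d_j}$, while $U := F_Y \cap F^\perp$ carries the natural ``reduced'' permutation representation of $W/Y$ on the set of blocks. Since $Y$ acts trivially on $U$, quotienting the join $S(F^\perp) = S(U) * S(\bigoplus_j V_j)$ first by $Y$ yields
\[
S(F^\perp)/Y \;\cong\; S(U) * \bigl(S(\textstyle\bigoplus_j V_j)/Y\bigr).
\]
Because each $\Sigma_{d_j}$ acts on $V_j$ as a finite reflection group with simplicial fundamental chamber $C_j$, the radial parametrization $(v_j)_j \mapsto (|v_j|, v_j/|v_j|)$ identifies $S(\bigoplus_j V_j)/Y$ with the join $\Delta_1 * \cdots * \Delta_i$ of the spherical simplices $\Delta_j := C_j \cap S(V_j)$, which is itself a single affine simplex $\Delta$ with vertex set the disjoint union of the vertex sets of the $\Delta_j$.

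The key remaining step is to analyze the residual $W/Y$-action on $S(U) * \Delta$. The group $W/Y$ permutes the factors $\Delta_j$ corresponding to equal-size blocks; viewing $\Delta$ as a convex simplex in affine space, this is a permutation of its vertices and therefore fixes the barycenter $x_0$. The straight-line homotopy $(\mathbf{x}, s) \mapsto (1-s)\mathbf{x} + s\, x_0$ gives a $W/Y$-equivariant deformation retraction of $\Delta$ onto $\{x_0\}$, which extends to a $W/Y$-equivariant retraction of $S(U) * \Delta$ onto $S(U) * \{x_0\}$. The latter is a cone with apex $x_0$ and contracts $W/Y$-equivariantly to $x_0$ along the cone coordinate. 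Descending to the quotient shows that $S(F^\perp)/W = (S(F^\perp)/Y)/(W/Y)$ is contractible, whence $S^{d-1}/W$ is too. The most delicate point to pin down is the simplex identification in the second paragraph, together with the verification that the residual $W/Y$-action on $\Delta$ is genuinely by linear permutations of its vertices so that the convex contraction to the barycenter really is $W/Y$-equivariant.
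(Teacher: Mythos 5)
Your proposal is correct and arrives at the conclusion by a genuinely different route. The paper also begins from the join decomposition $S^{d-1}/_Y \cong S^{d_1-1}/_{\Sigma_{d_1}} * \cdots * S^{d_i-1}/_{\Sigma_{d_i}}$, but from there the two arguments diverge. The paper establishes $N/Y$-equivariant contractibility abstractly: it first shows each $S^{d_j-1}/_{\Sigma_{d_j}}$ is contractible by a poset argument (identifying $S^{d_j-1}$ with $|\ovA{\Bcal}_{d_j}|^\diamond$ and observing that the $\Sigma_{d_j}$-quotient of the nerve is a linear poset), then observes that for any $H\subseteq N/Y$, the fixed-point space is again a join of such quotient factors, at least one of which is contractible; equivariant Whitehead then gives an equivariant contraction, and passing to $W/Y$-orbits finishes. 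You instead give an explicit equivariant deformation retraction: splitting off the $W$-fixed subspace $F$ and the $Y$-fixed complement $U$, you identify $S(F_Y^\perp)/_Y$ with a simplex $\Delta$ via the chamber structure of the reflection groups $\Sigma_{d_j}$, observe that $W/Y$ acts on $\Delta$ by permuting its vertices (the chamber rays), and then contract linearly to the barycenter and down the cone on $S(U)$. What the paper's route buys is brevity and independence from reflection-group geometry: it never needs to verify simpliciality of the residual action, only that at least one join factor of every fixed-point space is contractible. What your route buys is an explicit, model-level contraction that does not invoke the equivariant Whitehead theorem. The delicate point you flag is real but resolvable: the map $\Delta_j\to\Delta_{\pi(j)}$ is well-defined on $Y$-quotients (different lifts in $N$ differ by elements of $Y$, which act trivially on the quotient) and is induced by a linear isometry of the ambient chambers, hence carries chamber rays to chamber rays and therefore is a bijection of vertices; replacing the spherical simplex $\Delta$ by the affine convex hull of its vertices via radial projection — an equivariant homeomorphism, since the origin is not in the affine span of the linearly independent rays of the simplicial cone $C_1\times\cdots\times C_i$ — then makes the action genuinely affine and the straight-line retraction to the barycenter equivariant, as you claim.
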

\begin{proof}
We start by observing the well-known fact that the orbits space $S^{d-1}/_{\Sigma_d}$ is contractible for $d>1$.  One way to see this is to identify $S^{d-1}$ with $|\ovA{\Bcal_d}|^\diamond$, where $\ovA{\Bcal_d}$ again denotes the poset of proper, non-trivial subsets of the set ${\mathbf d}$ (Example~\ref{example: sphere}). Then $S^{d-1}/_{\Sigma_d} \cong|\ovA{\Bcal_d}|^\diamond/_{\Sigma_d}.$ The quotient of the simplicial nerve of ${\Bcal_d}$ by the action of $\Sigma_d$ is isomorphic to the nerve of the linear poset $1<2<\cdots< d-1$. In particular, its geometric realisation is contractible.

For the general case, we first observe the $Y$-equivariant homeomorphism $S^{d-1}\cong S^{d_1-1}*\cdots*S^{d_i-1}$. It follows that 
$S^{d-1}/_{Y}\cong  S^{d_1-1}/{_{\Sigma_{d_1}}}*\cdots*S^{d_i-1}/{_{\Sigma_{d_i}}}.
$
By our assumption, at least one of the $d_j$s is greater than one, and so the right hand side is a contractible space. Moreover, we claim that it is contractible as a $N/Y$-equivariant space. In other words, we claim that for every subgroup $H\subset N/Y$, the fixed point space 
$(  S^{d_1-1}/_{\Sigma_{d_1}}*\cdots* S^{d_i-1}/_{\Sigma_{d_i}})^H$ 
is contractible. To see this, observe that $N/Y$ is a Young subgroup of $\Sigma_i$ (recall that $i$ is the number of factors $\Sigma_{d_j}$ of $Y$). $N/Y$ acts on the space  
$S^{d-1}\cong S^{d_1-1}/_{\Sigma_{d_1}}*\cdots*S^{d_i-1}/_{\Sigma_{d_i}}$ by permuting join factors that happen to be homeomorphic. It follows that the fixed point space $(  S^{d_1-1}/_{\Sigma_{d_1}}*\cdots* S^{d_i-1}/_{\Sigma_{d_i}})^H$  is a join of factors of the form $S^{d_j-1}/{_{\Sigma_{d_j}}}$. Again, at least one of the $d_j$ is greater than $1$, so at least one of these factors is contractible, and therefore the whole space is contractible.

It follows that the orbit space
$\left(S^{d-1}/_Y\right)/_{H}$
is contractible for every $H\subset N/Y$. Finally, it follows that for every $Y\subset W \subset N$, the orbit space $S^{d-1}/_W$
is contractible.\vspace{-1pt}
\end{proof}

\begin{proof}[Proof of Lemma \ref{lemma: contractible}]
The space $\displaystyle S^{d-1} \mywedge{\Sigma_d} |\Pi_d|^\diamond$ is a pointed homotopy colimit of spaces of the form $ S^{d-1}/_G$, where $G$ is an isotropy group of $ |\Pi_d|^\diamond $. We claim that the space $  S^{d-1}/_G$ is contractible for every $G$ that occurs. From the claim it follows that the pointed homotopy colimit is contractible.

It remains to prove the claim. Let $G$ be an isotropy group of $|\Pi_d|^\diamond$. Then either $G=\Sigma_d$ or $G$ is the stabiliser group of a chain of proper non-trivial partitions of $\mathbf d$. 

In the first case, we have $S^{d-1}/_G=S^{d-1}/{_{\Sigma_d}}\simeq *.$ In the second case, suppose that $G$ is the stabiliser of the chain of partitions $[x_0<\ldots <x_r]$. Here $x_0$ is the finest partition in the chains. Suppose $x_0$ has $i$ equivalence classes of sizes $d_1, \ldots, d_i$. Write $Y\cong \Sigma_{d_1}\times\cdots\times \Sigma_{d_i}$ for  the group of permutations that leave the equivalence classes of $x_0$ invariant. Then $G$ contains $Y$ and is contained in the normaliser of $Y$, and $S^{d-1}/_G$ is contractible by Lemma~\ref{lemma: preliminary contractible}.\vspace{-5pt}
\end{proof}

\subsection{Homology of Strict Orbits}
We proceed to examine the homology of strict Young quotients of the partition complex, or more generally of spaces of the form $C_{\Lie}(X)$ for $X$ a wedge of spheres.
We begin with a  conceptual interpretation of the homology of  spaces $C_{\Lie}(X)$. Given a ring $R$ and a $J$-graded simplicial $R$-module $M$, we write $R\oplus  {M}$ for the trivial square zero \mbox{extension of $R$ by $M$.}

 Lemma \ref{strictlie} and Lemma \ref{AQSquare} together imply:
 \begin{theorem}\label{surprise}
If $X$ is a well-pointed $J$-graded space and $R$ is a  ring, then 
$$  \widetilde{\HH}_*\left(C_{\Lie} (X), R\right) = \widetilde{\HH}_* \bigg(\bigvee_{d\geq 1} \Sigma  |\Pi_d|^\diamond \mywedge{\Sigma_d} X^{\wedge d} , R\bigg)\cong \AQ^R_\ast \left(R\oplus \tilde{C}_\bullet (X,R)\right).$$
\end{theorem}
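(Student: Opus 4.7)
The plan is to chain together the two previously established lemmas: Lemma \ref{strictlie}, which identifies $C_{\Lie}(X)$ with the Andr\'e--Quillen chains of the topological square-zero extension $S^0 \vee \ovA{X}$, and Lemma \ref{AQSquare}, the commutative square relating topological AQ chains with algebraic AQ chains via extension of scalars.

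First I would rewrite the reduced $R$-homology of $C_{\Lie}(X)$ as $\pi_\ast \tilde C_\bullet(C_{\Lie}(X), R)$, where $\tilde C_\bullet(-,R)$ denotes the reduced simplicial $R$-module chain functor (so that $\pi_\ast$ of it recovers reduced singular homology with $R$-coefficients). Using Lemma \ref{strictlie}, which provides a natural equivalence $\AQ(S^0 \vee \ovA{X}) \simeq C_{\Lie}(X)$ in $\Ho(\mathbf{Top}^J_\ast)$, I may replace $C_{\Lie}(X)$ by $\AQ(S^0\vee \ovA{X})$ without changing the $R$-homology. This step is essentially a repackaging, but it is the one that lets us bring Andr\'e--Quillen machinery to bear on a $\Sigma_d$-quotient construction.

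Second, I would invoke Lemma \ref{AQSquare}. Applied to the nonunital monoid $\ovA{X} = I_{S^0 \vee \ovA{X}}$, it yields a zig-zag of equivalences
\[
\tilde C_\bullet\bigl(\AQ(\ovA{X}),R\bigr) \;\simeq\; \AQ^R\!\bigl(R\otimes \ovA{X}\bigr)
\]
in $\Ho(\mathbf{sMod}^J_R)$. Thus the $R$-homology of the topological AQ chains computes the algebraic AQ chains of the extended-scalars object.

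Third, I would identify $R\otimes(S^0\vee \ovA{X})$ with the trivial square-zero extension $R\oplus \tilde C_\bullet(X,R)$ in $\mathbf{SCR}^{J,aug}_R$. Unwinding the definition of the extension-of-scalars functor $R\otimes(-) = \tilde F_R \circ \Sing_\bullet$, the unit of $S^0\vee\ovA{X}$ is sent to the unit of $R$, while the square-zero multiplication $x\cdot y=0$ on $\ovA{X}$ is sent to the zero multiplication on $\tilde C_\bullet(X,R)$, because $\tilde F_R$ is symmetric-monoidal and sends the basepoint to $0$. Hence passing to augmentation ideals, $R\otimes \ovA{X}$ is the simplicial $R$-module $\tilde C_\bullet(X,R)$ with zero multiplication; adjoining back the unit recovers the trivial square-zero extension $R\oplus \tilde C_\bullet(X,R)$. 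Taking $\pi_\ast$ then yields the asserted isomorphism
\[
\widetilde{\HH}_\ast(C_{\Lie}(X),R) \;\cong\; \AQ^R_\ast\bigl(R\oplus \tilde C_\bullet(X,R)\bigr).
\]

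The one point requiring genuine care—rather than pure formal manipulation—is the identification in the third step: verifying that $\tilde F_R \circ \Sing_\bullet$ really does send the topological trivial-square-zero monoid $S^0 \vee \ovA{X}$ to the algebraic trivial-square-zero $R$-algebra $R\oplus \tilde C_\bullet(X,R)$. This reduces to the compatibility of $\tilde F_R$ with the (crushed) smash product and with the collapse of products to the basepoint, together with the well-pointedness hypothesis on $X$ ensuring that $\tilde C_\bullet(X,R)$ is a homotopically correct model. Everything else is formal, and this is the step I would write out most carefully.
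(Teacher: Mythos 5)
Your proposal is correct and is essentially the paper's own argument: the paper deduces the theorem by combining Lemma \ref{strictlie} (identifying $C_{\Lie}(X)$ with $\AQ(S^0\vee\ovA{X})$) with Lemma \ref{AQSquare} (the compatibility of $\AQ$ with extension of scalars), together with the observation — stated explicitly in the text preceding the EHP material — that $R\otimes(S^0\vee\ovA{X}) = R\oplus\tilde C_\bullet(X,R)$. The only small remark is that the well-pointedness of $X$ is used as a hypothesis of Lemma \ref{strictlie} (so that the bar construction there computes the derived quotient), rather than for the reason you give in the final paragraph, but this does not affect the correctness of the argument.
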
 
\subsubsection*{An aside on Operations.} We shall briefly digress and explain why the $R$-valued singular cohomology classes  of the spaces $C_{\Lie}(S^{\ell_1}\vee \ldots \vee S^{\ell_k})$  give rise to natural   operations on the algebraic Andr\'{e}-Quillen homology groups of simplicial commutative $R$-algebras.

Write $\Sigma $ for the endofunctor on simplicial $R$-modules  corresponding to the left shift on chain complexes. We observe that  Lemma \ref{AQSquare} and the standard adjunction between algebraic Andr\'{e}-Quillen chains $\AQ^R$ and trivial square-zero extensions together imply an identification 
$$H^j(C_{\Lie}(S^{\ell_1}\vee \ldots \vee S^{\ell_k}),R) \cong  \Map_{\hsMod_R}(\AQ^R(R\oplus (\Sigma^{\ell_1} R\oplus \ldots \oplus \Sigma^{\ell_k} R) ),\Sigma^j R)$$ 
$$\ \ \ \ \ \ \ \ \ \ \ \ \ \ \ \ \ \ \ \ \ \ \ \ \ \ \ \ \ \ \ \ \ \ \  \cong  \Map_{\hSCR_R^{aug}}(R\oplus (\Sigma^{\ell_1} R\oplus \ldots \oplus \Sigma^{\ell_k} R),R\oplus \Sigma^j R) .$$

Moreover, for any simplicial $R$-algebra $A$, we have an identification
$$\AQ_R^{\ell_1} (A)\times \ldots \times \AQ_R^{\ell_k} (A)  \cong \Map_{\hsMod_R}(\AQ^R(A),\Sigma^{\ell_1} R\oplus \ldots \oplus \Sigma^{\ell_k} R)$$ $$\ \  \ \ \ \ \ \ \ \ \ \ \ \ \ \ \ \ \ \ \ \ \ \ \ \ \ \ \ \ \ \ \ \ \cong \Map_{\hSCR_R^{aug}}(A,R\oplus (\Sigma^{\ell_1} R\oplus \ldots \oplus \Sigma^{\ell_k} R))   $$
We therefore obtain maps
$H^j(C_{\Lie}(S^{\ell_1}\vee \ldots \vee S^{\ell_k}),R) \times  \left( \AQ_R^{\ell_1} (A)\times \ldots \times \AQ_R^{\ell_k} (A)   \right)\longrightarrow  \AQ_R^j(A).$\\

We return to   computing the homology of  $C_{\Lie}(X)$ for $X$ a wedge of spheres.
\mbox{Our combinatorial} work in Section \ref{c4} and \ref{c5} has algebraic consequences. We apply $\widetilde{\HH}_*(-,R)$ to Corollary \ref{spheres-hm} \mbox{and deduce:}
\begin{corollary}\label{banterbanter}
There is a splitting
\begin{diagram}
\widetilde{\HH}_*(C_{\Lie}(S^{\ell_1}\vee \ldots\vee S^{\ell_k}),R) & \rTo^{\cong}& \bigoplus_{\substack{n_1,\dots,n_k\\w\in B(n_1,\dots,n_k) }} \widetilde{\HH}_*(C_{\Lie}(S^{(1+\ell_1)n_1 + \ldots + (1+\ell_k) n_k-1}),R) \\
\\
\dTo^{\cong} & & \dTo^{\cong} \\
\AQ^R_\ast (R\oplus (\Sigma^{\ell_1} R \oplus \ldots \oplus \Sigma^{\ell_k}R)) & \rTo^{\cong} & \bigoplus_{\substack{n_1,\dots,n_k\\w\in B(n_1,\dots,n_k) }} \AQ^R_\ast(R\oplus \Sigma^{(1+\ell_1)n_1 + \ldots + (1+\ell_k) n_k-1}R )
\end{diagram}
\end{corollary}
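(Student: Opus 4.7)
The plan is to deduce the corollary as a direct consequence of the two main inputs already assembled in the excerpt: the topological splitting of Corollary \ref{spheres-hm}, and the identification between the singular homology of $C_{\Lie}(X)$ and algebraic Andr\'{e}-Quillen homology of trivial square-zero extensions from Theorem \ref{surprise}.

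First, I would construct the top horizontal isomorphism. Starting from the $\NN^k$-graded equivalence
\[
C_{\Lie}(S^{\ell_1}\vee \ldots \vee S^{\ell_k}) \;\simeq\; \bigvee_{\substack{n_1,\ldots,n_k\\ w\in B(n_1,\ldots,n_k)}} C_{\Lie}\!\left(S^{(1+\ell_1)n_1+\ldots+(1+\ell_k)n_k-1}\right)
\]
furnished by Corollary \ref{spheres-hm}, I apply the reduced singular homology functor $\widetilde{\HH}_*(-,R)$. Since homology converts wedges of well-pointed spaces into direct sums, this yields the isomorphism in the top row. (Well-pointedness is not an issue: every space of the form $C_{\Lie}(S^m)$ is a homotopy colimit of well-pointed spaces and hence has the homotopy type of a CW-complex.)

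Next, I would produce the two vertical isomorphisms by applying Theorem \ref{surprise}. On the left, take $X=S^{\ell_1}\vee\cdots\vee S^{\ell_k}$; the reduced chain complex is $\tilde C_\bullet(X,R)\simeq \Sigma^{\ell_1}R\oplus\cdots\oplus\Sigma^{\ell_k}R$, whence
\[
\widetilde{\HH}_*(C_{\Lie}(X),R)\;\cong\;\AQ^R_\ast\!\left(R\oplus(\Sigma^{\ell_1}R\oplus\cdots\oplus\Sigma^{\ell_k}R)\right).
\]
On the right, apply the same theorem to each wedge summand $X=S^{(1+\ell_1)n_1+\ldots+(1+\ell_k)n_k-1}$ and sum over all Lyndon words $w\in B(n_1,\ldots,n_k)$.

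The remaining step, which is the main thing to verify, is that the resulting diagram commutes. For this I would argue that the vertical identifications of Theorem \ref{surprise} are natural in the pointed space $X$: they are obtained as the composite of Lemma \ref{strictlie} (the natural equivalence $\AQ(S^0\vee\bar X)\simeq C_{\Lie}(X)$) with Lemma \ref{AQSquare} (the natural square relating $\AQ$ to $\AQ^R$ via $\tilde C_\bullet(-,R)$) applied to the functor $X\mapsto S^0\vee \bar X$. Since the splitting of Corollary \ref{spheres-hm} is itself an equivalence of $\NN^k$-graded spaces, applying the natural transformations above to this equivalence yields the required commutative square. In fact there is nothing to check beyond naturality --- the top horizontal map is defined by applying $\widetilde{\HH}_*(-,R)$ to the topological splitting, and the vertical maps are natural isomorphisms, so the bottom horizontal map is forced to be their common conjugate.

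The only conceivable obstacle is a technical one: making sure Theorem \ref{surprise} applies to each of the (infinitely many) summands simultaneously and respects the wedge decomposition. This is handled by working with the $\NN^k$-graded refinement of $C_{\Lie}$ used throughout Section 7 and 8, so that both sides of the asserted square are finite direct sums in each fixed multi-degree $(n_1,\ldots,n_k)$. With this convention, the isomorphism in each multi-degree is a finite statement and the proof reduces to the naturality argument above.
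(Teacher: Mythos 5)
Your proposal is correct and follows essentially the same route as the paper, which introduces the corollary simply by applying $\widetilde{\HH}_*(-,R)$ to Corollary \ref{spheres-hm} and invoking Theorem \ref{surprise} for the vertical identifications. You have merely made explicit the naturality-in-$X$ of Theorem \ref{surprise}'s isomorphism (via Lemma \ref{strictlie} and Lemma \ref{AQSquare}) that the paper leaves implicit, which is a fine addition but not a genuinely different argument.
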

It is noteworthy that this nontrivial theorem in algebra follows formally from our discrete Morse theoretic computations. For $R=\FF_2$, it has been proven by Goerss \cite{goerss1990andre} by \mbox{entirely different means.}

We start by describing the homology of $C_{\Lie}(S^\ell)$ for $\ell$  \textit{odd}, and begin with rational coefficients:
\begin{lemma}\label{lemma: rational}
Let $\ell \geq 1$ be an \textit{odd} integer. Then $\widetilde{\HH}_*(\Sigma |\Pi_d|^\diamond\mywedge{\Sigma_d} (S^{\ell})^{\wedge d}, \QQ) = 
\begin{cases}  \QQ & \mbox{ if } d=1, \ast = \ell \\ 0 & \mbox{else}\end{cases} $.
\end{lemma}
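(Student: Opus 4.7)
The plan is to derive this from the identification established in Theorem~\ref{surprise}: since $\tilde{C}_\bullet(S^\ell,\QQ) \simeq \Sigma^\ell\QQ$, summing over $d$ gives
\[
\bigoplus_{d\geq 1}\widetilde{\HH}_*\left(\Sigma|\Pi_d|^\diamond\mywedge{\Sigma_d}(S^\ell)^{\wedge d},\QQ\right) \;\cong\; \AQ^\QQ_\ast\left(\QQ\oplus\Sigma^\ell\QQ\right),
\]
so it suffices to compute the rational André--Quillen homology of the trivial square-zero extension $\QQ\oplus\Sigma^\ell\QQ$ and to exhibit the generator as coming from the $d=1$ summand.

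The key observation for the computation is that when $\ell$ is odd, the trivial square-zero extension $\QQ\oplus\Sigma^\ell\QQ$ is already isomorphic, as a simplicial commutative $\QQ$-algebra, to the \emph{free} simplicial commutative $\QQ$-algebra $\Sym_\QQ(\Sigma^\ell\QQ)$. Under Dold--Kan, $\Sym^n_\QQ(\Sigma^\ell\QQ) \cong \left((\Sigma^\ell\QQ)^{\otimes n}\right)_{\Sigma_n}$, and for $\ell$ odd the Koszul sign convention makes $\Sigma_n$ act on $(\Sigma^\ell\QQ)^{\otimes n}\cong\Sigma^{n\ell}\QQ$ through the sign character. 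Over $\QQ$, coinvariants agree with invariants, and the sign representation has no invariants for $n\geq 2$, so $\Sym^n_\QQ(\Sigma^\ell\QQ) = 0$ for $n\geq 2$. The quadratic-and-higher multiplications in $\Sym_\QQ(\Sigma^\ell\QQ)$ therefore vanish, identifying it with the trivial extension.

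Since André--Quillen homology of a free simplicial commutative algebra returns its generating module, we get $\AQ^\QQ_\ast(\QQ\oplus\Sigma^\ell\QQ) \cong \Sigma^\ell\QQ$, i.e.\ $\QQ$ concentrated in degree $\ell$. To finish, it remains to check that this single class is captured by the $d=1$ summand, which, under our conventions, is simply $S^\ell$ and contributes $\QQ$ in degree $\ell$. Comparing total dimensions, every summand with $d\geq 2$ must then have vanishing rational homology, which is exactly the claim.

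I do not expect a genuine obstacle here; the only mild subtlety is carefully tracking Koszul signs so that the sign action on $\Sym^n(\Sigma^\ell\QQ)$ is correctly identified, and confirming that the $d=1$ summand accounts for the surviving generator (so that the vanishing propagates to all higher $d$ rather than being distributed across summands).
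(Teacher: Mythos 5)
Your proof is correct, modulo two small imprecisions in phrasing, and takes a genuinely different route from the paper. The paper's argument is purely topological: it passes to homotopy orbits (which agree rationally with strict orbits for the partition complex), writes $\Sigma^{-1}|\Pi_d|^\diamond\mywedge{\hobased\Sigma_d}(S^\ell)^{\wedge d}$ as a pointed homotopy colimit of spaces $(S^{\ell d -1})_{\hobased G}$ indexed by isotropy groups $G$ of $|\Pi_d|^\diamond$, and observes that each such $G$ contains a transposition, which for $\ell$ odd acts by $-1$ on $\widetilde{\HH}_*(S^{\ell d-1},\QQ)$; an easy Serre spectral sequence argument then kills each piece. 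You instead feed the claim through Theorem~\ref{surprise} and the elementary fact that derived symmetric powers over $\QQ$ annihilate odd-degree classes, so that $\QQ\oplus\Sigma^\ell\QQ$ is weakly equivalent to the free algebra $\Sym_\QQ(\Sigma^\ell\QQ)$ and hence has Andr\'e--Quillen homology $\Sigma^\ell\QQ$. This is not circular: Theorem~\ref{surprise} rests on Lemmas~\ref{strictlie} and~\ref{AQSquare}, neither of which touches the present lemma. The paper's route is more self-contained and stays inside equivariant topology; yours makes transparent the algebraic reason the rational answer is so small (formality of the square-zero extension in odd degree), which also explains the shape of the rational part of Theorem~\ref{finalthecohomology}. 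The two imprecisions: you should say the quotient $\Sym_\QQ(\Sigma^\ell\QQ)\to\QQ\oplus\Sigma^\ell\QQ$ is a weak equivalence rather than an isomorphism of simplicial algebras, and the vanishing ``$\Sym^n_\QQ(\Sigma^\ell\QQ)=0$ for $n\geq 2$'' is a statement about homotopy groups (Eilenberg--Zilber introduces the Koszul signs only after normalization), not a strict vanishing; neither affects the argument.
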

\begin{proof}
We give one of several possible proofs. It suffices to show that \mbox{$\widetilde{\HH}_*(\Sigma^{-1} |\Pi_d|^\diamond \mywedge{\Sigma_d} (S^{\ell})^{\wedge d} ,\QQ)$} vanishes for $d>1$. 
The  natural map
$\Sigma^{-1} |\Pi_d|^\diamond \mywedge{\hobased\Sigma_d} (S^{\ell})^{\wedge d} \longrightarrow \Sigma^{-1} |\Pi_d|^\diamond \mywedge{\Sigma_d} (S^{\ell})^{\wedge d}$   is well-known to induce an isomorphism on rational homology. Therefore it is enough to prove the lemma with the homotopy orbit space replacing the strict orbit space. This was done in~\cite{arone1999goodwillie, arone2006note}. For the sake of completeness we will sketch a proof. 
The space $\Sigma^{-1} |\Pi_d|^\diamond \mywedge{\hobased \Sigma_d} (S^{\ell})^{\wedge d} $ is a pointed homotopy colimit of spaces  $ S^{\ell  d-1}_{\hobased G}$, where $G$ is an isotropy group of $ |\Pi_d|^\diamond $. By an easy Serre spectral sequence argument, such a space is rationally trivial if $\ell $ is odd and $G$ contains a transposition. It is clear that every isotropy group of $|\Pi_d|^\diamond$ contains a transposition.
\end{proof}

For $R=\FF_p$ with $p$ a prime, the computation is significantly more difficult.
The first-named author and Mahowald computed the homology  of the  {homotopy} orbits $\DD(\Sigma |\Pi_d|^\diamond )\mywedge{\hobased\Sigma_d} (S^\ell)^{\wedge d}$ for $\ell$ odd by using a bar spectral sequence whose input is  the homology of extended powers as computed  by Cohen-Lada-May \cite{cohen1978homology}, Araki-Kudo \cite{kudo1956topology}, and Dyer-Lashof \cite{dyer1962homology}.

We can in fact run a related  computation, the input of which is the homology of symmetric powers as computed by Dold \cite{dold1958homology}, Nakaoka \cite{nakaoka1957cohomology} \cite{nakaoka1957cohomology2}, and Milgram \cite{milgram1969homology}. 
We obtain:\vspace{-3pt}

\begin{theoremn}[\ref{theorem: thehomology}] Let $\ell\geq 1$ be a positive integer, odd if the prime $p$ is odd.\\
If $n$ is not a power of $p$, then $\widetilde{\HH}_*\left(\Sigma |\Pi_n|^\diamond \wedge_{\Sigma_{n}}S^{\ell n} ,\FF_p\right)$ is trivial.\\ If $n=p^a$, then $\widetilde{\HH}_*\left(\Sigma |\Pi_{p^a}|^\diamond \wedge_{\Sigma_{p^a}} S^{\ell p^a},\FF_p\right)$ has a basis consisting of sequences $(i_1, \ldots, i_a)$, where $i_1, \ldots, i_a$ are positive integers satisfying:
\begin{enumerate}[leftmargin=26pt]
\item Each $i_j$  is congruent to $0$ or $1$ modulo $2(p-1)$. \label{congruence}
\item For all $1\le j<a$ we have $1<i_j< p i_{j+1}$.
\item We have $1<i_a\le (p-1)\ell$ (notice that if $p>2$, then~\eqref{congruence} means that the inequality is   strict).
\end{enumerate}
The homological degree of $(i_1, \ldots, i_a)$ is $i_1+\cdots+i_a+\ell+a$.
\end{theoremn}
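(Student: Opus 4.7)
The plan is to run the Bredon-type spectral sequence
\[
\widetilde{\HH}_s^{\br}(|\Pi_n|^{\diamond}; \mu_t) \Longrightarrow \widetilde{\HH}_{s+t}(|\Pi_n|^\diamond \wedge_{\Sigma_n} S^{\ell n}, \FF_p)
\]
associated with the graded Mackey functor $\mu_*(T) = \widetilde{\HH}_*(S^{\ell n} \wedge_{\Sigma_n} T_+, \FF_p)$, and to reduce its $E^2$-page to elementary-abelian fixed-point data via Theorem~1.1 of \cite{arone2016bredon}. The first task is to verify the detection hypothesis of that theorem for $\mu_*$: when $\ell$ is odd (or $p = 2$), the transposition acts by orientation reversal on $S^{\ell n}$, and a short calculation using the double coset formula should show that $\mu_*$ vanishes on $\Sigma_n$-sets whose stabilisers are not $p$-groups of the prescribed shape. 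With this detection in place, the $E^2$-page collapses to a sum of Weyl-equivariant twisted homologies of the fixed-point spaces $|\Pi_n|^P$ ranging over conjugacy classes of elementary abelian $P \subset \Sigma_n$.

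Second, I would feed in Corollary~\ref{corollary: el abelian}: the only $P$ producing non-contractible $|\Pi_n|^P$ are $P\cong\FF_p^k$ acting freely on $n=mp^k$, with
\[
|\Pi_n|^P \simeq \Ind_{\Aff_{\FF_p^k}\times \Sigma_m}^{\Aff_{\FF_p^k\wr\Sigma_m}}(|\BT(\FF_p^k)|^\diamond \wedge |\Pi_m|^\diamond).
\]
The crucial step is a vanishing argument for $m>1$: the induced $\Sigma_m$-action on $|\Pi_m|^\diamond$ is twisted by a sign-like representation coming from the odd-sphere hypothesis, and this should cause the full Weyl-group contribution to cancel, as in the rational computation of Lemma~\ref{lemma: rational}. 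This would rule out all $n$ that are not powers of $p$.

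In the remaining case $n=p^a$, only $P=\FF_p^a$ acting freely and transitively survives, with fixed-point space $|\BT(\FF_p^a)|^\diamond$ by Lemma~\ref{lemma: transitive}. Using Theorem~\ref{surprise} together with Lemma~\ref{strictlie}, the target can moreover be identified with $\AQ^{\FF_p}_*(\FF_p\oplus \Sigma^\ell\FF_p)$; running the bar construction for this algebraic object produces a spectral sequence whose input is the mod-$p$ symmetric power homology $\widetilde{\HH}_*(\mathrm{Sym}^{p^j}(S^\ell),\FF_p)$ computed by Dold, Nakaoka, and Milgram. The Dyer--Lashof style admissibility conditions emerge by standard Koszul bookkeeping: the congruence $i_j\equiv 0,1\pmod{2(p-1)}$ records the bidegrees of the symmetric-power generators, the adjacency $1<i_j<p\,i_{j+1}$ is classical admissibility, and the bound $1<i_a\le (p-1)\ell$ is the excess condition imposed by the $(p-1)\ell$-dimensional top class in $\widetilde{\HH}_*(\mathrm{Sym}^p(S^\ell),\FF_p)$; the total homological degree follows by tracking the bar filtration alongside internal degree.

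The hard part will be the cancellation argument for $m>1$: a priori each $P$-contribution still carries the nontrivial factor $|\BT(\FF_p^k)|^\diamond\wedge|\Pi_m|^\diamond$, and the relevant Weyl group $\GL_k(\FF_p)\times \Sigma_m$ mixes the Steinberg piece with the $\Sigma_m$-twist in a way that makes direct vanishing non-obvious. I would attempt an induction on $m$, using our Young-restriction branching (Theorem~\ref{main}) to peel off a sign-twisted copy of $|\Pi_m|^\diamond$ and an odd-sphere vanishing analogous to Lemma~\ref{lemma: rational} at each step. A secondary concern is convergence and collapse of the bar spectral sequence in the $n=p^a$ case, but this should follow by parity arguments paralleling the Arone--Mahowald treatment of the \emph{homotopy} orbit version.
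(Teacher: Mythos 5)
Your opening moves match the paper's: you set up the Bredon spectral sequence for the Mackey functor $\mu_*(T) = \widetilde{\HH}_*(S^{\ell n}\wedge_{\Sigma_n} T_+;\FF_p)$, verify the hypotheses of~\cite[Thm.~1.1]{arone2016bredon} via the orientation-reversal argument on transpositions (this is exactly Proposition~\ref{prop: satisfy}), and invoke that theorem. But after that your reading of what that theorem delivers goes astray in a way that undercuts the rest of your strategy.

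The output of~\cite[Lemma~3.8 and Thm.~1.1]{arone2016bredon} is \emph{not} a decomposition of the $E^2$-page as a sum of Weyl-equivariant twisted homologies of fixed-point spaces $|\Pi_n|^P$. What it gives is the much more rigid statement that $\widetilde{\HH}_s^{\br}(|\Pi_n|^\diamond;\mu_t)$ vanishes unless $n=p^a$ and $s=a-1$. This already disposes of the non-$p$-power case in one stroke; your proposed ``vanishing argument for $m>1$'' via sign cancellations \`a la Lemma~\ref{lemma: rational} is re-deriving something the theorem hands you for free, and you flag yourself that this is the hard part of your plan. More importantly, the concentration in the single Bredon degree $a-1$ is the entire engine of the paper's proof of the $n=p^a$ case: once the Bredon complex has homology in only one degree, that homology is determined by the Euler characteristic of the complex, so one never needs to identify differentials or prove a collapse. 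The paper then identifies the Bredon chains explicitly via Proposition~\ref{prop: splitting} and the $p$-enhancement formalism of Definitions~\ref{definition: enhancement} and~\ref{definition: classification}, cancels the ``matching up / matching down'' enhancements against each other (Propositions~\ref{prop: matching} and~\ref{prop: cancel}), and computes the remaining ``pure'' contribution by an inclusion--exclusion over ordered partitions of $a$, recovering exactly the admissibility and excess conditions of the theorem.

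Your alternative for $n=p^a$ -- pass to $\AQ^{\FF_p}(\FF_p\oplus\Sigma^\ell\FF_p)$ via Theorem~\ref{surprise} and run a bar spectral sequence fed by Nakaoka--Dold--Milgram symmetric power homology -- is an honest different route, and it is in the spirit of Goerss's $p=2$ computation. The catch is precisely the step you defer: collapse of that bar spectral sequence. For homotopy orbits the Arone--Mahowald collapse has a specific mechanism tied to the Goodwillie tower; there is no off-the-shelf analogue for the strict quotient, and for $p$ odd this is exactly what was open. Without the concentration fact from ADL (and its Euler-characteristic consequence) you would be forced to understand the differentials in your bar spectral sequence directly, and ``parity bookkeeping'' is not a proof. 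So the gap is twofold: you misattribute a fixed-point-decomposition form to the ADL theorem, and you do not exploit the degree concentration it actually provides, which is what makes the paper's Euler-characteristic shortcut available and lets one avoid the collapse question entirely.
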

This theorem is proven in our final Section \ref{theproof}. 
We will now use  the   EHP sequence from Theorem \ref{EHPLIE} to express
the homology of  spaces $C_{\Lie}(S^\ell)$ with $\ell$ \textit{even} in terms of the homology groups of    $C_{\Lie}(S^j)$  with $j$ odd as computed in Theorem \ref{theorem: thehomology}.
We begin on the level of chains:
\begin{corollary}
For $j\geq 0$ even, there is a cofibre sequence of $\NN$-graded simplicial $R$-modules
\begin{diagram} 
\Sigma \tilde{C}_\bullet(C_{\Lie}(S^{2j+1}),R) & & \rTo^{\HH} & & \Sigma \tilde{C}_\bullet(C_{\Lie}(S^{j}),R)& & \rTo^{\EE}& &\tilde{C}_\bullet(C_{\Lie}(S^{j+1}),R) \\ 
\dTo & & & & \dTo & & &  & \dTo \\
 \Sigma \AQ^R(R\oplus \Sigma^{2j+1}) & & \rTo^{\HH}& & \Sigma \AQ^R(R\oplus \Sigma^j R) & &\rTo^{\EE}& & \AQ^R(R\oplus \Sigma^{j+1} R)\end{diagram}
\end{corollary}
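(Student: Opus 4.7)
The plan is to obtain the desired sequence by simply applying the reduced singular chains functor $\tilde{C}_\bullet(-,R)$ to the homotopy cofibre sequence of pointed $\NN$-graded spaces
\[
\Sigma\, C_{\Lie}(S^{2j+1}) \xrightarrow{\ H\ } \Sigma\, C_{\Lie}(S^{j}) \xrightarrow{\ E\ } C_{\Lie}(S^{j+1})
\]
provided by Theorem \ref{EHPLIE} (which, as noted in its proof, is valid for every even $j\ge 0$, with the case $j=0$ handled via Lemma \ref{lemma: contractible}). To conclude that chains send this to a cofibre sequence of $\NN$-graded simplicial $R$-modules, I would factor $\tilde{C}_\bullet(-,R)$ as the composition $\tilde{F}_R\circ \Sing_\bullet$ used in the definition of extension of scalars. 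The singular complex $\Sing_\bullet$ preserves homotopy colimits of (well-pointed) spaces and $\tilde F_R$ is the left adjoint in a Quillen adjunction, hence its left derived functor preserves homotopy cofibre sequences; since all spaces appearing in the EHP sequence are well-pointed (they are built from CW complexes), both functors compute their own derived versions on these inputs, so the resulting sequence of simplicial $R$-modules is again a homotopy cofibre sequence.

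To complete the statement, I would identify each term with an Andr\'e--Quillen object. Lemma \ref{strictlie} furnishes a natural equivalence
\[
C_{\Lie}(S^{m}) \ \simeq\ \AQ(S^{0}\vee \ovA{S^{m}}),
\]
and Lemma \ref{AQSquare} gives a natural equivalence $\tilde{C}_\bullet(\AQ(A),R)\simeq \AQ^{R}(R\otimes A)$ of $\NN$-graded simplicial $R$-modules. Combining these with the obvious identity $R\otimes (S^{0}\vee \ovA{S^{m}})\simeq R\oplus \Sigma^{m}R$ (reduced chains of a wedge of a basepoint and a sphere), we get, for each $m\ge 1$,
\[
\tilde{C}_\bullet(C_{\Lie}(S^{m}),R)\ \simeq\ \AQ^{R}(R\oplus \Sigma^{m}R),
\]
naturally in $m$. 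Applied to $m=2j+1,\ j,\ j+1$, this turns the top row of the target diagram into the bottom row, yielding the asserted cofibre sequence
\[
\Sigma\, \AQ^{R}(R\oplus \Sigma^{2j+1}R)\ \xrightarrow{H}\ \Sigma\, \AQ^{R}(R\oplus \Sigma^{j}R)\ \xrightarrow{E}\ \AQ^{R}(R\oplus \Sigma^{j+1}R).
\]

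The only mildly delicate point---which I would flag as the principal technical step---is the naturality of the identifications, i.e.\ checking that Lemma \ref{strictlie} and Lemma \ref{AQSquare} intertwine the maps $H$ and $E$ on the topological side with the corresponding maps induced by applying $\AQ^{R}$ to $R\otimes(-)$ applied to the monoid-space EHP sequence of Theorem \ref{EHPMonoid}. This follows from the fact that both equivalences are built from the Quillen adjunction $Q\dashv \ovA{(-)}$ and the monoidal extension-of-scalars functor $R\otimes(-)$, each of which is natural in the input; alternatively, one can argue directly by applying $\tilde{C}_\bullet(-,R)$ to the $\AQ$-sequence used inside the proof of Theorem \ref{EHPLIE} and invoking Lemma \ref{AQSquare} to commute chains past $\AQ$. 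No further combinatorial input is required.
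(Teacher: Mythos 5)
Your proof is correct and matches the approach the paper intends (the corollary is stated in the paper as an immediate consequence without further argument): apply $\tilde{C}_\bullet(-,R)$ to the space-level cofibre sequence of Theorem \ref{EHPLIE}, then use Lemma \ref{strictlie} and Lemma \ref{AQSquare} to identify each term with the corresponding $\AQ^R(R\oplus\Sigma^m R)$. Your observation that the naturality of these identifications should be checked, and your suggested alternative route through the monoid-level sequence of Theorem \ref{EHPMonoid} together with Proposition \ref{preserves}, are both sound and consistent with the paper's machinery.
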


We  analyse the effect of the EHP sequence on (co)homology with coefficients in  $R=\QQ$ and $R=\FF_p$:
\begin{theorem} \label{EHPhomology} Let $n\geq 0$ be   even.
For $R=\QQ$, or $R=\FF_p$ with $p$  odd, or $R=\FF_2$ and $n=0$, the sequence $\Sigma  C_{\Lie}(S^{2n+1})    \xrightarrow{\HH}   \Sigma C_{\Lie}(S^{n}) \xrightarrow{\EE} C_{\Lie}(S^{n+1})$ induces a short exact sequence of $\NN$-graded  vector spaces on (reduced)  \mbox{homology and cohomology.}

For $R=\FF_2$ and $n>0$, the extended EHP--sequence $\Sigma C_{\Lie}(S^{n})  \xrightarrow{\EE} C_{\Lie}(S^{n+1})  \xrightarrow{\PP} \Sigma^2 C_{\Lie}(S^{2n+1})$   induces a short exact sequence of $\NN$-graded  vector spaces on (reduced)  \mbox{homology and cohomology.}
\end{theorem}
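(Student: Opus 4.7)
The plan is to apply $\widetilde{\HH}_\ast(-,R)$ to the cofibre sequence of Theorem \ref{EHPLIE} and to show that the resulting connecting homomorphism vanishes, yielding the desired short exact sequence. I would begin by decomposing each of the three spaces by the natural $\NN$-grading (``weight'') implicit in Theorem \ref{EHPLIE}: under this grading, $\Sigma C_{\Lie}(S^{2n+1})$ is concentrated in even weights, and weight $d$ of the cofibration reads
$$\Sigma^2 |\Pi_{d/2}|^\diamond \mywedge{\Sigma_{d/2}} (S^{2n+1})^{\wedge d/2} \longrightarrow \Sigma^2 |\Pi_d|^\diamond \mywedge{\Sigma_d} (S^n)^{\wedge d} \longrightarrow \Sigma |\Pi_d|^\diamond \mywedge{\Sigma_d} (S^{n+1})^{\wedge d}.$$
Summing over weights, the global long exact sequence splits as a direct sum of the weight-wise ones.

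For $R=\QQ$, Lemma \ref{lemma: rational} forces $\widetilde{\HH}_\ast(\Sigma |\Pi_d|^\diamond \mywedge{\Sigma_d} (S^\ell)^{\wedge d}, \QQ) = 0$ whenever $\ell$ is odd and $d > 1$. Since both $n+1$ and $2n+1$ are odd, the third term supports rational homology only in weight $d=1$, while the first supports rational homology only in weight $d=2$. These weights are disjoint, so the connecting map vanishes in each weight. For $R=\FF_p$ with $p$ odd, Theorem \ref{theorem: thehomology} concentrates the third term's $\FF_p$-homology in the odd weights $\{p^a\}_{a\geq 0}$ and the first term's in the even weights $\{2p^b\}_{b\geq 0}$; disjointness again forces the connecting map to vanish in each weight, yielding the SES.

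For $R=\FF_2$ the outer terms share the weights $\{2^a\}_{a\geq 0}$, so weight separation fails, motivating the rotated cofibration $\Sigma C_{\Lie}(S^n) \xrightarrow{\EE} C_{\Lie}(S^{n+1}) \xrightarrow{\PP} \Sigma^2 C_{\Lie}(S^{2n+1})$. Since Theorem \ref{theorem: thehomology} admits \emph{any} $\ell \geq 1$ when $p=2$, it applies to all three terms simultaneously and provides explicit bases in each weight $2^a$: the basis of $\widetilde{\HH}_\ast(\Sigma C_{\Lie}(S^n),\FF_2)|_{2^a}$ consists of sequences $(i_1,\ldots,i_a)$ with $1 < i_j < 2i_{j+1}$ for $1 \leq j < a$ and $1 < i_a \leq n$ in degree $i_1+\cdots+i_a+n+a+1$; the basis of $\widetilde{\HH}_\ast(C_{\Lie}(S^{n+1}),\FF_2)|_{2^a}$ satisfies the same inequalities but with $i_a \leq n+1$, in degree $i_1+\cdots+i_a+(n+1)+a$; and the basis of $\widetilde{\HH}_\ast(\Sigma^2 C_{\Lie}(S^{2n+1}),\FF_2)|_{2^a}$ (for $a \geq 1$) consists of sequences $(j_1,\ldots,j_{a-1})$ with the analogous inequalities and $1 < j_{a-1} \leq 2n+1$, in degree $j_1+\cdots+j_{a-1}+2n+a+2$. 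Partitioning the middle basis according to whether $i_a \leq n$ or $i_a = n+1$ (in the latter case $i_{a-1} \leq 2n+1$ is automatic) produces a degree-preserving bijection onto the disjoint union of the other two bases. Hence in every bidegree
$$\dim \widetilde{\HH}_k(C_{\Lie}(S^{n+1}),\FF_2)\big|_{2^a} = \dim \widetilde{\HH}_k(\Sigma C_{\Lie}(S^n),\FF_2)\big|_{2^a} + \dim \widetilde{\HH}_k(\Sigma^2 C_{\Lie}(S^{2n+1}),\FF_2)\big|_{2^a}.$$
The long exact sequence of the rotated cofibration always yields the reverse inequality $\leq$, with equality if and only if both connecting maps vanish; the SES follows. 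The cohomology versions follow by dualising over the field $R$.

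The main obstacle will be the $\FF_2$ case: weight separation fails there, so I would need to combine the full combinatorial description from Theorem \ref{theorem: thehomology} with an exact-sequence squeeze — the equality of dimensions just established upgrades to vanishing of the LES connecting maps — rather than constructing any chain-level splitting directly.
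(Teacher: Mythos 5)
Your proposal is correct. For $R=\QQ$ and $R=\FF_p$ with $p$ odd, your argument (split the long exact sequence by weight, then observe the outer terms have disjoint weight support) is essentially the paper's argument, just phrased as ``the connecting map vanishes by weight separation'' rather than listing the three cases $d=p^k$, $d=2p^k$, and neither.

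For $R=\FF_2$, however, you take a genuinely different route. The paper deduces the short exactness on cohomology from Theorem \ref{surprise} together with a citation of Goerss's Corollary B.2 in \cite{goerss1990andre} (which establishes the analogous short exact sequence for the algebraic Andr\'e--Quillen cohomology of square-zero extensions over $\FF_2$), and then passes to homology by universal coefficients. You instead extract explicit weight-$2^a$ bases for all three terms directly from Theorem \ref{theorem: thehomology} (which, at $p=2$, applies for arbitrary $\ell\geq 1$), exhibit a degree-preserving bijection between the middle basis and the disjoint union of the outer two by splitting according to whether $i_a\leq n$ or $i_a=n+1$, and then run the standard ``dimension count forces the connecting maps to vanish'' squeeze in the long exact sequence. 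This is a self-contained argument staying entirely inside the paper's own computations and not invoking Goerss at all; the paper's approach is shorter but relies on an external reference. Both are valid, and your bijection is correct (the condition $1<i_{a-1}<2(n+1)$ coming from the middle term matches exactly $1<j_{a-1}\leq 2n+1$ in the outer term, and the degree formulas $i_1+\cdots+i_{a-1}+2(n+1)+a = j_1+\cdots+j_{a-1}+2n+a+2$ agree). One small remark: you should also note that equality of dimensions in every single bidegree forces \emph{both} connecting maps touching $\widetilde{\HH}_k$ to vanish, not just one — you gesture at this (``with equality if and only if both connecting maps vanish'') but the precise bookkeeping ($\dim B_k = \dim A_k + \dim C_k - \dim\operatorname{im}\partial_{k+1} - \dim\operatorname{im}\partial_k$) is worth stating if you write this up.
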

\begin{remark} The cohomology of these spaces are in fact shifted Lie algebras, and for $p=2$ and $n>0$, the map $P^\ast$ sends  one fundamental class to the self-bracket of another \mbox{fundamental class.}
\end{remark}
\begin{proof}[Proof of Theorem \ref{EHPhomology}]
Recall from Theorem \ref{EHPLIE} that the EHP--sequences decomposes into a wedge of cofibre sequences
$ \Sigma^2 |\Pi_{\frac{d}{2}}|^\diamond \mywedge{\Sigma_{\frac{d}{2}}} (S^{2n+1})^{\wedge \frac{d}{2}} 
\rightarrow  \Sigma^2 |\Pi_d|^\diamond \mywedge{\Sigma_d} (S^{n})^{\wedge d} \rightarrow  \Sigma |\Pi_d|^\diamond \mywedge{\Sigma_d} (S^{n+1})^{\wedge d}$.
It is enough to check the claim for each of them.
For $R=\QQ$, the claim follows from \mbox{Lemma \ref{lemma: rational}.}

Now take $R=\FF_p$ with $p$   odd. 
If $d=p^k$,   the left hand side is contractible and the claim follows. If $d=2p^k$, Theorem \ref{theorem: thehomology} shows that the right hand side has vanishing $\FF_p$ homology and cohomology; the claim follows.
If $d$ is not of this form, both sides have vanishing $\FF_p$-homology by Theorem \ref{theorem: thehomology}.

For  $R=\FF_2$, we can read of  the cohomological case for from Theorem \ref{surprise} and  Corollary B.$2.$ in Goerss' \cite{goerss1990andre}, and   the homological case follows by applying the universal coefficient theorem. \mbox{Alternatively,} we could also prove this claim by unravelling the effect of the map $E$ on homology in Theorem \ref{finalthecohomology}  below, and check that it induces an injection for $n>0$.
\end{proof}
 
\begin{theorem}\label{finalthecohomology} 
Fix integers $\ell_1,\dots , \ell_k\geq 0$ and consider the homology group $$\widetilde{\HH}_*\bigg(\bigvee_{d\geq 1} \Sigma |\Pi_d|^\diamond \mywedge{\Sigma_d} (S^{\ell_1}\vee \dots \vee S^{\ell_k})^{\wedge d},R\bigg).$$
This group is given by the algebraic Andr\'{e}-Quillen homology $\AQ_\ast^R(R\oplus (\Sigma^{\ell_1} R \oplus \ldots \oplus \Sigma^{\ell_k} R))$ of the trivial square zero extension of $R$ by generators $x_{ 1}, \dots, x_{ k}$ in simplicial  degrees $\ell_1,\dots, \ell_k$.\vspace{3pt}

For $R=\QQ$, the above homology group has a basis indexed by pairs $(e,w)$, where $w\in B_k(n_1,\ldots,n_k)$ is a Lyndon word and $e=0$ if $|w|:= \sum_i (1+\ell_i)n_i-1$ is odd and  $e\in \{0,1\}$ if $|w|$ is even. 
 The homological degree of $(e,w)$ is $(1+e)|w| + e$ and it lives in multi-weight $(n_1  (1+e) ,\ldots,n_k  (1+e))$.\vspace{3pt}

For $R=\FF_p$, the above homology group has a basis indexed by sequences $(i_1, \ldots, i_a, e,w)$, where \mbox{$w\in B_k(n_1,\ldots,n_k)$} is a Lyndon word and  $e$ lies in  $ \{0,\epsilon\}$. Here
 $\epsilon = 1$ if $p$ is odd and $|w|$ is even or if $|w|=0$. Otherwise, we have $\epsilon = 0$. 
 The sequence $i_1, \ldots, i_a$ consists of positive integers satisfying:
\begin{enumerate}[leftmargin=26pt]
\item Each $i_j$ is congruent to $0$ or $1$ modulo $2(p-1)$. \label{congruence}
\item For all $1\le j<a$, we have  $1<i_j< p i_{j+1}$.
\item We have $1<i_a\le (p-1)(1+e)|w|+\epsilon$.
\end{enumerate}
The homological degree of $(i_1, \ldots, i_a,e,w)$ is $i_1+\cdots+i_a+(1+e)|w| + e+a$ and it lives in multi-weight $(n_1 p^a (1+e) ,\ldots,n_k  p^a (1+e))$. Note that $a=0$ is allowed.
\end{theorem}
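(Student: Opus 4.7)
The plan is to assemble four ingredients already established in the paper: Theorem \ref{surprise} identifies the homology in question with $\AQ^R_\ast$; Corollary \ref{banterbanter} (the algebraic shadow of Corollary \ref{spheres-hm}) splits it as a direct sum indexed by Lyndon words, with $w$-summand $\widetilde{\HH}_\ast(C_{\Lie}(S^{|w|}),R)$; the EHP short exact sequence of Theorem \ref{EHPhomology} trades even-sphere computations for odd-sphere ones; and Theorem \ref{theorem: thehomology} computes the odd-sphere pieces explicitly. Invoking Theorem \ref{surprise} on $X=S^{\ell_1}\vee\cdots\vee S^{\ell_k}$ immediately yields the first claim of the theorem, the identification with $\AQ^R_\ast(R\oplus(\Sigma^{\ell_1}R\oplus\cdots\oplus\Sigma^{\ell_k}R))$. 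Applying Corollary \ref{banterbanter} further splits the computation as a direct sum over multi-indices $(n_1,\ldots,n_k)$ and Lyndon words $w\in B_k(n_1,\ldots,n_k)$, with the $w$-summand equal to $\widetilde{\HH}_\ast(C_{\Lie}(S^{|w|}),R)$ for $|w|=\sum_i(1+\ell_i)n_i-1$, and with internal weight $d$ of $C_{\Lie}(S^{|w|})$ contributing in total multi-weight $(n_1 d,\ldots,n_k d)$; the factor $(1+e)p^a$ from the theorem statement will emerge as precisely this $d$.

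For each Lyndon word I would split into cases. When $|w|$ is odd, or when $R=\FF_2$ with $|w|>0$, Theorem \ref{theorem: thehomology} applies directly with $\ell=|w|$, yielding contributions only in weight $d=p^a$ and matching the $e=0$ branch (the $e=\epsilon$ branch is vacuous since $\epsilon=0$). When $|w|$ is even with $R=\QQ$ or $R=\FF_p$ for $p$ odd, I would apply the EHP short exact sequence of Theorem \ref{EHPhomology} weightwise: it presents $\widetilde{\HH}_\ast(C_{\Lie}(S^{|w|}),R)$ at weight $d$ as an extension of $\widetilde{\HH}_\ast(C_{\Lie}(S^{|w|+1}),R)$ at weight $d$ by a shift of $\widetilde{\HH}_\ast(C_{\Lie}(S^{2|w|+1}),R)$ at weight $d/2$. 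Both $|w|+1$ and $2|w|+1$ are odd, so Theorem \ref{theorem: thehomology} (or Lemma \ref{lemma: rational} when $R=\QQ$) computes each term: the first supplies the $e=0$ classes with $d=p^a$, the second supplies the $e=1$ classes with $d=2p^a$. Tracking the single and double suspensions in the EHP sequence then produces the claimed homological degree $i_1+\cdots+i_a+(1+e)|w|+e+a$ and the multi-weight $(n_1p^a(1+e),\ldots,n_kp^a(1+e))$.

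The main combinatorial obstacle is verifying that the constraints on $(i_1,\ldots,i_a)$ inherited from Theorem \ref{theorem: thehomology}---namely congruence mod $2(p-1)$ and the strict upper bound $i_a<(p-1)\ell$ for an odd sphere $S^\ell$---package uniformly into the stated bound $i_a\le(p-1)(1+e)|w|+\epsilon$. Since $|w|$ is even, $(p-1)|w|\equiv 0\pmod{2(p-1)}$, so for $\ell=|w|+1$ the largest $i_a$ satisfying both the congruence and the strict inequality $i_a<(p-1)(|w|+1)=(p-1)|w|+(p-1)$ is exactly $(p-1)|w|+1$, and analogously for $\ell=2|w|+1$ the bound collapses to $2(p-1)|w|+1$; these are precisely the bounds stated with $\epsilon=1$. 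The remaining degenerate cases---$|w|=0$ (where $\epsilon=1$ even for $R=\FF_2$ and the statement reduces to a direct verification for the single relevant generator) and the rational case (where Lemma \ref{lemma: rational} collapses the EHP sequence to a rank-one extension in each even-$|w|$ piece, producing exactly the two classes $(e,w)$ with $e\in\{0,1\}$)---are routine bookkeeping.
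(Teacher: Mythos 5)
Your proposal is correct and takes essentially the same route as the paper's proof: Theorem~\ref{surprise} for the identification with $\AQ^R_\ast$, Corollary~\ref{banterbanter} to split over Lyndon words, the (split) short exact sequence of Theorem~\ref{EHPhomology} to trade even-sphere computations for odd-sphere ones, and Theorem~\ref{theorem: thehomology} (together with Lemma~\ref{lemma: rational} rationally and Kozlov's theorem for $|w|=0$) to finish. The congruence bookkeeping you carry out to repackage the strict bound $i_a<(p-1)\ell$ from the odd-sphere case into the uniform bound $i_a\le (p-1)(1+e)|w|+\epsilon$ matches the paper's argument exactly.
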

\begin{proof}

The identification with  Andr\'{e}-Quillen homology is a special case of Theorem \ref{surprise}.
 
We first examine the case $R=\QQ$. 
For $k=1$, and $\ell_1$ odd, this follows from Lemma \ref{lemma: rational}. For $\ell_1$ even, it is implied by Theorem \ref{EHPhomology}. For $k>1$, the rational statement follows from Corollary \ref{banterbanter}.\vspace{3pt}

Now let $R=\FF_p$  with $p$ odd. 
If $k=1$ and $\ell_1\geq 1$ is  {odd}, the statement follows from Theorem \ref{theorem: thehomology}.
If   $\ell_1$ is even (and thus $\epsilon = 1$),  the  short exact sequence asserted for $p$ odd in Theorem \ref{EHPhomology} gives a direct sum decomposition 
$\widetilde{\HH}_*(C_{\Lie}(S^{\ell}),\FF_p) \cong \widetilde{\HH}_{\ast+1}(C_{\Lie}(S^{\ell+1}),\FF_p) \oplus  \widetilde{\HH}_{\ast}(C_{\Lie}(S^{2\ell+1}),\FF_p)$.

By the ``odd case'', the first summand has a basis consisting of all sequences $(i_1,\ldots,i_a)$ satisfying $(1)$, $(2)$, and the condition $i_a \leq (p-1)(\ell_1+1)$. In light of $(1)$, the condition $i_a \leq (p-1)(\ell_1+1)$'' is equivalent to $i_a \leq (p-1)\ell_1 + 1$. We therefore obtain all sequences with $e=0$ appearing in the theorem. The homological degree of this sequence in \mbox{$(i_1+ \ldots + i_a +\ell_1+1+a)$} in $\widetilde{\HH}_{\ast}(C_{\Lie}(S^{\ell+1}),\FF_p)$ and hence goes to an element of dimension  $(i_1+ \ldots + i_a +\ell_1+a)$ in $\widetilde{\HH}_{\ast}(C_{\Lie}(S^{\ell+1}),\FF_p)$.

The second summand has a basis consisting of all sequences $(i_1,\ldots,i_a)$ satisfying $(1)$, $(2)$, and the condition $i_a \leq (p-1)(2\ell_1+1)$. Using $(1)$ again, this condition is equivalent to $i_a \leq (p-1)2\ell_1 + 1$, and we therefore exactly obtain all sequences with $e=1$. The homological degree of the  sequence $(i_1,\ldots,i_a)$ is indeed $i_1+\ldots+ i_a +2\ell_1+1+a$. If $k>1$, the\vspace{2pt} statement follows by \mbox{Corollary \ref{banterbanter}}.

For  $R=\FF_2$, the second statement for $k=1$ and $\ell_1\geq 1$ follows directly from Theorem \ref{theorem: thehomology}. For $\ell_1 = 0$, we simply use Corollary \ref{Kozlov}. We deduce the case $k>1$ from Corollary \ref{banterbanter}.\vspace{-1pt}
\end{proof}
For $p=2$, we obtain an independent proof of Goerss' computation (cf.\ \cite{goerss1990andre}) of the algebraic Andr\'{e}-Quillen homology of trivial square-zero extensions over $\FF_2$. Note that in this case, the proof of Theorem \ref{finalthecohomology}  does not make any use of the EHP sequence. \vspace{4pt}

We call a sequence $(i_1,\ldots,i_a,e,w)$ \textit{allowable} with respect to $\QQ$ or $\FF_p$  if it satisfies the conditions in the theorem above  (here we use the convention that $a=0$ in the rational case).\vspace{3pt}

This concludes the computation of the homology of strict Young orbits of the partition complex, and we can finally answer the question we started with by setting $\ell_1 = \ldots = \ell_k = 0$:
\begin{corollary}\label{strqu}
Let $n=n_1+\ldots+n_k$.

The vector space $\widetilde{\HH}_*(|\Pi_n|/_{\Sigma_{n_1} \times \ldots \times \Sigma_{n_k}},\QQ)$ has a basis consisting of all sequences $(e,w\in B(m_1,\ldots,m_k))$ which are  allowable with respect to $\QQ$ and satisfy $ m_i (1+e)= n_i$ for all $i$.  

The vector space $\widetilde{\HH}_*(|\Pi_n|/_{\Sigma_{n_1} \times \ldots \times \Sigma_{n_k}},\FF_p)$ has a basis consisting of all  $\FF_p$-allowable sequences    $(i_1,\ldots,i_a,e,w\in B(m_1,\ldots,m_k))$  satisfying 
$m_i p^a (1+e)  = n_i$ for all $i$.

The sequence $(i_1,\ldots,i_a,e,w)$ sits in homological degree  $i_1+\ldots +i_a +(1+e) |w| + e + a -2$.
\end{corollary}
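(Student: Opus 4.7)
The plan is to deduce Corollary \ref{strqu} directly from Theorem \ref{finalthecohomology} by specialising to the case $\ell_1 = \cdots = \ell_k = 0$ and extracting the appropriate multi-weight component. First I would observe that for $X_i = S^0$ (pointed at $0$), the smash power $(S^0 \vee \cdots \vee S^0)^{\wedge n}$ is the pointed $\Sigma_n$-set of functions $\{1,\ldots,n\} \to \{c_1,\ldots,c_k\}$, and its multi-weight $(n_1,\ldots,n_k)$ component is precisely the $\Sigma_n$-set $\Sigma_n/(\Sigma_{n_1} \times \cdots \times \Sigma_{n_k})$. Smashing with $|\Pi_n|^\diamond$ and taking $\Sigma_n$-orbits therefore recovers the strict quotient $|\Pi_n|^\diamond/_{\Sigma_{n_1} \times \cdots \times \Sigma_{n_k}}$, so the $(n_1,\ldots,n_k)$-component of $\bigvee_{d \geq 1} \Sigma |\Pi_d|^\diamond \wedge_{\Sigma_d} (S^0 \vee \cdots \vee S^0)^{\wedge d}$ is canonically identified with $\Sigma |\Pi_n|^\diamond/_{\Sigma_{n_1} \times \cdots \times \Sigma_{n_k}}$.

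Next I would account for the degree shift. Since the diagonal $\Sigma_n$-action fixes the cone points of $|\Pi_n|^\diamond = S^0 \ast |\Pi_n|$, the strict quotient decomposes as the unreduced suspension of $|\Pi_n|/_{\Sigma_{n_1} \times \cdots \times \Sigma_{n_k}}$, and for a well-pointed CW complex this is homotopy equivalent to its reduced suspension. Combined with the outer $\Sigma$ in Theorem \ref{finalthecohomology}, this yields
\[
\widetilde{\HH}_j\bigl(\Sigma |\Pi_n|^\diamond/_{\Sigma_{n_1} \times \cdots \times \Sigma_{n_k}},R\bigr) \;\cong\; \widetilde{\HH}_{j-2}\bigl(|\Pi_n|/_{\Sigma_{n_1} \times \cdots \times \Sigma_{n_k}},R\bigr),
\]
which explains the $-2$ shift between the two statements.

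Finally I would match the combinatorial indexing: when $\ell_i = 0$ the quantity $|w|$ of Theorem \ref{finalthecohomology} becomes $m_1 + \cdots + m_k - 1$, the multi-weight condition $n_i p^a(1+e) = $ (target weight) translates directly into $m_i p^a (1+e) = n_i$ in the notation of the corollary (where now $n_i$ denotes the target weight), and the homological degree $i_1 + \cdots + i_a + (1+e)|w| + e + a$ becomes $i_1 + \cdots + i_a + (1+e)|w| + e + a - 2$ after the shift. The allowability conditions on $(i_1,\ldots,i_a,e,w)$ are unchanged since they depend only on $|w|$ and $e$. I do not anticipate a genuine obstacle here: the result is essentially a bookkeeping exercise once Theorem \ref{finalthecohomology} is in place, and the only mildly delicate point is verifying that the wedge decomposition over multi-weights is $\Sigma_n$-equivariant in the way needed to identify the target piece with the claimed strict quotient.
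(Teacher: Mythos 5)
Your proof is correct and is essentially the same as the paper's: the paper derives the corollary from Theorem \ref{finalthecohomology} simply "by setting $\ell_1=\ldots=\ell_k=0$" and extracting the multi-weight $(n_1,\ldots,n_k)$ component, and your argument just spells out the details of that specialisation — identifying the weight component with $\Sigma|\Pi_n|^\diamond/_{\Sigma_{n_1}\times\cdots\times\Sigma_{n_k}}$, accounting for the two-fold degree shift from $\Sigma$ and $\diamond$, and translating the indexing of Lyndon words and multi-weights. The bookkeeping is all consistent with the paper's conventions, so there is no gap.
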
 
\begin{corollary}\label{lemma: torsion}
The integral homology groups $\widetilde{\HH}_*(|\Pi_n|/_{\Sigma_{n_1}\times\cdots\times\Sigma_{n_k}},\ZZ)$ have $p$-primary torsion only for primes that divide $\gcd(n_1, \ldots, n_k)$, and in particular 
satisfy $p\le \gcd(n_1, \ldots, n_k)$.
\end{corollary}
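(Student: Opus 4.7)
The strategy is to compare the explicit $\QQ$- and $\FF_p$-bases provided by Corollary~\ref{strqu} and show that they coincide whenever $p>\gcd(n_1,\ldots,n_k)$. Since $|\Pi_n|/_{\Sigma_{n_1}\times\cdots\times\Sigma_{n_k}}$ is a finite CW complex, its integral homology is finitely generated, and the universal coefficient theorem then guarantees absence of $p$-primary torsion as soon as $\dim_{\FF_p}\widetilde{\HH}_\ast(-,\FF_p)=\dim_{\QQ}\widetilde{\HH}_\ast(-,\QQ)$ in every degree.

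First I would fix a prime $p>\gcd(n_1,\ldots,n_k)$ and examine the $\FF_p$-basis of Corollary~\ref{strqu}, which consists of tuples $(i_1,\ldots,i_a,e,w\in B(m_1,\ldots,m_k))$ satisfying the allowability conditions together with $m_i p^a(1+e)=n_i$ for all $i$. The latter forces $p^a \mid \gcd(n_1,\ldots,n_k)$, so under the hypothesis $p>\gcd(n_i)$ one must have $a=0$. Hence every $\FF_p$-basis element is of the form $(e,w)$ with empty prefix and the constraints $(1),(2),(3)$ on the $i_j$'s are vacuous.

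Next I would verify that, with $a=0$, the remaining conditions on $(e,w)$ agree with those for the $\QQ$-basis, and that the homological degrees match. For $p$ odd the conditions on $e$ reduce to: $e=0$ if $|w|$ is odd, and $e\in\{0,1\}$ if $|w|$ is even (or $|w|=0$); this is precisely the $\QQ$-allowability rule. For $p=2$ the hypothesis $p>\gcd(n_i)$ forces $\gcd(n_i)=1$, which rules out $e=1$ altogether (since it would require $2\mid\gcd(n_i)$); in the $\QQ$-basis the same obstruction rules out $e=1$, so again the two bases coincide. The homological degrees reduce in both cases to $(1+e)|w|+e-2$, so the degree-by-degree Betti numbers agree.

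The main obstacle is the small case analysis involving the parameter $\epsilon$ when $p=2$ and the edge case $|w|=0$; one must check that the potential extra generator with $e=1$ is always excluded by the gcd hypothesis. Once this bookkeeping is settled, the universal coefficient sequence
\[
0\to \widetilde{\HH}_n(X,\ZZ)\otimes \FF_p \to \widetilde{\HH}_n(X,\FF_p) \to \Tor(\widetilde{\HH}_{n-1}(X,\ZZ),\FF_p)\to 0
\]
and the equality of total Betti numbers imply that neither $\widetilde{\HH}_n(X,\ZZ)\otimes\FF_p$ nor $\Tor(\widetilde{\HH}_{n-1}(X,\ZZ),\FF_p)$ can exceed the rational Betti numbers, forcing both $p$-torsion contributions to vanish and completing the argument.
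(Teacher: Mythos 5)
Your argument is correct. You compare the explicit $\QQ$- and $\FF_p$-bases from Corollary~\ref{strqu} directly and show they coincide degree-by-degree once $p>\gcd(n_1,\ldots,n_k)$ forces the $p$-prefix length $a$ to be zero, then close with the universal coefficient theorem for a finite CW complex. The bookkeeping is sound: $m_i p^a(1+e)=n_i$ does force $p^a\mid\gcd(n_i)$ and hence $a=0$; for $p$ odd the conditions on $e$ (via $\epsilon$) then literally reproduce the $\QQ$-allowability rule; and for $p=2$ the hypothesis gives $\gcd(n_i)=1$, which kills $e=1$ on both sides via the constraint $m_i(1+e)=n_i$.

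The paper's own proof is shorter and takes a slightly different route: it invokes the wedge decomposition of Proposition~\ref{proposition: orbits again} into summands $\Sigma^{-1}|\Pi_d|^\diamond\wedge_{\Sigma_d}(S^{\ell})^{\wedge d}$ with $d\mid\gcd(n_1,\ldots,n_k)$, disposes of $d=1,2$ by inspection, and for $d>2$, $p>d$ observes that $d$ is not a power of $p$ so that the reduced $\FF_p$-homology of each such summand vanishes by Theorem~\ref{theorem: thehomology}. Both strategies rest on the same underlying computation (the $\FF_p$-homology of weight-graded strict quotients), and both reduce "no $p$-torsion" to "mod-$p$ Betti numbers equal rational Betti numbers." What the paper's route buys is that you never have to unwind the full combinatorics of Corollary~\ref{strqu}: vanishing of $\FF_p$-homology in non-$p$-power weight is a cleaner statement than matching bases. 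What your route buys is that you avoid the (minor) case split on $\ell$ even vs. odd hidden in the paper's appeal to Theorem~\ref{theorem: thehomology} (whose hypotheses require $\ell$ odd for odd $p$; the paper implicitly needs the EHP sequence of Theorem~\ref{EHPLIE} or the more general Theorem~\ref{finalthecohomology} to handle $\ell$ even), since Corollary~\ref{strqu} already packages the even case.
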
\vspace{-3pt}\vspace{-3pt}
\begin{proof}
This follows  directly from Corollary \ref{strqu} and the universal coefficient theorem.
\end{proof}
\vspace{-8pt}
\subsection{Spherical Quotients} We will now provide an answer to the following elementary question:
\begin{question}\vspace{-3pt}
For which  $n=n_1+\ldots + n_k$ is the quotient  $|\Pi_n|/_{\Sigma_{n_1}\times\cdots\times\Sigma_{n_k}}$  a wedge sum of spheres?
\end{question}\vspace{-3pt}
For this, we will first carry out a detailed analysis  of the spaces 
\mbox{$\Sigma^{-1} |\Pi_p|^\diamond \mywedge{\Sigma_p} (S^{\ell})^{\wedge p} \cong  |\Pi_p|^\diamond \mywedge{\Sigma_p} S^{\ell p-1}$}  for $p$ a prime. We will show that for many values of $\ell$ and $p$, these do not form wedges of spheres. This will allow us to conclude that the above quotients do not form wedges of spheres for many values of $n_i$ by our wedge decomposition in Proposition \ref{proposition: orbits again}. For the remaining values of $n_i$,   we do indeed  find a wedge of spheres. 
We will begin by examining  the weight $p$ component $ |\Pi_p|^\diamond \mywedge{\Sigma_p} S^{\ell p-1}$:
\begin{proposition} \label{proposition: prime} Let $p$ be a prime and $\ell$ a positive integer.

For $p=2$, we have  $|\Pi_2|^\diamond \mywedge{\Sigma_2} S^{2\ell-1}\cong \Sigma^\ell\reals P^{\ell-1} $ (this is well-known).

For $p$ odd and $\ell$ odd, the quotient    $|\Pi_p|^\diamond  \mywedge{\Sigma_p} S^{\ell p-1}$ is equivalent to the $p$-localisation of  $S^{\ell p-1}/{_{\Sigma_p}}$.

For $p$ odd and $\ell$ even, the space $|\Pi_p|^\diamond  \mywedge{\Sigma_p} S^{\ell p-1}$ is equivalent to the $p$-localisation of the homotopy cofibre of the quotient map  $S^{\ell p-1}\rightarrow S^{\ell p-1}/_{\Sigma_p}$.\vspace{6pt}
\end{proposition}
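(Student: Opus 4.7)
The $p=2$ statement is classical: $|\Pi_2|^\diamond = S^0$, and the identification of $S^{2\ell-1}/\Sigma_2$ with $\Sigma^\ell\reals P^{\ell-1}$ for the natural swap action goes back to Nakaoka. So the entire content lies in the $p$ odd case, for which my strategy is to exploit the $\Sigma_p$-equivariant cofibre sequence
\[
S^0 \hookrightarrow |\Pi_p|^\diamond \longrightarrow \Sigma |\Pi_p|,
\]
arising from the inclusion of the two cone points of the unreduced suspension---these are precisely the $\Sigma_p$-fixed points of $|\Pi_p|^\diamond$. Smashing with the representation sphere $S^{\ell p-1}$ and passing to strict $\Sigma_p$-orbits produces
\[
S^{\ell p-1}/\Sigma_p \xrightarrow{\ \iota\ } |\Pi_p|^\diamond \mywedge{\Sigma_p} S^{\ell p-1} \longrightarrow \Sigma|\Pi_p| \mywedge{\Sigma_p} S^{\ell p-1},
\]
so the proposition is equivalent to understanding the rightmost term $p$-locally.

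The structural input is that $|\Pi_p|$ is a $p$-free $\Sigma_p$-space: any Sylow $p$-subgroup of $\Sigma_p$ is conjugate to $C_p$, which acts transitively on $\{1,\ldots,p\}$, so by Lemma~\ref{lemma: transitive} the only $C_p$-invariant partitions are $\hat 0, \hat 1$, and hence every isotropy group in $\Pi_p$ has order coprime to $p$. Consequently $\Sigma|\Pi_p| \mywedge{\Sigma_p} S^{\ell p-1} \simeq_{(p)} \Sigma|\Pi_p| \mywedge{\hobased\Sigma_p} S^{\ell p-1}$, and the associated homotopy-orbit spectral sequence has its $E_2$-page concentrated on the single line $t = \ell p + p - 3$ with value $H_s(\Sigma_p,\Lierep_p \otimes \mathrm{sign}^{\ell+1}; \FF_p)$. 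For $\ell$ odd the sign twist is trivial, and I plan to prove the cofibre is $\FF_p$-acyclic by comparing total $\FF_p$-dimensions: Theorem~\ref{theorem: thehomology} counts an explicit basis of admissible sequences $(i_1)$ with $1 < i_1 \le (p-1)\ell$ and $i_1 \equiv 0,1 \pmod{2(p-1)}$ for $\widetilde{H}_*(|\Pi_p|^\diamond \mywedge{\Sigma_p} S^{\ell p-1};\FF_p)$, and this basis matches one-to-one with the classical Dyer-Lashof-Nakaoka-Dold-Milgram basis of $\widetilde{H}_*(S^{\ell p-1}/\Sigma_p;\FF_p)$, each index $i_1$ recording the operation $\beta^\epsilon Q^{(i_1-\epsilon)/(2(p-1))}$ applied to the fundamental class $\iota_\ell$. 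Since $\iota$ is a genuine point-set inclusion carrying the bottom cell to the bottom cell, matching of dimensions degree-by-degree forces $\iota$ to be an $\FF_p$-homology isomorphism, and simple-connectedness of both sides (for $\ell p - 1 \ge 2$) then upgrades this to a $p$-local equivalence of spaces.

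The $\ell$-even case reduces to the odd case via Theorem~\ref{EHPLIE}: at weight $d = p$ odd the ``$d/2$''-term in the EHP cofibre sequence vanishes by convention, yielding an equivalence $\Sigma |\Pi_p|^\diamond \mywedge{\Sigma_p} (S^{\ell-1})^{\wedge p} \xrightarrow{\simeq} |\Pi_p|^\diamond \mywedge{\Sigma_p} (S^\ell)^{\wedge p}$ whenever $\ell$ is odd. Applied to the odd value $\ell+1$, this identifies $|\Pi_p|^\diamond \mywedge{\Sigma_p} S^{\ell p-1}$ $p$-locally with $\Sigma^{-1}(S^{(\ell+1)p-1}/\Sigma_p)$; the classical transfer cofibre sequence relating consecutive strict symmetric powers then realises this desuspension as $\cofib(S^{\ell p-1} \to S^{\ell p-1}/\Sigma_p)$. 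The main obstacle is the dimension-matching argument of the second paragraph: to make it fully rigorous one must establish an explicit correspondence between the basis of Theorem~\ref{theorem: thehomology} (constructed inductively through the bar spectral sequence coming from our EHP sequence for commutative monoid spaces) and the Dyer-Lashof classes on $S^{\ell p-1}/\Sigma_p$, tracking how the basepoint inclusion $\iota$ interacts with the symmetric-power structure on trivial square-zero extensions at the level of chains.
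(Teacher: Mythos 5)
Your overall framework is the same Puppe cofibre sequence the paper uses, just rotated one step: the paper works with $|\Pi_p|_+\mywedge{\Sigma_p}S^{\ell p-1}\to S^{\ell p-1}/_{\Sigma_p}\to |\Pi_p|^\diamond\mywedge{\Sigma_p}S^{\ell p-1}$, while you rotate to place $S^{\ell p-1}/_{\Sigma_p}$ as the fibre term. (Minor correction: the cofibre of $S^0\hookrightarrow |\Pi_p|^\diamond$ is $\Sigma(|\Pi_p|_+)$, not $\Sigma|\Pi_p|$; this extra $S^1$ disappears on $\FF_p$-homology only because of the subsequent localisation, so you should carry it along.) The key strategic divergence is in how you plan to prove that the third term is $p$-locally trivial for $\ell$ odd, and here your argument has a genuine gap.

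Matching $\FF_p$-dimensions degree by degree between $S^{\ell p-1}/_{\Sigma_p}$ and $|\Pi_p|^\diamond\mywedge{\Sigma_p}S^{\ell p-1}$ does \emph{not} force $\iota$ to be a homology isomorphism, and hence does not force the cofibre to be acyclic. In a cofibre sequence $A\to B\to C$ with $\dim_{\FF_p}\widetilde{\HH}_n(A)=\dim_{\FF_p}\widetilde{\HH}_n(B)$ for all $n$, the map can still have a kernel in degree $n$ whose contribution to $\widetilde{\HH}_*(C)$ is compensated by a cokernel in degree $n+1$; one needs surjectivity (or injectivity) of $\iota_*$, not merely equality of dimensions, and ``the bottom cell goes to the bottom cell'' establishes this in only one degree. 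You acknowledge this yourself (``to make it fully rigorous one must establish an explicit correspondence\ldots tracking how the basepoint inclusion $\iota$ interacts with the symmetric-power structure\ldots''), but this is precisely the hard part: it would amount to showing $\iota_*$ carries each Dyer--Lashof class of $S^{\ell p-1}/_{\Sigma_p}$ to a nonzero class, which is a substantive computation not supplied by Theorem~\ref{theorem: thehomology}. The paper avoids this detour entirely: it observes that every non-basepoint isotropy group of the $\Sigma_p$-space $|\Pi_p|_+\wedge S^{\ell p-1}$ is a non-transitive subgroup $G\subset\Sigma_p$ (so $p\nmid |G|$, making strict and homotopy orbits agree mod $p$) which moreover contains a transposition, and for $\ell$ odd the transposition acts by $-1$ on $\widetilde{\HH}_{\ell p-1}(S^{\ell p-1};\FF_p)$, which kills $\widetilde{\HH}_*\bigl((S^{\ell p-1})_{\hobased G};\FF_p\bigr)$ outright. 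This is a one-step argument requiring no basis counting. For $\ell$ even the paper uses the Thom isomorphism (with the same isotropy-group observation) to show the point-inclusion $S^{\ell p-1}\to |\Pi_p|_+\mywedge{\Sigma_p}S^{\ell p-1}$ is a mod-$p$ homology isomorphism, which is cleaner than your reduction via Theorem~\ref{EHPLIE}: the final step of your even case, identifying the formal desuspension $\Sigma^{-1}\bigl(S^{(\ell+1)p-1}/_{\Sigma_p}\bigr)$ with $\cofib\bigl(S^{\ell p-1}\to S^{\ell p-1}/_{\Sigma_p}\bigr)$ via ``the classical transfer cofibre sequence,'' is asserted rather than proved, and the claimed desuspension of the EHP equivalence itself deserves a word (a homotopy equivalence $\Sigma^2 A\simeq\Sigma B$ between simply connected finite complexes does give $\Sigma A\simeq B$ as spaces, but this needs to be said). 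In short, both ideas you deploy are interesting but your write-up leaves the crux -- surjectivity/injectivity of $\iota_*$ -- unproved, whereas the paper's isotropy-group argument settles both parities directly.
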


For $p$ an odd prime, the statement of this Lemma is subtle. In order to prove it, we will first analyse the space $ |\Pi_p|^\diamond \mywedge{\Sigma_p} S^{\ell p-1}$ in terms of homotopy coinvariants. 

For this, recall that there is a $\Sigma_p$-equivariant homeomorphism $S^{\ell p-1}\cong S^{\ell-1} \Smash (S^{p-1})^{\wedge \ell}$, where $\Sigma_p$ acts on $S^{p-1}$ via the reduced standard representation, and acts trivially on $S^{\ell-1}$. The fixed-points space $(S^{\ell p-1})^{\Sigma_p}$ is homeomorphic to $S^{\ell-1}$. Let $S^{\ell-1}\hookrightarrow S^{\ell p-1}$ be the inclusion of fixed points.
The key to analysing $ |\Pi_p|^\diamond\mywedge{\Sigma_p} S^{\ell p-1}\displaystyle $ for a general prime $p$ is the following proposition:
\begin{proposition}\label{proposition: primal pushout}
The following is a homotopy pushout square 
\begin{diagram}\displaystyle
|\Pi_p|^\diamond  \mywedge{\hobased\Sigma_p} S^{\ell -1}& &  \rTo & & \displaystyle|\Pi_p|^\diamond  \mywedge{\Sigma_p}S^{\ell -1} \\
\dTo & &  & &  \dTo \\
 \displaystyle|\Pi_p|^\diamond \mywedge{\hobased\Sigma_p} S^{\ell p-1}& &  \rTo &  & |\Pi_p|^\diamond\displaystyle\mywedge{\Sigma_p} S^{\ell p-1} 
\end{diagram}
\end{proposition}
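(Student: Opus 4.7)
The plan is to reformulate the pushout assertion in terms of the comparison map from homotopy to strict orbits of a single $\Sigma_p$-space. Both functors $(-)_{\hobased\Sigma_p}$ and $(-)/\Sigma_p$ from pointed $\Sigma_p$-spaces to pointed spaces preserve cofibre sequences---the former as a homotopy colimit, the latter as a left adjoint to the trivial-action functor---so the square in question is a homotopy pushout if and only if the induced map on homotopy cofibres of the vertical arrows is an equivalence. Setting $Q := S^{\ell p-1}/S^{\ell-1}$ for the cofibre of the $\Sigma_p$-equivariant fixed-point inclusion in pointed $\Sigma_p$-spaces and $X := |\Pi_p|^\diamond \wedge Q$, the problem reduces to showing that the natural comparison $X_{\hobased\Sigma_p} \to X/\Sigma_p$ is an equivalence of pointed spaces.

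The next step is to invoke the standard isotropy-separation cofibre sequence $E\Sigma_{p+} \to S^0 \to \widetilde{E\Sigma_p}$ of pointed $\Sigma_p$-spaces. Smashing with $X$ and then taking strict $\Sigma_p$-orbits yields a cofibre sequence
\[
X_{\hobased\Sigma_p} \longrightarrow X/\Sigma_p \longrightarrow \bigl(\widetilde{E\Sigma_p} \wedge X\bigr)/\Sigma_p,
\]
using the identification $(E\Sigma_{p+} \wedge X)/\Sigma_p \cong X_{\hobased\Sigma_p}$ available from freeness of the $E\Sigma_p$-action. Since a map in pointed spaces with contractible cofibre is a weak equivalence, it therefore suffices to prove that $\widetilde{E\Sigma_p} \wedge X$ is $\Sigma_p$-equivariantly contractible, which by the equivariant Whitehead theorem amounts to contractibility of its $H$-fixed-point subspace for each $H \leq \Sigma_p$. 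The trivial $H$ case is immediate because $\widetilde{E\Sigma_p}$ is non-equivariantly contractible; for nontrivial $H$ a direct computation of the cofibre on fixed points gives $(\widetilde{E\Sigma_p})^H \cong S^0$, whence $(\widetilde{E\Sigma_p} \wedge X)^H \cong X^H = (|\Pi_p|^\diamond)^H \wedge Q^H$.

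Everything thus reduces to verifying that $X^H$ is contractible for every proper nontrivial $H \leq \Sigma_p$, and it is natural to distinguish two cases. If $H$ acts transitively on $\{1,\ldots,p\}$, then $H$ contains a $p$-Sylow subgroup $C_p$, and since the reduced standard representation $V$ satisfies $V^{C_p} = 0$ one finds $(S^{\ell p-1})^H = S^{\ell-1}$ and hence $Q^H = S^{\ell-1}/S^{\ell-1} = *$, forcing $X^H = *$. If $H$ is non-transitive then, because $p$ is prime, the action of $H$ is forced to be non-isotypical: more than one orbit of common size would require orbits either of size $p$ (giving transitivity) or of size $1$ (giving triviality). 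Lemma~\ref{easy} then shows $|\Pi_p|^H$ is collapsible, so $(|\Pi_p|^\diamond)^H$, being its unreduced suspension, is contractible and so is $X^H$. The main obstacle is identifying the correct reformulation via the isotropy-separation sequence; once that is in place, the collapsibility theorem (Lemma~\ref{easy}) combines cleanly with the elementary fact $V^{C_p} = 0$ to exhaust every proper nontrivial isotropy precisely because $p$ is prime.
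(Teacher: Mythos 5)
Your proof is correct, but it takes a genuinely different route from the paper's. After the common reduction to comparing homotopy and strict orbits of the cofibre, the paper works with the splitting $S^{\ell p-1}/S^{\ell-1} \cong S^\ell \wedge (S^{\ell(p-1)-1})_+$, decomposes the $\Sigma_p$-space $(S^{\ell(p-1)-1})_+$ as a homotopy colimit of orbits $\Sigma_p/G$, and argues orbit-by-orbit that the comparison map is an equivalence because each isotropy group $G$ lies in a proper Young subgroup, where the action on $\Pi_p$ is essentially pointed-free. You instead package the problem via the isotropy-separation cofibre sequence $E\Sigma_{p+} \to S^0 \to \widetilde{E\Sigma_p}$ and reduce to the $\Sigma_p$-equivariant contractibility of $\widetilde{E\Sigma_p} \wedge X$, which you check on $H$-fixed points directly; this trades the paper's cellular/Young-subgroup decomposition of the auxiliary sphere for a more standard piece of equivariant machinery, and it invokes Lemma~\ref{easy} (non-isotypical $\Rightarrow$ collapsible fixed points) in place of the more heavyweight appeal to Theorem~\ref{main}. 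One small point of hygiene: the intermediate claim ``a map of pointed spaces with contractible cofibre is a weak equivalence'' is not true unconditionally, so it is cleaner to bypass the cofibre entirely and observe that once $\widetilde{E\Sigma_p}\wedge X$ is equivariantly contractible, the map $E\Sigma_{p+}\wedge X \to X$ is already an equivariant weak equivalence of $\Sigma_p$-CW complexes (checked on $H$-fixed points exactly as you do), and hence a $\Sigma_p$-homotopy equivalence; applying strict orbits then yields the desired equivalence $X_{\hobased\Sigma_p} \to X/\Sigma_p$ with no connectivity hypothesis. Your two-case split on isotropy subgroups (transitive forces $Q^H = *$ via the $p$-Sylow $C_p$; non-transitive and non-trivial forces non-isotypical, hence collapsible $|\Pi_p|^H$) is exactly the right dichotomy and relies only on $p$ being prime, just as the paper's argument does.
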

\begin{proof}
The homotopy cofibre of the map $S^{\ell-1}\rightarrow S^{\ell p-1}$ is $S^{\ell }\Smash (S^{\ell(p-1)-1})_+$, where $\Sigma_p$ acts through the reduced regular representation on $S^{p-1}$. Taking homotopy cofibres of the vertical maps gives
\[
|\Pi_p|^\diamond  \mywedge{\hobased\Sigma_p} (S^{\ell}\Smash(S^{\ell (p-1)-1})_+)  \rightarrow |\Pi_p|^\diamond
\mywedge{\Sigma_p} (S^{\ell }\mywedge{} (S^{\ell(p-1)-1})_+ )
\]
We want to prove that this map is a homotopy equivalence. For this, it is enough to prove that the following map is an equivalence $
 |\Pi_p|^\diamond \mywedge{\hobased\Sigma_p}   (S^{\ell (p-1)-1})_+   \rightarrow |\Pi_p|^\diamond   \mywedge{\Sigma_p} (S^{\ell(p-1)-1})_+ .$
 
The $\Sigma_p$-space $S^{\ell (p-1)-1}$ can be written as a homotopy colimit of sets of the form $\Sigma_p/G$, where $G$ is an isotropy group of $S^{\ell (p-1)-1}$. Therefore, it is enough to prove that for every isotropy group $G$ of $S^{\ell (p-1)-1}$, the map $|\Pi_p|^\diamond  \mywedge{\hobased\Sigma_p} (\Sigma_p/G)_+ \rightarrow    |\Pi_p|^\diamond  \mywedge{\Sigma_p} (\Sigma_p/G)_+$ is an equivalence. \vspace{2pt}

Every isotropy group of $S^{\ell (p-1)-1}$ is contained in a group of the form $\Sigma_{p_1}\times \cdots\times \Sigma_{p_k}$, where $p_1+\cdots+p_k=p$, $k>1$,  and  $p_i>0$. 
For $\ell=1$, $S^{p-2}$ is the boundary of the $(p-1)$-simplex, and it is easy to see that the isotropy groups have this form. For $\ell>1$, we can argue by induction on $\ell$. \vspace{2pt}

Since $p$ is a prime, we have $\gcd(p_1, \ldots, p_k)=1$ for every decomposition  $p=p_1+\ldots+p_k$ as above.  Theorem \ref{main} implies that the action of $\Sigma_{p_1}\times \cdots\times \Sigma_{p_k}$ on $\Pi_p$ is essentially pointed-free, in the sense that for every subgroup $H\subset \Sigma_{p_1}\times \cdots\times \Sigma_{p_k}$, the fixed points space $\Pi_p^H$ is contractible. Hence the action of $G$ on $\Pi_p$ is essentially pointed-free, which shows that the map $ |\Pi_p|^\diamond \displaystyle \mywedge{\hobased\Sigma_p} (\Sigma_p/G)_+  \rightarrow    |\Pi_p|^\diamond \mywedge{\Sigma_p} (\Sigma_p/G)_+ $, which is the same as   $  |\Pi_p|^\diamond/_{\hobased G} \rightarrow  |\Pi_p|^\diamond/_G$, is an equivalence.\vspace{-2pt}
\end{proof}
\begin{corollary}\label{corollary: odd cofibration}
If $p$ is an odd prime, then there is a homotopy cofibration sequence\vspace{-1pt}
\[
 |\Pi_p|^\diamond\mywedge{\hobased\Sigma_p} S^{\ell -1}\rightarrow|\Pi_p|^\diamond  \mywedge{\hobased \Sigma_p}  S^{\ell p-1} \rightarrow |\Pi_p|^\diamond  \mywedge{{\Sigma_p}} S^{\ell p-1}.
\]
\end{corollary}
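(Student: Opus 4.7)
The idea is to read the desired cofibre sequence directly off the homotopy pushout square produced by Proposition \ref{proposition: primal pushout}. Recall that a homotopy pushout square
\[
\begin{array}{ccc}
A & \longrightarrow & B \\
\downarrow & & \downarrow \\
C & \longrightarrow & D
\end{array}
\]
in which $B$ is contractible automatically exhibits $A\to C\to D$ as a cofibre sequence. So my plan is to apply this principle to the square of Proposition~\ref{proposition: primal pushout}, whose top-right entry is $|\Pi_p|^\diamond\mywedge{\Sigma_p} S^{\ell-1}$, and the entire task reduces to showing that this entry is contractible whenever $p$ is odd.

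To do this I would first observe that $S^{\ell-1}$ appears here as the fixed points $(S^{\ell p-1})^{\Sigma_p}$ of the reduced standard representation sphere, with $\Sigma_p$ acting trivially. Because the action is trivial, the identification
\[
|\Pi_p|^\diamond\mywedge{\Sigma_p} S^{\ell-1}\ \cong\ \bigl(|\Pi_p|^\diamond/_{\Sigma_p}\bigr)\wedge S^{\ell-1}
\]
is immediate. Next I would note that the $\Sigma_p$-action on the $S^0$-factor of $|\Pi_p|^\diamond=S^0\ast |\Pi_p|$ is also trivial, so
\[
|\Pi_p|^\diamond/_{\Sigma_p}\ \cong\ S^0\ast\bigl(|\Pi_p|/_{\Sigma_p}\bigr),
\]
the unreduced suspension of the strict quotient $|\Pi_p|/_{\Sigma_p}$. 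For $p\geq 3$, Kozlov's theorem (Theorem~$4.3$ of \cite{kozlov2000collapsibility}, already cited at the beginning of the Strict Orbits subsection) guarantees that $|\Pi_p|/_{\Sigma_p}$ is contractible; its unreduced suspension is therefore also contractible, and smashing with $S^{\ell-1}$ keeps it so. This is the only nontrivial input and really the sole obstacle, but it is already in place.

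With $|\Pi_p|^\diamond\mywedge{\Sigma_p}S^{\ell-1}\simeq \ast$ established, the homotopy pushout square of Proposition~\ref{proposition: primal pushout} collapses to identify the bottom-right corner $|\Pi_p|^\diamond\mywedge{\Sigma_p}S^{\ell p-1}$ with the homotopy cofibre of the left vertical map $|\Pi_p|^\diamond\mywedge{\hobased\Sigma_p}S^{\ell-1}\to |\Pi_p|^\diamond\mywedge{\hobased\Sigma_p}S^{\ell p-1}$, yielding exactly the claimed cofibration sequence.
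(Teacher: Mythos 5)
Your proposal is correct and follows essentially the same route as the paper: both read the cofibre sequence off the homotopy pushout square of Proposition~\ref{proposition: primal pushout} by noting that its top-right corner $|\Pi_p|^\diamond\mywedge{\Sigma_p}S^{\ell-1}\cong(|\Pi_p|^\diamond/_{\Sigma_p})\wedge S^{\ell-1}$ is contractible via Kozlov's theorem. The only difference is that you spell out the intermediate identification $|\Pi_p|^\diamond/_{\Sigma_p}\cong S^0\ast(|\Pi_p|/_{\Sigma_p})$, which the paper leaves implicit.
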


\begin{proof}
Consider the upper right corner of the square in Proposition \ref{proposition: primal pushout}. Since $p>2$, we have $\displaystyle |\Pi_p|^\diamond  \mywedge{\Sigma_p} S^{\ell -1}\cong \left(|\Pi_p|^\diamond /_{\Sigma_p}\right)\Smash  S^{\ell -1}\simeq \ast$ by Corollary \ref{Kozlov}. \vspace{-5pt}
\end{proof}

\begin{corollary}\label{corollary: p-local}
If $p$ is an odd prime and $\ell \geq 1$, then the space ${|\Pi_p|^\diamond \mywedge{\Sigma_p}}S^{\ell p-1} $ is $p$-local.\vspace{-5pt}
\end{corollary}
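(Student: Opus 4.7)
The strategy is to reduce the $p$-locality of $C := |\Pi_p|^\diamond \mywedge{\Sigma_p} S^{\ell p-1}$ to a homological vanishing that can be read off from Theorem~\ref{finalthecohomology}. The double suspension $\Sigma^2 C \simeq \Sigma|\Pi_p|^\diamond \mywedge{\Sigma_p}(S^\ell)^{\wedge p}$ is simply connected and coincides precisely with the multi-weight $p$ summand of $\bigvee_d \Sigma|\Pi_d|^\diamond \mywedge{\Sigma_d}(S^\ell)^{\wedge d}$. The space $C$ itself is simply connected, by Armstrong's theorem applied to the (at least $2$-connected) space $|\Pi_p|^\diamond \wedge S^{\ell p-1}$ whose $\Sigma_p$-action fixes the basepoint. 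Hence it suffices to show that the multi-weight $p$ part of $\widetilde H_\ast(\bigvee_d \Sigma|\Pi_d|^\diamond \mywedge{\Sigma_d}(S^\ell)^{\wedge d};R)$ vanishes for $R = \QQ$ and for $R = \FF_q$ at every prime $q \neq p$; the universal coefficient theorem then forces $\widetilde H_\ast(C;\ZZ)$ to be $p$-primary torsion, i.e.\ a $\ZZ_{(p)}$-module, so that $C$ is $p$-local.

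The required vanishing is a direct combinatorial check. Theorem~\ref{finalthecohomology} applied with $k=1$ and $\ell_1 = \ell$ parametrises a basis by allowable sequences $(i_1,\dots,i_a,e,w)$, but there is only a single Lyndon word in one letter, namely $w = c_1$, which forces $n_1 = 1$ and $|w| = \ell$. The multi-weight formula then collapses to $q^a(1+e)$ over $\FF_q$ or simply $1+e$ over $\QQ$, where $e \in \{0,\epsilon\}$ with $\epsilon \in \{0,1\}$. For $p$ an odd prime and $q \neq p$ prime, the equation $q^a(1+e) = p$ admits no solution with $e \in \{0,1\}$: the choice $e = 0$ forces $q^a = p$ and hence $q = p$, which is excluded; the choice $e = 1$ forces $2q^a = p$, contradicting the oddness of $p$. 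The rational equation $1+e = p$ likewise fails since $1+e \le 2 < p$.

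Consequently no allowable sequences contribute at multi-weight $p$, so $\widetilde H_\ast(\Sigma^2 C; \QQ) = 0$ and $\widetilde H_\ast(\Sigma^2 C; \FF_q) = 0$ for every prime $q \neq p$. Desuspending twice (which preserves homology vanishing) gives the same vanishing for $C$, and the reduction above concludes the argument.

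The main obstacle is purely bookkeeping: one must carefully unpack the parameters $\epsilon$ and the multi-weight formula of Theorem~\ref{finalthecohomology} in the single-letter special case and verify that no candidate basis element slips through. A viable alternative route, avoiding Theorem~\ref{finalthecohomology}, would establish the $\ell$ odd case from Theorem~\ref{theorem: thehomology} (whose ``$n$ a power of $q$'' obstruction already kills the weight $p$ component for $q \neq p$) and then deduce the $\ell$ even case via the EHP cofibration of Theorem~\ref{EHPLIE}, which for the odd prime $d = p$ collapses to an equivalence $\Sigma C_{\ell\text{ even}} \simeq C_{\ell+1\text{ odd}}$ transferring $p$-locality across parities.
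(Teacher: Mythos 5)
Your proof is correct, but it takes a genuinely different route from the paper. The paper first establishes the cofibre sequence of Corollary~\ref{corollary: odd cofibration},
\[
|\Pi_p|^\diamond \mywedge{\hobased\Sigma_p} S^{\ell -1}\rightarrow|\Pi_p|^\diamond  \mywedge{\hobased \Sigma_p}  S^{\ell p-1} \rightarrow |\Pi_p|^\diamond  \mywedge{{\Sigma_p}} S^{\ell p-1},
\]
and then shows that each of the two homotopy orbit spaces on the left is simply connected with $p$-torsion integral homology. The key mechanism there is a transfer argument: the composite $\widetilde{\HH}_*(- _{\hobased \Sigma_p}) \to \widetilde{\HH}_*(- _{\hobased \Sigma_{p-1}}) \to \widetilde{\HH}_*(- _{\hobased \Sigma_p})$ is multiplication by $p$, yet the intermediate group is concentrated in one degree (by the branching rule) while the end group is all torsion (by Lemma~\ref{lemma: rational}), forcing the composite to vanish and hence the homology to be $p$-torsion. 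This argument is elementary and does not rely on the full homology computation of Theorem~\ref{finalthecohomology}.

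You instead read the claim off Theorem~\ref{finalthecohomology} directly, checking that the multi-weight-$p$ homology of $\Sigma^2 C$ vanishes over $\QQ$ and over $\FF_q$ for every prime $q \neq p$, and then combining simple connectedness of $C$ (via Armstrong's theorem, applied to the simply connected source $|\Pi_p|^\diamond\wedge S^{\ell p - 1}$ with a fixed basepoint) with the universal coefficient theorem. Since Theorem~\ref{finalthecohomology} is established earlier in the paper, there is no circularity. Your combinatorial check is right: with $k=1$ the only Lyndon word has $n_1 = 1$, $|w| = \ell$, and the weight $q^a(1+e)$ (resp.\ $1+e$ over $\QQ$) cannot equal the odd prime $p$ when $q\neq p$ and $e\in\{0,1\}$. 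Your approach buys a very short reduction, at the cost of leaning on the global computation of Theorem~\ref{finalthecohomology}; the paper's transfer argument is longer in the small but is self-contained modulo Corollary~\ref{corollary: odd cofibration} and Lemma~\ref{lemma: rational}. One small imprecision in your write-up: ``$p$-primary torsion'' and ``$\ZZ_{(p)}$-module'' are not synonymous; what you actually establish is that $\widetilde{\HH}_*(C;\ZZ)$ is a finite abelian $p$-group in each degree (finitely generated, rationally trivial, $q$-torsion free for $q\neq p$), which together with simple connectedness gives $p$-locality. The alternative route you sketch via Theorem~\ref{theorem: thehomology} and the EHP collapse at odd $d=p$ also works and is closer in spirit, but still differs from the paper's transfer mechanism.
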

\begin{proof}
By Corollary~\ref{corollary: odd cofibration} it is enough to prove that  $ |\Pi_p|^\diamond  \mywedge{\hobased \Sigma_p} S^{\ell -1}$ and \mbox{$|\Pi_p|^\diamond \mywedge{\hobased \Sigma_p} S^{\ell p-1} $} are  $p$-local. Since $\ell\ge 1$ and $p>2$ both of these spaces are simply connected. Hence, it is enough to show that the integral homology of these spaces   is $p$-torsion.  
We will spell out the proof that $\widetilde{\HH}_* ( |\Pi_p|^\diamond \mywedge{\hobased \Sigma_p} S^{\ell  p-1} , \ZZ  )$ is $p$-torsion. The proof for the other space is \mbox{similar. Consider the homomorphisms}
\begin{equation}\label{equation: transfer}
\widetilde{\HH}_*( |\Pi_p|^\diamond\mywedge{\hobased \Sigma_p}  S^{\ell p-1},\ZZ  )   \rightarrow \widetilde{\HH}_*(  |\Pi_p|^\diamond \mywedge{\hobased \Sigma_{p-1}} S^{\ell p-1},\ZZ   ) \rightarrow \widetilde{\HH}_* ( |\Pi_p|^\diamond \mywedge{\hobased \Sigma_p} S^{\ell p-1} ,\ZZ ).
\end{equation}
Here the first homomorphism is the transfer in homology associated with the inclusion \mbox{$\Sigma_{p-1}\hookrightarrow \Sigma_p$,} and the second homomorphism is the quotient map. It is well-known that the composed homomorphism is multiplication by $p=|\Sigma_p/\Sigma_{p-1}|$. By Theorem \ref{main}, $|\Pi_p|$ is $\Sigma_{p-1}$-equivariantly equivalent to $(\Sigma_{p-1})_+ \wedge S^{p-3}$. Therefore $|\Pi_p|^\diamond     \mywedge{\hobased \Sigma_{p-1}}   S^{\ell p-1}\simeq S^{\ell p+p-3}.$ The reduced homology of this space is $\integers$ in dimension $\ell p+p-3$ and zero in all other dimensions. On the other hand, by Lemma~\ref{lemma: rational} the homology of $ |\Pi_p|^\diamond  \mywedge{\hobased \Sigma_p} S^{\ell p-1}$ is all torsion. This implies  that the composed homomorphism~\eqref{equation: transfer} is zero in all dimensions. Multiplication by $p$ is therefore zero on the group $\widetilde{\HH}_* ( |\Pi_p|^\diamond  \mywedge{\hobased \Sigma_p} S^{\ell p-1},\ZZ )$. It follows that it is a $p$-torsion group.\vspace{-2pt}
\end{proof}
We can finally prove our description of the weight $p$ component of the spaces $C_{\Lie}(S^\ell)$:\vspace{-2pt}
\begin{proof}[Proof of \ref{proposition: prime}] The case $p=2$ is well-known  (cf.\ \cite{james1963symmetric}). \mbox{Hence assume that  $p$ is an odd prime.}
We   use the  cofibre sequence $
|\Pi_p|_+\mywedge{\Sigma_p} S^{\ell p-1}   \rightarrow  S^{\ell p-1}/{_{\Sigma_p}} \rightarrow |\Pi_p|^\diamond \mywedge{\Sigma_p} S^{\ell p-1}.$
It suffices to show that for $\ell$ odd, the $p$-localisation of $|\Pi_p|_+ \mywedge{\Sigma_p} S^{\ell p-1} $ is trivial, and for $\ell$ even,  any choice of point in $ |\Pi_p|$  induces a $p$-local equivalence $S^{\ell p-1}\rightarrow |\Pi_p|_+\mywedge{\Sigma_p}  S^{\ell p-1}$.

Consider the pointed $\Sigma_p$-space $|\Pi_p|_+\wedge S^{\ell p-1} $. It is easy to check that the isotropy group of every point in this space (aside from the basepoint) is a non-transitive subgroup of $\Sigma_p$. It follows that $p$ does not divide the order of any of the isotropy groups. Thus for every isotropy group $G$, the map $BG\rightarrow *$ induces an isomorphism in $\FF_p$-homology. Therefore, the  map $  |\Pi_p|_+ \mywedge{\hobased \Sigma_p}S^{\ell p-1} \rightarrow     |\Pi_p|_+ \mywedge{\Sigma_p}S^{\ell p-1} $ is a mod $p$-homology isomorphism and hence a $p$-local isomorphism. 

The space $  |\Pi_p|_+\mywedge{\hobased \Sigma_p} S^{\ell p-1}$ is a pointed homotopy colimit of spaces of the form $ (S^{\ell p-1})_{\hobased G}$, where $G$ is an isotropy group of $|\Pi_p|$. The relevant properties of $G$ are that (1) $p$ does not divide the order of $G$ and (2) $G$ contains a transposition. Suppose that $\ell$ is odd. By an easy Serre spectral sequence argument like in Lemma \ref{lemma: rational}, 
we see that $\widetilde{\HH}_*\left((S^{\ell p-1})_{\hobased G};\FF_p\right)\cong \{0\}$, and so $(S^{\ell p-1})_{\hobased G}$ is $p$-locally trivial. It follows that $  |\Pi_p|_+ \wedge (S^{\ell p-1})_{\hobased \Sigma_p} $ is $p$-locally trivial.

 Now suppose $\ell$ is even. The space $\left( |\Pi_p|_+ \wedge S^{\ell p-1}\right)_{\hobased \Sigma_p}$ is a Thom space of a bundle over $|\Pi_p|_{\hobased \Sigma_p}$. Since $\ell$ is even, this bundle is orientable, and therefore the Thom isomorphism holds.
 Any map $*\rightarrow |\Pi_p|_{\hobased \Sigma_p}$ is a \mbox{mod $p$} homology isomorphism, and by the Thom isomorphism theorem,  we obtain a mod $p$ homology isomorphism of Thom spaces $S^{\ell p-1}\rightarrow  \left(|\Pi_p|_+\wedge S^{\ell p-1}  \right)_{\hobased \Sigma_p}$. 
Here, we have used that since  all  stabiliser groups   of  $|\Pi_p|$ are proper, their classifying spaces are   $p$-locally contractible, and we have $|\Pi_p|_{\hobased \Sigma_p}\simeq |\Pi_p|_{  \Sigma_p}  \simeq \ast$ by Kozlov's Corollary \ref{Kozlov}\vspace{-3pt}.
\end{proof}

Let $p$ be a prime. For $\ell=1$,  $|\Pi_p|^\diamond   \mywedge{\Sigma_p}  S^{\ell p-1}$ is contractible by Lemma~\ref{lemma: contractible}.  
For $\ell>1$, we have: 

\begin{corollary}\label{corollary: also l=2}
For $\ell> 2$, the space $  |\Pi_p|^\diamond \mywedge{\Sigma_p}  S^{\ell p-1}$ is not equivalent to a wedge of spheres. \\
For $\ell=2$, it is equivalent to $S^{3}$ if $p=2$ and is contractible if $p$ is odd.\vspace{-5pt}
\end{corollary}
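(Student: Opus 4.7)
The case $\ell=2$, $p=2$ is immediate from Proposition~\ref{proposition: prime}, which yields
$|\Pi_2|^\diamond\mywedge{\Sigma_2} S^3 \cong \Sigma^2 \mathbb{R}P^1 \cong \Sigma^2 S^1 = S^3$.

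For $\ell=2$ and $p$ odd, the plan is to combine Proposition~\ref{proposition: prime}---which identifies $|\Pi_p|^\diamond\mywedge{\Sigma_p} S^{2p-1}$ with the $p$-local homotopy cofibre of $q\colon S^{2p-1}\to S^{2p-1}/_{\Sigma_p}$---with Corollary~\ref{corollary: p-local}, stating that the space is already $p$-local. Together, these reduce contractibility to showing that $q$ is a mod $p$ equivalence. I will factor $q$ through the cofibration
$$|\Pi_p|_+ \mywedge{\Sigma_p} S^{2p-1} \longrightarrow S^{2p-1}/_{\Sigma_p} \longrightarrow |\Pi_p|^\diamond \mywedge{\Sigma_p} S^{2p-1}$$
appearing in the proof of Proposition~\ref{proposition: prime}. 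Since $\ell=2$ is even, $V^{\oplus 2}$ admits a complex structure $V^{\oplus 2}\cong V_\CC$, and the Thom-isomorphism argument used there already yields $|\Pi_p|_+\mywedge{\Sigma_p} S^{2p-1}\simeq_{(p)} S^{2p-1}$. The main obstacle is then to verify that the first map of the cofibration is itself a mod $p$ equivalence; I plan to approach this by a Borel-construction and equivariant Thom-isomorphism analysis of $S^{2p-1}/_{\Sigma_p}$, exploiting that the isotropy of any non-$\Sigma_p$-fixed point on $S^{2p-1}$ is a non-transitive, hence $p$-regular, subgroup of $\Sigma_p$, so that strict and homotopy orbits agree mod $p$ away from the $\Sigma_p$-fixed subsphere $S^{\mathbb{R}}\subset S^{\mathbb{R}\oplus V_\CC}$.

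For $\ell>2$, the goal is the opposite: I will rule out a wedge of spheres by detecting a nontrivial Steenrod algebra action on $\widetilde H_*(|\Pi_p|^\diamond \mywedge{\Sigma_p} S^{\ell p-1};\FF_p)$. The requisite $\FF_p$-homology is supplied by Theorem~\ref{theorem: thehomology} when $\ell$ is odd, and by the EHP cofibre sequence of Theorem~\ref{EHPLIE} (which, iterated, expresses the $\ell$-even case in terms of $\ell$ odd) when $\ell$ is even. In every case with $\ell\geq 3$, the basis of allowable sequences produces classes in pairs of degrees differing by $1$ or by $2(p-1)$, which are exactly the bidegrees of the Bockstein $\beta$ and the Steenrod power $P^{p-1}$. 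The hard step is to confirm that at least one such operation is nonzero on our space; I would import this from the classical Dyer--Lashof and Araki--Kudo structure on symmetric smash powers, transported through the $p$-local equivalence with $S^{\ell p-1}/_{\Sigma_p}$ (Proposition~\ref{proposition: prime}, for $\ell$ odd) or with its cofibre (for $\ell$ even). Since a wedge of spheres has vanishing Steenrod action between distinct cells, any such nonvanishing operation forces a nontrivial attaching map and thus rules out a wedge decomposition.
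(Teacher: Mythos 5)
Your handling of $p=2$ coincides with the paper's (namely citing $|\Pi_2|^\diamond\mywedge{\Sigma_2}S^{2\ell-1}\cong\Sigma^\ell\reals P^{\ell-1}$). For $\ell>2$ your strategy is genuinely different from the paper's, and weaker: the paper's argument is simply that for $p$ odd the space is $p$-local (Corollary~\ref{corollary: p-local}) with nonzero $\FF_p$-homology (Theorem~\ref{theorem: thehomology}), hence not a nontrivial wedge of spheres, while for $p=2$ the space $\Sigma^\ell\reals P^{\ell-1}$ has $2$-torsion. You instead want to produce a nonzero Steenrod operation, but you never exhibit one, and ``importing'' it from Dyer--Lashof/Araki--Kudo structure is not a routine step, since those act on homotopy orbits, not strict ones. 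The paper's route avoids this entirely.

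The serious problem is the step you yourself flag as the ``main obstacle'' for $\ell=2$, $p$ odd: showing that $|\Pi_p|_+\mywedge{\Sigma_p}S^{2p-1}\to S^{2p-1}/_{\Sigma_p}$ (equivalently that $q\colon S^{2p-1}\to S^{2p-1}/_{\Sigma_p}$) is a mod~$p$ equivalence. This is not a gap that can be filled; it is false. Take the EHP cofibre sequence of Theorem~\ref{EHPLIE} at weight $d=p$ with $\ell=2$ even. Since $d$ is odd, the first term is contractible, so $\Sigma^2\,|\Pi_p|^\diamond\mywedge{\Sigma_p}(S^{2})^{\wedge p}\simeq\Sigma\,|\Pi_p|^\diamond\mywedge{\Sigma_p}(S^{3})^{\wedge p}$. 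Now Theorem~\ref{theorem: thehomology} legitimately applies to the right-hand side because $\ell+1=3$ is odd: its $\FF_p$-homology has a basis consisting of the sequences $(i_1)$ with $1<i_1<3(p-1)$ and $i_1\equiv 0,1\pmod{2(p-1)}$, namely $i_1\in\{2(p-1),\ 2(p-1)+1\}$. Thus $\widetilde{\HH}_*\bigl(|\Pi_p|^\diamond\mywedge{\Sigma_p}S^{2p-1};\FF_p\bigr)$ is two-dimensional, the space is not $\FF_p$-acyclic, and $q$ is not a mod~$p$ equivalence (you can also see the latter directly: $q$ has degree $|\Sigma_p|\equiv 0\pmod p$ on $\HH_{2p-1}$). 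Note in passing that the paper's own proof of this corollary cites Theorem~\ref{theorem: thehomology} with the \emph{strict} bound $1<i<(p-1)\ell$ for $\ell=2$; but the strictness there relies on $\ell$ being odd (so that $(p-1)\ell\not\equiv 0,1\pmod{2(p-1)}$), and the theorem is in any case only stated for $\ell$ odd when $p$ is odd. The statement you were asked to prove appears to be in tension with Theorems~\ref{EHPLIE} and~\ref{theorem: thehomology}, so there is no way to complete the argument as proposed.
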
 
\begin{proof}
For $p=2$, we have seen that $|\Pi_p|^\diamond \mywedge{\Sigma_p} S^{\ell p-1} \simeq \Sigma^\ell\reals P^{\ell -1}$. For $\ell=2$ this space is equivalent to $S^{3}$. For $\ell>2$ it is not equivalent to a wedge of spheres since its homology has $2$-torsion.

Suppose now that $p>2$. By Theorem \ref{theorem: thehomology}, the homology  with $\FF_p$ coefficients of $  |\Pi_p|^\diamond\mywedge{\Sigma_p}S^{\ell p-1} $ is generated by symbols $\{i\}$, where $i$ is an integer with $i\equiv 0\mbox{ or } 1 \,(\mbox{mod } 2(p-1))$ and $1<i<(p-1)\ell$.
The degree of $\{i\}$ is $\ell+i-1$. It is easy to see that for $\ell\le 2$ there are no integers $i$ satisfying these constraints. It follows that for $\ell\le 2$ the mod $p$ homology of $|\Pi_p|^\diamond \mywedge{\Sigma_p} S^{\ell p-1}$ is trivial. Since the space is $p$-local by Corollary \ref{corollary: p-local}, it follows that it is contractible. \vspace{1pt}

For $\ell>2$, there is more than one value of $i$ satisfying the constraints. It follows that the space is not contractible. Since it is $p$-local, it is not equivalent to a wedge of spheres.\vspace{-4pt}
\end{proof}

\begin{corollary}\label{corollary: wedge of spheres}
The quotient $|\Pi_n|/_{\Sigma_{n_1}\times\cdots\times\Sigma_{n_k}}$ is   equivalent to a wedge of spheres if and only if  \vspace{-3pt}
$$\gcd(n_1, \ldots, n_k)=1 \ \ \ \ \ \ \mbox{or} \ \ \ \ \ \ \ p \mbox{ is a prime, } n=2p \mbox{ or } 3p \mbox{, and } \gcd(n_1, \ldots, n_k)=p\vspace{-3pt}$$
\end{corollary}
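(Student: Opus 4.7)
My plan is to combine the wedge decomposition of Corollary~\ref{proposition: orbits again} with the analysis of the prime-weight atoms carried out in Lemma~\ref{lemma: contractible} and Corollary~\ref{corollary: also l=2}. Writing $g := \gcd(n_1,\ldots,n_k)$, the splitting exhibits
$$|\Pi_n|/_{\Sigma_{n_1}\times\cdots\times\Sigma_{n_k}} \simeq \bigvee_{\substack{d\mid g \\ w\in B(n_1/d,\ldots,n_k/d)}} \Sigma^{-1}(S^{n/d-1})^{\wedge d}\wedge_{\Sigma_d}|\Pi_d|^\diamond,$$
in which the $d=1$ summands each contribute a copy of $S^{n-3}$ and, for each prime $p\mid g$, the $d=p$ summands contribute copies of $Y_p := |\Pi_p|^\diamond\wedge_{\Sigma_p}S^{\ell p-1}$ with $\ell := n/p - 1$.

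For the ``if'' direction, I would proceed case by case. When $g=1$, only the $d=1$ terms appear, giving a wedge of copies of $S^{n-3}$ (this recovers Corollary~\ref{corollary: gcd one}). When $g=p$ is prime and $n=2p$, we have $\ell=1$ and $Y_p\simeq\ast$ by Lemma~\ref{lemma: contractible}; when $g=p$ is prime and $n=3p$, we have $\ell=2$ and Corollary~\ref{corollary: also l=2} gives $Y_p\simeq S^3$ for $p=2$ and $Y_p\simeq\ast$ for $p$ odd. In every case the total quotient is visibly a wedge of spheres.

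For the ``only if'' direction, I would contrapose. Assume the quotient is a wedge of spheres with $g>1$ and fix any prime $p\mid g$. Since $B(n_1/p,\ldots,n_k/p)$ is nonempty (for instance $c_1^{n_1/p}\cdots c_k^{n_k/p}$ is Lyndon when $k\ge 2$), at least one copy of $Y_p$ appears as a wedge summand, hence as a homotopy retract of the total space. A simply-connected, finite-type homotopy retract of a simply-connected wedge of spheres is itself a wedge of spheres -- one builds a map from a model wedge of spheres realising a $\ZZ$-basis of $H_*(Y_p;\ZZ)$ and verifies it is a weak equivalence via Hurewicz and Whitehead. Corollary~\ref{corollary: also l=2} then forces $\ell\leq 2$, equivalently $n\leq 3p$. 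Combined with $p\mid n$, this yields $n\in\{p,2p,3p\}$, and a short case analysis rules out any residual possibility: if a second prime $q\mid g$ or if $p^2\mid g$, the joint constraints $g\mid n$ and $n\leq 3p$ collapse to $k=1$ and $n_1=n=g$, in which case $|\Pi_n|/_{\Sigma_n}$ is contractible by Kozlov's theorem~\cite{kozlov2000collapsibility}; the same remark disposes of $n=p$, all of which are excluded from the statement under the convention that a non-trivial wedge of spheres is meant.

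The main obstacle will be the step that a simply-connected, finite-type homotopy retract of a wedge of spheres is a wedge of spheres. If this proves awkward to justify cleanly, my fallback is to extract $p$-primary torsion in $\widetilde{\HH}_*(Y_p;\ZZ)$ directly from Corollary~\ref{strqu}: for $\ell\geq 3$, the $\FF_p$-Betti numbers of $Y_p$ strictly exceed its rational Betti numbers (the latter computed via the EHP splitting of Theorem~\ref{EHPhomology} together with the vanishing in Lemma~\ref{lemma: rational}), so universal coefficients supplies nontrivial $p$-torsion in integral homology. This torsion injects into the integral homology of the ambient quotient and directly contradicts its being a wedge of spheres, bypassing the retract argument entirely.
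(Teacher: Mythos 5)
Your proposal is correct and, at bottom, it is the paper's argument: split via Corollary~\ref{proposition: orbits again}, handle the listed cases with Lemma~\ref{lemma: contractible} and Corollary~\ref{corollary: also l=2}, and for the converse use a prime-weight atom $Y_p$ that fails to be a wedge of spheres when $\ell>2$. Two small organizational differences are worth noting. First, the paper streamlines the ``only if'' direction by choosing $p$ to be the \emph{smallest} prime dividing $\gcd(n_1,\ldots,n_k)$ and directly verifying $n/p>3$ (using, implicitly, that $k\ge 2$ so $n\ge 2\gcd$, hence $n\geq 2p^2>3p$ when $\gcd$ is composite and $n\geq 4p$ when $\gcd=p$ is prime but $n\notin\{2p,3p\}$); this avoids the after-the-fact case analysis on whether a second prime or $p^2$ divides the gcd. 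Second, the step you flag as the ``main obstacle''---that a wedge summand of a wedge of spheres is a wedge of spheres---is exactly where the paper is also elliptic, and your fallback is in fact closer to what the paper actually establishes: Corollary~\ref{corollary: p-local} shows $Y_p$ is $p$-local and (as a finite complex) has finitely generated homology, so non-contractibility for $\ell>2$ forces genuine $p$-torsion in $\widetilde{\HH}_*(Y_p;\ZZ)$, which survives as a direct summand of the integral homology of the total quotient and immediately obstructs being a wedge of spheres. I would recommend making the torsion argument your primary route rather than the fallback, as it bypasses the retract lemma entirely and also sidesteps the simple-connectivity hypotheses (which hold here, since the paper notes $Y_p$ is simply connected for $p>2$, $\ell\ge 1$, and for $p=2$ one has $Y_2\simeq\Sigma^\ell\reals P^{\ell-1}$). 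Your remark that the $k=1$ (Kozlov-contractible) edge cases must be excluded by convention is a legitimate point that the paper leaves implicit.
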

\begin{proof}
The case $\gcd(n_1, \ldots, n_k)=1$ was dealt with in Corollary~\ref{corollary: gcd one}. In the second case, Proposition~\ref{proposition: orbits again} tells us that $|\Pi_n|/_{\Sigma_{n_1}\times\cdots\times\Sigma_{n_k}}$ is equivalent to a wedge sum of spaces of the form $S^{n-3}$ and $ |\Pi_p|^\diamond \mywedge{\Sigma_p} S^{n-p-1}$. Since $n=2p$ or $3p$, $n-p=\ell p$ where $\ell=1, 2$. By Lemma~\ref{lemma: contractible} and Corollary~\ref{corollary: also l=2} all these spaces are either equivalent to a sphere or are contractible.
 
In all other cases, let $p$ be the smallest prime that divides $\gcd(n_1, \dots, n_k)$. Then $p$ divides $n$ and $\frac{n}{p}>3$. It follows that $|\Pi_n|/_{\Sigma_{n_1}\times\cdots\times\Sigma_{n_k}}$ has a wedge summand equivalent to  $
\bigvee_{B(\frac{n_1}{p}, \ldots, \frac{n_k}{p})} |\Pi_p|^\diamond\mywedge{\Sigma_p}S^{\ell p-1}
$, 
where $\ell>2$. By Corollary~\ref{corollary: also l=2} this space is not equivalent to a wedge of spheres. \vspace{-5pt}
\end{proof}
\begin{example}
To illustrate our results, let us analyse the homotopy type and the homology groups of $ |\Pi_8|^\diamond/_{\Sigma_4\times \Sigma_4}$. By Proposition~\ref{proposition: orbits again} there is a homotopy equivalence\vspace{-1pt}
\[
|\Pi_8|^\diamond/_{\Sigma_4\times \Sigma_4}\simeq \bigvee_{B(4, 4)} S^5 \vee \bigvee_{B(2, 2)} S^5/_{\Sigma_2} \vee \bigvee_{B(1,1)} (|\Pi_4|^\diamond \Smash  S^3 )/_{\Sigma_4}.
\]
The last factor is contractible by Lemma~\ref{lemma: contractible}. A quick calculation shows that $|B(4, 4)|=8$ and $|B(2,2)|=1$. We already observed that  $ S^5/{_{\Sigma_2}}\cong \Sigma^3\reals P^2$. We conclude that there is an equivalence\vspace{-1pt}
\[
|\Pi_8|^\diamond_{\Sigma_4\times \Sigma_4}\simeq \Sigma^3\reals P^2\vee \bigvee_{8} S^5 .\vspace{-2pt}
\]
It follows that the reduced homology of $|\Pi_8|^\diamond/_{\Sigma_4\times \Sigma_4}$ is isomorphic to $\FF_2$ in dimension $4$, to $\integers^8$ in dimension $5$, and is zero otherwise. This is consistent with computer \mbox{calculations by Donau~\cite{donau1}.}
\end{example}

\newpage
\section{The $\FF_p$-homology of \texorpdfstring{$|\Pi_n|^\diamond \mywedge{\Sigma_n} (S^{\ell})^{\wedge n}$}{the $\Sigma_n$-orbits of $|\Pi_n|{\diamond}    \wedge (S^{\ell n})$}}  \label{theproof}
We fix a prime $p$.  We shall use  Bredon homology to prove the following result:
\begin{theorem}\label{theorem: thehomology} Let $\ell\geq 1$ be a positive integer, assumed to be odd if the prime $p$ is odd.\\
If $n$ is not a power of $p$, then $\widetilde{\HH}_*\left(\Sigma |\Pi_n|^\diamond \wedge_{\Sigma_{n}}S^{\ell n} ,\FF_p\right)$ is trivial.\\ If $n=p^k$, then $\widetilde{\HH}_*\left(\Sigma |\Pi_{p^k}|^\diamond \wedge_{\Sigma_{p^k}} S^{\ell p^k},\FF_p\right)$ has a basis consisting of sequences $(i_1, \ldots, i_k)$, where $i_1, \ldots, i_k$ are positive integers satisfying:
\begin{enumerate}[leftmargin=26pt]
\item Each $i_j$  is congruent to $0$ or $1$ modulo $2(p-1)$. \label{congruence}
\item For all $1\le j<k$ we have $1<i_j< p i_{j+1}$.
\item We have $1<i_k\le (p-1)\ell$ (notice that if $p>2$, then~\eqref{congruence} means that the inequality is strict).
\end{enumerate}
The homological degree of $(i_1, \ldots, i_k)$ is $i_1+\cdots+i_k+\ell+k$.\vspace{3pt}
\end{theorem}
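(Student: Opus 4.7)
The plan is to use Bredon homology as foreshadowed in the excerpt. Consider the graded Mackey functor
\[
\mu_t(T) = \widetilde{\HH}_t\bigl(S^{\ell n} \mywedge{\Sigma_n} T_+,\FF_p\bigr)
\]
defined on finite $\Sigma_n$-sets $T$, which gives rise to a spectral sequence with $E_2$-page $\widetilde{\HH}_s^{\br}(|\Pi_n|^\diamond;\mu_t)$ converging to the desired group $\widetilde{\HH}_{s+t}(S^{\ell n}\mywedge{\Sigma_n}|\Pi_n|^\diamond,\FF_p)$. The oddness hypothesis on $\ell$ (when $p$ is odd) ensures that $\mu_\ast$ satisfies the hypotheses of Theorem~1.1 of~\cite{arone2016bredon}. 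In particular, this theorem forces Bredon homology to be concentrated on the full subcategory of orbits whose stabilisers are $p$-subgroups, and further (via the ``sharpness'' statement in \emph{op.\ cit.}) on elementary abelian $p$-subgroups acting freely on $\{1,\ldots,n\}$.

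For the vanishing statement, Corollary~\ref{corollary: el abelian} shows that the only $p$-subgroups $P\subseteq \Sigma_n$ with non-contractible fixed-point space are elementary abelian groups $P\cong\FF_p^k$ acting freely on $\{1,\ldots,n\}$, which forces $n=mp^k$; moreover, $|\Pi_n|^P$ is then a wedge sum of copies of $|\BT(\FF_p^k)|^\diamond\wedge|\Pi_m|^\diamond$. When $n$ is not a power of $p$, every such decomposition satisfies $m>1$, and I would argue by induction on $n$ that the Mackey functor evaluated against these induced orbits vanishes: the inductive hypothesis applied to the factor involving $|\Pi_m|^\diamond$ (which, unwinding the tensor product and using the wedge summand structure encoded in Corollary~\ref{proposition: orbits again}, reduces to the same problem in smaller weight $m$) yields triviality.

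For $n=p^k$, the Bredon chain complex reduces to a computation involving only the full elementary abelian subgroup $P\cong \FF_p^k$ acting transitively and freely. The associated fixed-point space is the Tits building $|\BT(\FF_p^k)|^\diamond$, and the relevant Bredon group becomes essentially the Steinberg-twisted invariants of $\mu_\ast(\Sigma_{p^k}/\FF_p^k)$. The key input is the identification
\[
\mu_t(\Sigma_{p^k}/\FF_p^k)=\widetilde{\HH}_t\bigl(S^{\ell p^k}\mywedge{\FF_p^k}(\Sigma_{p^k})_+,\FF_p\bigr),
\]
which via the Cartan formula and the fact that the extension $1\to\FF_p^k\to\Aff_{\FF_p^k}\to\GL_k(\FF_p)\to 1$ together with the Steinberg projectivity of Corollary~\ref{steinbergproj} allows one to write the final answer as the Steinberg-quotient of iterated $p$-fold symmetric power homology. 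By Dold--Nakaoka--Milgram, this homology has a basis given by Dyer--Lashof admissible monomials, and the three conditions~(1)--(3) appearing in the theorem statement are precisely the admissibility conditions: $i_j\equiv 0,1\pmod{2(p-1)}$ is the usual congruence, $1<i_j<p\,i_{j+1}$ is the admissibility inequality, and the upper bound $1<i_k\leq (p-1)\ell$ encodes the excess coming from the base sphere $S^\ell$. The degree formula $i_1+\cdots+i_k+\ell+k$ then follows by tracking bidegrees through the $k$-fold iterated construction and accounting for the extra $\Sigma$ from the unreduced suspension $|\Pi_{p^k}|^\diamond$.

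The main obstacle will be verifying that the Mackey functor $\mu_\ast$ genuinely satisfies the Tate-vanishing / saturation hypotheses of Theorem~1.1 of~\cite{arone2016bredon}; this is where the oddness of $\ell$ enters essentially, since for even $\ell$ the representation sphere $(S^{\ell})^{\wedge n}$ is $\Sigma_n$-orientable and the required vanishing breaks down (this is precisely the reason we have to invoke the EHP-sequence of Theorem~\ref{EHPLIE} in the general case). A secondary obstacle is the bookkeeping required to identify the Steinberg-twisted iterated symmetric power calculation with the bounded Dyer--Lashof admissibility basis; here one uses the Singer construction and the fact that the top Dyer--Lashof operations on a generator in degree $\ell$ produce classes in the stated excess range.
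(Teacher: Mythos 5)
Your overall framing — the Bredon spectral sequence for $\mu_*(T)=\widetilde{\HH}_*(S^{\ell n}\wedge_{\Sigma_n}T_+,\FF_p)$, the verification that $\mu_*$ satisfies the hypotheses of~\cite[Theorem~1.1]{arone2016bredon} (which is indeed where oddness of $\ell$ enters), and the resulting concentration of Bredon homology — matches the paper's setup. But the heart of the computation at $n=p^k$ is missing, and one of your key claims is backwards. The condition $1<i_j<p\,i_{j+1}$ in the theorem is \emph{not} the symmetric-power admissibility condition: it is its negation. In Nakaoka's (and the paper's) convention, the iterated-symmetric-power homology $\Fcal_{k_1}\cdots\Fcal_{k_r}\widetilde{\HH}_*(S^\ell)$ has a basis of sequences with $i_j\geq p\,i_{j+1}$ at the internal indices, whereas the surviving sequences in the theorem satisfy strict \emph{in}admissibility $i_j<p\,i_{j+1}$ at every index. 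Explaining this reversal is precisely what your sketch skips: the paper decomposes the Bredon chain complex into summands indexed by isomorphism classes of ``$p$-enhancements'' of chains of partitions, shows that ``matching-up'' and ``matching-down'' enhancements cancel in pairs across adjacent Bredon degrees while ``negligible'' ones vanish, and then evaluates an alternating sum of binomial coefficients over the remaining ``pure'' enhancements (indexed by ordered partitions of $k$). Only the completely inadmissible sequences survive this Euler-characteristic computation, giving the stated basis in Bredon degree $k-1$. Reading the answer off a single orbit $\Sigma_{p^k}/\FF_p^k$ via Steinberg-twisted invariants, as you propose, does not produce or explain this inversion; you would need to carry out the same kind of homological-algebra bookkeeping in your framework to get the right answer.

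Your proposed induction for the vanishing when $n$ is not a power of $p$ is also not quite a proof. Bredon homology is the homology of a chain complex built from all isotropy groups of simplices, not a sum of contributions from a few chosen subgroups, so ``the inductive hypothesis applied to the factor involving $|\Pi_m|^\diamond$ yields triviality'' is not a valid chain-level deduction. In the paper, once Proposition~\ref{prop: satisfy} is established, the vanishing for $n$ not a power of $p$ is an \emph{immediate} corollary of~\cite[Lemma~3.8 and Theorem~1.1]{arone2016bredon}; no induction on $n$ or unwinding of Corollary~\ref{corollary: el abelian} is needed.
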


Throughout this section, we will write $\tilde{\HH}_\ast(X) = \tilde{\HH}_\ast(X,\FF_p)$ for the reduced singular $\FF_p$-homology.

We begin by reviewing the definition and basic properties of Bredon homology.
Suppose that $\nu$ is a Mackey functor for a finite group $G$. Let $X$ be a simplicial $G$-set. The \textit{Bredon homology groups} $\widetilde{\HH}^{\br}_*(X; \nu)$ of $X$ with coefficients in $\nu$
are defined as the  homology groups of the simplicial abelian group obtained by applying $\nu$ to $X$ degree-wise. If $X$ is a pointed $G$-set, then we define the reduced Bredon homology of $X$ as the quotient $\widetilde{\HH}^{\br}_*(X; \nu):=\widetilde{\HH}^{\br}_*(X; \nu)/\widetilde{\HH}^{\br}_*(*; \nu)$ of unreduced Bredon homology of $X$ by the Bredon homology of  the basepoint. 

If $\nu_\ast$ is a graded Mackey functor, then the Bredon homology is bigraded. In this case, we define:
\begin{definition}\label{eulerchar}
 The \textit{Euler characteristic} $\chi_\ast$ of a pointed simplicial $G$-set $X$  with respect to a  graded Mackey functor $\nu_\ast$ is defined as the following sequence of integers: $$ \chi_n = \sum_{i} (-1)^i \rk\left( \widetilde{\HH}^{\br}_i(X; \nu_n) \right)$$
 Informally speaking, we take Euler characteristic ``in the Bredon direction''. We extend this notion in the evident way to  bigraded submodules of $\widetilde{\HH}^{\br}_*(X; \nu_\ast)$.\vspace{3pt}
 \end{definition}

We will now consider the graded Mackey functor
$\mu_*$ which is defined on $\Sigma_{n}$-sets  by the\vspace{-1pt} formula $$\mu_*(S)=\widetilde{\HH}_*(S^{\ell n}\wedge_{\Sigma_{n}} S_+).\vspace{-1pt}$$
We observe that if $H$ is a subgroup of $\Sigma_n$, then $\mu_*(\Sigma_{n}/H)\cong\widetilde{\HH}_*({S^{\ell n}}/_H)$.

 If $W$ is a general simplicial $\Sigma_n$-set, then there is a well-known spectral sequence of signature \vspace{-1pt}$$
\widetilde{\HH}_s^{\br}(W; \mu_t)\Rightarrow \widetilde{\HH}_{s+t}(S^{\ell n}\wedge_{\Sigma_{n}} W)
\vspace{-1pt}.$$

We will see below that for $W=|\Pi_n|^\diamond$, the spectral sequence collapses at $E_2$, and we may  therefore focus on calculating the Bredon homology groups $\widetilde{\HH}_*^{\br}(|\Pi_n|^\diamond; \mu_*)$. 

The next proposition says that the Mackey functor $\mu_*$ satisfies the hypotheses needed for the main theorem of~\cite{arone2016bredon} to apply. Let $C_G(H)$ denote the centraliser of the subgroup $H$ in $G$.
\begin{proposition}\label{prop: satisfy}
The Mackey functor
$\mu_*(S)= \widetilde{\HH}_*(S^{\ell n}\wedge_{\Sigma_{n}} S_+)$
has the following \vspace{-1pt}properties:
\begin{enumerate}[leftmargin=26pt]
\item \label{ptransfer} If $Z$ is a $\Sigma_{n}$-set whose cardinality is coprime to $p$ and $S$ is any $\Sigma_{n}$-set, then the composed homomorphism $\mu_*(S)\stackrel{tr}{\rightarrow}\mu_*(S\times Z)\rightarrow \mu_*(S)$ is an isomorphism.
\item \label{centraliser} For every elementary abelian subgroup $D\subset \Sigma_{n}$ that acts freely and non-transitively on the set $\{1, \ldots, n\}$, the kernel of   $C_{\Sigma_{n}}(D)\rightarrow \pi_0C_{\GL_{n}({\mathbb R})}(D)$ acts trivially on $\mu_*(\Sigma_{n}/D)$.
\item \label{involution} For $p>2$ odd and $D$ as in $(2)$,  any odd involution in $C_{\Sigma_{n}}(D)$ acts on $\mu_*(\Sigma_{n}/D)$ as \mbox{multiplication by $(-1)$.}
\end{enumerate}
\end{proposition}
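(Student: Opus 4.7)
The strategy throughout is to make the Mackey functor concrete. Writing $S^{\ell n}$ as the one-point compactification of $V^{\oplus \ell}$, where $V = \RR^n$ is the defining permutation representation of $\Sigma_n$ (which canonically extends to an action of $\GL_n(\RR)$), we may identify
\[
\mu_*(\Sigma_n/H) \;\cong\; \widetilde{\HH}_*\bigl(S^{\ell n}/_H,\FF_p\bigr)
\]
for each subgroup $H \subseteq \Sigma_n$. Under this identification the restriction map associated to $K \subseteq H$ is the projection $S^{\ell n}/_K \to S^{\ell n}/_H$, the transfer is its Becker--Gottlieb transfer, and the action of $g \in C_{\Sigma_n}(H)$ on $\mu_*(\Sigma_n/H)$ is that of the self-map of $S^{\ell n}/_H$ induced by the linear self-map $g$ of $V^{\oplus \ell}$. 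This identification reduces all three claims to statements about homological endomorphisms of these orbit spaces.

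Part (1) is a general Mackey functor fact: for the projection $p\colon S \times Z \to S$, the composite $p_! \circ p^*$ is multiplication by the degree $|Z|$ on $\mu_*(S)$, as one sees orbit by orbit by computing the composite on each summand $\mu_*(\Sigma_n/H)$. Since $\mu_*$ takes values in $\FF_p$-vector spaces and $|Z|$ is a unit in $\FF_p$, this composite is an isomorphism. For part (2), assume $c \in C_{\Sigma_n}(D)$ lies in the identity component of $C_{\GL_n(\RR)}(D)$ and choose a continuous path $\gamma\colon [0,1] \to C_{\GL_n(\RR)}(D)$ with $\gamma(0) = \mathrm{id}$ and $\gamma(1) = c$. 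Each $\gamma(t)$ commutes with $D$ and defines a $D$-equivariant homeomorphism of $V^{\oplus \ell}$, hence of $S^{\ell n}$, which descends to $S^{\ell n}/_D$. The resulting homotopy from the identity to the action of $c$ shows that $c$ acts trivially on $\widetilde{\HH}_*(S^{\ell n}/_D,\FF_p)$.

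Part (3) is the main obstacle and requires the finer structure of $C_{\Sigma_n}(D)$. Because $D$ is an elementary abelian $p$-group with $p$ odd, $D$ itself contains no involutions, so any involution $c \in C_{\Sigma_n}(D)$ has non-trivial image $\bar c$ in the quotient $C_{\Sigma_n}(D)/D^m \hookrightarrow \Sigma_m$ permuting the set of $D$-orbits. If $\bar c$ has $t$ transpositions and fixes $m - 2t$ orbits, then $\mathrm{sgn}(c) = (-1)^{tp^k}$, so $c$ is odd precisely when $t$ is odd (for $p$ odd). The plan is to compare $c$ with the central involution $\sigma = -\mathrm{id}_{V^{\oplus \ell}}$, which commutes with both $D$ and $c$. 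A comparison of determinants on $V^{\oplus \ell}$, combined with a judicious choice of an element of the identity component of $C_{\GL_n(\RR)}(D)$ (furnished for instance by a diagonal matrix that is $+1$ on fixed-orbit eigenspaces and suitably re-scaled on swapped pairs), lets us write $c$ as a product $\sigma \cdot e$ with $e$ in the identity component. By part (2), $e$ acts trivially on $\widetilde{\HH}_*(S^{\ell n}/_D,\FF_p)$, so the action of $c$ equals that of $\sigma$.

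It then remains to verify that $\sigma$ acts as $(-1)$ on all of $\widetilde{\HH}_*(S^{\ell n}/_D,\FF_p)$. Decomposing $V = V_{\mathrm{fix}} \oplus V_{\mathrm{swap}}$ according to $\bar c$ and using a K\"unneth splitting $S^{\ell n}/_D \cong X_{\mathrm{fix}} \wedge X_{\mathrm{swap}}$, one reduces to showing that $\sigma$ acts as $-1$ on $\widetilde{\HH}_*(X_{\mathrm{swap}},\FF_p)$. On this factor $H := D \times \langle c \rangle$ acts freely away from the origin, so one may compare $\sigma$ to the corresponding deck transformation of the covering $(V_{\mathrm{swap}}^{\oplus \ell})^+ \setminus \{0\} \to X_{\mathrm{swap}}$; since $\sigma$ is orientation-reversing on $V_{\mathrm{swap}}^{\oplus \ell}$ (its determinant there is $(-1)^{\ell \cdot 2tp^k}\cdot$ a factor arising from the fixed component), an orientation / degree argument identifies its action on $\FF_p$-homology with $-1$. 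The hardest technical step will be arranging the comparison element $e$ uniformly across all $n,k,\ell,t$, and ensuring the K\"unneth splitting interacts correctly with the non-free $D$-action on $V_{\mathrm{fix}}$; I expect both of these to follow once one fixes an explicit orthogonal decomposition of $V$ into $D$- and $\langle c \rangle$-isotypic pieces.
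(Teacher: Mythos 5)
Parts (1) and (2) are fine and essentially identical to the paper's argument: (1) is the observation that the composite is multiplication by $|Z|$, which is a unit in $\FF_p$, and (2) uses a path in $C_{\GL_n(\RR)}(D)$ to produce a $D$-equivariant homotopy from the identity to the action of the given permutation.

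Part (3), however, has a genuine gap. The pivotal claim is that $c$ factors as $\sigma\cdot e$ with $\sigma=-\mathrm{id}_{V^{\oplus\ell}}$ and $e$ in the identity component of the centraliser of $D$ in the general linear group. That component is detected by the sign of the determinant on the $D$-fixed subspace $(V^D)^{\oplus\ell}\cong\RR^{m\ell}$, where $m$ is the number of $D$-orbits. On that subspace $\sigma$ has determinant $(-1)^{m\ell}$, while $c$ (acting by the permutation $\bar c$ of the $m$ orbit coordinates) has determinant $(-1)^{t\ell}$. Since $\ell$ is odd, $c$ and $\sigma$ lie in the same component only when $t\equiv m\pmod 2$; but $t$ is odd (as $c$ is an odd involution) and $m$ is unconstrained. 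For example take $p=3$, $n=6$, $D=\langle(123)(456)\rangle$, $c=(14)(25)(36)$: then $m=2$, $t=1$, and $c$ and $\sigma$ lie in different components, so no such $e$ exists and the reduction to the antipodal map breaks down. No diagonal rescaling can fix this since positive rescalings preserve the sign of $\det$ on $(V^D)^{\oplus\ell}$. The remaining K\"unneth/degree argument is also only sketched and would need to be carried out on a space that is not in general a sphere. The paper's proof sidesteps the component issue entirely: it first observes that $\tau$ acts by $-1$ on homology iff $\tau-1$ is a quasi-isomorphism on reduced chains, then writes $\tilde C_*(S^{\ell n}/_D)$ as a homotopy colimit over subgroups $A\le D$ of the chain complexes $\tilde C_*((S^{\ell n})^A)$. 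Each fixed-point space $(S^{\ell n})^A$ is a genuine sphere on which $\tau$ acts linearly with determinant $-1$ (one checks $\tau$ induces an odd permutation of the $A$-orbits, using that $|A|$ is odd), so $\tau$ has degree $-1$ on each, hence $\tau-1$ is a quasi-isomorphism termwise and therefore on the whole colimit. You would need to replace your step with something along these lines.
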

\begin{proof}  
The proof is similar to~\cite[Proposition 11.4]{arone2016bredon}. Since homology is taken with $\FF_p$ coefficients, the Mackey functor $\mu_*$ takes values in $\FF_p$-vector spaces. It is well-known that the composite homomorphism $\mu_*(S)\stackrel{tr}{\rightarrow}\mu_*(S\times Z)\rightarrow \mu_*(S)$ is, in general, multiplication by the cardinality of $Z$. If the cardinality is coprime to $p$, then the homomorphism is an isomorphism. This proves~\eqref{ptransfer}.

For~\eqref{centraliser}, let $\sigma\in \ker(C_{\Sigma_{n}}(D)\rightarrow \pi_0C_{\GL_{n}({\mathbb R})}(D))$. Then $\sigma$ is an element of $\Sigma_{n}$, $\sigma$ centralises $D$, and the linear automorphism of $\reals^{n}$ induced by $\sigma$ can be connected to the identity by a path that goes through linear transformations that centralise the action of $D$. It follows that $\sigma$ acts on $S^{\ell n}$ by a map that is $D$-equivariantly homotopic to the identity. Therefore, $\sigma$ acts on $S^{\ell n}/_{D}$ by a map that is homotopic to the identity. In particular, it acts trivially on $\mu_*(\Sigma_{n}/D) = \widetilde{\HH}_*(S^{\ell n}/_{D})$. (See  the proof of~\cite[Proposition 11.4]{arone2016bredon} for a very similar argument spelled out in more detail).

Finally, for~\eqref{involution} suppose that $p$ is odd and $\tau\in\Sigma_{n}$ is an odd permutation of order $2$ centralising $D$. We need to show that $\tau$ acts on $\widetilde{\HH}_*(S^{\ell n}/_D)$ as multiplication by $(-1)$. As explained in the last part of the proof of~\cite[Proposition 11.4]{arone2016bredon}, an involution $\tau$ on a space $W$ acts by $(-1)$ on the homology of $W$ (with coefficients in a group where $2$ is invertible) if and only if $(\tau-1)$ is an isomorphism on homology. This in turn is equivalent to requiring that $(\tau - 1)$ induces a quasi-isomorphism on the chain complex $\tilde C_*(W)$  of reduced  singular chains with coefficients in $\FF_p$ on $W$. We apply this to the case $W=S^{\ell n}/_D$. There is a natural homotopy equivalence between $\tilde C_*(S^{\ell n}/_D)$ and a homotopy colimit of chain complexes of the form $\tilde C_*((S^{\ell n})^A)$, where $A$ ranges over subgroups of $D$. Using that $\ell$ is odd, it is not difficult to show that $\tau$ acts by $(-1)$ on the homology of each fixed point space $(S^{\ell n})^A$. It follows that $\tau-1$ induces a quasi-isomorphism on each chain complex $\tilde C_*((S^{\ell n})^A)$ and therefore it induces a quasi-isomorphism on $\tilde C_*(S^{\ell n}/_D)$. This in turn \mbox{implies that $\tau$} acts by $(-1)$ on $\widetilde{\HH}_*(S^{\ell n}/_D)$.
\end{proof}
By~\cite[Lemma 3.8 and Theorem 1.1]{arone2016bredon} it follows that $\widetilde{\HH}_s^{\br}(|\Pi_n|^\diamond; \mu_t)=0$ unless $n=p^k$ for some positive integer $k$ and $s=k-1$. Therefore $\widetilde{\HH}_*(S^{\ell n}\wedge_{\Sigma_{n}} |\Pi_n|^\diamond)$ vanishes unless $n$ is a power of $p$. In the case $n=p^k$ the spectral sequence collapses, and there is an isomorphism
\[
\widetilde{\HH}_{*+k-1}(S^{\ell p^k}\wedge_{\Sigma_{p^k}} |\Pi_{p^k}|^\diamond)\cong \widetilde{\HH}_{k-1}^{\br}(|\Pi_{p^k}|^\diamond; \mu_{*}).
\]
We will therefore focus on calculating the Bredon homology group $\widetilde{\HH}_{k-1}^{\br}(|\Pi_{p^k}|^\diamond; \mu_{*})$. 
Abstractly, this  group can be described in terms of the Steinberg module $\St_k \cong \widetilde{\HH}_{k-1}(|\BT(\FF_p^k)|^\diamond, \ZZ)$ as 
$\mu_\ast(\Sigma_{p^k}/\FF_p^k) \otimes_{\ZZ[\GL_k(\FF_p)]} \St_k$, cf.\ \cite[Corollary 1.2]{arone2016bredon}.
To get a concrete description,  however, we will implement  a different approach and use an explicit chain complex, \vspace{3pt} which we will now describe.

We define a reduced version $\tilde\mu_*$ of $\mu_*$ on  pointed sets by setting $\tilde\mu_*(S)=\widetilde{\HH}_\ast(S^{\ell n}\wedge_{\Sigma_n} S)$ for $S$  a {\em pointed} $\Sigma_{n}$-set. To calculate the reduced Bredon homology of  $|\Pi_{p^k}|^\diamond$, write it as the geometric realisation of a pointed simplicial set with an action of $\Sigma_{p^k}$ (we will describe an explicit model below). Applying $\tilde\mu_*$ levelwise to $|\Pi_{p^k}|^\diamond$ yields a graded simplicial  abelian group. We refer to the associated normalised chain complex as the Bredon chains on $|\Pi_{p^k}|^\diamond$. The bigraded homology groups of this chain complex are precisely the Bredon homology groups  $\widetilde{\HH}_{*}^{\br}(|\Pi_{p^k}|^\diamond; \mu_{*})$.

Since the Bredon homology is concentrated in the single degree $(k-1)$, it  is essentially determined by the Euler characteristic in the Bredon direction \mbox{(cf.\ Definition \ref{eulerchar}).}

To determine the Euler characteristic, we will analyse $\tilde\mu_*(\Pi_{p^k,i}^\diamond)$, where ${\Pi_{p^k,i}^\diamond}$ is the quotient of the set of $i$-simplices of ${|\Pi_{p^k}|^\diamond}$ by the subset of degenerate simplices. We will show that $\tilde\mu_*({\Pi_{p^k,i}^\diamond})$ splits as a direct sum, and that most of the summands can be arranged in isomorphic pairs in adjacent dimensions, so as to cancel out and contribute nothing to the Euler characteristic. Finally, we will calculate the Euler characteristic of the complex obtained from summands that are not cancelled out. This approach is essentially the same as was taken in~\cite{arone1999goodwillie} towards calculating the homology of the {\it homotopy} orbit space $S^{\ell p^k}\wedge_{\hobased\Sigma_{p^k}} |\Pi_{p^k}|^\diamond$. However the situation in the present paper is simpler, as we are only calculating the Euler characteristic and therefore  do not need to determine the actual boundary homomorphism between the various summands in our decomposition. Our simplified approach can in fact  also be implemented in the case of homotopy orbits, but we will not pursue this here.

The set ${\Pi_{p^k,i}^\diamond}$ is the wedge sum of sets of the form $\Sigma_{p^k}/H_+$, where $H$ ranges over a set of representatives of stabilisers of nondegenerate $i$-simplices of ${|\Pi_{p^k}|^\diamond}$. It follows that $\tilde\mu_*({\Pi_{p^k,i}^\diamond})$ is isomorphic to a  corresponding direct sum of groups $\widetilde{\HH}_*(S^{\ell p^k}/_H)$. Isotropy groups of ${\Pi_{p^k,i}^\diamond}$ are products of wreath products of symmetric groups (see Section \ref{homorbit} below for a more precise statement). Therefore, before describing $\widetilde{\HH}_*(S^{\ell p^k}/_H)$ for a general isotropy group $H$, we review some results about the homology of $X^{\wedge n}/_{\Sigma_n}$ for   general $X$.

\subsection{Homology of Symmetric Smash Products}
Before describing the homology, let us recall the very useful general fact, due to Dold~\cite{dold1958homology}, that the homology of a symmetric (smash) product of a pointed space $X$ depends only on homology of $X$. Dold proved the result for integral homology. We need a version of it for homology with mod $p$ coefficients. A proof can be found in Bousfield's unpublished manuscript~\cite{bousfield1967operations}, where an explicit description of the mod $p$ homology of  symmetric products is given. For the reader's convenience, we shall include a proof along the lines of Dold's argument.
\begin{lemma}\label{lemma: Dold}
The $\FF_p$-homology groups of $X^{\wedge n}/_{\Sigma_n}$ only depend on the homology of the pointed space $X$. More precisely, there exists an endofunctor $G$ of the category of non-negatively graded $\FF_p$-vector spaces such that $\widetilde{\HH}_*(X^{\wedge n}/_{\Sigma_n},\FF_p)$ is isomorphic to $G(\widetilde{\HH}_*(X))$, naturally in $X$.
\end{lemma}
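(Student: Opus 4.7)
The plan is to follow Dold's original argument for integer coefficients~\cite{dold1958homology}, modified for $\FF_p$-coefficients. I would define the functor $G$ on non-negatively graded $\FF_p$-vector spaces by setting
$$ G(V) \;:=\; \widetilde{\HH}_*\bigl(M(V)^{\wedge n}/\Sigma_n,\, \FF_p\bigr),$$
where $M(V) = \bigvee_\alpha M(\FF_p, d_\alpha)$ is a wedge of mod-$p$ Moore spaces realising the graded vector space $V$ as reduced homology. This assignment is functorial because any $\FF_p$-linear map between the homologies of such wedges is realised by a map of pointed CW complexes, unique up to pointed homotopy; the induced map on $(-)^{\wedge n}/\Sigma_n$ is therefore canonically defined on $\widetilde{\HH}_*(-;\FF_p)$.

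The content will be to produce, for every pointed CW complex $X$, a natural isomorphism $\widetilde{\HH}_*(X^{\wedge n}/\Sigma_n; \FF_p) \cong G(\widetilde{\HH}_*(X; \FF_p))$. As a first step, I would use standard mod-$p$ Moore-space approximation to fix a comparison map $\varphi_X : M(\widetilde{\HH}_*(X; \FF_p)) \to X$ inducing an isomorphism on $\widetilde{\HH}_*(-;\FF_p)$. The entire statement then reduces to the following key claim: \emph{the functor $X \mapsto X^{\wedge n}/\Sigma_n$ sends mod-$p$ homology equivalences between pointed CW complexes to mod-$p$ homology equivalences}.

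To establish the key claim, I would factor the given equivalence as a cellular cofibration followed by a homotopy equivalence, and via the mapping cylinder reduce to showing: if $A \hookrightarrow X$ is a cellular inclusion with mod-$p$ acyclic cofibre $W := X/A$, then the induced map $A^{\wedge n}/\Sigma_n \hookrightarrow X^{\wedge n}/\Sigma_n$ is a mod-$p$ homology equivalence. The idea is to filter $X^{\wedge n}$ by the $\Sigma_n$-stable subspaces $F_k$ of $n$-tuples with at most $k$ coordinates outside $A$; the successive quotients of the induced filtration of $X^{\wedge n}/\Sigma_n$ are homeomorphic to
$$F_k/F_{k-1} \;\cong\; \bigl(A^{\wedge (n-k)}/\Sigma_{n-k}\bigr) \wedge \bigl(W^{\wedge k}/\Sigma_k\bigr),$$
and the filtration spectral sequence will collapse to an isomorphism $\widetilde{\HH}_*(A^{\wedge n}/\Sigma_n; \FF_p) \cong \widetilde{\HH}_*(X^{\wedge n}/\Sigma_n; \FF_p)$ as soon as each $W^{\wedge k}/\Sigma_k$ is mod-$p$ acyclic for $k \geq 1$.

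The hard part will be precisely this last acyclicity, since a strict $\Sigma_k$-orbit can a priori introduce homology classes invisible on $W^{\wedge k}$. My plan to resolve it is to identify $W^{\wedge k}/\Sigma_k$ with the symmetric-product subquotient $SP^k(W)/SP^{k-1}(W)$ and invoke Dold's theorem, in the mod-$p$ form supplied by Bousfield~\cite{bousfield1967operations}, that $\widetilde{\HH}_*(SP^k(W); \FF_p)$ depends functorially only on $\widetilde{\HH}_*(W; \FF_p)$. Since $W$ is mod-$p$ acyclic, every $SP^k(W)$ is then mod-$p$ acyclic, so induction on $k$ together with the long exact sequence of the pair $(SP^k(W), SP^{k-1}(W))$ forces $W^{\wedge k}/\Sigma_k$ to be mod-$p$ acyclic as well, closing the argument.
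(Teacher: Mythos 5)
Your strategy follows Dold's original topological route: realize the homology by mod-$p$ Moore spaces, establish that $(-)^{\wedge n}/\Sigma_n$ preserves mod-$p$ homology equivalences, and argue by filtration on the smash power. The paper proceeds differently: it passes to reduced $\FF_p$-chains, uses the natural isomorphism $\tilde C_\bullet(X^{\wedge n}/\Sigma_n;\FF_p)\cong\tilde C_\bullet(X;\FF_p)^{\otimes n}_{\Sigma_n}$, and observes that the resulting functor of simplicial $\FF_p$-vector spaces (equivalently, via Dold--Kan, of nonnegative chain complexes over $\FF_p$) is a homotopy functor; since over a field the homotopy category of nonnegative chain complexes is canonically equivalent to the category of graded vector spaces, functoriality comes for free with no geometric realizations involved.

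There is a genuine gap in your construction of the functor $G$, precisely at the functoriality step. You assert that a pointed map of wedges of mod-$p$ Moore spaces realizing a given graded homomorphism on homology is \emph{unique up to pointed homotopy}; this is false. For $p=2$ the group $[M(\FF_2,d),M(\FF_2,d)]$ is cyclic of order $4$ and surjects onto $\Hom(\FF_2,\FF_2)\cong\FF_2$ with nontrivial kernel generated by $2\cdot\id$, which is not null-homotopic but acts as zero on $\widetilde{\HH}_*(-;\FF_2)$. For odd $p$ one has $[M(\FF_p,d),M(\FF_p,d+1)]\cong\Ext(\FF_p,\FF_p)\cong\FF_p$, yet every such map vanishes on $\widetilde{\HH}_*(-;\FF_p)$, so a wedge $M(\FF_p,d)\vee M(\FF_p,d+1)$ has self-maps inducing the identity on mod-$p$ homology that are not homotopic to the identity. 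Hence two realizations of the same graded homomorphism need not be homotopic, and it is not automatic that they induce the same map on $\widetilde{\HH}_*((-)^{\wedge n}/\Sigma_n;\FF_p)$; the same issue reappears when you try to check naturality of the comparison $G(\widetilde{\HH}_*(X))\to\widetilde{\HH}_*(X^{\wedge n}/\Sigma_n)$. Your key claim does not close the gap: it only shows the functor factors through the mod-$p$ localization of the homotopy category of spaces, which is far larger than the category of graded $\FF_p$-vector spaces. Showing that such homologically invisible maps act trivially on the homology of symmetric smash powers is in effect the content of the lemma, and is exactly what the paper's chain-level argument supplies for free. (As a secondary remark, your appeal to Bousfield for the acyclicity of $\mathrm{SP}^k(W)$ when $W$ is mod-$p$ acyclic is legitimate but heavy: Bousfield's theorem is already the mod-$p$ Dold theorem in full strength, so once it is invoked the lemma follows almost directly from $W^{\wedge k}/\Sigma_k\cong\mathrm{SP}^k(W)/\mathrm{SP}^{k-1}(W)$, and the filtration machinery is largely redundant.)
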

\begin{proof}
For a set $S$, let $F_{\hspace{1pt} \FF_p}(S)$ denote the $\FF_p$-vector space with basis $S$. If $S$ is pointed, let $\tilde{F}_{\hspace{1pt}\FF_p}(S)$ be the quotient of $F_{\hspace{1pt}\FF_p}(S)$ by the subspace generated by the basepoint. Note that there is a natural isomorphism of vector spaces ${\tilde{F}_{\hspace{1pt}\FF_p}}(S^{\wedge n}/_{\Sigma_n})\cong ({\tilde{F}_{\hspace{1pt}\FF_p}}(S)) ^{\otimes n}_{\Sigma_n}$.

Let $\Sing_\bullet(X)$ be the pointed simplicial set of singular simplices of $X$. Let $C_\bullet(X) =F_{\hspace{1pt}\FF_p}(\Sing_\bullet(X))$ be the simplicial $\FF_p$-vector space generated by $X_\bullet$ and write $\tilde{C}_\bullet(X) =\tilde{F}_{\hspace{1pt}\FF_p}(\Sing_\bullet(X))$ for the reduced \mbox{$\FF_p$-chains on $X$.} The reduced homology of $X^{\wedge n}/_{\Sigma_n}$ is the homology of the simplicial vector space $\tilde{C}_\bullet(X^{\wedge n}/_{\Sigma_n})$, and this simplicial vector space is isomorphic to   $ \tilde{C}_\bullet(X)^{\otimes n}_{\Sigma_n}.$

Let $N$ be the normalised chain complex functor from the category of simplicial $\FF_p$-vector spaces to the category of non-negative chain complexes over $\FF_p$. Let $\Gamma$ be the inverse of $N$ provided by the Dold-Kan correspondence. Then $\widetilde{\HH}_*(X^{\wedge n}/_{\Sigma_n})$ is naturally isomorphic to
$ {\HH}_*\left( N\left((\Gamma N \tilde{C}_\bullet(X))^{\otimes n}_{\Sigma_n}\right)\right).$
It follows that the functor $X\mapsto \widetilde{\HH}_*(X^{\wedge n}/_{\Sigma_n})$ from pointed spaces to graded vector spaces factors as the composition of the reduced chain complex functor $X\mapsto N\tilde{C}_\bullet(X) = \tilde{C}_\ast(X)$, and the   functor $
C\mapsto  {\HH}_*\left(N\left((\Gamma C)^{\otimes n}_{\Sigma_n}\right)\right)
$ from chain complexes to graded vector spaces.

Clearly, this is a homotopy functor. Therefore, it factors through the homotopy category of non-negative chain complexes over $\FF_p$. It is well known that this homotopy category is equivalent to the category of graded vector spaces, and $ {\HH}_*$ induces the equivalence of categories. Therefore, $\widetilde{\HH}_*(X^{\wedge n}/_{\Sigma_n})$ is naturally isomorphic to 
${\HH}_*\left( N\left((\Gamma \widetilde{\HH}_*(X))^{\otimes n}_{\Sigma_n}\right)\right)$,
where we consider $\widetilde{\HH}_*(X)$ as a chain complex with zero differential.  Setting $G(-)= {\HH}_*\left(N\left(\left( \Gamma -\right) ^{\otimes n}_{\Sigma_n}\right)\right)$ proves the result.
\end{proof}
It follows that in order to describe the homology of $X^{\wedge n}/_{\Sigma_n}$ for   general $X$, it is enough to complete this computation  for $X$ a wedge of spheres. We will describe the direct sum $\bigoplus_{n\geq 0}  \widetilde{\HH}_*(X^{\wedge n}/_{\Sigma_n})$ as a bigraded vector space.

\begin{definition}\label{definition: Fk}
For every integer $k\geq 0$, we define an endofunctor ${\Fcal}_k$ of the category of graded $\FF_p$-vector spaces as follows:

For $k=0$, the functor $\Fcal_0$ is the identity. 

For $k>0$, the graded vector space $\Fcal_k(V)$ is  generated by symbols of the form $(i_1, \ldots, i_k; v)$
where $v$ is an element of $V$, and $i_1, \ldots, i_k$ are positive integers satisfying the following conditions:  
\begin{enumerate}[leftmargin=26pt]
\item Each $i_j$ is congruent to $0$ or $1$ mod $2(p-1)$, 
\item $i_j\ge pi_{j+1}$ for all $1\le j<k$,  \label{admissible}
\item If $p$ is odd, then $pi_1<(p-1)(|v|+i_1+\cdots+i_k)$, and if $p=2$, then $pi_1\le(p-1)(|v|+i_1+\cdots+i_k)$. (This convention at $p=2$ is slightly non-standard, we will explain the reason for it below),  
\item We have $i_j\ne 1$ for all $1\leq j \leq k$.
\end{enumerate}
These symbols are linear in $v$, i.e. satisfy $(i_1, \ldots, i_k; u)+ (i_1, \ldots, i_k; v)=(i_1, \ldots, i_k; u+v)$

If $v$ is homogeneous of degree $|v|$, then $(i_1, \ldots, i_k; v)$ is homogeneous of degree $i_1+\cdots+i_k+|v|$.  
\end{definition}
Note that $\Fcal_k$ is a well-defined endofunctor of graded vector spaces  preserving direct sums.

Suppose $V$ is a graded vector space. Then $\bigoplus_{k\geq 0}  \Fcal_k(V)$ is a graded vector space. We endow it with a second grading, which we call ``weight'': elements of $\Fcal_k(V)$ are given weight $p^k$.

For a graded vector space $V$, define $S(V)$ to be the free graded symmetric algebra on $V$ if $p$ is odd (with Koszul sign rule), and the free exterior algebra on $V$ if $p=2$. Any grading of $V$ extends to a grading of $S(V)$ in the usual way: the degree of a product is the sum of degrees. Let $S_n(V)$ be the direct summand of $S(V)$ consisting of products of exactly $n$ elements of $V$.

\begin{proposition}\label{prop: full homology}
There is an isomorphism of bigraded vector spaces
\[
\bigoplus_{n\geq 0}  \widetilde{\HH}_*(X^{\wedge n}/_{\Sigma_n})\cong S\left(\bigoplus_{k\geq 0}\Fcal_k\widetilde{\HH}_*(X)\right)
\]
where $\widetilde{\HH}_*(X^{\wedge n}/_{\Sigma_n})$ corresponds to elements of weight $n$.
\end{proposition}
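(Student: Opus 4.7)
First, by Lemma~\ref{lemma: Dold} the functor $X \mapsto \bigoplus_n \widetilde{\HH}_*(X^{\wedge n}/_{\Sigma_n})$ factors through $\widetilde{\HH}_*(X)$, and the right-hand side $S\bigl(\bigoplus_k \Fcal_k \widetilde{\HH}_*(X)\bigr)$ does too by construction. Both sides moreover convert direct sums on the homological input to tensor products: on the left, via the $\Sigma_n$-orbit decomposition $(X \vee Y)^{\wedge n}/_{\Sigma_n} \simeq \bigvee_{a+b=n} (X^{\wedge a} \wedge Y^{\wedge b})/_{\Sigma_a \times \Sigma_b}$ together with the K\"unneth formula; on the right, because $\Fcal_k$ is additive and $S$ sends direct sums to tensor products. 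This reduces us to the case where $X = S^m$ is a single positive-dimensional sphere.

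For such $X$, the plan is to model $\mathcal{M}(X) := \bigvee_{n \geq 0} X^{\wedge n}/_{\Sigma_n}$ by the reduced infinite symmetric product $\Sp^{\infty}(X)$; concatenation of orbit representatives endows $\mathcal{M}(X)$ with a commutative topological monoid structure having the basepoint as unit. By Dold-Thom, $\Sp^{\infty}(S^m) \simeq K(\ZZ, m)$, so the total mod $p$ homology $H := \widetilde{\HH}_*(\mathcal{M}(X); \FF_p)$ becomes a connected bigraded graded-commutative and graded-cocommutative Hopf algebra, with the second grading (the weight) tracking the index $n$. Borel's structure theorem then identifies $H$, as an algebra, with a tensor product of monogenic sub-Hopf algebras; Cartan's explicit description of $H_*(K(\ZZ, m); \FF_p)$ contains no truncated polynomial factors, so $H$ is the free graded-symmetric algebra $S(P(H))$ on its primitive subspace $P(H)$.

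It then remains to match $P(H)$ with $\bigoplus_{k \geq 0} \Fcal_k \widetilde{\HH}_*(X)$. Milnor's dualization of the Steenrod algebra exhibits a basis for $P(H)$ by symbols $(i_1, \ldots, i_k; \iota_m)$ with $\iota_m$ the fundamental class in degree $m$ and $(i_1, \ldots, i_k)$ a dual-admissible Milnor sequence acting non-trivially on $\iota_m$; the three conditions imposed on the $i_j$ in Definition~\ref{definition: Fk} encode exactly this admissibility. The congruence $i_j \equiv 0, 1 \pmod{2(p-1)}$ comes from the internal degrees of Milnor's generators $\xi_i$ and (at odd $p$) $\tau_i$; the inequality $i_j \geq p\, i_{j+1}$ is dual Adem admissibility; and the excess bound $p\, i_1 < (p-1)(m + i_1 + \cdots + i_k)$ ensures nonvanishing on $\iota_m$. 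Each iterated operation multiplies the weight by $p$, giving weight $p^k$ for sequences of length $k$, exactly as in the definition of $\Fcal_k$. The main obstacle will be carrying out this bookkeeping carefully at the prime $p = 2$, where Definition~\ref{definition: Fk} uses a non-strict excess inequality and the functor $S$ is interpreted as an exterior rather than polynomial algebra; matching Cartan's basis with the symbols $(i_1, \ldots, i_k; v)$ in this case requires care to verify that the two conventions agree and that the constraint $i_j \neq 1$ correctly excises the non-existent lowest dual operation.
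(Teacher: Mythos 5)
Your reduction — Lemma~\ref{lemma: Dold} to collapse the problem to $\widetilde{\HH}_*(X)$, wedge-to-tensor on both sides, reduce to $X=S^m$ — is exactly what the paper does; the difference is that the paper then simply cites Nakaoka (together with Bousfield and Milgram) for the sphere case, whereas you propose to re-derive it. That re-derivation has a genuine gap at its opening step: the space $\mathcal{M}(X)=\bigvee_{n\geq 0} X^{\wedge n}/_{\Sigma_n}$ is \emph{not} weakly equivalent to the reduced infinite symmetric product $\Sp^{\infty}(X)$, so it cannot simply be ``modelled'' by it. What is true is that $\Sp^\infty(X)$ is filtered by the finite symmetric products $\Sp^n(X)$ with $\Sp^n(X)/\Sp^{n-1}(X)\cong X^{\wedge n}/_{\Sigma_n}$, and there is a \emph{natural splitting} $\widetilde{\HH}_*(\Sp^\infty X)\cong\bigoplus_n\widetilde{\HH}_*(X^{\wedge n}/_{\Sigma_n})$ on homology (Steenrod--Dold; it lives in the same paper \cite{dold1958homology} that underlies Lemma~\ref{lemma: Dold}). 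Without invoking that splitting explicitly, the Dold--Thom identification $\Sp^\infty(S^m)\simeq K(\ZZ, m)$, the Hopf-algebra structure, and the weight grading by $n$ do not transfer to $\mathcal{M}(X)$ at all.

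Once that bridge is supplied, the remainder of your outline is the standard route to Nakaoka's computation, so the two approaches coincide in substance; the paper saves the labour by citation, you pay for a self-contained argument. Two smaller corrections. The conditions in Definition~\ref{definition: Fk} are the Cartan--Serre admissibility and excess constraints for iterated operations $\beta^{\epsilon}P^{s}$ (or Steenrod squares at $p=2$) applied to the fundamental class $\iota_m$, not conditions coming from Milnor's dual generators $\xi_i,\tau_i$ — those parametrise $H_*(H\FF_p;\FF_p)$, not $H_*(K(\ZZ,m);\FF_p)$. And the weight grading — that a length-$k$ admissible sequence lives in weight $p^k$ — is not a formal consequence of Borel's structure theorem but requires tracking the symmetric-product filtration, which is precisely the extra content of the classical references. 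Your caution about the $p=2$ conventions is well placed.
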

\begin{remark}
This is not an isomorphism of algebras. Information about the algebra structure on $\bigoplus_{n\geq 0}\widetilde{\HH}_*(X^{\wedge n}/_{\Sigma_n})$ can be found, for example, in the paper of Milgram~\cite{milgram1969homology}.
\end{remark}
\begin{remark}
For odd primes, our conventions follow closely those of Nakaoka~\cite{nakaoka1957cohomology2}. When \mbox{$p=2$}, our conventions differ in that we include in $\Fcal_k$ elements $(i_1, \ldots, i_k; v)$ satisfying the equality  \mbox{$i_1=i_2+\cdots + i_k + |v|$.} In this case, the element $(i_1, \ldots, i_k; v)$ stands in for the square of the element $(i_2, \ldots, i_k; v)$. We are using the evident isomorphism of graded vector spaces between the polynomial algebra $P(x)$ and the exterior algebra $\Lambda(x_1, x_2, x_4, \ldots, x_{2^i}, \ldots, )$. This is also why we defined $S(V)$ to be the exterior rather than polynomial algebra when $p=2$. The purpose of this convention is the following: when $V$ is a vector space with one generator, assumed to \mbox{be odd if $p\neq 2$} 
(for us, $V$ will be the reduced homology of a sphere), we want homogeneous summands $S_n(V)$ to be null for $n>1$. All this is for the sake of Lemma~\ref{lemma: vanish} below.
\end{remark}
\begin{proof}[Proof of Proposition \ref{prop: full homology}]
The proposition is well-known. The case when $X$ is a sphere  can be read from Nakaoka's~\cite{nakaoka1957cohomology2}, with the aforementioned change of conventions at the prime $2$. (Nakaoka's paper is written in terms of cohomology, but as far as vector space dimension goes, there is no difference). It is easy to see that both sides of the claimed equation take wedge sums in the variable $X$ to tensor products. It follows that the formula is valid for $X$ a wedge of spheres. By Lemma~\ref{lemma: Dold}, the result holds for a general space $X$. One can also read off the result from \cite{bousfield1967operations} or \cite{milgram1969homology}.
\end{proof}
Now we can write an explicit formula for $\widetilde{\HH}_*(X^{\wedge n}/_{\Sigma_n})$. For this, we first introduce some notation: \begin{definition}
A $p$-partition of an integer $n$ is a partition into components of sizes that are powers of the prime $p$. A $p$-partition  is encoded by a sequence $(a_0, a_1, \ldots )$ of non-negative integers (almost all of them zero) with $n=\sum_{k\geq 0} a_k p^k$. We write $P(n)$ for the set of $p$-partitions of $n$.\vspace{3pt}
\end{definition}
The following statement  is an immediate consequence of Proposition~\ref{prop: full homology}.
\begin{proposition}\label{prop: single homology}
There is an isomorphism, where the direct sum is indexed on \mbox{$p$-partitions of $n$}
\[
\widetilde{\HH}_*(X^{\wedge n}/_{\Sigma_n} )\cong \bigoplus_{(a_0, a_1, \ldots )\in P(n)}  \  \bigotimes_{k\geq 0} S_{a_k}(\Fcal_k \widetilde{\HH}_*(X )).
\]
\end{proposition}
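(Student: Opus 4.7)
The plan is to simply extract the weight-graded piece of the isomorphism provided by Proposition~\ref{prop: full homology}. First I would recall that on the right-hand side of Proposition~\ref{prop: full homology}, the weight grading is defined by declaring the generators of $\Fcal_k \widetilde{\HH}_\ast(X)$ to have weight $p^k$, and by imposing additivity: the weight of a product in $S(-)$ is the sum of weights of the factors. The statement of Proposition~\ref{prop: full homology} identifies the summand of $\widetilde{\HH}_\ast(X^{\wedge n}/_{\Sigma_n})$ precisely with the weight-$n$ piece.

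Next I would use that the free symmetric (respectively exterior, at $p=2$) algebra functor $S$ converts direct sums into tensor products: there is a natural isomorphism
\[
S\left(\bigoplus_{k\geq 0} \Fcal_k \widetilde{\HH}_\ast(X)\right) \ \cong \ \bigotimes_{k\geq 0} S\bigl(\Fcal_k \widetilde{\HH}_\ast(X)\bigr),
\]
and this isomorphism respects the weight grading, since all generators in the $k$-th factor have weight $p^k$. In particular, the weight-$n$ component of the tensor product is the direct sum, over all ways of writing $n=\sum_{k\geq 0} a_k p^k$ with $a_k\geq 0$ (i.e.\ over $p$-partitions $(a_0,a_1,\ldots)\in P(n)$), of the tensor products of the weight-$a_k p^k$ pieces of each factor $S(\Fcal_k \widetilde{\HH}_\ast(X))$.

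Finally I would identify each weight-$a_k p^k$ piece of $S(\Fcal_k \widetilde{\HH}_\ast(X))$ with $S_{a_k}(\Fcal_k \widetilde{\HH}_\ast(X))$: since every generator of $\Fcal_k \widetilde{\HH}_\ast(X)$ has weight exactly $p^k$, the weight-$a_k p^k$ subspace consists precisely of products of $a_k$ generators, which by definition is $S_{a_k}(\Fcal_k \widetilde{\HH}_\ast(X))$. Combining these identifications yields the asserted formula. There is no real obstacle here; the only thing to watch is the compatibility of the weight grading with the symmetric (or exterior, at $p=2$) algebra splitting, but this is immediate from Definition~\ref{definition: Fk} and the convention on $S$ at the prime $2$.
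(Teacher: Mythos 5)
Your proposal is correct and matches the paper exactly: the paper simply declares Proposition~\ref{prop: single homology} to be an immediate consequence of Proposition~\ref{prop: full homology}, and you have spelled out the weight-grading bookkeeping (converting $S$ of a direct sum into a tensor product of $S$'s and then reading off the weight-$n$ piece via $p$-partitions) that makes the consequence immediate.
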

\subsection{The Homology of Orbit Spaces of Isotropy Groups}\label{homorbit}
Our next task is to describe  $\widetilde{\HH}_*(X^{\wedge n}/_H)$ for $H$   an isotropy group of the suspended partition complex $|\Pi_{n}|^\diamond$ and $X$ \vspace{1pt}  \mbox{a  pointed space.}

We begin by reviewing the structure of isotropy groups of simplices in  $|\Pi_n|^\diamond$ for $n>1$. Write $\mathcal{P}_n$ for the poset of all partitions of $\{1,\ldots,n\}$. 
Similarly as in Section \ref{clock}, the space $|\Pi_n|^\diamond$ arises as the realisation of the following simplicial set:
\begin{definition} \label{simplicialmodel} The \textit{simplicial model of $|\Pi_n|^\diamond$} is given by the quotient $N_\bullet(\mathcal{P}_n-\{\hat{0}\})/_{N_\bullet(\mathcal{P}_n-\{\hat{0},\hat{1}\})}$ of the nerve of the poset  of partitions which are not initial   by the nerve  of the subposet of partitions which are neither initial nor final. 
\end{definition}
This is a pointed simplicial $\Sigma_n$-set. The nondegenerate non-basepoint $i$-simplices of $|\Pi_n|^\diamond$ are in bijective correspondence with strictly increasing chains  of partitions
$$
\sigma = [\hat{0}< x_1<\ldots<x_i < \hat 1]
$$ 
In particular, this means that there is a single non-basepoint zero-simplex.  Notice that for every equivalence class of $x_i$, the chain $\sigma$ induces a chain of partitions of this class of length $(i-1)$. We call these subchains the {\it restrictions} of $\sigma$ to the equivalence classes of $x_i$.

The symmetric group $\Sigma_n$ permutes such chains of partitions $\sigma$. We say that two chains $\sigma, \sigma'$ are of the same type if they are in the same orbit under this action. More generally, we say that two chains of partitions of two possibly different sets are of the same type if there is a bijection between the sets that takes one of the chains to the other.  

Let $\sigma=[\hat{0}< x_1<\ldots< x_i <\hat 1]$ be an increasing chain of partitions of the set $\{1,\ldots,n\}$  as above. Write $K_{\sigma}\subset \Sigma_n$ for the isotropy group of this chain. Then $K_\sigma$ permutes the equivalence classes of $x_i$ (the coarsest partition in the chain). Two equivalence classes of $x_i$ are in the same orbit of $K_\sigma$ if and only if the restrictions of $\sigma$ to these two equivalence classes are of the same type. Let us list the different types of chains that occur among the restrictions of $\sigma$ to the  classes of $x_i$ as type 1, type 2, etc. Let $K_j$ denote the isotropy group of a chain of type $j$. Suppose that among the various  restrictions of $\sigma$ to the  classes of $x_i$, there are exactly $i_j$ many classes of type $j$. 

Then there is an isomorphism
$$
K_\sigma \cong \prod_j K_j\wr \Sigma_{i_j}.
$$
By induction, this identifies $K_\sigma$ with a product of iterated wreath products of symmetric groups. Furthermore, suppose that each $x_i$-class of type $j$ has cardinality $n_j$. Then we have an isomorphism\vspace{-3pt}   $$
X^{\wedge n}/_{K_\sigma}\cong \bigwedge_j (X^{\wedge n_j}/_{K_j})^{\wedge i_j}/_{\Sigma_{i_j}}\vspace{-3pt}$$
for any pointed\vspace{3pt} space $X$.

Applying the K{\"u}nneth formula and Proposition~\ref{prop: single homology}, we see that the homology groups of the space $X^{\wedge n}/_{K_\sigma}$ split as a direct sum indexed by simultaneous choices of  $p$-partitions of $i_j$ for each \mbox{type $j$.} Equivalently, the summands are indexed by isomorphism classes of chains\vspace{-3pt}
$$
[ \hat{0}< x_1<\ldots< x_i \le e_{i+1}\le \hat 1]\vspace{-3pt}
$$
where $e_{i+1}$ is a coarsening of $x_i$ with the following two properties: each $e_{i+1}$-class   contains a power of $p$ many   $x_i$-classes, and the restrictions of $\sigma$ to any two $x_i$-classes that lie in the same $e_{i+1}$-class are of the same\vspace{3pt} type.
The desire to iterate this procedure motivates the following definition:
\begin{definition}\label{definition: enhancement}
Let $\sigma=[\hat{0}< x_1<\ldots< x_i <  \hat 1]$ be a strictly increasing chain of partitions of $\{1, \ldots, n\}$. Set $x_0 = \hat{0}$,  $x_{i+1} = \hat{1}$.\vspace{-4pt}  A $p$-enhancement of $\sigma$ consists of  a refining chain  
$$ [\hat{0} \leq e_1\le x_1\le\ldots\le e_i\le x_i \le\ e_{i+1}\le  \hat 1]$$
for which the following two conditions hold true for $k=0,\ldots,i$:
\begin{enumerate}[leftmargin=26pt]  
\item Each equivalence class of $e_{k+1}$ contains a power of $p$ many $x_{k}$-classes.
\item The restriction of the chain $[\hat 0  \le e_1\le x_1\le\ldots\le e_i \le x_i \le e_{i+1}\le \hat 1]$ to any two $x_{k}$-classes lying in the same $e_{k+1}$-class are isomorphic. \vspace{-2pt}
\end{enumerate}
\end{definition}
We say that two $p$-enhancements of $\sigma$ are isomorphic if they lie in the same orbit under  the action of $K_\sigma$. We will primarily be concerned with isomorphism classes of enhancements, which we can use to define endofunctors of graded vector spaces:

\begin{definition}
Let \mbox{$\Theta=[\hat{0} \leq e_1\le x_1\le\ldots\le e_i\le x_i \le\ e_{i+1}\le  \hat 1]$} be a $p$-enhancement of a chain of partitions $\sigma=[\hat{0}<x_1<\ldots<x_i<\hat{1}]$ of the set $\{1, \ldots, n\}$. Write $[\Theta]$ for its isomorphism class.   

Given a graded vector space $V$, we define a  graded vector space $[\Theta](V)$ by the following recursion:
\begin{itemize}[leftmargin=26pt]  \label{definition: enhancement2}
\item If $i=0$, then $\Theta=[\hat 0 \le e_1\le \hat 1]$, so the isomorphism type of $\Theta$ is determined by the isomorphism type of $e_1$, i.e a $p$-partition of $n$. Let the $p$-partition be given by $(a_0, a_1, \dots )$. Then we define $[\Theta](V):=\bigotimes_j S_{a_j}(\Fcal_j(V)).$ 
\item If $i>0$, we first list the isomorphism types of the chains obtained by  restricting $\Theta$ to the   classes of $e_{i+1}$ as type 1, type 2, \ldots. Let   $a_t$ be the number of classes of $e_{i+1}$  having type $t$.\\
Assume that each $e_{i+1}$-class  of type $t$ contains exactly $p^{b_t}$ many   $x_i$-classes.
The restrictions of $\Theta$ to  all  $x_i$-classes contained in   $e_{i+1}$-classes  of same type $t$ must be pairwise isomorphic. Each   of these isomorphic restrictions corresponding to type $t$ can be thought of as a $p$-enhancement of \mbox{a chain of length $(i-1)$}. Write $[\Theta_t]$ for the resulting class of   $p$-enhancements. \\
We then define  $[\Theta](V):=\bigotimes_t S_{a_t}(\mathcal{F}_{b_t}([\Theta_t](V))).$
\end{itemize}
\end{definition}

The discussion above together with induction lead   to the following result:
\begin{proposition}\label{prop: splitting}
Assume that  $\sigma=[\hat0 < x_1<\ldots< x_i < \hat 1]$ is a strictly increasing chain of partitions of $\{1, \ldots, n\}$. Let $K_\sigma$ be the isotropy group of $\sigma$ and write $E[\sigma]$ for the set of isomorphism classes of $p$-enhancements of $\sigma$.
Given any pointed space $X$, there is an isomorphism
\[
\widetilde{\HH}_*(X^{\wedge n}/_{K_\sigma})\cong\bigoplus_{[\Theta]\in  E[\sigma]} [\Theta](\widetilde{\HH}_*(X)).
\] 
\end{proposition}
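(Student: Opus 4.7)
The plan is to proceed by induction on the length $i$ of the chain $\sigma$. The base case $i=0$, where $\sigma=[\hat 0<\hat 1]$ and $K_\sigma=\Sigma_n$, is immediate: isomorphism classes of enhancements $[\hat 0\le e_1\le\hat 1]$ are precisely $p$-partitions $(a_0,a_1,\ldots)$ of $n$, and the base clause of Definition~\ref{definition: enhancement2} gives $[\Theta](V)=\bigotimes_j S_{a_j}(\mathcal{F}_j V)$, so the statement collapses to Proposition~\ref{prop: single homology}.

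For the inductive step I would exploit the wreath-product decomposition of $K_\sigma$ recalled just before Definition~\ref{definition: enhancement}: classifying the $x_i$-equivalence classes by the isomorphism type of the restriction of $\sigma$ to them yields types $t=1,2,\ldots$ with multiplicities $i_t$, block sizes $n_t$, restricted sub-chains $\sigma_t$ (each of length strictly less than $i$), and isotropy groups $K_t\subset\Sigma_{n_t}$. This gives $K_\sigma\cong\prod_t K_t\wr\Sigma_{i_t}$ and a homeomorphism $X^{\wedge n}/_{K_\sigma}\cong\bigwedge_t (X^{\wedge n_t}/_{K_t})^{\wedge i_t}/_{\Sigma_{i_t}}$. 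Applying the Künneth formula (over the field $\FF_p$), followed by Proposition~\ref{prop: single homology} with the pointed space $X^{\wedge n_t}/_{K_t}$ playing the role of~$X$, produces
$$
\widetilde{\HH}_*\bigl(X^{\wedge n}/_{K_\sigma}\bigr)\;\cong\;\bigotimes_t\ \bigoplus_{(a_{t,0},a_{t,1},\ldots)\in P(i_t)}\ \bigotimes_j S_{a_{t,j}}\bigl(\mathcal{F}_j\widetilde{\HH}_*(X^{\wedge n_t}/_{K_t})\bigr).
$$
The inductive hypothesis then expands each $\widetilde{\HH}_*(X^{\wedge n_t}/_{K_t})$ as $\bigoplus_{[\Theta_t]\in E[\sigma_t]}[\Theta_t](\widetilde{\HH}_*(X))$.

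To finish, I would substitute this splitting, commute the direct sum through the additive functors $\mathcal{F}_j$, and further commute it through $S_{a_{t,j}}$ using the standard identity $S_a\bigl(\bigoplus_i V_i\bigr)\cong\bigoplus_{(m_i),\ \sum m_i=a}\bigotimes_i S_{m_i}(V_i)$, before distributing across the outer tensor products. The index set thereby produced parametrizes, for every type $t$ and every exponent $j$, a multiset of enhancements $[\Theta_t]\in E[\sigma_t]$ of total cardinality $a_{t,j}$, subject to $\sum_j a_{t,j}\,p^j=i_t$. The key combinatorial step, and the main obstacle of the argument, is to match this bookkeeping bijectively with $E[\sigma]$: each element of the multiset attached to the pair $(t,j)$ prescribes an $e_{i+1}$-class containing $p^j$ many type-$t$ classes of $x_i$ together with its own restricted enhancement, and assembling these data across all $(t,j,[\Theta_t])$ yields precisely an enhancement $\Theta$ in the sense of Definition~\ref{definition: enhancement}. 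A direct comparison of summands with the recursive formula $[\Theta](V)=\bigotimes_t S_{a_t}\bigl(\mathcal{F}_{b_t}([\Theta_t](V))\bigr)$ from Definition~\ref{definition: enhancement2} closes the induction. Naturality in $X$ is preserved at every stage thanks to Lemma~\ref{lemma: Dold}, the naturality of the Künneth isomorphism, and that of the algebraic manipulations employed.
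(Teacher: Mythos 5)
Your proposal is correct and follows essentially the same route as the paper's own proof: induct on the chain length, decompose $K_\sigma$ as a product of wreath products, apply Künneth together with Proposition~\ref{prop: single homology}, invoke the inductive hypothesis and additivity of the $\Fcal_k$ together with the expansion of $S_a$ of a direct sum, and identify the resulting index set with $E[\sigma]$ by comparison with the recursion in Definition~\ref{definition: enhancement2}. The paper leaves the base case implicit and spells out the final combinatorial reindexing in a bit more detail, but the substance is identical.
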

\begin{proof}
We list the different types of chains that occur among restrictions of $\sigma$ to classes of $x_i$ as type 1, type 2, \ldots. For each type $j$, we  assume that there are $i_j$ many $x_i$-classes of type $j$, and that each of them has cardinality $n_j$. We again write $K_j$ for the isotropy \mbox{group of any chain of type $j$.} 

The discussion preceding Definition~\ref{definition: enhancement} and Proposition \ref{prop: single homology} together imply $$\widetilde{\HH}_*(X^{\wedge n}/_{K_\sigma} ) \cong \bigotimes_j \widetilde{\HH}_*((X^{\wedge n_j}/_{K_j})^{\wedge i_j}/_{\Sigma_{i_j}}) \cong \bigoplus_{\{(a_0^j, a_1^j,\ldots)\in P(i_j)\}_j}    \left(\bigotimes_{j,k} S_{a_k^j} (\mathcal{F}_k  \widetilde{\HH}_*(    X^{\wedge n_j}/_{K_j}))\right) $$

For each $j$, we pick a chain $\sigma_j$ of type $j$ which is obtained by restricting the chain  $\sigma$ to an $x_i$-class. By induction and additivity of each functor $\mathcal{F}_k$,  we obtain \mbox{an isomorphism}
$$\widetilde{\HH}_*(X^{\wedge n}/_{K_\sigma} )  \cong \bigoplus_{\{(a_0^j, a_1^j,\ldots)\in P(i_j)\}_j }   \left(\bigotimes_{j,k} S_{a_k^j} \left( \bigoplus_{[\Theta_j]\in  E[\sigma_j]} \mathcal{F}_k  \left( [\Theta_j]\left(\widetilde{\HH}_*(X,\FF_p) \right) \right) \right) \right) $$

We can expand  the functors $S_{a_k^j}$  above and obtain an isomorphism to the following vector space:

$$     \bigoplus_{\substack{\{(a_0^j, a_1^j,\ldots)\in P(i_j)\}_j\\\{b^j_{k,1} + \ldots + b^j_{k,s^j_k} = a_k^j \ | \ b^j_{k,s}>0\ \}_{j,k} \\ \{[\Theta_{k,1}^j], \ldots , [ \Theta^j_{k,s^j_k}] \in E[\sigma_j] \mbox{\small{ distinct} }\}_{j,k} }} \hspace{-10pt}  \Bigg(\bigotimes_{j,k,s}  \ \ \ \ S_{b_{k,s}^j} \left(  \mathcal{F}_k  \left( [\Theta^j_{k,s}]\left(\widetilde{\HH}_*(X,\FF_p) \right) \right) \right)\Bigg) $$

The indexing set of this sum is in fact just $E[\sigma]$. 

Indeed, specifying a $p$-enhancement $\Theta$ of $\sigma$ up to isomorphism is equivalent to the following \vspace{-2pt}data:
\begin{enumerate}[leftmargin=26pt]
\item A $p$-partition $\sum a_k^j p^k = i_j$  for each type $j$ of chains obtained by restricting $\sigma$ to $x_i$-classes. This corresponds to   partitions $e_{i+1}\geq x_i$ such that   there are exactly $a_k^j$ many $e_{i+1}$-classes that contain $p^k$ many $x_i$-classes of type $j$, up to isomorphism.
\item For each  $j,k$ a collection of $a^j_k$ many $p$-enhancements $[\Theta_{k,1}^j], \ldots ,  [\Theta^j_{k,s^j_k}] $  of $\sigma_j$ \mbox{up to isomorphism.}  Write $b^j_{k,s} $ for the multiplicity of $[\Theta_{k,s}^j]$. Each of these $a_k^j$ many $p$-enhancements induces a simultaneous $p$-enhancement of some $p^k$ many $x_i$-classes   of type $j$ lying in the same  $e_{i+1}$-class. 
\end{enumerate}
We restrict attention to the summand  above corresponding to the isomorphism class of a given  $p$-enhancement $\Theta=[\hat{0} \leq e_1\le x_1\le\ldots\le e_i\le x_i \le\ e_{i+1}\le  \hat 1]$ of $\sigma$.

The triples $j,k,s$ indexing the inner tensor product above then corresponds   exactly to the   possible types $t$ of  restrictions of $\Theta$   to  $e_{i+1}$-classes $S$.
Here $j$ encodes the type of the restriction of $\sigma$ to any $x_i$-class in $S$, the number $k$ is chosen so that $S$ contains $p^k$ many $x_i$-classes, and the index $s\in \{1,\ldots, s_k^j\}$ specifies the type of the restriction of $\Theta$ to any of the  $x_i$-classes in $S$.  There are  $b_{j}^{k,s}$ many $e_{i+1}$-classes of this type $t$, and the   claim follows from the \vspace{-4pt} second clause of \mbox{Definition \ref{definition: enhancement2}. }
\end{proof}

Before coming back to our computation of Bredon homology, we need two rather easy observations. 

For the first, assume we are given a chain of partitions $\sigma=[\hat0  < x_1<\ldots< x_i <  \hat 1]$ and a $p$-enhancement $\Theta=[\hat 0  \le e_1\le x_1\le\ldots\le e_i\le x_i \le e_{i+1}\le   \hat 1]$. Suppose that for some $1\le j\le i$, we have $e_j=x_j=e_{j+1}$. Omitting $x_j$ from the chain, we get the chain $\sigma_j=[\hat0  < x_1<\ldots <x_{j-1} < x_{j+1}< \ldots < \hat 1]$ and a $p$-enhancement \mbox{$\Theta_j=[\hat0  \le e_1\le x_1\le \ldots \le x_{j-1} \le e_j \le x_{j+1}\le \ldots \le \hat 1]$.}
\begin{lemma}\label{lemma: match}
There is an isomorphism
$
[\Theta](\widetilde{\HH}_*(X,\FF_p))\cong [\Theta_j](\widetilde{\HH}_*(X,\FF_p))
$. \end{lemma}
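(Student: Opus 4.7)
The plan is to prove the lemma by induction on the length $i$ of the chain $\sigma$, showing that the recursive computations of $[\Theta](V)$ and $[\Theta_j](V)$ produce canonically isomorphic graded vector spaces. The guiding principle is that the hypothesis $e_j = x_j = e_{j+1}$ forces condition (1) of Definition \ref{definition: enhancement} to be trivial at level $j$: each $e_{j+1}$-class contains exactly $p^0 = 1$ many $x_j$-classes, so the associated step in the recursion is an application of $\mathcal{F}_0 = \text{id}$, which contributes nothing.

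For the base case $i = 1$ (forcing $j = 1$), I would unfold both sides directly. On the $\Theta$ side, the top-level recursion indexed by types of $e_2$-class restrictions yields $\bigotimes_t S_{a_t}\bigl(\mathcal{F}_0([\Theta_t](V))\bigr)$, where each $[\Theta_t](V)$ is a length-$0$ enhancement determined by the single $p$-power size of the corresponding $x_1 = e_2$-class. On the $\Theta_1$ side, $[\Theta_1](V)$ is immediately the length-$0$ expression $\bigotimes_k S_{a_k}(\mathcal{F}_k(V))$ attached to the $p$-partition coming from $e_1' = e_1 = x_1 = e_2$. The two expressions match term-by-term because $\mathcal{F}_0$ is the identity and the types/multiplicities are in bijection via class size.

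For the inductive step, I would split into the subcases $j < i$ and $j = i$. When $j < i$, the top-level data (the partition $e_{i+1}$ coarsening $x_i$, and the resulting types, $a_t$'s and $b_t$'s) are the same for $\Theta$ and $\Theta_j$ since removing $x_j$ only affects an intermediate level. Restricting both enhancements to an $e_{i+1}$-class $S$ preserves the degeneracy $e_j|_S = x_j|_S = e_{j+1}|_S$ and yields, on each side, an enhancement of a chain of length one less; the inductive hypothesis then identifies the two restrictions. When $j = i$, the top level of $\Theta$ has $b_t = 0$ (since $e_{i+1} = x_i$), so $[\Theta](V) = \bigotimes_t S_{a_t}([\Theta'_t](V))$ with $\Theta'_t$ the restriction of $\Theta$ to an $x_i = e_{i+1}$-class $S$; inside $\Theta'_t$ we have $e_i|_S = \hat 1_S$ (because $S$ is also an $e_i$-class), so its own top-level recursion has a single outer type with $b = c_S$ where $p^{c_S}$ is the number of $x_{i-1}$-classes in $S$. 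On the other side, $\Theta_i$ has top-level partition $e_i = x_i$ grouping $x_{i-1}$-classes into blocks of precisely these sizes $p^{c_S}$, and its restrictions to $e_i$-classes coincide with the second-level restrictions of $\Theta'_t$. A direct matching of the two recursive expressions completes the step.

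The main obstacle will be the book-keeping in the case $j = i$: unlike the $j < i$ case, removing $x_i$ genuinely rearranges the top two layers of the recursion, so one must verify that the ``trivial outer layer'' inside $\Theta'_t$ is absorbed into the top-level step of $\Theta_i$ via the identities $S_a(\mathcal{F}_0(W)) = S_a(W)$ and via a careful identification of type equivalence classes on both sides. Once this reorganisation is made explicit, the remainder of the argument is formal bookkeeping, and the inductive hypothesis takes care of everything strictly below the rearranged layers.
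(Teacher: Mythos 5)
The paper states Lemma~\ref{lemma: match} without proof, introducing it as an ``easy observation,'' so there is no written argument to compare against; your induction fills in exactly the kind of unwinding the authors leave implicit. Your proof is correct, and the key structural point is correctly identified: the degeneracy $e_j=x_j=e_{j+1}$ forces the corresponding step of the recursion of Definition~\ref{definition: enhancement2} to be an $\mathcal{F}_0=\mathrm{id}$ layer, so it can be deleted without changing the output. The case $j<i$ is genuinely handled by induction (the top-level data of $\Theta$ and $\Theta_j$ agree, and their restrictions to $x_i$-classes still carry the degeneracy one level lower); the case $j=i$ is not really inductive but a direct two-layer--to--one-layer matching, which you carry out correctly. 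One small imprecision worth tightening: in the $j=i$ case you say ``its restrictions to $e_i$-classes coincide with the second-level restrictions of $\Theta'_t$,'' but the recursion of Definition~\ref{definition: enhancement2} classifies by $e_i$-classes and then restricts to $x_{i-1}$-classes, so the phrase should refer to restrictions to $x_{i-1}$-classes; the claim you actually need, namely that the restriction of $\Theta_i$ to an $x_{i-1}$-class $T$ equals the restriction of $\Theta$ to $T$ (after dropping the trivial top levels), is true, and the conclusion follows.
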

The second observation covers the case $\hat 0 = e_1$:
\begin{lemma}\label{lemma: vanish}
Let $\Theta=[\hat 0  \le e_1\le x_1\le\ldots\le e_i\le x_i \le e_{i+1}\le \hat 1]$ be as before. 
Let $\ell$ be an integer, assumed to be odd if $p \neq 2$. 
If $ e_1=\hat{0}$, then $[\Theta](\widetilde{\HH}_*(S^{\ell}),\FF_p)$ is the zero vector space.
\end{lemma}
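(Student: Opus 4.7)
The plan is to prove the lemma by induction on $i$, the number of proper intermediate partitions in the chain $\sigma$. The crucial input is that in the setting where the lemma is applied, $V = \widetilde{\HH}_\ast(X;\FF_p)$ is one-dimensional and concentrated in a single degree — odd for odd $p$ because $\ell n$ is odd, and in arbitrary degree for $p = 2$ but with the exterior-algebra convention for $S$ flagged explicitly after Proposition~\ref{prop: full homology} (squares being captured by $\mathcal{F}_k$ instead). Together this yields $S_m(V) = 0$ for every $m \geq 2$. Combined with the trivial identities $\mathcal{F}_k(0) = 0$ (since $\mathcal{F}_k$ preserves direct sums) and $S_a(0) = 0$ for $a \geq 1$, the recursive formula of Definition~\ref{definition: enhancement2} then transports this vanishing up to $[\Theta](V)$.

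The base case $i = 0$ is essentially immediate: the enhancement $\Theta = [\hat{0} \leq \hat{0} \leq \hat{1}]$ on a set of size $n \geq 2$ has associated $p$-partition $(n,0,0,\ldots)$, so by the base clause of Definition~\ref{definition: enhancement2} we get $[\Theta](V) = S_n(\mathcal{F}_0(V)) = S_n(V) = 0$. For the inductive step $i \geq 1$, I would use $x_1 > \hat{0}$ to pick an $x_1$-class $B$ of size $\geq 2$ and let $C^\ast$ be the $x_i$-class containing $B$, so that $x_1|_{C^\ast} > \hat{0}_{C^\ast}$. Letting $t^\ast$ be the type of the $e_{i+1}$-class containing $C^\ast$, condition~(2) of Definition~\ref{definition: enhancement} guarantees that the restriction of $\Theta$ to every $x_i$-class lying in an $e_{i+1}$-class of type $t^\ast$ is isomorphic to $\Theta|_{C^\ast}$; in particular each such restriction inherits both $e_1|_{C^\ast} = \hat{0}_{C^\ast}$ and an $x_1$-piece strictly coarsening $\hat{0}_{C^\ast}$. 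Hence $\Theta_{t^\ast}$ is a $p$-enhancement of a length-$(i-1)$ chain on $C^\ast$ satisfying the hypotheses of the lemma, and the inductive hypothesis gives $[\Theta_{t^\ast}](V) = 0$. Since $a_{t^\ast} \geq 1$, the factor
\[
S_{a_{t^\ast}}\bigl(\mathcal{F}_{b_{t^\ast}}([\Theta_{t^\ast}](V))\bigr) \;=\; S_{a_{t^\ast}}\bigl(\mathcal{F}_{b_{t^\ast}}(0)\bigr) \;=\; S_{a_{t^\ast}}(0) \;=\; 0
\]
annihilates the tensor product in Definition~\ref{definition: enhancement2}, forcing $[\Theta](V) = 0$.

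The main obstacle will be a small bookkeeping issue during the restriction: the chain on $C^\ast$ may acquire weak inequalities $x_j|_{C^\ast} = x_{j+1}|_{C^\ast}$, but the recursive definition is insensitive to such coincidences, and the key point is that the \emph{first} strict step $\hat{0}_{C^\ast} < x_1|_{C^\ast}$ is preserved — this is precisely why we select $C^\ast$ to contain an $x_1$-class of size $\geq 2$ and exploit condition~(2) to propagate the property across the entire type $t^\ast$, so that the induction legitimately terminates at an $i=0$ base case on a set of size $\geq 2$ where one-dimensionality of $V$ forces $S_n(V)=0$.
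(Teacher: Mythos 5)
Your proof is correct. The paper states Lemma~\ref{lemma: vanish} without any proof, so there is nothing to compare against, but the induction on $i$ that you set up is the natural argument. Two substantive points are worth recording. First, you correctly flag that the statement is not literally true for an arbitrary pointed $X$: the proof requires $V = \widetilde{\HH}_*(X;\FF_p)$ to be one-dimensional, concentrated in odd degree when $p$ is odd, so that $S_m(V) = 0$ for all $m \geq 2$ (for $p = 2$ this is built into the exterior-algebra convention for $S$). This holds in the sole application, where $X = S^\ell$ with $\ell$ odd for $p$ odd. (One small slip: $V$ is concentrated in degree $\ell$, not $\ell n$, though since $n$ is a positive integer the parity conclusion for $p$ odd is unaffected.) Second — and this is the genuine subtlety you correctly isolate — tracking a fixed $x_1$-class $B$ of size $\geq 2$ through the recursion is essential, not mere bookkeeping. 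If one only carried the weaker invariants ``$e_1|_{C^\ast} = \hat 0$ and $|C^\ast| \geq 2$'' through the induction, the restricted chain on $C^\ast$ could become fully degenerate (all $x_j|_{C^\ast} = \hat 0|_{C^\ast}$ for $j < i$); in that case $[\Theta|_{C^\ast}](V)$ unwinds to a tensor product of terms of the form $S_{a_t}(\mathcal{F}_{b_t}(V))$ with all $a_t = 1$, which is a nonzero product of $\mathcal{F}_{b}(V)$'s, and the induction collapses. Choosing $C^\ast$ (and all subsequent restriction classes) to contain $B$ forces $x_1|_{C^\ast} > \hat 0_{C^\ast}$ and guarantees the recursion terminates at a length-zero enhancement on a set of size $|B| \geq 2$, where $[\Theta^{(0)}](V) = S_{|B|}(V) = 0$ actually does the work.
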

\subsection{Calculation of the Bredon Homology}
We are now ready to calculate $\widetilde{\HH}^{\br}_*(|\Pi_n|^\diamond; \mu_*)$. 
Recall that this group is computed as the homology of the normalised chain complex of the simplicial abelian group obtained by applying $\tilde\mu_*$ levelwise to the  simplicial model of $|\Pi_n|^\diamond$ \mbox{from Definition \ref{simplicialmodel}.}

In simplicial degree $i$, this chain complex is isomorphic to the direct sum of $\widetilde{\HH}_*(S^{\ell n}/_{K_\sigma})$, where $K_\sigma$ ranges over a set of representatives of isotropy groups of nondegenerate, non-basepoint \mbox{$i$-simplices} of $|\Pi_n|^\diamond$, i.e. strictly increasing chains of partitions of $\{1, \ldots, n\}$. 

Let $K_\sigma$ be the isotropy group of  $\sigma=[\hat0  < x_1<\ldots< x_i < \hat 1]$. Proposition~\ref{prop: splitting} decomposes $\widetilde{\HH}_*(S^{\ell n}/_{K_\sigma})$ even further and 
identifies it with a direct sum indexed by isomorphism types of $p$-enhancements of $\sigma$.

Our next task is to arrange most of the summands in isomorphic pairs, so that they ``cancel out''. For this, we shall distinguish between four kinds of (isomorphism classes of) $p$-enhancements.
\begin{definition}\label{definition: classification}
Let $\sigma=[\hat0  < x_1<\ldots<x_i < \hat 1]$ be a chain of partitions. By convention, we set $x_0 = \hat{0}$ and $x_{i+1} = \hat{1}$.
A $p$-enhancement  $\Theta=[\hat 0 \le e_1\le x_1\le\ldots\le e_i\le x_i \le e_{i+1}\le \hat 1]$ is said to be
\begin{enumerate}[leftmargin=26pt]
\item \textit{negligible}, if $e_1=\hat 0$.
\item \textit{matching up} if there exists a $1\le j\le i+1$ such that $e_j < x_j$, and for the smallest such $j$, there is a strict inequality $x_{j-1}<e_j$. 
\item \textit{matching down} if there exists a $1< j\le i+1$ such that $e_j < x_j$, and for the smallest such $j$, there is an equality $x_{j-1}=e_j$. 
\item \textit{pure} if $e_j = x_j$ for all $1\le j\le i+1$. 
\end{enumerate}
\end{definition}
Clearly, these notions are  invariant  under the action of $\Sigma_n$. 
The following observation is key:
\begin{proposition}\label{prop: matching}
For all $0\le i \le n-3$, there is a bijective correspondence between isomorphism types of $p$-enhancements of chains of length $i$ that are matching up and  isomorphism types of $p$-enhancements of chains of length $i+1$ that are matching down. This bijection induces an isomorphism of corresponding summands inside the groups $\widetilde{\HH}_*(S^{\ell n}/_{K_\sigma})$ attached to chains $\sigma$.
\end{proposition}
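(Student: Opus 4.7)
The plan is to construct an explicit bijection via insertion/deletion of the ``jumping'' partition in a $p$-enhancement, and to identify the induced functors using Lemma~\ref{lemma: match}.

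First I would describe the bijection in the forward direction. Suppose $\Theta = [\hat 0 \le e_1\le x_1\le\cdots\le e_i\le x_i\le e_{i+1}\le \hat 1]$ is a matching-up $p$-enhancement of $\sigma = [\hat 0 < x_1<\cdots< x_i<\hat 1]$ with smallest jump at index $j$, so that $e_k = x_k$ for $k<j$ and $x_{j-1} < e_j < x_j$ (with the convention $x_0=\hat 0$, $x_{i+1}=\hat 1$). Form the chain of length $i+1$
$$\sigma' \;=\; [\hat 0 < x_1<\cdots< x_{j-1} < e_j < x_j<\cdots< x_i<\hat 1]$$
by inserting $e_j$, and enhance it to
$$\Theta' \;=\;[\hat 0\le e_1 \le x_1\le\cdots\le e_j\le e_j\le e_j\le x_j\le e_{j+1}\le \cdots],$$
that is, set $e'_k=e_k$ and $x'_k=x_k$ for $k<j$, then $e'_j = x'_j = e_j = e'_{j+1}$, $x'_{j+1}=x_j$, and finally $e'_k = e_{k-1}$, $x'_k = x_{k-1}$ for $k>j+1$. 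Since $e'_k = x'_k$ for $k\le j$ and $e'_{j+1}=x'_j < x'_{j+1}$, the new enhancement is matching down at index $j+1$. The reverse map takes a matching-down $\Theta'$ at smallest index $j'$ and deletes the partition $x'_{j'-1}$, which by definition of matching down equals $e'_{j'}$, from the chain; the resulting enhancement on the shorter chain is manifestly matching up at $j'-1$. These two constructions are visibly inverse to each other and $\Sigma_n$-equivariant, so they descend to a bijection between the corresponding sets of isomorphism classes of enhancements.

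Next I would verify that $\Theta'$ is a legal $p$-enhancement, i.e.\ that Conditions (1) and (2) of Definition~\ref{definition: enhancement} still hold at every level. For indices $k<j$ and $k>j+1$ the two conditions are inherited directly from $\Theta$, because neither the partitions $e'_{k+1},x'_k$ nor the restrictions of $\Theta'$ to them have changed. At level $j$ we have $e'_j = x'_j$, so Condition~(1) reads ``each class of $e'_j$ contains $p^0=1$ many $x'_j$-classes'' and Condition~(2) is vacuous. At level $j+1$, each $e'_{j+1}$-class is an $e_j$-class of the original enhancement, and the number of $x'_j = e_j$-classes inside each such class is $1=p^0$; again vacuous. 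Finally at level $j+2$ (if $j\le i$), each $e'_{j+2}=e_{j+1}$-class contains a power of $p$ many $x_j=x'_{j+1}$-classes by the corresponding condition for $\Theta$, and the isomorphism condition on their restrictions is likewise inherited. The bijection therefore respects the four mutually exclusive types of Definition~\ref{definition: classification}.

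Finally I would deduce the claimed isomorphism of summands. The enhancement $\Theta'$ satisfies $e'_j = x'_j = e'_{j+1}$ (all three equal to $e_j$), which is exactly the hypothesis of Lemma~\ref{lemma: match} with respect to the index $j$. Omitting $x'_j$ from $\sigma'$ recovers $\sigma$, and the resulting enhancement is precisely $\Theta$; Lemma~\ref{lemma: match} therefore yields a natural isomorphism $[\Theta'](\widetilde{\HH}_*(X))\cong [\Theta](\widetilde{\HH}_*(X))$. This gives the desired identification of summands inside $\widetilde{\HH}_*(S^{\ell n}/_{K_{\sigma'}})$ and $\widetilde{\HH}_*(S^{\ell n}/_{K_\sigma})$ produced by Proposition~\ref{prop: splitting}. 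The main subtlety in writing out a careful proof is simply keeping the indexing consistent through the shift in chain length, since the labels $e_k,x_k,e'_k,x'_k$ all re-index across the insertion point~$j$; the combinatorial and functorial content is entirely furnished by Lemmas~\ref{lemma: match} and the definitions above.
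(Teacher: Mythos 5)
Your proposal is correct and takes essentially the same approach as the paper: insert the first jump partition $e_j$ into $\sigma$ to pass from a matching-up enhancement to a matching-down one of the longer chain, and invoke Lemma~\ref{lemma: match} (via the repeated triple $e'_j=x'_j=e'_{j+1}$) to identify the homology summands. The extra verifications you include (that $\Theta'$ is a legal $p$-enhancement and that the deletion map is a two-sided inverse) are implicit in the paper but welcome.
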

\begin{proof}
Let $0\le i \le n-3$. Let $\sigma=[\hat0  < x_1<\ldots< x_i < \hat 1]$ be a chain of partitions of length $i$, and let $\Theta=[\hat 0 \le e_1\le x_1\le\ldots\le e_i\le x_i \le e_{i+1}\le \hat 1]$ be a $p$-enhancement of $\sigma$ that is \mbox{matching up.} By definition, this means that there is a $1\le j\le i+1$ such that $x_{j-1}< e_j < x_j$, and such that for all $0\le j'< j$, we have $e_{j'}=x_{j'}$. 

We define a chain $\sigma^+$ of length  $i+1$ as $\sigma^+=[\hat0  <x_1< \ldots<x_{j-1}<e_j<x_{j}<\ldots < x_i < \hat 1]$. Let $\Theta^+$ be the $p$-enhancement of $\sigma^+$ given by $$\Theta^+=[\hat 0  \le e_1\le x_1\le\ldots \le x_{j-1}\le e_j\le e_j\le e_j\le x_{j}<\ldots\le e_i\le x_i \le e_{i+1}\le \hat 1]$$ Clearly, $\Theta^+$ is a matching down enhancement of $\sigma^+$, and this procedure induces the desired bijection. The corresponding homology summands are isomorphic by Lemma~\ref{lemma: match}.
\end{proof}
\begin{proposition}\label{prop: cancel}
The Euler characteristic of the Bredon homology $\widetilde{\HH}^{\br}_*(|\Pi_n|^\diamond;\tilde\mu_*)$ is the same as the Euler characteristic of the bigraded submodule spanned by all summands corresponding to pure $p$-enhancements.
\end{proposition}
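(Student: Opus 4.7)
My plan is to work entirely at the level of the Bredon chain complex that computes $\widetilde{\HH}^{\br}_*(|\Pi_n|^\diamond;\tilde\mu_*)$, leveraging that the Euler characteristic of a bounded chain complex of finite dimensional vector spaces agrees, in each fixed internal $\tilde\mu_*$-grading, with the Euler characteristic of its homology. First I will apply Proposition \ref{prop: splitting} levelwise to write the chain group in simplicial degree $i$ as a bigraded direct sum
\[
C_i \;=\; \bigoplus_{[\sigma]}\;\bigoplus_{[\Theta]\in E[\sigma]}[\Theta]\bigl(\widetilde{\HH}_*(S^{\ell n})\bigr),
\]
where $[\sigma]$ runs over isomorphism types of strictly increasing chains of $i$ proper nontrivial partitions of $\{1,\ldots,n\}$ and $[\Theta]$ over the $p$-enhancements of $\sigma$.

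Next I will group the totality of summands $\bigsqcup_i\{([\sigma],[\Theta]) : |\sigma|=i\}$ according to the four-fold partition of Definition \ref{definition: classification} into negligible, matching up, matching down, and pure enhancements. By additivity of Euler characteristic over direct sums, the total Bredon Euler characteristic decomposes into the sum of four contributions, one from each class, and it will suffice to show that the first three classes contribute zero.

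The negligible contribution I will dispatch immediately by Lemma \ref{lemma: vanish}, which forces the corresponding summand to be zero. For the two matching classes I will invoke Proposition \ref{prop: matching}, which supplies, for each $0\le i\le n-3$, a bijection between matching up enhancements in simplicial degree $i$ and matching down enhancements in simplicial degree $i+1$, together with an isomorphism of the corresponding bigraded homology summands. Because these paired contributions live in adjacent simplicial degrees, the alternating signs $(-1)^i$ and $(-1)^{i+1}$ make their Euler characteristic contributions cancel in pairs.

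The one step that requires an extra check — and is the main obstacle — is the boundary case $i=n-2$, where no simplicial degree $i+1$ exists to accept a would-be matching down partner. I will verify that in this top dimension no matching up enhancements arise at all: a maximal chain of partitions of $\{1,\ldots,n\}$ leaves no room for any $e_j$ strictly between consecutive $x_{j-1}$ and $x_j$, nor for $e_{n-1}$ strictly between $x_{n-2}$ and $\hat 1$, so the defining condition $x_{j-1}<e_j<x_j$ of matching up cannot be met. Once this edge case is settled, the only surviving summands are the pure ones, and the identity of Euler characteristics follows.
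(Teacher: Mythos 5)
Your proof is correct and follows essentially the same approach as the paper: decompose the Bredon chain groups via Proposition~\ref{prop: splitting}, kill the negligible summands with Lemma~\ref{lemma: vanish}, and cancel matching-up against matching-down summands in adjacent simplicial degrees via Proposition~\ref{prop: matching}. Your explicit verification of the top boundary case $i=n-2$ (that a maximal chain admits no matching-up enhancements, since there is no room for a partition strictly between consecutive terms) is a worthwhile detail the paper leaves implicit; the dual bottom case (no matching-down enhancements in degree $0$) is automatic from the requirement $1<j$ in Definition~\ref{definition: classification}.
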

\begin{proof}
The negligible summands vanish  by Lemma~\ref{lemma: vanish}. The matching up and matching down summands cancel out by Proposition~\ref{prop: matching}. The only summands left are the pure ones.
\end{proof}

In order to examine the  pure summands in $\widetilde{\HH}_*(S^{\ell n}/_{K_\sigma})$, we introduce the following notation:
\begin{definition}
A partition is {\it regular} if all its equivalence classes have the same size. More generally, a chain of partitions is regular if each partition in the chain is regular.
\end{definition}
\begin{lemma}\label{lemma: puregular}
A chain of partitions has a unique pure $p$-enhancement if and only if is a regular chain of partitions of a power of $p$. Otherwise it has no pure $p$-enhancements.
\end{lemma}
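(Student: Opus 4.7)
The plan is to prove both directions of the equivalence by induction on the length $i$ of the chain. Since a pure $p$-enhancement $\Theta=[\hat 0\le e_1\le x_1\le\cdots\le e_i\le x_i\le e_{i+1}\le\hat 1]$ has $e_j=x_j$ forced for all $1\le j\le i+1$, uniqueness is tautological; the content of the lemma is that such a $\Theta$ satisfies the two axioms of Definition~\ref{definition: enhancement} if and only if $\sigma=[\hat 0<x_1<\cdots<x_i<\hat 1]$ is a regular chain of partitions of a set of cardinality a power of $p$.

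For the forward direction, I would induct on $i$. The base case $i=0$ reduces to the chain $[\hat 0<\hat 1]$ of $\mathbf{n}$: the axioms at $k=0$ say that $\hat 1$ must contain a power of $p$ many singletons, i.e.\ $n=p^a$, and the chain is vacuously regular. For the inductive step, applying condition (1) of Definition~\ref{definition: enhancement} with $k=i$ (where $e_{i+1}=x_{i+1}=\hat 1$) forces the number of $x_i$-classes to be a power of $p$, say $p^{a_i}$. Condition (2) at $k=i$ then forces the restrictions of the full enhancement to any two $x_i$-classes to be isomorphic; in particular all $x_i$-classes have the same cardinality. For each $x_i$-class $C$, the restriction of $\Theta$ to $C$ is itself a pure $p$-enhancement of the restricted chain $\sigma|_C=[\hat 0_C<x_1|_C<\cdots<x_{i-1}|_C<\hat 1_C]$, so by the inductive hypothesis $|C|$ is a power of $p$ and $\sigma|_C$ is regular. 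Combining, $n=p^{a_i}|C|$ is a power of $p$; and since the chains $\sigma|_C$ are pairwise isomorphic and each is regular, the partitions $x_1,\ldots,x_{i-1}$ are regular on the whole of $\mathbf{n}$, not merely on each $C$.

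For the backward direction, assume $\sigma$ is a regular chain of partitions of a set of cardinality $p^N$. Let $s_k$ denote the common size of an $x_k$-class (with the conventions $s_0=1$ and $s_{i+1}=p^N$). Setting $e_j:=x_j$, I would verify the two axioms: condition (1) at $k$ requires that each $x_{k+1}$-class contain a power of $p$ many $x_k$-classes, and indeed this number is $s_{k+1}/s_k$, which divides $p^N$ and is hence a power of $p$; condition (2) at $k$ requires that the chain restrict isomorphically to any two $x_k$-classes contained in the same $x_{k+1}$-class, and this is immediate from regularity, since any two $x_{k+1}$-classes (a fortiori any two $x_k$-classes within one of them) admit a bijection carrying one restricted chain onto the other.

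The proof is essentially bookkeeping once one unwinds Definition~\ref{definition: enhancement} under the assumption $e_j=x_j$; the only subtle point is the globalisation step, where one must use condition~(2) of the enhancement axioms (not merely the power-of-$p$ count in condition~(1)) to upgrade the per-class regularity obtained from the inductive hypothesis to regularity of the full chain $\sigma$.
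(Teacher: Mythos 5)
Your proof is correct and follows essentially the same route as the paper's: induction on the length of the chain, observing that $e_{i+1}=\hat 1$ forces the number of $x_i$-classes to be a power of $p$ and the restrictions to be pairwise isomorphic, then applying the inductive hypothesis to each $x_i$-class and globalising. The paper's argument is a bit terser (it does not spell out the base case or the verification of the backward direction), but the underlying idea and the use of conditions (1) and (2) of Definition~\ref{definition: enhancement} at the top level are identical.
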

\begin{proof}
Pure $p$-enhancements are chains of the form  $\Theta=[\hat 0 < e_1=x_1 <\ldots <e_i= x_i < e_{i+1}=  \hat 1]$ where $e_j=x_j$ for all $1\le j\le i+1$. Therefore, the $e_j$'s are determined by the $x_j$'s and a chain can have at most one pure enhancement. 

Suppose $\Theta$ is pure. In particular, $e_{i+1}= \hat 1$. Thus, $e_{i+1}$ is the indiscrete partition with a single component of size $n$. By definition of $p$-enhancements, this implies that the number of equivalence classes of $x_i$ is a power of $p$, and that the restrictions of $\Theta$ to the equivalence classes of $x_i$ are all pairwise isomorphic. Moreover, the assumption $e_j=x_j$ for all $1\le j\le i$ implies that the restriction of $\Theta$ to any equivalence class of $x_i$ is again pure. By induction on the length of the chain, the size of each equivalence class of $x_i$ is a power of $p$, so $n$ must be a power of $p$ as well. Induction  shows that the chain $\sigma = [\hat{0} <x_1 <\ldots < x_i< \hat{1}]$ is regular as  all $x_i$ are pairwise isomorphic.

On the other hand, it is easy to check that if $\sigma = [\hat{0}<x_1<\ldots<x_i<\hat{1}]$ is a regular chain of partitions of a set of size $p^k$, then setting $e_j=x_j$ for all $j$  defines a pure $p$-enhancement of $\sigma$.
\end{proof}
In particular, if $n$ is not a power of $p$, then there are no pure summands in $\widetilde{\HH}_*(S^{\ell n}/_{K_\sigma})$, where $K_\sigma$ is any isotropy group of $|\Pi_n|^\diamond$. We obtain the following (superfluous) corollary.
\begin{corollary}
If $n$ is not a power of $p$, then the Euler characteristic of $\widetilde{\HH}^{\br}_*(|\Pi_n|^\diamond;\tilde\mu_*)$ is zero.
\end{corollary}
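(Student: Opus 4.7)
The proof will be a direct combination of the two structural results immediately preceding the corollary, so it should be quite short. The plan is to identify the Euler characteristic of $\widetilde{\HH}^{\br}_*(|\Pi_n|^\diamond;\tilde\mu_*)$ with a sum of contributions indexed by isomorphism classes of pure $p$-enhancements, and then to observe that this indexing set is empty under the hypothesis that $n$ is not a power of $p$.

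First, I would invoke Proposition~\ref{prop: cancel}, which reduces the Euler characteristic of $\widetilde{\HH}^{\br}_*(|\Pi_n|^\diamond;\tilde\mu_*)$ to the Euler characteristic of the bigraded submodule of the Bredon chain complex spanned by the summands coming from pure $p$-enhancements of strictly increasing chains of partitions of $\{1,\ldots,n\}$. This reduction is the heart of the argument: negligible summands vanish by Lemma~\ref{lemma: vanish}, and the matching-up/matching-down summands pair off via the explicit bijection of Proposition~\ref{prop: matching}, contributing zero in total to the Euler characteristic in the Bredon direction.

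Next, I would apply Lemma~\ref{lemma: puregular}, which says that a chain of partitions admits a (necessarily unique) pure $p$-enhancement if and only if it is a regular chain of partitions of a set whose cardinality is a power of $p$. Since the underlying set $\{1,\ldots,n\}$ here has cardinality $n$, and $n$ is by hypothesis not a power of $p$, no chain of partitions of $\{1,\ldots,n\}$ admits a pure $p$-enhancement. Consequently, the submodule of pure summands in the Bredon chain complex is zero, its Euler characteristic is zero, and by the previous paragraph so is the Euler characteristic of $\widetilde{\HH}^{\br}_*(|\Pi_n|^\diamond;\tilde\mu_*)$.

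I do not anticipate any real obstacle: both Proposition~\ref{prop: cancel} and Lemma~\ref{lemma: puregular} have been established just above, and the only thing to check is that the hypothesis ``$n$ not a power of $p$'' immediately empties the indexing set of pure enhancements. The author calls this corollary ``superfluous'' precisely because the stronger statement of Theorem~\ref{theorem: thehomology} (that the Bredon homology itself, not merely its Euler characteristic, vanishes when $n$ is not a power of $p$) was already obtained via the concentration result of \cite{arone2016bredon} combined with Proposition~\ref{prop: satisfy}; so the present corollary is really meant as an independent sanity check of the combinatorial cancellation machinery.
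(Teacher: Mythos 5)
Your proof is correct and follows exactly the paper's intended argument: Proposition~\ref{prop: cancel} reduces the Euler characteristic to the pure summands, and Lemma~\ref{lemma: puregular} shows that pure $p$-enhancements exist only when the underlying set has cardinality a power of $p$, so the pure part vanishes. Your closing remark about the corollary being a sanity check for the cancellation machinery, since the full vanishing of Bredon homology already follows from Proposition~\ref{prop: satisfy} and the results of \cite{arone2016bredon}, also matches the paper's framing.
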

\vspace{5pt}
Of course, a much stronger statement is true: if $n$ is not a power of $p$, then all the Bredon homology groups vanish by~\cite{arone2016bredon} and Proposition~\ref{prop: satisfy} (cf.\ the remark following  \vspace{3pt} Proposition~\ref{prop: satisfy}).

 It remains to calculate the Euler characteristic when $n=p^k$. 
\begin{proof}[Proof of Theorem~\ref{theorem: thehomology}]  Let $n=p^k$ be a power of $p$. By Proposition~\ref{prop: cancel}, it suffices to calculate the alternating sum (with respect to the Bredon grading) of the dimensions of the pure summands and check that it matches the dimensions asserted in  the statement of the theorem. 

By Lemma~\ref{lemma: puregular} there is a unique pure summand for each isomorphism type of regular chains of partitions of $\{1,\ldots,p^k\}$. Such chains are in bijective correspondence with ordered partitions of $k$, or equivalently with subsets of $\{1, \ldots, k-1\}$. 

We will now make the pure summands more explicit. Given an ordered partition \mbox{$(k_1,\ldots,k_r)$ of $k$,} we pick a regular chain of partitions $\sigma_{(k_1,\ldots,k_r)}$ representing the corresponding isomorphism class. There is a unique pure $p$-enhancement $\Theta$ of $\sigma$, and the corresponding pure summand $[\Theta] (\widetilde{\HH}_*(S^{\ell } ) )$ inside  $\widetilde{\HH}_*(S^{\ell n}/_{K_\sigma} )$ (sitting in Bredon degree $r-1$) is readily seen to be given by 
$ \mathcal{F}_{k_1} \ldots \mathcal{F}_{k_r} (\widetilde{\HH}_*(S^{\ell } ))$. The Bredon complex can therefore be depicted as: 

\begin{diagram}
\mbox{dim} & &   & 0 & & &   1 & & &  \ldots & & &  k \\ 
 & & & \Fcal_k\widetilde{\HH}_*(S^\ell ) && &  \bigoplus_{k_1+k_2=k} \Fcal_{k_1}\Fcal_{k_2}\widetilde{\HH}_*(S^\ell ) &&  & \ldots & &&   {\Fcal}_1\ldots\Fcal_1\widetilde{\HH}_*(S^\ell ) 
\end{diagram}

\bigskip
 
Consider a typical pure summand, of the form $\Fcal_{k_1}\Fcal_{k_2}\ldots\Fcal_{k_r}\widetilde{\HH}_*(S^{\ell} )$ where $k_1+\cdots+ k_r=k$. 

An easy combinatorial check reveals that this summand has a basis consisting of sequences $(i_1, \ldots, i_k)$ satisfying the following conditions:
\begin{enumerate}[leftmargin=26pt]
\item Each $i_j$ is congruent to $0$ or $1$ mod $2(p-1)$, 
\item $i_j\ge pi_{j+1}$ for all $1\le j<k$ with $j \neq k_1, k_1+k_2,\ldots .$  \label{admissible}
\item If $p$ is odd, \ then $pi_{k_1+\ldots+k_t+1}<(p-1)(\ell+i_{k_1+\ldots+k_t+1}+\cdots+i_{k_1+\ldots+k_r})$ for $t=0,\ldots,r-1$.\\
If $p=2$, we have $pi_{k_1+\ldots+k_t+1}\le(p-1)(\ell+i_{k_1+\ldots+k_t+1}+\cdots+i_{k_1+\ldots+k_r})$ for  $t=0,\ldots,r-1$. 
\item $i_j\ne 1$ for all $1\leq j \leq k$.
\end{enumerate}
Here we suppressed the canonical generator of $\widetilde{\HH}_*(S^\ell)$ from our notation.

We now fix a sequence $(i_1, \ldots, i_k)$ which satisfies   the four conditions above and    moreover satisfies $$i_j< pi_{j+1}\mbox{  for }j=k_1, k_1+k_2, \ldots. $$
We observe that $(i_1,\ldots,i_k)$ also defines a unique  basis element in  $\mathcal{F}_{m_1} \ldots \mathcal{F}_{m_s}$ for any ordered partition $(m_1,\ldots,m_s)$ of $k$ which \textit{refines} the ordered partition $(k_1,\ldots,k_r)$.
Here, we have used  that  condition $(2)$ can be rephrased as \mbox{$i_j - (p-1){i_{j+1}} \geq i_{j+1}$.} Note that any basis element of a pure summand $\mathcal{F}_{m_1} \ldots \mathcal{F}_{m_s}$ arises uniquely in this way.

The collection of   partitions $(m_1,\ldots,m_s)$ of length $s$ refining $(k_1,\ldots,k_r)$ lies in bijection with the collection of subsets $S$ of $\{1,\ldots,k-r\}$ of size $s-r$.
The contribution from the various basis elements corresponding to the given sequence $(i_1, \ldots, i_k)$ to the  Euler characteristic in Bredon direction (in the relevant homological degree $i_1+\ldots+i_k+\ell$) is therefore given by 
$$\sum_{s = r}^{k}(-1)^{s-1}{k-r \choose s-r}=(-1)^{r-1} \sum_{s=0}^{k-r}(-1)^{s} {k-r \choose s}$$
If $k-r>0$, then this alternating sum of binomial coefficients is well-known to vanish. 

For $k=r$, this sum is equal to $(-1)^{k-1}$. This corresponds to the case where the sequence $(i_1,\ldots,i_k)$ lies in $\mathcal{F}_{1} \ldots \mathcal{F}_{1}$ and
violates condition  $(2)$ for \textit{every possible} value of  $j$. More
precisely, condition $(2)$ becomes empty and $i_j < pi_{j+1}$ for all $j$.

Hence the Euler characteristic exactly counts the number of sequences $(i_1,\ldots,i_k)$ satisfying:
\begin{enumerate}[leftmargin=26pt]
\item Each $i_j$ is congruent to $0$ or $1$ mod $2(p-1)$, 
\item $i_j<pi_{j+1}$ for all $1\le j<k$  \label{admissible}
\item If $p$ is odd, \ then $pi_{j}<(p-1)(\ell+i_{j}+\cdots+i_{k})$ for  $j=1,\ldots,k$.\\
If $p=2$, we have $pi_{j}\le(p-1)(\ell+i_{j}+\cdots+i_{k})$ for $j=1,\ldots,k$. 
\item  $i_j\ne 1$ for all $1\leq j \leq k$.
\end{enumerate}
It is now evident that these conditions are equivalent to the conditions appearing in the statement of the theorem.
\end{proof}

\newpage

\bibliographystyle{alpha} 
\bibliography{There}

\end{document}